\newtheorem{thm}{Theorem}[section]
\newtheorem{cor}[thm]{Corollary}
\newtheorem{fact}[thm]{Fact}
\newtheorem{lemma}[thm]{Lemma}
\newtheorem{prop}[thm]{Proposition}
\theoremstyle{definition}
\newtheorem{definition}[thm]{Definition}
\newtheorem{ex}[thm]{Example}
\newtheorem{remark}[thm]{Remark}
\newtheorem{question}[thm]{Question}
\newtheorem{problem}[thm]{Problem}
\newtheorem{conj}[thm]{Conjecture}
\def\rquotient#1#2{%
	\makeatletter
	\raise.3ex\hbox{$#1$}/\lower.3ex\hbox{$#2$}%
	\makeatother
}	
\newcommand{\subjclass}[2][2010]{%
	\let\@oldtitle\@title%
	\gdef\@title{\@oldtitle\footnotetext{#1 \emph{Mathematics subject classification.} #2}}%
}
\newcommand{\keywords}[1]{%
	\let\@@oldtitle\@title%
	\gdef\@title{\@@oldtitle\footnotetext{\emph{Key words and phrases.} #1.}}%
}
\newcommand{\Address}{{
		\bigskip
		\small
		
		\textsc{University of Montpellier\\ 
Institut Math\'ematiques Alexander Grothendieck\\
Place Eug\`ene Bataillon\\
34090 Montpellier (France)}\par\nopagebreak
		\textit{E-mail address}: \texttt{anthony.genevois@umontpellier.fr}
		
}}
\title{An introduction to diagram groups}
\date{\today}
\author{Anthony Genevois}
\subjclass{Primary 20F65. Secondary 05C25, 57M07.}
\keywords{Diagram groups, semigroup diagrams, Squier complexes, directed 2-complexes, Thompson groups, median graphs, CAT(0) cube complexes}
\begin{document}

\maketitle

\begin{abstract}
To every semigroup presentation $\mathcal{P}= \langle \Sigma \mid \mathcal{R} \rangle$ and every baseword $w \in \Sigma^+$ can be associated a \emph{diagram group} $D(\mathcal{P},w)$, defined as the fundamental group of the \emph{Squier complex} $S(\mathcal{P},w)$. Roughly speaking, $D(\mathcal{P},w)$ encodes the lack of asphericity of $\mathcal{P}$. Examples of diagram groups include Thompson's group $F$, the lamplighter group $\mathbb{Z} \wr \mathbb{Z}$, the pure planar braid groups, and various right-angled Artin groups. This survey aims at summarising what is known about the family of diagram groups. 
\end{abstract}

\small
\tableofcontents
\normalsize

\section{Introduction}

\noindent
Defined by J. Meakin and M. Sapir (unpublished), and first investigated by V. Kilibarda in her thesis \cite{MR1448329}, diagram groups have been mainly promoted by V. Guba and M. Sapir, in particular through their monograph \cite{MR1396957}. Since then, many articles have been dedicated to the understanding of which groups can be described as (subgroups of) diagram groups and which properties can be deduced from such a description. Perspectives on the subject include computational problems such that the word, conjugacy, and commutation problems \cite{MR1396957, MR1725439}, finiteness properties \cite{MR1978047}, homology \cite{MR2193190}, orderability \cite{MR1983088, MR2193191}, median geometry \cite{MR1978047, MR3868219, MR4033512}, Hilbertian geometry \cite{MR2271228}, negatively curved geometry \cite{HypDiag, MR4071367}.

\medskip \noindent
In this survey, our goal is to organise in a coherent and essentially self-contained way what is known about the family of diagram groups. 

\medskip \noindent
In Section~\ref{section:FirstTaste}, we first present three different, but equivalent, descriptions of diagram groups: as a way to encode the lack of asphericity of a semigroup presentation (Section~\ref{section:Asphericity}); as a two-dimensional analogue of free groups, where words are replaced with diagrams over semigroup presentations (Section~\ref{section:Diagrams}); and finally as second fundamental groups of directed $2$-complexes (Section~\ref{section:Free}). Next, we record explicit examples of diagram groups in Section~\ref{section:Examples}, including Thompson's group $F$, its commutator subgroup $F'$, the lamplighter group $\mathbb{Z} \wr \mathbb{Z}$, the pure planar braid groups, and various right-angled Artin groups. Most properties satisfied by diagram groups are listed in Section~\ref{section:Properties}, sometimes accompanied with ideas of proofs. 

\medskip \noindent
Section~\ref{section:Algo}, we are mainly concerned with algorithmic and computational aspects of diagrams. In Section~\ref{section:Presentation}, we show how one can compute efficiently presentations of diagram groups. Sections~\ref{section:WordConj} and~\ref{section:Commutation} essentialy deal with the combinatorics of semigroup diagrams, with a focus on the conjugacy problem in Section~\ref{section:WordConj} and on centralisers in Section~\ref{section:Commutation}. In Section~\ref{section:ComSub}, we show how to determine efficiently whether an element of a diagram group belongs to the commutator subgroup. Finally, in Section~\ref{section:Fold}, we exploit the point of view given by directed $2$-complexes in order to adapt the well-known foldings of Stallings from free groups to diagram groups. This allows us to solve the membership problem for some specific subgroups.

\medskip \noindent
In Section~\ref{section:Median}, we consider diagram groups from a more geometric perspective. After a crash course on median geometry in Section~\ref{section:Crash}, we show in Section~\ref{section:MedianDiag} that diagram groups naturally act on median graphs. This geometry allows us to extract some valuable information about diagram groups regarding finiteness properties, once combined with a combinatorial version of Morse theory (Section~\ref{section:Morse}); Hilbert space compression (Section~\ref{section:Hilbert}); and acylindrical hyperbolicity (Section~\ref{section:Acyl}). The structure of hyperplanes in the nonpositively curved cube complexes whose fundamental groups are the diagram groups is studied in Section~\ref{section:Hyperplanes}, with applications to residual finiteness and subgroup separability. Finally, in Section~\ref{section:DiagramProducts}, we introduce diagram products, a version of diagram groups with coefficients coming from a family of groups indexed by the alphabet of the underlying semigroup presentation. We show that these products naturally act on quasi-median graphs, and we exploit this geometry in order to extract some information on the structure of diagram products. 

\medskip \noindent
We conclude this survey by describing various possible generalisations of diagram groups, most of them already available in the literature (Section~\ref{section:generalisations}); and by listing several open questions and problems that we find appealing (Section~\ref{section:OpenQuestion}), with the hope that this could motivate future works on diagram groups.


\section{A first taste}\label{section:FirstTaste}

\subsection{Diagram groups as lack of asphericity}\label{section:Asphericity}

\noindent
Let $\mathcal{P} = \langle \Sigma \mid \mathcal{R} \rangle$ be a semigroup presentation, i.e.\ an alphabet $\Sigma$ and a collection of relations $\mathcal{R}$ of the form $u=v$ where $u,v$ are positive words written over $\Sigma$. In the sequel, we will always assume that $\mathcal{R}$ does not contain obvious redundancy, i.e.\ if $u=v$ is in $\mathcal{R}$ we will assume that $v=u$ is not in $\mathcal{R}$. In particular, $\mathcal{R}$ does not contains a relation of the form $u=u$. As an example, we can take $\mathcal{P}= \langle a,b \mid ab=ba, a=a^2 \rangle$. 

\medskip \noindent
The presentation $\mathcal{P}$ is said to be \emph{aspherical} if, given two positive words $p,q \in \Sigma^+$ representing the same element in the semigroup, there is essentially a unique way to derive $q$ from $p$ by applying relations in $\mathcal{R}$. Depending on the meaning one gives to such a uniqueness, one may obtain distinct notions of asphericity. For instance, the words $abba$ and $bab$ represent the same element in the semigroup given by $\mathcal{P} = \langle a,b \mid ab=ba,a=a^2 \rangle$, a natural derivation being
$$abba \to baba \to baab \to bab.$$
However, such a derivation is far from being unique. For instance, instead of applying the relation $ab=ba$ to first half of $abba$ and next to the second half, we can apply the relation to the second half of the word and next to the first half:
$$abba \to abab \to baab \to bab.$$
Even worth, we can apply a relation and next undo what we have just did, e.g.\ 
$$abba \to bab \to baab \to bab \to bba \to bab.$$
However, we do not consider such variations as being \emph{essentially} different from our initial derivations. In order to formalise this idea, we use some algebraic topology.

\begin{definition}\label{def:SquierSquare}
Let $\mathcal{P}= \langle \Sigma \mid \mathcal{R} \rangle$ be a semigroup presentation. The \emph{Squier square complex} $S(\mathcal{P})$ is the complex whose vertices are the positive words written over $\Sigma$; whose edges $[a,u=v,b]$ connect two words $aub$ and $avb$ if one can be obtained from the other by applying a relation $u=v$ from $\mathcal{R}$; and whose squares $[a,u=v,b,p=q,c]$ bound the $4$-cycles given by edges $[a,u=v,bpc]$, $[a,u=v,bqc]$, $[aub,p=q,c]$, $[avb,p=q,c]$. 
\end{definition}
\begin{figure}[h!]
\begin{center}
\includegraphics[width=0.25\linewidth]{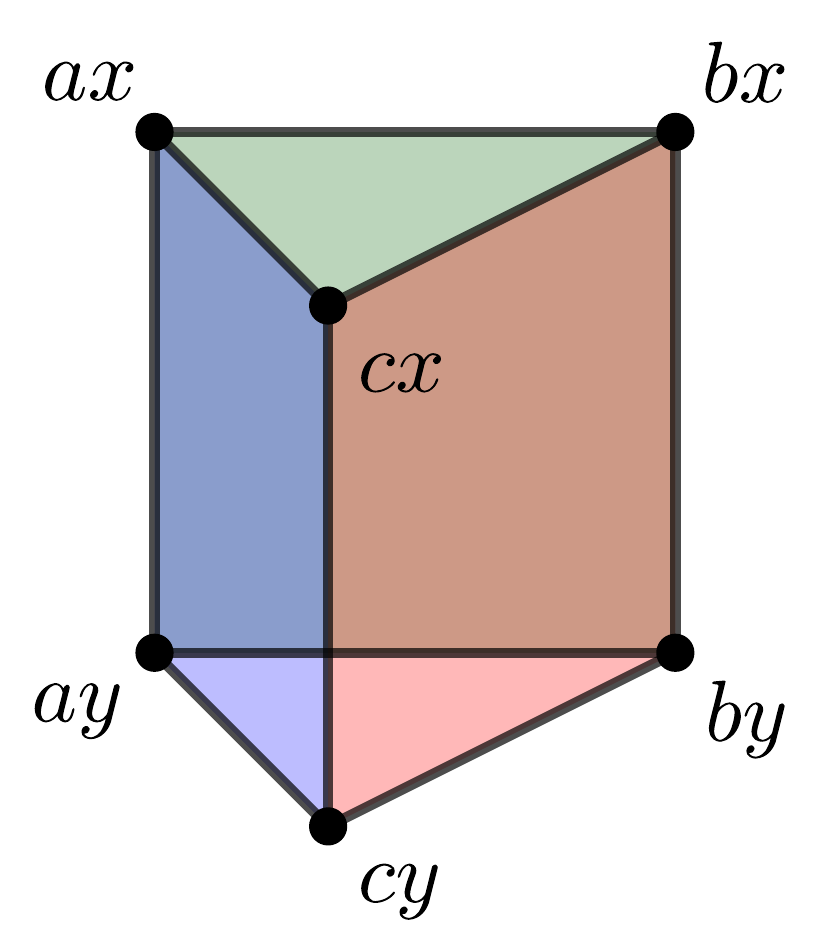}
\caption{The complex $S(\mathcal{P},ad)$ where $\mathcal{P} = \left\langle a,b,c,x,y \left|  \begin{array}{c} a=b,b=c \\ c=a, x=y \end{array} \right. \right\rangle$.}
\label{Squier}
\end{center}
\end{figure}

\noindent
Then, one says that our presentation $\mathcal{P}$ is \emph{aspherical} if the fundamental group of each connected component of the correspond Squier square complex is trivial. Thus, the fundamental groups of the connected components of $S(\mathcal{P})$ encode the lack of asphericity of $\mathcal{P}$. These groups are our \emph{diagram groups}.

\begin{definition}\label{def:DiagramGroups}
Let $\mathcal{P}= \langle \Sigma \mid \mathcal{R} \rangle$ be a semigroup presentation and $w \in \Sigma^+$ a baseword. The \emph{diagram group} $D(\mathcal{P},w)$ is the fundamental group $\pi_1(S(\mathcal{P}),w)$ of the Squier square complex $S(\mathcal{P})$ based at $w$. 
\end{definition}

\noindent
This is the shortest definition one can get of diagram groups, and it already allows us to construct examples. For instance, the diagram group given by Figure~\ref{Squier} is a free group of rank two. However, Definition~\ref{def:DiagramGroups} does not explain why we are focusing on semigroup presentations instead of monoid or group presentations, and its formulation has nothing to do with \emph{diagrams}, so the terminology we are using remains mysterious. These points will be clarified in the next section.

\subsection{Diagrammatic representation}\label{section:Diagrams}

\noindent
Let $\mathcal{P} = \langle \Sigma \mid \mathcal{R} \rangle$ be a semigroup presentation. Given two positive words $w_1,w_2 \in \Sigma^+$ representing the same element in the semigroup given by $\mathcal{P}$ a derivation from $w_1$ to $w_2$, or equivalently a path in the Squier complex $S(\mathcal{P})$ from $w_1$ to $w_2$, can be encoded by a \emph{diagram over $\mathcal{P}$}. For instance, if $\mathcal{P}=\langle a,b \mid ab=ba, a=a^2 \rangle$, then the derivation
$$abba \to baba \to baab \to bab \to abb$$
can be represented by the diagram 
\begin{center}
\includegraphics[width=0.4\linewidth]{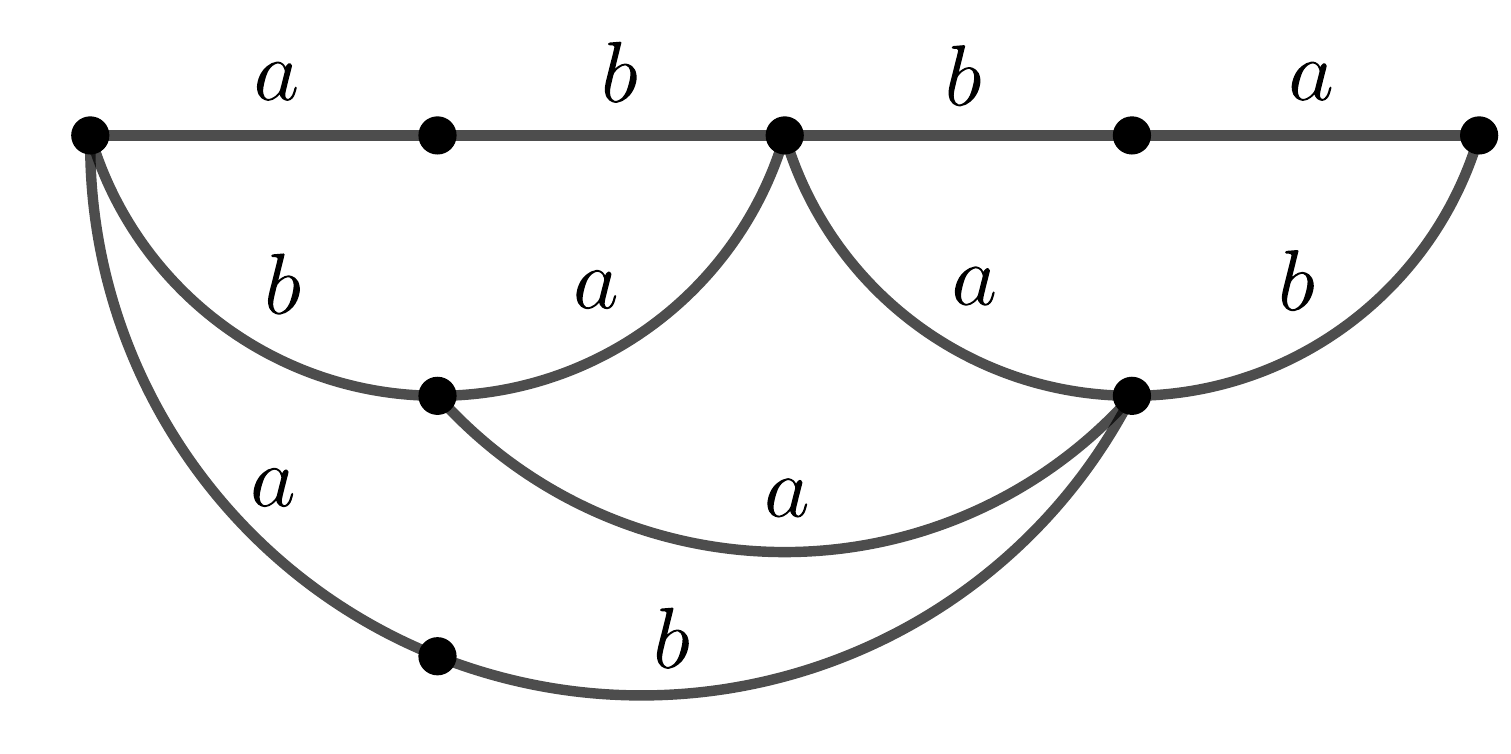}
\end{center}
The diagram is constructed as follows. We start with a top path (oriented from left to right) labelled by our initial word $abba$. The first step $abba \to baba$ in our derivation applies the relation $ab=ba$ to the first half of the word $abba$, so we glue a $2$-cell to our path with a boundary which is the disjoint union of a top path $ab$ and a bottom path $ba$. Similarly, following the second step $baba\to baab$ in our derivation, we glue a $2$-cell with a boundary which is the disjoint union of a top path $ba$ and a bottom path $ab$. So far, we have a diagram with two $2$-cells, whose top path is labelled by $abba$, and whose bottom path is labelled by $baab$. Following the third step $baab \to bab$ of our derivation, we glue a $2$-cell with a boundary which is the disjoint union of a top bath labelled $aa$ and a bottom path labelled $a$. Finally, we glue a forth $2$-cell corresponding the forth step $bab \to abb$ from our derivation. 

\medskip \noindent
Formally, a diagram of a semigroup presentation $\mathcal{P}= \langle \Sigma \mid \mathcal{R} \rangle$ is a planar oriented graph $\Delta$ whose oriented edges are labelled by letters from $\Sigma$ such that:
\begin{itemize}
	\item $\Delta$ has a unique source and a unique sink;
	\item the boundary of every cell is a disjoint union of two oriented paths, referred to as the \emph{top} and \emph{bottom} paths of the cell;
	\item for every cell, if $u$ and $v$ denote the labels of its top and bottom paths, then $u=v$ or $v=u$ belongs to $\mathcal{R}$. 
\end{itemize}
By definition, a diagram comes with a fixed embedding in the plane. Two diagrams only differing by an isotopy of the plane are considered as identical. In the sequel, orientations of edges in diagrams will be clear from our embeddings (from left to right) so they will not be specified. 

\medskip \noindent
Our diagrams are the analogues for semigroups of van Kampen diagrams for groups. In particular, two positive words $w_1,w_2 \in \Sigma^+$ represent the same element of the semigroup given by $\mathcal{P}$ if and only if there exists a diagram $\Delta$ over $\mathcal{P}$ with $\mathrm{top}(\Delta)=w_1$ and $\mathrm{bot}(\Delta)=w_2$. 

\medskip \noindent
Thus, every path in the Squier complex $S(\mathcal{P})$ from some vertex $w_1$ to another vertex $w_2$ can be represented by a diagram over $\mathcal{P}$ whose top and bottom words respectively are $w_1$ and $w_2$. Notice that two paths can be represented by the same diagram. For instance, the diagram given above also represents the path
$$abba \to abab \to baab \to baba \to abb.$$
Clearly, the paths represented by a given diagram $\Delta$ correspond to the different ways one has to construct $\Delta$ by gluing $2$-cells successively. In our example, we can glue the top left cell first and next the top right cell, or we can glue the top right cell first and next the top left cell. These two possibilities correspond to the two paths we gave. 

\medskip \noindent
It is worth noticing that that all the paths encoded by a diagram are pairwise homotopic. However, two homotopic paths may not be represented by the same diagram. 
\begin{center}
\includegraphics[width=0.7\linewidth]{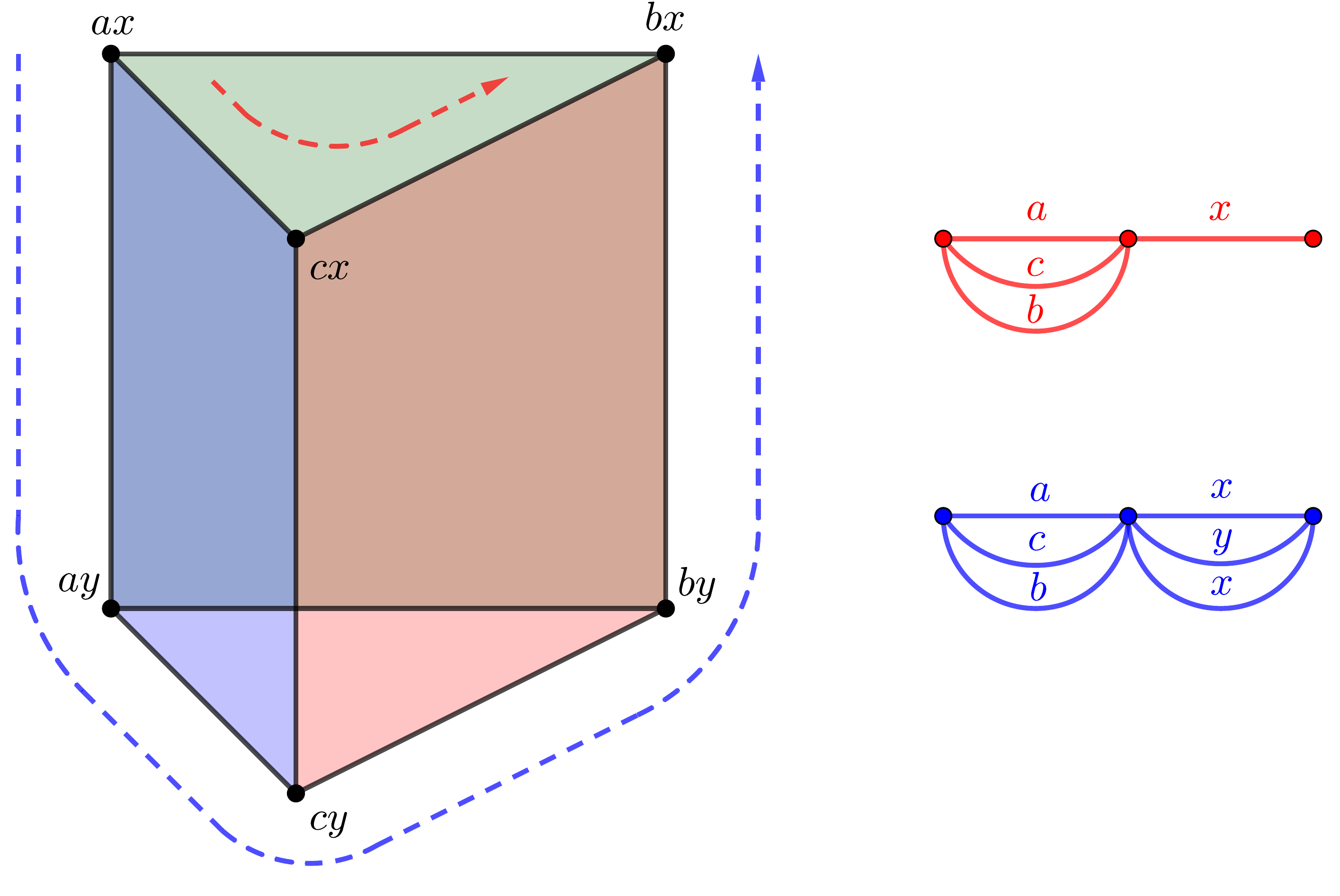}
\end{center}

\noindent
In a square complex, two paths in the one-skeleton are homotopy equivalent if and only if one can be obtained from the other by a sequence of elementary moves, namely flipping a square and adding or removing a backtrack.
\begin{figure}[h!]
\begin{center}
\includegraphics[width=0.9\linewidth]{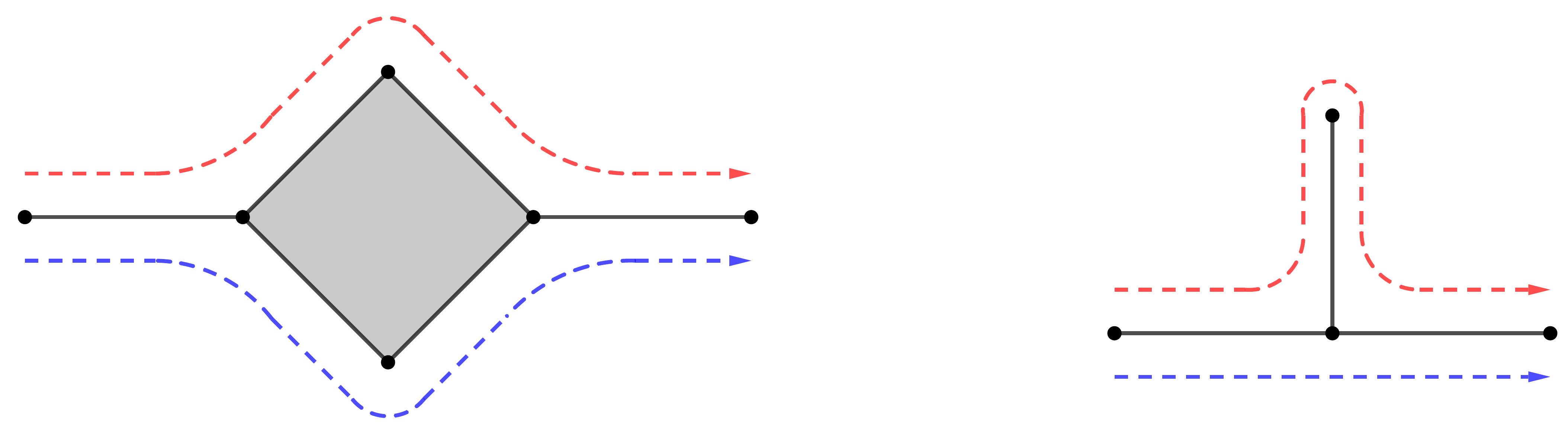}
\caption{Flipping a square and removing / adding a backtrack.}
\label{Moves}
\end{center}
\end{figure}

\noindent
It is clear that flipping a square does not modify the diagram representing our path. However, adding or removing a backtrack does modify the diagram. This motivates the following definition.

\begin{definition}
Let $\mathcal{P}=\langle \Sigma \mid \mathcal{R} \rangle$ be a semigroup presentation and $\Delta$ a diagram over $\mathcal{P}$. A \emph{dipole} in $\Delta$ is the data of two cells $\pi_1,\pi_2 \subset \Delta$ such that $\mathrm{bot}(\pi_1)=\mathrm{top}(\pi_2)$ and such that $\pi_1,\pi_2$ correspond to a relation of $\mathcal{R}$ and its inverse. One \emph{reduces} the dipole by identifying the paths $\mathrm{top}(\pi_1)$ and $\mathrm{bot}(\pi_2)$. 
\end{definition}

\noindent
For instance, the blue diagram above contains a dipole, namely the two right cells. Reducing the dipole provides the red diagram. It worth noticing that the diagram associated to a path with a backtrack necessarily contains a dipole. However, the converse is not true, as shown by the blue diagram above. 

\begin{prop}\label{prop:PathDiag}
Let $\mathcal{P}$ be a semigroup presentation. Two paths in the Squier complex $S(\mathcal{P})$ having the same endpoints are homotopy equivalent if and only if the diagrams over $\mathcal{P}$ representing them have the same reduction. 
\end{prop}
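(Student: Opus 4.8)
The plan is to prove two lemmas and then combine them. Throughout I use the elementary-moves description recalled above (two paths with common endpoints in the $1$-skeleton of a square complex are homotopic if and only if one can be obtained from the other by flipping squares and by inserting or deleting backtracks), together with the fact, implicit in the statement of the proposition, that dipole reduction is confluent, so that \emph{the} reduction of a diagram is well defined.

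\emph{Lemma A: any two paths in $S(\mathcal{P})$ represented by the same diagram $\Delta$ are homotopic.} As noted in the text, the paths representing $\Delta$ are exactly the sequences recording an order in which the $2$-cells of $\Delta$ may be glued one after another, and these orders are precisely the linear extensions of the natural partial order on the cells of $\Delta$ (a cell must be glued before every cell lying below it). Any two linear extensions of a finite poset are related by a sequence of transpositions of consecutive incomparable elements; since two incomparable cells that are glued consecutively are attached along edge-disjoint sub-paths $u$ and $p$ of the current bottom path, transposing them replaces one side of the square $[a,u=v,b,p=q,c]$ of $S(\mathcal{P})$ by the other, i.e.\ performs a square flip. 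Hence all paths representing $\Delta$ are homotopic.

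\emph{Lemma B: if $\Delta'$ is obtained from $\Delta$ by reducing a dipole $(\pi_1,\pi_2)$, then every path representing $\Delta'$ is homotopic to every path representing $\Delta$.} Since $\mathrm{bot}(\pi_1)=\mathrm{top}(\pi_2)$, each edge of this shared path borders exactly the two cells $\pi_1$ and $\pi_2$; following chains of cells through this path, one deduces that no cell of $\Delta$ lies strictly between $\pi_1$ and $\pi_2$ in the cell poset, so there is a linear extension in which $\pi_2$ comes immediately after $\pi_1$. By Lemma A this yields a path $p$ representing $\Delta$ in which the two corresponding steps form an edge of $S(\mathcal{P})$ immediately followed by its inverse. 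Deleting these two steps gives a path $q$, and removing the cells $\pi_1,\pi_2$ and re-gluing gives exactly $\Delta'$, so $q$ represents $\Delta'$; as $p$ is $q$ with a backtrack inserted, $p$ and $q$ are homotopic, and Lemma A (applied to $\Delta$ and to $\Delta'$) completes the proof of the lemma.

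The proposition now follows. If the diagrams $\Delta_1,\Delta_2$ representing $p_1,p_2$ have the same reduction $\Delta$, then iterating Lemma B shows that $p_1$ and $p_2$ are each homotopic to a path representing $\Delta$, and these are homotopic by Lemma A. Conversely, if $p_1$ and $p_2$ are homotopic, follow a sequence of elementary moves from one to the other: flipping a square leaves the associated diagram unchanged, whereas inserting or deleting a backtrack inserts or deletes a dipole and hence, by confluence, does not change the reduction. So $\Delta_1$ and $\Delta_2$ have the same reduction. The main obstacle is Lemma B, specifically the claim that the two cells of a dipole can always be brought next to each other in the gluing order: this is exactly where the planar, two-sided structure of diagrams enters, through the observation that the shared sub-path of a dipole borders no other cell. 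One should also carry out carefully the bookkeeping that deleting the two intermediate steps really produces a valid path whose diagram is $\Delta'$, and, for the converse, invoke confluence of dipole reduction (a fact that should accompany the definition of \emph{the} reduction of a diagram).
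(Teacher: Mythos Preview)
The paper does not actually supply a proof of this proposition: it states the result, sketches the easy converse in the surrounding discussion (square flips leave the diagram unchanged, backtracks correspond to dipoles), and defers the confluence of dipole reduction to \cite[Theorem~3.17]{MR1396957}, but gives no argument for the implication ``same reduction $\Rightarrow$ homotopic''. So there is no proof in the paper to compare against.

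Your argument is correct and is the natural one. Two small points worth tightening. In Lemma~A, the fact that any two linear extensions of a finite poset are connected by adjacent transpositions of incomparable elements is standard but perhaps deserves a one-line justification (a bubble-sort argument suffices: the first element of the target extension is minimal, hence incomparable to everything preceding it in the source, so it can be bubbled to the front, and one recurses). In Lemma~B, your claim that no cell lies strictly between $\pi_1$ and $\pi_2$ is the heart of the matter and is correct: since every edge of $\mathrm{bot}(\pi_1)=\mathrm{top}(\pi_2)$ has $\pi_1$ as the unique cell above it and $\pi_2$ as the unique cell below it, the only cell covered by $\pi_1$ in the cell poset is $\pi_2$, so $\pi_1<\pi$ forces $\pi_2\le\pi$. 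Your converse direction is exactly the argument the paper gestures at in the paragraphs before the proposition, made precise by invoking confluence.
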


\noindent
Here, the \emph{reduction} of a diagram refers to the diagram obtained after reducing all the dipoles. A priori, the diagram thus obtained could differ depending on the order we following when reducing the dipoles. It turns out not to be the case; see \cite[Theorem~3.17]{MR1396957}. Thus, the reduction of a diagram is well-defined. 

\medskip \noindent
Proposition~\ref{prop:PathDiag} immediately implies:

\begin{cor}\label{cor:PathDiag}
Let $\mathcal{P}$ be a semigroup presentation. The map that sends a path in $S(\mathcal{P})$ to its diagram over $\mathcal{P}$ induces an isomorphism from the fundamental groupoid of $S(\mathcal{P})$ to the \emph{diagram groupoid} $D(\mathcal{P})$ whose elements are the diagrams over $\mathcal{P}$ up to dipole reduction and whose product sends any two diagrams $\Delta_1,\Delta_2$ with $\mathrm{bot}(\Delta_1), \mathrm{top}(\Delta_2)$ labelling by the same word to the diagram $\Delta_1 \circ \Delta_2$ obtained by gluing $\Delta_2$ below $\Delta_2$. 
\end{cor}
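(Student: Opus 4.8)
The plan is to exhibit the assignment from the statement as a functor $F$ from the fundamental groupoid of $S(\mathcal{P})$ to $D(\mathcal{P})$, and then to show that $F$ is the identity on objects and a bijection on every set of morphisms; a functor with these properties is automatically an isomorphism of categories, hence of groupoids. First I would record the elementary bookkeeping already set up in Section~\ref{section:Diagrams}: both groupoids have $\Sigma^+$ as set of objects and $F$ acts as the identity there; a path $p$ from $w_1$ to $w_2$ in $S(\mathcal{P})$ is a finite sequence of applications of relations of $\mathcal{R}$, and gluing the corresponding cells one after the other assembles a diagram $\Delta(p)$ with $\mathrm{top}(\Delta(p))=w_1$ and $\mathrm{bot}(\Delta(p))=w_2$; concatenating two composable paths corresponds exactly to vertically stacking the associated diagrams, $\Delta(p\cdot q)=\Delta(p)\circ\Delta(q)$; and the constant path at $w$ yields the trivial diagram on $w$ (a single path labelled $w$ with no cell). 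We then set $F([p])$ to be the class of $\Delta(p)$ modulo dipole reduction.

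Next I would promote this to a functor on homotopy classes. Well-definedness of $F$ on homotopy classes is precisely the forward implication of Proposition~\ref{prop:PathDiag}: homotopic paths with the same endpoints have diagrams with the same reduction. Functoriality then drops out: by definition the product in $D(\mathcal{P})$ of two classes is the class of the stacking of any two representatives, so taking $\Delta(p)$ and $\Delta(q)$ as representatives and using $\Delta(p\cdot q)=\Delta(p)\circ\Delta(q)$ gives $F([p\cdot q])=F([p])F([q])$; units go to units because the class of the trivial diagram on $w$ is the identity of $w$ in $D(\mathcal{P})$. Here I am implicitly using that the reduction of a diagram is well defined, independently of the order in which dipoles are removed, so that $D(\mathcal{P})$ is a well-defined category to begin with; this is \cite[Theorem~3.17]{MR1396957}.

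It remains to check that $F$ is bijective on each set of morphisms. Injectivity on a hom-set is exactly the reverse implication of Proposition~\ref{prop:PathDiag}: if $\Delta(p)$ and $\Delta(q)$ have the same reduction, then $p$ and $q$ are homotopic, so $[p]=[q]$. For surjectivity, given a reduced diagram $\Delta$ over $\mathcal{P}$ with $\mathrm{top}(\Delta)=w_1$ and $\mathrm{bot}(\Delta)=w_2$, I would fix any order in which the cells of $\Delta$ can be glued to build it — equivalently, a decomposition of $\Delta$ as a concatenation of single-cell diagrams — producing a path $p$ from $w_1$ to $w_2$ in $S(\mathcal{P})$ with $\Delta(p)=\Delta$; since $\Delta$ is already reduced, $F([p])$ is the class of $\Delta$. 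Being a functor that is the identity on objects and a bijection on every hom-set, $F$ is an isomorphism of groupoids; as a byproduct $D(\mathcal{P})$ really is a groupoid, the inverse of a diagram $\Delta$ being the diagram obtained by exchanging its top and bottom paths, whose stacking with $\Delta$ reduces to a trivial diagram.

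The only genuinely nontrivial input is Proposition~\ref{prop:PathDiag}, which we may assume, so I do not expect a serious obstacle; the step deserving the most care is surjectivity, i.e.\ the fact that every diagram is the diagram of some path — equivalently, decomposes into single-cell diagrams — which is implicit in the passage from derivations to diagrams in Section~\ref{section:Diagrams} and is established in \cite{MR1396957}.
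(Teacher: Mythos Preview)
Your proposal is correct and is precisely the argument the paper has in mind: the paper simply states that the corollary ``immediately'' follows from Proposition~\ref{prop:PathDiag}, and your write-up unpacks exactly what ``immediately'' means here --- well-definedness and injectivity on hom-sets come from the two directions of Proposition~\ref{prop:PathDiag}, functoriality from the obvious compatibility of concatenation with stacking, and surjectivity from the fact that every diagram is built by successively gluing single cells.
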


\noindent
Here, we think as a groupoid as a set endowed with a product only partially defined but satisfying basically the same axioms as groups: the product, when it is defined, is associative; there are neutral elements (not unique); and every element admits a unique inverse. The example to keep in mind is the fundamental groupoid of a topological space: its elements are the oriented paths up to homotopy and the product between two paths $\alpha,\beta$ is well-defined when the terminal point of $\alpha$ coincides with the initial vertex point of $\beta$ in which case the product is just the concatenation of $\alpha$ and $\beta$. Neutral elements are single points and the inverse of an element is obtained by reversing the orientation. In $D(\mathcal{P})$, the neutral elements are the diagrams with no $2$-cells and the inverse of a diagram is given by its mirror image (along a horizontal axis).
\begin{figure}[h!]
\begin{center}
\includegraphics[width=\linewidth]{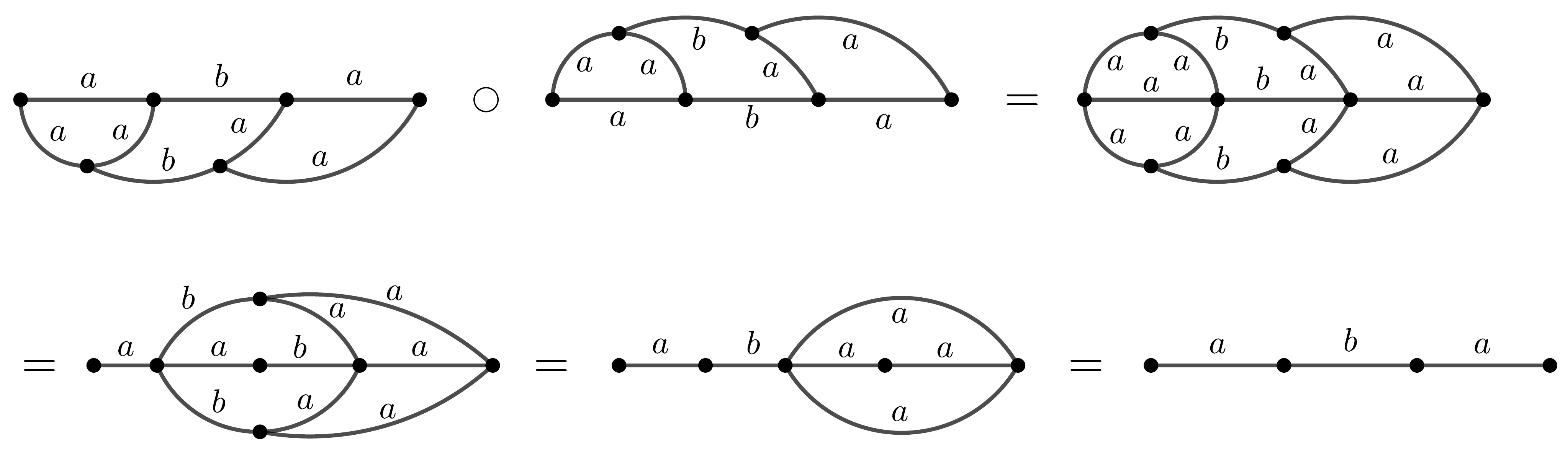}
\caption{Product of diagrams over $\mathcal{P}= \langle a,b \mid ab=ba, a=a^2 \rangle$.}
\label{Product}
\end{center}
\end{figure}

\noindent
As a particular case of Corollary~\ref{cor:PathDiag}, given a semigroup presentation $\mathcal{P}= \langle \Sigma \mid \mathcal{R} \rangle$ and a baseword $w \in \Sigma^+$, the diagram group $D(\mathcal{P},w)$ can be described as the set of \emph{$(w,w)$-diagrams} over $\mathcal{P}$ (i.e.\ the diagrams whose top and bottom paths are labelled by $w$) up to dipole reductions endowed with the concatenation product.

\subsection{Diagram groups as two-dimensional free groups}\label{section:Free}

\noindent
There is an analogy between our previous description of diagram groups and the construction of free groups. Elements of free groups are words (which can be thought of as linear objects) up to reductions $xx^{-1}=1$ (corresponding to dipole reductions), which we concatenate together (similarly to concatenations of diagrams). The analogy can be pushed further. In the same way that free groups can be described as fundamental groups of graphs, diagram groups can be described as \emph{second fundamental groups} of \emph{directed $2$-complexes}. This point of view was developed in \cite{MR2193190}, and will be crucial in Section~\ref{section:Fold}. However, since it will not be essentially used elsewhere and is rather technical, the present section can be skept in a first reading. 

\begin{definition}
A \emph{directed $2$-complex} is the data of a directed graph $\Gamma$, a set of \emph{$2$-cells} $F$, two maps $\mathrm{top}$, $\mathrm{bot}$ sending each $2$-cell to a directed path in $\Gamma$, and a map $\iota : F \to F$ such that:
\begin{itemize}
	\item for every $f \in F$, $\mathrm{bot}(f)$ and $\mathrm{top}(f)$ are non-empty and have common initial and terminal vertices;
	\item $\iota$ is a fixed-point free involution satisfying $\mathrm{top}(\iota(f))= \mathrm{bot}(f)$ and $\mathrm{bot}(\iota(f))= \mathrm{top}(f)$ for every $f \in F$.
\end{itemize}
\end{definition}

\noindent
For instance, given a semigroup presentation $\mathcal{P}$, a diagram $\Delta$ over $\mathcal{P}$ is naturally a directed $2$-complex (drawn on the plane). If the top and bottom paths of $\Delta$ have the same label, then one can identify $\mathrm{top}(\Delta)$ with $\mathrm{bot}(\Delta)$ in order to obtain another directed $2$-complex (drawn on the sphere). 

\medskip \noindent
Mimicking the construction of a cellular $2$-complex from a group presentation, one can also associate a directed $2$-complex $\mathcal{X}(\mathcal{P})$ to our semigroup presentation $\mathcal{P}= \langle \Sigma \mid \mathcal{R}\rangle$. Namely, $\mathcal{X}(\mathcal{P})$ has a single vertex, one directed edge for each generator in $\Sigma$, and, for each relation $u=v$ in $\mathcal{R}$, we add two $2$-cells whose top and bottom paths are respectively labelled by $u,v$ and $v,u$. 

\medskip \noindent
Now, our goal is to define a \emph{second fundamental groupoid} of a directed $2$-complex. Roughly speaking, it is given by homotopy classes of \emph{$2$-paths} (i.e.\ paths of paths) endowed with the usual concatenation. 

\begin{definition}
Let $\mathcal{X}$ be a directed $2$-complex. A \emph{$1$-path} is a directed path in the underlying directed graph. An \emph{elementary transformation} of a $1$-path $\alpha$ is the replacement of a subsegment $\beta \subset \alpha$ which coincides with the bottom path of some $2$-cell $\pi$ with $\mathrm{top}(\pi)$. A \emph{$2$-path} is a finite sequence of $1$-paths such that each $1$-paths is obtained from the previous one by an elementary transformation. 
\end{definition}

\noindent
One should think of an elementary transformation as \emph{pushing} a $1$-path $\alpha$ through a $2$-cell having its bottom path in $\alpha$. Observe that, given a semigroup presentation $\mathcal{P}$, a $2$-path in the directed $2$-complex $\mathcal{X}(\mathcal{P})$ is naturally encoded by a diagram over $\mathcal{P}$. 

\begin{definition}
Let $\mathcal{X}$ be a directed $2$-complex. Two $2$-paths are \emph{elementary equivalent} if they are of the form
$$\alpha_1, \ldots, \alpha_r, \zeta \mathrm{bot}(\pi) \xi, \zeta \mathrm{top}(\pi) \xi, \zeta \mathrm{bot}(\pi) \xi, \beta_1 ,\ldots \beta_s$$
$$\alpha_1, \ldots, \alpha_r, \zeta \mathrm{bot}(\pi) \xi, \beta_1, \ldots, \beta_s$$
for some $2$-cell $\pi$, or
$$\alpha_1, \ldots, \alpha_r, \alpha  \mathrm{bot}(\mu)  \beta  \mathrm{bot}(\nu)  \gamma, \alpha \mathrm{top}(\mu) \beta \mathrm{bot}(\nu) \gamma, \alpha \mathrm{top}(\mu) \beta \mathrm{top}(\nu) \gamma, \beta_1 ,\beta_s$$
$$\alpha_1, \ldots, \alpha_r, \alpha \mathrm{bot}(\mu) \beta \mathrm{bot}(\nu) \gamma, \alpha \mathrm{bot}(\mu) \beta \mathrm{top}(\nu) \gamma, \alpha \mathrm{top}(\mu) \beta \mathrm{top}(\nu) \gamma, \beta_1 ,\beta_s$$
for some $2$-cells $\mu,\nu$. Two $2$-paths are \emph{homotopy equivalent} if one can pass from one to the other by a sequence of elementary equivalences.
\end{definition}

\noindent
In other words, the first elementary equivalence tells us that pushing a $1$-path through a $2$-cell and next through its inverse amounts to doing nothing. The second elementary equivalence tells us that, given two $2$-cells $\pi_1,\pi_2$ having their bottom paths in our $1$-path $\zeta$ and disjoint, pushing $\alpha$ through $\pi_1$ and next through $\pi_2$ amounts to pushing $\alpha$ through $\pi_2$ and next through $\pi_1$. 

\medskip \noindent
As previously mentioned, a $2$-path in the directed $2$-complex $\mathcal{X}(\mathcal{P})$ associated to a semigroup presentation $\mathcal{P}$ is naturally encoded by a diagram over $\mathcal{P}$. It should be clear from our definition that any two homotopy equivalent $2$-paths are encoded by the same diagram modulo dipole reduction. 

\begin{definition}
Let $\mathcal{X}$ be a directed $2$-complex. The \emph{second fundamental groupoid} $\Pi_2(\mathcal{X})$ of $\mathcal{X}$ is the set of $2$-paths up to homotopy equivalence endowed with concatenation. Given a $1$-path $\alpha$, the \emph{fundamental group} $\pi_2(\mathcal{X},\alpha)$ is the set of $2$-paths starting and ending at $\alpha$ up to homotopy equivalence endowed with concatenation. 
\end{definition}

\noindent
Observe that neutral elements of $\Pi_2(\mathcal{X})$ are $2$-paths reduced to single $1$-paths, and that the inverse of a $2$-path is the same $2$-path but read in the reversed order. 

\medskip \noindent
Interestingly, second fundamental groups of directed $2$-complexes define the same class of groups as diagram groups \cite[Theorem~4.3]{MR2193190}.

\begin{thm}
For every semigroup presentation $\mathcal{P}$, the map that sends a $2$-path in $\mathcal{X}(\mathcal{P})$ to the diagram encoding it induces an groupoid isomorphism $\Pi_2(\mathcal{X}(\mathcal{P})) \to D(\mathcal{P})$. Conversely, every directed $2$-complex $\mathcal{X}$ can be turned into the directed $2$-complex associated to a semigroup presentation $\mathcal{P}$ such that $\Pi_2(\mathcal{X})$ and $D(\mathcal{P})$ are isomorphic. 
\end{thm}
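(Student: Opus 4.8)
The plan is to prove the two assertions separately, reducing each to Corollary~\ref{cor:PathDiag} by unwinding the definitions.

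For the first assertion, I would argue that $\mathcal{X}(\mathcal{P})$ is essentially a disguise for the Squier square complex $S(\mathcal{P})$. Since $\mathcal{X}(\mathcal{P})$ has a single vertex and one loop per letter of $\Sigma$, a $1$-path in $\mathcal{X}(\mathcal{P})$ is the same datum as a positive word over $\Sigma$, i.e.\ a vertex of $S(\mathcal{P})$; and, since the $2$-cells of $\mathcal{X}(\mathcal{P})$ are indexed by the relations of $\mathcal{R}$ together with their formal inverses, an elementary transformation of a $1$-path (replacing a factor equal to the bottom path of a $2$-cell with its top path) is exactly the crossing of an edge of $S(\mathcal{P})$. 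Hence a $2$-path in $\mathcal{X}(\mathcal{P})$ is literally an edge-path in $S(\mathcal{P})$, and concatenation of $2$-paths is concatenation of edge-paths. Moreover the two elementary equivalences of $2$-paths match the two elementary moves that generate homotopy of edge-paths in a square complex: the first (push through a $2$-cell, then through its $\iota$-image) is the insertion or deletion of a backtrack, and the second (commute two elementary transformations located at disjoint factors) is a flip across a square $[a,u=v,b,p=q,c]$ of $S(\mathcal{P})$ --- here one need only check that the four intermediate $1$-paths appearing in the equivalence are the four corners of that square. Thus the identity on $2$-paths induces a groupoid isomorphism from $\Pi_2(\mathcal{X}(\mathcal{P}))$ onto the fundamental groupoid of $S(\mathcal{P})$; composing with the isomorphism of Corollary~\ref{cor:PathDiag} and tracing through the constructions, the resulting morphism $\Pi_2(\mathcal{X}(\mathcal{P})) \to D(\mathcal{P})$ is exactly the one reading off, from a $2$-path, the diagram obtained by gluing the corresponding sequence of $2$-cells.

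Conversely, given a directed $2$-complex $\mathcal{X}$ with underlying graph $\Gamma$, $2$-cells $F$, and involution $\iota$, the candidate presentation is the obvious one: $\Sigma$ is the edge set of $\Gamma$, and $\mathcal{R}$ contains one relation $\mathrm{top}(f)=\mathrm{bot}(f)$ per $\iota$-orbit of $F$. Two preliminary normalisations, each a harmless modification of $\mathcal{X}$ leaving $\Pi_2$ unchanged, make this legitimate and faithful: first, arrange that $\mathrm{top}(f)\neq\mathrm{bot}(f)$ for all $f$ and that distinct $\iota$-orbits never yield the same unordered pair of paths --- so that $\mathcal{R}$ is a genuine set of relations without redundancy and the $2$-cells of $\mathcal{X}(\mathcal{P})$ are in bijection with those of $\mathcal{X}$, parallel cells included --- by a ``subdivision of $2$-cells'', replacing each offending cell $f$ (with top $p$, bottom $q$) by a brand-new $1$-path $r$ of fresh edges and vertices together with two cells of top/bottom paths $(p,r)$ and $(r,q)$ and their $\iota$-images, which merely subdivides the act of pushing through $f$; second, work one connected component of $\Gamma$ at a time, since $\pi_2$ only sees one component. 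The crux is then that, although $\mathcal{X}(\mathcal{P})$ has many $1$-paths which are not directed paths of $\Gamma$, the $2$-path component of a fixed directed path $\alpha$ never leaves the directed paths of $\Gamma$: a factor of such a word equal to $\mathrm{bot}(f)$ as a word must occur as a subpath with the endpoints of $\mathrm{bot}(f)$ (a walk is determined by its edge sequence), so replacing it by $\mathrm{top}(f)$ again produces a directed path of $\Gamma$, and the no-collision normalisation makes the transformations available in $\mathcal{X}(\mathcal{P})$ and in $\mathcal{X}$ coincide there. Hence $\Pi_2(\mathcal{X})$ and $\Pi_2(\mathcal{X}(\mathcal{P}))$ agree on the component of $\alpha$, giving $\pi_2(\mathcal{X},\alpha)\cong\pi_2(\mathcal{X}(\mathcal{P}),\alpha)\cong D(\mathcal{P},\alpha)$ by the first assertion; running over all components shows every such group is a diagram group, which is the asserted coincidence.

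I expect the main obstacle to lie in this converse direction: carrying out the preliminary normalisation cleanly and checking that it truly preserves $\Pi_2$ --- the $2$-cell subdivision is morally a homotopy of directed $2$-complexes, but spelling out the resulting bijection on homotopy classes of $2$-paths is delicate --- and then verifying that collapsing the many vertices of $\mathcal{X}$ to the single vertex of $\mathcal{X}(\mathcal{P})$ creates no spurious identifications among $2$-paths on the relevant component. The first assertion, by contrast, is a pure translation exercise once Corollary~\ref{cor:PathDiag} is granted.
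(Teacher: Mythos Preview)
Your treatment of the first assertion is exactly what the paper has in mind: it says only that ``the first assertion should be clear from what we already said,'' and your identification of $1$-paths with vertices of $S(\mathcal{P})$, elementary transformations with edges, and the two elementary equivalences with backtracks and square-flips is precisely that unwinding, followed by an appeal to Corollary~\ref{cor:PathDiag}.

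For the converse, the paper gives no argument at all and simply refers to \cite{MR2193190}. Your construction --- take $\Sigma$ to be the edge set, $\mathcal{R}$ the relations $\mathrm{top}(f)=\mathrm{bot}(f)$ over $\iota$-orbits, after subdividing cells to avoid collisions --- is the natural one, and the key observation that a subword of a genuine directed path equal to $\mathrm{bot}(f)$ must actually \emph{be} that subpath (because an edge sequence determines a walk) is correct and is the heart of the matter. One point worth flagging: your argument establishes $\pi_2(\mathcal{X},\alpha)\cong D(\mathcal{P},\alpha)$ for each $\alpha$ that is a genuine $1$-path of $\mathcal{X}$, which is what the surrounding text really needs (``second fundamental groups of directed $2$-complexes define the same class of groups as diagram groups''). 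The literal groupoid statement $\Pi_2(\mathcal{X})\cong D(\mathcal{P})$ is stronger, since $\mathcal{X}(\mathcal{P})$ acquires extra $1$-paths --- words over $\Sigma$ that are not directed paths in $\Gamma$ --- giving $D(\mathcal{P})$ additional (possibly trivial) groupoid components that have no counterpart in $\Pi_2(\mathcal{X})$. You should either note that the theorem is to be read at the level of groups, or observe that these extra components are harmless isolated objects and the groupoids agree on the components that matter.
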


\noindent
The first assertion should be clear from what we already said. We refer to \cite{MR2193190} for more information on the second assertion.

\subsection{Examples of diagram groups}\label{section:Examples}

\noindent
The simplest diagram group is given by the semigroup presentation 
$$\mathcal{P}= \langle a,b,c \mid a=b,b=c,c=a \rangle.$$ 
The connected component $S(\mathcal{P},a)$ containing $a$ of the Squier complex $S(\mathcal{P})$ is just a $3$-cycle with vertices $a,b,c$, so $D(\mathcal{P},a)$ is infinite cyclic. From this example, it is possible to construct new diagram groups by combination. 

\begin{prop}[\cite{MR1396957}]\label{prop:FreeProduct}
Free products of diagram groups are diagram groups.
\end{prop}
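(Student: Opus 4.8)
The plan is to show that given two semigroup presentations $\mathcal{P}_1 = \langle \Sigma_1 \mid \mathcal{R}_1 \rangle$ and $\mathcal{P}_2 = \langle \Sigma_2 \mid \mathcal{R}_2 \rangle$ with basewords $w_1 \in \Sigma_1^+$ and $w_2 \in \Sigma_2^+$, one can build a single presentation whose associated diagram group is $D(\mathcal{P}_1, w_1) \ast D(\mathcal{P}_2, w_2)$. The natural candidate is to take disjoint alphabets, add a fresh letter $x$, and form $\mathcal{P} = \langle \Sigma_1 \sqcup \Sigma_2 \sqcup \{x\} \mid \mathcal{R}_1 \cup \mathcal{R}_2 \cup \{ x = w_1, x = w_2 \} \rangle$ with baseword $x$. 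The idea is that a $(x,x)$-diagram over $\mathcal{P}$ decomposes as an alternating concatenation of: a single cell $x \to w_1$ (or $x \to w_2$), then a $(w_i,w_i)$-diagram entirely over $\mathcal{P}_i$, then the inverse cell $w_i \to x$; so reduced $(x,x)$-diagrams correspond exactly to reduced words in the free product, with the two "hub" cells $x = w_1$ and $x = w_2$ playing the role of syllable separators.

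The key steps, in order, are as follows. First I would make precise the decomposition claim: any $(x,x)$-diagram $\Delta$ over $\mathcal{P}$, being reduced, cannot have the letter $x$ appear on any internal path other than the top and bottom (since the only relations involving $x$ are $x=w_1$ and $x=w_2$, and the $w_i$ contain no $x$ because the alphabets are disjoint and $x$ is fresh); from this one extracts that $\Delta$ is a vertical concatenation of "blocks", each block being a cell $x \to w_i$, followed by a diagram all of whose cells come from $\mathcal{R}_i$ (so it is a $(w_i, w_i)$-diagram over $\mathcal{P}_i$ living inside $S(\mathcal{P}_i, w_i)$), followed by the inverse cell $w_i \to x$. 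Here one uses that $w_i$ is connected in the Squier complex only to words over $\Sigma_i$ via $\mathcal{R}_i$-relations — the connected component of $w_i$ in $S(\mathcal{P})$ is exactly that of $S(\mathcal{P}_i)$. Second, I would check that dipole reduction on $\Delta$ corresponds to free-reduction on the associated word: an adjacent pair of blocks over the same $\mathcal{P}_i$ whose middle diagrams are mutual mirror images forms exactly a cancelling pair, matching $g g^{-1} = 1$; and a middle diagram of one block being trivial (no cells) means that block contributes $x \to w_i \to x$, itself a dipole to be removed. Third, with the bijection between reduced $(x,x)$-diagrams and reduced words over $D(\mathcal{P}_1,w_1) \sqcup D(\mathcal{P}_2,w_2)$ in hand, I would verify it is a group homomorphism: concatenation of diagrams followed by reduction matches concatenation of words followed by free-reduction, which is precisely the multiplication in the free product. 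Finally, induction handles arbitrary (finite, then countable) free products; the countable case just uses a presentation with countably many hub letters $x_i$ and a common baseword, or iterates.

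The main obstacle I anticipate is the rigidity argument in the first step: showing that a \emph{reduced} $(x,x)$-diagram genuinely has this layered form, i.e.\ that the hub letter $x$ cannot occur "in the interior" in some tangled way and that no cell of $\mathcal{R}_1$ can be adjacent to a cell of $\mathcal{R}_2$ without an intervening $x$. This is really a statement about the combinatorics of planar diagrams — one must argue that an edge labelled $x$ separates the diagram, and that each side, being a sub-diagram whose boundary is a power of $w_i$ (hence over $\Sigma_i$), uses only $\mathcal{R}_i$. The cleanest route is probably to invoke the directed $2$-complex / Squier complex picture: the component of $x$ in $S(\mathcal{P})$ is obtained from the components of $w_1$ in $S(\mathcal{P}_1)$ and of $w_2$ in $S(\mathcal{P}_2)$ by gluing a new vertex $x$ joined by an edge to $w_1$ and to $w_2$, which is a wedge-type construction, and then read off $\pi_1$ of a wedge as a free product via van Kampen; the diagram-group language merely repackages this. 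Either way, once the layered structure is established the rest is bookkeeping. I would cite \cite{MR1396957} for the foundational facts about reduced diagrams being well-defined (Theorem~3.17, already quoted above) that this argument rests on.
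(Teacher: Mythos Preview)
Your construction and your ``cleanest route'' (the Squier component of the hub letter is a wedge, then van Kampen) are exactly the paper's one-line argument. Unfortunately there is a genuine gap, present in the paper's sketch as well: both the wedge claim and your layered-structure claim fail whenever some $w_i$ occurs as a \emph{proper} subword of a word in $[w_i]_{\mathcal{P}_i}$. Take $\mathcal{P}_1=\langle a\mid a=a^2\rangle$, $w_1=a$, and $\mathcal{P}_2=\langle b\mid b=b^2\rangle$, $w_2=b$. In $S(\mathcal{Q},o)$ the vertex $aa$ is adjacent to $oa$ (apply $a\to o$ to the first letter), which lies in neither $S(\mathcal{P}_i,w_i)$, so the component of $o$ is not the wedge. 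Correspondingly, the $(o,o)$-diagram encoded by the derivation
\[
o\to a\to a^2\to oa\to ba\to bo\to b^2\to b\to o
\]
has two internal edges labelled $o$, each flanked by an $(a\to o)$-cell above and an $(o\to b)$-cell below; these are \emph{not} dipoles (the two cells come from the distinct relations $o=a$ and $o=b$), so the diagram is reduced yet not layered. The abelianisation map $\alpha$ of Section~\ref{section:ComSub} separates it from every layered diagram: in a layered diagram every $o=w_i$ cell spans the full width, hence $\alpha$ has zero coefficient on any triple $(\ell,\,o=w_i,\,r)$ with $\ell$ or $r$ nontrivial, whereas here it does not. So with this construction the natural map $D(\mathcal{P}_1,w_1)\ast D(\mathcal{P}_2,w_2)\to D(\mathcal{Q},o)$ is not onto.

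The repair is painless. First replace each $(\mathcal{P}_i,w_i)$ by $(\mathcal{P}_i',\,p_iw_iq_i)$, where $p_i,q_i$ are fresh inert letters adjoined to $\Sigma_i$; this does not change the diagram group, since $S(\mathcal{P}_i',p_iw_iq_i)\cong S(\mathcal{P}_i,w_i)$ canonically. Only then add the hub letter and the relations $o=p_iw_iq_i$. Since $p_i,q_i$ occur exactly once (at the two ends) in every word of $[p_iw_iq_i]_{\mathcal{P}_i'}$, the baseword $p_iw_iq_i$ is never a proper subword of anything in its class; now the wedge description of $S(\mathcal{Q},o)$ and your layered decomposition of reduced $(o,o)$-diagrams both become literally correct, and the rest of your outline goes through verbatim.
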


\begin{proof}
Let $I$ be a set. For every $i \in I$, let $\mathcal{P}_i:= \langle \Sigma_i \mid \mathcal{R}_i \rangle$ be a semigroup presentation and $w_i \in \Sigma_i^+$ a baseword. Without loss of generality, we assume that the alphabets $\Sigma_i$ are pairwise disjoint. Set
$$\mathcal{Q}:= \left\langle \{o\} \sqcup \bigsqcup\limits_{i \in I} \Sigma_i \mid \bigsqcup\limits_{i \in I} \mathcal{R}_i \sqcup \{o=w_i, i \in I\} \right\rangle.$$
Then $D(\mathcal{Q},o)$ is isomorphic to the free product $\ast_{i \in I} D(\mathcal{P}_i,w_i)$ because $S(\mathcal{Q},o)$ coincides with the disjoint union of the $S(\mathcal{P}_i,w_i)$ together with the new vertex $o$ which is adjacent to all the $w_i$. 
\end{proof}

\noindent
Thus, free groups are diagram groups. By applying the construction above, a group freely generated by a set $I$ can be represented as the diagram group given by the semigroup presentation
$$\langle o, a_i,b_i,c_i \ (i \in I) \mid o=a_i, a_i=b_i, b_i=c_i, c_i=a_i (i \in I) \rangle$$
and the baseword $o$. 

\begin{prop}[\cite{MR1396957}]\label{prop:DirectSum}
Direct sums of finitely many diagram groups are diagram groups.
\end{prop}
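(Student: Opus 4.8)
The plan is to imitate the proof of Proposition~\ref{prop:FreeProduct}, but instead of joining the Squier complexes at a new vertex we \emph{concatenate} the basewords, which turns a free product into a direct product. So, given semigroup presentations $\mathcal{P}_i = \langle \Sigma_i \mid \mathcal{R}_i \rangle$ with basewords $w_i \in \Sigma_i^+$ for $1 \leq i \leq n$, and assuming (as we may) that the $\Sigma_i$ are pairwise disjoint, I would set
$$\mathcal{Q} := \left\langle \bigsqcup_{i=1}^n \Sigma_i \ \Big| \ \bigsqcup_{i=1}^n \mathcal{R}_i \right\rangle$$
and prove that $D(\mathcal{Q}, w_1 \cdots w_n)$ is isomorphic to $D(\mathcal{P}_1,w_1) \times \cdots \times D(\mathcal{P}_n,w_n)$, which, since these are finitely many groups, is the same as their direct sum. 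One could reduce to $n=2$ by induction, but the general case costs nothing extra.

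Everything follows from one claim: the connected component $S(\mathcal{Q}, w_1\cdots w_n)$ is isomorphic, as a square complex, to the direct product $S(\mathcal{P}_1,w_1) \times \cdots \times S(\mathcal{P}_n, w_n)$; indeed, $\pi_1$ of a finite product of connected spaces is the product of the fundamental groups, and $D(\mathcal{Q}, w_1\cdots w_n) = \pi_1(S(\mathcal{Q}), w_1\cdots w_n)$ by definition. To establish the claim I would define the candidate map on vertices by $(u_1, \ldots, u_n) \mapsto u_1 \cdots u_n$, and first check that its image is exactly the component of $w_1\cdots w_n$ and that it is injective there. The disjointness of the alphabets does all the work: any relation of $\mathcal{Q}$ belongs to a single $\mathcal{R}_i$ and hence involves only letters of $\Sigma_i$, so applying it to a word of the form $u_1\cdots u_n$ (with $u_j \in \Sigma_j^+$) replaces a subword contained entirely in the block $u_i$ --- in particular the block decomposition is preserved along edges, the blocks stay nonempty, and the $i$-th block moves within the $S(\mathcal{P}_i)$-component of $w_i$. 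An induction on path length then shows that the component of $w_1\cdots w_n$ consists precisely of the words $u_1\cdots u_n$ with each $u_i$ in the $S(\mathcal{P}_i)$-component of $w_i$, and uniqueness of the factorisation into blocks yields injectivity.

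It remains to match up edges and squares. An edge of the $i$-th factor is a move rewriting $u_i$ via a relation of $\mathcal{R}_i$, which corresponds to the edge of $S(\mathcal{Q})$ rewriting the $i$-th block of $u_1\cdots u_n$; this is clearly a bijection on edges. For squares, a square of the product complex is either a square living in a single factor $S(\mathcal{P}_i)$ (two disjoint $\mathcal{R}_i$-moves inside $u_i$) or a \emph{mixed} square built from one edge in factor $i$ and one in factor $j \neq i$; unravelling Definition~\ref{def:SquierSquare}, these are precisely the squares $[a,u=v,b,p=q,c]$ of $S(\mathcal{Q})$ based in the component, the mixed ones being exactly those for which $u=v$ and $p=q$ belong to different $\mathcal{R}_i$'s and therefore act on different blocks. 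Here too it is the pairwise disjointness of the $\Sigma_i$ that guarantees there are no further squares and that every product square is realised, so the vertex map extends to a square-complex isomorphism, and we are done. The one step that needs genuine care, rather than routine unravelling of definitions, is this square-by-square comparison, i.e. checking that the mixed commuting squares of $S(\mathcal{Q})$ are in exact bijection with the product squares. (Equivalently, and perhaps more vividly, one can argue with diagrams: by planarity together with the disjointness of the alphabets, every reduced $(w_1\cdots w_n, w_1\cdots w_n)$-diagram over $\mathcal{Q}$ decomposes uniquely into $n$ side-by-side vertical strips, the $i$-th a reduced $(w_i,w_i)$-diagram over $\mathcal{P}_i$, and the concatenation product acts stripwise.) Finally I would note that finiteness is essential: the construction relies on the single \emph{finite} baseword $w_1 \cdots w_n$, and does not extend to infinitely many factors.
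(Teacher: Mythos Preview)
Your proposal is correct and follows essentially the same approach as the paper: define $\mathcal{Q}$ with the disjoint union of alphabets and relations, take the concatenated baseword $w_1\cdots w_n$, and observe that the Squier complex is the product of the individual Squier complexes. The only minor imprecision is that $S(\mathcal{Q},w_1\cdots w_n)$ is really the \emph{$2$-skeleton} of the product (since the full product of square complexes has higher-dimensional cells), but this is harmless for $\pi_1$ and the paper itself makes the same parenthetical remark.
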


\begin{proof}
Fix an integer $n \geq 1$, and, for every $1 \leq i \leq n$, let $\mathcal{P}_i:= \langle \Sigma_i \mid \mathcal{R}_i \rangle$ be a semigroup presentation and $w_i \in \Sigma_i^+$ a baseword. Without loss of generality, we assume that the alphabets $\Sigma_i$ are pairwise disjoint. Set
$$\mathcal{Q}:= \left\langle \bigsqcup\limits_{i=1}^n \Sigma_i \mid \bigsqcup\limits_{i=1}^n \mathcal{R}_i \right\rangle$$
and $w:=w_1 \cdots w_n$. Then $D(\mathcal{Q},w)$ is isomorphic to $D(\mathcal{P}_1,w_1) \oplus \cdots \oplus D(\mathcal{P}_n,w_n)$ because $S(\mathcal{Q},w)$ coincides with the ($2$-skeleton of the) product $S(\mathcal{P}_1,w_1) \times \cdots \times S(\mathcal{P}_n,w_n)$. 
\end{proof}

\noindent
Thus, free abelian groups of finite ranks are diagram groups. By applying the construction above, if
$$\mathcal{P}= \langle a,b,c \mid a=b,b=c,c=a \rangle$$ 
then $D(\mathcal{P},a^n)$ is free abelian of rank $n$. 

\medskip \noindent
Comparing Proposition~\ref{prop:DirectSum} with Proposition~\ref{prop:FreeProduct}, it is natural to ask whether direct sums of infinitely many diagram groups are still diagram groups. The construction used in Proposition~\ref{prop:DirectSum} does not work anymore, but one can apply a trick:

\begin{prop}[\cite{MR1725439}]
Directs sums of countably many diagram groups are diagram groups.
\end{prop}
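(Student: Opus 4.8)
The plan is to adapt the construction of Proposition~\ref{prop:DirectSum} so that it tolerates infinitely many factors, using the same ``trick'' that made Proposition~\ref{prop:FreeProduct} work for arbitrary index sets: introduce a single new ``coordinating'' letter. Concretely, given semigroup presentations $\mathcal{P}_i = \langle \Sigma_i \mid \mathcal{R}_i \rangle$ with basewords $w_i \in \Sigma_i^+$ for $i \in \mathbb{N}$ (alphabets pairwise disjoint), I would take a fresh letter $o$ and form
\begin{equation*}
\mathcal{Q} := \left\langle \{o\} \sqcup \bigsqcup_{i \in \mathbb{N}} \Sigma_i \ \middle| \ \bigsqcup_{i \in \mathbb{N}} \mathcal{R}_i \sqcup \{ o = ow_i, \ i \in \mathbb{N} \} \right\rangle
\end{equation*}
with baseword $o$. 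The point of the relations $o = ow_i$ (rather than $o = w_i$) is that applying such a relation, and its inverse, lets one ``spawn'' or ``absorb'' a copy of $w_i$ at the right end of the current word while keeping the distinguished prefix $o$ intact; this is exactly the device used in \cite{MR1725439} to realise restricted direct products.

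The key steps, in order: first, describe the connected component $S(\mathcal{Q}, o)$ explicitly. Its vertices are the words of the form $o u_{i_1} \cdots u_{i_k}$ where $u_{i_j}$ is a word in $\Sigma_{i_j}$ derivable from $w_{i_j}$ in $\mathcal{P}_{i_j}$, i.e. sitting in the component $S(\mathcal{P}_{i_j}, w_{i_j})$; the relations split into the ``internal'' ones (acting inside one block $\Sigma_i$, hence moving inside a copy of $S(\mathcal{P}_i, w_i)$) and the ``spawning'' ones $o = ow_i$ (which only act at the seam between $o$ and the rest). Second, I would argue that $S(\mathcal{Q}, o)$ deformation retracts onto, or is directly seen to be, a suitable combinatorial object whose fundamental group is $\bigoplus_{i \in \mathbb{N}} D(\mathcal{P}_i, w_i)$ — the restricted direct sum, since any word only ever involves finitely many blocks. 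One clean way: show that $D(\mathcal{Q}, o)$ is generated by the ``obvious'' copies of the $D(\mathcal{P}_i, w_i)$ — a $(w_i, w_i)$-diagram over $\mathcal{P}_i$ becomes a $(o,o)$-diagram over $\mathcal{Q}$ by conjugating with the spawning cell $o \to ow_i$ — that these copies commute (diagrams supported on disjoint blocks commute, since disjoint cells can be pushed past each other, cf. the second elementary equivalence in Section~\ref{section:Free}), and that they generate $D(\mathcal{Q}, o)$ freely-modulo-commutation, i.e. a reduced $(o,o)$-diagram over $\mathcal{Q}$ decomposes canonically as a product of one diagram from each block.

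For the last point I would use the diagrammatic picture from Section~\ref{section:Diagrams}: take a reduced $(o,o)$-diagram $\Delta$ over $\mathcal{Q}$; the top and bottom paths are the single edge $o$, and I would trace how the $o$-edge propagates through $\Delta$. The cells coming from $o = ow_i$ attach a ``bubble'' of $\Sigma_i$-edges; between consecutive $o$-cells one has a genuine diagram over $\mathcal{P}_i$; so $\Delta$ is an alternating concatenation of spawning cells and $\mathcal{P}_i$-diagrams. Reduction of dipoles plus the commutation of disjoint-block pieces lets me collect all the $\mathcal{P}_i$-contributions together for each fixed $i$, yielding the desired normal form and hence the isomorphism $D(\mathcal{Q}, o) \cong \bigoplus_{i \in \mathbb{N}} D(\mathcal{P}_i, w_i)$. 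The main obstacle I anticipate is making this normal-form / decomposition argument genuinely rigorous rather than merely plausible: one has to control what dipole reduction does to the interleaving of spawning cells with internal cells, and verify that two a priori different orders of ``collecting'' give the same element — essentially a confluence argument. A cleaner alternative that sidesteps some of this is to present $D(\mathcal{Q}, o)$ via a direct limit: the subpresentations using only $\{o\} \cup \bigsqcup_{i \le n} \Sigma_i$ give the finite direct sums $\bigoplus_{i \le n} D(\mathcal{P}_i, w_i)$ by (a variant of) Proposition~\ref{prop:DirectSum}, the inclusions induce the obvious split injections, and $S(\mathcal{Q}, o)$ is the increasing union of these subcomplexes, so its fundamental group is the direct limit, which is the restricted direct sum. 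I would likely write the proof in this second form, invoking Proposition~\ref{prop:DirectSum} and the fact that $\pi_1$ commutes with increasing unions, and relegate the explicit diagram normal form to a remark.
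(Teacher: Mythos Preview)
Your presentation does not give $\bigoplus_i D(\mathcal{P}_i,w_i)$. The relation $o=ow_i$ lets you spawn arbitrarily many copies of $w_i$, and each copy contributes its own factor. Take a single summand $\mathcal{P}_1=\langle a,b,c\mid a=b,\ b=c,\ c=a\rangle$ with $w_1=a$, so $D(\mathcal{P}_1,a)\cong\mathbb{Z}$. Your $\mathcal{Q}$ is then $\langle o,a,b,c\mid a=b,\ b=c,\ c=a,\ o=oa\rangle$; but as noted in Section~\ref{section:Morse} (right after Proposition~\ref{prop:EasyF}), this diagram group is free abelian of \emph{infinite} rank, not $\mathbb{Z}$. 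Your direct-limit argument therefore collapses already at stage $n=1$: the appeal to ``a variant of Proposition~\ref{prop:DirectSum}'' is unjustified, since that proposition relies on the baseword having a fixed finite number of slots, which a slot-creating relation like $o=ow_i$ destroys. With several factors there is a second obstruction: the spawning moves $o\to ow_1$ and $o\to ow_2$ act on the same one-letter subword $o$, hence do not span a square in $S(\mathcal{Q})$, and the images of $D(\mathcal{P}_1,w_1)$ and $D(\mathcal{P}_2,w_2)$ do not commute---the reduced $(o,o)$-diagrams for $g_1g_2$ and $g_2g_1$ each have ten cells and no dipoles, but carry their $a_1$- and $a_2$-cells in opposite vertical order.

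The paper's construction, taken from \cite{MR1725439}, is genuinely different. It places a new letter $x$ on the \emph{right} and adds one relation per depth rather than one per factor: $w_1\cdots w_n x = w_1\cdots w_{n+1}x$ for every $n\ge 0$. Because the left-hand side grows with $n$, the $n$th relation only fires at a word already carrying the prefix $w_1\cdots w_n$; this is what pins each $D(\mathcal{P}_i,w_i)$ to a single slot and makes the identification $D(\mathcal{Q},x)\cong\bigoplus_i D(\mathcal{P}_i,w_i)$ go through.
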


\begin{proof}[Sketch of proof.]
For every $i \geq 1$, let $\mathcal{P}_i:= \langle \Sigma_i \mid \mathcal{R}_i \rangle$ be a semigroup presentation and $w_i \in \Sigma_i^+$ a baseword. Without loss of generality, we assume that the alphabets $\Sigma_i$ are pairwise disjoint. Following Proposition~\ref{prop:DirectSum}, what we would like to do is to consider the infinite word $w:=w_1w_2 \cdots$ and to apply derivations on the $w_i$ independently according to the $\mathcal{P}_i$ respectively. However, infinite words are not allowed. The trick is to introduce a new letter $x$ in the alphabet and to ``hidde'' the infinite part of our $w$ inside $x$. More precisely, set
$$\mathcal{Q}:= \left\langle \{x\} \sqcup \bigsqcup\limits_{i \geq 1} \Sigma_i \mid \bigsqcup\limits_{i \geq 1} \mathcal{R}_i \sqcup \{w_1 \cdots w_nx=w_1 \cdots w_{n+1}x, n\geq 0\} \right\rangle.$$
The claim is that $D(\mathcal{Q},x)$ is isomorphic to $\bigoplus_{i\geq 1} D(\mathcal{P}_i,w_i)$. For every $i \geq 1$, derivations of the form
$$x \to w_1 x \to \cdots \to \underset{\text{derivation in $\mathcal{P}_i$ applied to $w_i$}}{\underbrace{w_1 \cdots w_i x \to \cdots \to w_1 \cdots w_i x}} \to w_1 \cdots w_{i-1}x \to \cdots \to x$$
correspond to the elements of the factor $D(\mathcal{P}_i,w_i)$ in our direct sum. 
\end{proof}

\noindent
Thus, a free abelian group of countable rank is a diagram group. By applying the construction above, it can be represented as the diagram group given by the semigroup presentation
$$\left\langle x,a,b,c \mid a=b,b=c,c=a, a^nx=a^{n+1}x \ (n \geq 0) \right\rangle$$
and the baseword $x$. 

\begin{prop}[\cite{MR1725439}]
If $G$ is a diagram group, then so is $G \wr \mathbb{Z}$. 
\end{prop}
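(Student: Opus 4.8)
The plan is to adapt the ``hidden infinite word'' trick used for countable direct sums to produce a wreath product with $\mathbb{Z}$, which, being a semidirect product $\left( \bigoplus_{n \in \mathbb{Z}} G \right) \rtimes \mathbb{Z}$, is morally ``$G$ repeated along a bi-infinite line, with a shift''. Concretely, suppose $G = D(\mathcal{P},w)$ for a semigroup presentation $\mathcal{P} = \langle \Sigma \mid \mathcal{R} \rangle$ and a baseword $w \in \Sigma^+$. I would introduce two new letters $x,y$ and consider a presentation $\mathcal{Q}$ on the alphabet $\{x,y\} \sqcup \Sigma$ (possibly with extra bookkeeping letters), with relations: all of $\mathcal{R}$; a ``shift'' relation allowing one to convert a block of the form $xwy$ into $xwwy$ — more precisely relations of the form $xw^ny = xw^{n+1}y$ for $n \geq 0$ — mimicking the countable-direct-sum construction but now symmetric so that copies of $G$ can be created on both sides of a distinguished position; and the baseword should be something like $xy$. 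The letter $x$ hides the (potentially infinite) half-line to the left, the letter $y$ the half-line to the right, and derivations that first expand $x \to x w^k$, apply an element of $G$ to one of the freshly exposed copies of $w$, then contract back, realise the corresponding generator of a single $\mathbb{Z}$-coordinate of the base group $\bigoplus_{n} G$.

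The core of the argument, then, is to identify the generator $t$ of the acting $\mathbb{Z}$ with a diagram implementing the shift: the relation $xw^ny = xw^{n+1}y$ lets one ``slide'' the boundary between the $x$-region and the copies of $w$, and conjugating the ``apply $g \in G$ to the $i$th copy of $w$'' diagram by this shift diagram should move the action from the $i$th copy to the $(i+1)$th copy — precisely the defining relation $t g_i t^{-1} = g_{i+1}$ of the wreath product. I would verify (i) that the subgroup of $D(\mathcal{Q},xy)$ generated by the ``copy-$i$'' diagrams is indeed $\bigoplus_{i \in \mathbb{Z}} G$ — commutativity of distinct copies follows from the fact that disjoint cells in a diagram can be glued in either order (Corollary~\ref{cor:PathDiag} / the second elementary equivalence for $2$-paths), and freeness of the sum from an analysis of reduced diagrams as in the countable-direct-sum proof; (ii) that the shift diagram $t$ has infinite order and conjugates the copies as claimed; and (iii) that $D(\mathcal{Q},xy)$ is generated by $t$ together with one copy of $G$, with no further relations, which is the content of $D(\mathcal{Q},xy) \cong G \wr \mathbb{Z}$. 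Step (iii) — showing there are \emph{no} extra relations, i.e. that an arbitrary reduced $(xy,xy)$-diagram factors uniquely as a product of a power of $t$ and base-group elements — is where I expect the real work to be, and it is best handled by putting reduced diagrams over $\mathcal{Q}$ into a normal form: read off from the diagram the net shift (a well-defined integer, since the relations $xw^ny=xw^{n+1}y$ are the only ones touching $x$ and $y$), peel off the corresponding power of $t$, and argue that what remains is a diagram supported on finitely many disjoint copies of $w$, hence an element of the base.

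The main obstacle is thus the normal-form / uniqueness analysis in (iii): one must be careful that dipole reductions do not create unexpected identifications between a power of the shift and a base-group element, and that the bi-infinite indexing (as opposed to the one-sided indexing of the countable-direct-sum construction) is correctly realised — this is why I anticipate needing the symmetric pair of letters $x,y$ (or an equivalent device) rather than a single hiding letter. A cleaner alternative, which I would mention, is to invoke a general ``diagram groups are closed under this kind of skew construction'' principle if one is available in \cite{MR1725439}; but absent that, the hands-on approach above, modelled closely on the preceding proof, is the natural route. I would also remark that iterating gives $G \wr \mathbb{Z}^n$ and, combined with Proposition~\ref{prop:DirectSum} and Proposition~\ref{prop:FreeProduct}, a reasonably large closure class, and in particular that $\mathbb{Z} \wr \mathbb{Z}$ — the lamplighter group mentioned in the introduction — arises by taking $G = \mathbb{Z} = D(\langle a,b,c \mid a=b,b=c,c=a\rangle, a)$.
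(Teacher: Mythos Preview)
Your high-level plan---build a presentation $\mathcal{Q}$ in which copies of $w$ can be spawned, define a morphism $D(\mathcal{Q},\cdot)\to\mathbb{Z}$ measuring net shift, identify copies $D_i$ of $G$ in its kernel, verify $\langle D_i\rangle=\bigoplus_i D_i$, and check that a shift element $t$ conjugates $D_i$ to $D_{i+1}$---is exactly the paper's approach, and your analysis of where the work lies (your step~(iii)) is accurate. The genuine gap is in the concrete presentation you write down.

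With the single family of relations $xw^ny=xw^{n+1}y$ there is no nontrivial shift element. These relations are \emph{global}: the left-hand side is the entire word, so the only way to alter the number of visible copies of $w$ is to apply one of them and later its inverse, and any cell-count one could use to define your ``net shift'' morphism is then identically zero on $(xy,xy)$-diagrams. You correctly say that $x$ should absorb a half-line to the left and $y$ one to the right, but your relations do not implement this asymmetry: they cannot distinguish ``add a copy near $x$'' from ``add a copy near $y$''. The paper fixes this (and your overlap worry, which is real) by using a separation letter $s$ and two \emph{local} relations $a=asws$ and $b=swsb$: now one may add or remove a copy on either end independently of the state of the others, the shift $\Xi$ is the genuinely nontrivial loop $aswsb\to asws\,swsb\to aswsb$ (expand at $b$, contract at $a$), and the morphism $\eta$ counting $(b,swsb)$-cells minus $(swsb,b)$-cells is surjective onto $\mathbb{Z}$. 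The separation letter $s$ also prevents $\mathcal{R}$-relations from crossing the boundary between adjacent copies of $w$, which your step~(i) needs in general. With this corrected $\mathcal{Q}$, your steps (i)--(iii) are precisely what the paper sketches.
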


\noindent
Recall that the \emph{wreath product} $A \wr B$ of two groups $A,B$ is the semidirect product $\bigoplus_B A \rtimes B$ where $B$ acts on the direct sum by permuting the factors according to the action on itself by left multiplication. 

\begin{proof}[Sketch of proof.]
Let $\mathcal{P} = \langle \Sigma \mid \mathcal{R} \rangle$ be a semigroup presentation and $w \in \Sigma^+$ a baseword such that $G$ coincides with the diagram group $D(\mathcal{P},w)$. Let $a,b,s$ be two new letters that do not belong to $\Sigma$. Set
$$\mathcal{Q}:= \langle \Sigma \sqcup \{a,b,s\} \mid \mathcal{R} \sqcup \{a=asws,b=swsb\} \rangle.$$
The letter $s$ can be thought of as a \emph{separation letter}. It prevents us from applying a relation from $\mathcal{R}$ to two adjacent $w$. This letter can be removed when no relation from $\mathcal{R}$ can be applied to a subword of some $w \cdots w$ overlapping between two copies of $w$. 

\medskip \noindent
The claim is that $D(\mathcal{Q},aswsb)$ is isomorphic to $D(\mathcal{P},w) \wr \mathbb{Z}$. For every diagram $\Delta$ over $\mathcal{Q}$, let $\eta(\Delta)$ denote the number of $(b,bsws)$-cells in $\Delta$ minus the number of $(bsws,b)$-cells in $\Delta$; it is worth noticing that $\eta(\Delta)$ depends only on the class of $\Delta$ modulo dipole reduction. One gets a morphism $\eta : D(\mathcal{Q},aswsb) \twoheadrightarrow \mathbb{Z}$. For instance, the diagram $\Xi$ associated to the derivation 
$$aswsb \ \underset{b=swsb}{\longrightarrow} \ aswsswsb \ \underset{asws=a}{\longrightarrow} \ aswsb$$
is sent to $1$ under $\eta$. For every $i \geq 0$, $D(\mathcal{Q},aswsb)$ contains a copy $D_i$ of $D(\mathcal{P},w)$ whose elements correspond to the derivations of the form
\small $$aswsb \underset{b=swsb}{\longrightarrow} \cdots \underset{b=swsb}{\longrightarrow} \underset{\text{derivation in $\mathcal{P}$ applied to the rightmost $w$}}{\underbrace{a(sws)^{i-1}swsb \longrightarrow \cdots \longrightarrow a(sws^{i-1})swsb}} \underset{swsb=b}{\longrightarrow} \cdots \underset{swsb=b}{\longrightarrow} aswsb.$$
\normalsize Similarly, for every $i < 0$, $D(\mathcal{Q},aswsb)$ contains a copy $D_i$ of $D(\mathcal{P},w)$ whose elements correspond to the derivations of the form
\small $$aswsb \underset{a=swsa}{\longrightarrow} \cdots \underset{a=swsa}{\longrightarrow} \underset{\text{derivation in $\mathcal{P}$ applied to the leftmost $w$}}{\underbrace{asws(sws)^{i-1}b \longrightarrow \cdots \longrightarrow asws(sws)^{i-1}b}} \underset{swsa=a}{\longrightarrow} \cdots \underset{swsa=a}{\longrightarrow} aswsb.$$
\normalsize Clearly, each $D_i$ lies in the kernel of $\eta$. In fact, one can shows that the $D_i$ generate $\mathrm{ker}(\eta)$. Therefore, $D(\mathcal{Q},aswsb)$ decomposes as $\langle D_i, i \in \mathbb{Z} \rangle \rtimes \langle \Xi \rangle$. In order to conclude, it suffices to verify that $\langle D_i , i \in \mathbb{Z} \rangle$ decomposes as $\bigoplus_{i \in \mathbb{Z}} D_i$ and that $\Xi \cdot D_i \cdot \Xi^{-1} = D_{i+1}$ for every index $i \in \mathbb{Z}$. 

\medskip \noindent
It is worth noticing that $D(\mathcal{Q},ab)$ is isomorphic to $D(\mathcal{Q},aswsb)$, because $ab$ and $aswsb$ belong to the same connected component of the Squier complex $S(\mathcal{P})$. Therefore, $D(\mathcal{Q},ab)$ can also be taken as a description of $D(\mathcal{P},w) \wr \mathbb{Z}$ as a diagram group.
\end{proof}

\noindent
Thus, the wreath product $\mathbb{Z} \wr \mathbb{Z}$ is a diagram group. By applying the construction above, it can be represented as the diagram group given by the semigroup presentation
$$\left\langle a,b,p_1,p_2,p_3 \mid \begin{array}{c} p_1=p_2, p_2=p_3, p_3=p_1 \\ a=ap_1, b=p_1b \end{array} \right\rangle$$
and the baseword $ab$. 

\medskip \noindent
Let us mention two last group operations preserving diagram groups. First, given two groups $A$ and $B$, let $A \bullet B$ denote the group given by the (relative) presentation
$$\left\langle A,B,t \mid \left[ a,t^n bt^{-n} \right]=1 \text{ for all $a \in A$, $b \in B$, and $n \geq 0$} \right\rangle.$$
As an alternative description \cite[Example~4.29]{MR4033512}, $A \bullet B$ can be described as the subgroup $\langle A,B, yx \rangle$ in $(A \ast \mathbb{Z}) \times (B \ast \mathbb{Z})$ where $x$ (resp. $y$) denotes a generator of the left (resp. right) factor $\mathbb{Z}$. For instance, the group $\mathbb{Z} \bullet \mathbb{Z} = \langle a,b,t \mid [a,t^nbt^{-n}]=1, n \geq 0\rangle$ coincides with the kernel of the morphism $\mathbb{F}_2 \times \mathbb{F}_2 \twoheadrightarrow \mathbb{Z}$ that sends each generator to $1$ (sometimes referred to as the \emph{Bestvina-Brady group} of $\mathbb{F}_2 \times \mathbb{F}_2$). 

\begin{prop}[\cite{MR1396957}]
The $\bullet$-product of two diagram groups is again a diagram group.
\end{prop}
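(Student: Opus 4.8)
The plan is to exhibit a semigroup presentation whose diagram group realises $A \bullet B$ whenever $A$ and $B$ are themselves diagram groups, mimicking as closely as possible the construction used for wreath products. So fix semigroup presentations $\mathcal{P}_A = \langle \Sigma_A \mid \mathcal{R}_A \rangle$ with baseword $u$ and $\mathcal{P}_B = \langle \Sigma_B \mid \mathcal{R}_B \rangle$ with baseword $v$, realising $A = D(\mathcal{P}_A, u)$ and $B = D(\mathcal{P}_B, v)$; after disjointifying the alphabets, introduce fresh letters playing the role of a stable letter $t$ (together with its ``negative'' counterpart) and separation letters, and build a presentation $\mathcal{Q}$ in which one may insert arbitrarily many copies of $u$ and $v$, separated by the separation letters, along an infinite axis, and apply $\mathcal{R}_A$-derivations to the $u$'s and $\mathcal{R}_B$-derivations to the $v$'s. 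The conjugation by $t$ should shift everything one notch along the axis, and the separation letters ensure that the $u$'s and $v$'s sitting at a common position $n$ interact (producing the commutator relations $[a, t^n b t^{-n}]$) but that $u$'s and $v$'s at different positions do not.

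Concretely, I would imitate the alternative description $A \bullet B \cong \langle A, B, yx \rangle \leq (A \ast \mathbb{Z}) \times (B \ast \mathbb{Z})$ cited from \cite[Example~4.29]{MR4033512}. First, by Propositions~\ref{prop:FreeProduct} and the wreath-product construction (or directly), $A \ast \mathbb{Z}$ and $B \ast \mathbb{Z}$ are diagram groups, say $D(\mathcal{P}_A', u')$ and $D(\mathcal{P}_B', v')$, where the $\mathbb{Z}$-factor is realised by a $3$-cycle relation on an auxiliary letter as in the first example of Section~\ref{section:Examples}. Then I would take the presentation $\mathcal{Q}$ built for the direct sum-type construction but with a single shared baseword that threads through both, in the spirit of the infinite-direct-sum trick: a letter $z$ hiding an infinite axis, with relations $z = u'_nzv'_n$-style steps allowing one to pull out matched pairs $(u', v')$, applying $\mathcal{R}_A'$ on the left copies and $\mathcal{R}_B'$ on the right copies. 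The diagonal generator $yx$ is then exactly the ``shift by one matched pair'' diagram, analogous to the diagram $\Xi$ in the wreath-product proof. One then checks that $D(\mathcal{Q}, z)$ decomposes as the subgroup of $D(\mathcal{P}_A', u') \wr \mathbb{Z} \times D(\mathcal{P}_B', v') \wr \mathbb{Z}$-type object generated by the two copies and the diagonal shift, which by the quoted description is $A \bullet B$.

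The key steps, in order, are: (1) record the two realisations $A \ast \mathbb{Z}$, $B \ast \mathbb{Z}$ as diagram groups and fix explicit basewords; (2) write down $\mathcal{Q}$ with its separation letters and shift relations, and define the candidate copies $\bar A_n$, $\bar B_n$ ($n \in \mathbb{Z}$) of $A$ and $B$ at each position, together with the diagonal shift diagram $\Xi$; (3) verify that the $\bar A_n$ commute with all $\bar B_m$ (this is forced by planarity once the separation letters are in place — the $u$-cells and $v$-cells at distinct positions occupy disjoint parts of the diagram and can be slid past each other), and that $\Xi \bar A_n \Xi^{-1} = \bar A_{n+1}$, similarly for $B$; (4) identify the subgroup generated by all $\bar A_n$, $\bar B_m$ and $\Xi$ with $A \bullet B$ via the presentation $\langle A, B, t \mid [a, t^n b t^{-n}] = 1 \rangle$, checking there are no extra relations; and (5) argue $\langle \bar A_n, \bar B_m, \Xi \rangle$ exhausts $D(\mathcal{Q}, z)$ — i.e. every reduced $(z,z)$-diagram is a product of shifts and local $A$- or $B$-derivations. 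I expect step (5), the surjectivity/normal-form argument, to be the main obstacle: one must show a reduced diagram over $\mathcal{Q}$ has a canonical ``stratified'' shape along the axis, so that peeling off the outermost shift relations leaves independent $\mathcal{P}_A'$- and $\mathcal{P}_B'$-diagrams at each position, exactly as the analogous (and only sketched) claim in the wreath-product proof that the $D_i$ generate $\ker(\eta)$. As with that proof, I would present this as a sketch, citing \cite{MR1396957} and \cite{MR4033512} for the combinatorial details of reduced diagrams, and emphasise the structural picture rather than grinding through the dipole bookkeeping.
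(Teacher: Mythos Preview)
Your approach is far more elaborate than necessary, and the vagueness of your presentation $\mathcal{Q}$ makes step~(5) genuinely precarious. The paper's construction is strikingly simple: given $\mathcal{P}_1 = \langle \Sigma_1 \mid \mathcal{R}_1 \rangle$, $\mathcal{P}_2 = \langle \Sigma_2 \mid \mathcal{R}_2 \rangle$ with basewords $w_1, w_2$, one adds a \emph{single} new letter $p$ and just two relations, setting
\[
\mathcal{Q} := \left\langle \Sigma_1 \sqcup \Sigma_2 \sqcup \{p\} \mid \mathcal{R}_1 \sqcup \mathcal{R}_2 \sqcup \{w_1 = w_1 p,\ w_2 = p w_2\} \right\rangle
\]
with baseword $w_1 w_2$. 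The copies of $A$ and $B$ are simply the derivations acting on the $w_1$- and $w_2$-halves, and the stable letter $t$ is the diagram for the derivation $w_1 w_2 \to w_1 p w_2 \to w_1 w_2$. There is no infinite axis, no separation letters, no passage through $A \ast \mathbb{Z}$ or $B \ast \mathbb{Z}$.

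Your detour through the description $A \bullet B \cong \langle A, B, yx \rangle \leq (A \ast \mathbb{Z}) \times (B \ast \mathbb{Z})$ is risky precisely because subgroups of diagram groups need not be diagram groups, so you must arrange your $\mathcal{Q}$ to realise \emph{exactly} that subgroup and nothing more --- in particular, you must prevent the two $\mathbb{Z}$-factors $x$ and $y$ from appearing independently while still allowing their product $yx$. You never make $\mathcal{Q}$ explicit enough to see how this is enforced, and the ``$z = u'_n z v'_n$-style'' relations you gesture at would seem to give both shifts separately. By contrast, in the paper's presentation the single letter $p$ plays the role of the common ``notch'' shared by both sides: emitting it from $w_1$ and absorbing it into $w_2$ is a single indivisible move, which is exactly what locks $x$ and $y$ together into $t = yx$. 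The commutation relations $[a, t^n b t^{-n}] = 1$ for $n \geq 0$ then fall out directly from planarity, and the surjectivity/normal-form check is correspondingly much lighter than your step~(5).
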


\begin{proof}[Sketch of proof.]
Let $\mathcal{P}_1= \langle \Sigma_1 \mid \mathcal{R}_1 \rangle$ and $\mathcal{P}_2= \langle \Sigma_2 \mid \mathcal{R}_2 \rangle$ be two semigroup presentations. Without loss of generality, we assume that $\Sigma_1 \cap \Sigma_2 = \emptyset$. Fix two basewords $w_1 \in \Sigma_1^+$ and $w_2 \in \Sigma_2^+$. The claim is that, given the semigroup presentation
$$\mathcal{Q}:= \left\langle \Sigma_1 \sqcup \Sigma_2 \sqcup \{p\} \mid \mathcal{R}_1 \sqcup \mathcal{R}_2 \sqcup \{ w_1=w_1p, w_2=pw_2\} \right\rangle,$$
the diagram group $D(\mathcal{Q},w_1w_2)$ coincides with $D(\mathcal{P}_1,w_1) \bullet D(\mathcal{P}_2,w_2)$. More precisely, $D(\mathcal{Q},w_1w_2)$ naturally contains copies $D_1$ and $D_2$ of $D(\mathcal{P}_1,w_1)$ and $D(\mathcal{P}_2,w_2)$, corresponding to applications of derivations in $\mathcal{P}_1$ and $\mathcal{P}_2$ to $w_1$ and $w_2$. Then an isomorphism $D(\mathcal{P}_1,w_1) \bullet D(\mathcal{P}_2,w_2) \to D(\mathcal{Q},w_1w_2)$ is obtained by sending $D(\mathcal{P}_1,w_1)$ to $D_1$, $D(\mathcal{P}_2,w_2)$ to $D_2$, and the generator $t$ to the diagram $\Xi$ associated to the derivation $w_1w_2 \underset{w_1=w_1p}{\longrightarrow} w_1pw_2 \underset{pw_2=w_2}{\longrightarrow} w_1w_2$. 
\end{proof}

\noindent
Thus, the group $\mathbb{Z} \bullet \mathbb{Z}$ previously mentioned is a diagram group, and, by applying the construction above, it can be described as the diagram group given by the semigroup presentation
$$\left\langle a_1,a_2,a_3,b_1,b_2,b_3,p \mid \begin{array}{c} a_1=a_2, a_2=a_3, a_3=a_1 \\ b_1=b_2, b_2=b_3, b_3=b_1 \end{array}, a_1=a_1p, b_1=pb_1 \right\rangle$$
and the baseword $a_1b_1$. 

\noindent
Next, given two groups $A$ and $B$, let $A \square B$ denote the group given by the (relative) presentation
$$\langle A,B,t \mid [a,b]=\left[ a,tbt^{-1} \right]=1 \text{ for all } a \in A, b \in B \rangle.$$
For instance, $\mathbb{Z} \square \mathbb{Z}= \langle a,b,t \mid [a,b]=[a,tbt^{-1}]=1 \rangle$. 

\begin{prop}[\cite{QM}]
The $\square$-product of two diagram groups is again a diagram group.
\end{prop}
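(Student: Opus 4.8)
The plan is to adapt the construction used for the $\bullet$-product, arranging the combinatorics so that the \emph{two-fold} commutation pattern of $\square$ is produced exactly. A useful guide is the following observation: writing $A:=D(\mathcal{P}_1,w_1)$ and $B:=D(\mathcal{P}_2,w_2)$, the group $A\square B$ is the HNN extension of $A\times(B\ast B)$ whose two associated subgroups are the two free factors of $B\ast B$ and whose stable letter is $t$; indeed, eliminating the second copy $B_1$ of $B$ via $B_1=tB_0t^{-1}$ turns the presentation $\langle A\times(B_0\ast B_1),t\mid tB_0t^{-1}=B_1\rangle$ into $\langle A,B,t\mid[a,b]=[a,tbt^{-1}]=1\rangle$. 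So the goal is to produce a semigroup presentation
$$\mathcal{Q}:=\left\langle \Sigma_1\sqcup\Sigma_2\sqcup\{\text{one or two auxiliary letters}\}\mid \mathcal{R}_1\sqcup\mathcal{R}_2\sqcup\{\text{linking relations}\}\right\rangle$$
together with a baseword $w$ such that $D(\mathcal{Q},w)$ contains: a copy $D_1$ of $A$ coming from $\mathcal{R}_1$-derivations on a $w_1$-block of $w$; two copies $D_2,D_2'$ of $B$ coming from $\mathcal{R}_2$-derivations on a $w_2$-block sitting in one of two admissible positions, such that $D_2$ and $D_2'$ together generate their \emph{free} product while each of them commutes with $D_1$; and a distinguished $(w,w)$-diagram $\Xi$ with $\Xi D_2\Xi^{-1}=D_2'$. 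The linking relations must use the auxiliary letters to make the $w_2$-block appear in two positions that are both disjoint from the $w_1$-block but \emph{nested} with respect to one another, so that $\Xi^2 D_2\Xi^{-2}=\Xi D_2'\Xi^{-1}$ can no longer be pushed to a position disjoint from the $w_1$-block.

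Granting such a presentation, the rest follows the familiar pattern. One defines $\Phi:A\square B\to D(\mathcal{Q},w)$ on generators by sending $a\in A$ to the corresponding element of $D_1$, $b\in B$ to the corresponding element of $D_2$, and $t$ to $\Xi$; well-definedness amounts to checking that $D_1$ commutes with $D_2$ and with $\Xi D_2\Xi^{-1}=D_2'$, which holds because the relevant cells are supported on disjoint subwords. Surjectivity is obtained by the decomposition-of-reduced-diagrams argument used implicitly in the $\bullet$- and wreath-product proofs: an arbitrary reduced $(w,w)$-diagram over $\mathcal{Q}$ is cut into cells, each of which is an $\mathcal{R}_1$-cell, an $\mathcal{R}_2$-cell attached to one of the two admissible $w_2$-positions, or one of the special cells occurring in $\Xi^{\pm1}$; conjugating the middle pieces rewrites the diagram as a word in $D_1$, $D_2$ and $\Xi$.

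The main obstacle is injectivity. The plan is to use the HNN normal form for $A\square B=\mathrm{HNN}(A\times(B\ast B);B_0,B_1;t)$: every nontrivial element admits a reduced expression $g_0t^{\varepsilon_1}g_1\cdots t^{\varepsilon_n}g_n$ with no pinches, and one must show $\Phi$ sends it to a reduced, hence nontrivial, diagram over $\mathcal{Q}$. For this one introduces a homomorphism $D(\mathcal{Q},w)\to\mathbb{Z}$ counting the signed number of special $\Xi$-cells, playing the role of the $t$-exponent sum (the analogue of the map $\eta$ in the wreath-product argument), and checks that a special cell is never cancelled against its mirror image by a dipole reduction unless it ought to be --- which is precisely what the choice of linking relations is designed to guarantee. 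Here lies the real subtlety: one must verify that $D_2$ and $D_2'$ genuinely generate a free product inside $D(\mathcal{Q},w)$, equivalently that no reduced diagram interleaving $\mathcal{R}_2$-cells from the two positions collapses, since it is this free-product behaviour, together with the two-slot geometry, that forces the defining relations of $\square$ and nothing more. An alternative, possibly cleaner, route to injectivity is to exploit the quasi-median geometry of \cite{QM}: $D(\mathcal{Q},w)$ acts on a quasi-median graph on which the HNN/amalgam structure of $A\square B$ appears as a tree of spaces, and the required normal-form statement can be read off that action.
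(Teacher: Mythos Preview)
Your algebraic reformulation of $A\square B$ as the HNN extension of $A\times(B_0\ast B_1)$ with stable letter conjugating $B_0$ to $B_1$ is correct, and in fact is exactly the structure the paper obtains. But your proposal has a genuine gap: you never write down the presentation $\mathcal{Q}$, and the geometric picture you sketch (two \emph{nested} positions for the $w_2$-block) is not the one that works. Without the explicit $\mathcal{Q}$ there is nothing to check, and the ``real subtlety'' you flag --- that $D_2$ and $D_2'$ freely generate $B\ast B$ --- is left entirely open.

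The paper's argument is both more concrete and much shorter, because it works at the level of the Squier complex rather than at the level of diagrams. Take two new letters $x,y$ and set
\[
\mathcal{Q}:=\langle \Sigma_1\sqcup\Sigma_2\sqcup\{x,y\}\mid \mathcal{R}_1\sqcup\mathcal{R}_2\sqcup\{w_1x=w_1y,\ xw_2=yw_2\}\rangle,
\]
with baseword $w_1xw_2$. The Squier complex $S(\mathcal{Q},w_1xw_2)$ then visibly decomposes as a graph of spaces: two vertex-spaces $S(\mathcal{P}_1,w_1)\,x\,S(\mathcal{P}_2,w_2)$ and $S(\mathcal{P}_1,w_1)\,y\,S(\mathcal{P}_2,w_2)$, each a product, joined by two edge-spaces $S(\mathcal{P}_1,w_1)\times[0,1]$ and $S(\mathcal{P}_2,w_2)\times[0,1]$ coming from the two linking relations. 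Bass--Serre theory now gives the fundamental group for free, and the resulting graph-of-groups presentation simplifies to $A\square B$. All the hard work you anticipated --- free product of the two copies of $B$, HNN normal form, injectivity via dipole analysis --- is absorbed into the single observation that the edge-space inclusions are $\pi_1$-injective, which is immediate here since they are inclusions of factors into products. The moral is that for this kind of construction, analysing the topology of $S(\mathcal{Q},w)$ is far more efficient than manipulating reduced diagrams directly.
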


\begin{proof}
Let $\mathcal{P}_1= \langle \Sigma_1 \mid \mathcal{R}_1 \rangle$ and $\mathcal{P}_2= \langle \Sigma_2 \mid \mathcal{R}_2 \rangle$ be two semigroup presentations. Without loss of generality, we assume that $\Sigma_1 \cap \Sigma_2 = \emptyset$. Fix two basewords $w_1 \in \Sigma_1^+$ and $w_2 \in \Sigma_2^+$. The claim is that, given the semigroup presentation
$$\mathcal{Q}:= \langle \Sigma_1 \sqcup \Sigma_2 \sqcup \{x,y\} \mid \mathcal{R}_1 \sqcup \mathcal{R}_2 \sqcup \{ w_1x=w_1y, xw_2=yw_2\} \rangle,$$
the diagram group $D(\mathcal{Q},w_1xw_2)$ coincides with $D(\mathcal{P}_1,w_1) \square D(\mathcal{P}_2,w_2)$. The key observation is that the Squier complex $S(\mathcal{Q},w_1xw_2)$ contains two natural copies (of the two-skeleton) of $S(\mathcal{P}_1,w_1) \times S(\mathcal{P}_2,w_2)$, which we denote by $S(\mathcal{P}_1,w_1)x S(\mathcal{P}_2,w_2)$ and $S(\mathcal{P}_1,w_1)y S(\mathcal{P}_2,w_2)$. Observe that the two copies of $S(\mathcal{P}_1,w_1)$ span (the two-skeleton of) a product $S(\mathcal{P}_1,w_1) \times [0,1]$; and similarly for $S(\mathcal{P}_2,w_2)$. Thus, we get a decomposition of $S(\mathcal{Q},w_1xw_2)$ as a graph of spaces which implies that $D(\mathcal{Q},w_1xw_2)$ decomposes as the fundamental group of graph of groups with two vertices, both labelled by $D(\mathcal{P}_1,w_1) \times D(\mathcal{P}_2,w_2)$, and two edges, respectively identifying the two copies of $D(\mathcal{P}_1,w_1)$ and $D(\mathcal{P}_2,w_2)$. Hence the relative presentation
$$\langle A_1,B_1,A_2,B_2, t \mid [A_1,B_1]=[A_2,B_2]=1, tB_1t^{-1}=B_2 \rangle,$$
where $A_1,A_2$ (resp. $B_1,B_2$) are two copies of $A$ (resp. of $B$), which can be simplified as the presentation given above for $A \square B$. 
\end{proof}

\noindent
As an application, we can describe our group $\mathbb{Z} \square \mathbb{Z}$ as the diagram group given by the semigroup presentation
$$\left\langle a_1,a_2,a_3, b_1,b_2,b_3, x,y \mid \begin{array}{c} a_1=a_2,a_2=a_3,a_3=a_1 \\ b_1=b_2,b_2=b_3,b_3=b_1 \end{array}, a_1x=a_1y, xb_1=yb_1 \right\rangle$$
and the baseword $a_1xb_1$. 

\medskip \noindent
Given an integer $n \geq 1$, the \emph{planar braid group} $T_n$, also referred to as the \emph{twin group} or the group of \emph{flat braids}, is the fundamental group of
$$\{ (x_1, \ldots, x_n) \in \mathbb{R}^n \mid \forall i, \# \{ j \mid x_j=x_i \} \leq 1 \} / S_n,$$
i.e.\ the configuration space of $n$ (unordered) points moving in $\mathbb{R}$, where collisions between two points, but no more, is allowed. Elements of $T_n$ can be represented as \emph{planar braids} subject to the following relations:
\begin{center}
\includegraphics[width=0.7\linewidth]{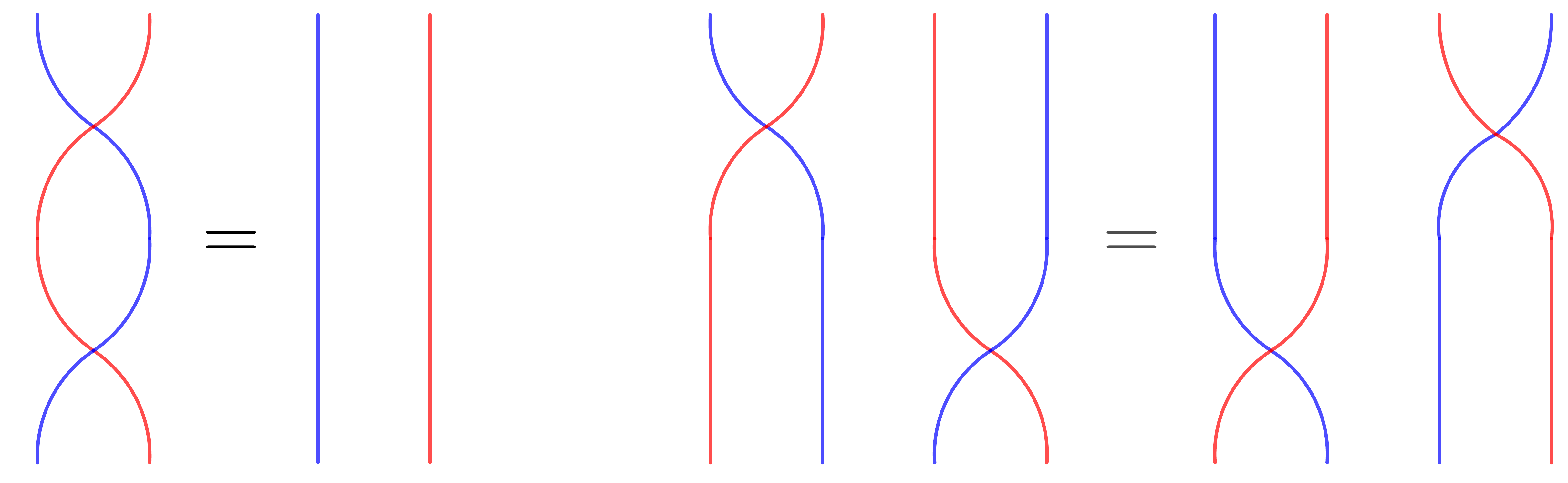}
\end{center}

\medskip \noindent
The planar braid group $T_n$ turns out to be naturally isomorphic to the right-angled Coxeter group
$$\langle \tau_1, \ldots, \tau_{n-1} \mid \tau_i^2=1 \ (1 \leq i \leq n-1), \ [\tau_i, \tau_j]=1 \ (|i-j| \geq 2) \rangle.$$
The kernel of the natural morphism $T_n \to S_n$ is the \emph{pure planar braid group} $PT_n$. 

\begin{prop}[\cite{MR4071367, FarleyTwin}]\label{prop:PlanarBraid}
For every $n \geq 1$, the pure planar braid group $PT_n$ is isomorphic to $D(\mathcal{P}_n,x_1 \cdots x_n)$ where $\mathcal{P}_n:= \langle x_1, \ldots, x_n \mid x_ix_j=x_jx_i \ (1 \leq i < j \leq n) \rangle$. 
\end{prop}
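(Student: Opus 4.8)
The plan is to identify $D(\mathcal{P}_n, x_1 \cdots x_n)$ with $PT_n$ by constructing a geometric correspondence between reduced spherical diagrams over $\mathcal{P}_n$ with base word $x_1 \cdots x_n$ and (isotopy classes of) pure planar braids on $n$ strands. First I would set up both sides carefully: on the braid side, recall that $PT_n$ is the fundamental group of the configuration space of $n$ unordered points on $\mathbb{R}$ in which double collisions (but no triple ones) are permitted, based at the configuration $(1,2,\dots,n)$; a loop is a pure planar braid, i.e.\ a braid diagram in the strip $\mathbb{R} \times [0,1]$ built out of elementary transpositions $\tau_i$ of adjacent strands, subject only to far commutation $[\tau_i,\tau_j]=1$ for $|i-j|\geq 2$ and the cancellation $\tau_i^2=1$, with the purity condition that each strand returns to its starting position. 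On the diagram side, a $(w,w)$-diagram over $\mathcal{P}_n$ with $w = x_1 \cdots x_n$ is glued from cells each of which realises a commutation relation $x_i x_j = x_j x_i$ with $i<j$.

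The key step is the dictionary: a cell in a diagram over $\mathcal{P}_n$ whose top path is labelled $x_i x_j$ and bottom path $x_j x_i$ corresponds exactly to a crossing of the strand carrying label $x_i$ with the strand carrying label $x_j$ — that is, to an application of $\tau_k$ at the appropriate position. Because the only relations in $\mathcal{P}_n$ are commutations and there are no relations of the form $u = u$ with $|u|=1$, the word $x_1 \cdots x_n$ never changes as a multiset of letters, and along any path in $S(\mathcal{P}_n)$ through the base component the letters are merely permuted; tracking which letter sits in which position over the course of a $(w,w)$-diagram is precisely tracking the $n$ strands of a braid, and the fact that we end at $x_1 \cdots x_n$ again is the purity condition. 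I would then check that the two moves generating homotopy of paths in the Squier complex match the two moves on planar braids: flipping a square in $S(\mathcal{P}_n)$ — which records two disjoint commutations happening in either order — corresponds to the far-commutation relation $[\tau_i,\tau_j]=1$ ($|i-j|\geq 2$), and dipole reduction (cancelling a cell against its inverse, using that $x_i x_j = x_j x_i$ and $x_j x_i = x_i x_j$ are mutually inverse relations in $\mathcal{R}$) corresponds exactly to cancelling $\tau_i \tau_i = 1$. Concatenation of $(w,w)$-diagrams corresponds to stacking planar braids, so the bijection on reduced diagrams (well-defined by Proposition~\ref{prop:PathDiag} and the discussion after it) is a group isomorphism $D(\mathcal{P}_n, x_1 \cdots x_n) \cong PT_n$.

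The main obstacle I expect is verifying that the correspondence is genuinely a \emph{bijection} on reduced objects, not merely a well-defined surjection in each direction — concretely, that a reduced diagram over $\mathcal{P}_n$ cannot encode two non-isotopic planar braids and that every reduced planar braid word (reduced in the sense of the Squier homotopy, i.e.\ no dipoles) yields a unique reduced diagram. This amounts to matching the normal form for reduced spherical diagrams over $\mathcal{P}_n$ (via \cite[Theorem~3.17]{MR1396957}, the uniqueness of dipole reduction) with a normal form for elements of $PT_n$ inside the right-angled Coxeter group $T_n = \langle \tau_1,\dots,\tau_{n-1} \mid \tau_i^2, [\tau_i,\tau_j]_{|i-j|\geq 2}\rangle$ — that is, with reduced words in a right-angled Coxeter group up to the braid (commutation) moves. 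Since reduced words in RACGs are unique up to exactly such commutations (Tits' theorem), and a reduced diagram over a presentation all of whose relations are commutations is, after the dictionary above, precisely such a commutation class of $\tau$-words with the purity constraint, the two normal forms coincide; I would also double-check the base-word subtlety, namely that the correspondence is insensitive to the choice of connected component representative (as noted for related constructions, $D(\mathcal{P}_n, w)$ only depends on the component of $w$), so that nothing is lost by fixing $w = x_1 \cdots x_n$. For this last comparison I would cite \cite{MR4071367, FarleyTwin} for the details rather than reproduce them.
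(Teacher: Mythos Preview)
Your proposal is correct and follows essentially the same approach as the paper: the paper's entire argument is a figure showing that a diagram over $\mathcal{P}_n$ with top path $x_1\cdots x_n$ is naturally dual to a planar braid (the edges of the diagram become the strands, each commutation cell becomes a crossing), together with the remark that this correspondence is ``clearly compatible with the group structures.'' Your write-up simply unpacks this duality in more detail---matching square-flips with far commutation, dipole reduction with $\tau_i^2=1$, and invoking Tits' solution to the word problem in right-angled Coxeter groups to certify bijectivity---which is exactly what one would do to make the paper's sketch rigorous.
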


\noindent
\begin{minipage}{0.38\linewidth}
\includegraphics[width=0.9\linewidth]{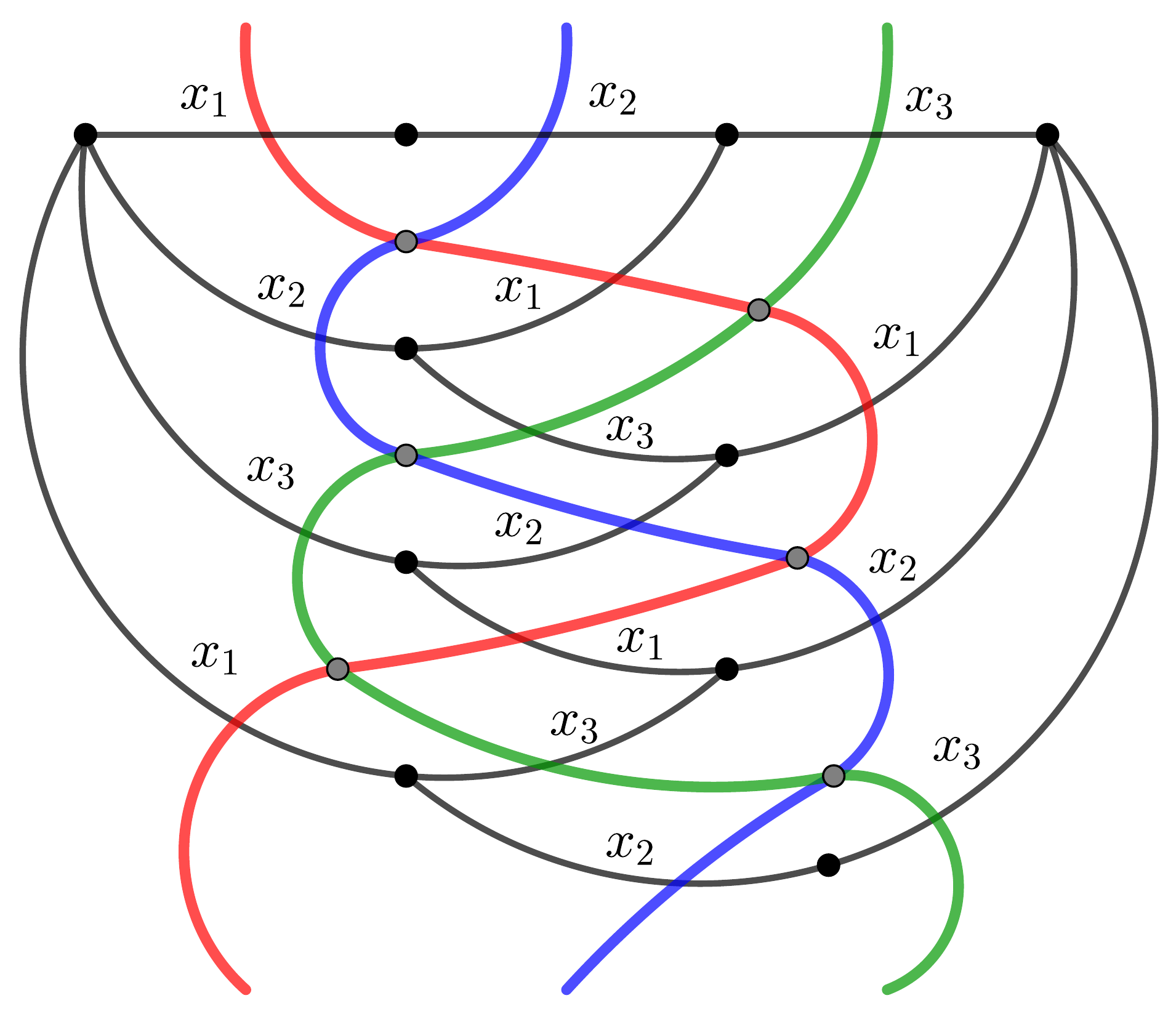}
\end{minipage}
\begin{minipage}{0.6\linewidth}
The isomorphism between the diagram group $D(\mathcal{P}_n,x_1 \cdots x_n)$ and the planar braid group $PT_n$ is illustrated by the figure on the left. It shows that every diagram over $\mathcal{P}_n$ with top path labelled $x_1 \cdots x_n$ is naturally dual to a planar braid. The correspondence is clearly compatible with the group structures. 
\end{minipage}

\medskip \noindent
It is worth noticing that diagram groups given by the same semigroup presentation $\mathcal{P}_n$ as in Proposition~\ref{prop:PlanarBraid} but with different basewords lead to interesting variations. For instance, $D(\mathcal{P}_n,x_1^{r_1} \cdots x_n^{r_n})$ corresponds to the group of the pure planar braids with $r_1 + \cdots + r_n$ strands, coloured with $n$ colours such that there are $r_i$ strands coloured with the $i$th colour, such that no two strands with the same colour are allowed to cross. 

\medskip \noindent
Now, let us turn probably to the most famous example of diagram group, namely Thompson's group $F$. There are many possible definitions of $F$. The shortest is the following: $F$ is the group of the increasing piecewise linear homeomorphisms $[0,1] \to [0,1]$ whose breakpoints have dyadic coordinates and whose slopes are powers of $2$. We refer to \cite{MR1426438} for more information on Thompson's groups. 

\begin{prop}[\cite{MR1448329}]\label{prop:Thompson}
The diagram group $D(\mathcal{P},x)$ where $\mathcal{P}:= \langle x \mid x=x^2 \rangle$ is isomorphic to the Thompson group $F$. 
\end{prop}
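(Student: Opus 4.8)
The plan is to use the description of $D(\mathcal{P},x)$ provided by Corollary~\ref{cor:PathDiag}, namely as the group of $(x,x)$-diagrams over $\mathcal{P}= \langle x \mid x=x^2 \rangle$ up to dipole reduction, and to identify these diagrams with the classical tree-pair representatives of elements of Thompson's group $F$ (see \cite{MR1426438}).

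First I would describe the diagrams over $\mathcal{P}$. Since $\Sigma=\{x\}$, the top and bottom paths of every cell are labelled by powers of $x$, and the single relation forces each cell to be of one of two types: a \emph{positive cell}, with $(\mathrm{top},\mathrm{bot})$ labelled $(x,x^2)$, or a \emph{negative cell}, with $(\mathrm{top},\mathrm{bot})$ labelled $(x^2,x)$. A diagram built only from positive cells and whose top path is labelled $x$ is nothing but a finite rooted binary tree: the top edge is the root and each positive cell is a caret attaching two children to a leaf, so that the leaves of the tree are in order-preserving bijection with the edges of the bottom path; dually, a diagram built only from negative cells and whose bottom path is labelled $x$ is an upside-down binary tree. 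The key structural step is then the normal form of Guba and Sapir \cite{MR1396957}: every $(x,x)$-diagram equals, modulo dipole reduction, a product $\Delta_+\circ\Delta_-$ with $\Delta_+$ a positive $(x,x^n)$-diagram and $\Delta_-$ a negative $(x^n,x)$-diagram, and this decomposition is unique once the diagram is reduced. The mechanism is that each negative cell has a single-edge bottom path and each positive cell a single-edge top path, so that a negative cell sitting immediately above a positive cell either meets it along this common edge --- in which case the two cells form a dipole, to be reduced --- or has disjoint support from it, in which case the two cells commute; iterating pushes every positive cell above every negative cell. Consequently a reduced element of $D(\mathcal{P},x)$ is exactly a pair $(T_+,T_-)$ of finite rooted binary trees with the same number $n$ of leaves, subject to the requirement that $T_+$ and $T_-$ do not carry a common caret over the same pair of adjacent leaves --- which is precisely the absence of a dipole in $\Delta_+\circ\Delta_-$, that is, the standard notion of a \emph{reduced tree pair}.

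Next I would build the isomorphism. A finite rooted binary tree $T$ with $n$ leaves determines a subdivision of $[0,1]$ into $n$ standard dyadic intervals $I_1(T),\dots,I_n(T)$, obtained by recursive bisection with one interval per leaf and listed from left to right. To a reduced diagram $\Delta=\Delta_+\circ\Delta_-$, whose positive part $\Delta_+$ is the tree $T_+$ and whose negative part $\Delta_-$, read upside down, is the tree $T_-$, associate the map $\Phi(\Delta)\colon[0,1]\to[0,1]$ that is increasing and affine on each $I_i(T_-)$ and carries $I_i(T_-)$ onto $I_i(T_+)$. Then $\Phi(\Delta)$ is an increasing piecewise-linear homeomorphism with dyadic breakpoints and slopes that are powers of $2$, hence an element of $F$; and since every element of $F$ is represented by a unique reduced dyadic tree pair, $\Phi$ is a bijection taking the neutral diagram to the identity. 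Finally one verifies that $\Phi$ is a homomorphism: writing $\Delta=\Delta_+\circ\Delta_-$ and $\Delta'=\Delta'_+\circ\Delta'_-$, the product in $D(\mathcal{P},x)$ is $\Delta_+\circ(\Delta_-\circ\Delta'_+)\circ\Delta'_-$, and the negative-then-positive middle diagram $\Delta_-\circ\Delta'_+$ can be renormalised as $\Psi_+\circ\Psi_-$ by choosing a common positive expansion of the trees $T_-$ and $T'_+$. On the side of $F$ this is exactly the operation of refining the pairs $(T_+,T_-)$ and $(T'_-,T'_+)$ so that the range subdivision of the first agrees with the domain subdivision of the second before composing the corresponding PL maps; passing to a dyadic subdivision on which both maps are affine on every piece, the composite is affine on every piece and coincides with $\Phi$ of the renormalised diagram, whence $\Phi(\Delta\circ\Delta')=\Phi(\Delta)\,\Phi(\Delta')$ once compatible composition conventions are fixed on the two sides.

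The main obstacle is the normal-form step: establishing that every $(x,x)$-diagram reduces to a positive diagram stacked on a negative one, uniquely when the diagram is reduced. Once this "trees on top, cotrees on the bottom" structure is in hand --- together with the bookkeeping that concatenation of diagrams amounts to passing to a common dyadic refinement --- the identification of $D(\mathcal{P},x)$ with the tree-pair, equivalently piecewise-linear, model of $F$ is essentially a translation exercise. An alternative, less geometric route would be to extract a presentation of $D(\mathcal{P},x)$ via the methods of Section~\ref{section:Presentation} and to recognise the familiar two-generator, infinitely-related presentation of $F$; but the diagram/tree dictionary above is more transparent and is moreover the point of view reused in later sections.
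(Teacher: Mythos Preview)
Your proposal is correct and follows essentially the same route as the paper. The paper's treatment is a brief sketch built around the single ``key observation'' that a reduced $(x,x)$-diagram over $\langle x\mid x=x^2\rangle$ factors as $\Delta^{+}\circ\Delta^{-}$ with all positive cells above all negative cells, together with a figure indicating the passage to tree pairs and to piecewise-linear homeomorphisms; you have unpacked exactly this, supplying the mechanism for the normal form (commuting disjoint cells and cancelling dipoles) and the check that concatenation of diagrams matches composition in $F$ via a common refinement.
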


\noindent
The key observation is that a diagram $\Delta$ over $\mathcal{P}$ with no dipoles decomposes as the concatenation of two diagrams $\Delta^+ \cdot \Delta^-$ where the $2$-cells of $\Delta^+$ (resp. $\Delta^-$) are labelled by the relation $x\to x^2$ (resp. $x^2 \to x$). The figure below illustrates how to associated an element of $F$ from an element of $D(\mathcal{P},w)$ both in terms of pairs of trees and of piecewise linear homeomorphisms.
\begin{center}
\includegraphics[width=0.9\linewidth]{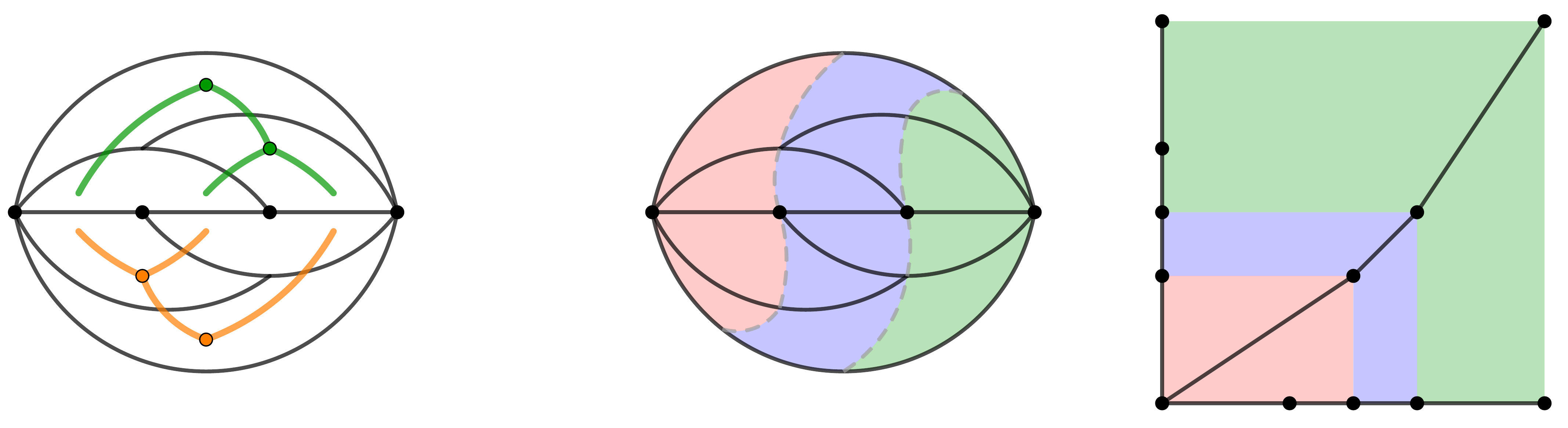}
\end{center}

\noindent
In the right part of the figure, one should think of a $2$-cell from the top part of the diagram as a map $[0,1] \to [0,2]$ defined by $x \mapsto 2x$ and of a $2$-cell from the bottom part of the diagram as a map $[0,2] \to [0,1]$ defined by $x \mapsto x/2$. This is a particular case of the construction mentioned in Section~\ref{section:OpenQuestion} after Question~\ref{question:ResiduallyPL}. 

\medskip \noindent
Clearly, the arguments above generalise to every Thompson's group $F_n$ (defined from $n$-adic numbers instead of $2$-adic numbers). Thus, the diagram groups $D(\mathcal{P}_n,x)$ given by $\mathcal{P}_n:= \langle x \mid x=x^n \rangle$ is isomorphic to $F_n$ for every $n \geq 2$. 

\medskip \noindent
Interestingly, there turns out to exist (algebraically) simple diagram groups:

\begin{prop}[\cite{MR1725439}]\label{prop:CommutatorF}
Given the semigroup presentation
$$\mathcal{P}:= \left\langle x, a_i,b_i \ (i \geq 1) \mid x=x^2, a_i=a_{i+1}x, b_i=xb_{i+1} \ (i \geq 1) \right\rangle,$$ 
the diagram group $D(\mathcal{P},a_1b_1)$ is isomorphic to the commutator subgroup $F'$ of Thompson's group $F$. 
\end{prop}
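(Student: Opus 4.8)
The plan is to identify $D(\mathcal{P},a_1b_1)$ with $F'$ by combining two ingredients already developed in the excerpt: the $\bullet$-product construction and the description of $F$ as a diagram group. First I would recall the well-known fact that the commutator subgroup $F'$ of Thompson's group $F$ consists exactly of the elements whose associated piecewise-linear homeomorphism has slope $1$ in a neighbourhood of $0$ and in a neighbourhood of $1$ (equivalently, the dyadic subdivision "does nothing" near the two endpoints of $[0,1]$). In the tree-pair / diagram picture of Proposition~\ref{prop:Thompson}, this says that a reduced $(x,x)$-diagram over $\langle x \mid x=x^2\rangle$ represents an element of $F'$ precisely when its leftmost and rightmost strands are never subdivided, i.e.\ the first and last edges of the top path survive untouched all the way to the bottom path. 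The semigroup presentation $\mathcal{P}$ in the statement is engineered to force exactly this behaviour: the letters $a_i$ and $b_i$ act as markers of the leftmost and rightmost positions, and the relations $a_i = a_{i+1} x$, $b_i = x b_{i+1}$ let one "spawn" copies of $x$ immediately to the right of the left marker and immediately to the left of the right marker, but never allow the markers themselves to be consumed by the $x = x^2$ relation.

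The cleanest way to make this rigorous is to massage $\mathcal{P}$ into a $\bullet$-product form. Observe that the subpresentations $\langle x, a_i \ (i\geq 1) \mid x = x^2, a_i = a_{i+1}x \rangle$ and $\langle x, b_i \ (i \geq 1) \mid x = x^2, b_i = x b_{i+1} \rangle$ each present (with basewords $a_1$ and $b_1$ respectively) a copy of $F$ itself: indeed, modulo the infinitely many $a_i$'s, which are all "the same" via the relations $a_i = a_{i+1}x$, one is just looking at diagrams over $\langle x \mid x = x^2\rangle$ with a distinguished left boundary edge that is never touched — and forgetting that edge gives back all of $F$. Wait — more carefully, the correct reading is that $\langle x, a_i \mid x = x^2, a_i = a_{i+1}x\rangle$ with baseword $a_1$ presents $F$, because the relations $a_i = a_{i+1}x$ are exactly of the shape $w_1 = w_1 p$ appearing in the $\bullet$-product construction after one renames; so I would instead directly exhibit $\mathcal{P}$ as (a presentation equivalent to) the $\bullet$-product presentation $\mathcal{Q}$ from the proof of the $\bullet$-product proposition, taking $\mathcal{P}_1 = \mathcal{P}_2 = \langle x \mid x = x^2 \rangle$, $w_1 = w_2 = x$, and the separating/spawning letter $p$ replaced by the chains of $a_i$'s and $b_i$'s. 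This identifies $D(\mathcal{P}, a_1 b_1)$ with $D(\langle x\mid x=x^2\rangle, x) \bullet D(\langle x \mid x=x^2\rangle, x) = F \bullet F$. Then it remains to prove the purely group-theoretic fact that $F \bullet F \cong F'$.

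For the isomorphism $F \bullet F \cong F'$, I would use the alternative description of $\bullet$ given in the excerpt: $A \bullet B = \langle A, B, yx\rangle \leq (A \ast \mathbb{Z}) \times (B \ast \mathbb{Z})$. Taking $A = B = F$ and recalling that $F \ast \mathbb{Z}$ embeds into $F$ (because $F$ contains a copy of $F \ast \mathbb{Z}$, or more elementarily, $F$ acting on $[0,1]$ has two "independent halves" $[0,1/2]$ and $[1/2,1]$ whose supported copies of $F$ together with a translation-like element generate $F \ast \mathbb{Z}$-like subgroups) one can realise $F \bullet F$ concretely as the subgroup of $F' $ generated by the copy of $F$ supported on $[0,1/2]$, the copy of $F$ supported on $[1/2, 1]$, and a single element of $F'$ that conjugates one into a neighbourhood of the other — and this is exactly a known generating picture of $F'$. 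The standard reference fact is that $F'$ is generated by (and in fact is the directed union of) the copies of $F$ supported on dyadic subintervals $[a,b] \subsetneq (0,1)$, and that it decomposes as an ascending HNN-type / $\bullet$-type object; the relations $[A, t^n B t^{-n}] = 1$ record precisely that copies of $F$ supported on disjoint dyadic intervals commute, together with the action of a shift $t$. I would cite Guba--Sapir for this identification (it is essentially their original proof of Proposition~\ref{prop:CommutatorF}) rather than reprove it from scratch.

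The main obstacle I expect is the last step — pinning down the abstract isomorphism $F \bullet F \cong F'$ — because it requires knowing the fine structure of $F'$ (its description as an ascending union of copies of $F$, or via the "germs at $0$ and $1$" exact sequence $1 \to F' \to F \to \mathbb{Z}^2 \to 1$), and matching the defining relations of $\bullet$ to the commuting-and-shifting relations among interval subgroups of $F'$. Everything before that (rewriting $\mathcal{P}$ into $\bullet$-product shape, and invoking the already-proved $\bullet$-product and Thompson-group propositions) is essentially bookkeeping: one checks that the chain of relations $a_i = a_{i+1}x$ does nothing more than provide, at the level of reduced diagrams based at $a_1 b_1$, an inexhaustible supply of fresh left-boundary $x$'s — which is exactly the role of the letter $p$ with relation $w_1 = w_1 p$ in the $\bullet$-construction — and symmetrically for the $b_i$'s. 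I would therefore structure the write-up as: (1) reduce to $D(\mathcal{P}, a_1 b_1) \cong F \bullet F$ via the $\bullet$-product proposition; (2) quote $F \bullet F \cong F'$ with reference; (3) remark on the piecewise-linear interpretation (slopes $1$ near $0$ and $1$) as a sanity check.
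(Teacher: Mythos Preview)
Your plan has a fatal error: $F \bullet F$ is \emph{not} isomorphic to $F'$. From the relative presentation $\langle A,B,t \mid [a,t^nbt^{-n}]=1\rangle$ one sees immediately that the abelianisation of $A\bullet B$ is $A^{\mathrm{ab}}\times B^{\mathrm{ab}}\times\mathbb{Z}$; taking $A=B=F$ gives $(F\bullet F)^{\mathrm{ab}}\cong\mathbb{Z}^5$. But $F'$ is simple, hence perfect. So step~(2) of your outline fails, and no amount of citing can repair it.

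Step~(1) is also wrong, and for an instructive reason. The infinite chain of relations $a_i=a_{i+1}x$ is \emph{not} equivalent to a single relation of the form $w_1=w_1p$ as in the $\bullet$-construction. With $a=ax$, the derivation $a\to ax\to a$ gives a nontrivial loop in the Squier complex (this is exactly where the extra generator $t$ in $A\bullet B$ comes from). With the chain $a_1\to a_2x\to a_3x^2\to\cdots$, by contrast, every such expansion is uniquely reversible---the return path $a_rx^{r-1}\to\cdots\to a_1$ uses the inverse relations and the whole thing collapses by dipoles. The indexing by $i$ is precisely what kills the stable letter; it is the whole point of the infinite alphabet, not a cosmetic detail you can rename away.

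The paper's argument is instead a direct bijection at the level of reduced diagrams. One shows that every reduced $(a_1b_1,a_1b_1)$-diagram over $\mathcal{P}$ has the shape: first expand $a_1\to a_rx^{r-1}$ and $b_1\to x^{s-1}b_s$, then perform an arbitrary $(x^{r+s-2},x^{r+s-2})$-derivation using only $x=x^2$, then contract symmetrically. Reading off the pair of trees as in Proposition~\ref{prop:Thompson}, the forced symmetric expansion/contraction on the extremities says exactly that the leftmost and rightmost leaves of the top and bottom trees carry the same dyadic addresses---which is the tree-pair characterisation of $F'\subset F$. Your ``sanity check'' in part~(3) is essentially the correct proof; the detour through $\bullet$ is both unnecessary and incorrect.
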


\noindent
Recall that the elements in $F'$ coincide with the homeomorphisms $[0,1] \to [0,1]$ fixing pointwise neighbourhoods of $0$ and $1$. In terms of pairs of trees, the elements in $F'$ correspond to the pairs $(T_1,T_2)$ such that the leftmost leaves of $T_1$ and $T_2$ have the same \emph{dyadic representations}, and same thing for the rightmost leaves of $T_1$ and $T_2$. Here, the dyadic representation of a vertex in a planar $2$-regular rooted tree refers to the digits associated to it when we label the root by $\emptyset$, its left child by $0$, its right child by $1$, the left child of $0$ by $00$, the right child of $0$ by $01$, the left child of $1$ by $10$, etc. For instance, the leftmost leaves of the top and bottom trees in the figure below have $00$ as dyadic representatives. However, the leftmost leaves of the top and bottom trees in the figure above have $0$ and $00$ as dyadic representatives; therefore, the element of $F$ represented does not belong to the commutator subgroup $F'$. (See also Example~\ref{ex:CommutatorF} below for an equivalent description of $F'$ in $F$.)

\medskip \noindent
The figure below shows how to associate a pair of trees, and a fortiori an element of $F$, to every diagram with no dipole over the presentation given by Proposition~\ref{prop:CommutatorF}. The key observation is that such a diagram corresponds to a derivation starting with with the moves
$$a_1 \to a_2x \to a_3xx \to \cdots \to a_rx^{r-1} \text{ and } b_1 \to xb_2 \to xxb_3 \to \cdots \to x^{s-1}b_s$$
and ending with the moves
$$a_rx^{r-1} \to a_{r-1}x^{r-2} \to \cdots a_1 \text{ and } x^{s-1}b_s \to x^{s-2}b_{s-1} \to \cdots \to b_1.$$
This is why the leftmost (resp. rightmost) leaves in the top and bottom trees have the same dyadic representation. 

\begin{center}
\includegraphics[width=0.7\linewidth]{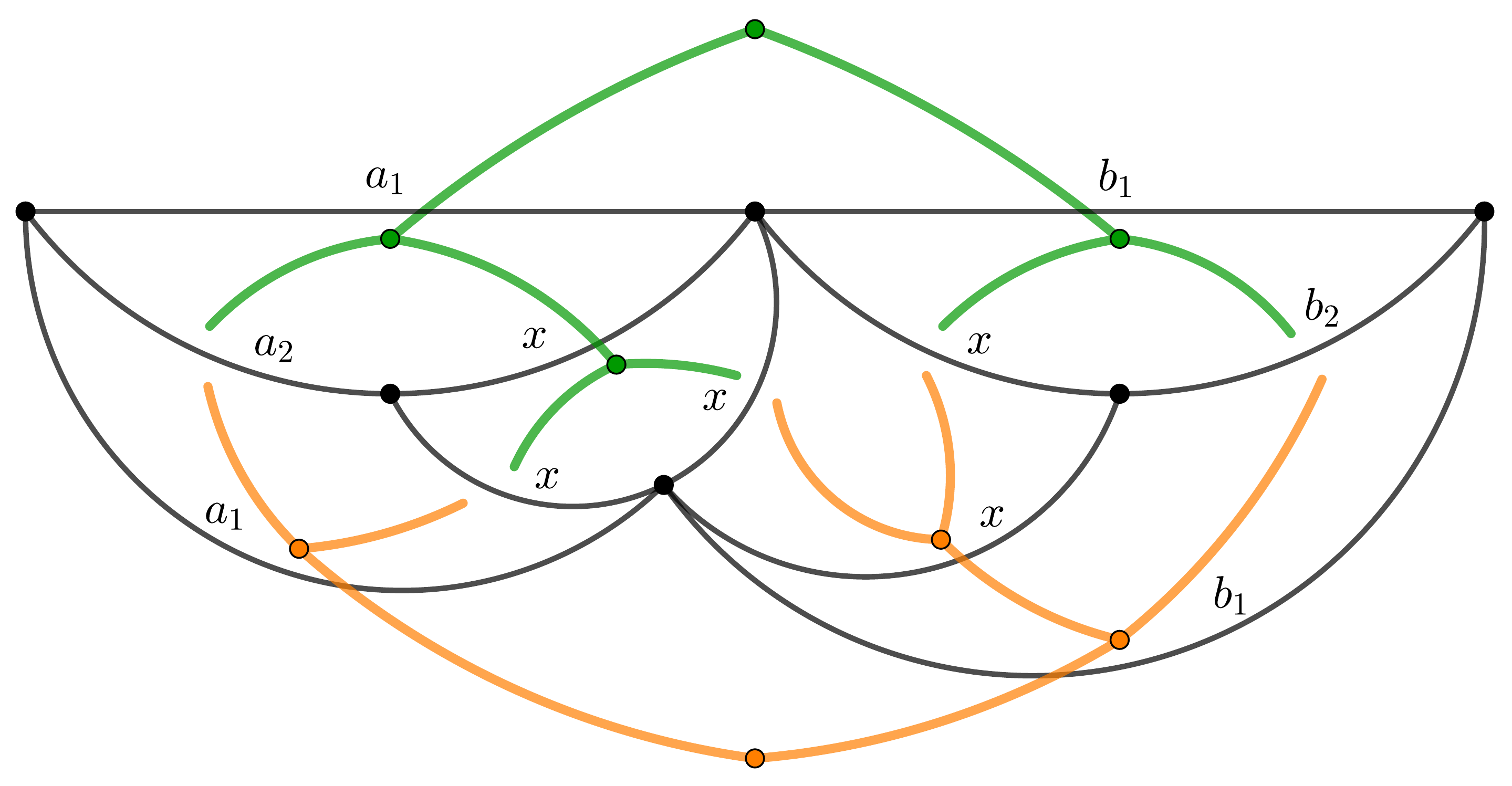}
\end{center}

\noindent
Let us end this section by mentioning a last interesting families of groups. Given a (simplicial) graph $\Gamma$, the \emph{right-angled Artin group} $A(\Gamma)$, also referred to as \emph{partially commutative group}, is given by the presentation
$$\left\langle u \in V(\Gamma) \mid [u,v]=1, \ \{u,v\} \in E(\Gamma) \right\rangle$$
where $V(\Gamma)$ and $E(\Gamma)$ denote the vertex- and edge-sets of $\Gamma$. Because diagram groups include $\mathbb{Z}$ and are stable under direct sums and free products, we already know that many right-angled Artin groups are diagram groups. However, not every right-angled Artin group is a diagram group, and the question of which right-angled Artin groups are diagram groups is still open; see Question~\ref{question:RAAG}. 

\medskip \noindent
An easily recognisable family of right-angled Artin groups that are diagram groups is the following. Recall that a graph $\Gamma$ is an \emph{interval graph} if there exist an $n \geq 1$ and a collection $\mathcal{C}$ of intervals in $\{1, \ldots, n\}$ such that $\mathcal{C}$ is the vertex-set of $\Gamma$ and such that two intervals are connected by an edge whenever they are disjoint. 

\begin{prop}[\cite{MR3868219}]\label{prop:RAAGinterval}
If $\Gamma$ is a finite interval graph, then $A(\Gamma)$ is a diagram group.
\end{prop}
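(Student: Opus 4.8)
The plan is to build, from an interval representation of $\Gamma$, an explicit semigroup presentation $\mathcal{P}_\Gamma$ and a baseword $w_\Gamma$ for which the connected component of $w_\Gamma$ in the Squier square complex is a subdivision of the $2$-skeleton of the Salvetti complex of $A(\Gamma)$ (the nonpositively curved cube complex with one $k$-torus for each $k$-clique of $\Gamma$, which is a $K(A(\Gamma),1)$). Since the fundamental group of a complex is determined by its $2$-skeleton, this immediately gives $D(\mathcal{P}_\Gamma,w_\Gamma) = \pi_1(S(\mathcal{P}_\Gamma,w_\Gamma)) \cong A(\Gamma)$, so that $A(\Gamma)$ is a diagram group.

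Concretely, I would start from a family $\mathcal{C}$ of intervals in $\{1,\dots,n\}$ realising $\Gamma$, two intervals being adjacent exactly when disjoint; write $I = \{\ell(I),\ell(I)+1,\dots,r(I)\}$ for $I \in \mathcal{C}$. For each $I \in \mathcal{C}$ I introduce two fresh letters $s_I,t_I$ and set
$$\mathcal{P}_\Gamma := \left\langle \{a_1,\dots,a_n\} \sqcup \bigsqcup_{I \in \mathcal{C}} \{s_I,t_I\} \ \mid\ a_{\ell(I)} \cdots a_{r(I)} = s_I, \ s_I = t_I, \ t_I = a_{\ell(I)} \cdots a_{r(I)} \ \ (I \in \mathcal{C}) \right\rangle$$
with baseword $w_\Gamma := a_1 a_2 \cdots a_n$, a construction reminiscent of the presentation $\mathcal{P}_n$ of Proposition~\ref{prop:PlanarBraid}. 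The three relations attached to $I$ produce, based at $w_\Gamma$, a $3$-cycle of edges
$$a_1 \cdots a_n \ \to\ a_1 \cdots a_{\ell(I)-1}\, s_I\, a_{r(I)+1} \cdots a_n \ \to\ a_1 \cdots a_{\ell(I)-1}\, t_I\, a_{r(I)+1} \cdots a_n \ \to\ a_1 \cdots a_n ,$$
which I call $C_I$; the corresponding element $\delta_I \in D(\mathcal{P}_\Gamma,w_\Gamma)$ is the intended image of the standard generator of $A(\Gamma)$ at the vertex $I$.

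The heart of the argument is then a combinatorial description of the component $S(\mathcal{P}_\Gamma,w_\Gamma)$. I would first show that the words reachable from $w_\Gamma$ are exactly those obtained by choosing a pairwise-disjoint subfamily $Q \subseteq \mathcal{C}$ --- that is, a clique of $\Gamma$ --- and replacing, for each $I \in Q$, the block $a_{\ell(I)} \cdots a_{r(I)}$ by $s_I$ or $t_I$: the point is that applying any relation preserves the property that the set of ``active'' intervals (those whose block is currently collapsed to a single letter) is pairwise disjoint, since the blocks of two overlapping intervals can never be exposed as literal subwords simultaneously. Second, I would show that a square of $S(\mathcal{P}_\Gamma)$ consists, by Definition~\ref{def:SquierSquare}, of two relation-applications sitting in disjoint portions of a word, which --- given the shape of our relations --- can occur only when the two intervals involved are disjoint, i.e.\ adjacent in $\Gamma$ (inspecting the corner where both applications have been performed forces the two intervals to be active at once, hence disjoint); conversely each edge $\{I,J\}$ of $\Gamma$ produces such squares, which assemble into a torus $C_I \times C_J$ based at $w_\Gamma$. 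It then follows that $S(\mathcal{P}_\Gamma,w_\Gamma)$ is the union, over the edges $\{I,J\}$ of $\Gamma$, of the tori $C_I \times C_J$, glued along the common wedge $\bigvee_{I \in \mathcal{C}} C_I$ joined at $w_\Gamma$; since blowing a loop up into a $3$-cycle (and the attached $2$-cells accordingly) is a subdivision, this is a subdivision of the $2$-skeleton of the Salvetti complex of $A(\Gamma)$, and in particular has fundamental group $A(\Gamma)$, with $\delta_I$ mapping to the generator at $I$.

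I expect the main obstacle to be precisely this combinatorial bookkeeping: verifying that the reachable words are exactly the ``clique configurations'' and that the squares of the Squier complex are exactly the ``clique tori'', with nothing extra on either side. Everything rests on the single observation that the non-trivial side of each relation is either a block $a_{\ell(I)} \cdots a_{r(I)}$ of consecutive letters indexed by an interval, or the single letter $s_I$ or $t_I$ occupying that block's slot, so that two relation-applications commute inside a derivation if and only if the two underlying intervals are disjoint. One has to be mildly careful about the fact that collapsing a block shifts the positions of the other letters; but the left-to-right order of the intervals --- hence all the combinatorics that matters --- is untouched, and once this is pinned down the identification of $S(\mathcal{P}_\Gamma,w_\Gamma)$ with a subdivision of the Salvetti $2$-skeleton, and therefore the computation of $D(\mathcal{P}_\Gamma,w_\Gamma)$, is routine.
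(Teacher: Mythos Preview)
Your presentation is essentially the one the paper records: to each interval $I$ attach a short cycle based at the subword $a_{\ell(I)}\cdots a_{r(I)}$, so that two such cycles span squares in the Squier complex precisely when the intervals are disjoint. (The paper uses three auxiliary letters $a_I,b_I,c_I$ per interval; your two-letter variant is a harmless simplification.) The survey does not actually argue the isomorphism --- it just writes down the presentation and refers to the original source --- so your outline is already more detailed than what appears here, and your description of the reachable words and of when two relations can be applied independently is correct.

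There is, however, a real slip in your final step. The claim that $S(\mathcal{P}_\Gamma,w_\Gamma)$ equals the union of the $2$-tori $C_I\times C_J$ over the edges $\{I,J\}$ of $\Gamma$, and is therefore a subdivision of the Salvetti $2$-skeleton, fails as soon as $\Gamma$ contains a triangle. If $I,J,K$ are three pairwise disjoint intervals, the word in which all three blocks are collapsed is a vertex of the Squier complex lying in none of your $2$-tori, and there are further edges and squares attached at that vertex. Concretely, for $\Gamma=K_3$ realised by three singleton intervals, the Squier square complex is the $2$-skeleton of $C_3\times C_3\times C_3$, which has Euler characteristic $27-81+81=27$, while the Salvetti $2$-skeleton of $K_3$ has Euler characteristic $1-3+3=1$; so they are not even homotopy equivalent, let alone related by a subdivision.

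The fix is immediate once you see the problem: pass to the full Squier cube complex $S^+(\mathcal{P}_\Gamma,w_\Gamma)$ and describe it as the union, over \emph{all} cliques $Q$ of $\Gamma$, of the products $\prod_{I\in Q} C_I$, glued along common sub-products. Since each $C_I$ is homotopy equivalent to a circle, this union is homotopy equivalent to the Salvetti complex of $A(\Gamma)$, and its fundamental group --- which agrees with that of its $2$-skeleton $S(\mathcal{P}_\Gamma,w_\Gamma)$ --- is $A(\Gamma)$. Alternatively, keep your square-complex viewpoint but compute $\pi_1$ directly via a spanning tree, checking that the extra squares coming from cliques of size $\geq 3$ impose only redundant relations; either route closes the gap.
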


\noindent
More precisely, if our interval graph $\Gamma$ is given by a collection $\mathcal{C}$ of intervals in $\{1, \ldots, n\}$, then the diagram group given by the baseword $x_1 \cdots x_n$ and the semigroup presentation
$$\left\langle x_1, \ldots, x_n , a_I,b_I,c_I \ (I \in \mathcal{C}) \mid x_I=a_I, a_I=b_I, b_I=c_I \ (I \in \mathcal{C}) \right\rangle,$$
where $x_I$ is a shorthand for $x_{i_1} x_{i_2} \cdots x_{i_s}$ if $I= \{i_1< i_2< \cdots < _s \}$, is naturally isomorphic to the right-angled Artin group $A(\Gamma)$. 

\medskip \noindent
Another source of examples can be found in \cite{MR2193191}. As a particular case of interest:

\begin{prop}[\cite{MR2193191}]\label{prop:RAAGtree}
Right-angled Artin groups over finite trees are diagram groups.
\end{prop}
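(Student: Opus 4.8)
The plan is to induct on the number of vertices of $T$, using the standard fact that the right-angled Artin group of a tree splits as an amalgamated product of right-angled Artin groups over strictly smaller trees, together with a closure property of diagram groups under the relevant amalgamations.

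First, the base cases: if $T$ is a single vertex then $A(T)=\mathbb{Z}$, which is a diagram group (the first example of Section~\ref{section:Examples}), and if $T$ is a single edge then $A(T)=\mathbb{Z}\oplus\mathbb{Z}$, which is one by Proposition~\ref{prop:DirectSum}. Now suppose $T$ has at least three vertices. A counting of degrees shows $T$ has a non-leaf vertex $v$, and grouping the connected components of $T\setminus v$ we may write $T=T_1\cup_v T_2$, where $T_1$ and $T_2$ are subtrees meeting exactly in $v$, sharing no edge, and each having strictly fewer vertices than $T$. Since $E(T)=E(T_1)\sqcup E(T_2)$ and $V(T_1)\cap V(T_2)=\{v\}$, comparing group presentations gives
$$A(T)\;\cong\;A(T_1)\ast_{\langle v\rangle}A(T_2),$$
the amalgamation identifying the two copies of the vertex subgroup $\langle v\rangle\cong\mathbb{Z}$. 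By induction $A(T_1)$ and $A(T_2)$ are diagram groups, so everything reduces to showing that such an amalgam of two diagram groups over a vertex cyclic subgroup is again a diagram group.

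To get a handle on this, I would strengthen the induction so that it outputs, along with an isomorphism $A(T)\cong D(\mathcal{P}_T,w_T)$, a distinguished triangle $\{a_u,b_u,c_u\}\subseteq\Sigma_T$ (with $a_u=b_u$, $b_u=c_u$, $c_u=a_u$ in $\mathcal{R}_T$) for each vertex $u$ of $T$, whose associated loop in the Squier complex represents the generator $u$, exactly in the spirit of the interval construction behind Proposition~\ref{prop:RAAGinterval}. Given such data for $T_1$ and $T_2$, with triangles $\{a_v^{(i)},b_v^{(i)},c_v^{(i)}\}$ for the common vertex $v$, the natural candidate for $\mathcal{P}_T$ is built from the disjoint union $\mathcal{P}_{T_1}\sqcup\mathcal{P}_{T_2}$ by fusing the two triangles into one (identifying $a_v^{(1)}$ with $a_v^{(2)}$, and so on) and then merging the two basewords into a single $w_T$ by a connecting gadget of the kind used for free products in Proposition~\ref{prop:FreeProduct}. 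One then checks that a distinguished triangle for every vertex of $T_1$ or $T_2$ survives in $\mathcal{P}_T$, so that the strengthened induction hypothesis is preserved.

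The hard part will be proving that this fused presentation computes \emph{exactly} the amalgam, with no extra and no missing relations. Concretely, one has to understand the connected component of $w_T$ in $S(\mathcal{P}_T)$: show it decomposes as a graph of spaces whose pieces are the relevant components of $S(\mathcal{P}_{T_1})$ and $S(\mathcal{P}_{T_2})$, glued along the sub-complex carrying the shared triangle, and then read off $\pi_1$ via van Kampen / Bass--Serre theory, as is done in Proposition~\ref{prop:RAAGinterval} and in the $\square$-product construction. The genuinely delicate point is that fusing two triangles could \emph{a priori} create new paths in $S(\mathcal{P}_T)$ joining words of $\mathcal{P}_{T_1}$ to words of $\mathcal{P}_{T_2}$ through the shared letters; one must show these contribute precisely the amalgamating identification of $\langle v\rangle$ and nothing more. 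An alternative, which is the route taken in \cite{MR2193191}, is to bypass the induction altogether and write down a single semigroup presentation for $A(T)$ directly along the rooted tree $T$ — one triangle per vertex — the commutation of adjacent vertices being arranged at the level of the baseword; the verification is then a global version of the same Squier-complex computation.
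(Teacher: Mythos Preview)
The paper does not give a proof of this proposition: it is stated with a citation to \cite{MR2193191}, followed only by the remark that the statement extends to graph products of diagram groups over trees. So there is no argument in the paper itself to compare your proposal against.

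Your plan is a reasonable outline, and you are honest that the amalgamation step is where the content lies. But that gap is genuine, not cosmetic. Amalgamated products of diagram groups over cyclic subgroups are \emph{not} among the closure operations established in this survey (free products, finite direct sums, $-\wr\mathbb{Z}$, $\bullet$, $\square$), so there is no lemma to invoke. Your proposed construction --- fuse the two triangles and then ``merge the two basewords by a connecting gadget of the kind used for free products'' --- does not work as stated: the gadget of Proposition~\ref{prop:FreeProduct} (a new letter $o$ with $o=w_i$) yields a wedge of Squier complexes at a single vertex, hence a \emph{free} product on $\pi_1$, not an amalgam over $\mathbb{Z}$. Getting the two copies of $\langle v\rangle$ identified, and nothing else, requires a different glueing, and you have not said what it is. The difficulty you correctly name --- that fusing the triangles may create extra paths in the Squier complex --- is precisely the point, and nothing in the proposal controls it.

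The alternative you mention at the end, writing down a single global presentation along the rooted tree as in \cite{MR2193191}, is indeed how the original reference proceeds. If you want to salvage the inductive route instead, you would need to formulate and prove a standalone lemma: given $G=D(\mathcal{P},w)$ with a distinguished triangle subword realising a copy of $\mathbb{Z}$, and similarly for $G'$, the amalgam $G\ast_{\mathbb{Z}}G'$ is again a diagram group with the same extra structure. Proving that lemma amounts to carrying out the Squier-complex computation you defer.
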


\noindent
In fact, one can prove something stronger. One can prove that, if $\Gamma$ is either a finite interval graph or a finite tree, then a graph product of diagram groups over $\Gamma$ is again a diagram group. Recall that, given a (simplicial) graph $\Gamma$ and a collection of groups $\mathcal{G}= \{G_u \mid u \in V(\Gamma)\}$ indexed by the vertices of $\Gamma$, the \emph{graph product} $\Gamma \mathcal{G}$ is given by the (relative) presentation
$$\left\langle G_u, \ u \in V(\Gamma) \mid [G_u,G_v]=1, \ \{u,v\} \in E(\Gamma) \right\rangle,$$
where $[G_u,G_v]=1$ is a shorthand for $[a,b]=1$ for all $a \in G_u$ and $b \in G_v$. Notice that graph products of infinite cyclic groups coincide with right-angled Artin groups.

\subsection{First properties of diagram groups}\label{section:Properties}

\noindent
So far, we have described diagram groups from several perspectives in Sections~\ref{section:Asphericity},~\ref{section:Diagrams}, and~\ref{section:Free}, and we have shown in Section~\ref{section:Examples} that there exist many interesting groups that turn out to be diagram groups. Now, the natural question is: what can we learn from the fact that a given group can be described as a diagram group? In this section, our goal is to show that, despite the fact that the family of diagram groups is very broad, being a diagram groups turns out to be quite restrictive; or, in other words, it provides valuable information on the group. As a complement of the previous section, this will also allow us to give examples of groups that are not diagram groups. 

\medskip \noindent
For the statements below, we include ideas of proofs, following the arguments given in the original articles. However, as it will be explained in Section~\ref{section:Median}, some of these arguments based on the combinatorics of diagrams follow almost for free from the median geometry described in Section~\ref{section:MedianDiag}.

\medskip \noindent
Let us first explain that diagram groups are torsion-free, which already prevents many groups from being diagram groups. 

\begin{thm}[\cite{MR1448329, MR1396957}]\label{thm:TorsionFree}
Diagram groups are torsion-free.
\end{thm}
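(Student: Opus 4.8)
The plan is to exploit the diagrammatic description of diagram groups from Corollary~\ref{cor:PathDiag}: an element of $D(\mathcal{P},w)$ is a reduced $(w,w)$-diagram over $\mathcal{P}$, and the group operation is concatenation followed by dipole reduction. So I need to show that no nontrivial reduced $(w,w)$-diagram $\Delta$ has a power equal to the trivial diagram (the diagram with no cells).

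First I would reduce to understanding how the number of cells behaves under concatenation. Write $\#\Delta$ for the number of $2$-cells of a reduced diagram $\Delta$. When we concatenate $\Delta$ with itself $n$ times to form $\Delta^n$ and then reduce all dipoles, cells disappear only in cancelling pairs along the gluing interfaces. The key combinatorial input is a \emph{no-cancellation} phenomenon: I would argue that when $\Delta$ is reduced and nontrivial, the concatenation $\Delta \circ \Delta$ already produces at least one cell of $\Delta^2$ that survives reduction — more precisely, one shows that the leftmost (or topmost-leftmost) cell of $\Delta$, suitably chosen, cannot be cancelled by anything coming from the second copy, so $\#(\Delta^2) \geq 1$, and by iterating, $\#(\Delta^n)\geq 1$ for all $n \geq 1$, hence $\Delta^n \neq 1$. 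The cleanest way to organize this is to pick a cell $\pi$ of $\Delta$ that is extremal for the partial order on cells induced by the planar structure (a cell whose top path touches $\mathrm{top}(\Delta)$, i.e.\ a cell "on the boundary"), track its fate, and observe that a dipole reduction in $\Delta^n$ can only pair a cell of the $i$-th copy with a cell of the $(i\pm1)$-st copy across an interface, and the extremal cell of the first copy has no partner on the side facing outward.

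Alternatively — and this is probably the slicker route to write up — I would invoke the action of $D(\mathcal{P},w)$ on a tree-like or median structure. But since the median geometry is only developed later (Section~\ref{section:Median}), within the scope of what is available here I would stick with the direct combinatorial argument via $\#(\cdot)$, or else reduce to the known torsion-freeness of the diagram groupoid: a torsion element $\Delta$ with $\Delta^n = 1$ would give, after passing to the "trivial" connected component, a closed $2$-path of finite order in $\Pi_2(\mathcal{X}(\mathcal{P}))$, and one shows directly that the only $2$-path of finite order is the constant one because concatenating a reduced diagram with itself strictly increases the cell count modulo the bounded cancellation at the single interface.

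The main obstacle is making the cancellation bookkeeping rigorous: I need a clean statement that dipole reductions in $\Delta_1 \circ \Delta_2$ (with $\Delta_1, \Delta_2$ reduced) only occur in a neighborhood of the gluing path and, crucially, that they cannot "propagate" all the way through a copy of $\Delta$ and out the other side when $\Delta \neq 1$. This is exactly the content behind the well-definedness of reduction (\cite[Theorem~3.17]{MR1396957}) together with a Newman-type confluence argument, and the honest version requires either the normal-form theory for diagrams or the observation that $\Delta \mapsto \#(\Delta)$ is subadditive with controlled defect. I expect that once one fixes an extremal cell and argues by planarity that it is never part of a dipole in any power, the rest is routine; pinning down "extremal" correctly (top-left-most cell in the natural reading order) and verifying its persistence is where the care is needed.
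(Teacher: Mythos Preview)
Your approach is genuinely different from the paper's, and the gap you yourself flag is real and not easily patched along the lines you suggest. The paper does not try to track an extremal cell through powers; instead it invokes a structural lemma (\cite[Lemma~15.10]{MR1396957}): every reduced spherical diagram $\Delta$ decomposes, up to dipole reduction, as $\Psi \circ \Delta_0 \circ \Psi^{-1}$ with $\Delta_0$ \emph{absolutely reduced}, meaning $\Delta_0^n$ is already reduced for every $n\geq 1$. From this the result is immediate: $\Delta^n = \Psi \Delta_0^n \Psi^{-1}$ has $2\#\Psi + n\#\Delta_0$ cells, and $\Delta_0$ is nontrivial since $\Delta$ is, so $\Delta^n\neq 1$.

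Your extremal-cell heuristic is in fact \emph{true}---the topmost cells of the first copy of $\Delta$ do survive in the reduced form of $\Delta^n$---but the only clean way I see to prove it is via the decomposition above (the first $\Psi$ is untouched when one cancels the interior $\Psi^{-1}\Psi$ blocks). Your hoped-for statement that ``cancellation cannot propagate all the way through a copy of $\Delta$'' is exactly what the lemma delivers, and it is not a consequence of confluence or of planarity alone: the cancellation in $\Delta\circ\Delta$ can consume all of $\Psi^{-1}$ from the first copy and all of $\Psi$ from the second, and $\#\Psi$ can be arbitrarily close to $\tfrac12\#\Delta$. What stops it from eating everything is precisely that the absolutely reduced core $\Delta_0$ is nontrivial---and isolating that core is the content of the lemma, not something your leftmost-cell bookkeeping recovers. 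So as written, your proposal identifies the right phenomenon but does not supply the missing mechanism; the paper's conjugation-to-absolutely-reduced lemma is that mechanism.
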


\noindent
Let $\mathcal{P}=\langle \Sigma \mid \mathcal{R} \rangle$ be a semigroup presentation. A diagram $\Delta$ over $\mathcal{P}$ is \emph{reduced} if it does not contain any dipole. Assuming that $\Delta$ is \emph{spherical} (i.e.\ if its top and bottom paths have the same label), one says that $\Delta$ is \emph{absolutely reduced} if $\Delta^n$ is reduced for every $n \geq 1$. A key property is that every reduced spherical diagram $\Delta$ decomposes (up to dipole reduction) as $\Psi \circ \Delta_0 \circ \Psi^{-1}$ for some absolutely reduced diagram $\Delta_0$ \cite[Lemma~15.10]{MR1396957}. We emphasize that $\Psi$ may not be spherical. 

\medskip \noindent
Consequently, given a baseword $w \in \Sigma^+$ and a reduced diagram $\Delta \in D(\mathcal{P},w)$, we decompose $\Delta$ as $\Psi \circ \Delta_0 \circ \Psi^{-1}$ for some absolutely reduced diagram $\Delta_0$. Then $\Delta^n$ is trivial in $D(\mathcal{P},w)$ if and only if so is $\Delta_0^n$ in $D(\mathcal{P},w_0)$, where $w_0$ denotes the word labelling the top and bottom paths of $\Delta_0$. But $\Delta_0$ is absolutely reduced, so $\Delta_0^n$ is trivial if and only if $\Delta_0$ is itself trivial, which amounts to saying that $\Delta$ is trivial. Thus, we have shown that the only torsion element in $D(\mathcal{P},w)$ is the trivial element. 

\begin{thm}[\cite{MR1725439}]\label{thm:Nilpotent}
In diagram groups, nilpotent and polycyclic subgroups are free abelian and undistorted when they are finitely generated. 
\end{thm}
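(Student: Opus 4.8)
The plan is to reduce everything to the single statement that any two commuting reduced diagrams in a diagram group are simultaneously "diagonalisable" in a suitable sense, i.e.\ lie in a common free abelian subgroup generated by diagrams supported on disjoint regions, and that such free abelian subgroups are undistorted. First I would recall the normal form from the excerpt: a reduced spherical diagram $\Delta$ decomposes as $\Psi \circ \Delta_0 \circ \Psi^{-1}$ with $\Delta_0$ absolutely reduced. Conjugating by $\Psi$, it suffices to understand centralisers of absolutely reduced diagrams. The key combinatorial input (from \cite{MR1396957}, the theory of the "centraliser" of a diagram) is that an absolutely reduced diagram $\Delta_0$ has a well-defined decomposition of its boundary word $w_0$ into maximal "bands" on which powers of $\Delta_0$ act, and the centraliser of $\Delta_0$ in $D(\mathcal{P},w_0)$ is a direct sum of cyclic groups (one per band that is genuinely moved) together with a diagram group of the fixed part. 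From this one gets that every abelian subgroup of a diagram group is free abelian — and in fact that every virtually abelian, or more generally every nilpotent, subgroup is free abelian.

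The step-by-step outline: (1) Reduce to absolutely reduced diagrams via the $\Psi \circ \Delta_0 \circ \Psi^{-1}$ decomposition, noting conjugation is an isomorphism of the ambient group so it preserves the classes of subgroups in question. (2) Invoke the structure of centralisers of (absolutely) reduced diagrams from \cite{MR1396957} to show the centraliser of any element is, up to conjugacy, a direct product of a diagram group with a free abelian group coming from the bands; iterate to see that an ascending/descending chain forced by nilpotency or polycyclicity lands in a free abelian group. Concretely, a nilpotent subgroup $N$ has nontrivial centre; pick a central element $z$, pass to $C(z)$, and use the product structure to induct on "complexity" (number of cells of an absolutely reduced representative), the base case being trivial. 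This shows $N$ is free abelian; the polycyclic case reduces to the nilpotent case because a polycyclic subgroup of a torsion-free group with the property "every nilpotent subgroup is free abelian" and no Baumslag--Solitar subgroups (diagram groups contain no $BS(1,n)$, $n\ge 2$, by the same centraliser analysis) must itself be nilpotent, hence free abelian. (3) For undistortion, observe that the free abelian subgroup sits as a product of cyclic factors each generated by a band-diagram $\delta_i$ with pairwise disjoint supports, so that the word length of $\delta_1^{k_1}\cdots\delta_m^{k_m}$ in $D(\mathcal{P},w)$ is comparable to $\sum_i |k_i|$; the lower bound comes from a Lipschitz retraction or from counting the cells of the reduced diagram representing $\delta_1^{k_1}\cdots\delta_m^{k_m}$, which grows linearly in each $|k_i|$ because the supports do not interact and no dipoles can form between distinct bands (this is exactly where "absolutely reduced" is used).

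The main obstacle I expect is step (2): making precise and self-contained the claim that the centraliser of an absolutely reduced diagram splits as (diagram group of the fixed part) $\times$ (free abelian on the moved bands). This requires the band/decomposition machinery of Guba--Sapir and a careful argument that there is no "mixing" between bands — equivalently, that $D(\mathcal{P},w)$ contains no solvable Baumslag--Solitar subgroup, so there is no room for distortion or for non-free-abelian nilpotency. A clean alternative, previewed in Section~\ref{section:Median}, is to bypass the combinatorics entirely: diagram groups act freely on CAT(0) cube complexes (median graphs), and for groups acting freely and cocompactly-on-their-orbit on CAT(0) cube complexes the flat torus theorem and the theory of hyperplanes give at once that polycyclic subgroups are free abelian and quasi-isometrically embedded. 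I would state the combinatorial proof for completeness but remark that the geometric argument of Section~\ref{section:MedianDiag} gives it "almost for free", matching the sentence already in the excerpt.
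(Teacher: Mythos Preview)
Your combinatorial route for the nilpotent case is close in spirit to the paper's, but the paper uses a sharper single observation rather than an induction: for \emph{every} subgroup $H$ of a diagram group one has $Z(H)\cap [H,H]=\{1\}$ (Theorem~\ref{thm:ForNilpotent}). This kills $2$-step nilpotent subgroups in one stroke, and then Corollary~\ref{cor:FreeAbelian} (abelian subgroups are free, proved by showing they are residually infinite cyclic via the centraliser description) finishes the nilpotent case. Your step~(3) for undistortion is essentially the paper's argument (Theorem~\ref{thm:FormAbelian} and the cell count in the corollary following it).

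There is a genuine gap in your treatment of the polycyclic case. The claim ``torsion-free, every nilpotent subgroup free abelian, no $BS(1,n)$ $\Rightarrow$ polycyclic subgroups are nilpotent'' is false. Take $P=\mathbb{Z}^2\rtimes_A\mathbb{Z}$ with $A=\begin{pmatrix}2&1\\1&1\end{pmatrix}$: this group is torsion-free polycyclic, contains no $BS(1,n)$ with $n\geq 2$ (the eigenvalues of $A$ are irrational), and every nilpotent subgroup is free abelian (indeed cyclic or inside the $\mathbb{Z}^2$), yet $P$ is not even virtually nilpotent. So your reduction does not go through.

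The paper handles the polycyclic case differently: the median/CAT(0) cube complex action (Section~\ref{section:MedianDiag}) yields only that polycyclic subgroups are \emph{virtually} abelian---the paper explicitly calls this a ``weak version''. The upgrade from virtually abelian to abelian is not automatic (the Klein bottle group is torsion-free and virtually $\mathbb{Z}^2$), and it requires an extra ingredient specific to diagram groups: the consequence of unique root extraction recorded after Corollary~\ref{cor:Roots}, namely $[a^m,b^n]=1\Rightarrow[a,b]=1$. Given a finite-index free abelian $A\leq H$, every $h\in H$ has a power in $A$, hence commutes with $A$ by that property; thus $A\leq Z(H)$, so $H/Z(H)$ is finite, hence $[H,H]$ is finite (Schur), hence trivial, hence $H$ is abelian. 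You should replace your $BS(1,n)$ argument with this.
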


\noindent
We refer to Sections~\ref{section:Commutation} and~\ref{section:MedianDiag} for elements of proofs. 

\medskip \noindent
As a consequence of Theorem~\ref{thm:Nilpotent}, there are no interesting examples of nilpotent or polycyclic diagram groups. For instance, the fundamental group of the Klein bottle or the Heisenberg groups are not diagram groups. Nevertheless, there are metabelian but not abelian diagram groups, such as $\mathbb{Z} \wr \mathbb{Z}$; as well as solvable diagram groups of arbitrary lengths, such as $((\mathbb{Z} \wr \mathbb{Z}) \wr \mathbb{Z}) \cdots \wr \mathbb{Z}$. The absence of distortion given by Theorem~\ref{thm:Nilpotent} also allows us to discard non-amenable groups from diagram groups, such as most Baumslag-Solitar groups. 

\begin{thm}[\cite{HypDiag}]\label{thm:HypDiag}
A diagram group with no $\mathbb{Z}^2$ is locally free. 
\end{thm}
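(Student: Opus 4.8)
The plan is to exploit the median geometry of diagram groups, which is the natural home of statements of this flavour, combined with the standard dichotomy for groups acting on CAT(0) cube complexes. By the results recalled in Section~\ref{section:MedianDiag}, a diagram group $D(\mathcal{P},w)$ acts freely (by Theorem~\ref{thm:TorsionFree}, the action has no inversions after subdividing) on a CAT(0) cube complex $X$, namely the universal cover of the cube complex whose underlying square complex is the Squier complex component containing $w$; equivalently $D(\mathcal{P},w)$ acts freely on the median graph $X^{(1)}$. The engine of the proof will be the following general principle: a group acting freely on a CAT(0) cube complex either contains a $\mathbb{Z}^2$ (coming from two commuting hyperbolic isometries with skewered, ``independent'' axes, or from a flat) or is, in a suitable sense, ``tree-like''. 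So the first step is to fix a finitely generated subgroup $H \leq D(\mathcal{P},w)$ and its induced action on $X$, and reduce to showing $H$ is free.

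Next I would run the standard argument for freeness via actions on trees, transported to the cube-complex setting through hyperplanes. Since $H$ has no $\mathbb{Z}^2$, in particular it has no pair of commuting hyperbolic elements with distinct axes; one then analyses the hyperplanes of $X$. A key structural input is that the diagram group acts on $X$ so that the hyperplane stabilisers are again diagram groups (of ``smaller'' complexity), and that two crossing hyperplanes give rise to a pair of commuting elements acting hyperbolically on the two factors of the corresponding product subcomplex, hence a $\mathbb{Z}^2$ — contradiction. Thus $H$ essentially acts on a CAT(0) cube complex with no two crossing hyperplanes in the part it ``sees'', i.e.\ (after passing to the essential core and using the disc-diagram / Sageev machinery) on a tree, or more precisely $H$ is built up from hyperplane stabilisers along a tree of groups where the edge groups are again $\mathbb{Z}^2$-free diagram groups. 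An induction on complexity (e.g.\ on the number of cells of $\mathcal{P}$, or on the dimension of $X$, or on a suitable rank) then lets us assume the vertex and edge groups appearing are already locally free, and a finitely generated subgroup of a finite graph of locally free groups with, crucially, \emph{cyclic or trivial edge groups} is locally free — here one uses that a $\mathbb{Z}^2$-free group cannot contain $\mathbb{Z}^2$ as an edge-to-vertex configuration, forcing the edge groups down to rank $\le 1$, and then Karrass--Solitar / the structure of subgroups of amalgams over cyclic subgroups gives local freeness.

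An alternative, perhaps cleaner, route — and the one I would actually try to push through — goes through acylindrical hyperbolicity or through the ``flat or tree'' alternative phrased directly for the Guba--Sapir complex: one shows that a finitely generated $\mathbb{Z}^2$-free subgroup $H$ acts on the median graph $X^{(1)}$ with the property that no two hyperplanes crossed by an axis of an element of $H$ can themselves cross (else one manufactures $\mathbb{Z}^2$, using that diagram groups have lots of ``parallel'' commuting diagrams whenever two cells can be applied in disjoint regions — precisely the squares of the Squier complex). A median group acting freely with this non-crossing property is known to be free (this is in the spirit of the characterisation of free groups among groups acting on median graphs / trees). Feeding $H$ through this characterisation yields that $H$ is free, hence $D(\mathcal{P},w)$ is locally free.

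The main obstacle, and where the real work lies, is the middle step: upgrading ``no $\mathbb{Z}^2$ in $D(\mathcal{P},w)$'' to ``no two crossing hyperplanes can be simultaneously skewered by the subgroup $H$'', i.e.\ controlling \emph{when} two commuting diagrams both act hyperbolically and span a genuine flat rather than, say, a line. This requires the fine combinatorics of diagrams — understanding centralisers and the structure of commuting reduced spherical diagrams (the material of Sections~\ref{section:Commutation} and~\ref{section:MedianDiag}), in particular the fact that if two reduced spherical diagrams commute then, after the $\Psi \circ \Delta_0 \circ \Psi^{-1}$ normal form, their ``essential parts'' are supported on disjoint or nested subwords, so two genuinely independent commuting hyperbolic diagrams force disjoint supports and hence a $\mathbb{Z}^2$. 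Making this precise, and handling the inductive bookkeeping on hyperplane stabilisers (which a priori need not be finitely generated, so some care with the essential core is needed), is the technical heart of the argument; once it is in place, the descent to local freeness via graphs of groups over cyclic edge groups is routine.
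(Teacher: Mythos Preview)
Your proposal lives in the right world --- median geometry, hyperplanes, and trees --- but the route you sketch is both vaguer and more roundabout than the paper's, and the inductive mechanism you lean on is not actually set up.

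The paper's argument is sharper and more direct. One first reduces to a finitely generated subgroup $H$ that is \emph{freely irreducible}. The key step is then to locate a \emph{single} hyperplane $J$ in $M(\mathcal{P},w)$ with two properties: no two $H$-translates of $J$ are transverse, and the $H$-stabiliser of $J$ is trivial. The orbit $H\cdot J$ then yields a tree on which $H$ acts with trivial edge-stabilisers; Bass--Serre theory forces $H$ to split over the trivial group, and since $H$ is freely irreducible it must be trivial or infinite cyclic. The no-$\mathbb{Z}^2$ hypothesis is used precisely in the construction of such a $J$: this is where the combinatorics of diagrams enters.

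By contrast, your first route (graph of groups over cyclic edge groups, induction on ``complexity'') has two soft spots. First, the claim that hyperplane stabilisers are diagram groups of strictly smaller complexity is not something the paper establishes or uses, and it is not clear what well-founded quantity you would induct on. Second, even granting cyclic edge groups, a graph of locally free groups over cyclic edges need not be locally free without further control. Your alternative route (``no two hyperplanes skewered by $H$ can cross'') is closer in spirit, but it overshoots: the paper does not prove anything about \emph{all} hyperplanes skewered by $H$, only about the $H$-orbit of one carefully chosen $J$. That localisation, together with the reduction to the freely irreducible case, is exactly what makes the argument go through cleanly.
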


\noindent
The proof of this proposition is fundamentally based on the median geometry that will be described in Section~\ref{section:MedianDiag}. The idea is the following. Let $D(\mathcal{P},w)$ be a diagram group with no $\mathbb{Z}^2$ and let $M(\mathcal{P},w)$ denote the median graph on which it acts naturally. Given a finitely generated subgroup $H \leq D(\mathcal{P},w)$, we want to prove that $H$ is free. We can assume without loss of generality that $H$ is freely irreducible. The trick is to choose carefully some hyperplane $J$ in $M(\mathcal{P},w)$ such that no two $H$-translates of $J$ are transverse and the $H$-stabiliser of $J$ is trivial. Thus, the orbit $H \cdot J$ induces an arboreal structure on $M(\mathcal{P},w)$ and $H$ acts on the dual tree (non-trivially and) with trivial edge-stabilisers. This implies that $H$ splits over the trivial group, which amounts to saying that $H$ is either trivial or infinite cyclic, as desired. Of course, the difficulty is to choose the good hyperplane $J$, and this is where we use the assumption that there is not $\mathbb{Z}^2$ in $D(\mathcal{P},w)$. 

\medskip \noindent
It follows from Theorem~\ref{thm:HypDiag} that free groups are the only hyperbolic groups that are diagram groups. In particular, surface groups are not diagram groups. Nevertheless, it is worth mentioning that surface groups are subgroups of diagram groups. Indeed, the \emph{complement graph} $\overline{P_7}$ of the path $P_7$ of length seven (i.e.\ the graph whose vertices are the vertices of $P_7$ and whose edges connect two vertices whenever they are not adjacent in $P_7$) is an interval graph, so it follows from Proposition~\ref{prop:RAAGinterval} that the right-angled Artin group $A(\overline{P_7})$ is a diagram group. This group contains $A(C_n)$ for every $n \geq 5$, where $C_n$ denotes the cycle of length $n$ \cite[Corollary~4.4]{MR3072113}; and a fortiori every surface group \cite[Corollary~4.5]{MR3072113}. (Here, by a surface group, we mean the fundamental group of an orientable closed surface of genus $\geq 2$.) As a corollary, it follows that subgroups of diagram groups may not be diagram groups themselves.

\begin{thm}[\cite{MR2193191}]\label{thm:BiOrderable}
Diagram groups are bi-orderable, hence locally indicable.
\end{thm}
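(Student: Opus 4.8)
The plan is to treat the two assertions separately: first produce a bi-invariant total order on an arbitrary diagram group $D(\mathcal{P},w)$, then derive local indicability from it by an argument valid for all bi-orderable groups. For the order, recall from Corollary~\ref{cor:PathDiag} that $D(\mathcal{P},w)$ is the set of reduced spherical $(w,w)$-diagrams over $\mathcal{P}$ under concatenation, with $\Delta^{-1}$ the mirror image of $\Delta$. It suffices to exhibit a \emph{positive cone}: a sub-semigroup $P \subseteq D(\mathcal{P},w)$ with $D(\mathcal{P},w) = P \sqcup \{1\} \sqcup P^{-1}$ and $\Psi \circ P \circ \Psi^{-1} = P$ for every diagram $\Psi$ --- the latter condition, conjugation-invariance, being exactly what upgrades the left-order defined by $P$ to a bi-invariant one. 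Equivalently, I want a sign $\varepsilon \colon D(\mathcal{P},w) \setminus \{1\} \to \{\pm 1\}$ with $\varepsilon(\Delta^{-1}) = -\varepsilon(\Delta)$, with $\varepsilon(\Delta_1 \circ \Delta_2) = \varepsilon(\Delta_1)$ whenever $\varepsilon(\Delta_1) = \varepsilon(\Delta_2)$, and invariant under conjugation.

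I would build $\varepsilon$ lexicographically from a coarse and a fine invariant. For the coarse one, fix a well-order on $\mathcal{R}$ and an orientation $u_r \to v_r$ of each relation $r$; the signed count of $r$-cells is unchanged by dipole reduction and is additive under concatenation, so it defines a homomorphism $\phi_r \colon D(\mathcal{P},w) \to \mathbb{Z}$, and $\Phi := (\phi_r)_{r \in \mathcal{R}} \colon D(\mathcal{P},w) \to \bigoplus_{r \in \mathcal{R}} \mathbb{Z}$ pulls back, from the lexicographic bi-order on the target, a bi-invariant order comparing any two diagrams on which $\Phi$ differs. The remaining, and main, task is to bi-order the kernel $K := \ker \Phi$ --- the diagrams in which every relation and its inverse occur equally often --- compatibly with conjugation by \emph{all} of $D(\mathcal{P},w)$. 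Here I would take Thompson's group $F = D(\langle x \mid x = x^2 \rangle, x)$ of Proposition~\ref{prop:Thompson} as a model: a reduced spherical diagram there is a pair of binary trees $(T_+, T_-)$ with equally many leaves, and one orders by the first leaf, read left to right, at which $T_+$ and $T_-$ diverge, the sign recording which subtree is deeper. In general, the analogue is to read a reduced diagram from its top-left corner, find the first $2$-cell at which it departs from the trivial derivation of $w$ from $w$, and let $\varepsilon$ record the orientation of that cell.

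The hard part is this fine invariant, on two counts. First, a diagram encodes many derivations (Corollary~\ref{cor:PathDiag}), so one must show that "the first cell read from the top-left" depends only on the reduced diagram and not on the chosen reading; and since conjugation $\Delta \mapsto \Psi \circ \Delta \circ \Psi^{-1}$ can displace that first cell, the reading order must be arranged so that pre- and post-composing with $\Psi$ shift the reading in a controlled, self-cancelling way. Second, the clean decomposition of a reduced spherical diagram as a positive diagram followed by the inverse of a positive diagram --- which is what makes the $F$-argument run --- is not available over an arbitrary presentation; the natural remedy, which I expect to be the technical core, is to embed $D(\mathcal{P},w)$ into a diagram group over an auxiliary presentation enjoying such a decomposition, in the spirit of the constructions of Section~\ref{section:Examples}, and to check that the embedding can be taken order-preserving, thereby reducing the general case to the tree-pair picture.

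Granting bi-orderability, local indicability follows from a standard argument which I would simply invoke. Let $H \leq D(\mathcal{P},w)$ be finitely generated and nontrivial, equipped with the restricted bi-order. In a bi-ordered group the convex subgroups form a chain and are all normal, and finite generation of $H$ forces the union $C$ of its proper convex subgroups to be itself proper, hence a proper normal subgroup of $H$. Then $H/C$ is bi-ordered with no proper nontrivial convex subgroup, so it is Archimedean and embeds order-isomorphically in $(\mathbb{R},+)$ by H\"older's theorem; being finitely generated and nontrivial, it is therefore free abelian of finite positive rank and admits an epimorphism onto $\mathbb{Z}$. Composing $H \twoheadrightarrow H/C \twoheadrightarrow \mathbb{Z}$ exhibits $H$ as indicable, as required.
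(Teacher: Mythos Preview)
Your derivation of local indicability from bi-orderability (last paragraph) is correct and standard; the paper simply asserts the implication without spelling it out, so your version is more informative on that point.

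For bi-orderability itself, your coarse invariant $\Phi$ is well-defined and the lexicographic scheme is reasonable in principle, but the fine invariant on $K=\ker\Phi$ is only a heuristic. You correctly identify the two obstructions --- well-definedness of ``the first cell read from the top-left'' independent of the derivation chosen, and stability under conjugation by arbitrary $\Psi$ --- and you do not resolve either. These are not mere technicalities: different atomic decompositions of the same reduced diagram can begin at different cells, and there is no evident reason why conjugation by $\Psi$ should shift the reading in a way that cancels against conjugation by $\Psi^{-1}$ when $\Psi$ touches the region where you want to read. As written, the proposal leaves the heart of the argument undone.

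The route taken in \cite{MR2193191}, and summarised in the paper, sidesteps a direct construction on each $D(\mathcal{P},w)$. One builds a single \emph{universal} diagram group containing every diagram group as a subgroup, and shows it splits as a semidirect product of a right-angled Artin group by Thompson's group $F$. Both factors are bi-orderable, and the action of $F$ on the right-angled Artin group preserves a bi-order, so the semidirect product --- and hence every subgroup --- is bi-orderable. A closely related short exact sequence appears later as Theorem~\ref{thm:Short}. Your closing suggestion, to embed into an auxiliary presentation with a tree-pair normal form, is reaching toward exactly this idea; the paper shows that the right target is the structured product $A\rtimes F$, which supplies both the coarse and fine layers of the order at once.
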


\noindent
Recall that a group $G$ is \emph{(bi-)orderable} if there exists a total order $\leq$ and $G$ such that, for all $a,b,c \in G$, if $a \leq b$ then $ac \leq bc$ (and $ca \leq cb$). A group is \emph{locally indicable} if all its finitely generated subgroups surjects onto $\mathbb{Z}$. Locally indicable groups are automatically orderable, but the converse does not hold. However, bi-orderable groups are locally indicable.

\medskip \noindent
It has been first proved in \cite{MR1983088} that diagram groups are orderable. (See also Corollary~\ref{cor:LocallyIndicable} below.) Both \cite{MR1983088} and \cite{MR2193191} use the same strategy: first, they construct a ``universal'' group containing all the diagram groups; and next they show this universal group is (bi-)orderable. The universal groups are however very different in the two articles. In \cite{MR1983088}, this is a braid groups on infinitely many strands. In \cite{MR2193191}, this is a well-chosen diagram group which turns out to splits as a semi-direct product between a right-angled Artin group and Thompson's group $F$. 

\medskip \noindent
Observe that, as an immediate consequence of Theorem~\ref{thm:BiOrderable}, no finitely generated diagram group can be simple. This contrasts with the fact that the commutator subgroup $F'$ of Thompson's group $F$, which is simple but not finitely generated, is a diagram group (Proposition~\ref{prop:CommutatorF}). 

\begin{cor}[\cite{MR1396957}]\label{cor:Roots}
Every diagram group $G$ satisfies the \emph{unique extraction of roots property}, i.e.\ for all $a,b \in G$ and $k \geq 1$, if $a^k=b^k$ then $a=b$.
\end{cor}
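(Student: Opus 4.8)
The plan is to deduce this directly from bi-orderability (Theorem~\ref{thm:BiOrderable}): in fact the unique extraction of roots property holds in \emph{every} bi-orderable group, so no diagram-specific input beyond Theorem~\ref{thm:BiOrderable} is needed. Fix a bi-invariant total order $\leq$ on $G$, which exists by Theorem~\ref{thm:BiOrderable}, and suppose $a^k=b^k$ with $k\geq 1$ but $a\neq b$; without loss of generality $a<b$ (otherwise swap the roles of $a$ and $b$). The goal is to derive the contradiction $a^k<b^k$.

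First I would record the elementary monotonicity fact that in a bi-ordered group, if $x<y$ and $z\leq t$ then $xz<yt$: indeed, right multiplication by $z$ gives $xz<yz$, left multiplication by $y$ gives $yz\leq yt$, and one concludes by transitivity. Then I would prove by induction on $n\geq 1$ that $a^n<b^n$. The base case $n=1$ is the hypothesis $a<b$. For the inductive step, applying the monotonicity fact to $a<b$ and $a^n<b^n$ yields $a\cdot a^n<b\cdot b^n$, that is, $a^{n+1}<b^{n+1}$. Specializing to $n=k$ gives $a^k<b^k$, contradicting $a^k=b^k$. Therefore $a=b$, which is the claim.

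There is no genuine obstacle here: the whole content sits in Theorem~\ref{thm:BiOrderable}, and the remainder is a one-line induction. The only point worth flagging is that bi-invariance (invariance under both left and right multiplication) is genuinely used; for merely left-orderable groups the unique extraction of roots property can fail, so the argument really relies on the stronger conclusion of Theorem~\ref{thm:BiOrderable}.
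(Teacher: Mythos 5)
Your proof is correct and follows essentially the same route as the paper, which also deduces the statement from (bi-)orderability via the chain $a^k = a\cdots aa < a\cdots ab < \cdots < b\cdots bb = b^k$; your version merely formalises that chain as an induction. Your remark that bi-invariance is genuinely needed is accurate (the paper's chain replaces an $a$ by a $b$ in the middle of a word, which uses invariance on both sides), and the paper additionally notes that a direct diagrammatic proof exists, but that is an aside rather than a different argument.
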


\noindent
This a consequence of the orderability of $G$. Indeed, given some $a,b \in G$ satisfying $a^k=b^k$ for some $k \geq 1$, it is clear that $a<b$ implies that 
$$a^k = a \cdots aa < a \cdots ab < a \cdots bb < \cdots < b \cdots bb=b^k,$$
which would contradict that $a^k=b^k$; and similarly for $b<a$. Therefore, we must have $a=b$, as desired. Nevertheless, it is possible and instructive to prove directly Corollary~\ref{cor:Roots} thanks to the combinatorics of diagrams. See \cite[Theorem~15.25]{MR1396957} for more details. 

\medskip \noindent
Let us record a couple of nice consequences of the unique extraction of roots property. Given a diagram group $G$ and two elements $a,b \in G$,
\begin{itemize}
	\item if $[a^m,b^n]=1$ for some $m,n \neq 0$, then $[a,b]=1$;
	\item if $a^m=b^n$ for some $m,n \neq 0$, then there exist $c \in G$ and $k, \ell \geq 0$ satisfying $km=\ell n$ such that $a=c^k$ and $b=c^\ell$.
\end{itemize}
See \cite[Corollaries~15.27 and~15.28]{MR1396957} for more details.

\begin{prop}[\cite{MR1396957}]\label{prop:Roots}
Every element of a diagram group $G$ has only finitely many roots. More precisely, if $a,b \in G\backslash \{1\}$ satisfies $a=b^k$ for some $k \neq 0$, then $|k|$ does not exceed half the number of cells in a reduced diagram representing $a$.
\end{prop}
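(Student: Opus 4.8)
The plan is to represent $a$ and $b$ by reduced diagrams over a semigroup presentation and to read off the number of cells of the reduced diagram of $a$ from the decomposition of a reduced spherical diagram as a conjugate of an absolutely reduced one, recalled in the paragraph before Theorem~\ref{thm:TorsionFree}. Write $c(\cdot)$ for the number of cells of a diagram, fix $\mathcal{P}=\langle\Sigma\mid\mathcal{R}\rangle$ and $w\in\Sigma^+$ with $G=D(\mathcal{P},w)$, and note that $a=b^k$ with $k\neq 0$ is the same as $a=(b^{-1})^{-k}$, so it suffices to treat $k\geq 1$; the case $k=1$ is trivial, so I would assume $k\geq 2$. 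Let $\Gamma$ be the reduced $(w,w)$-diagram representing $b$ and, using \cite[Lemma~15.10]{MR1396957}, write $\Gamma=\Psi\circ\Gamma_0\circ\Psi^{-1}$ with $\Gamma_0$ absolutely reduced and $\Psi$ (possibly non-spherical) chosen so that this expression is already a reduced diagram.

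Next I would identify the reduced diagram representing $a$. Raising to the $k$-th power and cancelling the $k-1$ interior pairs $\Psi^{-1}\circ\Psi$ shows that $b^k$ is represented by $\Psi\circ\Gamma_0^k\circ\Psi^{-1}$, and the main point is that this diagram is reduced. No dipole can lie inside $\Psi$, inside $\Psi^{-1}$, or inside a single copy of $\Gamma_0$, since these are reduced; a dipole between two consecutive copies of $\Gamma_0$ would be a dipole of $\Gamma_0^2$, which is excluded because $\Gamma_0$ is absolutely reduced; and a dipole involving $\Psi$ and the topmost copy of $\Gamma_0$, or the bottommost copy of $\Gamma_0$ and $\Psi^{-1}$, would already occur in $\Psi\circ\Gamma_0\circ\Psi^{-1}$, which we arranged to be reduced. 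As the reduction of a diagram is well defined (\cite[Theorem~3.17]{MR1396957}), the reduced diagram $\Delta$ representing $a$ is exactly $\Psi\circ\Gamma_0^k\circ\Psi^{-1}$, so $c(\Delta)=2c(\Psi)+k\,c(\Gamma_0)$.

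To finish I would bound $c(\Gamma_0)$ from below. Since $b\neq 1$, the diagram $\Gamma_0$ is spherical, reduced and nontrivial, and a nontrivial reduced spherical diagram has at least two cells: a diagram with a single cell $\pi$ has top and bottom labels of the form $\lambda u\rho$ and $\lambda v\rho$ where $u=v$ (or $v=u$) lies in $\mathcal{R}$ and $u\neq v$, so these labels differ and the diagram is not spherical. Hence $c(\Gamma_0)\geq 2$ and $c(\Delta)\geq k\,c(\Gamma_0)\geq 2k$, i.e.\ $|k|\leq c(\Delta)/2$, which is the quantitative statement. For the first assertion, fix $a\neq 1$ and let $N$ be the number of cells of its reduced diagram; every root $b$ of $a$ satisfies $b^k=a$ for a unique integer $k$ with $0<|k|\leq N/2$ (uniqueness of $k$ follows from Theorem~\ref{thm:TorsionFree}), and by the unique extraction of roots property (Corollary~\ref{cor:Roots}) each admissible value of $k$ is attained by at most one $b$. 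Therefore $a$ has at most $N$ roots.

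The only delicate step is the appeal to \cite[Lemma~15.10]{MR1396957}: what is really needed is that $\Gamma=\Psi\circ\Gamma_0\circ\Psi^{-1}$ holds as an honest equality of diagrams with the right-hand side already reduced, rather than only up to dipole reduction. This is the content of the precise form of that lemma, which produces a canonical absolutely reduced core together with a conjugating diagram $\Psi$ of minimal complexity. Granting it, the absence of dipoles in $\Psi\circ\Gamma_0^k\circ\Psi^{-1}$ and the cell count are routine, and the finiteness statement is then purely formal.
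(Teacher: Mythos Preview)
Your argument is correct and is genuinely different from both proofs the paper gives. The paper's first proof (in Section~\ref{section:WordConj}) decomposes \emph{both} $a$ and $b$ as conjugates of normal absolutely reduced diagrams, then invokes the conjugacy machinery (Lemmas~\ref{lem:NormalAbs}, \ref{lem:ConjNormal}, \ref{lem:ShiftConj}) to compare their simple components via cyclic shifts. The paper's second proof (in Section~\ref{section:MedianDiag}) is geometric: it uses that every element acts as a translation on a bi-infinite geodesic in the median graph $M(\mathcal{P},w)$ and reads off the inequality from translation lengths. Your route is more direct than the first proof and purely combinatorial unlike the second: you only decompose $b$, then argue that $\Psi\circ\Gamma_0^k\circ\Psi^{-1}$ is already the reduced diagram for $a$, whence the cell count $2c(\Psi)+k\,c(\Gamma_0)\ge 2k$ follows immediately. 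This avoids the shift machinery entirely.

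Two small points deserve tightening. First, your dipole analysis tacitly assumes a dipole in a concatenation $X_1\circ\cdots\circ X_n$ lies in some adjacent pair $X_i\circ X_{i+1}$; strictly speaking a dipole could straddle non-adjacent pieces if the intermediate pieces are trivial on the relevant subpath, but in your situation this forces either a dipole in $\Psi\circ\Gamma_0\circ\Psi^{-1}$ or a contradiction with the fact that all copies of $\Gamma_0$ are identical, so the conclusion stands. Second, the ``delicate step'' you flag is genuinely available: the iterative construction behind \cite[Lemma~15.10]{MR1396957} peels off matching top and bottom cells of $\Gamma$ one at a time, producing $\Gamma=\Psi\circ\Gamma_0\circ\Psi^{-1}$ as an honest equality of reduced diagrams, not merely up to dipole reduction. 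With these clarifications your proof is complete and arguably the most elementary of the three.
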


\noindent
We refer to Section~\ref{section:WordConj} for a proof of the proposition. 

\medskip \noindent
We conclude this section with some information on homology groups of diagram groups. The main result in this direction is:

\begin{thm}[\cite{MR2193190}]\label{thm:Homology}
Homology groups over $\mathbb{Z}$ of diagram groups are free abelian.
\end{thm}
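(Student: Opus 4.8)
The plan is to exploit the description of diagram groups as second fundamental groups of directed $2$-complexes, following the approach of \cite{MR2193190}. The key point is that a diagram group $D(\mathcal{P},w)$ acts freely and properly on a CAT(0) cube complex (this is precisely the median geometry promised in Section~\ref{section:MedianDiag}: $D(\mathcal{P},w)$ is the fundamental group of a nonpositively curved cube complex, hence acts freely cocompactly-by-orbits on its universal cover, a CAT(0) cube complex). However, cube complexes alone do not force freeness of integral homology, so the genuine input has to be more structural.

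First I would recall, or reconstruct, the precise combinatorial resolution. One associates to $\mathcal{P}$ and $w$ a specific contractible cube complex — the Farley-type complex, or equivalently the ``universal cover'' $\widetilde{K}(\mathcal{P},w)$ of a classifying space built from reduced diagrams — on which $D(\mathcal{P},w)$ acts freely. The cells of the quotient $K(\mathcal{P},w)$ are indexed by reduced diagrams together with a choice of pairwise-disjoint cells inside them (an $n$-cube corresponds to a reduced diagram with $n$ marked mutually non-overlapping $2$-cells), and the boundary maps in the associated chain complex are given by ``removing a marked cell'' with appropriate signs. The crucial combinatorial lemma of Guba--Sapir is that these boundary maps, after a suitable change of basis reflecting the independence of disjoint cells, split so that the chain complex is a direct sum of chain complexes each of which is manifestly exact in all but one degree, or more precisely that the homology is computed by a complex with trivial differentials. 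This is the content of \cite[Theorem 4.x]{MR2193190} and is the step I would cite or prove carefully.

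Concretely, the key steps in order are: (1) Set up the classifying space $K(\mathcal{P},w)$ as a cube complex whose cubes correspond to reduced $(w,w')$-diagrams with marked independent cells, and verify freeness of the $D(\mathcal{P},w)$-action on the universal cover, so that $H_*(D(\mathcal{P},w);\mathbb{Z}) = H_*(K(\mathcal{P},w);\mathbb{Z})$. (2) Write down the cellular chain complex $C_*(K(\mathcal{P},w))$ explicitly and identify a natural $\mathbb{Z}$-basis in each degree. (3) Perform the change of basis exploiting that the set of disjoint-cell configurations inside a fixed reduced diagram forms (the face poset of) a cube, so the relevant local chain complex is that of a cube and its reduced homology vanishes; this makes the global differential block-diagonal with the ``interesting'' part having zero differential. (4) Conclude that each $H_n$ is free abelian, being presented by a matrix that is identically zero, i.e.\ it is a free $\mathbb{Z}$-module on an explicit set of generators (in Guba--Sapir's description, indexed by certain ``thin'' reduced diagrams or by the homology of an associated poset).

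The hard part will be step (3): controlling the differentials well enough to see the splitting. Naively the boundary map that removes one of $n$ marked independent cells has the shape of a cube differential only locally; globally, the identification of a face of one cube with a face of another (coming from dipole reduction, i.e.\ when removing a cell creates a reducible diagram) must be tracked, and one has to check that these identifications are compatible with the change of basis and do not reintroduce nonzero differentials. Guba and Sapir handle this by a careful filtration / spectral sequence argument (or an explicit homotopy), and faithfully reproducing it is where the real work lies; everything else is bookkeeping. I would either import \cite[Theorem 4.x]{MR2193190} wholesale here, or, if a self-contained account is wanted, present the spectral sequence of the filtration of $K(\mathcal{P},w)$ by number of cells in the underlying diagram and show it degenerates with free $E_\infty$-terms.

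\begin{proof}[Idea of proof]
One builds an explicit free resolution of $\mathbb{Z}$ over $\mathbb{Z}[D(\mathcal{P},w)]$ from the directed $2$-complex point of view of Section~\ref{section:Free}: the associated classifying space $K(\mathcal{P},w)$ is a nonpositively curved cube complex whose $n$-cubes correspond to reduced diagrams equipped with $n$ pairwise disjoint marked $2$-cells, the boundary of such a cube being obtained by removing marked cells (followed by dipole reduction). Since disjoint cells can be removed in any order, the cubes attached to a fixed underlying reduced diagram assemble into the chain complex of an ordinary cube, whose reduced homology is trivial; filtering $C_*(K(\mathcal{P},w))$ by the number of cells of the underlying diagram and running the resulting spectral sequence, all the ``cube'' differentials die and one is left with a complex having trivial differentials. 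Hence $H_n(D(\mathcal{P},w);\mathbb{Z}) = H_n(K(\mathcal{P},w);\mathbb{Z})$ is free abelian for every $n$. See \cite{MR2193190} for the details of the spectral sequence argument.
\end{proof}
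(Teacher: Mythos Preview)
Your general shape is right --- one computes $H_*(D(\mathcal{P},w);\mathbb{Z})$ as the cellular homology of the nonpositively curved Squier cube complex $S^+(\mathcal{P},w)$, and the goal is to show this chain complex is homotopy equivalent to one with zero differentials --- but two key ingredients of the paper's argument are missing from your sketch, and the mechanism you propose is not the one actually used.

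First, the paper does \emph{not} work directly with an arbitrary presentation. It first invokes Lemma~\ref{lem:Complete}: every diagram group is a retract in a diagram group over a \emph{complete} presentation, and since homology of a retract is a retract of homology (hence a subgroup of a free abelian group, hence free abelian), one may assume $\mathcal{P}$ is complete. This reduction is essential for what follows and you omit it entirely.

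Second, for complete $\mathcal{P}$ the paper (following \cite{MR2193190}) does not run a spectral sequence but builds a \emph{collapsible scheme} on $S^+(\mathcal{P},w)$: a discrete-Morse-type partition of the cubes into collapsible/redundant/essential cells, extending to all dimensions the role played by principal left edges in the computation of the presentation. Collapsing leaves a homotopy-equivalent complex $E(\mathcal{P},w)$ with the property that, in each dimension $n$, the number of essential $n$-cubes equals the rank of $H_n$ and the induced differential vanishes. Completeness is precisely what makes this scheme exist.

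Your alternative --- filter $K(\mathcal{P},w)$ ``by number of cells in the underlying diagram'' and argue the associated graded has cube-type local pieces --- has a concrete gap. The cubes of the quotient $S^+(\mathcal{P},w)$ are indexed by words with disjoint marked subwords, not by reduced diagrams with marked $2$-cells; the description you give is (roughly) that of the universal cover $M_\square(\mathcal{P},w)$, but the number-of-cells filtration there is not $D(\mathcal{P},w)$-equivariant and so does not descend. Even granting a correct filtration, you yourself flag that face identifications coming from dipole reduction spoil the naive ``local cube'' picture, and nothing in your sketch explains why, after these identifications, the spectral sequence degenerates with free $E_\infty$-terms and no extension problems. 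The collapsible-scheme argument, by contrast, sidesteps all of this by producing an explicit deformation retraction.
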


\noindent
Some elements from the proof of the theorem are given in Section~\ref{section:Presentation}. Interestingly, with some restrictions on the underlying semigroup presentations, further information can be extracted from the proof of theorem:

\begin{thm}[\cite{MR2193190}]
Let $\mathcal{P}$ be a finite and complete presentation of a finite semigroup. A diagram group over $\mathcal{P}$ has a rational Poincar\'e series.
\end{thm}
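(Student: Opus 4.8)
The plan is to combine Theorem~\ref{thm:Homology} with a careful analysis of the structure of the classifying space provided by the proof of that theorem, specialised to the case of a finite complete presentation of a finite semigroup. Recall that the proof of Theorem~\ref{thm:Homology} (following \cite{MR2193190}) constructs, for a diagram group $D(\mathcal{P},w)$, a free resolution of $\mathbb{Z}$ over $\mathbb{Z}[D(\mathcal{P},w)]$ whose chain modules in each degree are free with an explicit basis indexed by certain combinatorial data attached to $\mathcal{P}$ — essentially tuples of cells (or ``thin'' diagrams) that can be stacked side by side. When $\mathcal{P}$ is finite and complete, rewriting terminates and confluates, so every word over $\Sigma$ has a unique normal form; this is precisely what makes the relevant combinatorial indexing sets finite in each degree and lets one bound, in terms of the (finite) semigroup and the (finite) set of relations, the number of basis elements in homological degree $n$.

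First I would set up the Poincar\'e series $P(t) = \sum_{n \ge 0} \dim_{\mathbb{Q}} H_n(D(\mathcal{P},w);\mathbb{Q}) \, t^n$ (equivalently, by Theorem~\ref{thm:Homology}, the series recording the ranks of the free abelian groups $H_n(D(\mathcal{P},w);\mathbb{Z})$). Next I would recall the polyhedral $K(\pi,1)$ for $D(\mathcal{P},w)$ coming from the median/cube-complex picture of Section~\ref{section:MedianDiag}, or equivalently the resolution of \cite{MR2193190}: the cells in dimension $n$ correspond to collections of $n$ pairwise ``independent'' elementary moves applicable to words appearing in derivations from $w$, and completeness guarantees that up to the group action there are only finitely many such configurations in each dimension. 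The third step is the key counting estimate: using finiteness of the semigroup and completeness, one shows the number $c_n$ of $n$-dimensional cells modulo the group action grows in a controlled way, and more importantly that the generating function $\sum_n c_n t^n$ is rational — typically because the set of relevant configurations is recognised by a finite automaton (confluence of the rewriting system is exactly what produces such an automaton on normal forms), so its generating function is rational by the transfer-matrix / Sch\"utzenberger method.

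Finally I would feed this into the Euler-characteristic-style bookkeeping: since the cellular chain complex computing the homology is, in each degree, free of finite rank $c_n$ over $\mathbb{Z}$ (after passing to the orbit complex, or working $\mathbb{Z}$-equivariantly and taking invariants appropriately), and since Theorem~\ref{thm:Homology} tells us there is no torsion and no cancellation pathology, the alternating sum of the $c_n$ governs the Poincar\'e series; rationality of $\sum_n c_n t^n$ then transfers to rationality of $P(t)$. More precisely, one can argue that the boundary maps, in the finite-state description, are themselves ``automatic'' so that the ranks of homology assemble into a rational function via a matrix computation over $\mathbb{Q}(t)$.

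The main obstacle I anticipate is the third step: showing that the combinatorial data indexing the cells (in all dimensions simultaneously) is genuinely governed by a \emph{finite} automaton, i.e.\ that completeness of $\mathcal{P}$ does not merely give termination and confluence word-by-word but yields a uniform finite-state description of the higher syzygies. One has to check that the ``independence'' relation between elementary moves, and the way cells in dimension $n+1$ are glued along cells in dimension $n$, only depends on a bounded window of the normal forms involved — this boundedness is where finiteness of the semigroup (not just finiteness of $\Sigma$ and $\mathcal{R}$) is essential, and making it precise is the technical heart of the argument; everything after that is a routine application of the standard rationality criterion for generating functions of regular languages.
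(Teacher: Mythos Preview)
Your outline points in the right direction but has a real gap at the final step. Knowing that $\sum_n c_n t^n$ is rational, together with the fact that homology is free abelian, does \emph{not} yield rationality of the Poincar\'e series: the alternating sum of the $c_n$ only recovers the Euler characteristic, not the individual Betti numbers, and ``no torsion'' does not prevent cancellation between consecutive degrees. Your fallback sentence (``the boundary maps are themselves automatic, so the ranks of homology assemble into a rational function via a matrix computation over $\mathbb{Q}(t)$'') is where the whole content would have to live, and as stated it is not an argument---computing ranks of infinitely many boundary maps from a finite-state description is precisely the hard part you have not addressed.

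The paper's route (following \cite{MR2193190}, sketched at the end of Section~\ref{section:Presentation}) sidesteps this difficulty entirely. One does not work with the raw cell counts of $S^+(\mathcal{P},w)$; instead one applies a \emph{collapsible scheme} (discrete Morse theory) to obtain a homotopy-equivalent complex $E(\mathcal{P},w)$ built only from the \emph{essential} cubes. The crucial fact is that this Morse-reduced complex is minimal: in each dimension $n$, the $n$th homology group is freely generated by the essential $n$-cubes themselves. Hence the Poincar\'e series \emph{equals} the generating function counting essential cubes, and one only has to show that this count is governed by a finite automaton---which is where finiteness of the presentation and of the semigroup enter. So the missing idea in your proposal is the passage, via the collapsible scheme, to a complex with zero differential; once you have that, no Euler-characteristic bookkeeping or analysis of boundary maps is needed at all.
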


\noindent
Recall that the \emph{Poincar\'e series} of a group $G$ is the series $P_G(t):= \sum_{n \geq 0} r_nt^n$ where $r_n$ denotes the rank of $H_n(G,\mathbb{Z})$ for every $n \geq 0$. Complete semigroup presentation will be defined in Section~\ref{section:Presentation}. 

\medskip \noindent
As a particular case of Theorem~\ref{thm:Homology}, abelianisations of diagram groups are free abelian (also proved in \cite{MR1396957}, see Corollary~\ref{cor:Abelianisation} below). This contrasts with Pride's result from \cite{MR2727671}: for every abelian group $A$ which is a finite or countably infinite direct product of cyclic groups, there exists a diagram grop $G$ such that $[G,G] / [[G,G],G] \simeq A$.

\section{Algorithmic and computational properties}\label{section:Algo}

\subsection{Computation of presentations and homology groups}\label{section:Presentation}

\noindent
In this section, we are concerned with the following problem: Given a semigroup presentation $\mathcal{P}:= \langle \Sigma \mid \mathcal{R} \rangle$ and a baseword $w \in \Sigma^+$, how can we compute efficiently a presentation for the diagram group $D(\mathcal{P},w)$? Or equivalently, how can we compute efficiently a presentation for the fundamental group of the Squier square complex $S(\mathcal{P},w)$? We describe here the solution proposed in \cite[Chapter~9]{MR1396957}. See also Section~\ref{section:Hyperplanes} for a decomposition of diagram groups as fundamental groups of graphs of groups. 

\medskip \noindent
The starting idea is simple. We want to fix, in our Squier square complex $S(\mathcal{P},w)$, an orientation of the edges and a spanning tree $T(\mathcal{P},w)$ so that
$$\left\langle \text{oriented edges in $S(\mathcal{P},w)$} \left| \begin{array}{c} \text{edges in $T(\mathcal{P},w)$ are trivial} \\ \text{the boundary of a square is trivial} \end{array} \right. \right\rangle$$
defines a presentation of our diagram group $D(\mathcal{P},w)$. 
As an illustration, consider the semigroup presentation 
$$\mathcal{P}:= \langle a,b,c, x,y \mid a=b, x=y, c=ax, c=bx \rangle.$$

\medskip \noindent
\begin{minipage}{0.4\linewidth}
\includegraphics[width=0.95\linewidth]{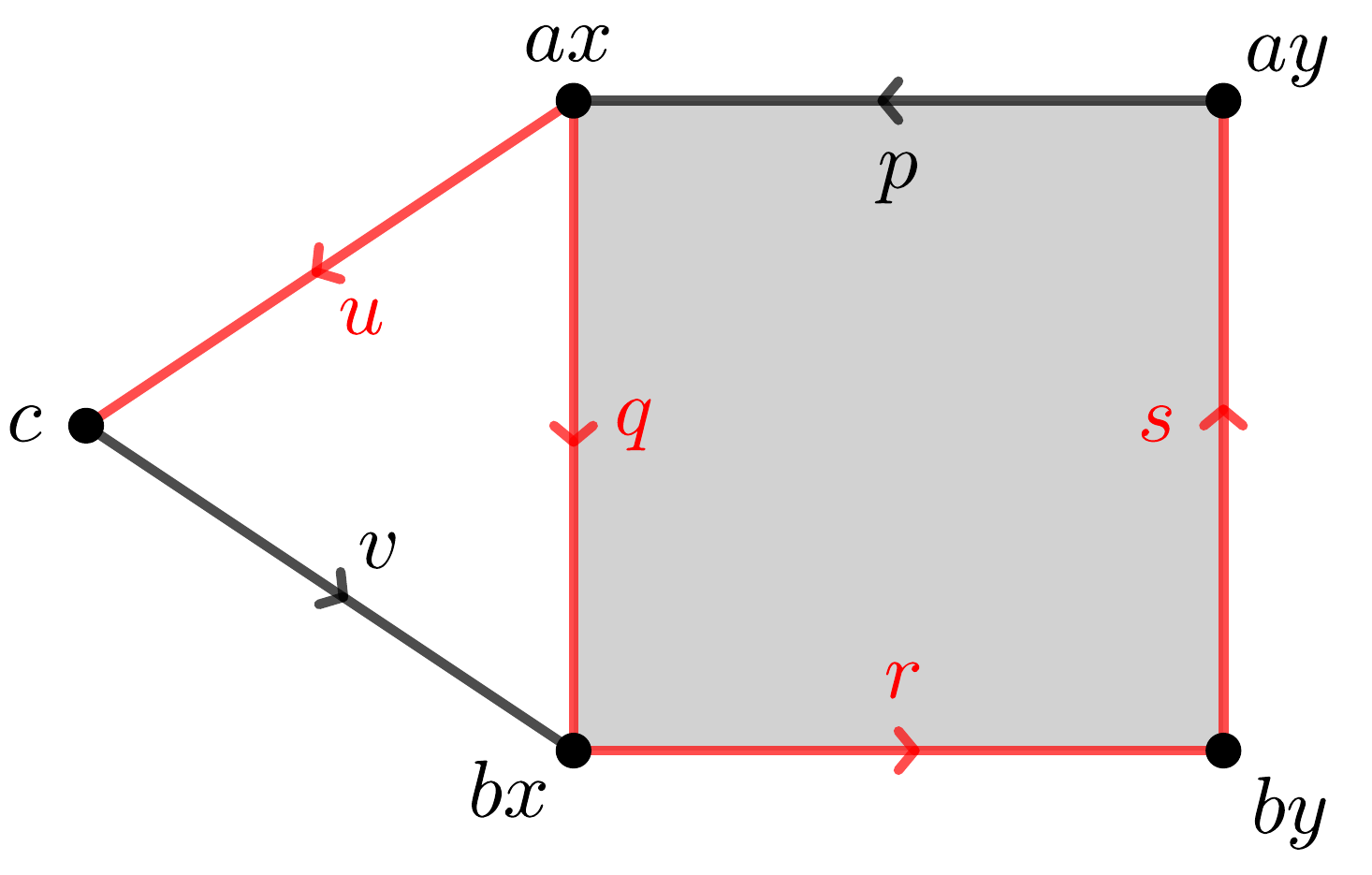}
\end{minipage}
\begin{minipage}{0.58\linewidth}
Draw the Squier complex $S(\mathcal{P},c)$, orient the edges, and choose a spanning tree as illustrated on the left. One gets the presentation
$$\langle p,q,r,s,u,v \mid u=q=r=s=1, qrsp=1 \rangle$$
of the diagram group $D(\mathcal{P},c)$, which can be simplified as $\langle v \mid \ \rangle$. 
\end{minipage}

\medskip \noindent
The difficulty is to find an automatic procedure constructing a spanning tree. This is why we restrict ourselves to \emph{complete} semigroup presentations, which we define now.

\medskip \noindent
Let $\mathcal{P}= \langle \Sigma \mid \mathcal{R} \rangle$ be a semigroup presentation. Given two positive words $w_1,w_2 \in \Sigma^+$, we write $w_1 \to w_2$ if $w_2$ can be obtained from $w_1$ by applying a(n ordered) relation of $\mathcal{R}$, i.e.\ if we can write $w_1=aub$ and $w_2=avb$ for some $u=v$ in $\mathcal{R}$. We emphasize that, because $v=u$ does not belong to $\mathcal{R}$, $w_1 \to w_2$ does not necessarily imply $w_2 \to w_1$. We denote by $\overset{\star}{\to}$ the transitive closure of $\to$, i.e.\ given two words $u,v \in \Sigma^+$, we write $u \overset{\star}{\to} v$ if there exist $w_1, \ldots, w_n \in \Sigma^+$ satisfying $u \to w_1 \to \cdots \to w_n \to v$. The presentation $\mathcal{P}$ is \emph{complete} if the relation $\overset{\star}{\to}$ is \emph{confluent} (i.e.\ if $u \overset{\star}{\to} a$ and $u \overset{\star}{\to} b$, then there exists $v$ such that $a \overset{\star}{\to} v$ and $a \overset{\star}{\to} v$) and \emph{terminating} (i.e.\ every sequence $w_1 \to w_2 \to \cdots$ eventually terminates). For instance, the presentation $\langle x \mid x^2=x \rangle$ is complete, but not the presentation $\langle x \mid x=x^2 \rangle$ because $x \to x^2 \to x^3 \to \cdots$ does not terminate.

\medskip \noindent
From now on, we assume that our semigroup presentation $\mathcal{P}= \langle \Sigma \mid \mathcal{R} \rangle$ is complete. As a consequence, every positive word $w \in \Sigma^+$ has a unique \emph{reduced form}, i.e.\ a positive word $\bar{w} \in \Sigma^+$ such that $w \overset{\star}{\to} \bar{w}$ and such that there is no $u \in \Sigma^+$ such that $\bar{w} \to u$. (Because $\overset{\star}{\to}$ is terminating, such a word exists; and because $\overset{\star}{\to}$ is confluent, such a word is unique.) However, a derivation from a word to its reduction may not be uniquely defined. What we want to do is to define a canonical way to reduced words and to define our spanning tree in the Squier square complex as the union of all the paths given by these reductions. These canonical reductions are defined by the elementary moves associated to the following edges in our Squier complex:

\begin{definition}\label{def:PrincipalLeft}
An oriented edge $(u, \ell \to r,v)$ in $S(\mathcal{P})$ is a \emph{principal left edge} if the following conditions hold:
\begin{itemize}
	\item each proper prefix of the word $u \ell$ is reduced;
	\item $\ell$ is the longest suffix of $u \ell$ which is the left side of a relation in $\mathcal{R}$;
	\item if there is a relation $\ell = r'$ in $\mathcal{R}$ with $r \neq r'$, then $r$ is smaller than $r'$ in the ShortLex order.
\end{itemize}
\end{definition}

\noindent
Here, we think of the letters in $\Sigma$ as totally ordered (say following the left-right order when we write the presentation $\mathcal{P}$) so that positive words are naturally totally ordered by the corresponding ShortLex order, namely: given $w_1,w_2 \in \Sigma^+$, we write $w_1 \leq w_2$ if $w_1$ is shorter than $w_2$ or if $w_1,w_2$ have the same length and $w_1$ is smaller than $w_2$ with respect to the lexicographic order given by our total order on $\Sigma$. 

\medskip \noindent
Let us illustrate with an example how to reduce a word by following the pattern given by the previous definition (which seems to be difficult to digest, but which turns out to quite natural when we understand how it works). So let us consider the semigroup presentation
$$\mathcal{P}:= \langle a,b \mid a^3=a, a^3=a^2, ba^3=a^3b \rangle,$$
which is complete. The word $aba^3ba^4$ is not reduced. 
\begin{itemize}
	\item Identify the smallest prefix of $aba^3ba^4$ that is not reduced. This is $aba^3$. 
	\item Identify the longest suffix of $aba^3$ that can be modified by applying a relation. This is $ba^3$.
	\item Observe that $ba^3=a^3b$ is the only relation having $ba^3$ as its left side, so this the relation we have to apply.
\end{itemize}
Thus, the first step in our derivation from $aba^3ba^4$ to its reduction is $aba^3ba^4 \to a^4b^2a^4$. Our new word is again not reduced, so another step is necessary.
\begin{itemize}
	\item Identify the smallest prefix of $a^4b^2a^4$ that is not reduced. This is $a^3$.
	\item Identify the longest suffix of $a^3$ that can be modified by applying a relation. This is $a^3$.
	\item We have two relation having $a^3$ as their left sides, namely $a^3=a$ and $a^3=a^2$. Compare the right sides $a$ and $a^2$ with respect to the ShortLex order. Since $a$ is shorter than $a^2$, the relation we must apply is $a^2=a$.
\end{itemize}
Thus, the second step in our derivation is $a^4b^2a^4 \to a^2 b^2a^4$. Keeping applying the procedure, we eventually find 
$$aba^3ba^4 \to a^4b^2a^4 \to a^2b^2a^4 \to a^2ba^3ba \to a^5b^2a \to a^3b^2a \to ab^2a,$$
which is the canonical reduction we are looking for.

\medskip \noindent
The union of all the principal left edges given by Definition~\ref{def:PrincipalLeft} define a spanning forest in our Squier complex, as previous claimed \cite[Lemma~9.4]{MR1396957}, which allows us to deduce the following statement:

\begin{thm}[\cite{MR1396957}]\label{thm:Presentation}
Let $\mathcal{P}= \langle \Sigma \mid \mathcal{R} \rangle$ be a complete semigroup presentation and $w \in \Sigma^+$ a baseword. The diagram group $D(\mathcal{P},w)$ admits the following presentation. The generators are the triples $(u, \ell \to r, v)$ where $(\ell = r) \in \mathcal{R}$, $urv=w$ mod $\mathcal{P}$, $u$ is a reduced word, $v \in \Sigma^+$, $(u, \ell \to r,v)$ is not a principal left edge. The defining relations are all the relations of the form
$$(u, \ell \to r, vsw) = (u, \ell \to r, vtw)^{( \overline{urv}, s \to t,w)}$$
where $(s=t) \in \mathcal{R}$ and the edge $( \overline{urv}, s \to t,w)$ is not a principal left edge; or of the form
$$(u, \ell \to r, vsw) = (u, \ell \to r, vtw)$$
if $( \overline{urv}, s \to t,w)$ is a principal left edge. 
\end{thm}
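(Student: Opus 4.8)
The plan is to compute $\pi_1(S(\mathcal{P},w))$ via the standard presentation attached to a graph with a chosen spanning tree, and to feed in the combinatorial description of principal left edges established in \cite[Lemma~9.4]{MR1396957}. First I would recall the general fact: if $X$ is a connected $2$-complex with oriented $1$-skeleton and $T$ is a spanning tree of $X^{(1)}$, then $\pi_1(X)$ is presented by generators the oriented edges of $X$ not in $T$, with relators the edges of $T$ (set to $1$) together with the boundary words of the $2$-cells of $X$. In our situation $X = S(\mathcal{P},w)$, the connected component of the Squier square complex based at $w$: its vertices are the positive words equal to $w$ modulo $\mathcal{P}$, its oriented edges are the triples $(u,\ell\to r,v)$ with $urv \equiv w$ (oriented so that the edge goes from $u\ell v$ to $urv$, matching the direction of the rewriting $\ell\to r$), and its squares are the $[a,u=v,b,p=q,c]$ of Definition~\ref{def:SquierSquare}.

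The key input is that, since $\mathcal{P}$ is complete, the principal left edges of Definition~\ref{def:PrincipalLeft} form a spanning forest of $S(\mathcal{P})$; restricting to the component of $w$, they form a spanning tree $T(\mathcal{P},w)$ --- this is exactly \cite[Lemma~9.4]{MR1396957}. (Termination of $\overset{\star}{\to}$ guarantees the canonical-reduction paths reach the reduced form $\bar w$, hence connect every vertex to $\bar w$; confluence guarantees $\bar w$ is the common root, so the graph they span is connected; and a counting/uniqueness argument shows there are no cycles, i.e. each non-reduced word has exactly one outgoing principal left edge.) So I would simply take $T = T(\mathcal{P},w)$ in the general presentation above.

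Next I would unwind the two families of relators. The spanning-tree relators say that each principal left edge is trivial. The square relators come in the shape described in Definition~\ref{def:SquierSquare}: a square glued along edges $[a,u{=}v,bpc]$, $[a,u{=}v,bqc]$, $[aub,p{=}q,c]$, $[avb,p{=}q,c]$ gives the relation that conjugating the $u{=}v$-edge by the $p{=}q$-edge yields the $u{=}v$-edge applied at the other position; written multiplicatively in the free group on the edges this is precisely an identity of the form (first edge)$^{\text{(third edge)}} = $ (second edge) after replacing the fourth edge by its value. Now I would perform the Tietze simplification: every generator that is a principal left edge is killed by the tree relators, so one may eliminate it, substituting the empty word wherever it occurs. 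Tracking through which squares survive, a generator $(u,\ell\to r,v)$ after elimination of principal left edges can be normalized to one where $u$ is reduced and $urv \equiv w$, the edge is not itself principal left, and the tails appearing as "$\overline{urv}$" are the reduced forms forced by completeness; the square relators become exactly the two displayed families, the first when the transverse edge $(\overline{urv},s\to t,w)$ is not principal left (so it survives as a generator and appears as the conjugating exponent) and the second when it is principal left (so it has been set to $1$, and the conjugation collapses to an equality). This is the bulk of the bookkeeping.

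The main obstacle I expect is precisely this bookkeeping step: one must verify that after eliminating the principal-left generators via the tree, the residual generating set is exactly the triples $(u,\ell\to r,v)$ with $u$ reduced, $v\in\Sigma^+$, $urv\equiv w$, and $(u,\ell\to r,v)$ not principal left --- in particular that no relations among these survive other than the two advertised families, and that the exponents $\overline{urv}$ are the correct base-vertices of the transverse edges. This requires knowing the precise structure of the squares incident to a principal left edge and checking that the Tietze substitution is consistent (no generator is eliminated twice, no circular elimination), which in turn rests again on the forest property of Lemma~9.4 and on the fact that completeness makes the reduced form $\overline{urv}$ well-defined. Once that combinatorial audit is done, the statement follows immediately from the general spanning-tree presentation of $\pi_1$ of a $2$-complex, so I would not belabor the routine group-theoretic manipulations and would instead cite \cite[Chapter~9]{MR1396957} for the verification that the forest is spanning and for the detailed case analysis of the squares.
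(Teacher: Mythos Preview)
Your proposal is correct and follows essentially the same approach as the paper: use the spanning forest of principal left edges (the paper's \cite[Lemma~9.4]{MR1396957}) as the spanning tree in the standard $\pi_1$-of-a-$2$-complex presentation, then perform the Tietze eliminations to obtain the stated generators and the two families of relations. The paper itself only sketches this, saying the spanning-forest property ``allows us to deduce'' the theorem, so your more detailed account of the bookkeeping (normalising $u$ to its reduced form, tracking which transverse edge in a square is principal left versus not) is in fact more explicit than what the survey provides.
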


\noindent
Here, we use the notation $a^b=b^{-1}ab$. It is worth mentioning that the presentation given by Theorem~\ref{thm:Presentation} is in general not optimal since the second type of relations allows us to decrease the number of generators. It is possible to extract a presentation with a minimal number of generators (which turns out to coincide with the rank of the abelianisation), but at the cost of a more technical statement. We refer the interested reader to \cite[Theorem~9.8]{MR1396957}.

\medskip \noindent
An interesting consequence of Theorem~\ref{thm:Presentation} is that diagram groups over complete semigroup presentations are examples of \emph{LOG groups}.

\begin{definition}
Let $\Gamma$ be a directed graph and $\lambda : E(\Gamma) \to V(\Gamma)$ a labelling map. The \emph{LOG group} $\Gamma(\lambda)$ is given by the presentation
$$\left\langle V(\Gamma) \mid  a=b^{\lambda(a,b)}  , \ (a,b) \in E(\Gamma) \right\rangle.$$
\end{definition}

\noindent
LOG stands for Labelled Oriented Graph. It is worth noticing that not all diagram groups are LOG. Indeed, it is clear that every LOG group surjects onto $\mathbb{Z}$ (it suffices to send all the generators to $1$), but we know from Proposition~\ref{prop:CommutatorF} that the commutator subgroup $F'$ of Thompson's group $F$ is a simple diagram group. Thus, $F'$ is a diagram group but not a LOG group. We also deduce that $F'$ cannot be described as a diagram group over a complete semigroup presentation, so we lost some generality when restricting ourselves to complete semigroup presentation. Nevertheless, as shown by Lemma~\ref{lem:Complete} below, it turns out that every diagram group can be described (more or less explicitly) as a retract inside some diagram group over a complete semigroup presentation, so it is also possible to compute presentations of diagram groups over incomplete semigroup presentations thanks to the techniques developed in this section (see \cite[Lemma~9.11]{MR1396957} and the related discussion).

\begin{ex}
Consider the semigroup presentation $\mathcal{P}=\langle x \mid x^2=x \rangle$, which is complete. Notice that the principal left edges in $S(\mathcal{P},x)$ are the edges of the form $(\emptyset, x^2 \to x, x^n)$, $n \geq 0$. A direct application of Theorem~\ref{thm:Presentation} shows that $D(\mathcal{P},x)$, which is isomorphic to the Thompson group $F$ according to Proposition~\ref{prop:Thompson}, admits a presentation whose generators are the triples $(x,x^2 \to x,x^n)$ for $n \geq 0$, and whose relations are
$$(x,x^2 \to x, x^px^2x^q)= (x,x^2 \to x, x^pxx^q)^{(x,x^2\to x,x^q)}, \ p,q \geq 0.$$
Setting $s_i:= (x,x^2 \to x, x^i)$ for every $i \geq 0$, one gets the infinite presentation
$$\left\langle s_0,s_1, \ldots \mid s_{n+1}= s_n^{s_m}, \ n>m \geq 0 \right\rangle.$$
The presentation can be simplified as a finite presentation; see \cite[Example~9.10]{MR1396957} for details. 
\end{ex}

\noindent
Let us mention a theoretical application of Theorem~\ref{thm:Presentation}, namely:

\begin{cor}\label{cor:Abelianisation}
Diagram groups have free abelianisations.
\end{cor}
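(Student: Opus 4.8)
The plan is to deduce Corollary~\ref{cor:Abelianisation} directly from the explicit presentation furnished by Theorem~\ref{thm:Presentation}, at least for diagram groups over complete semigroup presentations, and then to bootstrap to the general case using the retraction statement alluded to in the text (Lemma~\ref{lem:Complete}). So first I would assume $\mathcal{P}$ is complete and stare at the presentation of $D(\mathcal{P},w)$ given by Theorem~\ref{thm:Presentation}: the generators are certain triples $(u,\ell\to r,v)$, and every defining relation is of the form $g = h^k$ for generators $g,h,k$ (using the convention $a^b = b^{-1}ab$), or of the form $g = h$. Abelianising kills all conjugations, so in $D(\mathcal{P},w)^{\mathrm{ab}}$ each relation of the first type becomes simply $g = h$, and relations of the second type are already of that form. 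Hence $D(\mathcal{P},w)^{\mathrm{ab}}$ is the quotient of the free abelian group on the generating set by a set of relations each of which identifies two generators.

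The key point is then purely combinatorial: a quotient of a free abelian group $\mathbb{Z}^{(S)}$ by relations of the form $s = s'$ ($s,s'\in S$) is again free abelian. Indeed, such relations generate an equivalence relation on $S$, and the quotient is visibly the free abelian group on the set $S/\!\sim$ of equivalence classes — one checks that the obvious map $\mathbb{Z}^{(S/\sim)} \to \mathbb{Z}^{(S)} / \langle s - s' \rangle$ is an isomorphism, with inverse induced by $s \mapsto [s]$. This gives that $D(\mathcal{P},w)^{\mathrm{ab}}$ is free abelian whenever $\mathcal{P}$ is complete.

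To remove the completeness hypothesis, I would invoke the fact stated just before Example~\ref{ex:CommutatorF}'s surrounding discussion, namely Lemma~\ref{lem:Complete}: every diagram group is a retract of a diagram group over a complete semigroup presentation. Abelianisation is a functor, and a retraction $r\colon G \to H$ with section $\iota\colon H \to G$ (so $r\circ\iota = \mathrm{id}_H$) induces a retraction $r^{\mathrm{ab}}\colon G^{\mathrm{ab}} \to H^{\mathrm{ab}}$ with section $\iota^{\mathrm{ab}}$; hence $H^{\mathrm{ab}}$ is a direct summand of $G^{\mathrm{ab}}$. A direct summand of a free abelian group is free abelian (every subgroup of a free abelian group is free abelian, and a summand is in particular a subgroup), so the general case follows from the complete case.

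The main obstacle is bookkeeping rather than anything deep: one has to be careful that the presentation in Theorem~\ref{thm:Presentation} really does express every relation in the stated $g = h^k$ or $g = h$ shape — in particular that the exponent/conjugator is itself one of the listed generators (or the identity, if the edge $(\overline{urv},s\to t,w)$ happens to be principal left), so that abelianisation genuinely collapses it to an identification of generators and introduces no new free variables or torsion. Granting the structure of that presentation, the remainder is the elementary lemma on quotients of free abelian groups by coordinate-identifications together with the functoriality of abelianisation under retracts. (Alternatively, one can avoid Theorem~\ref{thm:Presentation} entirely and cite Theorem~\ref{thm:Homology}, since $H_1(G,\mathbb{Z}) = G^{\mathrm{ab}}$; but the self-contained argument above is in the spirit of the surrounding section and is the one I would present here.)
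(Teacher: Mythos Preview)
Your proposal is correct and follows essentially the same approach as the paper: first treat the complete case via the presentation of Theorem~\ref{thm:Presentation}, then pass to the general case by Lemma~\ref{lem:Complete} and the observation that abelianisation of a retract sits inside the abelianisation of the ambient group. The only cosmetic differences are that the paper packages your ``all relations become $g=h$ after abelianising'' step as the one-line remark that diagram groups over complete presentations are LOG groups (hence have free abelianisation), and for the retract step the paper proves directly that $H^{\mathrm{ab}}\hookrightarrow G^{\mathrm{ab}}$ rather than noting it is a direct summand; both routes finish with ``subgroups of free abelian groups are free abelian''.
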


\begin{lemma}\label{lem:Complete}
Every diagram group is a retract in a diagram group over a complete semigroup presentation.
\end{lemma}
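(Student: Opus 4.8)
The plan is to leave the alphabet unchanged and to \emph{complete} the presentation by adding derived relations, checking at each step that the diagram group only grows while keeping the original one as a retract.

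First, fix a well-ordering of $\Sigma$ (harmless, since $\Sigma$ is just a set); it induces the ShortLex well-ordering $\prec$ on $\Sigma^+$, which is a reduction order, i.e.\ compatible with taking contexts ($u \prec v$ implies $aub \prec avb$). Orient every relation of $\mathcal{R}$ according to $\prec$ — possible since $\mathcal{R}$ contains no relation $u=u$ and $\prec$ is total — and run Knuth--Bendix completion with a fair strategy (and without the optional interreduction steps): whenever an overlap of two left-hand sides produces a critical pair whose two normal forms $\hat u, \hat v$ differ, add the new rule orienting $\hat u = \hat v$ by $\prec$. Every rule so produced is compatible with $\prec$, and $\prec$ is a well-founded reduction order, so the (possibly infinite) resulting system $\mathcal{R}'$ is terminating; fairness guarantees that every critical pair eventually resolves, so $\mathcal{R}'$ is also confluent. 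Thus $\mathcal{Q} := \langle \Sigma \mid \mathcal{R}' \rangle$ is a complete semigroup presentation, and by construction every relation of $\mathcal{R}' \setminus \mathcal{R}$ already holds in the semigroup defined by $\mathcal{P}$.

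The crux is: if $\mathcal{P}^{+}$ is obtained from a presentation $\mathcal{P}_0$ by adjoining a single relation $u' = v'$ with $u' =_{\mathcal{P}_0} v'$, then $D(\mathcal{P}_0, w)$ is a retract of $D(\mathcal{P}^{+}, w)$ for every baseword $w$. Fix once and for all a $\mathcal{P}_0$-derivation $\delta$ from $u'$ to $v'$, i.e.\ a path in $S(\mathcal{P}_0)$ from $u'$ to $v'$. Define a cellular map $\rho : S(\mathcal{P}^{+}) \to S(\mathcal{P}_0)$ which is the identity on vertices (the vertex sets coincide) and on old edges, and which sends each new edge $[a, u'=v', b]$ to the path obtained by inserting $\delta$ between the fixed prefix $a$ and suffix $b$. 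One must check that $\rho$ extends over the squares of $S(\mathcal{P}^{+})$: a square involving a new edge and a relation applied to a disjoint subword (old or new) is sent to the boundary of a rectangle of the shape ``$\delta$ in one slot, a derivation in the disjoint slot'', and any such rectangle is tiled by Squier squares of $S(\mathcal{P}_0)$ because applications of relations to disjoint subwords commute up to squares; hence the image of the square's boundary bounds a disc. Therefore $\rho$ is a well-defined retraction of square complexes, and on the connected component of $w$ it induces a homomorphism $\rho_* : D(\mathcal{P}^{+}, w) \to D(\mathcal{P}_0, w)$ splitting the natural injection.

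Finally, enumerate $\mathcal{R}' \setminus \mathcal{R} = \{ r_1, r_2, \ldots \}$ and set $\mathcal{P}_n := \langle \Sigma \mid \mathcal{R} \cup \{ r_1, \ldots, r_n \} \rangle$, so that $\mathcal{Q}$ is the increasing union of the $\mathcal{P}_n$ and $S(\mathcal{Q})$ the increasing union of the $S(\mathcal{P}_n)$. The previous paragraph yields, for each $n$, a retraction $\rho^{(n)} : S(\mathcal{P}_n) \to S(\mathcal{P}_{n-1})$ that is the identity on $S(\mathcal{P}_{n-1})$; the composites $\rho^{(1)} \circ \cdots \circ \rho^{(n)} : S(\mathcal{P}_n) \to S(\mathcal{P})$ form a compatible system, and passing to the direct limit on the component of $w$ (where $\pi_1$ of an increasing union of complexes with fixed vertex set is the colimit of the $\pi_1$'s) produces a retraction $D(\mathcal{Q}, w) \to D(\mathcal{P}, w)$ splitting the inclusion $D(\mathcal{P},w) \hookrightarrow D(\mathcal{Q},w)$. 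Since $\mathcal{Q}$ is complete, this proves the lemma. The main obstacle is the verification in the third paragraph that $\rho$ is cellular — that each new square maps to a nullhomotopic loop, which is precisely where one uses that disjoint relation applications commute up to squares — together with the (routine but slightly delicate) bookkeeping needed to run Knuth--Bendix against a well-founded ShortLex order over a possibly infinite alphabet.
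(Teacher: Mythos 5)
Your proof is correct in its essentials and follows the same basic strategy as the paper: complete the presentation using the ShortLex order, then retract by substituting each added relation with a fixed derivation over the original presentation. The differences are technical. Where you run Knuth--Bendix, the paper simply declares $\mathcal{R}'$ to be the set of \emph{all} relations $u=v$ with $u=v$ modulo $\mathcal{P}$ and $v$ ShortLex-smaller than $u$; confluence is then immediate (every non-minimal word rewrites in one step to the ShortLex-least word in its class), so none of the bookkeeping about fairness, critical pairs, or suppressed interreduction is needed. Your route buys a more economical rewriting system, at the cost of a genuinely more delicate completeness argument. Where you build the retraction one relation at a time on the Squier complex and pass to a direct limit, the paper defines it in one shot at the level of diagrams: each cell labelled by a new relation $u=v$ is replaced by a fixed $(u,v)$-diagram $\Delta(u,v)$ over $\mathcal{P}$. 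Your edge-to-path substitution is the same map in the language of Squier complexes, and your verification that squares are sent to rectangles tiled by old Squier squares is exactly the point that needs checking (it is the complex-level counterpart of the paper's claim that cell substitution respects dipole reduction).

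One genuine, though minor, wrinkle: the enumeration $\mathcal{R}'\setminus\mathcal{R}=\{r_1,r_2,\ldots\}$ presumes this set is countable, which can fail for uncountable alphabets, and the lemma is stated for every diagram group (the paper's proof only asks for a well-founded total order on $\Sigma$, which exists for any $\Sigma$). This is easily repaired: since your square-compatibility check only ever involves two relations at a time, you can define $\rho$ on all new edges simultaneously rather than iterating, which removes both the countability assumption and the direct limit.
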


\noindent
Recall that, given a group $G$, a subgroup $H \leq G$ is a \emph{retract} if there exists a morphism $\rho : G \to H$ that restricts to the identity on $H$.

\begin{proof}[Proof of Lemma~\ref{lem:Complete}.]
Let $\mathcal{P} = \langle \Sigma \mid \mathcal{R} \rangle$ be a semigroup presentation. We fix a total order on $\Sigma$ with no infinite decreasing sequence and order the positive words in $\Sigma^+$ by using the corresponding ShortLex order. Up to replacing some $u=v$ in $\mathcal{R}$ with $v=u$, we assume that, for every $u=v$ in $\mathcal{R}$, $v$ is smaller than $u$ with respect to the ShortLex order. Let $\mathcal{R}'$ denote the collection of all the relations $u=v$ such that $v$ is smaller than $u$ with respect to the ShortLex order and such that $u=v$ mod $\mathcal{P}$. Notice that $\mathcal{R} \subset \mathcal{R}'$. 

\medskip \noindent
Observe that $\mathcal{P}':= \langle \Sigma \mid \mathcal{R}' \rangle$ is complete. Indeed, the fact that our initial order on $\Sigma$ has no infinite decreasing sequence implies that the corresponding ShortLex order has no infinite decreasing sequence either. Moreover, for every $w \in \Sigma^+$, there exists a unique ShortLex-minimal word $\bar{w} \in \Sigma^+$ among all the positive words equal to $w$ mod $\mathcal{P}$; and, for all $u,v \in \Sigma^+$ satisfying $w \overset{\star}{\to} u$ and $w \overset{\star}{\to} v$, we have $u \to \bar{w}$ and $v \to \bar{w}$ by definition of $\mathcal{R}'$. Thus, $\mathcal{P}'$ is terminating and confluent.

\medskip \noindent
Fix a baseword $w \in \Sigma^+$. Because a (reduced) diagram over $\mathcal{P}$ is also a (reduced) diagram over $\mathcal{P}'$, there is an obvious injective morphism $D(\mathcal{P},w) \to D(\mathcal{P}',w)$. From now on, we identify $D(\mathcal{P},w)$ with its image in $D(\mathcal{P}',w)$. For every $u=v$ in $\mathcal{R}'$ but not in $\mathcal{R}$, we know that $u=v$ mod $\mathcal{P}$, so there must exist a diagram $\Delta(u,v)$ with its top path labelled $u$ and its bottom path labelled $v$. Define a map $D(\mathcal{P}',w) \to D(\mathcal{P},w)$ by sending a diagram over $\mathcal{P}'$ to the diagram over $\mathcal{P}$ obtained by replacing each to cell labelled by $u=v$ with $(u=v) \in \mathcal{R}$ (resp. with $(v=u) \in \mathcal{R}$) with a copy of $\Delta(u,v)$ (resp. $\Delta(u,v)^{-1}$). We can verify that this map is a well-defined morphism; see \cite[Theorem~7.7]{MR1396957} for more details. Moreover, this morphism is clearly the identity on $D(\mathcal{P},w)$. 
\end{proof}

\begin{proof}[Proof of Corollary~\ref{cor:Abelianisation}.]
We know that diagram groups over complete semigroup presentations are LOG groups, and clearly LOG groups have free abelianisations. Therefore, it follows from Lemma~\ref{lem:Complete} that the following observation is sufficient to conclude the proof of our corollary:

\begin{fact}
Let $G$ be a group and $H \leq G$ a retract. The abelianisation of $H$ embeds into the abelianisation of $G$.
\end{fact}

\noindent
Let $\alpha : G \to G^{\mathrm{ab}}$ denote the abelianisation map of $G$ and let $\rho : G \to H$ be a retraction. The restriction of $\alpha$ to $H$ yields a morphism $H \to G^{\mathrm{ab}}$, which factors through $\varphi : H^{\mathrm{ab}} \to G^{\mathrm{ab}}$. Let $g \in H^{\mathrm{ab}}$ be an element in the kernel of $\varphi$. Fix a pre-image $\bar{g}$ of $g$ in $H$. By definition of $\varphi$, $\alpha(\bar{g})$ is trivial. In other words, $\bar{g}$ belongs to $[G,G]$. Hence
$$\bar{g} = \rho(\bar{g}) \in [\rho(G), \rho(G)] = [H,H],$$
which implies that $g$ is trivial in $H^{\mathrm{ab}}$. Thus, we have proved that $\varphi : H^{\mathrm{ab}} \to G^{\mathrm{ab}}$ is injective. 
\end{proof}

\noindent
As already mentioned in Section~\ref{section:Properties}, Corollary~\ref{cor:Abelianisation} can be generalised by showing that homology groups of diagram groups are free abelian. The strategy in similar in spirit to the proof of Corollary~\ref{cor:Abelianisation}. We refer to \cite{MR2193190} for more details, and only sketch the argument below (in the language of diagrams instead of directed $2$-complexes). 

\medskip \noindent
Given a semigroup presentation $\mathcal{P}= \langle \Sigma \mid \mathcal{R} \rangle$ and a baseword $w \in \Sigma^+$, we need a classifying space for the diagram group $D(\mathcal{P},w)$. We already know that $D(\mathcal{P},w)$ is the fundamental group of the Squier square complex $S(\mathcal{P},w)$, but this complex is not contractible in general. However, by adding natural higher-dimensional cubes to it, it turns out to be contractible.

\begin{definition}
The \emph{Squier cube complex} $S^+(\mathcal{P})$ is the cube complex whose vertices are the words in $\Sigma^+$; whose (oriented) edges can be written as $(a,u \to v,b)$, where $u=v$ or $v=u$ belongs to $\mathcal{R}$, connecting the vertices $aub$ and $avb$; and whose $n$-cubes similarly can be written as $(a_1,u_1 \to v_1, \ldots, a_n, u_n \to v_n, a_{n+1})$, spanned by the set of vertices $\{a_1w_1 \cdots a_nw_na_{n+1} \mid w_i=v_i \text{ or } u_i\}$. Given a word $w \in \Sigma^+$, we denote by $S^+(\mathcal{P},w)$ the connected component of $S^+(\mathcal{P})$ containing the vertex $w$.  
\end{definition}

\noindent
Observe that $S(\mathcal{P})$ coincides with the two-skeleton of $S^+(\mathcal{P})$, so our diagram group $D(\mathcal{P},w)$ remains isomorphic to the fundamental group of $S^+(\mathcal{P},w)$. Each connected component of the Squier cube complex is contractible. This can be proved directly \cite[Theorem~3.10]{MR1978047} or can be deduced from its nonpositive curvature described in Section~\ref{section:MedianDiag}. Anyway, the key point is that $S^+(\mathcal{P},w)$ yields a classifying space for the corresponding diagram group $D(\mathcal{P},w)$. 

\medskip \noindent
Next, observe that, given a group $G$, if $H \leq G$ is a retract then the homology groups of $H$ are retracts of the homology groups of $G$, so it follows from Lemma~\ref{lem:Complete} that we can focus on diagram groups over complete semigroup presentations. So let $\mathcal{P}=\langle \Sigma \mid \mathcal{R} \rangle$ be a complete semigroup presentation and fix a baseword $w \in \Sigma^+$. Similarly to what we did for $S(\mathcal{P},w)$ above, we can use the completeness of $\mathcal{P}$ in order to extract from $S^+(\mathcal{P},w)$ the essential part that encodes the homology. 
\begin{figure}[h!]
\begin{center}
\includegraphics[width=0.7\linewidth]{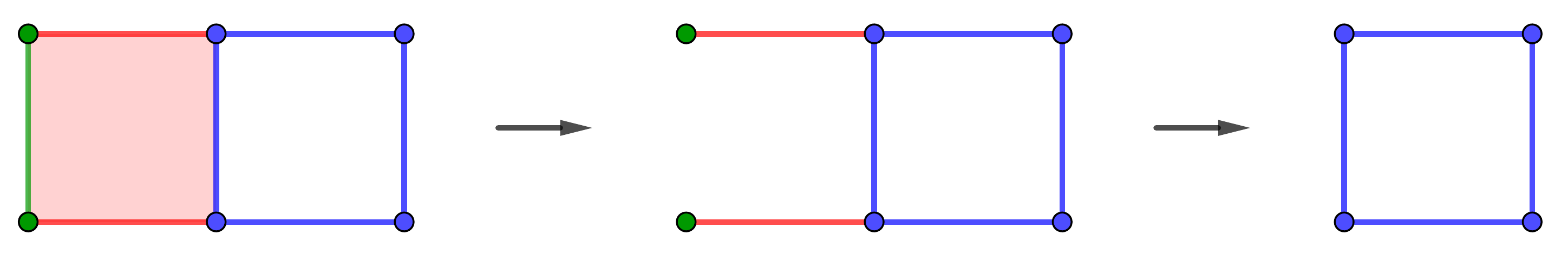}
\caption{A collapsible scheme. Collapsible cells in red, redundant cells in green, essential cells in blue.}
\label{Scheme}
\end{center}
\end{figure}

\noindent
Formally, \cite{MR2193190} defines a \emph{collapsible scheme} on $S^+(\mathcal{P},w)$; see \cite[Section~9]{MR2193190} for a precise statement. Roughly speaking, this is a partition of the cubes of $S^+(\mathcal{P},w)$ into three categories: \emph{contractible}, \emph{redundant}, and \emph{essential} cubes. Under reasonable assumptions, it is possible to successively collapse contractible cubes by pushing inside a codimension-one face that is redundant. At the end of the process, one gets a homotopy equivalent cube complex $E(\mathcal{P},w)$ containing only the initial essential cubes. 

\medskip \noindent
Thanks a well-chose collapsible scheme, it is proved in \cite{MR2193190} that $E(\mathcal{P},w)$ is as small as possible. More precisely, in each dimension $n$, the $n$th homology group of $E(\mathcal{P},w)$ (which is also the $n$th homology group of $D(\mathcal{P},w)$ since $E(\mathcal{P},w)$ and $S^+(\mathcal{P},w)$ are homotopy equivalent) is freely generated by the $n$-cubes.

\subsection{Word and conjugacy problems}\label{section:WordConj}

\noindent
As already mentioned, every diagram over a semigroup presentation admits a unique reduction (i.e.\ a diagram with no dipole). Therefore, in order to determine whether a given diagram represents the neutral element in the corresponding diagram group, it suffices to reduce its dipoles, in whatever order, and to look at if $2$-cells survive at the end of the process. If so, the diagram represents a non-trivial element; otherwise, it represents the neutral element. Moreover, this word problem can be solved very efficiently.

\begin{thm}[\cite{MR1396957}]
Let $\mathcal{P}= \langle \Sigma \mid \mathcal{R} \rangle$ be a semigroup presentation and $w \in \Sigma^+$ a baseword. Fix finitely many $(w,w)$-diagrams $\Delta_1, \ldots, \Delta_r$ over $\mathcal{P}$. There exists an algorithm that decides, given a product of $n$ diagrams $\Delta_i$, whether it represents the trivial element in $D(\mathcal{P},w)$ with time complexity $O(n^2 \log^2n)$. 
\end{thm}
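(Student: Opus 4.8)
The plan is to reduce the statement to a purely combinatorial bound on the number of cells appearing when one multiplies $n$ diagrams and then reduces all dipoles. Since $w$ and the finitely many $(w,w)$-diagrams $\Delta_1,\dots,\Delta_r$ are fixed in advance, there is a uniform constant $C$ bounding the number of cells in each $\Delta_i$ and the length of every path appearing inside them; hence the concatenation $\Xi = \Delta_{i_1} \circ \cdots \circ \Delta_{i_n}$ has at most $Cn$ cells and at most $Cn$ horizontal paths, each of length $O(n)$. The output is determined by reducing all dipoles of $\Xi$, and by Proposition~\ref{prop:PathDiag} (and the well-definedness of reduction cited after it) the result does not depend on the order of reduction, so the algorithm is: build $\Xi$, then greedily reduce dipoles until none remain, then test whether any cell survives.

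The key steps, in order, are as follows. First I would describe the data structure: represent $\Xi$ as a planar graph whose vertices carry their position along the top-to-bottom order and whose edges are labelled by $\Sigma$; building $\Xi$ from the $n$ factors takes $O(n^2)$ time in the worst case since paths have length $O(n)$ and there are $O(n)$ of them. Second, I would explain how to detect dipoles efficiently: a dipole is a pair of vertically adjacent cells with $\mathrm{bot}(\pi_1)=\mathrm{top}(\pi_2)$ coming from a relation and its inverse; maintaining a list of candidate adjacent cell pairs and checking the relation labels is a local test, and each reduction strictly decreases the number of cells, so at most $Cn$ reductions occur. Third—and this is the crucial accounting—I would argue that one reduction may split or merge horizontal strata and force re-sorting of a stratum's vertices, costing $O(n\log n)$ per reduction (using the fact that relation sides have bounded length, so only $O(1)$ new comparisons propagate, but a stratum may need to be re-ordered); multiplying $O(n)$ reductions by $O(n\log n)$ gives $O(n^2\log n)$, and a second $\log n$ factor enters from arithmetic on the $O(n)$-bit position indices used to keep the planar structure consistent, yielding the claimed $O(n^2\log^2 n)$. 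Finally, trivial membership is decided by checking whether the fully reduced diagram has zero cells.

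The main obstacle I anticipate is the bookkeeping for step three: proving that the total cost of maintaining planarity and the left-to-right order of each horizontal path through all $O(n)$ dipole reductions is $O(n^2\log^2 n)$ rather than something worse. The subtlety is that reducing one dipole can identify two long paths, potentially triggering a cascade of further identifications and re-sortings; one must show such cascades are globally bounded, e.g.\ by a potential-function argument on the total number of cells plus total path length (both monotonically non-increasing, the former strictly decreasing at each reduction), so that the amortised cost of reorganising is controlled. I would lean on the detailed diagram combinatorics of \cite[Chapter~?]{MR1396957} rather than re-deriving these invariants from scratch, and I expect the honest version of the argument to require care in the definition of the data structure precisely so that each elementary dipole reduction touches only $O(n)$ vertices and each vertex touched costs $O(\log^2 n)$ to update.
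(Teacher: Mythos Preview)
The paper does not prove this theorem; it is stated with attribution to \cite{MR1396957} and then the text immediately turns to the conjugacy problem. The only argument the paper offers is the sentence preceding the statement: reduce dipoles in any order and check whether cells remain. Your high-level algorithm (concatenate the $n$ factors, greedily reduce dipoles, test for surviving cells) is exactly that, so at the level the paper operates you are in agreement.

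Your complexity accounting, however, contains a slip worth flagging. You write that ``paths have length $O(n)$'', but each $\Delta_i$ is a $(w,w)$-diagram for a \emph{fixed} $w$, so every horizontal path at a gluing level has length $|w|$, a constant; internal paths inside each factor are also bounded since the $\Delta_i$ are fixed in advance. Hence the concatenation $\Xi$ has $O(n)$ cells and $O(n)$ edges in total, and building it is $O(n)$, not $O(n^2)$. This does not break your outline, but it means the source of the $n^2\log^2 n$ bound is not where you placed it: the quadratic factor must come entirely from the dipole-reduction phase and its data-structure maintenance, not from the size of the input diagram. Your own step three already concedes this is the delicate part and defers to \cite{MR1396957}; since the paper under review does the same, there is nothing further to compare, but be aware that your informal justification of the two $\log n$ factors (one from re-sorting strata, one from bit-arithmetic on position indices) is speculative and would need to be replaced by the actual data-structure argument from the original reference.
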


\noindent
In order to solve the conjugacy problem, we need to introduce some terminology. 

\begin{definition}
Let $\mathcal{P}$ be a semigroup presentation and $\Delta$ a diagram. If $\Delta$ decomposes as the concatenation $\Delta_1 \circ \Delta_2$, then the diagram $\Delta_2 \circ \Delta_1$, when well-defined, is a \emph{cyclic shift} of $\Delta$. 
\end{definition}

\noindent
We emphasize that $\Delta_1 \circ \Delta_2$ is a decomposition of $\Delta$ on the nose, not up to dipole reduction. As an illustration, given the two diagrams $\Delta,\Omega$ below, $\Omega$ is a cyclic shift of~$\Delta$.
\begin{figure}[h!]
\begin{center}
\includegraphics[width=0.7\linewidth]{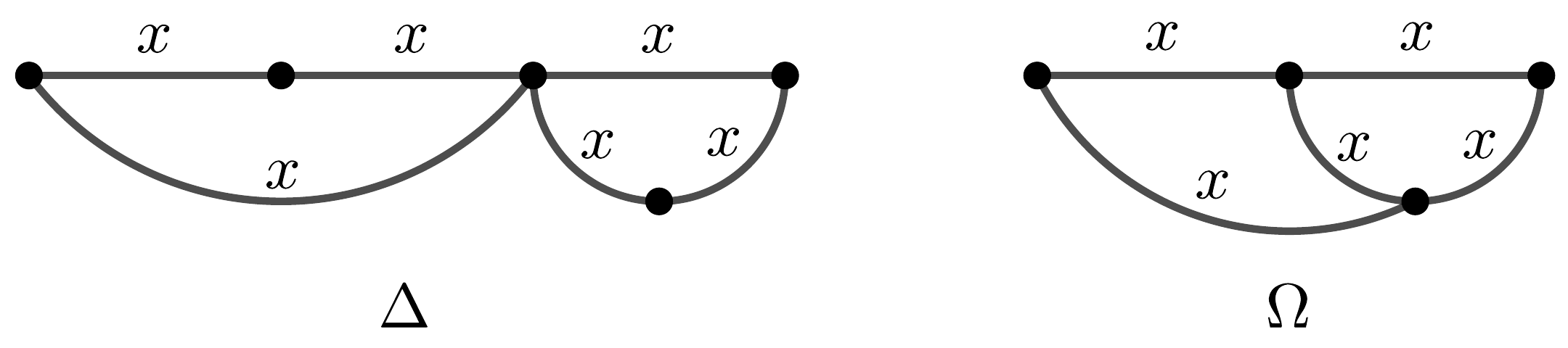}
\caption{A diagram $\Delta$ over $\mathcal{P}:= \langle x \mid x=x^2 \rangle$ and a cyclic shift $\Omega$.}
\label{Shift}
\end{center}
\end{figure}

\noindent
Indeed, $\Delta$ decomposes as $(A^{-1} + \epsilon(x)) \circ (\epsilon(x)+A)$, where $\epsilon(x)$ denotes the $(x,x)$-diagram with no $2$-cell and where $A$ denotes the $(x,x^2)$-diagram with a single $2$-cell; and $\Omega = (\epsilon(x)+A) \circ (A^{-1}+ \epsilon(x))$. 

\begin{definition}\label{def:SumDiag}
Let $\mathcal{P}$ be a semigroup diagram. The \emph{sum} $\Delta_1+ \Delta_2$ of two diagrams $\Delta_1,\Delta_2$ over $\mathcal{P}$ is the diagram obtained by identifying the sink-vertex of $\Delta_1$ with the source-vertex of $\Delta_2$. A diagram is \emph{simple} if it does not decompose as the sum of two diagrams. Clearly, every diagram admits a unique decomposition as a sum of simple diagrams. A spherical diagram is \emph{normal} if all the diagrams in its decomposition as a sum of simple diagrams are spherical. 
\end{definition}

\noindent
For instance, the diagram $\Delta$ above is neither simple nor normal. However, its cyclic conjugate $\Omega$ is simple. 

\begin{thm}[\cite{MR1396957}]
Let $\mathcal{P}= \langle \Sigma \mid \mathcal{R} \rangle$ be a semigroup presentation and $w \in \Sigma^+$ a baseword. If the semigroup $\mathcal{P}$ has a solvable word problem, then the conjugacy problem in the diagram group $D(\mathcal{P},w)$ is solvable. 
\end{thm}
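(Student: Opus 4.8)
The plan is to reduce the conjugacy problem for $D(\mathcal{P},w)$ to the combinatorics of reduced spherical diagrams, using the ``absolutely reduced'' normal form already introduced in the discussion following Theorem~\ref{thm:TorsionFree}. Recall that every reduced spherical diagram $\Delta$ decomposes (up to dipole reduction) as $\Psi \circ \Delta_0 \circ \Psi^{-1}$ with $\Delta_0$ absolutely reduced \cite[Lemma~15.10]{MR1396957}; call $\Delta_0$ the \emph{absolutely reduced core} of $\Delta$. The first step is to observe that two elements of $D(\mathcal{P},w)$ are conjugate in $D(\mathcal{P},w)$ if and only if their absolutely reduced cores (which are spherical diagrams over $\mathcal{P}$, possibly with a different baseword) are conjugate in the \emph{diagram groupoid} $D(\mathcal{P})$ of Corollary~\ref{cor:PathDiag}, i.e.\ there is a diagram $\Phi$ over $\mathcal{P}$ with $\Phi \circ (\text{core}_1) \circ \Phi^{-1} = \text{core}_2$ up to dipole reduction. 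So the problem becomes: decide when two absolutely reduced spherical diagrams are conjugate in the groupoid.

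The second step is the key combinatorial lemma: two absolutely reduced spherical diagrams $\Delta_0$, $\Delta_0'$ are groupoid-conjugate if and only if $\Delta_0'$ is obtained from $\Delta_0$ by a sequence of cyclic shifts (Definition following Figure~\ref{Shift}), possibly after rewriting the diagrams with the same top/bottom word using the word problem in the semigroup. The intuition is exactly the same as for the conjugacy problem in free groups, where conjugate reduced words are cyclic permutations of one another: an absolutely reduced diagram plays the role of a cyclically reduced word, and a conjugating diagram $\Phi$ can be ``absorbed'' cell by cell, each absorption realising a cyclic shift. One must also handle the sum decomposition (Definition~\ref{def:SumDiag}): a normal spherical diagram decomposes as a sum of simple spherical diagrams, and two such diagrams are conjugate precisely when, after matching up the summands (allowing cyclic rotation of the list of summands, since the baseword $w$ is not rotated but the summands can be independently conjugated), the corresponding simple summands are conjugate. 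This reduces everything to conjugacy of \emph{simple} absolutely reduced diagrams, for which the cyclic-shift characterisation is cleanest.

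The third step is to turn this into an algorithm. Given $\Delta_0$ simple and absolutely reduced with $n$ cells, the set of its cyclic shifts is finite: each cyclic shift corresponds to choosing a decomposition $\Delta_0 = \Delta_1 \circ \Delta_2$ along an intermediate $1$-path, and there are only finitely many intermediate $1$-paths appearing in $\Delta_0$ (bounded in terms of $n$). Here is where solvability of the word problem in the semigroup given by $\mathcal{P}$ is used: to decide whether a given $\Delta_1 \circ \Delta_2$ is a legitimate decomposition — i.e.\ whether $\mathrm{bot}(\Delta_1)$ and $\mathrm{top}(\Delta_2)$ can be taken to be the \emph{same} positive word — and, more importantly, to enumerate the finitely many basewords $w_0$ that can serve as the top/bottom of an absolutely reduced core equal to our element mod $\mathcal{P}$, one needs to decide equality of positive words in the semigroup. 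Concretely: compute reduced representatives $\Delta$, $\Delta'$ of the two input elements, extract their absolutely reduced cores $\Delta_0$, $\Delta_0'$, decompose each into simple summands, and then check — using the word problem to test equalities of boundary words — whether the (cyclically rotated) lists of simple summands match up to cyclic shift. Output ``conjugate'' iff such a match exists.

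The main obstacle will be the second step, the cyclic-shift characterisation, and in particular showing that a conjugating diagram between two absolutely reduced diagrams can always be reduced to a product of cyclic shifts rather than something wilder. The delicate point is that the conjugator $\Phi$ need not be spherical and need not itself be reduced in any strong sense; one has to argue, using absolute reducedness of both $\Delta_0$ and $\Delta_0'$, that no dipoles can ``hide'' inside $\Phi \circ \Delta_0 \circ \Phi^{-1}$ except those created at the $\Phi/\Delta_0$ and $\Delta_0/\Phi^{-1}$ interfaces, and hence that each reduction step pushes one cell of $\Phi$ across $\Delta_0$, which is precisely a cyclic shift. This is the analogue of the ``Collins' lemma / cyclic reduction'' argument for free products or HNN extensions, transported to the diagram setting, and it is carried out in \cite[Chapter~15]{MR1396957}; the remaining steps (finiteness of the shift set, and the reduction of the group conjugacy problem to the groupoid one) are essentially bookkeeping once this is in hand.
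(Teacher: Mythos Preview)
Your approach is essentially the paper's: pass to a normal absolutely reduced core (Lemma~\ref{lem:NormalAbs}), split into simple summands (Lemma~\ref{lem:ConjNormal}), and characterise conjugacy of simple absolutely reduced diagrams via cyclic shifts (Lemma~\ref{lem:ShiftConj}), with the semigroup word problem used to compare boundary labels when enumerating the finite shift set. Your identification of the ``cyclic-shift characterisation'' as the crux, and the free-group analogy, are spot on.

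One correction: in your handling of the sum decomposition you allow ``cyclic rotation of the list of summands''. For planar diagrams this does not occur. Lemma~\ref{lem:ConjNormal} shows that if $\Delta = \Delta_1 + \cdots + \Delta_m$ and $\Gamma = \Gamma_1 + \cdots + \Gamma_n$ are normal absolutely reduced and groupoid-conjugate, then $m=n$ and the conjugator itself splits as a sum $\Theta_1 + \cdots + \Theta_m$, so $\Delta_i$ is conjugate to $\Gamma_i$ \emph{in the same order}. The planarity forbids any swapping or rotation of the summands (this would require wires to cross, which only happens in the annular or symmetric variants). So the algorithm simply checks $\mathrm{Shift}(\Omega_1^j) = \mathrm{Shift}(\Omega_2^j)$ index by index, with no permutation of the $j$'s.
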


\noindent
It is worth noticing that a diagram group given by a semigroup presentation with unsolvable word problem may have an unsolvable conjugacy problem; see \cite[Example~15.22]{MR1396957}. Let us describe the main ingredients needed in order to solve the conjugacy problem in pratice. The first step is to reduce the problem to simple absolutely reduced diagrams.

\begin{lemma}\label{lem:NormalAbs}\emph{(\cite[Lemma~15.14]{MR1396957})}
Let $\mathcal{P}$ be a semigroup presentation and $\Delta$ a spherical diagram over $\mathcal{P}$. There exist diagrams $\Phi, \Omega$ such that $\Delta = \Phi \circ \Omega \circ \Phi^{-1}$, up to dipole reduction, and with $\Omega$ normal and absolutely reduced. Moreover, $\Omega$ can be chosen with a number of $2$-cells bounded above by the number of $2$-cells of $\Delta$. 
\end{lemma}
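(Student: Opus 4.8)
The plan is to prove Lemma~\ref{lem:NormalAbs} in two stages: first reduce $\Delta$ to a normal spherical diagram by conjugation, and then reduce a normal spherical diagram to an absolutely reduced one by a further conjugation. We may assume from the start that $\Delta$ is reduced, since passing to the reduction of $\Delta$ only decreases the number of $2$-cells and does not change the element it represents.

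\emph{Step 1: achieving normality.} Write the reduced diagram $\Delta$ as a sum of simple diagrams $\Delta = \Delta_1 + \cdots + \Delta_k$. Because $\Delta$ is spherical, its top and bottom paths carry the same word $w$; I would look at where the decomposition points of the bottom path fail to match those of the top path. Each simple summand $\Delta_i$ is a $(p_i,q_i)$-diagram, and sphericity of the whole forces $p_1 \cdots p_k = q_1 \cdots q_k$ as words, but the individual factorisations need not agree. The idea is to ``realign'' the bottom decomposition with the top one: one can find a diagram $\Phi$ such that conjugating $\Delta$ by $\Phi$ produces a diagram whose bottom cut points coincide with its top cut points, hence a sum of spherical diagrams, i.e.\ a normal diagram. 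Concretely, $\Phi$ records the discrepancy between the two factorisations and is built from the simple summands themselves; the conjugated diagram $\Phi^{-1} \circ \Delta \circ \Phi$ (or $\Phi \circ \Delta \circ \Phi^{-1}$, depending on orientation conventions) is normal, and since $\Phi$ introduces cells in pairs that either cancel against cells of $\Delta$ or against cells of $\Phi^{-1}$, after dipole reduction the number of $2$-cells does not increase.

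\emph{Step 2: achieving absolute reducedness.} Now suppose $\Delta$ is normal; I must make it absolutely reduced, i.e.\ ensure $\Delta^n$ is reduced for all $n \geq 1$, while keeping normality and not increasing the cell count. By \cite[Lemma~15.10]{MR1396957} (quoted in Section~\ref{section:Properties}), every reduced spherical diagram decomposes, up to dipole reduction, as $\Psi \circ \Delta_0 \circ \Psi^{-1}$ with $\Delta_0$ absolutely reduced. Applying this to each simple spherical summand $\Delta_i$ of $\Delta$ gives $\Delta_i = \Psi_i \circ (\Delta_0)_i \circ \Psi_i^{-1}$; setting $\Psi := \Psi_1 + \cdots + \Psi_k$ and $\Delta_0 := (\Delta_0)_1 + \cdots + (\Delta_0)_k$ yields $\Delta = \Psi \circ \Delta_0 \circ \Psi^{-1}$ with $\Delta_0$ a sum of spherical diagrams, hence normal. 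One checks $\Delta_0$ is absolutely reduced: a dipole in $\Delta_0^n$ would have to lie within a single summand column (the summands cannot interact since each is spherical), contradicting that each $(\Delta_0)_i$ is absolutely reduced. Composing the conjugators from the two steps gives the desired $\Phi$ with $\Omega = \Delta_0$.

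The main obstacle is the bookkeeping in Step~1: making precise the ``discrepancy diagram'' $\Phi$ that realigns the bottom factorisation of $\Delta$ with its top factorisation, and then verifying carefully that after reducing all dipoles the total number of $2$-cells of $\Omega$ is at most that of $\Delta$. The delicate point is that $\Phi$ need not be spherical — so it genuinely contributes the ``conjugating'' part — and one must argue that every cell of $\Phi$ that survives in $\Phi^{-1} \circ \Delta \circ \Phi$ is paired (in the reduction) either with a cell of $\Phi^{-1}$ or with a cell originally coming from $\Delta$, so that $\Omega$ inherits at most the cells of $\Delta$. A clean way to see the cell-count bound is to observe that $\Omega$ is itself a diagram obtained from $\Delta$ by a sequence of operations (reordering summands via conjugation, then extracting the absolutely reduced core) each of which is realised by dipole reductions on suitable products, and dipole reduction never increases the number of cells; the content of \cite[Lemma~15.14]{MR1396957} is exactly that this can be organised consistently.
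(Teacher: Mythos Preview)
The survey does not prove this lemma at all; it merely records the statement and cites \cite[Lemma~15.14]{MR1396957}. So there is no proof in the paper to compare your attempt against.

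As for your outline itself, the overall two-step strategy (conjugate to normality, then conjugate to absolute reducedness) is the right shape, and Step~2 is close to correct: once $\Delta$ is normal, applying \cite[Lemma~15.10]{MR1396957} summand-by-summand and summing the conjugators does produce an absolutely reduced $\Delta_0$, and your argument that dipoles in $\Delta_0^n$ are confined to columns is valid because each $(\Delta_0)_i$ is spherical. One point you do not address is why $\Delta_0$ is \emph{normal} in the precise sense of Definition~\ref{def:SumDiag}: you show it is a sum of spherical diagrams, but normality requires that the \emph{simple} summands be spherical, and each $(\Delta_0)_i$ could in principle decompose further. (It does turn out that an absolutely reduced spherical diagram is automatically normal, but this needs a line of justification.)

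The genuine gap, which you yourself flag, is Step~1. Saying ``one can find a diagram $\Phi$ that realigns the factorisations'' is not a construction, and your closing paragraph effectively concedes that the content of the lemma lies exactly here. The actual mechanism in \cite{MR1396957} is an iterative one: as long as the current spherical diagram fails to be normal and absolutely reduced, a specific conjugation (or a dipole reduction in a power) strictly decreases the cell count, so the process terminates. Your sketch does not exhibit such a decreasing quantity, and the hand-waved claim that ``every cell of $\Phi$ that survives is paired'' is exactly what needs proof. Likewise, the cell-count bound for $\Omega$ is asserted rather than derived; in the iterative argument it falls out automatically because each step is non-increasing.
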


\noindent
We emphasize that $\Omega$ must be a spherical diagrams but its top and bottom paths may have distinct labels than the top and bottom paths of $\Delta$. Recall from Section~\ref{section:Properties} that a spherical diagram $\Delta$ is \emph{absolutely reduced} if $\Delta^n$ is reduced for every $n \geq 1$. 

\begin{lemma}\label{lem:ConjNormal}\emph{(\cite[Lemma~15.15]{MR1396957})}
Let $\mathcal{P}$ be a semigroup presentation and $\Delta,\Gamma$ two normal and absolutely reduced diagrams over $\mathcal{P}$. Decompose $\Delta$ (resp. $\Gamma$) as a sum $\Delta_1 + \cdots + \Delta_m$ (resp. $\Gamma_1+ \cdots + \Gamma_n$) of simple diagrams. For every $1 \leq i \leq m$ (resp. $1 \leq j \leq n$), let $x_i$ (resp. $y_j$) be the words labelling the top and bottom paths of $\Delta_i$ (resp. $\Gamma_j$). Assume that there exists some $\Theta$ such that $\Delta = \Theta \circ \Gamma \circ \Theta^{-1}$, up to dipole reduction. Then $m=n$ and $\Gamma$ decomposes as a sum $\Gamma_1 + \cdots + \Gamma_m$ where each $\Gamma_i$ is a $(y_i,x_i)$-diagram satisfying $\Delta_i = \Theta_i \circ \Gamma_i \circ \Theta_i^{-1}$, up to dipole reduction. 
\end{lemma}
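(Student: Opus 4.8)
The plan is to eliminate the dipole reductions hidden in the relation $\Delta = \Theta \circ \Gamma \circ \Theta^{-1}$ by passing to high powers, where absolute reducedness of $\Delta$ and $\Gamma$ forces all cancellation into bounded end regions; to read off the matching of the two sum decompositions from the planarity of the resulting genuine three-layer diagram; and then to descend from powers back to $\Delta$ and $\Gamma$ via the unique extraction of roots (Corollary~\ref{cor:Roots}).

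First I would replace $\Theta$ by its reduction (this does not affect the relation) and record that $\mathrm{top}(\Theta) = x_1 \cdots x_m$ and $\mathrm{bot}(\Theta) = y_1 \cdots y_n$. Since the simple summands of a normal spherical diagram are themselves spherical (Definition~\ref{def:SumDiag}), the powers decompose blockwise: $\Delta^k = \Delta_1^k + \cdots + \Delta_m^k$ and $\Gamma^k = \Gamma_1^k + \cdots + \Gamma_n^k$ for every $k \geq 1$. A short planarity argument shows the $\Delta_i^k$, $\Gamma_j^k$ are again simple, and they are reduced because $\Delta$, $\Gamma$ are absolutely reduced, so the cut vertices of $\Delta^k$ along its top path are exactly the $m-1$ boundaries between consecutive $x$-blocks, and similarly $n-1$ for $\Gamma^k$. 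From the hypothesis, $\Delta^k = \Theta \circ \Gamma^k \circ \Theta^{-1}$ up to dipole reduction, and since $\Delta^k$ is reduced it \emph{equals} the reduction of the right-hand side.

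The heart of the argument is the following stabilisation claim: because $\Gamma$ is absolutely reduced, there is an integer $\ell$, at most the number of cells of $\Theta$ and independent of $k$, such that for every $k > 2\ell$ the reduction of $\Theta \circ \Gamma^k \circ \Theta^{-1}$ equals $\Theta_\ast \circ \Gamma^{k-2\ell} \circ \Theta_\ast^{-1}$, where $\Theta_\ast$ is the reduction of $\Theta \circ \Gamma^\ell$ and no cancellation occurs at the two remaining junctions (the middle block $\Gamma^{k-2\ell}$ survives verbatim). Granting this, I would analyse $\Delta^k$ through this honest three-layer picture. A cut vertex of $\Delta^k$ at a boundary between two $x$-blocks is the top endpoint of a vertical cut-path of $\Delta^k$; by planarity this path crosses the middle line $\mathrm{bot}(\Theta_\ast) = \mathrm{top}(\Gamma^{k-2\ell}) = y_1 \cdots y_n$ at a cut vertex of $\Gamma^{k-2\ell}$, hence at a boundary between two $y$-blocks, and distinct $x$-boundaries give non-crossing cut-paths and distinct crossing points, so this correspondence is an order-preserving injection from $x$-block boundaries to $y$-block boundaries. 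Running the same argument on the inverse conjugation $\Gamma = \Theta^{-1} \circ \Delta \circ \Theta$ (using that $\Delta$ is absolutely reduced) gives an order-preserving injection the other way, so $m = n$ and the correspondence is the identity matching. Consequently the $m-1$ cut-paths split $\Theta_\ast$ as a sum $\Theta_\ast = \Theta^{(1)} + \cdots + \Theta^{(m)}$ with $\Theta^{(i)}$ an $(x_i, y_i)$-diagram, and distributing the three-layer identity over this sum yields $\Delta_i^k = \Theta^{(i)} \circ \Gamma_i^{k-2\ell} \circ (\Theta^{(i)})^{-1}$ up to dipole reduction for each $i$.

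Finally, since $\ell$ and $\Theta^{(i)}$ do not depend on $k$, working in $D(\mathcal{P}, x_i)$ and writing $u_i := \Theta^{(i)} \circ \Gamma_i \circ (\Theta^{(i)})^{-1}$, the last identity reads $\Delta_i^k = u_i^{\,k - 2\ell}$; applying it for two values $k_1 < k_2$ and cancelling gives $\Delta_i^{\,k_2 - k_1} = u_i^{\,k_2 - k_1}$, whence $u_i = \Delta_i$ by the unique extraction of roots in $D(\mathcal{P}, x_i)$. Thus $\Delta_i = \Theta_i \circ \Gamma_i \circ \Theta_i^{-1}$ up to dipole reduction with $\Theta_i := \Theta^{(i)}$ (and $\Theta = \sum_i \Theta^{(i)} \circ \Gamma_i^{-\ell}$ up to dipole reduction recovers the original conjugator as a compatible sum). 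I expect the stabilisation claim to be the only real obstacle: one must show that the dipole cancellation triggered by the two copies of $\Theta$ tunnels through only boundedly many $\Gamma$-layers, uniformly in $k$. This is precisely where both hypotheses enter — absolute reducedness of $\Gamma$ makes every $\Gamma$-layer and every interface between consecutive layers already reduced, so the cancellation front cannot advance indefinitely, while absolute reducedness of $\Delta$ pins down the final reduced form — and it requires the careful local bookkeeping of how dipole reductions propagate across a composition that forms the technical core of this lemma in \cite{MR1396957}. The auxiliary facts used along the way (that simple summands have simple powers, and that $\Delta^k$ retains the block structure of $\Delta$) are routine but should be settled first.
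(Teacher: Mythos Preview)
The paper does not prove this lemma; it only records the statement with attribution to \cite[Lemma~15.15]{MR1396957}, so there is nothing in the text to compare your argument against directly.

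Your overall strategy --- force the cancellation into bounded end regions by passing to high powers, read off the matching of summands via cut vertices of the surviving middle block $\Gamma^{\,k-\text{const}}$, then descend --- is sound and can be made into a complete proof. Two points deserve attention. First, your stabilisation claim as written is not quite correct: the bottom factor in the reduced three-layer form is the reduction of $\Gamma^{\ell'} \circ \Theta^{-1}$, not $(\Theta_\ast)^{-1}$ (the latter is the reduction of $\Gamma^{-\ell} \circ \Theta^{-1}$, a different diagram). The honest statement is that for all large $k$ one has $\Delta^k = \Theta_+ \circ \Gamma^{\,k-\ell} \circ \Theta_-$ with $\Theta_+,\Theta_-$ fixed and no cancellation at either junction; this is what your cut-vertex argument actually needs, and it can be proved cleanly from the cell-count inequality $\#(\text{reduction of }\Theta\circ\Gamma^j) \geq j\,\#(\Gamma) - \#(\Theta)$, which bounds the total number of dipole reductions in $\Theta\circ\Gamma^j$ by $\#(\Theta)$ uniformly in $j$. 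Second, once you have $\Delta_i^k = \Theta_+^{(i)}\circ\Gamma_i^{\,k-\ell}\circ\Theta_-^{(i)}$ for two consecutive values of $k$, dividing gives $\Delta_i = \Theta_+^{(i)}\circ\Gamma_i\circ(\Theta_+^{(i)})^{-1}$ immediately, so the appeal to unique extraction of roots is unnecessary. With these adjustments your proof goes through; the cut-vertex matching and the verification that powers of simple spherical summands remain simple are correct as you sketched them.
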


\noindent
Next, we can solve the conjugacy problem for simple absolutely reduced diagrams thanks the next lemma. In its statement, we denote by $\mathrm{Shift}(\Delta)$ the set of all the diagrams that can be obtained from $\Delta$ by a sequence of cyclic shifts. (It is worth mentioning that a cyclic shift of a cyclic shift may not be a cyclic shift of the initial diagram.) 

\begin{lemma}\label{lem:ShiftConj}
Let $\mathcal{P}$ be a semigroup presentation. Two simple and absolutely reduced spherical diagrams $\Delta_1, \Delta_2$ are conjugate if and only if $\mathrm{Shift}(\Delta_1)= \mathrm{Shft}(\Delta_2)$. 
\end{lemma}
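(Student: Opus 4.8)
The plan is to prove the two implications separately, the backward one being essentially formal and the forward one being the substantial part. First, suppose $\mathrm{Shift}(\Delta_1)=\mathrm{Shift}(\Delta_2)$. Since $\Delta_2 \in \mathrm{Shift}(\Delta_2)=\mathrm{Shift}(\Delta_1)$, it suffices to observe that any single cyclic shift of a diagram $\Delta = \Delta_a \circ \Delta_b$ to $\Delta_b \circ \Delta_a$ realises a conjugacy: indeed $\Delta_b \circ \Delta_a = \Delta_a^{-1} \circ (\Delta_a \circ \Delta_b) \circ \Delta_a$ up to dipole reduction, so $\Delta_b \circ \Delta_a$ is conjugate to $\Delta$ by $\Delta_a$. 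Iterating along the sequence of shifts taking $\Delta_1$ to $\Delta_2$ and composing the conjugating diagrams yields that $\Delta_1$ and $\Delta_2$ are conjugate. (One must check the intermediate products are all well-defined, which is guaranteed by the definition of $\mathrm{Shift}$.)

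For the forward implication, assume $\Delta_1,\Delta_2$ are simple, absolutely reduced, spherical, and conjugate, say $\Delta_1 = \Theta \circ \Delta_2 \circ \Theta^{-1}$ up to dipole reduction. The strategy is to analyse how the conjugating diagram $\Theta$ interacts with the absolute reducedness of $\Delta_1,\Delta_2$. The key point is that when one forms the product $\Theta \circ \Delta_2 \circ \Theta^{-1}$ and reduces all dipoles, the absolute reducedness of $\Delta_2$ forces $\Theta$ to "slide through" $\Delta_2$: concretely, pushing $\Theta$ downward across $\Delta_2$ produces a diagram $\Theta'$ with $\Theta \circ \Delta_2 = \Delta_2' \circ \Theta'$ for a suitable cyclic shift $\Delta_2'$ of $\Delta_2$ whose top path matches $\mathrm{bot}(\Theta)$, and continuing this process, the net effect of conjugating by $\Theta$ is that $\Delta_1$ is obtained from $\Delta_2$ by a bounded sequence of cyclic shifts together with, a priori, a residual "vertical" part that must cancel because both $\Delta_1$ and $\Delta_2$ are spherical and simple. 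One should lean on Lemma~\ref{lem:ConjNormal} (with $m=n=1$ here, by simplicity) and on the structure theorem that a reduced spherical diagram decomposes as $\Psi \circ \Delta_0 \circ \Psi^{-1}$ with $\Delta_0$ absolutely reduced, to control $\Theta$. From this one extracts that $\Delta_1 \in \mathrm{Shift}(\Delta_2)$, and by symmetry $\Delta_2 \in \mathrm{Shift}(\Delta_1)$; since $\mathrm{Shift}(-)$ is closed under taking cyclic shifts of its members (by its definition as the set reachable by \emph{sequences} of shifts), these two memberships give $\mathrm{Shift}(\Delta_1)=\mathrm{Shift}(\Delta_2)$.

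The main obstacle I expect is making precise the "sliding $\Theta$ through $\Delta_2$" step and showing the residual vertical part vanishes — that is, ruling out the possibility that $\Delta_1$ and $\Delta_2$ are conjugate by some $\Theta$ that genuinely changes the diagram beyond a cyclic permutation of its cells. This is exactly where absolute reducedness is indispensable: if $\Delta_2$ were merely reduced, powers could develop new dipoles and the conjugating diagram could be "absorbed" in a way that alters the cell pattern. The careful bookkeeping of which dipoles appear when reducing $\Theta \circ \Delta_2 \circ \Theta^{-1}$, and a length/cell-count argument bounding how far $\Theta$ can be pushed, will carry this through; I would model the argument on the treatment of conjugacy for absolutely reduced diagrams in \cite[Chapter~15]{MR1396957}.
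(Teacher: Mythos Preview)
The paper does not actually prove this lemma; it is stated without proof as one of several technical lemmas imported from \cite[Chapter~15]{MR1396957}, so there is no ``paper's own proof'' to compare against. Your backward implication is correct and complete.

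For the forward implication, what you have written is an outline rather than a proof, and you yourself flag the gap: the ``sliding $\Theta$ through $\Delta_2$'' step is the entire content of the lemma, and you have not carried it out. The heuristic that absolute reducedness forces $\Theta$ to act by cyclic permutation of cells is the right intuition, but the mechanism is more delicate than ``pushing $\Theta$ downward.'' In the Guba--Sapir argument one does not literally slide $\Theta$ through $\Delta_2$; rather, one analyses the dipole reductions in $\Theta \circ \Delta_2 \circ \Theta^{-1}$ and shows that, because $\Delta_2$ is absolutely reduced and simple, every cell of $\Theta$ that survives reduction must reappear (inverted) in the copy of $\Theta^{-1}$, and the way the cancellations pair up forces the surviving core to be a cyclic rearrangement of $\Delta_2$. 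Your invocation of Lemma~\ref{lem:ConjNormal} with $m=n=1$ is not helpful here: that lemma only tells you $\Theta$ is a $(y_1,x_1)$-diagram, which you already know; it does not constrain $\Theta$ further. Likewise the decomposition $\Psi \circ \Delta_0 \circ \Psi^{-1}$ from Lemma~\ref{lem:NormalAbs} does not obviously bear on how an \emph{arbitrary} conjugator $\Theta$ interacts with an already absolutely reduced $\Delta_2$. So as it stands, the forward direction is a plan pointing at \cite[Chapter~15]{MR1396957} rather than an argument, which is essentially what the survey itself does.
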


\noindent
Putting everything together, the conjugacy problem is solved as follows. Let $\mathcal{P}= \langle \Sigma \mid \mathcal{R} \rangle$ be a semigroup presentation and $w \in \Sigma^+$ a baseword. We fix two $(w,w)$-diagrams $\Delta_1,\Delta_2$ over $\mathcal{P}$ and we would like to determine whether or not $\Delta_1$ and $\Delta_2$ are conjugate in $D(\mathcal{P},w)$. 
\begin{enumerate}
	\item For $i=1,2$, write $\Delta_i$ as $\Phi_i \circ \Omega_i \circ \Phi_i^{-1}$ where $\Omega_i$ is normal and absolutely reduced. (Which is possible according to Lemma~\ref{lem:NormalAbs}, and can be done algorithmically.)
	\item Decompose $\Omega_i$ as a sum $\Omega_i^1 + \cdots + \Omega_i^{n_i}$ of simple diagrams.
	\item If $n_1 \neq n_2$, it follows from Lemma~\ref{lem:ConjNormal} that $\Delta_1$ and $\Delta_2$ are not conjugate. 
	\item Otherwise, check whether $\mathrm{Shift}(\Omega_1^j)= \mathrm{Shift}(\Omega_2^j)$ for every $1 \leq j \leq n_1=n_2$. (This can be done algorithmically according to \cite[Lemma~15.21]{MR1396957}.)
	\item According to Lemma~\ref{lem:ShiftConj}, $\Delta_1$ and $\Delta_2$ are conjugate if and only if all the equalities hold. 
\end{enumerate}
As a simple illustration, we can show that the two smallest reduced $(x,x)$-diagrams over $\mathcal{P}=\langle x \mid x=x^2 \rangle$ are not conjugate in $D(\mathcal{P},x)$. See Figure~\ref{Conjugacy}. 

\begin{figure}[h!]
\begin{center}
\includegraphics[width=\linewidth]{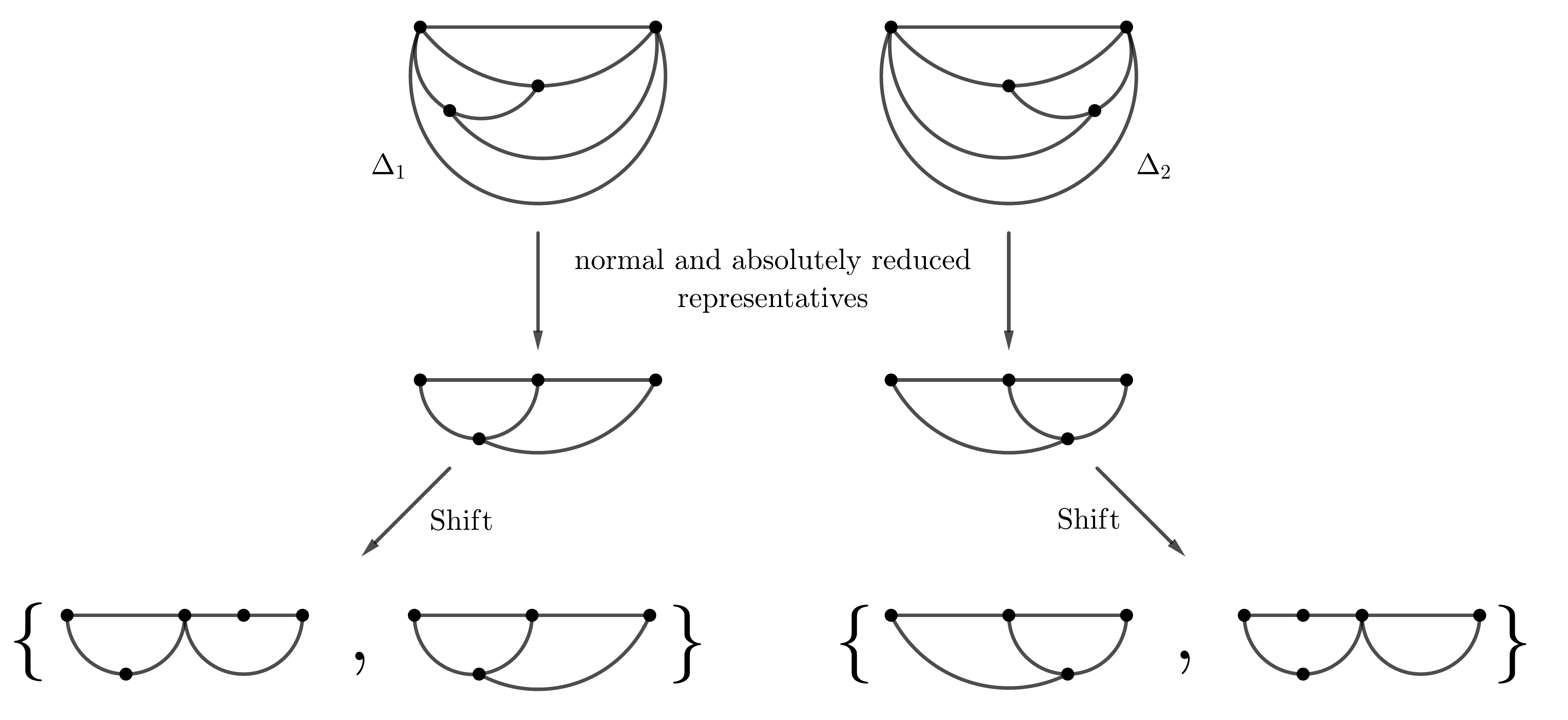}
\caption{Two $(x,x)$-diagrams over $\langle x, \mid x=x^2 \rangle$ that are not conjugate.}
\label{Conjugacy}
\end{center}
\end{figure}

\medskip \noindent
It is worth mentioning that the tools developped in order to solve the conjugacy problem turn out to be also useful to help proving some of the statements mentioned in Section~\ref{section:Properties}. For instance:

\begin{proof}[Proof of Proposition~\ref{prop:Roots}.]
Let $\Delta,\Gamma$ be two spherical reduced diagrams satisfying $\Delta= \Gamma^k$ for some $k \geq 1$. We assume that $\Delta$ and $\Gamma$ contain at least one $2$-cell. According to Lemma~\ref{lem:NormalAbs}, we can write $\Delta$ as $\Phi \Delta_0 \Phi^{-1}$ and $\Gamma$ as $\Psi \Gamma_0 \Psi^{-1}$ for some normal absolutely reduced diagrams $\Delta_0$ and $\Gamma_0$. Moreover, we can choose $\Delta$ with a number of $2$-cells bounded above by the number of $2$-cells of $\Delta$. Let
$$\Delta_0 = \Delta_0^1 + \cdots + \Delta_0^m \text{ and } \Gamma_0 = \Gamma_0^1+ \cdots + \Gamma_0^n$$
be the decompositions of $\Delta_0$ and $\Gamma_0$ as sums of simple diagrams. We have
$$\Delta_0^1+ \cdots + \Delta_0^m = \left( \Phi^{-1} \Psi \right) \circ \left( \Gamma_0^1+ \cdots + \Gamma_0^n \right)^k \circ \left( \Phi^{-1} \Psi \right)^{-1}.$$
As a consequence of Lemma~\ref{lem:ConjNormal}, $n=m$ and $\Phi^{-1} \Phi$ decomposes as a sum $\Omega_1+ \cdots + \Omega_n$ such that $\Delta_0^i = \Omega_i \circ (\Gamma_0^i)^k \circ \Omega_i^{-1}$ for every $1 \leq i \leq n$. Fix an index $1 \leq i \leq n$ such that $\Delta_0^i$ contains at least one $2$-cell. We deduce from Lemma~\ref{lem:ShiftConj} that $\mathrm{Shift}(\Delta_0^i)= \mathrm{Shift} (( \Gamma_0^i)^k)$, which implies that $\Delta_0^i$ and $(\Gamma_0^i)^k$ have the same number of $2$-cells. Because the number of $2$-cells of $(\Gamma_0^i)^k$ coincides with $k$ times the number of $2$-cells of $\Gamma_0^i$, it follows that $k$ is at most half the number of $2$-cells of $\Delta_0^i$, and a fortiori of $\Delta_0$ and finally of $\Delta$. 
\end{proof}

\subsection{Commutation problem}\label{section:Commutation}

\noindent
Centralisers in diagram groups are rather well understood. More precisely:

\begin{thm}[\cite{MR1396957}]\label{thm:Centralisers}
Let $\mathcal{P}= \langle \Sigma \mid \mathcal{R} \rangle$ be a semigroup presentation, $w \in \Sigma^+$ a baseword, and $g \in D(\mathcal{P},w)$ an element. One can represent $g$ as
$$\Gamma \cdot \left( \Delta_1^{n_1} + \cdots + \Delta_r^{n_r} \right) \cdot \Gamma^{-1},$$
where $\Gamma$ is some $(w,u_1 \cdots u_r)$-diagram and where each $\Delta_i$ is a simple absolutely reduced $(u_i,u_i)$-diagram with $\langle \Delta_i \rangle$ either trivial or a maximal cyclic subgroup in $D(\mathcal{P},u_i)$. Then, the centraliser of $g$ coincides with
$$\left\{ \Gamma \cdot \left( \Omega_1 + \cdots + \Omega_r \right) \cdot \Gamma^{-1} \mid \Omega_1 \in C_1, \ldots, \Omega_r \in C_r \right\}$$
where $C_i := \langle \Delta_i \rangle$ if $\Delta_i$ is non-trivial and $C_i:= D(\mathcal{P},u_i)$ otherwise. 
\end{thm}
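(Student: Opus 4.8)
The proof splits into two independent tasks: producing the stated normal form for $g$, and then using it to pin down the centraliser. For the normal form, the plan is to start from Lemma~\ref{lem:NormalAbs}, which lets us write $g=\Phi\circ\Omega\circ\Phi^{-1}$ (up to dipole reduction) with $\Omega$ normal and absolutely reduced; decompose $\Omega$ as a sum of simple diagrams $\Omega=\Omega_1+\cdots+\Omega_r$, noting that normality forces each $\Omega_i$ to be spherical, say a $(u_i,u_i)$-diagram, and that absolute reducedness passes to summands (a power $\Omega^m=\Omega_1^m+\cdots+\Omega_r^m$ is reduced iff each $\Omega_i^m$ is, since a dipole cannot straddle the joining vertex of a sum). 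Finally, using unique extraction of roots (Corollary~\ref{cor:Roots}) together with the finiteness of the set of roots (Proposition~\ref{prop:Roots}) inside each $D(\mathcal{P},u_i)$, one replaces each non-trivial $\Omega_i$ by $\Delta_i^{n_i}$ for the primitive root $\Delta_i$, leaving the trivial $\Omega_i$ (necessarily single edges, being simple and spherical) untouched. Setting $\Gamma:=\Phi$ gives the required expression; the precise bookkeeping is carried out in \cite[Chapter~15]{MR1396957}.

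Next I would reduce the centraliser computation to the ``model'' element $h:=\Delta_1^{n_1}+\cdots+\Delta_r^{n_r}$. By Corollary~\ref{cor:PathDiag}, the map $x\mapsto\Gamma\circ x\circ\Gamma^{-1}$ is an isomorphism $D(\mathcal{P},u_1\cdots u_r)\to D(\mathcal{P},w)$ carrying $h$ to $g$, so it suffices to prove that the centraliser of $h$ in $D(\mathcal{P},u_1\cdots u_r)$ equals $\{\Omega_1+\cdots+\Omega_r\mid\Omega_i\in C_i\}$. The inclusion ``$\supseteq$'' is formal: Definition~\ref{def:SumDiag} gives the identity $(A_1+\cdots+A_r)\circ(B_1+\cdots+B_r)=(A_1\circ B_1)+\cdots+(A_r\circ B_r)$ whenever both sides are defined, so $\Omega_1+\cdots+\Omega_r$ commutes with $h$ as soon as each $\Omega_i$ commutes with $\Delta_i^{n_i}$ --- which holds trivially when $\Delta_i$ is trivial (then $\Delta_i^{n_i}$ has no cells) and because $C_i=\langle\Delta_i\rangle$ when $\Delta_i$ is non-trivial.

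For the inclusion ``$\subseteq$'', take a reduced diagram $\Theta$ commuting with $h$. Observe first that $h=\Delta_1^{n_1}+\cdots+\Delta_r^{n_r}$ is exactly the decomposition of $h$ into simple diagrams: each $\Delta_i^{n_i}$ is simple (for $n_i\ge 2$ because the top path of $\Delta_i^{n_i}$ lies in the first copy of $\Delta_i$ and the bottom path in the last, so the two share only the global source and sink and there is no splitting vertex; for $n_i=1$ by hypothesis; and the trivial ones are single edges), and each is absolutely reduced since $(\Delta_i^{n_i})^m=\Delta_i^{n_i m}$. Applying Lemma~\ref{lem:ConjNormal} to the self-conjugacy $\Theta\circ h\circ\Theta^{-1}=h$ forces $\Theta$ to split as a sum $\Theta=\Theta_1+\cdots+\Theta_r$ with each $\Theta_i$ a $(u_i,u_i)$-diagram satisfying $\Theta_i\circ\Delta_i^{n_i}\circ\Theta_i^{-1}=\Delta_i^{n_i}$ up to dipole reduction, i.e.\ $\Theta_i\in C_{D(\mathcal{P},u_i)}(\Delta_i^{n_i})$. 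When $\Delta_i$ is trivial this centraliser is all of $D(\mathcal{P},u_i)=C_i$. When $\Delta_i$ is non-trivial, the consequence of Corollary~\ref{cor:Roots} recorded immediately after it (namely $[a^m,b^n]=1\Rightarrow[a,b]=1$, applied with $a=\Delta_i$ and $b=\Theta_i$) upgrades $[\Delta_i^{n_i},\Theta_i]=1$ to $[\Delta_i,\Theta_i]=1$, so it remains to see that $\Delta_i$ is self-centralising in $D(\mathcal{P},u_i)$.

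That last point is the heart of the proof and the step I expect to be hardest: for a simple absolutely reduced diagram $\Delta$ with $\langle\Delta\rangle$ maximal cyclic in its diagram group, one must show $C(\Delta)=\langle\Delta\rangle$. This is precisely where the combinatorics of cyclic shifts enters --- a reduced $\Theta$ with $\Theta\circ\Delta\circ\Theta^{-1}=\Delta$ (up to dipoles) is analysed, via the shift structure underlying Lemma~\ref{lem:ShiftConj}, to lie in the cyclic group generated by the primitive diagram controlling $\mathrm{Shift}(\Delta)$, which by maximality of $\langle\Delta\rangle$ is $\Delta$ itself (see \cite[Chapter~15]{MR1396957}). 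Granting this, each $\Theta_i$ lies in $C_i$, hence $\Theta=\Theta_1+\cdots+\Theta_r$ belongs to $\{\Omega_1+\cdots+\Omega_r\mid\Omega_i\in C_i\}$; conjugating back by $\Gamma$ yields the stated description of $C_{D(\mathcal{P},w)}(g)$. Everything apart from this self-centralising lemma --- and the routine but somewhat delicate bookkeeping in the normal-form step --- is a formal manipulation of sums combined with the conjugacy lemmas of Section~\ref{section:WordConj}.
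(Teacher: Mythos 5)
The paper itself offers no proof of Theorem~\ref{thm:Centralisers} (it is quoted from Guba--Sapir), so I can only measure your outline against the strategy of the cited source --- and your architecture does match it: normal form via Lemma~\ref{lem:NormalAbs} and root extraction, reduction to the model element $h=\Delta_1^{n_1}+\cdots+\Delta_r^{n_r}$, splitting of a self-conjugator via Lemma~\ref{lem:ConjNormal}, and finally the self-centralising property of a simple absolutely reduced diagram generating a maximal cyclic subgroup, which you correctly single out as the real content (it is where the cyclic-shift machinery behind Lemma~\ref{lem:ShiftConj} is used). The formal inclusion $\supseteq$ and the use of unique root extraction to pass from $[\Delta_i^{n_i},\Theta_i]=1$ to $[\Delta_i,\Theta_i]=1$ are fine.

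There is, however, one step that fails as written: your treatment of the trivial components. You decree that the trivial simple summands are single edges and then apply Lemma~\ref{lem:ConjNormal} to the full simple-summand decomposition of $h$, forcing the conjugator $\Theta$ to split along \emph{every} summand. The splitting conclusion is false across trivial summands. Take $\mathcal{P}=\langle x\mid x=x^2\rangle$ and $h=\epsilon(x)+\epsilon(x)=\epsilon(x^2)$: this is normal, absolutely reduced, and its decomposition into simple summands has two pieces, yet every $(x^2,x^2)$-diagram conjugates $h$ to itself, and a simple non-trivial $(x^2,x^2)$-diagram (such as $\Omega$ in Figure~\ref{Shift}) does not split as a sum of two $(x,x)$-diagrams. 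The same problem occurs for non-trivial $g$ with a multi-letter untouched subword, e.g.\ $h=S+\epsilon(x)+\epsilon(x)$ with $S$ non-trivial: the centraliser contains $\epsilon(\mathrm{top}(S))+\Theta$ for \emph{every} $(x^2,x^2)$-diagram $\Theta$, whereas your argument only produces sums $\Theta_1+\Theta_2+\Theta_3$ with $\Theta_2,\Theta_3\in D(\mathcal{P},x)$, i.e.\ $D(\mathcal{P},x)\times D(\mathcal{P},x)$ in place of the strictly larger $D(\mathcal{P},x^2)$. The fix (and the way the theorem must be read) is to group each maximal run of letters untouched by $g$ into a single trivial component $\epsilon(u_i)$ --- which is then \emph{not} simple --- and to use the version of the conjugator-splitting lemma that constrains $\Theta$ only along the non-trivial simple components while leaving it arbitrary on the trivial blocks. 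With that correction, the rest of your argument goes through, granting the deferred self-centralising lemma.
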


\noindent
As a consequence, the centraliser of an element in a diagram group is always isomorphic to the the product of a free abelian group of finite rank with finitely many diagram groups given by the same semigroup presentation but possibly different basewords. 

\begin{ex}
Let $\mathcal{P}$ be the semigroup presentation $\langle x \mid x=x^2 \rangle$ and $g \in D(\mathcal{P},x)$ the element represented by the following diagram.

\medskip \noindent
\includegraphics[width=\linewidth]{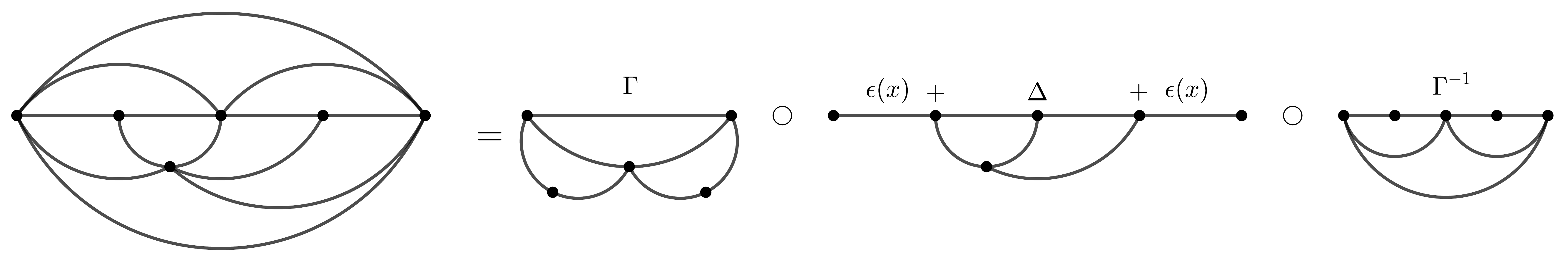}

\medskip \noindent
By applying Theorem~\ref{thm:Centralisers}, we deduce that the centraliser of $g$ in $D(\mathcal{P},x)$ is
$$\left\{ \Gamma \cdot \left( \Phi + \Delta^n + \Psi \right) \cdot \Gamma^{-1} \mid \Phi,\Psi \in D(\mathcal{P},x), n \in \mathbb{Z} \right\}.$$
In particular, the centraliser is isomorphic to $\mathbb{Z} \times F \times F$. Notice that, in $D(\mathcal{P},x)$ and a fortiori in $F$, all the centralisers are isomorphic to $\mathbb{Z}^m \times F^n$ for some $m,n \geq 0$. 
\end{ex}

\noindent
Theorem~\ref{thm:Centralisers} has a lot of interesting applications. The rest of the section is essentially dedicated to them.

\begin{cor}[\cite{MR1725439}]\label{cor:FreeAbelian}
In diagram groups, abelian subgroups are free.
\end{cor}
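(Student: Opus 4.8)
The plan is to prove by induction that every abelian subgroup $A$ of a diagram group $D(\mathcal{P},w)$ is free abelian, using Theorem~\ref{thm:Centralisers} to split off free abelian direct factors and reduce to abelian subgroups of strictly smaller diagram groups. If $A$ is trivial there is nothing to prove, so I would fix some $g \in A \setminus \{1\}$. Since $A$ is abelian, $A$ lies in the centraliser $C(g)$, which by Theorem~\ref{thm:Centralisers} is isomorphic to a finite direct product $C_1 \times \cdots \times C_r$ in which each $C_i$ is either infinite cyclic --- when the simple factor $\Delta_i$ is nontrivial --- or a diagram group $D(\mathcal{P},u_i)$ over the same presentation $\mathcal{P}$ --- when $\Delta_i$ is trivial. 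Because $g \neq 1$, at least one $\Delta_i$ is nontrivial, and $g$ projects nontrivially onto the product of the infinite cyclic factors.

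Next I would regard $A$ as a subgroup of $C_1 \times \cdots \times C_r$ and let $p \colon A \to P$ be the projection onto the product $P$ of the infinite cyclic $C_i$'s. Then $p(A)$ is a subgroup of the finitely generated free abelian group $P$, hence itself free abelian, so the short exact sequence $1 \to \ker p \to A \to p(A) \to 1$ splits and $A \cong \ker p \oplus p(A)$, with $\ker p = A \cap \prod_{\Delta_i = 1} D(\mathcal{P},u_i)$. Projecting $\ker p$ onto each factor $D(\mathcal{P},u_i)$ gives an abelian subgroup of $D(\mathcal{P},u_i)$, and $\ker p$ embeds into the finite direct product of these images. Since subgroups and finite direct products of free abelian groups are free abelian, it is enough to know that each of these images is free abelian; and for that it suffices that each $D(\mathcal{P},u_i)$ is a ``smaller'' instance of the problem, so that the induction hypothesis applies.

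The main obstacle is precisely to make this induction well-founded. One cannot simply induct on the length of the baseword, because the diagram $\Gamma$ conjugating $g$ in Theorem~\ref{thm:Centralisers} may lengthen words, so the $u_i$ need not be shorter than $w$. The remedy I would pursue is to choose $g$ with a minimal number of $2$-cells among the nontrivial elements of $A$, and to exploit the full strength of Theorem~\ref{thm:Centralisers} --- that the $\Delta_i$ are simple and absolutely reduced and that each $\langle \Delta_i\rangle$ is maximal cyclic --- to show that the elements of $A$ surviving in the factors $D(\mathcal{P},u_i)$ are represented by reduced diagrams with strictly fewer $2$-cells, so the induction terminates. As an alternative that sidesteps the delicate bookkeeping, one can instead invoke Theorem~\ref{thm:Nilpotent} to get that finitely generated abelian subgroups are free abelian, and then upgrade this to arbitrary abelian subgroups using Proposition~\ref{prop:Roots} (each nontrivial element has only finitely many roots, with a quantitative bound) together with the undistortedness of $\mathbb{Z}^n$ from Theorem~\ref{thm:Nilpotent}; this reduces the statement to showing that every finite-rank abelian subgroup is finitely generated, a Pontryagin-type argument.
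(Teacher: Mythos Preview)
Your first approach has the right opening move --- apply Theorem~\ref{thm:Centralisers} to some $g\in A\setminus\{1\}$ and project $A$ onto the cyclic factors --- but you overcomplicate the endgame by trying to induct on $\ker p$. The well-foundedness problem you flag is genuine, and the proposed fix (choose $g$ with a minimal number of $2$-cells and argue that surviving elements have strictly fewer) is not justified: the conjugating diagram $\Gamma$ can add cells, and there is no obvious monovariant that decreases. The paper sidesteps the induction entirely by a small change of viewpoint: rather than fixing one $g$ and analysing the kernel, apply the centraliser description to \emph{each} nontrivial $h\in A$ in turn. Writing $h=\Gamma(\Delta_1^{n_1}+\cdots+\Delta_r^{n_r})\Gamma^{-1}$ and picking $k$ with $\Delta_k$ nontrivial, the projection $C(h)\to\langle\Delta_k\rangle\cong\mathbb Z$ restricts to a homomorphism $A\to\mathbb Z$ not killing $h$. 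Thus $A$ is residually infinite cyclic, embeds in a product of copies of $\mathbb Z$, and (being countable) is therefore free abelian. No recursion, no bookkeeping on $\ker p$.

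Your alternative via Theorem~\ref{thm:Nilpotent} is shakier than you suggest. The finitely generated case is immediate from torsion-freeness alone, but the upgrade to arbitrary abelian subgroups is not automatic: an abelian group all of whose finitely generated subgroups are free abelian need not itself be free abelian (e.g.\ $\mathbb Q$). Proposition~\ref{prop:Roots} rules out $\mathbb Q$, and one can indeed push a Pontryagin-style argument through, but this is more work than the one-line residually-$\mathbb Z$ argument above, and you have not supplied the details.
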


\begin{proof}
Let $H$ be an abelian subgroup in a diagram group $G$. Our goal is to show that $H$ is \emph{residually infinite cyclic}, i.e.\ for every non-trivial $h \in H$ there exists a morphism $\varphi : H \to \mathbb{Z}$ satisfying $\varphi(h) \neq 0$. This implies that $H$ embeds into a direct product of (infinitely many) cyclic groups. Such a product being free abelian, the desired conclusion follows.

\medskip \noindent
So let $h \in H$ be a non-trivial element. Decompose $h$ as 
$$\Gamma \cdot \left( \Delta_1^{n_1} + \cdots + \Delta_r^{n_r} \right) \cdot \Gamma^{-1},$$
where $\Gamma$ is some $(w,u_1 \cdots u_r)$-diagram and where each $\Delta_i$ is a simple absolutely reduced $(u_i,u_i)$-diagram with $\langle \Delta_i \rangle$ either trivial or a maximal cyclic subgroup in $D(\mathcal{P},u_i)$. Because $h$ is non-trivial, at least one of the $\Delta_i$ is non-trivial, say $\Delta_k$. According to Theorem~\ref{thm:Centralisers}, the centraliser $C_G(h)$ of $h$ is
$$\left\{ \Gamma \cdot \left( \Omega_1 + \cdots + \Omega_r \right) \cdot \Gamma^{-1} \mid \Omega_1 \in C_1, \ldots, \Omega_r \in C_r \right\}$$
where $C_i := \langle \Delta_i \rangle$ if $\Delta_i$ is non-trivial and $C_i:= D(\mathcal{P},u_i)$ otherwise. The restriction to $H$ of the morphism $\varphi : C_G(h) \to \langle \Delta_k \rangle$ defined by
$$\Gamma \cdot \left( \Omega_1+ \cdots + \Omega_r \right) \cdot \Gamma^{-1} \mapsto \Omega_k$$
satisfies $\varphi(h) \neq 1$. This concludes the proof of our corollary. 
\end{proof}

\noindent
We saw with Propositions~\ref{prop:RAAGinterval} and~\ref{prop:RAAGtree} that many right-angled Artin groups are diagram groups. As another application of Theorem~\ref{thm:Centralisers}, we can show that there are also many right-angled Artin groups that are not diagram groups.

\begin{cor}[\cite{MR1725439}]\label{cor:NotRAAG}
Let $\Gamma$ be a triangle-free graph. If $\Gamma$ contains an induced cycle of odd length $\geq 5$, then the right-angled Artin group $A(\Gamma)$ is not a diagram group\footnote{\cite[Theorem~30]{MR1725439} only proves that a right-angled Artin group defined by a cycle of odd length $\geq 5$ is not a diagram group, but the proof can be repeated without any modification to deduce our corollary. In fact, our statement is not optimal either. \cite[Theorem~30]{MR1725439} proves that a diagram group does not contain elements $g_0, \ldots, g_{2n}$ such that $g_i$ commutes with $g_{i+1}$ for every $i \in \mathbb{Z}_{2n}$, such that the centraliser of each $g_i$ has a cyclic centre, and such that $g_i$ and $g_{i+1}$ do no belong to the same cyclic subgroup for every $i \in \mathbb{Z}_{2n}$.}.
\end{cor}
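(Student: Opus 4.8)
The plan is to argue by contradiction using the structure of centralisers given by Theorem~\ref{thm:Centralisers}, following the scheme indicated in the footnote. Suppose $\Gamma$ is triangle-free and contains an induced cycle $C$ of odd length $2n+1 \geq 5$, with consecutive vertices $v_0, v_1, \ldots, v_{2n}$ (indices mod $2n+1$), and suppose for contradiction that $A(\Gamma)$ is a diagram group, say $A(\Gamma) = D(\mathcal{P},w)$. For each $i$, set $g_i := v_i \in A(\Gamma)$. The key structural facts to extract are: (i) each $g_i$ commutes with $g_{i+1}$ (since $\{v_i, v_{i+1}\}$ is an edge); (ii) consecutive generators $g_i, g_{i+1}$ do not lie in a common cyclic subgroup (indeed $\langle v_i, v_{i+1}\rangle \cong \mathbb{Z}^2$ because $\Gamma$ has no multi-edges, so no single cyclic group contains both); and (iii) the centraliser of each $g_i$ in $A(\Gamma)$ has \emph{cyclic centre}. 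Fact (iii) is where triangle-freeness is used: in a right-angled Artin group the centraliser of a generator $v_i$ is $\langle v_i \rangle \times A(\mathrm{lk}(v_i))$, and since $\Gamma$ is triangle-free the link $\mathrm{lk}(v_i)$ is an independent set, so the centraliser is $\langle v_i\rangle \times F_{d}$ (free of rank $d = \deg(v_i)$); the centre of $\langle v_i\rangle \times F_d$ is $\langle v_i \rangle$ if $d \neq 1$ and is $\langle v_i \rangle \times F_1 \cong \mathbb{Z}^2$ if $d = 1$ --- but every vertex of the cycle $C$ has degree $\geq 2$ in $\Gamma$, so the centre is the infinite cyclic group $\langle v_i\rangle$.

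The heart of the argument is then the following claim about diagram groups, which is the content of \cite[Theorem~30]{MR1725439}: a diagram group cannot contain elements $g_0, \ldots, g_{2n}$ (an odd-length cyclic chain) satisfying (i), (ii), (iii) simultaneously. To prove this, I would use Theorem~\ref{thm:Centralisers} to normalise the situation. Since $g_0$ commutes with both $g_1$ and $g_{2n}$, both of these lie in $C_G(g_0)$, which by the theorem has the form $\Gamma_0 \cdot \left( D(\mathcal{P},u_1) \oplus \cdots \oplus \langle \Delta_k \rangle \oplus \cdots \right) \cdot \Gamma_0^{-1}$ --- a direct sum of diagram groups and one distinguished maximal cyclic factor $\langle \Delta_k\rangle$ coming from the nontrivial simple summands of $g_0$. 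Hypothesis (iii) forces the centre of this centraliser to be cyclic; but the centre of such a direct sum is the product of the centres of the factors, so \emph{exactly one} factor can be nontrivial and it must itself have cyclic centre, i.e.\ $C_G(g_0)$ is itself infinite cyclic (equal to a maximal cyclic subgroup), or more precisely $g_0$ lies in a unique maximal cyclic subgroup $M_0 := C_G(g_0)$. Wait --- more carefully: the condition is that the centre is cyclic, which allows $C_G(g_0)$ to be, say, $\mathbb{Z} \times (\text{centreless group})$; but the cleanest route is to observe that any $g_{i+1}$ commuting with $g_i$ must lie in $C_G(g_i)$, and push the combinatorial constraint around the cycle.

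The combinatorial crux --- and the step I expect to be the main obstacle --- is to turn the odd-cycle condition into a contradiction via a parity/orientation argument on maximal cyclic subgroups. The idea is: having cyclic centre of $C_G(g_i)$ means $g_i$ determines a canonical maximal cyclic subgroup $M_i$ (the centre of its centraliser, or the cyclic factor $\langle \Delta_k\rangle$ above), and $g_i \in M_i$. Commutation $[g_i, g_{i+1}] = 1$ together with (ii) ($g_i, g_{i+1}$ not in a common cyclic group, so in particular $g_{i+1} \notin M_i$ and $g_i \notin M_{i+1}$, whence $M_i \neq M_{i+1}$) should force the diagrams $\Delta^{(i)}$ underlying the $g_i$ to share summands in an alternating pattern: using Theorem~\ref{thm:Centralisers} again, $g_{i+1} \in C_G(g_i)$ means, after conjugating by $\Gamma_0$, that $g_{i+1}$'s nontrivial ``core'' is concentrated in the summands \emph{away from} the block occupied by $g_i$'s cyclic part. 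Tracking which blocks are ``occupied'' as $i$ runs $0,1,\ldots,2n,0$ yields a proper $2$-colouring of the cycle $C$ --- impossible since $C$ has odd length. Making this occupied-block bookkeeping precise (ensuring the blocks are consistently defined across all the $\Gamma_0$'s, which differ for different $i$, and that ``occupied'' is well-defined up to dipole reduction) is the technical heart; I would handle it by first conjugating everything into a single common decomposition using that the $g_i$ pairwise commute along the chain, then invoking Lemma~\ref{lem:ConjNormal} to align the simple-summand decompositions. Once the alternating/$2$-colouring structure is exposed, the odd-length contradiction is immediate, and the Corollary follows.
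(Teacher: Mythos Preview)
Your setup is correct and matches the paper: the three properties (i) adjacency-commutation, (ii) consecutive generators not in a common cyclic subgroup, and (iii) cyclic centre of each centraliser are exactly what is needed, and your verification of (iii) via $C_{A(\Gamma)}(v_i)=\langle v_i\rangle\times A(\mathrm{lk}(v_i))$ with $\mathrm{lk}(v_i)$ edgeless (by triangle-freeness) and $\deg(v_i)\ge 2$ is cleaner than the paper's appeal to Servatius. You also correctly extract from Theorem~\ref{thm:Centralisers} that (iii) forces each $g_i$ to be \emph{monocomponent}, i.e.\ conjugate to a single simple absolutely reduced summand.

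The gap is in your endgame. Your plan to ``conjugate everything into a single common decomposition'' cannot work: only \emph{adjacent} $g_i$ commute, so there is no reason for a single $\Gamma$ and a single block decomposition of the baseword to simultaneously diagonalise all $2n+1$ elements. Consequently the ``occupied block'' bookkeeping, and with it the vertex $2$-colouring, is not well-defined. The paper avoids this by never seeking a global decomposition. Instead it introduces a relation $\prec$ on monocomponent elements: $g_1\prec g_2$ if, after a suitable conjugation, $g_1$'s simple core sits strictly to the \emph{left} of $g_2$'s along the bottom path. The crucial (and nontrivial) facts are that $\prec$ is transitive and antisymmetric, and that any two commuting monocomponent elements not in a common cyclic subgroup are $\prec$-comparable. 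These are purely pairwise statements, so no global alignment is needed. One then runs around the cycle: from $s_0\prec s_1$ one cannot have $s_1\prec s_2$ (transitivity would give $s_0\prec s_2$, but $s_0,s_2$ do not commute), so $s_2\prec s_1$; similarly $s_2\prec s_3$, and so on alternately, until one arrives at $s_{2n}\prec s_1$, forcing $s_{2n}$ and $s_1$ to commute --- contradicting that the induced cycle has length $\ge 5$. Your ``$2$-colouring'' intuition is really this alternation of $\prec$-directions along the edges of the cycle; what you are missing is the definition of $\prec$ and, above all, its transitivity, which is what carries the argument across non-commuting pairs.
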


\begin{proof}[Sketch of proof.]
Given a triangle-free graph $\Gamma$ with an induced cycle of odd length $\geq 5$, assume for contradiction that $A(\Gamma)$ is a diagram group, say $D(\mathcal{P},w)$ for some semigroup presentation $\mathcal{P}= \langle \Sigma \mid \mathcal{R} \rangle$ and some baseword $w \in \Sigma^+$. 

\medskip \noindent
Any spherical diagram over $\mathcal{P}$ can be conjugate to a sum of spherical diagrams, and the number of non-trivial components depends only on the element of $D(\mathcal{P},w)$ we are looking at. Say that an element of $D(\mathcal{P},w)$ is \emph{monocomponent} if it has only one such component. Given two monocomponent elements $g_1,g_2 \in D(\mathcal{P},w)$, we write $g_1 \prec g_2$ whenever there exist a $(w,xuyvz)$-diagram $\Gamma$ and simple absolutely reduced $(u,u)$- and $(v,v)$-diagrams $\Psi_1$ and $\Psi_2$ such that
$$g_1 = \Gamma \cdot \left( \epsilon(x) + \Psi_1 + \epsilon(yvz) \right) \cdot \Gamma^{-1} \text{ and } g_2 = \Gamma \cdot \left( \epsilon(xuy) + \Psi_2+ \epsilon(z) \right) \cdot \Gamma^{-1}.$$
It can be verified that $\prec$ is transitive and antisymmetric, and that any two commuting monocomponent elements that do not belong to the same cyclic subgroup are $\prec$-comparable. See \cite{MR1725439} for details. 

\medskip \noindent
Now, let $s_0, \ldots, s_{2n} \in D(\mathcal{P},w)$ denote the generators of $A(\Gamma)$ given by an induced cycle of odd length $\geq 5$. It follows from the description of centralisers in right-angled Artin groups \cite{ServatiusCent} that the centraliser of each $s_i$ is $\langle s_{i-1},s_i,s_{i+1} \rangle \simeq \mathbb{Z} \times \mathbb{F}_2$; in particular, its centre is infinite cyclic. We deduce from Theorem~\ref{thm:Centralisers} that the $s_i$ are monocomponent. 

\medskip \noindent
Assume without loss of generality that $s_0 \prec s_1$. We cannot have $s_1 \prec s_2$ since otherwise this would imply that $s_0 \prec s_2$, which is impossible since $s_0$ and $s_2$ do not commute. Therefore, we have $s_2 \prec s_1$. Similarly, we cannot have $s_3 \prec s_2$ since otherwise we would have $s_3 \prec s_1$ and $s_3$ would commute with $s_1$. Hence $s_2 \prec s_3$. Iterating the argument, we find that:
$$s_0 \prec s_1, s_1 \prec s_2, s_2 \prec s_3, \ldots, s_{2n-1} \prec s_{2n}, s_{2n} \prec s_0.$$
It follows that $s_{2n}$ commutes with $s_1$ since $s_{2n} \prec s_1$, a contradiction.
\end{proof}

\noindent
An interesting consequence of Theorem~\ref{thm:Centralisers} is that nilpotent subgroups in diagram groups must be abelian (and a fortiori free abelian according to Corollary~\ref{cor:FreeAbelian}). This assertion will follow from the next statement:

\begin{thm}[\cite{MR1725439}]\label{thm:ForNilpotent}
For subgroup in a diagram group, the centre and the commutator subgroup intersect trivially. 
\end{thm}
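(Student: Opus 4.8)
The plan is to push everything through the structure theorem for centralisers (Theorem~\ref{thm:Centralisers}) and to use the fact that the centraliser of a nontrivial element splits as a direct product one of whose factors is infinite cyclic and \emph{detects} that element. So fix a diagram group $G = D(\mathcal{P},w)$ and a subgroup $H \le G$, and suppose towards a contradiction that there is a nontrivial $g \in Z(H) \cap [H,H]$. Since $g$ lies in the centre of $H$, every element of $H$ commutes with $g$, i.e.\ $H \le C_G(g)$, and on the other hand $g$ is a product of commutators of elements of $H$.

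Next I would apply Theorem~\ref{thm:Centralisers} to $g$: write $g = \Gamma \cdot (\Delta_1^{n_1} + \cdots + \Delta_r^{n_r}) \cdot \Gamma^{-1}$ with each $\Delta_i$ a simple absolutely reduced $(u_i,u_i)$-diagram, and
$$C_G(g) = \left\{ \Gamma \cdot (\Omega_1 + \cdots + \Omega_r) \cdot \Gamma^{-1} \mid \Omega_i \in C_i \right\},$$
where $C_i = \langle \Delta_i \rangle \cong \mathbb{Z}$ when $\Delta_i$ is nontrivial and $C_i = D(\mathcal{P},u_i)$ otherwise. The structural point I would spell out here is that because composition of side-by-side sums of diagrams is computed coordinate by coordinate, namely $(\Omega_1 + \cdots + \Omega_r) \circ (\Omega_1' + \cdots + \Omega_r') = (\Omega_1 \circ \Omega_1') + \cdots + (\Omega_r \circ \Omega_r')$, the map $(\Omega_1,\dots,\Omega_r) \mapsto \Gamma(\Omega_1 + \cdots + \Omega_r)\Gamma^{-1}$ is a group isomorphism $C_1 \times \cdots \times C_r \to C_G(g)$. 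In particular, for each index $k$ there is a projection homomorphism $\pi_k : C_G(g) \to C_k$, and $\pi_k(g) = \Delta_k^{n_k}$.

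Finally, since $g \neq 1$ at least one summand $\Delta_k^{n_k}$ is nontrivial; for such a $k$ the factor $C_k = \langle \Delta_k \rangle$ is infinite cyclic and $n_k \neq 0$, so $\pi_k(g) = \Delta_k^{n_k} \neq 1$. But $\pi_k$ restricts to a homomorphism $H \to \langle \Delta_k \rangle$ with abelian target, so it kills $[H,H]$; as $g \in [H,H]$ this forces $\pi_k(g) = 1$, a contradiction. Hence $Z(H) \cap [H,H] = \{1\}$, which (applied to the lower central series) is exactly what is needed to conclude that nilpotent subgroups of diagram groups are abelian.

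I do not expect a serious obstacle once Theorem~\ref{thm:Centralisers} is in hand; the only step that deserves care is the identification of $C_G(g)$ with the direct product $\prod_i C_i$ — equivalently, checking that the "sum" decomposition of spherical diagrams turns concatenation into the coordinatewise product — since this is precisely what makes each $\pi_k$ a genuine homomorphism and lets an abelian quotient of $H$ witness the nontriviality of $g$.
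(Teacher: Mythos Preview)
Your proof is correct and follows essentially the same route as the paper's: both use Theorem~\ref{thm:Centralisers} to embed $H$ into the centraliser $C_G(g) \cong C_1 \times \cdots \times C_r$, then observe that on each coordinate $k$ where $\Delta_k$ is nontrivial the factor $C_k$ is cyclic, so commutators of elements of $H$ vanish there while $g$ itself does not. The only cosmetic difference is that you phrase this via the projection homomorphisms $\pi_k$, whereas the paper computes $[a,b] = \Gamma([A_1,B_1]+\cdots+[A_r,B_r])\Gamma^{-1}$ directly; your version is arguably cleaner, and your caveat about checking that sums turn concatenation into coordinatewise product is exactly the point the paper uses without comment.
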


\begin{proof}
Let $\mathcal{P}= \langle \Sigma \mid \mathcal{R} \rangle$, be a semigroup presentation, $w\in \Sigma^+$ a baseword, and $H$  a subgroup of $D(\mathcal{P},w)$. Assume that the centre of $H$ contains a non-trivial element $h \in H$, otherwise there is nothing to prove. Decompose $h$ as 
$$\Gamma \cdot \left( \Delta_1^{n_1} + \cdots + \Delta_r^{n_r} \right) \cdot \Gamma^{-1},$$
where $\Gamma$ is some $(w,u_1 \cdots u_r)$-diagram and where each $\Delta_i$ is a simple absolutely reduced $(u_i,u_i)$-diagram with $\langle \Delta_i \rangle$ either trivial or a maximal cyclic subgroup in $D(\mathcal{P},u_i)$. For any two $a,b \in H$, it follows from Theorem~\ref{thm:Centralisers} that we can write
$$a = \Gamma \cdot \left( A_1 + \cdots + A_r \right) \cdot \Gamma^{-1} \text{ and } b= \Gamma \cdot \left( B_1+ \cdots + B_r \right) \cdot \Gamma^{-1}$$
where, for each $1 \leq i \leq r$, $A_i$ and $B_i$ belong to $\langle \Delta_i \rangle$ if $\Delta_i$ is non-trivial and to $D(\mathcal{P},u_i)$ otherwise. Then
$$[a,b] = \Gamma \cdot \left( [A_1,B_1] + \cdots + [A_r,B_r] \right) \cdot \Gamma^{-1}.$$
Observe that, for every $1 \leq i \leq r$ such that $\Delta_i$ is non-trivial, $[A_i,B_i]$ must be trivial. Consequently, a product of commutators of elements in $H$ decomposes as 
$$\Gamma \cdot ( C_1+ \cdots + C_r) \cdot \Gamma^{-1}$$ 
where each $C_i$ is trivial if $\Delta_i$ is non-trivial. This implies that $h$ cannot be written as a product of commutators in $H$. 
\end{proof}

\begin{cor}[\cite{MR1725439}]
Nilpotent subgroups in diagram groups are abelian.
\end{cor}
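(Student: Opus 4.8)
The statement to prove is: \emph{nilpotent subgroups in diagram groups are abelian}. The plan is to reduce this to Theorem~\ref{thm:ForNilpotent}, which says that in any subgroup of a diagram group the centre and the commutator subgroup intersect trivially. The key point is the classical fact that a nilpotent group which is \emph{not} abelian has a nontrivial element lying in both its centre and its commutator subgroup; combining this with Theorem~\ref{thm:ForNilpotent} gives the contradiction.

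First I would fix a nilpotent subgroup $H \leq D(\mathcal{P},w)$ and argue by contradiction, assuming $H$ is not abelian, i.e.\ $[H,H] \neq 1$. Since $H$ is nilpotent, its lower central series $H = \gamma_1(H) \supseteq \gamma_2(H) \supseteq \cdots$ terminates at the trivial group, say $\gamma_{c}(H) \neq 1$ but $\gamma_{c+1}(H) = 1$ with $c \geq 2$ (the hypothesis $[H,H]\neq 1$ forces $c\geq 2$). Now observe that $\gamma_{c}(H)$ is contained in the commutator subgroup $[H,H] = \gamma_2(H)$, because $c \geq 2$ and the lower central series is decreasing. On the other hand, $\gamma_{c}(H)$ is central in $H$: by definition $[\gamma_{c}(H), H] = \gamma_{c+1}(H) = 1$, so $\gamma_{c}(H) \leq Z(H)$. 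Hence $1 \neq \gamma_{c}(H) \subseteq Z(H) \cap [H,H]$, which directly contradicts Theorem~\ref{thm:ForNilpotent}. Therefore $[H,H] = 1$, i.e.\ $H$ is abelian.

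I do not expect any serious obstacle here: the argument is a short application of Theorem~\ref{thm:ForNilpotent} together with a standard fact about nilpotent groups. The only mild subtlety worth spelling out is why $\gamma_c(H)$ is nontrivial with $c \geq 2$ when $H$ is non-abelian: this is precisely the statement that the nilpotency class of a non-abelian nilpotent group is at least $2$, and that the last nontrivial term of the lower central series is both central and contained in the derived subgroup. If one prefers to avoid even invoking the lower central series explicitly, an equivalent route uses the upper central series: in a non-abelian nilpotent group $Z_2(H)/Z_1(H)$ is nontrivial, pick $h \in Z_2(H) \setminus Z_1(H)$, then $[h,H] \neq 1$ but $[h,H] \subseteq Z_1(H)$ and $[h,H] \subseteq [H,H]$, so again one produces a nontrivial element of $Z(H) \cap [H,H]$ (after possibly passing to one more commutator to land inside the centre). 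Either way the conclusion follows, and combined with Corollary~\ref{cor:FreeAbelian} one moreover gets that such subgroups are free abelian.
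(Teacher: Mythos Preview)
Your proof is correct and takes essentially the same approach as the paper: both deduce the result from Theorem~\ref{thm:ForNilpotent} by observing that a non-abelian nilpotent group has nontrivial intersection between its centre and its commutator subgroup. Your phrasing via the last nontrivial term $\gamma_c(H)$ of the lower central series is slightly more direct than the paper's, which instead invokes the existence of a $2$-step nilpotent subgroup, but the underlying idea is identical.
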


\begin{proof}
For every $k \geq 2$, a $k$-step nilpotent group always contains a $2$-step nilpotent group. But being a $2$-step nilpotent group precisely means that its commutator subgroup is non-trivial and contains in its centre. Thus, it follows from Theorem~\ref{thm:ForNilpotent} that nilpotent subgroups in diagram groups are $1$-step nilpotent, i.e.\ abelian. 
\end{proof}

\noindent
It is worth noticing that, even though diagram groups do not contain interesting nilpotent groups, they may contain interesting solvable groups. For instance, we saw in Section~\ref{section:Examples} that the lamplighter group $\mathbb{Z} \wr \mathbb{Z}$, which is metabelian, is a diagram group; as well as the iterated wreath product $((\mathbb{Z} \wr \mathbb{Z}) \wr \cdots) \wr \mathbb{Z}$. In fact, it is contained in many diagram groups. It is proved in \cite{MR1725439} that a diagram group $D(\mathcal{P},w)$ contains a subgroup isomorphic to $\mathbb{Z} \wr \mathbb{Z}$ if and only if there exist words $x,y,z$ such that $D(\mathcal{P},z) \neq \{1\}$ and the equalities $w=xy$, $x=xz$, and $y=zy$ hold modulo $\mathcal{P}$. This includes for instance Thompson's group $F$. 

\medskip \noindent
We saw with Corollary~\ref{cor:FreeAbelian} that abelian subgroups in diagram groups are necessarily free. In fact, we can be more precise and describe the structures of abelian subgroups in terms of diagrams.

\begin{thm}[\cite{MR1725439}]\label{thm:FormAbelian}
Let $\mathcal{P}= \langle \Sigma \mid \mathcal{R} \rangle$ be a semigroup presentation and $w \in \Sigma^+$ a baseword. For all pairwise commuting elements $g_1, \ldots, g_m \in D(\mathcal{P},w)$, there exist an $(w,v_1 \cdots v_n)$-diagram $\Gamma$ and $(v_j,v_j)$-diagrams $\Delta_j$ such that
$$g_i = \Gamma \cdot \left( \Delta_1^{p_{i1}} + \cdots + \Delta_n^{p_{in}} \right) \cdot \Gamma^{-1} \text{ for some } p_{i1}, \ldots, p_{in} \in \mathbb{Z}$$
for every $1 \leq i \leq m$. Moreover, the diagrams $\Delta_1, \ldots, \Delta_n$ can be chosen either trivial or simple and absolutely reduced. 
\end{thm}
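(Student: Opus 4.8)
The plan is to argue by induction on $m$, using Theorem~\ref{thm:Centralisers} to peel off the last element $g_{m+1}$ and feeding the remaining elements back into the induction after restricting them to the summands produced by the centraliser description. For the base case $m\le 1$ there is essentially nothing to do: if $g_1=1$ one takes $\Gamma:=\epsilon(w)$ (the $(w,w)$-diagram with no cell), $n:=1$, $\Delta_1:=\epsilon(w)$, $p_{11}:=0$; and if $g_1\neq 1$ then Theorem~\ref{thm:Centralisers} (equivalently Lemma~\ref{lem:NormalAbs}) already yields a $(w,u_1\cdots u_r)$-diagram $\Gamma$ together with simple absolutely reduced $(u_k,u_k)$-diagrams $\Delta_k$ with $g_1=\Gamma\cdot(\Delta_1^{n_1}+\cdots+\Delta_r^{n_r})\cdot\Gamma^{-1}$, which is exactly the required form.

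For the inductive step, let $g_1,\dots,g_{m+1}\in D(\mathcal{P},w)$ be pairwise commuting. If $g_{m+1}=1$ we simply apply the inductive hypothesis to $g_1,\dots,g_m$ and append $p_{m+1,j}:=0$ to the exponent vectors. Otherwise we apply Theorem~\ref{thm:Centralisers} to $g_{m+1}$, obtaining a $(w,u_1\cdots u_r)$-diagram $\Gamma$ and simple absolutely reduced $(u_k,u_k)$-diagrams $\Delta_k$, with $\langle\Delta_k\rangle$ trivial or maximal cyclic in $D(\mathcal{P},u_k)$, such that $g_{m+1}=\Gamma\cdot(\Delta_1^{n_1}+\cdots+\Delta_r^{n_r})\cdot\Gamma^{-1}$ and $C_{D(\mathcal{P},w)}(g_{m+1})=\{\Gamma\cdot(\Omega_1+\cdots+\Omega_r)\cdot\Gamma^{-1}\mid\Omega_k\in C_k\}$, where $C_k:=\langle\Delta_k\rangle$ when $\Delta_k$ is non-trivial and $C_k:=D(\mathcal{P},u_k)$ otherwise. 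Since each $g_i$ with $i\le m$ commutes with $g_{m+1}$, we may write $g_i=\Gamma\cdot(g_{i,1}+\cdots+g_{i,r})\cdot\Gamma^{-1}$ with $g_{i,k}\in C_k$.

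Next I would show that, for each fixed $k$, the elements $g_{1,k},\dots,g_{m,k}$ pairwise commute. Exactly as in the computation in the proof of Theorem~\ref{thm:ForNilpotent}, one has $[g_i,g_j]=\Gamma\cdot([g_{i,1},g_{j,1}]+\cdots+[g_{i,r},g_{j,r}])\cdot\Gamma^{-1}$; and since the decomposition of a diagram as a sum of simple diagrams (see Definition~\ref{def:SumDiag}) is rigid under dipole reduction — both cells of a dipole lie in a single summand, so a sum of spherical diagrams reduces to the trivial diagram if and only if each summand does — the hypothesis $[g_i,g_j]=1$ forces $[g_{i,k},g_{j,k}]=1$ in $D(\mathcal{P},u_k)$ for every $k$. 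Now treat each block separately. If $\Delta_k$ is non-trivial then $g_{i,k}\in\langle\Delta_k\rangle$, so $g_{i,k}=\Delta_k^{q_{ik}}$, and this block is already in diagonal form with the single component $\Delta_k$ over $u_k$ (the $k$-th block of $g_{m+1}$ being $\Delta_k^{n_k}$). If $\Delta_k$ is trivial, the inductive hypothesis applies to the pairwise commuting tuple $g_{1,k},\dots,g_{m,k}\in D(\mathcal{P},u_k)$, yielding a $(u_k,v_{k,1}\cdots v_{k,s_k})$-diagram $\Gamma_k$ and $(v_{k,l},v_{k,l})$-diagrams $\Delta_{k,l}$, each trivial or simple and absolutely reduced, with $g_{i,k}=\Gamma_k\cdot(\Delta_{k,1}^{q_{i,k,1}}+\cdots+\Delta_{k,s_k}^{q_{i,k,s_k}})\cdot\Gamma_k^{-1}$ for $i\le m$; and the $k$-th block of $g_{m+1}$, being trivial, equals $\Gamma_k\cdot(\Delta_{k,1}^0+\cdots+\Delta_{k,s_k}^0)\cdot\Gamma_k^{-1}$. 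Finally, setting $\Gamma_k:=\epsilon(u_k)$ in the non-trivial-block case and $\Gamma':=\Gamma\circ(\Gamma_1+\cdots+\Gamma_r)$, and using that $+$ distributes over $\circ$, one obtains $g_i=\Gamma'\cdot(\text{a sum of powers of the components})\cdot\Gamma'^{-1}$ for all $i=1,\dots,m+1$, the components being the $\Delta_k$ from the non-trivial blocks and the $\Delta_{k,l}$ from the trivial blocks, each trivial or simple and absolutely reduced; this closes the induction.

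The hard part is not a single idea but the careful bookkeeping: correctly identifying an element of $C_{D(\mathcal{P},w)}(g_{m+1})$ with its tuple of summands, justifying that the componentwise commutators vanish through the rigidity of sum decompositions under dipole reduction, and verifying that $\Gamma\circ(\Gamma_1+\cdots+\Gamma_r)$ really does conjugate the re-assembled diagonal sums back to the $g_i$ — in particular that composition and $+$ interact as expected up to dipole reduction, and that the outer conjugation by $\Gamma^{-1}$ cancels correctly.
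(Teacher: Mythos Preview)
The paper does not actually include a proof of Theorem~\ref{thm:FormAbelian}; it is stated with a citation to \cite{MR1725439} and then immediately used to derive the corollary on undistorted abelian subgroups. Your inductive argument via Theorem~\ref{thm:Centralisers} is correct and is precisely the natural route given the tools the survey develops: it mirrors the blockwise computation already displayed in the proof of Theorem~\ref{thm:ForNilpotent}, and the reassembly through $\Gamma':=\Gamma\circ(\Gamma_1+\cdots+\Gamma_r)$ works because $(A_1+\cdots+A_r)\circ(B_1+\cdots+B_r)=(A_1\circ B_1)+\cdots+(A_r\circ B_r)$ for compatible spherical summands.

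One small simplification: you do not need to argue separately about ``rigidity of sum decompositions under dipole reduction'' to get componentwise commutation. The description of the centraliser in Theorem~\ref{thm:Centralisers} is implicitly saying that $(\Omega_1,\ldots,\Omega_r)\mapsto\Gamma\cdot(\Omega_1+\cdots+\Omega_r)\cdot\Gamma^{-1}$ is a group isomorphism $C_1\times\cdots\times C_r\to C_{D(\mathcal{P},w)}(g_{m+1})$; once you phrase it that way, $[g_i,g_j]=1$ immediately gives $[g_{i,k},g_{j,k}]=1$ for each $k$ without any further analysis of dipoles. Your justification via cut-vertices is correct, just slightly more work than necessary.
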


\noindent
As a consequence, one can show that abelian subgroups in diagram groups are undistorted.

\begin{cor}[\cite{MR1725439}]
In diagram groups, finitely generated abelian subgroups are undistorted.
\end{cor}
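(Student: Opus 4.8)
The plan is to read off the result from the explicit normal form of Theorem~\ref{thm:FormAbelian}, using as the only metric input the subadditivity of the \emph{cell-counting} length function on a diagram group. Fix a semigroup presentation $\mathcal{P}=\langle \Sigma \mid \mathcal{R}\rangle$, a baseword $w$, and a finitely generated abelian subgroup $H=\langle g_1,\dots,g_m\rangle\le D(\mathcal{P},w)$. By Theorem~\ref{thm:FormAbelian} there are a $(w,v_1\cdots v_n)$-diagram $\Gamma$ and diagrams $\Delta_1,\dots,\Delta_n$, each either trivial or simple and absolutely reduced, with $g_i=\Gamma\cdot(\Delta_1^{p_{i1}}+\cdots+\Delta_n^{p_{in}})\cdot\Gamma^{-1}$. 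Discard the indices $j$ with $\Delta_j$ trivial, and for the surviving indices let $c_j\ge 1$ be the number of cells of $\Delta_j$. Conjugation by $\Gamma$ is a group isomorphism $D(\mathcal{P},v_1\cdots v_n)\to D(\mathcal{P},w)$, with inverse conjugation by $\Gamma^{-1}$, and under it $H$ corresponds to the subgroup $H':=\langle\,\Delta_1^{p_{i1}}+\cdots+\Delta_n^{p_{in}}:1\le i\le m\,\rangle\le D(\mathcal{P},v_1\cdots v_n)$. Since such an isomorphism preserves distortion of finitely generated subgroups, I would reduce to showing that $H'$ is undistorted; and since this amounts to undistortion of $H'$ inside every finitely generated subgroup in between, I fix such a subgroup $K$ with $H'\le K\le D(\mathcal{P},v_1\cdots v_n)$ and work in $K$.

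Next I would introduce $N:K\to\mathbb{N}$ sending an element to the number of cells of its reduced diagram (well defined by \cite[Theorem~3.17]{MR1396957}). This is a length function: $N(1)=0$; $N(g^{-1})=N(g)$, because the inverse is the mirror image and mirroring takes reduced diagrams to reduced diagrams; and $N(gh)\le N(g)+N(h)$, because concatenating reduced representatives and then reducing only removes cells. Hence, for a fixed finite generating set of $K$, $N(g)\le C_0\,|g|_K$ for all $g\in K$. On the other hand, every $h'\in H'$ is a product of the generators above and their inverses, hence $h'=\Delta_1^{k_1}+\cdots+\Delta_n^{k_n}$ for the corresponding integer vector $\mathbf{k}(h')=(k_1,\dots,k_n)$. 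The key point is that this displayed diagram is \emph{already reduced}: each $\Delta_j^{k_j}$ is reduced since $\Delta_j$ is absolutely reduced, and a sum of reduced diagrams is reduced because two cells lying in distinct summands share no edge and so cannot form a dipole. Consequently $N(h')=\sum_j|k_j|c_j$; in particular $h'=1$ iff $\mathbf{k}(h')=0$, so $h'\mapsto\mathbf{k}(h')$ embeds $H'$ as a finitely generated subgroup of $\mathbb{Z}^n$, which is undistorted there, giving $|h'|_{H'}\le C_1\|\mathbf{k}(h')\|_1+C_1$.

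Chaining these estimates finishes the argument: $\|\mathbf{k}(h')\|_1=\sum_j|k_j|\le\sum_j|k_j|c_j=N(h')\le C_0\,|h'|_K$, whence $|h'|_{H'}\le C_0C_1\,|h'|_K+C_1$; together with the automatic inequality $|h'|_K\le C_2\,|h'|_{H'}$ this shows $H'$ — and therefore $H$ — is undistorted. The only real input is Theorem~\ref{thm:FormAbelian}, and everything metric is formal. The one step deserving genuine care rather than routine bookkeeping is the claim that $\Delta_1^{k_1}+\cdots+\Delta_n^{k_n}$ is reduced: this uses the absolute reducedness of the $\Delta_j$ in an essential way, so that no dipole arises \emph{inside} a power $\Delta_j^{k_j}$, together with the observation that in a sum two cells from different summands meet, if at all, only at a single source/sink vertex and hence never along the common boundary path a dipole requires.
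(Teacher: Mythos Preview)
Your argument is correct and follows essentially the same approach as the paper's proof: both invoke Theorem~\ref{thm:FormAbelian} to put the generators into the form $\Gamma\cdot(\Delta_1^{p_{i1}}+\cdots+\Delta_n^{p_{in}})\cdot\Gamma^{-1}$ with the $\Delta_j$ absolutely reduced, and both use the cell-counting length $\#(\cdot)$ together with the key observation that $\Delta_1^{k_1}+\cdots+\Delta_n^{k_n}$ is already reduced, so its cell count is exactly $\sum_j|k_j|\,c_j$. The only cosmetic differences are that the paper keeps the conjugator $\Gamma$ throughout (absorbing it as an additive constant $2\#(\Gamma)$ in the estimates) whereas you conjugate it away at the outset, and the paper passes through the slightly larger free abelian group $A^+$ generated by the individual $\Gamma\cdot\Delta_k\cdot\Gamma^{-1}$ whereas you embed $H'$ directly into $\mathbb{Z}^n$; neither difference is substantive.
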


\begin{proof}
Given a diagram group, let $A$ be a finitely generated abelian subgroup. Fix a finite generating set $\{g_1, \ldots, g_m\}$ in $A$. According to Theorem~\ref{thm:FormAbelian}, there exist an $(w,v_1 \cdots v_n)$-diagram $\Gamma$ and $(v_j,v_j)$-diagrams $\Delta_j$ such that
$$g_i = \Gamma \cdot \left( \Delta_1^{p_{i1}} + \cdots + \Delta_n^{p_{in}} \right) \cdot \Gamma^{-1} \text{ for some } p_{i1}, \ldots, p_{in} \in \mathbb{Z}$$
for every $1 \leq i \leq m$. Moreover, we can choose the diagrams $\Delta_1, \ldots, \Delta_n$ either trivial or simple and absolutely reduced. Let $A^+$ denote the abelian subgroup freely generated by
$$\left\{ \Gamma \cdot \left( \epsilon(u_1 \cdots u_{k-1}) + \Delta_k + \epsilon(u_{k+1} \cdots u_n) \right) \cdot \Gamma^{-1} \mid 1 \leq k \leq n \text{ with $\Delta_k$ non-trivial} \right\}.$$
It suffices to show that $A^+$ is undistorted in our diagram group in order to conclude. So let $H$ be a finitely generated subgroup containing $A^+$. On the one hand, if $| \cdot |$ denotes the word length of $A^+$ given by our free basis, then it is clear that
$$|a| - 2 \# (\Gamma)  \leq \#(a) \leq |a| \cdot \max \{ \#(\Delta_i), 1 \leq i \leq n\} +2 \# (\Gamma)$$
for every $a \in A^+$. And, on the other hand, if we fix a generating set $\{s_1, \ldots, s_r \}$ of $H$ and a shortest representative $m=m(s_1, \ldots,s_r)$ of a given element $a \in A^+$, then
$$\#(a) = \#(m) \leq \mathrm{length}(m) \cdot \max \{ \#(s_i), 1 \leq i \leq r\}.$$
Thus, the length in $H$ of an element in $A^+$ is bounded below by its length in $A^+$. This concludes the proof of our corollary. 
\end{proof}

\subsection{Membership problem for the commutator subgroup}\label{section:ComSub}

\noindent
Let $\mathcal{P}= \langle \Sigma \mid \mathcal{R} \rangle$ be a semigroup presentation and $w \in \Sigma^+$ a baseword. Our goal in this section is to describe a natural morphism $\alpha : D(\mathcal{P},w) \to \mathscr{A}(\mathcal{P})$ from our diagram group $D(\mathcal{P},w)$ to some free abelian group $\mathscr{A}(\mathcal{P})$ whose kernel coincides with commutator subgroup of $D(\mathcal{P},w)$. The motivation being to solve efficiently the membership problem for the commutator subgroup of $D(\mathcal{P},w)$ in terms of diagrams, as soon as the word problem is solvable in $\mathcal{P}$. 

\medskip \noindent
So let $\mathscr{A}(\mathcal{P})$ denote the free abelian group generated by $\mathcal{S} \times \mathcal{R} \times \mathcal{S}$, where $\mathcal{S}$ denotes the semigroup given by $\mathcal{P}$. Given a diagram $\Delta$ over $\mathcal{P}$, we associate to each cell $\pi$ of $\Delta$ an element $\eta(\pi)$ of $\mathcal{S} \times \mathcal{R} \times \mathcal{S}$ as follows. The top and bottom paths of $\pi$ meet at two points: a source $\iota(\pi)$ and a sink $\tau(\pi)$. All the oriented paths in $\Delta$ from the source of $\Delta$ to $\iota(\pi)$ are labelled by positive words representing the same element in the semigroup $\mathcal{S}$; let $\ell(\pi)$ denote this element. Similarly, all the oriented paths in $\Delta$ from $\tau(\pi)$ to the sink of $\Delta$ are labelled by positive words representing the same element in the semigroup $\mathcal{S}$; let $r(\pi)$ denote this element. If $\pi$ is labelled by the relation $u=v$ with $(u=v) \in \mathcal{R}$ (resp. $(v=u) \in \mathcal{R}$), then we set $\eta(\pi):= (\ell(\pi), u=v, r(\pi))$ (resp. $-(\ell(\pi), v=u,r(\pi))$). Finally, we define
$$\alpha(\Delta):= \sum\limits_{\pi \text{ cell of } \Delta} \eta(\pi) \in \mathscr{A}(\mathcal{P}).$$
Let us consider an explicit example in order to illustrate our definition. Take the presentation
$$\mathcal{P}:= \left\langle x, a_i,b_i \ (i \geq 1) \mid x=x^2, a_i=a_{i+1}x, b_i=xb_{i+1} \ (i \geq 1) \right\rangle$$ 
from Proposition~\ref{prop:CommutatorF} and the following diagram $\Delta$:
\begin{center}
\includegraphics[width=0.7\linewidth]{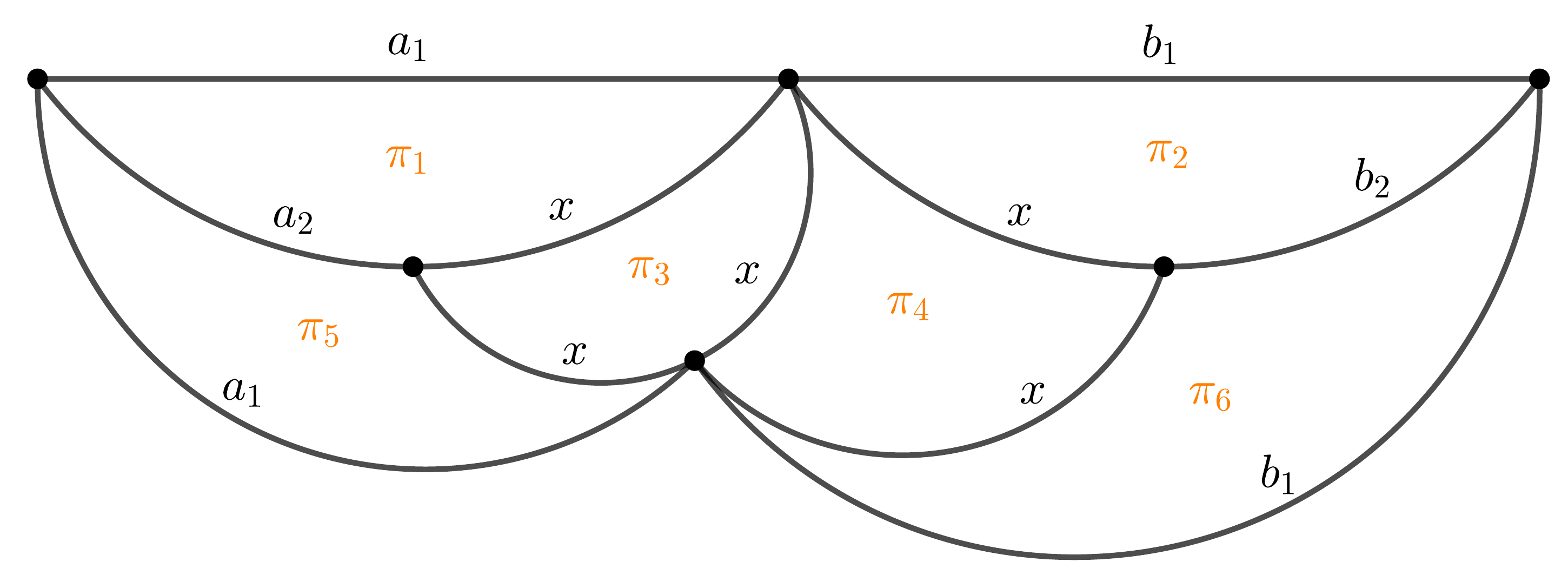}
\end{center}

\noindent
First of all, observe that $a_ix^n=a_1$ and $x^nb_i=b_1$ mod $\mathcal{P}$ for all $i,n \geq 1$. Indeed, we have
$$a_ix^n=a_{i+1}x^{n+1} = a_{i+1}x = a_i \text{ mod } \mathcal{P}$$
for all $i,n \geq 1$, and
$$a_1=a_2x = a_2=a_3x= a_3 = \cdots \text{ mod } \mathcal{P}.$$
We argue similarly for the equality mod $\mathcal{P}$ with $b_1$. This allows us to deduce that
$$\eta(\pi_5) = -(1, a_1=a_2x, xb_1) = -(1,a_1=a_2x,b_1) = - \eta(\pi_1),$$
$$\eta(\pi_6)= -(a_2x, b_1=xb_2, 1) = -(a_1, b_1=xb_2, 1) = -\eta(\pi_2),$$
and 
$$\eta(\pi_4)= - (a_2x, x=x^2, b_2) = -(a_2,x=x^2, b_1) = - \eta(\pi_3).$$
Therefore,
$$\alpha(\Delta)= \eta(\pi_1) + \cdots + \eta(\pi_6) = 0.$$
Coming back to the general case, the main result of interest for us is:

\begin{prop}[\cite{MR1396957}]\label{prop:Commutator}
Let $\mathcal{P}= \langle \Sigma \mid \mathcal{R} \rangle$ be a semigroup presentation and $w \in \Sigma^+$ a baseword. The kernel of $\alpha : D(\mathcal{P},w) \to \mathscr{A}(\mathcal{P})$ coincides with the commutator subgroup of $D(\mathcal{P},w)$.
\end{prop}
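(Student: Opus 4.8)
The plan is to show the two inclusions $[D,D] \subseteq \ker \alpha$ and $\ker \alpha \subseteq [D,D]$ separately, where $D := D(\mathcal{P},w)$. The first inclusion is the easy direction: since $\mathscr{A}(\mathcal{P})$ is abelian, it suffices to check that $\alpha$ is a group homomorphism, and then $[D,D] \subseteq \ker \alpha$ is automatic. So the first step is to verify that $\alpha(\Delta_1 \circ \Delta_2) = \alpha(\Delta_1) + \alpha(\Delta_2)$ for composable $(w,w)$-diagrams, and that $\alpha$ is invariant under dipole reduction. Additivity under $\circ$ is essentially immediate from the definition: when we stack $\Delta_2$ below $\Delta_1$, each cell $\pi$ of $\Delta_1$ keeps the same left-element $\ell(\pi)$ and its right-element, computed now in the larger diagram, still represents the same element of $\mathcal{S}$ (the word $\mathrm{bot}(\Delta_1) = \mathrm{top}(\Delta_2) = w$ is just extended by a diagram, which does not change the semigroup element); similarly for the cells of $\Delta_2$. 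For invariance under dipole reduction, one observes that a dipole consists of two cells $\pi_1, \pi_2$ labelled by a relation and its inverse with $\mathrm{bot}(\pi_1) = \mathrm{top}(\pi_2)$, so $\ell(\pi_1) = \ell(\pi_2)$ and $r(\pi_1) = r(\pi_2)$, whence $\eta(\pi_1) = -\eta(\pi_2)$ and their contributions cancel; the cells outside the dipole are unaffected. This shows $\alpha$ is a well-defined homomorphism on $D$.

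For the reverse inclusion $\ker \alpha \subseteq [D,D]$, the idea is to analyse the abelianisation $D^{\mathrm{ab}}$ directly and show that $\alpha$ becomes injective on it. Here I would lean on the presentation of $D(\mathcal{P},w)$ coming from Theorem~\ref{thm:Presentation} (or, more simply, the generators-and-relations description coming from the Squier complex: generators are oriented edges, relations say spanning-tree edges are trivial and square-boundaries are trivial). Abelianising kills all the conjugation in the square relations, so $D^{\mathrm{ab}}$ is generated by the edges of $S(\mathcal{P},w)$ modulo: (i) each spanning-tree edge is trivial, and (ii) for each square $[a, u\to v, b, p \to q, c]$ the relation $[a,u\to v, bpc] + [aub, p\to q, c] - [a, u\to v, bqc] - [avb, p\to q, c] = 0$. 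One then checks that $\alpha$, viewed on edges, sends an edge $(a, u\to v, b)$ (with $(u=v)\in\mathcal{R}$) to the generator $(\bar a, u=v, \bar b)$ of $\mathscr{A}(\mathcal{P})$ (and its negative if $(v=u)\in\mathcal{R}$), where $\bar a, \bar b$ denote semigroup elements; this respects the square relations (both square edges with label $u\to v$ have the same left/right semigroup elements, since $p\to q$ does not touch the prefix $a$ nor change the element of $bpc$ versus $bqc$) and kills tree edges if we have arranged, or rather it is enough that it kills \emph{some} generating set of the kernel. The cleanest formulation: exhibit a two-sided inverse by constructing, for each generator $(s, r, t)$ of $\mathscr{A}(\mathcal{P})$, an explicit $(w,w)$-diagram $\Delta_{s,r,t}$ with $\alpha(\Delta_{s,r,t}) = (s,r,t)$, namely take any diagram from $w$ to a word representing $s r_{\mathrm{left}} t$, insert the cell for the relation $r$, then come back; and check that the assignment $(s,r,t) \mapsto [\Delta_{s,r,t}]$ descends to a homomorphism $\mathscr{A}(\mathcal{P}) \to D^{\mathrm{ab}}$ splitting $\alpha$. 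Since $\alpha$ is then a split surjection whose splitting is also surjective (every edge-generator of $D^{\mathrm{ab}}$ is hit), $\alpha$ restricted to $D^{\mathrm{ab}}$ is an isomorphism, giving $\ker\alpha \subseteq [D,D]$.

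The main obstacle, and the step deserving the most care, is the well-definedness of the splitting map $\mathscr{A}(\mathcal{P}) \to D^{\mathrm{ab}}$: one must check that $[\Delta_{s,r,t}]$ in $D^{\mathrm{ab}}$ does not depend on the choice of connecting diagram used to build $\Delta_{s,r,t}$. Two choices differ by pre- and post-composition with a $(w,w)$-diagram and, crucially, the cell for $r$ sits between them; sliding the ambient diagram past this cell changes the connecting pieces but the difference of the two resulting $(w,w)$-diagrams is a product of commutators (this is exactly the content of the square relations in $S(\mathcal{P},w)$ — flipping a square lets one relation pass another), hence trivial in $D^{\mathrm{ab}}$. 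Making this precise is where the combinatorics of the Squier square complex, and in particular the fact (Proposition~\ref{prop:PathDiag}) that homotopic paths have the same reduced diagram, does the real work. An alternative route that sidesteps some of this bookkeeping is to invoke the explicit LOG-type presentation of Theorem~\ref{thm:Presentation} in the complete case, compute $\alpha$ on those generators and relators directly, and then deduce the general case via the retraction of Lemma~\ref{lem:Complete} together with naturality of $\alpha$ under the maps $D(\mathcal{P},w) \hookrightarrow D(\mathcal{P}',w) \twoheadrightarrow D(\mathcal{P},w)$; I would try the direct Squier-complex argument first and fall back on this if the slide-invariance becomes unwieldy.
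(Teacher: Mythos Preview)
Your easy direction is fine: $\alpha$ is visibly additive under concatenation and invariant under dipole reduction, hence a homomorphism, and $[D,D]\subseteq\ker\alpha$ follows.

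For the hard direction, your primary approach (a) has genuine gaps that go beyond the well-definedness concern you flag. First, $\alpha$ is not surjective onto $\mathscr{A}(\mathcal{P})$: a generator $(s,r,t)$ with $s\cdot r_{\mathrm{left}}\cdot t\neq[w]$ in $\mathcal{S}$ cannot be hit, so speaking of a two-sided inverse on all of $\mathscr{A}(\mathcal{P})$ is already off. More seriously, even when a $(w,w)$-diagram $\Delta_{s,r,t}$ exists, your claim $\alpha(\Delta_{s,r,t})=(s,r,t)$ is false: the connecting diagrams from $w$ to $aub$ and from $avb$ back to $w$ contribute their own cells, and these contributions do not cancel. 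For instance, with $\mathcal{P}=\langle x\mid x^2=x\rangle$ and $w=x$, the loop for the non-tree edge $(x,x^2\to x,1)$ built from principal-left tree paths has $\alpha$-value $(x,x^2{=}x,1)-(1,x^2{=}x,x)$, not $(x,x^2{=}x,1)$. Finally, your well-definedness worry is fatal rather than merely technical: if $\Gamma_1,\Gamma_2$ are two $(w,aub)$-diagrams and one uses the same return path $\Gamma'$, then $\Delta_1\Delta_2^{-1}$ reduces to $\Gamma_1\Gamma_2^{-1}$, an \emph{arbitrary} element of $D(\mathcal{P},w)$ with no reason to lie in $[D,D]$; the ``sliding past the cell'' picture does not produce commutators here.

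Your fallback (b) is exactly the paper's route and is the one to execute. In the complete case, Theorem~\ref{thm:Presentation} gives a LOG presentation; abelianising it exhibits $D^{\mathrm{ab}}$ as free abelian on classes $(u,\ell{\to}r,[v])$, and one checks that if a word $\Phi_1\cdots\Phi_n$ in these generators has $\alpha(\Phi_1)+\cdots+\alpha(\Phi_n)=0$ then the LOG relations let you pair and cancel the $\Phi_i$ up to commutators. The general case then follows via Lemma~\ref{lem:Complete}, after checking that both the inclusion $D(\mathcal{P},w)\hookrightarrow D(\mathcal{P}',w)$ and the retraction are compatible with $\alpha$.
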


\noindent
We emphasize that $\alpha$ is not surjective in general, so we cannot identify $\mathscr{A}(\mathcal{P})$ with the abelianisation of $D(\mathcal{P},w)$. In fact, $\mathscr{A}(\mathcal{P})$ is usually much bigger than the abelianisation. 

\medskip \noindent
An idea to prove Proposition~\ref{prop:Commutator} is the following. If $\mathcal{P}$ is a complete semigroup presentation, then Theorem~\ref{thm:Presentation} provides a presentation of $D(\mathcal{P},w)$. Given an element $\Delta$ in the kernel of $\alpha$, we write it as a product of generators from our presentation, say $\Delta = \Phi_1 \cdots \Phi_n$. Using the fact that $0=\alpha(\Delta)=\alpha(\Phi_1)+ \cdots+ \alpha(\Phi_n)$, we deduce that $\Delta$ can be written as a product of commutators. This argument shows that Proposition~\ref{prop:Commutator} holds for complete semigroup presentation. The general case is then deduced thanks to Lemma~\ref{lem:Complete}. See \cite[Theorem~11.3]{MR1396957} for a detailed proof. 

\begin{ex}
Thanks to Proposition~\ref{prop:Commutator}, we deduce that the diagram $\Delta$ from our previous example belongs to the commutator subgroup of the corresponding diagram group. This is not surprising since the diagram group coincides with $F'$, a simple group. Nevertheless, it can easily verified thanks to Propositions~\ref{prop:Commutator} and~\ref{prop:CommutatorF} that $F'$ is perfect. 
\end{ex}

\begin{ex}\label{ex:CommutatorF}
Let $\mathcal{P}=\langle x \mid x=x^2 \rangle$ be the semigroup presentation naturally associated to Thompson's group $F$. Because $\mathcal{P}$ has a single relation and because the semigroup defined by $\mathcal{P}$ contains exactly two elements, namely $1$ and $x$, let us say that the abelian group $\mathscr{A}(\mathcal{P})$ is freely generated by $\{1,x\}^2$. Given a reduced diagram $\Delta$ over $\mathcal{P}$, observe that the top cell is sent to $(1,1)$ under $\eta$, that the bottom cell is sent to $-(1,1)$ under $\eta$, and that no other cell is sent to $\pm (1,1)$. We refer to a cell sent to $\pm (1,x)$ as left, to a cell sent to $\pm (x,1)$ as right, and to a cell sent to $\pm (x,x)$ as internal. Remember that the cells of $\Delta$ labelled by $x=x^2$ (resp. $x^2=x$) are called positive (resp. negative); this convention is compatible with the sign of the image of a cell under $\eta$. We have
$$\alpha(\Delta)= \begin{array}{c} \text{\# of pos. left cells} \\ - \text{\# of neg. left cells} \end{array} (1,x) + \begin{array}{c} \text{\# of pos. right cells} \\ - \text{\# of neg. right cells} \end{array} (x,1) $$
$$+ \begin{array}{c} \text{\# of pos. internal cells} \\ - \text{\# of neg. internal cells} \end{array} (x,x).$$
Thus, $\Delta$ belongs to the commutator subgroup of $D(\mathcal{P},w)$ if and only if $\Delta$ has the same number of positive and negative left cells and the same number of positive and negative right cells. (The number of positive and negative cells being the same, these two equalities automatically implies that $\Delta$ has the same number of positive and negative internal cells as well.)
\end{ex}

\subsection{Foldings and closed subgroups}\label{section:Fold}

\noindent
The method of \emph{folding} graphs \cite{MR695906} has been quite influential in the study of free groups and has been generalised in various directions. Interestingly, thinking of a diagram groups as second fundamental groups of directed $2$-complexes, foldings appear naturally in a similar way. This allows us, in particular, to solve the membership problem for some subgroups. 

\medskip \noindent
First of all, let us recall how to solve the membership problem in free groups thanks to folding. In other words, given a free group $F$ and elements $g,h_1, \ldots, h_n \in F$, we want to determine whether or not $g$ belongs to the subgroup $H:= \langle h_1, \ldots, h_n$. We fix a free basis $s_1, \ldots, s_r$ of $F$ and we think of $F$ as the fundamental group of a bouquet $\Omega$ of $r$ circles, each oriented and labelled by a generator $s_i$. To $H$, we associate a bouquet $\Gamma_0$ of $n$ circles, each subdivided and labelled by a generator $h_i$. Observe that both $\Omega$ and $\Gamma_0$ have a distinguished vertex.
\begin{figure}[h!]
\begin{center}
\includegraphics[width=0.6\linewidth]{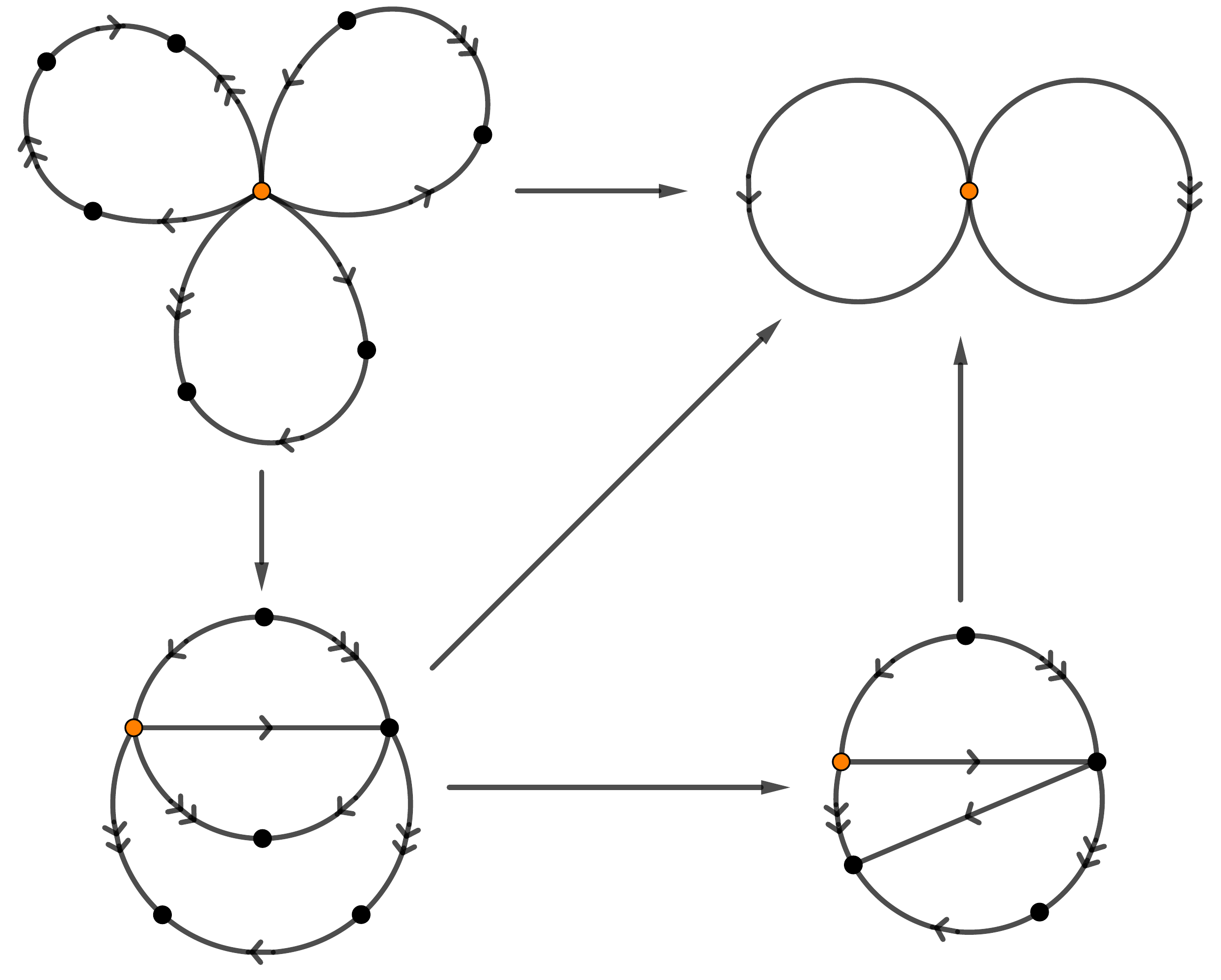}
\caption{Foldings for $\langle ab^{-1}a, abab^{-1}, a^2b^{-1} \rangle$ in $\langle a,b \mid \ \rangle$.}
\label{Fold}
\end{center}
\end{figure}

\medskip \noindent
Now, we fold $\Gamma_0$. More precisely, if two edges of $\Gamma_0$ share an endpoint and have the same image in $\Omega$, then we identify it. After finitely many folds $\Gamma_0 \to \cdots \to \Gamma_k$, one gets a graph $\Gamma_k$ with a locally injective map $\iota : \Gamma_k \to \Omega$ such that $H$ coincides with the image under $\iota$ of the fundamental group of $\Gamma_k$ in the fundamental group of $\Omega$. (The basepoints defining the fundamental groups being the distinguished vertices.) The key property is that an element of $F$ belongs to $H$ if and only if the reduced word $w$ representing it is \emph{accepted} by $\Gamma_k$, i.e.\ there is a loop in $\Gamma_k$ based at the distinguished vertex which is labelled by $w$. As an illustration, one deduces from Figure~\ref{Fold} that $ba^{-1}b^{-2}aba^{-2}$ belongs to $\langle ab^{-1}a, abab^{-1}, a^2b^{-1} \rangle$, but $ba^{-3}b^2$ does not. 

\medskip \noindent
This method for finite graphs can be naturally adapted to finite directed $2$-complexes. This has been done by V. Guba and M. Sapir in the late 1990s (unpublished), but it first appeared in print in \cite{MR3710646}. 

\medskip \noindent
Let $\mathcal{P}=\langle \Sigma \mid \mathcal{R} \rangle$ be a semigroup presentation and $w \in \Sigma^+$ a baseword. We think of the diagram group $D(\mathcal{P},w)$ as the second fundamental group of the directed $2$-complex $\mathcal{X}(\mathcal{P})$ based at the $1$-path labelled by $w$. See Section~\ref{section:Free} for the relevant definitions. We fix some spherical diagrams $\Delta, \Delta_1, \ldots, \Delta_n \in D(\mathcal{P},w)$ and we would like know whether $\Delta$ belongs to the subgroup $H:=\langle \Delta_1, \ldots, \Delta_n \rangle$. 

\medskip \noindent
To $H$ we associate the directed $2$-complex $\mathcal{X}_0$ obtained by identifying all the top and bottom paths of the $\Delta_i$ to a single segment. Topologically, $\mathcal{X}_0$ can be thought of as a bouquet of $2$-spheres pairwise intersecting along a fixed segment. Observe that there is a natural morphism of directed $2$-complexes $\mathcal{X}_0 \to \mathcal{X}(\mathcal{P})$ and that both $\mathcal{X}(\mathcal{P})$ and $\mathcal{X}_0$ have a distinguished $1$-path. A \emph{folding} refers to the following operation: if one sees two cells in $\mathcal{X}_0$ sharing their top or bottom paths and having the same image in $\mathcal{X}(\mathcal{P})$, then we identify them. After finitely many folds $\mathcal{X}_0 \to \cdots \to \mathcal{X}_k$, one gets a directed $2$-complex $\mathcal{X}_k$ with a locally injective morphism $\mathcal{X}_k \to \mathcal{X}(\mathcal{P})$. 

\medskip \noindent
Define the closure $\mathrm{Cl}(H)$ of $H$ as the subgroup of $D(\mathcal{P},w)$ given by the diagrams \emph{accepted} by $\mathcal{X}_k$, i.e.\ the image of the second fundamental group of $\mathcal{X}_k$ based at its distinguished $1$-path under the morphism $\mathcal{X}_k \to \mathcal{X}(\mathcal{P})$. (It can be shown that the closure of a subgroup does not depend on a particular choice of generators.) By construction, the closure of $H$ contains $H$, but, contrary to the case of free groups, it can be bigger. One can say that the closure of a subgroup is itself a diagram group, but subgroups of diagram groups may not be diagram groups themselves, so there must exist subgroups properly contained in their closure. In fact, it is easy to construct subgroups distinct from their closures. Indeed, if $\Delta$ is a diagram decomposing as a sum of two spherical diagrams $\Delta_1+ \Delta_2$, then the closure of $\langle \Delta \rangle$ contains $\Delta_1+ \epsilon(u_2)$ and $\epsilon(u_1)+\Delta_2$, where each $\epsilon(u_i)$ denotes the trivial diagram having the same label as the top and bottom paths of $\Delta_i$. Indeed, when identifying the top and bottom paths of $\Delta$, we do not get a topological sphere, but a pointed sum of two spheres due to the cut-point separating $\Delta_1$ and $\Delta_2$ in $\Delta$. It is conjectured that this phenomenon is essentially the only obstruction for a subgroup to coincide with its closure. See \cite[Conjecture~3.11]{Golan} for a precise statement. 

\medskip \noindent
In conclusion, for every diagram group, there exists an algorithm that decides, given spherical diagrams $\Delta, \Delta_1, \ldots, \Delta_n$, whether or not $\Delta$ belongs to the closure of the subgroup $\langle \Delta_1, \ldots, \Delta_n \rangle$. In particular, the membership problem is solvable for \emph{closed subgroups} (i.e.\ subgroups that coincide with their closures).

\section{Median geometry}\label{section:Median}

\subsection{Crash course on median geometry}\label{section:Crash}

\noindent
In this preliminary section, we record some basic definitions and results related to \emph{median graphs} that will be used in the next sections.

\begin{definition}
A connected graph $X$ is \emph{median} if, for all vertices $x_1,x_2,x_3 \in X$, there exists a unique vertex $m \in X$ satisfying
$$d(x_i,x_j)=d(x_i,m)+d(m,x_j) \text{ for all } i \neq j.$$
We refer to this vertex $m$ as the \emph{median point} of $x_1,x_2,x_3$. 
\end{definition}
\begin{figure}[h!]
\begin{center}
\includegraphics[scale=0.4]{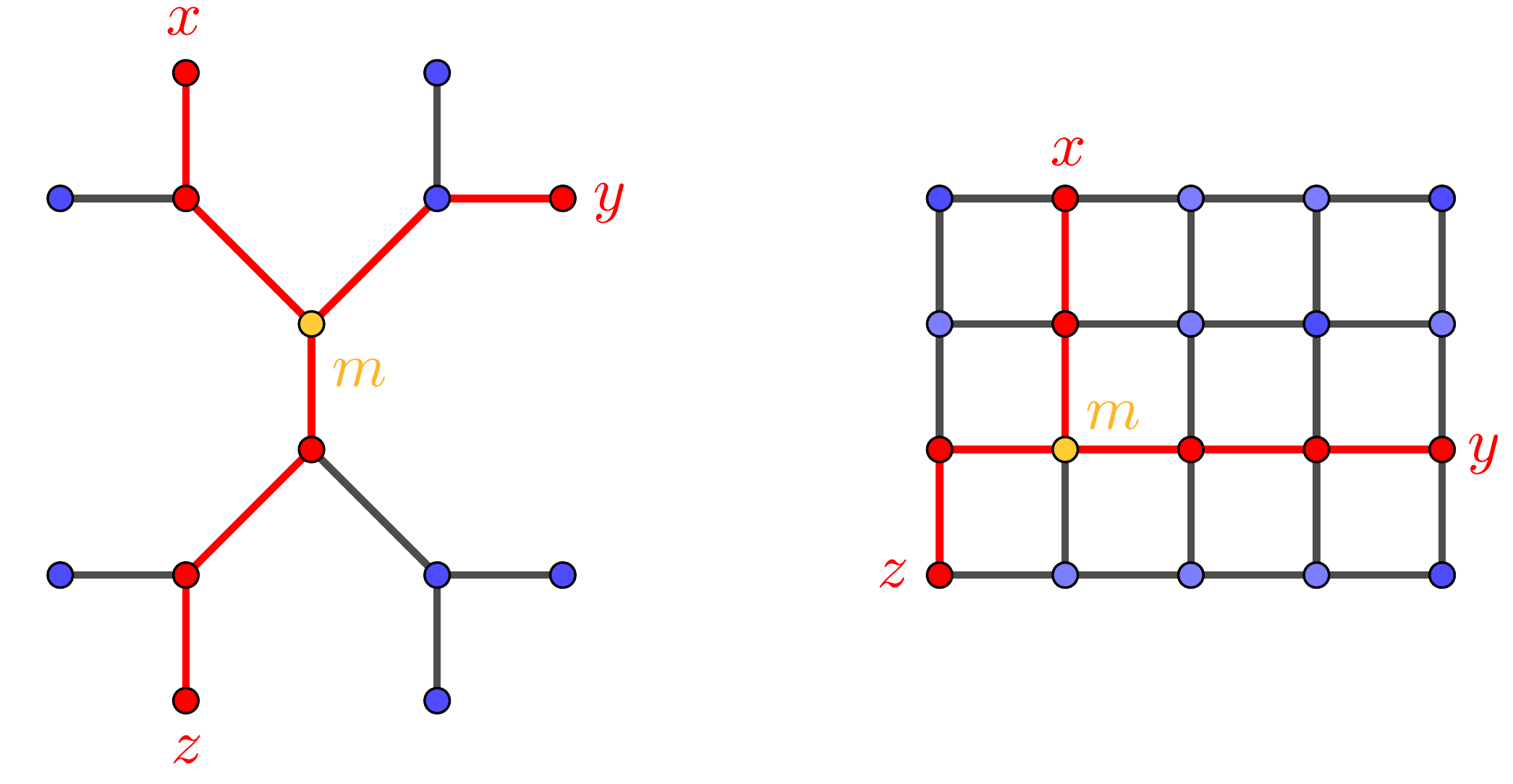}
\caption{Examples of median graphs and median points.}
\label{ExMedian}
\end{center}
\end{figure}

\noindent
Examples include of course simplicial trees, the median point of a triple of vertices corresponding to the centre of the tripod they delimit. A product of median graphs is still median, so product of trees are also median graphs. This includes in particular (one-skeleta of) cubes of arbitrary dimensions. 

\medskip \noindent
A fundamental idea is that the geometry of a median graph is essentially encoded in the combinatorics of its \emph{hyperplanes}. 

\begin{definition}
Let $X$ be a median graph. A(n \emph{oriented}) \emph{hyperplane} $J$ is an equivalence class of (oriented) edges with respect to the transitive closure of the relation that identifies two (oriented) edges when they are opposite sides of a $4$-cycle. 
\begin{itemize}
	\item If $X \backslash \backslash J$ denotes the graph obtained from $X$ by removing all the edges of $J$, then a connected component of $X \backslash \backslash J$ is a \emph{halfspace}. Two subsets $A,B \subset X$ are \emph{separated} by $J$ if they lie in distinct halfspaces delimited by $J$.
	\item Two hyperplanes $J_1$ and $J_2$ are \emph{transverse} if there exist two intersecting edges $e_1 \subset J_1$ and $e_2 \subset J_2$ that span a $4$-cycle. 
	\item They are \emph{tangent} if exist two intersecting edges $e_1 \subset J_1$ and $e_2 \subset J_2$ that do not span a $4$-cycle.
\end{itemize}
See Figure~\ref{Hyp} for a few examples.
\end{definition}
\begin{figure}[h!]
\begin{center}
\includegraphics[scale=0.4]{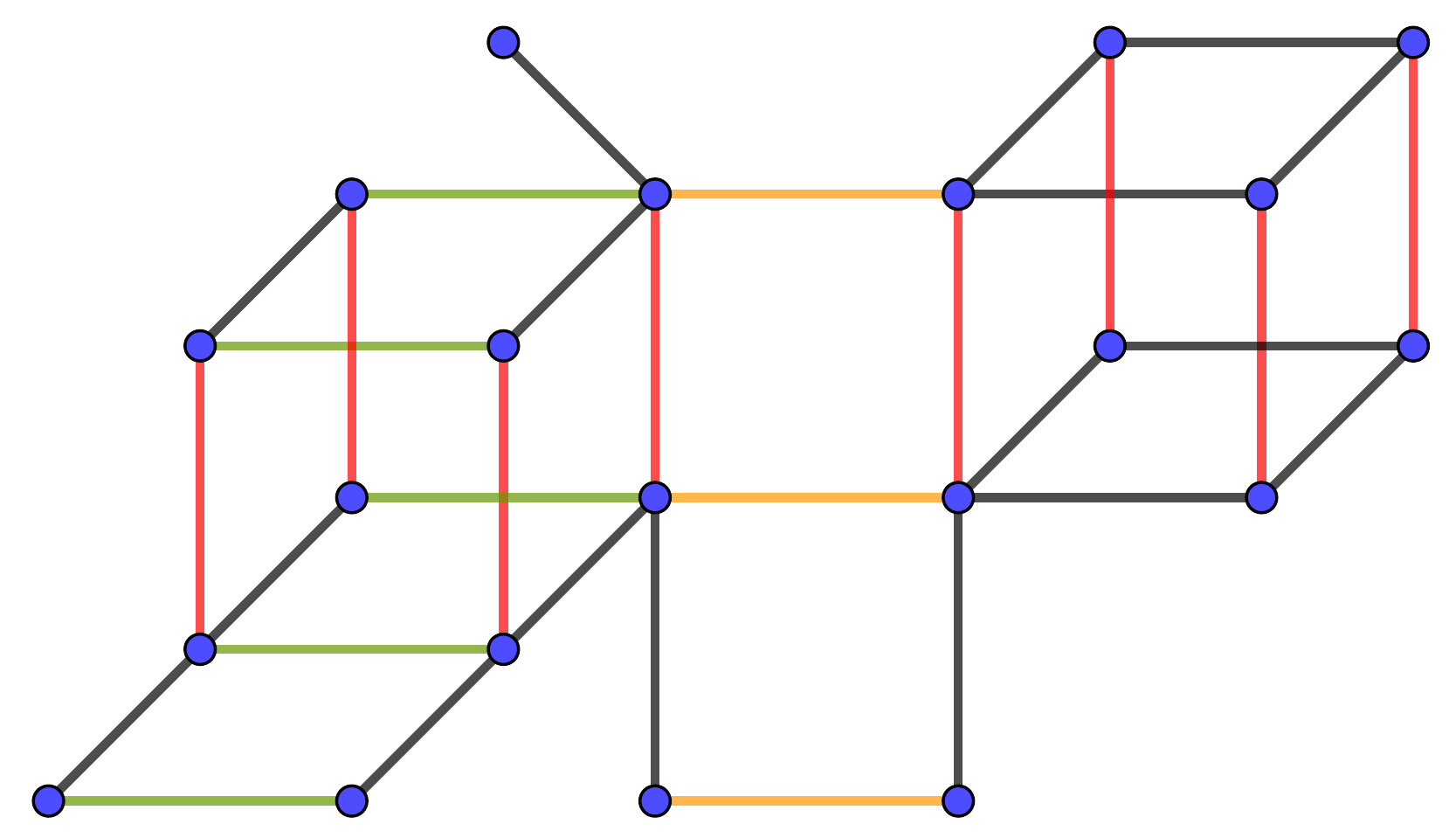}
\includegraphics[width=0.45\linewidth]{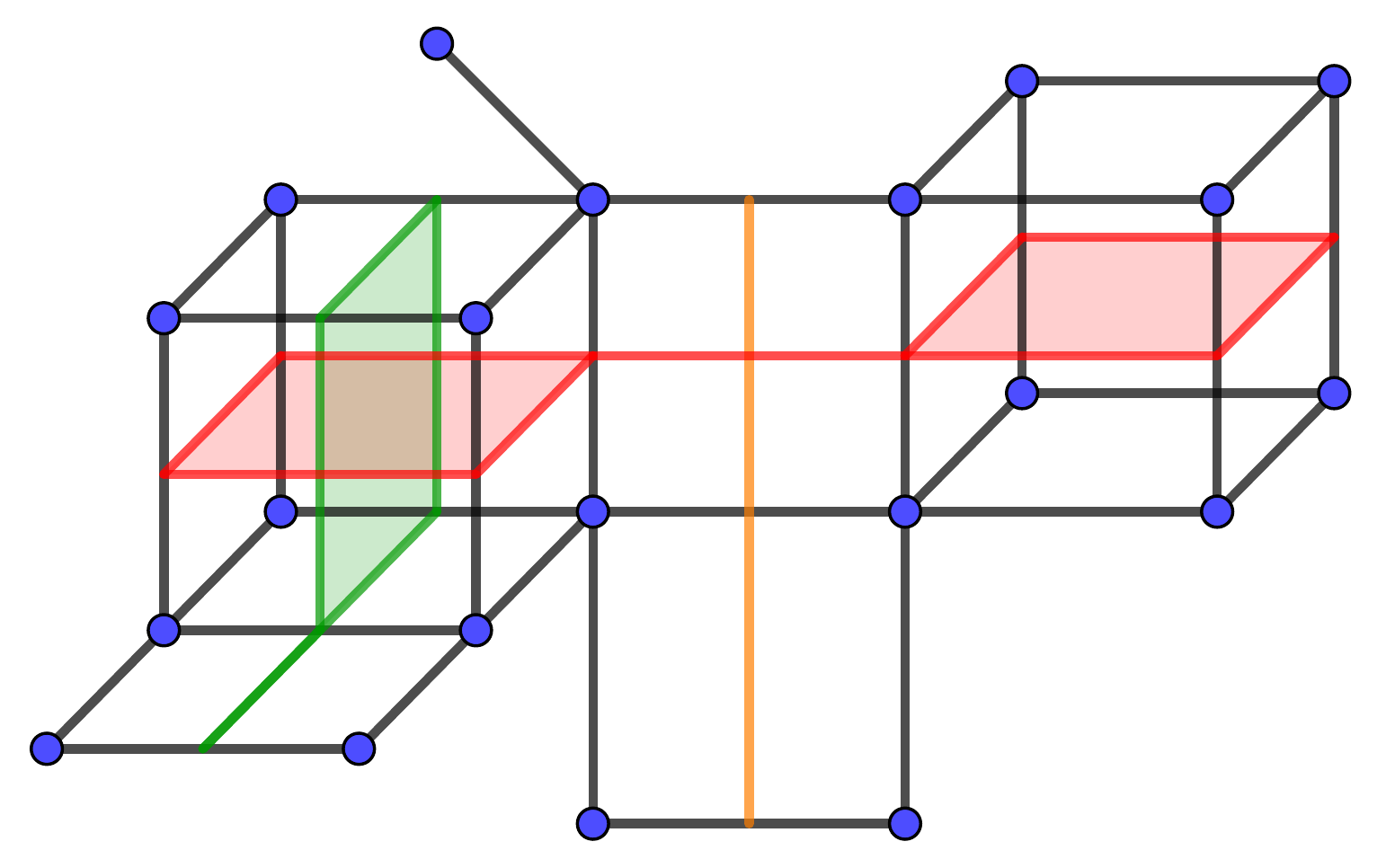}
\caption{Hyperplanes in a median graph and its cube-completion. The red hyperplane is transverse to the green and yellow hyperplanes. Green and yellow hyperplanes are tangent.}
\label{Hyp}
\end{center}
\end{figure}

\noindent
The previous claim is mainly motivated by the following statement:

\begin{thm}[\cite{MR1347406}]
Let $X$ be a median graph. The following assertions hold.
\begin{itemize}
	\item Every hyperplane $J$ separates. More precisely, $X \backslash \backslash J$ has exactly two connected components, which turn out to be convex.
	\item A path in $X$ is a geodesic if and only if it crosses each hyperplane at most once.
	\item The distance between two vertices $x,y \in X$ coincides with the number of hyperplanes separating $x,y$. 
\end{itemize}
\end{thm}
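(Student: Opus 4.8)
The plan is to reduce all three assertions to one convexity statement about half-spaces and then to read them off by soft arguments on geodesics and convex sets. I would first record two standard preliminaries. Median graphs are bipartite: they contain no triangle (for pairwise adjacent $x,y,z$ the defining equality forces $m(x,y,z)\in\{x,y\}\cap\{x,z\}\cap\{y,z\}=\varnothing$), and a shortest odd cycle would likewise contradict the median axiom. And intervals $I(u,v):=\{z:d(u,z)+d(z,v)=d(u,v)\}$ are convex, a short consequence of the median axiom. Now fix an edge $e=\{a,b\}$. By bipartiteness $d(x,a)$ and $d(x,b)$ differ by exactly $1$ for every vertex $x$, so the \emph{half-spaces} $H_a:=\{x:d(x,a)<d(x,b)\}$ and $H_b:=\{x:d(x,b)<d(x,a)\}$ partition $V(X)$.

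The crux is the \emph{Half-space Lemma}: $H_a$ and $H_b$ are convex --- equivalently, the square-equivalence relation that defines hyperplanes is already transitive. I would prove it from interval convexity by a direct argument with the median operation: for $x,y\in H_a$ and $z\in I(x,y)$ one analyses the medians $m(x,y,a)$, $m(x,y,b)$ and the ``gates'' they furnish inside $I(x,y)$ to conclude $z\in H_a$. This is the single place where the full strength of the median axiom is needed, and it involves a genuine case analysis --- the crude triangle-inequality estimates pin down $d(z,a)-d(z,b)$ only up to an ambiguous sign --- so I expect this lemma to be the main obstacle. Everything afterwards is bookkeeping.

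Granting the Half-space Lemma, I would next identify the hyperplane $J$ through $e$ as \emph{exactly} the set of edges with one endpoint in $H_a$ and the other in $H_b$. The inclusion ``$\subseteq$'' is immediate: opposite edges of a $4$-cycle split the same pair of vertices across $\{H_a,H_b\}$, and this survives the transitive closure. For ``$\supseteq$'', if $\{u,v\}$ straddles $\{H_a,H_b\}$ then an induction on $d(u,a)+d(v,b)$ --- using convexity of $H_a$ and $H_b$ together with the absence of triangles to produce a square at each step --- builds a ladder of $4$-cycles joining $\{u,v\}$ to $e$, so $\{u,v\}\in J$. The first assertion follows at once: $X\backslash\backslash J$ is $X$ with every $H_a$--$H_b$ edge removed, so it contains no edge between the two sides, while each side remains connected because any two of its vertices are joined by a geodesic that, by convexity, stays inside that side and so uses none of the removed edges; hence $X\backslash\backslash J$ has precisely the two components $H_a$ and $H_b$, both convex.

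For the second assertion, the ``only if'' direction uses the remark that for a geodesic $P$ and a convex set $H$ the intersection $P\cap H$ is a block of consecutive vertices of $P$ (a subpath of a geodesic is a geodesic, hence lies in $H$ whenever its endpoints do); applied to $H=H_a$ this shows the vertices of $P$ form an $H_a$-block followed by an $H_b$-block, i.e.\ $P$ crosses $J$ at most once. Consequently a geodesic from $x$ to $y$ crosses each hyperplane at most once, hence crosses, once each, precisely the hyperplanes that separate $x$ from $y$; since its $d(x,y)$ edges lie in pairwise distinct such hyperplanes, this yields the third assertion. The ``if'' direction of the second assertion is then a count: if a path $P$ from $x$ to $y$ crosses each hyperplane at most once, then every hyperplane it crosses separates $x$ from $y$, so $\operatorname{length}(P)=\#\{\text{hyperplanes crossed by }P\}\le\#\{\text{hyperplanes separating }x,y\}=d(x,y)\le\operatorname{length}(P)$, which forces equality, so $P$ is a geodesic.
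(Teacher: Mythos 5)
The paper itself offers no proof of this theorem (it is imported from the literature via the citation), so I can only judge your proposal on its own terms. Your architecture is the standard one, and everything downstream of the Half-space Lemma checks out: granting convexity of $H_a$ and $H_b$, the inclusion $\subseteq$ in your identification of $J$ with the set of straddling edges really is immediate (in a $4$-cycle each vertex lies in the interval between the two vertices at distance two from it, so convexity forces opposite edges to straddle the same way); the ladder induction for $\supseteq$ goes through (the new rung can be taken to be the median $m(u_1,v,b)$, and bipartiteness excludes the bad value of $d(v_1,a)$); and the block argument together with the count of edges versus hyperplanes gives the second and third assertions exactly as you say.

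The genuine gap is the one you flag yourself: the Half-space Lemma is asserted, not proved, and it is where the entire content of the theorem lives --- it is equivalent to the statement that the Djokovi\'c--Winkler classes of a median graph have convex sides, and none of the three bullet points can be recovered without it. ``One analyses the medians $m(x,y,a)$, $m(x,y,b)$ and the gates they furnish'' describes a proof rather than giving one, and as you observe, interval convexity plus triangle-inequality bookkeeping leaves the sign of $d(z,a)-d(z,b)$ undetermined; pinning it down requires a further, genuinely median-algebraic argument. Two remarks that make the remaining work concrete. First, $x\in H_a$ if and only if $m(x,a,b)=a$, since $m(x,a,b)\in I(a,b)=\{a,b\}$ and equals $a$ exactly when $d(x,b)=d(x,a)+1$. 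Second, convexity of $H_a$ reduces (by induction on $d(x,y)$, since every $z\in I(x,y)$ lies beyond some neighbour $x'$ of $x$ with $x'\in I(x,y)$ and $z\in I(x',y)$) to the single-step claim: if $x,y\in H_a$ and $x'\sim x$ with $x'\in I(x,y)$, then $x'\in H_a$. That one step is precisely where a nontrivial comparison of the medians $m(x',y,a)$ and $m(x',y,b)$ (or, in the classical treatments, the quadrangle condition for weakly modular graphs) must be carried out; until it is written down, the proof of the theorem is not complete.
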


\noindent
In geometric group theory, median graphs are better known as \emph{CAT(0) cube complexes}. 

\begin{definition}
A cube complex is \emph{nonpositively curved} if the links of its vertices are flag simplicial complexes.
\end{definition}

\noindent
Recall that the \emph{link} of a vertex $x$ is the complex whose vertices are the (half)edges starting from $x$ and whose simplices are given by edges than span a cell. Typically, the link of a vertex should be thought of as a small ball around the vertex, endowed with the cellular structure induced by the whole complex. A complex is \emph{flag} if, for all $n \geq 1$, any $n+1$ vertices span an $n$-simplex if and only if they are pairwise adjacent. 
\begin{center}
\includegraphics[width=\linewidth]{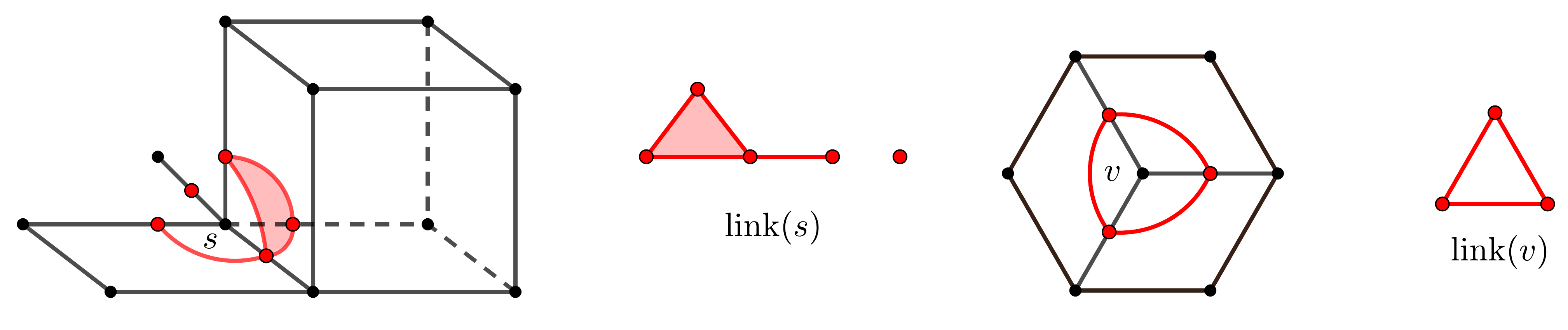}
\end{center}

\noindent
The terminology is justified by the fact that a cube complex, when endowed by the length metric extending the Euclidean metrics of its cubes, is \emph{locally CAT(0)} exactly when it is nonpositively curved \cite{GromovHyp, Leary}. As a consequence, a cube complex endowed with the same metric is \emph{CAT(0)} if and only if it is simply connected and nonpositively curved. We refer to \cite{MR1744486} for more information on CAT(0) geometry. It turns out that CAT(0) cube complexes and median graphs define essentially the same objects. More precisely:

\begin{thm}[\cite{MR1663779, mediangraphs, Roller}]\label{thm:MedianVScube}
A graph is median if and only if its cube-completion is simply connected and nonpositively curved. 
\end{thm}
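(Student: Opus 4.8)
The plan is to prove the two implications separately. The engine on the ``median'' side is the hyperplane structure theorem of \cite{MR1347406} recalled above (hyperplanes separate into two convex halfspaces, combinatorial geodesics cross each hyperplane at most once, and $d(x,y)$ equals the number of hyperplanes separating $x$ from $y$), together with the \emph{Helly property}: in a median graph a finite family of pairwise-intersecting convex subsets has a common vertex --- an immediate consequence of the existence and uniqueness of medians. The engine on the cube complex side is Gromov's link condition, identifying nonpositive curvature (flag links) with local $\mathrm{CAT}(0)$-ness \cite{GromovHyp, Leary}, together with the Cartan--Hadamard theorem \cite{MR1744486}, so that a simply connected nonpositively curved cube complex is $\mathrm{CAT}(0)$; compare \cite{MR1663779, mediangraphs, Roller}. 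Throughout, write $X^{\square}$ for the cube-completion of a graph $X$, obtained by gluing an $n$-cube along every subgraph isomorphic to the $1$-skeleton of the $n$-cube, and recall that a median graph is bipartite (it has no odd cycle).

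\textbf{From median to a $\mathrm{CAT}(0)$ cube complex.} Assume $X$ is median. For nonpositive curvature I would verify the flag condition: if $e_1 = (v,u_1), \dots, e_k = (v,u_k)$ are edges at a vertex $v$ pairwise spanning squares, they span a $k$-cube. The square condition says that the hyperplanes $J_1, \dots, J_k$ carrying these edges are pairwise transverse, so for a subset $S \subseteq \{1, \dots, k\}$ the halfspace delimited by $J_i$ \emph{not} containing $v$ (for $i \in S$) and the halfspace containing $v$ (for $i \notin S$) form a pairwise-intersecting family of convex sets (witnesses: $v$, the $u_i$, and the fourth vertices of the squares); by the Helly property this family has a common vertex, and its gate to $v$ is a well-defined vertex $v_S$. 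A short check --- this is where \emph{uniqueness} of medians is used, forcing the pairwise-compatible squares to be honest translates of one another and hence to close up --- shows the $2^k$ vertices $v_S$ span a $k$-cube, so $X^{\square}$ has flag links. For simple connectivity it suffices to contract edge-loops in the $2$-skeleton. Fix a basepoint $v$ and an edge-loop $\gamma = (v = x_0, x_1, \dots, x_{2n} = v)$, of even length by bipartiteness. If $\gamma$ is non-constant it has an interior index $i$ with $d(v,x_{i-1}) = d(v,x_{i+1}) = d(v,x_i) - 1$; when $x_{i-1} = x_{i+1}$ this is a backtrack and is deleted, and otherwise $x_{i-1},x_{i+1}$ are distinct neighbours of $x_i$ at distance $2$ apart, so $m := m(v,x_{i-1},x_{i+1})$ completes a chordless $4$-cycle $[x_{i-1},x_i,x_{i+1},m]$, hence a square of $X^{\square}$, with $d(v,m) = d(v,x_i) - 2$. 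Homotoping $\gamma$ across this square replaces $x_{i-1}x_ix_{i+1}$ by $x_{i-1}mx_{i+1}$; ordering loops lexicographically by $\big(\max_i d(v,x_i),\ \#\{i : d(v,x_i)\text{ maximal}\},\ \mathrm{length}(\gamma)\big)$, each move strictly decreases this, so $\gamma$ becomes constant after finitely many steps. Thus $X^{\square}$ is simply connected and nonpositively curved.

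\textbf{From a $\mathrm{CAT}(0)$ cube complex to median.} Assume $X^{\square}$ is simply connected and nonpositively curved. By Gromov's link condition it is locally $\mathrm{CAT}(0)$, hence $\mathrm{CAT}(0)$ by Cartan--Hadamard. Now Sageev's hyperplane lemma applies: every hyperplane of $X^{\square}$ is convex and separates $X^{\square}$ into exactly two convex halfspaces, and the combinatorial distance between two vertices of $X$ equals the number of hyperplanes separating them. Given $x_1,x_2,x_3 \in X$, let $C$ be their convex hull; every hyperplane meeting $C$ separates a pair of the $x_i$, so only finitely many do, and $C$ is a finite convex (hence $\mathrm{CAT}(0)$) subcomplex containing the $x_i$. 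For each hyperplane $J$ meeting $C$, let $\mathfrak{h}(J)$ be the halfspace delimited by $J$ containing at least two of $x_1,x_2,x_3$; this is well defined, since two of the three choices must agree. Any two such halfspaces share some $x_i$ by pigeonhole, so $\{\mathfrak{h}(J)\}_J$ is a pairwise-intersecting, hence consistent, orientation of the finite complex $C$, and is therefore realised by a unique vertex $m$ of $C$ (induction on the hyperplanes of $C$). Finally $m$ is the median of $x_1,x_2,x_3$: a hyperplane $J$ separates $x_i$ from $x_j$ if and only if it separates exactly one of them from $m$ --- if it separates $x_i$ from $x_j$ then $m$ lies with the third point $x_k$, hence with exactly one of $x_i,x_j$; conversely if $J$ separates $m$ from $x_i$ then the majority, and so $x_j$, lies with $m$. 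Counting separating hyperplanes gives $d(x_i,x_j) = d(x_i,m) + d(m,x_j)$, and uniqueness follows since any vertex satisfying these identities lies in each interval $I(x_i,x_j) \subseteq C$. Hence $X$ is median.

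\textbf{Main obstacle.} The crux on both sides is the hyperplane separation statement and the Helly property it feeds. In the first implication this is cited wholesale from \cite{MR1347406}; in the converse, proving that hyperplanes are two-sided and convex is Sageev's lemma, and is precisely the step that consumes the hypotheses ``simply connected'' and ``nonpositively curved'' --- without them a hyperplane may self-cross or fail to separate. The other delicate point is the flag-link verification in the first implication: turning a bouquet of pairwise-compatible squares at a vertex into an honest cube is the combinatorial heart of the equivalence, and is exactly where the uniqueness clause of the median axiom (not merely existence) does the work. The remaining ingredients --- the bipartiteness bookkeeping, the lexicographic induction, the majority vote, and the counting of separating hyperplanes --- are routine.
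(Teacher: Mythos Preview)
The paper does not prove this theorem at all --- it is stated with attribution to Chepoi, Gerasimov, and Roller and then used as a black box. So there is no in-paper proof to compare against; your outline is essentially the standard argument that those references carry out, and the architecture (Helly/hyperplanes for the forward direction, Sageev's separation plus a majority vote for the converse) is the right one.

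Two places nonetheless deserve tightening. First, the flag-link step is not a ``short check''. Your Helly argument does produce a vertex in each orthant determined by~$S$, but the assertion that the gate $v_S$ sits at distance exactly $|S|$ from $v$ --- equivalently, that no extraneous hyperplane separates $v$ from $v_S$ --- is precisely the content you still have to supply; without it you cannot conclude that the $2^k$ gates form the $1$-skeleton of a $k$-cube. The usual route is to prove the $3$-cube condition directly (take the median of the three far corners $d_{ab},d_{ac},d_{bc}$ of the pairwise squares and check it is the missing eighth vertex), and then induct on $k$. Your parenthetical about uniqueness of medians ``forcing the squares to close up'' names the mechanism but does not execute it.

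Second, watch for circularity in your appeal to \cite{MR1347406}. That paper proves hyperplane separation for CAT(0) cube complexes, which is exactly the structure you are trying to reach; the survey happens to phrase the theorem for median graphs, but if you are proving the equivalence you should source the median-side statements to the intrinsic median-graph literature (convexity of halfspaces in median graphs predates Sageev) rather than to the cube-complex side.

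Two minor points: in the simple-connectivity argument, pick $i$ to realise the \emph{global} maximum of $d(v,x_i)$, otherwise your lexicographic quantity need not drop; and in the converse, the passage from a pairwise-compatible family of halfspaces in a finite $\mathrm{CAT}(0)$ cube complex to a single vertex is the Helly property for halfspaces --- state it as such rather than ``induction on hyperplanes'', which hides the actual step.
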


\noindent
Here, given a graph $X$, its \emph{cube-completion} refers to the cube complex $X_\square$ obtained by filling with cubes all the subgraphs isomorphic to one-skeleta of cubes. 

\medskip \noindent
Usually, hyperplanes are thought of slightly differently in cube-completions of median graphs: they are thought of as unions of \emph{midcubes}. See Figure~\ref{Hyp}. The terminology related to hyperplanes in median graphs can be easily adapted for this alternative point of view.

\medskip \noindent
The following observation can be thought of as a consequence of the CAT(0)ness of the cube complexes under consideration, but it can also be proved directly thanks to the combinatorics of hyperplanes.

\begin{prop}
Cube-completions of median graphs are contractible.
\end{prop}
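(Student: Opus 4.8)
The plan is to deduce contractibility from the CAT(0) geometry of the complex. Let $X$ be a median graph and $X_\square$ its cube-completion. By Theorem~\ref{thm:MedianVScube}, $X_\square$ is simply connected and nonpositively curved. Endow $X_\square$ with the length metric extending the Euclidean ($\ell^2$-)metric on each cube. As recalled above, a nonpositively curved cube complex is locally CAT(0) for this metric, and a simply connected locally CAT(0) length space is globally CAT(0) (this is \cite{GromovHyp} in the finite-dimensional case and \cite{Leary} in general). Hence $X_\square$ is a CAT(0) space, and it only remains to invoke the elementary fact that CAT(0) spaces are contractible.

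To carry out that last step, I would fix a vertex $x_0 \in X_\square$. In a CAT(0) space any two points are joined by a unique geodesic, and these geodesics vary continuously with their endpoints: if $\gamma$ and $\gamma'$ denote the constant-speed geodesics $[0,1] \to X_\square$ from $x_0$ to $x$ and from $x_0$ to $y$, then $d(\gamma(s),\gamma'(s)) \le s\, d(x,y)$ for all $s \in [0,1]$, by convexity of the CAT(0) metric (see \cite{MR1744486}). Writing $\gamma_x$ for the constant-speed geodesic from $x_0$ to $x$, the map
\[
H : X_\square \times [0,1] \longrightarrow X_\square, \qquad H(x,t) := \gamma_x(1-t),
\]
satisfies $d(H(x,t),H(y,t)) \le (1-t)\, d(x,y)$, hence is continuous, and $H(\cdot,0) = \mathrm{id}_{X_\square}$ while $H(\cdot,1)$ is the constant map at $x_0$. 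Thus $X_\square$ deformation retracts onto $\{x_0\}$, and is in particular contractible.

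The main point is that essentially all the work is already packaged in Theorem~\ref{thm:MedianVScube} and in the cited CAT(0) facts; the only subtlety that deserves care is the possibly infinite-dimensional setting, where one must know that the $\ell^2$-length metric on $X_\square$ is a genuine complete geodesic metric for which the CAT(0) inequality and uniqueness of geodesics still hold — this is precisely what \cite{Leary} supplies. A purely combinatorial proof, avoiding metric geometry, is also available: fixing a vertex $x_0 \in X$, each cube $C$ of $X_\square$ has a unique vertex nearest to $x_0$ (the gate of $x_0$ in the convex subgraph $C$), which lets one define cube by cube a flow pushing every point monotonically toward $x_0$, decreasing the set of hyperplanes separating it from $x_0$; the combinatorics of hyperplanes then ensures that these cube-wise flows agree along common faces and patch together into a global deformation retraction of $X_\square$ onto $x_0$. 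In this second approach the only real difficulty is the bookkeeping needed to verify compatibility and continuity, which becomes more delicate when $X$ carries cubes of unbounded dimension.
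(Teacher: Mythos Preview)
Your argument is correct and matches the paper's own indication: the paper does not give a detailed proof but simply remarks that the proposition ``can be thought of as a consequence of the CAT(0)ness of the cube complexes under consideration, but it can also be proved directly thanks to the combinatorics of hyperplanes,'' which is precisely the pair of approaches you spell out. Your care with the infinite-dimensional case via \cite{Leary} is appropriate, and your sketch of the combinatorial alternative aligns with what the paper alludes to.
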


\noindent
For more information on median graphs, or equivalently CAT(0) cube complexes, we refer to \cite{MR3329724, Book}.

\subsection{Median geometry of diagram groups}\label{section:MedianDiag}

\noindent
In this section, we show that diagram groups act on median graphs. These median graphs can be described in several equivalent ways. For now, we give a direct and explicit definition and postpone more conceptual characterisations.

\begin{definition}\label{def:MedianDiag}
Let $\mathcal{P}$ be a semigroup presentation. Let $M(\mathcal{P})$ denote the graph whose vertices are the diagrams over $\mathcal{P}$ modulo dipole reduction and whose edges connect two diagrams whenever one can be obtained from another by right-concatenation by an \emph{atomic diagram} (i.e.\ a diagram with a single $2$-cell). 
\end{definition}

\noindent
In other words, the vertices are given by reduced diagrams and one passes from a vertex to a neighbour by gluing below a $2$-cell. In the sequel, vertices of $M(\mathcal{P})$ will be often thought of as reduced diagrams. 

\begin{figure}[h!]
\includegraphics[width=\linewidth]{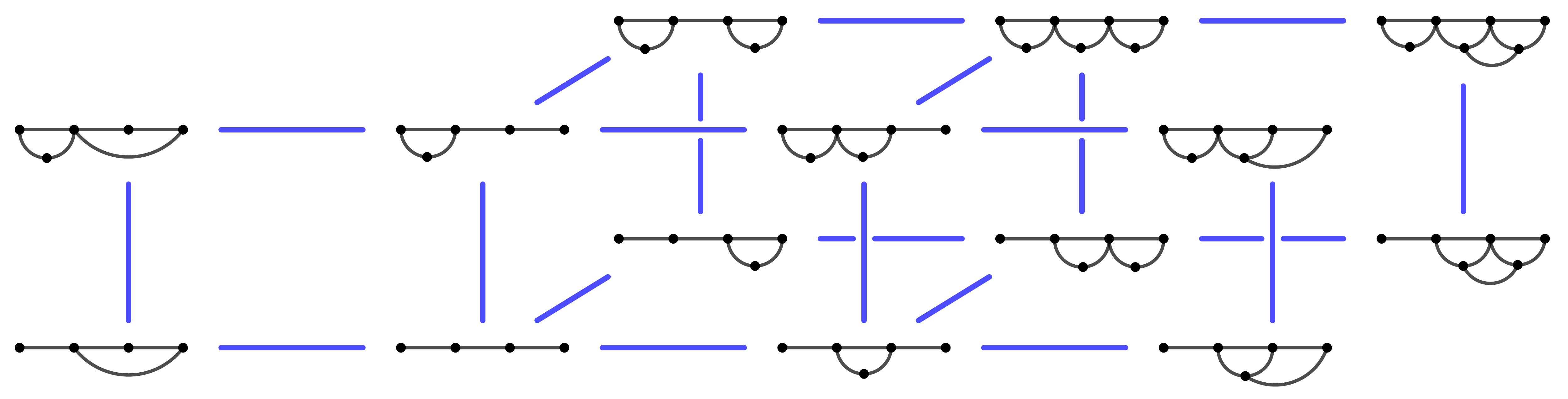}
\caption{A piece of $M(\mathcal{P},x^3)$ for $\mathcal{P}= \langle x \mid x=x^2 \rangle$.}
\label{MedianGraph}
\end{figure}

\noindent
One easily verifies that two diagrams belong to the same connected component of $M(\mathcal{P})$ if and only if their top paths have the same label. Given a word $w \in \Sigma^+$, we denote by $M(\mathcal{P},w)$ the connected component of $M(\mathcal{P})$ containing the diagrams whose top paths are labelled by $w$. 

\medskip \noindent
Notice that the diagram group $D(\mathcal{P},w)$ naturally acts on $M(\mathcal{P},w)$ by left multiplication. 

\medskip \noindent
It is worth noticing that, since edges of $M(\mathcal{P})$ are labelled by atomic diagrams, the paths in $M(\mathcal{P})$ are naturally labelled by the diagrams obtained by concatenating the atomic diagrams given by the edges along the path. Whether such a diagram is reduced characterises the geodesics in $M(\mathcal{P})$. More precisely:

\begin{lemma}\label{lem:Geodesics}
Let $\mathcal{P}$ be a semigroup presentation, $w \in \Sigma^+$ a baseword, and $\Phi,\Psi \in M(\mathcal{P},w)$ two vertices. A path
$$\Phi, \ \Phi \circ \Delta_1, \ \Phi \circ \Delta_1 \circ \Delta_2, \ldots, \ \Phi \circ \Delta_1 \circ \cdots \circ \Delta_n= \Psi$$
is a geodesic if and only if the diagram $\Delta_1 \circ \cdots \circ \Delta_n$ is reduced. As a consequence, the distance between $\Phi$ and $\Psi$ in $M(\mathcal{P},w)$ coincides with the number $\#( \Phi^{-1} \Psi)$ of $2$-cells in the reduction of $\Phi^{-1} \Psi$. 
\end{lemma}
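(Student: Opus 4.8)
The plan is to prove the statement in two directions, exploiting the description of $M(\mathcal{P},w)$ in terms of reduced diagrams and right-concatenation by atomic diagrams, together with the facts (from Section~\ref{section:Diagrams}) that every diagram has a well-defined reduction and that $\#(\cdot)$, the number of $2$-cells in the reduction, is a well-defined invariant on the diagram groupoid.

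First I would establish the easy inequality: a path $\Phi, \Phi\circ\Delta_1, \ldots, \Phi\circ\Delta_1\circ\cdots\circ\Delta_n=\Psi$ in $M(\mathcal{P},w)$ exhibits $\Phi^{-1}\Psi$ (up to dipole reduction) as the concatenation $\Delta_1\circ\cdots\circ\Delta_n$ of $n$ atomic diagrams, so its reduction has at most $n$ cells; hence $d(\Phi,\Psi)\geq\#(\Phi^{-1}\Psi)$ cannot fail — rather, \emph{every} path from $\Phi$ to $\Psi$ has length at least $\#(\Phi^{-1}\Psi)$, because reduction can only remove cells in cancelling pairs and an atomic diagram contributes exactly one cell. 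This already gives the lower bound $d(\Phi,\Psi)\geq\#(\Phi^{-1}\Psi)$ and shows that any path whose concatenated diagram $\Delta_1\circ\cdots\circ\Delta_n$ is already reduced has length exactly $\#(\Phi^{-1}\Psi)=n$, hence is a geodesic. Conversely, if the path is a geodesic then $n=d(\Phi,\Psi)\leq\#(\Delta_1\circ\cdots\circ\Delta_n)\leq n$, forcing $\#(\Delta_1\circ\cdots\circ\Delta_n)=n$, which means no dipole reduction occurs, i.e.\ $\Delta_1\circ\cdots\circ\Delta_n$ is reduced.

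For the "consequence", having shown $d(\Phi,\Psi)\geq\#(\Phi^{-1}\Psi)$, I only need a path realising this bound, i.e.\ a decomposition of the reduced diagram $\Phi^{-1}\Psi$ as a concatenation of $\#(\Phi^{-1}\Psi)$ atomic diagrams with all partial products reduced. Any diagram with $k$ cells decomposes as a concatenation of $k$ atomic diagrams: peel off the $2$-cells one at a time from the bottom (formally, take a $2$-cell $\pi$ adjacent to the bottom path, i.e.\ with $\mathrm{bot}(\pi)$ a subword of $\mathrm{bot}(\Delta)$ — such a cell exists since the diagram is planar with a single sink — and write $\Delta=\Delta'\circ A_\pi$ where $A_\pi$ is atomic, then induct on $\Delta'$). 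Since $\Phi^{-1}\Psi$ is reduced, every sub-concatenation $A_1\circ\cdots\circ A_j$ arising this way is a subdiagram of a reduced diagram, hence reduced; so by the first part this path is a geodesic of length $\#(\Phi^{-1}\Psi)$ from $\Phi$ to $\Phi\circ(\Phi^{-1}\Psi)=\Psi$ (working modulo dipole reduction), giving $d(\Phi,\Psi)=\#(\Phi^{-1}\Psi)$.

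I expect the main obstacle to be the bookkeeping around "working modulo dipole reduction": vertices of $M(\mathcal{P})$ are \emph{equivalence classes}, so one must check that edges, the right-concatenation operation, and the expression $\Phi^{-1}\Psi$ are all well-defined on classes, and that "a subdiagram of a reduced diagram is reduced" is used correctly (a subdiagram obtained by deleting cells from the bottom of a reduced diagram contains no dipole, since a dipole in it would be a dipole in the original). None of this is deep, but it is the point where a careless argument could go wrong; everything else follows from the additivity of cell-counts under concatenation and the basic properties of dipole reduction recorded after Proposition~\ref{prop:PathDiag}.
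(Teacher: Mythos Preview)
Your proposal is correct and follows essentially the same route as the paper's proof: both arguments establish the lower bound $d(\Phi,\Psi)\geq\#(\Phi^{-1}\Psi)$ (since any path expresses $\Phi^{-1}\Psi$ as a concatenation of atomic diagrams, and reduction only removes cells), construct a path of that exact length by decomposing the reduced diagram $\Phi^{-1}\Psi$ into atomic pieces, and then deduce the geodesic characterisation from the distance formula. The only cosmetic issue is that your converse direction in the first paragraph (``$n=d(\Phi,\Psi)\leq\#(\Delta_1\circ\cdots\circ\Delta_n)$'') tacitly uses the upper bound you only construct in the second paragraph; reordering so that the distance formula is fully established before the geodesic characterisation (as the paper does) would make the logic cleaner.
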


\begin{proof}
It is clear that any path from $\Phi$ to $\Psi$ must have length at least $\# (\Phi^{-1}\Psi)$, and that there exists at least one such path of this length. So the distance between $\Phi$ and $\Psi$ in $M(\mathcal{P},w)$ coincides with $\#(\Phi^{-1} \Psi)$. Therefore, our path is a geodesic if and only if $\Delta_1 \circ \cdots \circ \Delta_n = \Phi^{-1}\Psi$ with $n=\# (\Phi^{-1} \Psi)$. The only possibility is that there is no dipole in $\Delta_1 \circ \cdot \circ \Delta_n$, which amounts to saying that $\Delta_1 \circ \cdots \circ \Delta_n$ is reduced. 
\end{proof}

\noindent
The following theorem is the main result of this section.

\begin{thm}[\cite{MR1978047}]\label{thm:FarleyCC}
Let $\mathcal{P}$ be a semigroup presentation and $w \in \Sigma^+$ a baseword. Then $M(\mathcal{P},w)$ is a median graph, locally of finite cubical dimension, on which $D(\mathcal{P},w)$ acts freely. The action is cocompact if and only if $[w]_\mathcal{P}$ is finite. 
\end{thm}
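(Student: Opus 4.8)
The plan is to verify each claim of Theorem~\ref{thm:FarleyCC} in turn: first that $M(\mathcal{P},w)$ is median, then that it has locally finite cubical dimension, then that $D(\mathcal{P},w)$ acts freely, and finally the cocompactness criterion.

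For the \emph{median} property, I would exhibit $M(\mathcal{P},w)$ as a median graph by producing, for any three reduced diagrams $\Phi_1,\Phi_2,\Phi_3 \in M(\mathcal{P},w)$, their median and checking uniqueness. The natural candidate comes from thinking of a reduced diagram $\Phi$ with $\mathrm{top}(\Phi)=w$ as encoding a ``region'' below $w$: by Lemma~\ref{lem:Geodesics}, $d(\Phi,\Psi) = \#(\Phi^{-1}\Psi)$, so I want to find $M$ with $\#(\Phi_i^{-1}M) + \#(M^{-1}\Phi_j) = \#(\Phi_i^{-1}\Phi_j)$ for all $i\neq j$. The cleanest route is to identify $M(\mathcal{P},w)$ with a well-known median object. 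Each reduced $(w,\cdot)$-diagram can be decomposed, cell by cell, into atomic steps, and two diagrams agree on the ``part of the derivation'' they share; this should let me realise $M(\mathcal{P},w)$ as the graph of a poset of ``prefixes of derivations'' — concretely, the set of reduced diagrams $\Phi$ with $\mathrm{top}(\Phi)=w$ ordered by ``$\Phi \leq \Psi$ iff $\Psi = \Phi\circ\Theta$ for some diagram $\Theta$'', together with the involution given by direction of atomic cells. Then I would invoke the standard fact (Sageev / Chepoi, as cited in Section~\ref{section:Crash}) that a graph is median iff its cube-completion is simply connected and nonpositively curved, and check the flag-link condition combinatorially: a collection of atomic cells that can be glued below a given reduced diagram pairwise-compatibly can be glued all at once, because distinct atomic cells act on disjoint segments of the bottom path and hence commute — this is exactly the ``flag'' closure. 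Simple connectedness follows because any closed loop of atomic moves can be filled by squares coming from commuting pairs of cells plus dipole cancellations. (Alternatively, cite \cite{MR1978047} directly, since the theorem is attributed there, and present only the idea.)

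\emph{Local finiteness of the cubical dimension}: a cube at a vertex $\Phi$ corresponds to a family of pairwise ``independent'' atomic cells that may be appended below $\mathrm{bot}(\Phi)$; these must sit on disjoint subsegments of the word $\mathrm{bot}(\Phi)$, so the number of them is bounded by the length $|\mathrm{bot}(\Phi)|$, which is finite. Hence the link of each vertex is finite-dimensional, i.e. $M(\mathcal{P},w)$ is locally of finite cubical dimension.

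\emph{Freeness of the action}: $D(\mathcal{P},w)$ acts on $M(\mathcal{P},w)$ by $g\cdot\Phi = g\circ\Phi$ (left concatenation of a reduced $(w,w)$-diagram with a reduced $(w,\cdot)$-diagram, then reducing). If $g\cdot\Phi = \Phi$ then $g\circ\Phi = \Phi$ up to dipole reduction, so $\Phi^{-1}\circ g\circ \Phi$ is trivial, hence $g$ is trivial in $D(\mathcal{P},w)$; this uses that the diagram groupoid has unique inverses (Corollary~\ref{cor:PathDiag}). So stabilisers are trivial and the action is free.

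\emph{Cocompactness}: the action is cocompact iff there are finitely many $D(\mathcal{P},w)$-orbits of vertices. A vertex is a reduced diagram $\Phi$ with $\mathrm{top}(\Phi)=w$; acting on the left by $(w,w)$-diagrams, the orbit of $\Phi$ is determined precisely by $\mathrm{bot}(\Phi)$ modulo the equivalence ``represents the same semigroup element as'', because two reduced $(w,u)$-diagrams differ by a $(w,w)$-diagram, while a $(w,u)$- and a $(w,u')$-diagram can never be so related unless $u = u'$ in the semigroup. Hence the orbits of vertices are in bijection with the set of semigroup elements of the form $[u]_{\mathcal{P}}$ with $u$ reachable from $w$, which is exactly $[w]_{\mathcal{P}}$ — the $\mathcal{R}$-class of $w$ in the semigroup, i.e. the set of elements equal to $w$ modulo $\mathcal{P}$. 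So there are finitely many vertex-orbits iff $[w]_{\mathcal{P}}$ is finite; since $M(\mathcal{P},w)$ is locally finite-dimensional but possibly not locally finite, one should also note that finitely many orbits of \emph{vertices} already forces finitely many orbits of cubes of each dimension and a compact quotient, because each vertex sits in only finitely many cubes (bounded by $|\mathrm{bot}(\Phi)|$, which takes finitely many values over finitely many orbits). This gives the equivalence.

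\emph{Main obstacle.} The genuinely substantive step is establishing that $M(\mathcal{P},w)$ is median — in particular checking the flag / nonpositive-curvature condition and simple connectedness rigorously, rather than hand-waving ``disjoint cells commute''. One must be careful that appending an atomic cell can \emph{create} a dipole (when it cancels against a cell of $\Phi$), so the edge relation is subtler than ``strictly grow the diagram'', and the poset/involution description has to account for this; getting the links exactly right (a cube $=$ a set of atomic cells on pairwise-disjoint segments of $\mathrm{bot}(\Phi)$, and no accidental extra adjacencies) is where the real work lies. I would follow \cite{MR1978047} for this step and keep my exposition at the level of the idea.
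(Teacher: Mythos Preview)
Your proposal is correct and follows essentially the paper's \emph{second} proof: verify the Gromov link condition (cubes at $\Phi$ correspond to thin diagrams, i.e.\ collections of atomic cells on pairwise disjoint subsegments of $\mathrm{bot}(\Phi)$, so links are flag), then argue simple-connectedness; the remaining claims (local finite cubical dimension via $|\mathrm{bot}(\Phi)|$, freeness via cancellation in the groupoid, and the bijection between vertex-orbits and words in $[w]_{\mathcal{P}}$) are handled exactly as you do.

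It is worth knowing that the paper also gives a \emph{first} proof that sidesteps precisely the obstacle you flag. Instead of proving nonpositive curvature and simple-connectedness, one constructs the median point directly: given $\Delta_0,\Delta_1,\Delta_2$, conjugate so that $\Delta_0$ is trivial, and take the \emph{largest common prefix} $\Delta$ of $\Delta_1$ and $\Delta_2$. By Lemma~\ref{lem:Geodesics} this lies on geodesics from $\Delta_0$ to $\Delta_1$ and to $\Delta_2$; maximality of the prefix forces $\Gamma_1^{-1}\Gamma_2$ (where $\Delta_i=\Delta\circ\Gamma_i$) to be reduced, so $\Delta$ also lies on a geodesic from $\Delta_1$ to $\Delta_2$. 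Uniqueness is a short prefix argument. This route is shorter and avoids the delicate link analysis you were worried about (dipole creation, accidental adjacencies), at the cost of being less structural. Your approach, on the other hand, yields the cube-complex description for free, which is what one actually uses later (Morse theory, hyperplanes).

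One small wording issue: you write that the orbit of $\Phi$ is determined by $\mathrm{bot}(\Phi)$ ``modulo the equivalence `represents the same semigroup element as'\,''. In fact left-multiplication by a $(w,w)$-diagram preserves $\mathrm{bot}(\Phi)$ \emph{as a word}, so orbits are in bijection with the words in $[w]_{\mathcal{P}}$, not with semigroup elements; your conclusion is unaffected since all such words already represent $w$.
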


\noindent
Here, we use the notation $[w]_\mathcal{P}:= \{ m \in \Sigma^+ \mid m=w \text{ mod } \mathcal{P} \}$. We are going to give two proofs of Theorem~\ref{thm:FarleyCC}. The first one verify that every triple of vertices admits a unique median point thanks to the description of the geodesics provided by Lemma~\ref{lem:Geodesics}; and the second proof applies the characterisation of median graphs provided by Theorem~\ref{thm:MedianVScube}. 

\begin{proof}[First proof of Theorem~\ref{thm:FarleyCC}.]
Let $\Delta_0,\Delta_1,\Delta_2$ be three reduced diagrams representing three vertices of $M(\mathcal{P},w)$. Up to conjugating by $\Delta_0$, we can assume without loss of generality that $\Delta_0$ has no cell. (Notice that this operation may modify the baseword $w$.) Let $\Delta$ be the biggest common prefix of $\Delta_1$ and $\Delta_2$. (Given two diagrams $\Phi,\Psi$, one says that $\Phi$ is a prefix of $\Psi$ if $\Psi$ decomposes as a concatenation $\Phi \circ \Gamma$ for some diagram $\Gamma$.) The fact that $\Delta$ is a common prefix of $\Delta_1$ and $\Delta_2$ implies that $\Delta$ belongs to geodesics from $\Delta_0$ to $\Delta_1$ and from $\Delta_0$ to $\Delta_2$. And the maximality of this prefix assures that, if we decompose $\Delta_1$ (resp. $\Delta_2$) as $\Delta \circ \Gamma_1$ (resp. $\Delta \circ \Gamma_2$), then $\Gamma_1 \circ \Gamma_2^{-1}$ is reduced. This implies that $\Delta$ also belongs to a geodesic between $\Delta_1$ and $\Delta_2$. Thus, $\Delta$ provides a median point for $\Delta_0,\Delta_1,\Delta_2$. 

\medskip \noindent
If $\Xi$ is a median point of $\Delta_0,\Delta_1,\Delta_2$, then $\Xi$ must be a common prefix of $\Delta_1$ and $\Delta_2$, and consequently a prefix of $\Delta$. Decompose $\Delta$ as $\Xi \circ \Omega$. Because $\Xi$ belongs to a geodesic between $\Delta_1$ and $\Delta_2$, the concatenation $(\Omega \circ \Gamma_1)^{-1} (\Omega \circ \Gamma_2)$ must be reduced. In other words, $\Omega$ cannot have any cell, hence $\Xi= \Delta$. 
\end{proof}

\begin{proof}[Second proof of Theorem~\ref{thm:FarleyCC}.]
According to Theorem~\ref{thm:MedianVScube}, it suffices to show that the cube-completion $M_\square(\mathcal{P},w)$ is simply connected and nonpositively curved. 

\medskip \noindent
We begin by describing the cubes in $M_\square(\mathcal{P},w)$. A \emph{thin diagram} is a sum of atomic diagrams (see Definition~\ref{def:SumDiag}). Given a diagram $\Delta \in M(\mathcal{P},w)$ and thin diagram $\Psi$ with top label equal to the bottom label of $\Delta$, the vertices $\{ \Delta \circ \Psi_0 \mid \Psi_0 \text{ prefix of } \Psi\}$ span an $n$-cube if $\Psi$ has $n$ cells. One can show that all the cubes in $M_\square(\mathcal{P},w)$ are of this form. 

\medskip \noindent
As a consequence, given a vertex $\Delta \in M_\square(\mathcal{P},w)$, if $w$ denotes the bottom label of $\Delta$ then the link of $\Delta$ is isomorphic to the simplicial complex whose vertices are the subwords of $w$ which are part of a relation from $\mathcal{R}$ and whose simplices are given by pairwise disjoint subwords. Clearly, such a complex is flag, so $M_\square(\mathcal{P},w)$ is nonpositively curved.

\medskip \noindent
Now, let us show that every compact subcomplex in $M_\square(\mathcal{P},w)$ is contractible. More precisely, given a finite set of vertices $S$, we claim that the subcomplex spanned by $\bar{S}:= \{ \Delta \mid \exists \Omega \in S, \text{ $\Delta$ prefix of } \Omega \}$ is contractible. According to Lemma~\ref{lem:Geodesics}, $\bar{S}$ coincides with the union of all the geodesics from the trivial diagram to the vertices in $S$. Fix a reduced diagram $\Delta \in \bar{S}$ with a maximal number of cells (which amounts to saying that the distance from the trivial diagram to $\Delta$ is maximal). Decompose $\Delta$ as the concatenation $\Delta_0 \circ \Gamma$ where $\Gamma$ is a thin diagram containing all the cells of $\Delta$ whose bottom paths lie in the bottom path of $\Delta$. (In other words, $\Gamma$ is the maximal thin suffix of $\Delta$.) If $\Gamma$ has $n$ cells, let $\Delta_1, \ldots, \Delta_n$ denote the diagrams obtained from $\Delta$ by removing a single cell from $\Gamma$. Then
$$\bar{S}= \overline{ \underset{=:R}{\underbrace{S \backslash \{\Delta\} \cup \{\Delta_1, \ldots, \Delta_n\} }} } \cup \underset{\text{$n$-cube}}{\underbrace{ \left\{ \Delta_0 \circ \Gamma_0 \mid \Gamma_0 \text{ prefix of } \Gamma\right\} }}.$$
Thus, $\bar{S}$ deformation retracts onto $\bar{R}$. During the replacement, observe that the quantity
$$\left( \begin{array}{c} \text{maximal number of cells} \\ \text{of a diagram in $S$} \end{array}, \ \begin{array}{c} \text{number of vertices with a} \\ \text{maximal number of cells} \end{array} \right),$$
ordered with respect to the lexicographic order, decreases. Therefore, the process has to stop after finitely many steps. Necessarily, the final step corresponds to the situation where our set of vertices is reduced to the trivial diagram. 
\end{proof}

\noindent
Interestingly, the median geometry of diagram groups can replace the combinatorics of diagrams in some arguments. We record some of examples below.

\begin{proof}[Second proof of Theorem~\ref{thm:TorsionFree}.]
Let $\mathcal{P}= \langle \Sigma \mid \mathcal{R} \rangle$ be a semigroup presentation and $w \in \Sigma^+$ a baseword. A finite-order isometry of a median graph has to stabilise (the one-skeleton of) a cube (for instance, combine \cite[Lemma~2.1.3]{MR1418304} with \cite{MR913179}). Because cube-stabilisers are vertex-stabilisers for the action of $D(\mathcal{P},w)$ on $M(\mathcal{P},w)$, the fact that $D(\mathcal{P},w)$ acts freely on $M(\mathcal{P},w)$ immediately implies that $D(\mathcal{P},w)$ is torsion free. 
\end{proof}

\begin{proof}[Proof of a weak version of Theorem~\ref{thm:Nilpotent}.]
If a group acts properly on a median graph, then its finitely generated abelian subgroups are automatically undistorted \cite[Corollary~6.B.5]{CornulierCommensurated} and its polycyclic subgroups are automatically virtually abelian \cite[Corollary~6.C.8]{CornulierCommensurated}. 
\end{proof}

\begin{proof}[Second proof of Prosition~\ref{prop:Roots}.]
Let $\mathcal{P}= \langle \Sigma \mid \mathcal{R} \rangle$ be a semigroup presentation and $w \in \Sigma^+$ a baseword. Given an isometry of a median graph, if no power inverts a hyperplane (i.e.\ stabilises the hyperplane but swaps the two halfspaces it delimits), then it acts as a translation on a bi-infinite geodesic lines \cite{arXiv:0705.3386}. Because $D(\mathcal{P},w)$ acts on $M(\mathcal{P},w)$ with no inversion, it follows that a non-trivial element $g \in D(\mathcal{P},w)$ does act as a translation on some bi-infinite geodesic. This amounts to saying that there exists a diagram $\Delta$ and a geodesic $[\Delta, g\Delta]$ from $\Delta$ to $g \Delta$ such that the concatenation $\bigcup_{n \in \mathbb{Z}} b^n \cdot [\Delta,g \Delta]$ is a geodesic. Observe that the translation length $\tau(g)$ of $g$ is $d(\Delta, g \Delta) = \# (g)$. 

\medskip \noindent
Now, if $a,b \in D(\mathcal{P},w) \backslash \{1\}$ and $k \neq 0$ are such that $a=b^k$, then 
$$\#(a)= \tau(a) = |k| \cdot \tau(b) = |k| \cdot \#(b),$$
which implies that $|k| \leq \#(a)/2$. 
\end{proof}

\noindent
As already mentioned, there are several equivalent but more conceptual view on the median graph given by Definition~\ref{def:MedianDiag}. Below, we record three of them.

\paragraph{Cayley graph of the diagram groupoid.} Let $\mathcal{P}= \langle \Sigma \mid \mathcal{R} \rangle$ be a semigroup presentation. Recall from Section~\ref{section:Diagrams} that the diagram groupoid $D(\mathcal{P})$ is defined as the set of diagrams over $\mathcal{P}$ up to dipole reduction endowed with the concatenation of diagrams. Because every diagram can be constructed by adding one $2$-cell at a time, the atomic diagrams generate the groupoid $D(\mathcal{P})$. It follows directly from Definition~\ref{def:MedianDiag} that $M(\mathcal{P})$ coincides with the Cayley graph of $D(\mathcal{P})$ with respect to the generating set given by the atomic diagrams, i.e.\ the graph whose vertices are the elements of $D(\mathcal{P})$ and whose edges connect one element to another if one can be obtained from the other by right-multiplying by a generator. 

\medskip \noindent
This observation is compatible with our analogy between diagram groups and free groups. 
\begin{center}
\begin{tabular}{|c|c|} \hline
\textbf{Free groups} & \textbf{Diagram groups} \\ \hline
Letters & $2$-cells \\ \hline
Free generators & atomic diagrams \\ \hline
Words & Semigroup diagrams \\ \hline
Cancellations & Dipole reductions \\ \hline
\end{tabular}
\end{center}

\noindent
From this perspective, Cayley graphs of free groups with respect to free bases correspond to Cayley graphs of diagram groupoids with respect to atomic diagrams. Then, the arboreal geometry of free groups becomes the median geometry of diagram groups. 

\begin{remark}
Pushing further this analogy, one could say that free groups have natural boundaries, which can be described both as infinite words and as geodesic rays in their trees; and ask whether there is a counterpart for diagram groups. It turns out to the case: one can define boundaries of diagram groups both as infinite semigroup diagrams and as geodesic rays in their median graphs. Such a boundary coincides with the usual boundary of median graphs: the \emph{Roller boundary}. See \cite[Proposition~A.3]{MR4071367} for more details. 
\end{remark}

\paragraph{Universal cover of the Squier cube complex.} It turns out that (cube-completions of) our median graphs can be constructed directly from the Squier cube complexes. More precisely:

\begin{prop}
Let $\mathcal{P}= \langle \Sigma \mid \mathcal{R} \rangle$ be a semigroup presentation. The map that sends a semigroup diagram over $\mathcal{P}$ to the labels of its bottom path induces a universal cover $M_\square(\mathcal{P}) \to S^+(\mathcal{P})$. 
\end{prop}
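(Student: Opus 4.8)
The plan is to exhibit the map $\Phi : M(\mathcal{P}) \to S^+(\mathcal{P})$ sending a reduced diagram $\Delta$ to the word labelling $\mathrm{bot}(\Delta)$, show it is a covering map of cube complexes, and then invoke the contractibility of $M_\square(\mathcal{P})$ (established in the second proof of Theorem~\ref{thm:FarleyCC}) to conclude that it is the universal cover. First I would check that $\Phi$ is well-defined on vertices: dipole reduction does not change the bottom label, so $\Phi$ is constant on each class modulo dipole reduction. Next, $\Phi$ sends an edge $\Delta \to \Delta \circ \pi$ of $M(\mathcal{P})$, where $\pi$ is an atomic diagram realising a relation $u=v$ applied inside $\mathrm{bot}(\Delta)$, to the edge of $S^+(\mathcal{P})$ between the corresponding words $a u b$ and $a v b$; so $\Phi$ is a morphism of graphs, and it visibly carries the cube of $M_\square(\mathcal{P})$ spanned by $\{\Delta \circ \Psi_0 : \Psi_0 \text{ prefix of } \Psi\}$ (for a thin diagram $\Psi$ with $n$ cells) isomorphically onto the cube of $S^+(\mathcal{P})$ spanned by the corresponding $n$ pairwise-disjoint relation applications. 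Thus $\Phi$ extends to a cellular map $M_\square(\mathcal{P}) \to S^+(\mathcal{P})$.

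The heart of the argument is the local bijectivity of $\Phi$ — i.e.\ that it restricts to an isomorphism from the link of any vertex $\Delta \in M_\square(\mathcal{P})$ to the link of $\Phi(\Delta) = w$ in $S^+(\mathcal{P})$ (equivalently, from the star of $\Delta$ to the star of $w$). By the description recalled in the second proof of Theorem~\ref{thm:FarleyCC}, the link of $\Delta$ is the simplicial complex whose vertices are the occurrences in $w$ of a subword which is one side of a relation of $\mathcal{R}$, with simplices given by families of pairwise disjoint such occurrences; and this is, by inspection, exactly the link of $w$ in $S^+(\mathcal{P})$, the identification being induced by $\Phi$. One subtlety to treat with care: an edge of $S^+(\mathcal{P})$ at $w$ is an \emph{oriented} relation application $(a, u \to v, b)$ with $u = v$ \emph{or} $v = u$ in $\mathcal{R}$, and it must be lifted to an edge of $M(\mathcal{P})$ at $\Delta$, namely right-concatenation by the atomic diagram realising that oriented application below $\mathrm{bot}(\Delta)$; I would check that distinct oriented applications give non-dipole atomic concatenations, so the lift is unique, and that \emph{every} edge at $w$ is hit, using that any relation side occurring in $w$ can be applied below $\Delta$. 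Granting this local bijectivity, a standard covering-space argument (path and homotopy lifting, using that both complexes are connected — the latter by the discussion after Definition~\ref{def:MedianDiag}) shows $\Phi$ is a covering map.

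Finally, since the cube-completion $M_\square(\mathcal{P},w)$ of each connected component is contractible (proved within the second proof of Theorem~\ref{thm:FarleyCC}, or via Theorem~\ref{thm:MedianVScube}), in particular simply connected, the covering $M_\square(\mathcal{P},w) \to S^+(\mathcal{P},w)$ restricted to a component is the universal cover; taking the disjoint union over components (equivalently, over the classes $[w]_\mathcal{P}$) gives the universal cover $M_\square(\mathcal{P}) \to S^+(\mathcal{P})$. The main obstacle I anticipate is purely bookkeeping: making the identification of links precise, in particular matching the orientation data on edges of $S^+(\mathcal{P})$ with the choice of atomic diagram (relation versus its formal inverse) used to lift, and verifying that the cube structure on $M_\square(\mathcal{P})$ — cubes coming from thin suffixes $\Psi$ — corresponds bijectively to the cubes of $S^+(\mathcal{P})$ — coming from tuples of pairwise disjoint relation applications. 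Once the combinatorics of links is pinned down, the rest is the standard covering-space machinery.
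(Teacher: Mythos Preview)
Your proposal is correct and follows exactly the same route as the paper's proof: show that the map induces isomorphisms on links of vertices (hence is a covering map), and then invoke the simple connectedness of each component of $M_\square(\mathcal{P})$ established in the second proof of Theorem~\ref{thm:FarleyCC}. The paper's proof is a two-line sketch of precisely this argument, and your write-up simply unpacks the bookkeeping (well-definedness on dipole classes, the edge and cube correspondence, and the orientation matching) that the paper leaves implicit.
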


\noindent
The proposition is essentially immediate from the definitions of the spaces involved: it is clear that the map $M_\square(\mathcal{P}) \to S^+(\mathcal{P})$ induces isomorphisms between links of vertices, so it is a covering map, and we already saw that each connected component of $M_\square(\mathcal{P})$ is simply connected.

\paragraph{From directed $2$-complexes to cube complexes.} One can naturally associate to every directed $2$-complex $\mathcal{X}$ a cube complex $\mathrm{Sq}(\mathcal{X})$, namely the cube-completion of the graph whose vertices are the $1$-paths in $\mathcal{X}$ and whose edges connect two $1$-paths whenever one can be obtained from the other by an elementary transformation. Observe that, if $\mathcal{P}$ is a semigroup presentation and if $\mathcal{X}(\mathcal{P})$ is the associated directed $2$-complex, then $\mathrm{Sq}(\mathcal{X}(\mathcal{P}))$ coincides with the Squier cube complex $S^+(\mathcal{P})$. It turns out that the cube-completion $M_\square(\mathcal{P})$ can also be constructed from a directed $2$-complex. 

\begin{definition}
A \emph{rooted $2$-tree} is a directed $2$-complex with a distinguished $1$-path $\alpha$ which can be described as a union of directed $2$-complexes $\mathcal{X}_0 \subset \mathcal{X}_1 \subset \cdots$ such that:
\begin{itemize}
	\item $\mathcal{X}_0$ coincides with the $1$-path $\alpha$;
	\item for every $n \geq 0$, $\mathcal{X}_{n+1}$ is obtained from $\mathcal{X}_n$ by adding new $2$-cells whose bottom paths are pairwise disjoint, except possibly at their endpoints; and such that, for each new cell, its top path lies in $\mathcal{X}_n$ but its bottom path must be a simple arc meeting $\mathcal{X}_n$ only at its endpoints. 
\end{itemize}
\end{definition}

\noindent
Among directed $2$-complexes, there is a natural notion of \emph{covers} and every directed $2$-complexes (with a distinguished $1$-path) admits a (unique) \emph{universal cover}. This cover is a rooted $2$-tree and it is constructed as follows. Let $\mathcal{X}$ be a directed $2$-complex with a distinguished $1$-path $\alpha$. 
\begin{itemize}
	\item Let $\mathcal{X}_0$ denote the directed $2$-complex reduced to a single $1$-path, which we endow with a map $\phi_0 : \mathcal{X}_0 \to \mathcal{X}$ that sends isomorphically $\mathcal{X}_0$ to $\alpha$. 
	\item Assume that $\mathcal{X}_n$ and $\phi_n : \mathcal{X}_n \to \mathcal{X}$ are constructed for some $n \geq 0$. If there exists a $1$-path $\xi \subset \mathcal{X}_n$ and a $2$-cell $\pi \subset \mathcal{X}$ such that the top path of $\pi$ is $\phi_n(\xi)$ and such that no $2$-cell of $\mathcal{X}_n$ is sent to $\pi$ under $\phi_n$, then we constructed $\mathcal{X}_{n+1}$ by gluing a new $2$-cell along $\xi$ and extend $\phi_n$ by sending this new cell to $\pi$. 
\end{itemize}
Possibly after infinitely many steps, one gets a rooted $2$-tree $\widetilde{\mathcal{X}}$ and a (covering) map $\phi : \widetilde{\mathcal{X}} \to \mathcal{X}$. We refer to \cite{MR2193190} for more details. 

\begin{prop}
Let $\mathcal{P}= \langle \Sigma \mid \mathcal{R} \rangle$ be a semigroup presentation and $w \in \Sigma^+$ a baseword. Let $X(\mathcal{P})$ denote the directed $2$-complexes associated to $\mathcal{P}$ with $w$ as its distinguished $1$-path. The cube complex $M_\square(\mathcal{P},w)$ is naturally isomorphic to $\mathrm{Sq}\left( \widetilde{\mathcal{X}(\mathcal{P})} \right)$. 
\end{prop}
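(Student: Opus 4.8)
The plan is to identify $M_\square(\mathcal{P},w)$ and $\mathrm{Sq}(\widetilde{\mathcal{X}(\mathcal{P})})$ with one and the same object — the universal cover of the Squier cube complex $S^+(\mathcal{P},w)$ — and then to invoke uniqueness of universal covers; alternatively, one can write the isomorphism down explicitly through the dictionary between diagrams and $2$-paths from Section~\ref{section:Free}. First I would record that both complexes are covers of $S^+(\mathcal{P},w)$. On the one hand, we already know (from the proposition recalled just above) that the ``bottom label'' map realises $M_\square(\mathcal{P},w)$ as the universal cover of $S^+(\mathcal{P},w)$, sending the base vertex, the trivial diagram $\epsilon(w)$, to $w$; and recall that $\mathrm{Sq}(\mathcal{X}(\mathcal{P}))=S^+(\mathcal{P})$, so that $S^+(\mathcal{P},w)$ is the component of $\mathrm{Sq}(\mathcal{X}(\mathcal{P}))$ containing $w$. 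On the other hand, $\mathrm{Sq}(-)$ is functorial for covers of directed $2$-complexes: a covering map $\psi\colon \mathcal{Y}\to\mathcal{Z}$ is a local isomorphism, so it induces, for each $1$-path $\xi$ of $\mathcal{Y}$, a bijection between the elementary transformations applicable at $\xi$ and those applicable at $\psi(\xi)$, compatibly with their ``independence'' (disjointness of the involved sub-$1$-paths); since this is precisely the combinatorial data of the link of the vertex $\xi$ of $\mathrm{Sq}(\mathcal{Y})$, the induced map $\mathrm{Sq}(\psi)\colon \mathrm{Sq}(\mathcal{Y})\to\mathrm{Sq}(\mathcal{Z})$ is again a covering of cube complexes. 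Applying this to $\phi\colon \widetilde{\mathcal{X}(\mathcal{P})}\to\mathcal{X}(\mathcal{P})$ and passing to the component of the root $1$-path $\alpha$ (which maps to $w$), we obtain a covering $\mathrm{Sq}(\widetilde{\mathcal{X}(\mathcal{P})})\to S^+(\mathcal{P},w)$.

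Next I would check that $\mathrm{Sq}(\widetilde{\mathcal{X}(\mathcal{P})})$ is itself simply connected, hence is \emph{the} universal cover of $S^+(\mathcal{P},w)$. Nonpositive curvature is immediate: the link of a vertex $\xi$ is the flag complex whose vertices are the $2$-cells of $\widetilde{\mathcal{X}(\mathcal{P})}$ having their bottom (equivalently, via $\iota$, their top) path on $\xi$ and whose simplices are the pairwise disjoint such families — exactly the computation made in the second proof of Theorem~\ref{thm:FarleyCC}. Simple connectedness is where the \emph{rooted $2$-tree} structure of $\widetilde{\mathcal{X}(\mathcal{P})}$ is used: filtering $\widetilde{\mathcal{X}(\mathcal{P})}=\bigcup_n \mathcal{X}_n$ and collapsing the $2$-cells of $\mathcal{X}_n\setminus\mathcal{X}_{n-1}$ in decreasing order of depth, just as in the contractibility argument of the second proof of Theorem~\ref{thm:FarleyCC}, shows $\mathrm{Sq}(\widetilde{\mathcal{X}(\mathcal{P})})$ to be contractible (this is in substance \cite{MR2193190}). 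Then $M_\square(\mathcal{P},w)$ and $\mathrm{Sq}(\widetilde{\mathcal{X}(\mathcal{P})})$ are simply connected covers of $S^+(\mathcal{P},w)$, pointed at $\epsilon(w)\mapsto w$ and at $\alpha\mapsto w$ respectively, so the uniqueness of the universal cover supplies a canonical basepoint-preserving isomorphism $M_\square(\mathcal{P},w)\xrightarrow{\ \sim\ }\mathrm{Sq}(\widetilde{\mathcal{X}(\mathcal{P})})$ commuting with the two covering maps. Unwinding the construction, this isomorphism sends a reduced $(w,\ast)$-diagram $\Delta$ to the terminal $1$-path of the $\phi$-lift, based at $\alpha$, of the $2$-path that $\Delta$ encodes in $\mathcal{X}(\mathcal{P})$; one could equally well take this formula as the definition and verify by hand that it descends to dipole-reduction classes, is a bijection (here $\pi_2(\widetilde{\mathcal{X}(\mathcal{P})},\alpha)=1$ gives injectivity, and the inductive construction of the rooted $2$-tree gives surjectivity onto the root component), and carries right-concatenation by an atomic diagram into a single elementary transformation — whence a graph isomorphism, and therefore an isomorphism of the cube-completions.

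The main obstacle is the simple connectedness — equivalently the contractibility — of $\mathrm{Sq}(\widetilde{\mathcal{X}(\mathcal{P})})$, i.e.\ the $2$-asphericity of the rooted $2$-tree $\widetilde{\mathcal{X}(\mathcal{P})}$; this requires either importing the collapsing argument already used for $M_\square(\mathcal{P},w)$ or quoting \cite{MR2193190}. Everything else — functoriality of $\mathrm{Sq}$ under covers, the flagness of links, and the bookkeeping identifying an elementary transformation (symmetrised by the involution $\iota$) with right-concatenation by an atomic diagram — is routine once the correspondence between diagrams and $2$-paths of Section~\ref{section:Free} is available.
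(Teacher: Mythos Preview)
Your argument is correct, but the route differs from the paper's. The paper does not pass through uniqueness of universal covers; instead it gives a concrete description of $\widetilde{\mathcal{X}(\mathcal{P})}$ itself --- as the directed $2$-complex obtained by gluing all reduced diagrams with top label $w$ along their top paths, identifying common prefixes --- and then observes that every $1$-path $\alpha$ in this object, together with the root $\tau$, bounds a unique diagram $\Omega(\alpha)$. The map $\alpha\mapsto\Omega(\alpha)$ is then the desired isomorphism $\mathrm{Sq}(\widetilde{\mathcal{X}(\mathcal{P})})\to M_\square(\mathcal{P},w)$, written down directly.

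The trade-off: the paper's approach is more hands-on and avoids having to establish simple connectedness of $\mathrm{Sq}(\widetilde{\mathcal{X}(\mathcal{P})})$ as a separate step, since the bijection with reduced diagrams is visible from the explicit model. Your approach is more conceptual --- it explains \emph{why} the two complexes coincide (both are universal covers of the same base) --- but it front-loads the work into proving contractibility of $\mathrm{Sq}(\widetilde{\mathcal{X}(\mathcal{P})})$, which you correctly flag as the main obstacle and which ultimately requires either the collapsing argument or a citation to \cite{MR2193190}. The explicit formula you recover at the end is precisely the inverse of the paper's $\alpha\mapsto\Omega(\alpha)$, so the two descriptions agree once unwound.
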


\noindent
A possible description of the universal cover $\widetilde{\mathcal{X}(\mathcal{P})}$ is the following. Take all the reduced diagrams over $\mathcal{P}$ with top labels $w$ and glue them along their top paths $\tau$. Each time two diagrams $\Delta_1$ and $\Delta_2$ have a common prefix $\Delta$ (i.e.\ each time $\Delta_1$ and $\Delta_2$ decompose as concatenations $\Delta_1=\Delta\circ \Gamma_1$ and $\Delta_2= \Delta \circ \Gamma_2$), identify the two copies of $\Delta$. The directed $2$-complex one gets is precisely $\widetilde{\mathcal{X}(\mathcal{P})}$, with $\tau$ as its root. Observe that $\tau$ and any $1$-path $\alpha$ in $\widetilde{\mathcal{X}(\mathcal{P})}$ delimits a unique diagram $\Omega(\alpha) \subset \widetilde{\mathcal{X}(\mathcal{P})}$. The map $\alpha \mapsto \Omega(\alpha)$ yields the isomorphism $\mathrm{Sq}\left( \widetilde{\mathcal{X}(\mathcal{P})} \right) \to M_\square(\mathcal{P},w)$ from the proposition.

\subsection{Morse theory and finiteness properties}\label{section:Morse}

\noindent
Recall that, given an $n \geq 0$, a group is \emph{of type $F_n$} if it admits a classifying space with only finitely many $k$-cells for $k \leq n$. It is \emph{of type $F_\infty$} if is of type $F_n$ for every $n \geq 0$; or equivalently if it admits a classifying space with only finitely many cells in each dimension. A group is \emph{of type $F$} if it admits a classifying space with only finitely many cells. It is worth noticing that a group is of type $F_1$ (resp. of type $F_2$) if and only if it is finitely generated (resp. finitely presented). 

\medskip \noindent
Since universal covers of Squier cube complexes are contractible, as cube-completions of median graphs, one gets explicit classifying spaces for diagram groups, which allows us to deduce various finiteness properties. As an immediate application:

\begin{prop}\label{prop:EasyF}
Let $\mathcal{P}= \langle \Sigma \mid \mathcal{R} \rangle$ be a semigroup presentation and $w \in \Sigma^+$ a baseword. If only finitely many words in $\Sigma^+$ equal $w$ modulo $\mathcal{P}$, then the diagram group $D(\mathcal{P},w)$ is of type $F$. 
\end{prop}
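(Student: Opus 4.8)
The plan is to exhibit a finite classifying space for $D(\mathcal{P},w)$, which is exactly what being \emph{of type $F$} demands. As recalled just before the statement, the cube-completion $M_\square(\mathcal{P},w)$ of the median graph $M(\mathcal{P},w)$ is contractible and covers the Squier cube complex $S^+(\mathcal{P},w)$, with deck group $D(\mathcal{P},w)$ acting freely; hence $S^+(\mathcal{P},w)$ is a classifying space for $D(\mathcal{P},w)$. It therefore suffices to check that, under the hypothesis that $[w]_\mathcal{P}$ is finite, this classifying space has only finitely many cells.

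First I would recall that the vertex set of $S^+(\mathcal{P},w)$ is exactly $[w]_\mathcal{P}$, which is finite by assumption. Next I would observe that only finitely many relations of $\mathcal{R}$ are ``visible'' in $S^+(\mathcal{P},w)$: if a cell of $S^+(\mathcal{P},w)$ uses a relation $u=v$, then both $u$ and $v$ occur as subwords of words in $[w]_\mathcal{P}$, and a finite set of finite words has only finitely many subwords. Finally, an $n$-cube of $S^+(\mathcal{P},w)$ is the datum of a vertex together with $n$ pairwise disjoint active subwords and a choice of applicable relation for each of them; since words in $[w]_\mathcal{P}$ have bounded length, $n$ is bounded, and since there are finitely many vertices, finitely many subwords, and finitely many relevant relations, there are finitely many cubes in total. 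Thus $S^+(\mathcal{P},w)$ is a finite cube complex, so $D(\mathcal{P},w)$ is of type $F$.

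Alternatively, and more quickly, one may simply invoke Theorem~\ref{thm:FarleyCC}: it asserts that $D(\mathcal{P},w)$ acts freely on the median graph $M(\mathcal{P},w)$, that this action is cocompact precisely because $[w]_\mathcal{P}$ is finite, and, combined with the contractibility of $M_\square(\mathcal{P},w)$, that $M_\square(\mathcal{P},w)$ is a contractible CW-complex on which $D(\mathcal{P},w)$ acts freely and cocompactly. A group admitting such an action has a compact classifying space, namely the quotient, and is therefore of type $F$.

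There is no genuine obstacle here: the statement is an essentially formal consequence of Theorem~\ref{thm:FarleyCC}. The only point deserving a moment of care is that $\mathcal{R}$ is allowed to be infinite, so that finiteness of the number of cells of $S^+(\mathcal{P},w)$ does not follow from finiteness of $[w]_\mathcal{P}$ by a completely automatic count; this is dealt with by the ``visible relations'' remark above, or, equivalently, is already encoded in the cocompactness clause of Theorem~\ref{thm:FarleyCC}.
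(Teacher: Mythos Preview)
Your proof is correct and follows the same approach as the paper. The paper's proof is a single sentence: ``The Squier cube complex $S(\mathcal{P},w)$ defines a classifying space of $D(\mathcal{P},w)$ with only finitely many cells.'' You have simply unpacked this one-liner, and in doing so you have made explicit a point the paper glosses over, namely that $\mathcal{R}$ could a priori be infinite, so finiteness of the cube complex requires the ``visible relations'' observation rather than a naive count; your alternative route via Theorem~\ref{thm:FarleyCC} is equally valid and is in fact what the paper is implicitly leaning on.
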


\begin{proof}
The Squier cube complex $S(\mathcal{P},w)$ defines a classifying space of $D(\mathcal{P},w)$ with only finitely many cells.
\end{proof}

\noindent
For instance, the proposition applies to all the diagram groups associated to the semigroup presentations
$$\langle x_1, \ldots, x_n \mid x_ix_j = x_jx_i \ (1 \leq i < j \leq n) \rangle, \ n\geq 1,$$
including to the pure planar braid groups given by Proposition~\ref{prop:PlanarBraid}. It is worth noticing that not every diagram group satisfies good finiteness properties. For instance, the groups $\mathbb{Z} \wr \mathbb{Z}$ and $\mathbb{Z} \bullet \mathbb{Z}$ given in Section~\ref{section:Examples} are finitely generated by not finitely presented. As a more elementary example, it is not difficult to see that the diagram group $D(\mathcal{P},x)$, where
$$\mathcal{P} = \langle x,a,b,c \mid x=xa, a=b,b=c,c=a \rangle,$$
is a free abelian group of infinite rank. Of course, this group is not finitely generated. 

\medskip \noindent
Even though the Squier cube complex contains infinitely many cells in each dimension, it is possible to deduce good finiteness properties of the corresponding diagram groups by extracting from the cube complex a compact core. A general result illustrating this assertion is the following:

\begin{thm}[\cite{MR1978047}]\label{thm:Farley}
Diagram groups given by finite presentations of finite semigroups are of type $F_\infty$. 
\end{thm}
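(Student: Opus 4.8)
The strategy is to apply discrete Morse theory (in the sense of combinatorial/Bestvina–Brady Morse theory on CW-complexes) to the Squier cube complex $S^+(\mathcal{P},w)$, which, by the discussion preceding the statement, is a classifying space for $D(\mathcal{P},w)$. Since the semigroup $\mathcal{S}$ defined by $\mathcal{P}$ is finite, the class $[w]_\mathcal{P}$ is finite, so by Proposition~\ref{prop:EasyF} the group $D(\mathcal{P},w)$ is actually of type $F$ already in this situation — but the point of the theorem as stated is the \emph{genuinely infinite} case, so in the plan below I will not assume finiteness of $[w]_\mathcal{P}$ and instead run the argument on the (locally finite but infinite) median graph $M(\mathcal{P},w)$ and its cube-completion $M_\square(\mathcal{P},w)$, which is contractible by the second proof of Theorem~\ref{thm:FarleyCC}. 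Wait — I should re-read: the theorem says \emph{finite presentations of finite semigroups}, and indeed if the semigroup is finite then $[w]_\mathcal{P}$ is finite and Proposition~\ref{prop:EasyF} already gives type $F$, hence a fortiori type $F_\infty$. So the shortest correct proof is one sentence: apply Proposition~\ref{prop:EasyF}. I will therefore present the plan as a reduction to Proposition~\ref{prop:EasyF}, while indicating the Morse-theoretic approach that handles the more robust situations (finite semigroup but, say, with coefficients, or the variants where one only assumes local finiteness) for context.

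First I would observe that the hypotheses — $\mathcal{P}=\langle\Sigma\mid\mathcal{R}\rangle$ finite and the semigroup $\mathcal{S}$ presented by $\mathcal{P}$ finite — imply that for the chosen baseword $w\in\Sigma^+$ the set $[w]_\mathcal{P}=\{m\in\Sigma^+\mid m=w \text{ mod }\mathcal{P}\}$ is finite: every element of $[w]_\mathcal{P}$ represents the same element of $\mathcal{S}$ as $w$, but more carefully, since $\Sigma$ is finite and $\mathcal{S}$ is finite, the positive words representing a fixed element of $\mathcal{S}$ could in principle still be infinite in number (e.g. $\langle x\mid x=x^2\rangle$ has an infinite semigroup, so that example is excluded); the finiteness of $\mathcal{S}$ together with $\Sigma$ finite does force $[w]_\mathcal{P}$ finite because there is a bound on the length of a shortest representative and... hmm, actually there is no such bound in general. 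The correct justification: if $\mathcal{S}$ is finite then in particular it is, say, $\{s_1,\dots,s_k\}$, and the complete-rewriting machinery is not available, but one can argue that a finite semigroup has a finite \emph{congruence}, so there are only finitely many length-$\le N$ words for each $N$ and... I would instead simply cite that for finite $\mathcal{S}$ the number of positive words equal to $w$ modulo $\mathcal{P}$ is finite because otherwise two such words would differ by a derivation passing through arbitrarily long intermediate words while the represented element stays fixed — and a pumping argument on the finite semigroup shows one could then find a word $m$ and a nonempty factor that can be inserted freely, contradicting that...

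The cleanest route, and the one I would actually take, is: reduce to Proposition~\ref{prop:EasyF} via the following. By the hypothesis $\mathcal{S}$ is finite, hence $\mathcal{P}$ (being finite) admits, after possibly completing it as in Lemma~\ref{lem:Complete}, a finite \emph{complete} presentation $\mathcal{P}'$ of the same finite semigroup with $D(\mathcal{P},w)$ a retract of $D(\mathcal{P}',w)$; for a complete presentation every word has a unique reduced form, and since $\mathcal{S}$ is finite there are only finitely many reduced words, each of bounded length, so $[w]_{\mathcal{P}'}$ is finite. Then Proposition~\ref{prop:EasyF} gives that $D(\mathcal{P}',w)$ is of type $F$, in particular of type $F_\infty$. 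Finally, type $F_\infty$ passes to retracts: if $H$ is a retract of $G$ and $G$ is of type $F_n$ then so is $H$ — because a retraction gives a splitting of the relevant chain complexes/homology and, more concretely, one can build a classifying space for $H$ with finite $n$-skeleton out of one for $G$ by a standard argument (the retraction induces a stably finite resolution over $\mathbb{Z}H$). Hence $D(\mathcal{P},w)$ is of type $F_\infty$. The one point requiring care — and the step I expect to be the main obstacle to write cleanly — is the claim that \emph{type $F_n$ is inherited by retracts}; this is true and standard (it follows, for instance, from the fact that a retract of a group of type $FP_n$ is of type $FP_n$, combined with finite presentability being inherited, or from Alonso's results on finiteness properties), but it deserves an explicit citation rather than a hand-wave, and in the survey it would be natural to point to the same circle of ideas used for Lemma~\ref{lem:Complete} and Corollary~\ref{cor:Abelianisation}. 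If one prefers to avoid invoking completeness at all, the alternative is to run Farley's discrete Morse function directly on $S^+(\mathcal{P},w)$ using a \emph{complete rewriting-independent} height given by distance to a fixed basepoint in $M_\square(\mathcal{P},w)$, show the descending links are contractible (they are joins of simplices coming from non-overlapping applicable relations, hence contractible whenever at least one relation applies), and conclude that the quotient has a cocompact Morse core; the hard part there is the cocompactness of the sublevel sets, which again comes down to finiteness of the semigroup.
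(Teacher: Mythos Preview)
Your proposal contains a fundamental error that derails the entire argument: a finite semigroup does \emph{not} imply that $[w]_\mathcal{P}$ is finite. The canonical counterexample is the very one the paper uses to illustrate the theorem, namely $\mathcal{P}=\langle x\mid x=x^2\rangle$. The semigroup this presents has exactly one element (since $x=x^2=x^3=\cdots$), yet $[x]_\mathcal{P}=\{x^n\mid n\geq 1\}$ is infinite. So Proposition~\ref{prop:EasyF} does not apply, Thompson's group $F$ is not of type $F$ (it has infinite cohomological dimension), and the one-sentence reduction you propose is simply false. You visibly sense the problem in your own write-up (``hmm, actually there is no such bound in general''), but the patch via Lemma~\ref{lem:Complete} does not work either: after completing to $\mathcal{P}'$, it is true that there are only finitely many \emph{reduced} words, but $[w]_{\mathcal{P}'}$ is the set of \emph{all} words equal to $w$ modulo $\mathcal{P}'$, reduced or not, and this is still infinite in the example above. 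You have conflated ``finitely many normal forms'' with ``finitely many words in the equivalence class''.

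The paper's proof is precisely the Morse-theoretic argument you mention only as an afterthought. One uses the height function $\mathfrak{h}$ on $M_\square(\mathcal{P},w)$ given by the length of the bottom label. Cocompactness of the sublevel sets $\mathfrak{h}^{-1}([0,k])$ follows from the finiteness of the \emph{alphabet} (there are only finitely many words of length $\leq k$ over a finite $\Sigma$), not from the finiteness of the semigroup as you suggest. The finiteness of the semigroup enters in the analysis of the descending links: it guarantees that long words admit many non-overlapping length-decreasing rewrites, which is what forces the descending links to become highly connected as the height grows. These descending links are not contractible in general (contrary to your parenthetical claim); they are matching-type complexes whose connectivity one must establish by an inductive argument, and this is the genuine content of Farley's proof.
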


\noindent
Let us illustrate the proof in the particular case of the diagram group $D(\mathcal{P},x)$ where $\mathcal{P}:= \langle x \mid x=x^2 \rangle$. So we want to show that $D(\mathcal{P},x)$ is of type $F_\infty$ thanks to its action on the cube-completion $M_\square(\mathcal{P},x)$ of $M(\mathcal{P},x)$. We cannot deduce what we want to prove directly from $M_\square(\mathcal{P},x)$, because there are infinitely many orbits of cells. The trick is to truncate the complex at some height.

\medskip \noindent
More formally, given a polyhedral complex $X$, a map $f : X \to \mathbb{R}$ is a \emph{Morse function} if
\begin{itemize}
	\item $f$ is affine on each polyhedron;
	\item $f$ is never constant on a cell of positive dimension;
	\item and $f(X^{(0)})$ is discrete in $\mathbb{R}$.
\end{itemize}
Typically, one should think of a Morse function as a height function. 

\medskip \noindent
In our cube complex $M_\square(\mathcal{P},x)$, we have a natural Morse function at our disposal: it suffices to extend the map $M(\mathcal{P},x) \to \mathbb{N}$ that sends a vertex represented by an $(x,x^n)$-diagram to $n$. This Morse function $\mathfrak{h} : M_\square(\mathcal{P},x) \to \mathbb{R}_+$ is $D(\mathcal{P},x)$-equivariant, so $D(\mathcal{P},x)$ preserves the sublevels $\mathfrak{h}^{-1}([0,k])$, $k \geq 0$. Moreover, because $\mathcal{P}$ is a finite presentation, all these actions are cocompact. Thus, we would like to replace the Squier cube complex with the quotient under $D(\mathcal{P},x)$ of a well-chosen sublevel. Unfortunately, there is not guarantee that such a sublevel is contractible, contrary to the Squier complex, so it is not clear in general that our new complex will be a classifying space. 

\medskip \noindent
This is where Morse theory enters the game, whose philosophy claims that understanding the topology of a polyhedron complex is equivalent to understanding the topology of the (sub)levels of a Morse function. 

\begin{prop}[\cite{MR1465330}]
Let $X$ be an $n$-connected polyhedron complex and $f: X \to \mathbb{R}$ a Morse function. Assume that there exists some $h_0 \in \mathbb{R}$ such that the descending link of every vertex of height $\geq h_0$ is $n$-connected. Then $f^{-1}( (- \infty,h_0])$ is $n$-connected. 
\end{prop}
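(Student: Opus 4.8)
The plan is to run the standard sublevel-set induction of combinatorial Morse theory, following \cite{MR1465330}. For $t\in\mathbb{R}$ write $X_{\leq t}:=f^{-1}((-\infty,t])$. Since $f$ is affine, hence bounded, on each (compact) cell, every cell of $X$ lies in some $X_{\leq t}$; and since $f(X^{(0)})$ is discrete, the values of $f$ at vertices of height $\geq h_0$ form a discrete set. Let $h_0=s_0<s_1<s_2<\cdots$ enumerate $h_0$ together with the vertex-heights exceeding $h_0$; then $X=\bigcup_k X_{\leq s_k}$ is an ascending union, so any map of a sphere or a disc into $X$ already lands in some $X_{\leq s_k}$. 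Hence it suffices to show that each inclusion $X_{\leq h_0}\hookrightarrow X_{\leq s_k}$ is $(n+1)$-connected: passing to the colimit, $X_{\leq h_0}\hookrightarrow X$ is then $(n+1)$-connected, and since $X$ is $n$-connected so is $X_{\leq h_0}$. As a composite of $(n+1)$-connected maps is again $(n+1)$-connected, it is enough to treat one step $X_{\leq s_k}\hookrightarrow X_{\leq s_{k+1}}$.

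First I would establish the local model for a single step. Fix $s:=s_k<s':=s_{k+1}$, so that the only vertices with height in $(s,s']$ sit exactly at height $s'$. Standard Morse theory without critical values shows that $X_{\leq t}$ is homotopy equivalent to $X_{\leq s}$ for all $t\in[s,s')$. Now fix a vertex $v$ with $f(v)=s'$ and a small star of $v$. Because $f$ is affine and non-constant on positive-dimensional cells, the straight-line homotopy toward $v$ keeps $\{f\leq s'\}$ inside $\{f\leq s'\}$, so $\{f\leq s'\}$ meets the star in a cone with apex $v$, while a level set $\{f=s'-\delta\}$ inside the star is homotopy equivalent to the \emph{descending link} $\mathrm{lk}_\downarrow(v)$ (the full subcomplex of $\mathrm{lk}(v)$ spanned by the directions in which $f$ strictly decreases --- there are no ``horizontal'' edges, again by non-constancy on cells). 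Assembling these local pictures yields the usual conclusion: up to homotopy, $X_{\leq s'}$ is obtained from $X_{\leq s}$ by attaching, for every vertex $v$ of height $s'$, the mapping cone of the natural map $\mathrm{lk}_\downarrow(v)\to X_{\leq s}$. If several --- even infinitely many --- vertices sit at height $s'$, this is done simultaneously along their disjoint descending links, which is harmless because homotopy groups are detected by compact subsets. Every such $v$ has height $s'>h_0$, so $\mathrm{lk}_\downarrow(v)$ is $n$-connected by hypothesis.

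It then remains to invoke the elementary fact that coning off an $n$-connected complex raises connectivity: if $L$ is $n$-connected, then for any map $L\to A$ the inclusion $A\hookrightarrow A\cup_L CL$ of $A$ into the mapping cone is $(n+1)$-connected. Indeed, $L$ has the homotopy type of a CW complex with a single $0$-cell and all other cells of dimension $\geq n+1$, so up to homotopy $A\cup_L CL$ is obtained from $A$ by an elementary expansion followed by attaching cells of dimension $\geq n+2$; such attachments leave $\pi_j$ unchanged for $j\leq n$ and induce surjections on $\pi_{n+1}$. Applying this to the step $X_{\leq s}\hookrightarrow X_{\leq s'}$ (attaching a whole collection of such cones at once still only adds cells of dimension $\geq n+2$) gives the desired $(n+1)$-connectivity of each step, and chaining the steps finishes the argument.

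The part that demands genuine care, and where all three Morse axioms are really used, is the local model of the second paragraph: one must verify cleanly that near a vertex $v$ of height $s'$ the sublevel set is honestly a cone on $\mathrm{lk}_\downarrow(v)$, and that the homotopy type of $X_{\leq t}$ does not change for $t$ strictly between consecutive vertex-heights. Affineness on cells produces the conical structure, non-constancy on positive-dimensional cells guarantees that $\mathrm{lk}_\downarrow(v)$ is exactly the descending part of $\mathrm{lk}(v)$ (so the relevant deformation retraction is well defined), and discreteness of $f(X^{(0)})$ confines all changes of topology to the countably many critical heights. Once this picture is in place, everything else is formal homotopy theory.
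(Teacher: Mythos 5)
Your argument is correct, and it is in essence the original Bestvina--Brady proof, but it is organised quite differently from the sketch given in the paper. You work globally with the filtration by sublevel sets: you isolate the Morse Lemma (crossing a critical height changes the sublevel set, up to homotopy, by coning off the descending links of the new vertices), deduce that each inclusion $X_{\leq s_k}\hookrightarrow X_{\leq s_{k+1}}$ is $(n+1)$-connected because coning off an $n$-connected subcomplex only attaches cells of dimension $\geq n+2$ after CW approximation, and then pass to the colimit using compactness. The paper instead argues on a given filling: a loop in the sublevel set bounds a disc in $X$ by the $n$-connectedness of $X$, and one pushes that disc below $h_0$ by repeatedly excising a small neighbourhood of a vertex of maximal height and refilling the resulting boundary sphere inside its ($n$-connected) descending link; the paper only carries this out for $n=1$. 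Both routes use the three Morse axioms at exactly the same place, namely to establish the local conical structure of the sublevel set near a vertex and the absence of horizontal edges. What your route buys is a uniform and fully rigorous treatment of all $n$ at once, together with the stronger intermediate statement that $X_{\leq h_0}\hookrightarrow X$ is $(n+1)$-connected; what the paper's route buys is a more elementary and visual argument avoiding mapping cones and CW models, at the cost of being harder to write down carefully in higher dimensions. One point worth making explicit in your third paragraph: the implication ``$L$ $n$-connected $\Rightarrow A\hookrightarrow A\cup_L CL$ is $(n+1)$-connected'' should not be justified by naive excision for homotopy groups (which fails); your CW-model argument, or equivalently $\pi_j(CL,L)\cong\pi_{j-1}(L)=0$ for $j\leq n+1$ combined with cellular attachment, is the correct justification, so keep that version.
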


\noindent
Here, the \emph{descending link} of a vertex refers to the subcomplex of the link given by the edges connecting our vertex to vertices of lower heights. The proposition can be easily understood for $n=1$. So let $\gamma$ be a loop in our sublevel $f^{-1}((-\infty,h_0])$ and let us show that $\gamma$ is homotopically trivial in $f^{-1}((- \infty,h_0])$. Because $X$ is simply connected by assumption, there must be a disc $\mathbb{D}^2 \to X$ bounded by $\gamma$. However, this disc $D$ may not lie in our sublevel. Nevertheless, pick a vertex $x$ of maximal height $\geq h_0$ on $D$. Because $x$ has maximal height and because Morse functions are not constant on cells of positive dimensions, we can draw on $D$ a small loop $\sigma$ around $x$ lying in the descending link $\mathrm{link}_\downarrow(x)$ of $x$. Because $\mathrm{link}_\downarrow(x)$ is simply connected by assumption, there is a small disc $D'$ in $\mathrm{link}_\downarrow(x)$ bounded by $\sigma$. Replacing with $D'$ the subdisc of $D$ delimited by $\sigma$ allows us to push down $D$. By iterating the process, we eventually get a disc bounded by $\gamma$ inside $f^{-1}((-\infty,h_0])$, proving that $\gamma$ is homotopically trivial in $f^{-1}((-\infty,h_0])$. Thus, $f^{|1}((-\infty,h_0])$ is indeed simply connected.

\begin{center}
\includegraphics[trim=12cm 7cm 22cm 4cm,clip,width=0.32\linewidth]{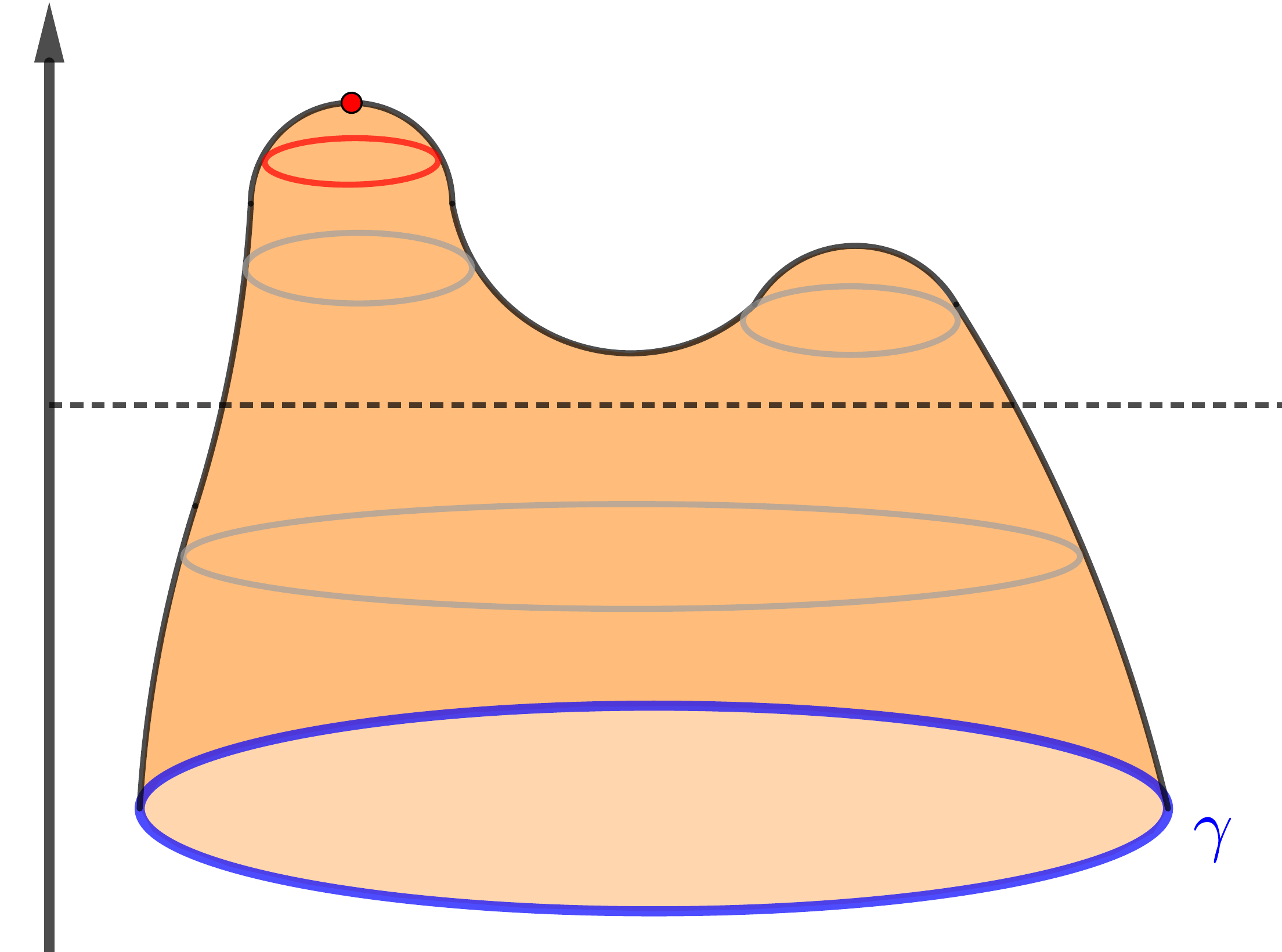}
\includegraphics[trim=12cm 7cm 22cm 4cm,clip,width=0.32\linewidth]{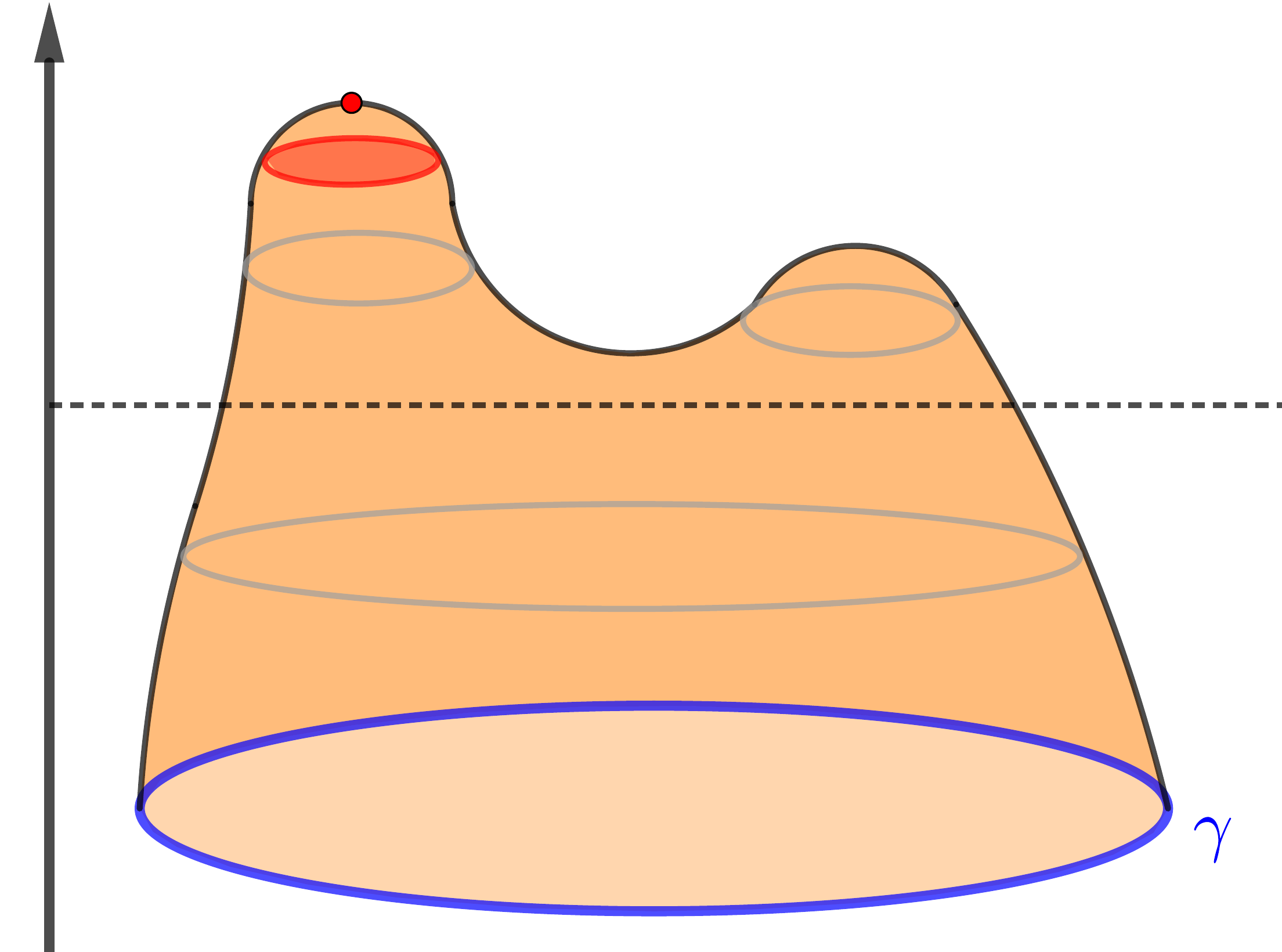}
\includegraphics[trim=12cm 7cm 22cm 4cm,clip,width=0.32\linewidth]{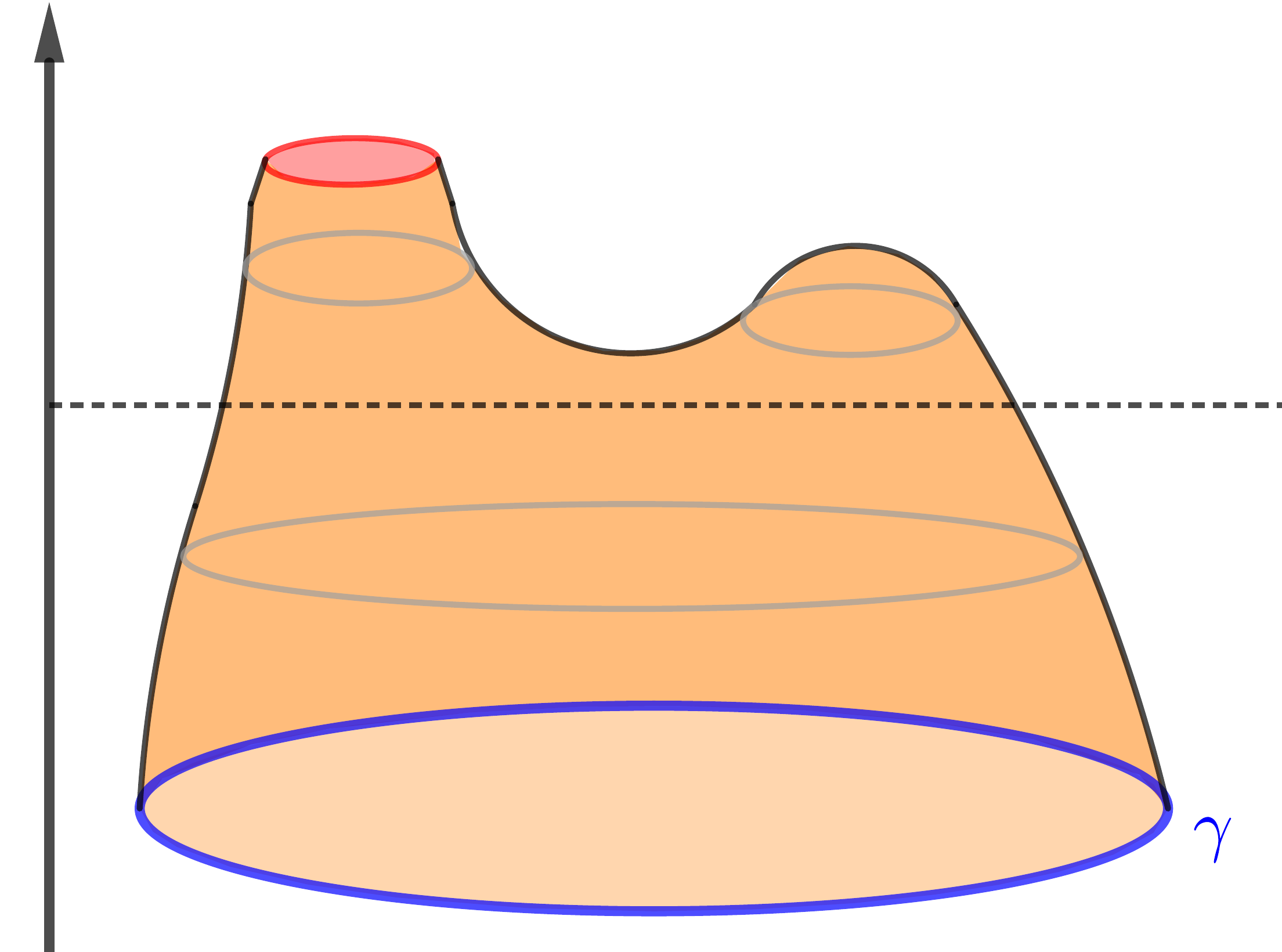}
\end{center}

\noindent
Thus, in order to show that our diagram group $D(\mathcal{P},x)$ is of type $F_n$ for every $n \geq 1$, it suffices to verify that descending links of vertices in $M_\square(\mathcal{P},x)$ gets more and more connected when the height increases. The is usually the delicate point: we need to describe the descending links of vertices and to prove that they are highly connected.

\medskip \noindent
Let $\Delta$ be an $(x,x^k)$-diagram representing a vertex $x$ of height $k$ in $M_\square(\mathcal{P},x)$. In order to connect $x$ to a vertex of lower height, we need to glue on $\mathrm{bot}(\Delta)$ a negative cell (i.e.\ a $2$-cell labelled by the relation $x^2=x$); and a collection of such gluings spans a simplex in the descending link of $x$ exactly when they are performed on pairwise disjoint subsegments of $\mathrm{bot}(\Delta)$. Thus, the descending link of a vertex of height $k$ in $M_\square(\mathcal{P},x)$ is isomorphic to the simplicial complex $\mathfrak{I}_k$
\begin{itemize}
	\item whose vertices are the subintervals $[i,i+1] \subset [1,k]$, $i \in \mathbb{N}$;
	\item and whose simplices are collections of pairwise disjoint subintervals.
\end{itemize}
One can naturally reconstruct $\mathfrak{I}_{k+3}$ from a cone over $\mathfrak{I}_{k+1}$ (whose apex corresponds to the vertex $[k+2,k+3]$ of $\mathfrak{I}_{k+3}$) by gluing a cone over $\mathfrak{I}_k \subset \mathfrak{I}_{k+1}$ (whose apex corresponds to the vertex $[k+1,k+2]$ of $\mathfrak{I}_{k+3}$). This observation allows us to prove by induction that $\mathfrak{I}_k$ is $n$-connected for every $k \geq 3(n+2)$. 

\medskip \noindent
Thus, given an $n \geq 0$, we know that $D(\mathcal{P},x)$ acts freely, properly, and cocompactly on the sublevel $\mathfrak{h}^{-1}([1,3(n+2)])$ in $M_\square(\mathcal{P},x)$, which is $n$-connected. We conclude that $D(\mathcal{P},x)$ is of type $F_n$, as desired. The argument easily generalises to the semigroup presentation $\langle x \mid x=x^r \rangle$, $r \geq 2$. We leave the details as an exercise for the interested reader.

\medskip \noindent
We conclude this section by mentioning the following consequence of Theorem~\ref{thm:Farley}:

\begin{cor}
Every diagram group over a finite semigroup presentation embeds into a diagram group of type $F_\infty$. 
\end{cor}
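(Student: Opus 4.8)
The plan is to enlarge $\mathcal{P}$ into a finite presentation of a \emph{finite} semigroup — so that Theorem~\ref{thm:Farley} applies verbatim — while arranging that the original diagram group sits inside the new one. Write $\mathcal{P} = \langle \Sigma \mid \mathcal{R} \rangle$ with $\Sigma$ and $\mathcal{R}$ finite, fix a baseword $w \in \Sigma^+$, and pick a letter $a_0 \in \Sigma$. The first step is to set
$$\mathcal{P}' := \left\langle \Sigma \mid \mathcal{R} \sqcup \{ a = a_0 \mid a \in \Sigma \setminus \{ a_0 \} \} \sqcup \{ a_0 = a_0^2 \} \right\rangle,$$
after the obvious clean-up (remove any added relation whose reverse already lies in $\mathcal{R}$) to respect the standing convention on relation sets. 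This is again a finite presentation, and its relations force every generator to coincide with $a_0$ and $a_0$ to be idempotent; hence every word of $\Sigma^+$ represents $a_0$ and the semigroup presented by $\mathcal{P}'$ has a single element. By Theorem~\ref{thm:Farley}, the diagram group $D(\mathcal{P}', w)$ is then of type $F_\infty$.

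The second step is to produce an embedding $D(\mathcal{P},w) \hookrightarrow D(\mathcal{P}',w)$. Since $\mathcal{R} \subseteq \mathcal{R}'$, every diagram over $\mathcal{P}$ is a diagram over $\mathcal{P}'$, and a reduced diagram over $\mathcal{P}$ stays reduced over $\mathcal{P}'$: a dipole of $\mathcal{P}'$ inside such a diagram involves two cells both carrying relations from $\mathcal{R}$, hence is already a dipole of $\mathcal{P}$. Combined with the uniqueness of reduced diagrams \cite[Theorem~3.17]{MR1396957}, this shows that the reduction of a $(w,w)$-diagram over $\mathcal{P}$ agrees with its reduction over $\mathcal{P}'$, so that the map sending a reduced $(w,w)$-diagram over $\mathcal{P}$ to itself, viewed over $\mathcal{P}'$, is a well-defined injective homomorphism $D(\mathcal{P},w) \to D(\mathcal{P}',w)$ — precisely the observation already made in the proof of Lemma~\ref{lem:Complete}. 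Composing the two steps yields an embedding of $D(\mathcal{P},w)$ into the type-$F_\infty$ diagram group $D(\mathcal{P}',w)$, which is the statement of the corollary.

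I do not expect a genuine obstacle here; the only point requiring care is checking that the inclusion of diagrams descends to a homomorphism and is injective, i.e.\ that reducing a diagram over $\mathcal{P}$ or over $\mathcal{P}'$ gives the same result, which follows from the stability of reducedness noted above. It is worth remarking that collapsing $\mathcal{P}$ all the way down to the trivial semigroup is merely the most economical choice: one could instead adjoin any finite set of relations turning the semigroup finite, the rest of the argument being unchanged.
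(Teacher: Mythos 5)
Your proposal is correct and follows essentially the same route as the paper: the paper likewise enlarges the presentation to a finite presentation of a finite semigroup (it adjoins a fresh letter $x$ with $x=x^2$ and identifies all letters of $\Sigma$, whereas you identify everything with a chosen $a_0\in\Sigma$ and add $a_0=a_0^2$), and then uses exactly the observation from Lemma~\ref{lem:Complete} that a reduced diagram over $\mathcal{P}$ stays reduced over the enlarged presentation, giving the injective morphism. Your justification of why reducedness is preserved, and hence why the inclusion is injective, is the only point needing care and you handle it correctly.
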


\begin{proof}
Let $\mathcal{P}= \langle \Sigma \mid \mathcal{R} \rangle$ be a finite semigroup presentation and $w \in \Sigma^+$ a baseword. Set
$$\mathcal{Q}:= \left\langle \Sigma \sqcup \{ x\} \mid \mathcal{R} \sqcup \{ x=x^2, a=b \ (a,b \in \Sigma) \} \right\rangle.$$
Every (reduced) diagram over $\mathcal{P}$ also defines a (reduced) diagram over $\mathcal{Q}$, so there is an obvious injective morphism $D(\mathcal{P},w) \hookrightarrow D(\mathcal{Q},w)$. Clearly, the semigroup defined by $\mathcal{Q}$ is finite, so Theorem~\ref{thm:Farley} implies that $D(\mathcal{Q},w)$ is of type $F_\infty$. 
\end{proof}

\subsection{Hilbert space compression}\label{section:Hilbert}

\noindent
Roughly speaking, the \emph{Hilbert space compression} of a metric space is a number between zero and one which quantifies the compatibility between the geometry of the space with the geometric of Hilbert spaces. The smallest the compression is, the highest the distortion of an embedding into a Hilbert space must be. 

\begin{definition}
Let $f : X \to Y$ be a Lipschitz map between two metric spaces. The \emph{compression} of $f$ is the least $\alpha>0$ for which there exists a constant $C>0$ such that
$$C \cdot d(x,y)^\alpha \leq d(f(x),f(y)) \text{ for all } x,y \in X.$$
The \emph{Hilbert space compression} of $X$ is the least compression of a Lipschitz map from $X$ to a Hilbert space. By extension, the Hilbert space compression of a finitely generated group is the \emph{Hilbert space compression} of its Cayley graphs (constructed from finite generating sets).
\end{definition}

\noindent
Interestingly, median graphs always admit good Lipschitz embeddings into Hilbert spaces with compression bounded below. 

\begin{thm}[\cite{Haagerup, MR2132393}]\label{thm:MedianComp}
The Hilbert space compression of a median graph is always $\geq 1/2$. Moreover, it is equal to $1$ when the cubical dimension is finite. 
\end{thm}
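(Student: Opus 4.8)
\emph{Lower bound.} I would start from the hyperplane combinatorics. Fix a median graph $X$ with hyperplane set $\mathcal H$ and a base vertex $x_0$, and for a vertex $x$ let $W(x)\subseteq\mathcal H$ be the set of hyperplanes separating $x_0$ from $x$. By the separation theorem of Section~\ref{section:Crash}, the number of hyperplanes separating two vertices equals their distance, and one checks that a hyperplane separates $x$ and $y$ exactly when it lies in $W(x)\triangle W(y)$; hence $d(x,y)=\#\!\big(W(x)\triangle W(y)\big)$. Consequently $\iota\colon X\to\ell^1(\mathcal H)$, $x\mapsto\mathbf 1_{W(x)}$, is an isometric embedding of $(X,d)$ into $\ell^1(\mathcal H)$ (each $W(x)$ is finite, of size $d(x_0,x)$). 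Now I would invoke the classical fact that $\|u-v\|_1$ is a conditionally negative definite kernel on an $L^1$-space (Schoenberg): there are a Hilbert space $\mathcal K$ and a map $\psi\colon\ell^1(\mathcal H)\to\mathcal K$ with $\|\psi(u)-\psi(v)\|^2=\|u-v\|_1$. The composite $F=\psi\circ\iota$ then satisfies $\|F(x)-F(y)\|=d(x,y)^{1/2}$ for all $x,y$; since $X$ is a graph, $d\ge 1$ for distinct vertices, so $F$ is $1$-Lipschitz, and this equality exhibits compression $1/2$. Hence the Hilbert space compression of $X$ is $\ge 1/2$.

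\emph{Finite dimension.} Assume now the cubical dimension is $\le N<\infty$. Since a Lipschitz map into a Hilbert space has compression $\le 1$ on an unbounded graph, it is enough to produce, for each $\alpha<1$, a Lipschitz map with compression $\ge\alpha$. The plan is a dyadic multiscale construction resting on one key input: for every integer $r\ge 1$ there is a $1$-Lipschitz map $f_r\colon X\to\mathcal K_r$ with $\|f_r(x)-f_r(y)\|\ge c_N\,r$ whenever $d(x,y)\ge r$, the constant $c_N>0$ depending only on $N$. Granting this, put $\lambda_k=2^{-k(1-\alpha)}$ and $F_\alpha=\bigoplus_{k\ge 0}\lambda_k f_{2^k}$. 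Then $\sum_k\lambda_k^2<\infty$ makes $F_\alpha$ Lipschitz, while for $m=d(x,y)$ one gets, summing over the scales $k$ with $2^k\le m$,
\[
\|F_\alpha(x)-F_\alpha(y)\|^2\ \ge\ \sum_{2^k\le m}\lambda_k^2\,c_N^2\,4^k\ =\ c_N^2\!\!\sum_{0\le k\le\log_2 m}\!\!2^{2k\alpha}\ \gtrsim_{N,\alpha}\ m^{2\alpha},
\]
i.e.\ $F_\alpha$ has compression $\ge\alpha$. Letting $\alpha\to 1$ gives Hilbert space compression $1$.

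\emph{The hard part.} Everything is driven by the scale-$r$ separation maps $f_r$, and this is where finite-dimensionality is genuinely used and the difficulty concentrated. One cannot hope for a single bi-Lipschitz embedding of $X$ into a Hilbert space — already regular trees, which are one-dimensional, fail to bi-Lipschitz embed, by Bourgain's logarithmic distortion bound for trees — so the family $(f_r)_r$ must be honestly $r$-dependent, and the subtle point is that the separation constant $c_N$ must be \emph{uniform in $r$}: the linear-in-$r$ separation is exactly what produces the factor $4^k$ above, whereas $\sqrt r$-separation (which the wall map of the first part already gives) yields nothing near $\alpha=1$. The idea for $f_r$ is to coarsen the hyperplane structure at scale $r$: one groups the hyperplanes crossed along geodesics into blocks of size $\asymp r$ and records a single signed coordinate per block — the model case being the isometric embedding $\mathbb Z^N\hookrightarrow\mathbb R^N\subset\mathcal K$ obtained by collapsing each family of parallel hyperplanes to one coordinate — with the bound $N$ on the dimension controlling the number of directions along which the blocks must be aligned and making the signs consistent. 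Verifying the uniform estimate $\|f_r(x)-f_r(y)\|\ge c_N r$ for $d(x,y)\ge r$ is the technical heart of the argument, and is carried out in \cite{Haagerup, MR2132393}.
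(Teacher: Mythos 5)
Your lower bound is correct and is essentially the argument the paper gives: the paper sends $x$ directly to $\sum_{J\in W(x)}\delta_J\in\ell^2(\mathcal H)$ and computes $\|\Phi(x)-\Phi(y)\|_2^2=\#\mathcal W(x|y)=d(x,y)$, which is exactly your composite $F=\psi\circ\iota$ (indeed $\|\mathbf 1_A-\mathbf 1_B\|_2^2=\#(A\triangle B)=\|\mathbf 1_A-\mathbf 1_B\|_1$, so your detour through $\ell^1$ and Schoenberg produces the same map up to an isometry of the target). For the finite-dimensional assertion the paper offers no proof at all — it simply cites \cite{Haagerup, MR2132393} — whereas you give a correct and complete reduction: the dyadic superposition $F_\alpha=\bigoplus_k 2^{-k(1-\alpha)}f_{2^k}$ is Lipschitz because $\sum_k 4^{-k(1-\alpha)}<\infty$, and your summation over scales $2^k\le d(x,y)$ does yield $\gtrsim d(x,y)^{2\alpha}$, so everything hinges, as you say, on the scale-$r$ maps $f_r$ with separation \emph{linear} in $r$. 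That is indeed where the cited works do the real work (roughly: weight each separating hyperplane by the length, capped at $r$, of a chain of separating hyperplanes below it; Dilworth's theorem plus the dimension bound $N$ guarantees chains of length $\ge d(x,y)/N$, which is what makes the separation linear rather than the square-root separation the plain wall map gives). Your observation that square-root separation at each scale would be useless near $\alpha=1$, and that Bourgain's obstruction rules out a single bi-Lipschitz embedding, correctly identifies why the multiscale structure and the uniformity of $c_N$ are unavoidable. The only incomplete point is the construction of the $f_r$ themselves, which you explicitly and appropriately defer to the references, exactly as the paper does for the whole second assertion.
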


\noindent
Let us justify the first assertion. Given a median graph $X$, let $\mathcal{H}$ denote the Hilbert space given by the $\ell^2$-summable maps $\{ \text{hyperplanes of } X\} \to \mathbb{R}$ endowed with the $\ell^2$-norm. For every hyperplane $J$ in $X$, we denote by $\delta_J$ the map that sends $J$ to $1$ and all the other hyperplanes to $0$. Given a basepoint $o \in X$, we claim that the map $\Phi : X \to \mathcal{H}$ defined by
$$x \mapsto \sum\limits_{J \text{ separates $o$ and $x$}} \delta_J$$
induces a Lipschitz embedding with compression $\geq 1/2$. Indeed, for all vertices $x,y \in X$, if we denote by $m$ the median point of $\{x,y,o\}$ and by $\mathcal{W}(\cdot | \cdot)$ the set of the hyperplanes separating two given vertices, we have
$$\begin{array}{lcl} \Phi(x)- \Phi(y) & = & \displaystyle \sum\limits_{J \in \mathcal{W}(o|m)} \delta_J + \sum\limits_{J \in \mathcal{W}(m|x)} \delta_J - \sum\limits_{J \in \mathcal{W}(o|m)} \delta_J - \sum\limits_{J \in \mathcal{W}(m|y)} \delta_J \\ \\ & = & \displaystyle \sum\limits_{J \in \mathcal{W}(m|x)} \delta_J - \sum\limits_{J \in \mathcal{W}(m|y)} \delta_J \end{array}$$
hence
$$ \| \Phi(x)- \Phi(y)\|_2^2 = \# \mathcal{W}(m|x) + \# \mathcal{W}(m|y) = \# \mathcal{W}(x|y) = d(x,y).$$
This concludes the proof of our claim. 

\medskip \noindent
As an immediate consequence of Theorem~\ref{thm:MedianComp}, it follows that the Hilbert space compression of a finitely generated diagram group $D(\mathcal{P},w)$ is $\geq 1/2$ if its orbits in the median graph $M(\mathcal{P},w)$ are quasi-isometrically embedded; and is even equal to $1$ if in addition the cubical dimension of $M(\mathcal{P},w)$ is finite. For instance:

\begin{prop}\label{prop:EasyB}
Let $\mathcal{P}= \langle \Sigma \mid \mathcal{R} \rangle$ be a semigroup presentation and $w \in \Sigma^+$ a baseword. If only finitely many words in $\Sigma^+$ are equal to $w$ modulo $\mathcal{P}$, then the Hilbert space compression of $D(\mathcal{P},w)$ is equal to $1$.
\end{prop}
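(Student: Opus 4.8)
The plan is to deduce everything from Theorems~\ref{thm:FarleyCC} and~\ref{thm:MedianComp}, together with the Milnor--\v{S}varc lemma. The hypothesis that only finitely many positive words are equal to $w$ modulo $\mathcal{P}$ says precisely that $[w]_\mathcal{P}$ is finite, so Theorem~\ref{thm:FarleyCC} applies: $M(\mathcal{P},w)$ is a median graph, locally of finite cubical dimension, and $D(\mathcal{P},w)$ acts on it freely and \emph{cocompactly}.

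First I would extract two consequences of cocompactness. On the one hand, a cocompact action on a graph that is locally of finite cubical dimension forces the global cubical dimension to be finite, since a compact fundamental domain meets only finitely many cubes and every cube is a translate of one of those. On the other hand, the action of $D(\mathcal{P},w)$ on the connected graph $M(\mathcal{P},w)$ is free and cocompact, hence proper, so the Milnor--\v{S}varc lemma gives that $D(\mathcal{P},w)$ is finitely generated and that any orbit map $D(\mathcal{P},w) \to M(\mathcal{P},w)$ is a quasi-isometry --- in particular the orbits are quasi-isometrically embedded, which is exactly the hypothesis invoked in the remark following Theorem~\ref{thm:MedianComp} in Section~\ref{section:Hilbert}.

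Then I would simply combine the two: by Theorem~\ref{thm:MedianComp} the Hilbert space compression of the finite-dimensional median graph $M(\mathcal{P},w)$ equals $1$, and since Hilbert space compression is preserved under quasi-isometry (a Lipschitz map of compression $\alpha$ precomposed with a quasi-isometry remains Lipschitz and keeps a lower bound $C\, d(x,y)^\alpha \le d(f(x),f(y))$ after adjusting the constant), the compression of $D(\mathcal{P},w)$ is $\geq 1$. Conversely it is $\leq 1$ for any unbounded metric space, since $C\, d(x,y)^\alpha \le d(f(x),f(y)) \le L\, d(x,y)$ would force $d(x,y)^{\alpha-1}$ to be bounded. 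Hence it equals $1$. (If $D(\mathcal{P},w)$ is trivial or finite the statement is vacuous, and one adopts the usual convention.)

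I do not anticipate a genuine obstacle: the proof is essentially a bookkeeping exercise assembling results already in hand. The only points deserving a line of care are the quasi-isometry invariance of Hilbert space compression and the fact that the orbit of $D(\mathcal{P},w)$ in $M(\mathcal{P},w)$ is quasi-isometrically embedded --- but the latter is immediate from cocompactness via Milnor--\v{S}varc, so no argument through the combinatorics of diagrams is needed here.
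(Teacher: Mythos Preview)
Your proof is correct and follows essentially the same approach as the paper's, which is a two-line argument: the action is proper and cocompact, hence the compression of $D(\mathcal{P},w)$ coincides with that of $M(\mathcal{P},w)$, which equals $1$ by Theorem~\ref{thm:MedianComp}. You have simply made explicit the steps the paper leaves implicit --- the finiteness of the global cubical dimension and the quasi-isometry invariance of Hilbert space compression via Milnor--\v{S}varc.
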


\begin{proof}
Our assumptions imply that $D(\mathcal{P},w)$ acts on $M(\mathcal{P},w)$ properly and cocompactly. Therefore, the Hilbert space compression of $D(\mathcal{P},w)$ coincides with the Hilbert space compression of $M(\mathcal{P},w)$, which is equal to $1$ according to Theorem~\ref{thm:MedianComp}. 
\end{proof}

\noindent
As a consequence of the description of the graph metric of $M(\mathcal{P},w)$ given in Section~\ref{section:MedianDiag}, it is clear that the orbits of a finitely generated diagram group $D(\mathcal{P},w)$ are quasi-isometrically embedded if and only if the following property is satisfied:

\begin{definition}
Let $\mathcal{P}= \langle \Sigma \mid \mathcal{R} \rangle$ be a semigroup presentation and $w \in \Sigma^+$ a baseword. The diagram group $D(\mathcal{P},w)$ satisfies the \emph{Burillo property} if it is finitely generated and if every word-metric given by a finite generating set is biLipischtz equivalent to the \emph{diagram metric} $\#(\cdot)$, i.e.\ the map that sends each element of $D(\mathcal{P},w)$ to the number of $2$-cell in a reduced diagram representing it. 
\end{definition}

\noindent
Thus, we deduce the following statement:

\begin{prop}[\cite{MR2271228}]\label{prop:BHilbert}
A diagram group satisfying the Burillo property has Hilbert space compression $\geq 1/2$. 
\end{prop}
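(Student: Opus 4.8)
The plan is to combine the Burillo property directly with Theorem~\ref{thm:MedianComp} via Theorem~\ref{thm:FarleyCC}. Let $D(\mathcal{P},w)$ be a diagram group satisfying the Burillo property. By Theorem~\ref{thm:FarleyCC}, the group acts freely on the median graph $M(\mathcal{P},w)$, and by Lemma~\ref{lem:Geodesics} the graph distance between two vertices $\Phi,\Psi$ of $M(\mathcal{P},w)$ is exactly $\#(\Phi^{-1}\Psi)$. In particular, fixing the trivial diagram $\varepsilon$ as a basepoint, the orbit map $g \mapsto g\cdot \varepsilon$ from $D(\mathcal{P},w)$ to $M(\mathcal{P},w)$ satisfies $d_{M(\mathcal{P},w)}(g\cdot\varepsilon, h\cdot\varepsilon) = \#(g^{-1}h)$, i.e.\ the pulled-back metric on the group is precisely the diagram metric $\#(\cdot)$.

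First I would record that the Burillo property says exactly that any word metric $d_S$ coming from a finite generating set $S$ is biLipschitz equivalent to $\#(\cdot)$; that is, there is a constant $L\geq 1$ with $\tfrac1L\, d_S(g,h) \leq \#(g^{-1}h) \leq L\, d_S(g,h)$ for all $g,h$. Consequently the orbit map $(D(\mathcal{P},w), d_S) \to M(\mathcal{P},w)$ is a quasi-isometric embedding — indeed a biLipschitz embedding up to the additive freedom one always has. Since Hilbert space compression is a quasi-isometry invariant for finitely generated groups (and, more generally, is non-decreasing under quasi-isometric embeddings of metric spaces into one another, passing to a subspace of the Hilbert target), the compression of $D(\mathcal{P},w)$ is at least the compression of (the orbit inside) $M(\mathcal{P},w)$.

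Next I would invoke Theorem~\ref{thm:MedianComp}: the Hilbert space compression of the median graph $M(\mathcal{P},w)$ is at least $1/2$. Concretely, one may even avoid citing quasi-isometry invariance abstractly: take the explicit embedding $\Phi : M(\mathcal{P},w) \to \mathcal{H}$ from the proof of Theorem~\ref{thm:MedianComp}, given by $x \mapsto \sum_{J \text{ separates } o,x} \delta_J$, which satisfies $\|\Phi(x)-\Phi(y)\|_2^2 = d_{M(\mathcal{P},w)}(x,y)$, hence has compression $1/2$ with constant $C=1$. Precomposing with the orbit map $D(\mathcal{P},w) \to M(\mathcal{P},w)$ and using the Burillo inequality $\#(g^{-1}h) \geq \tfrac1L d_S(g,h)$ yields, for the resulting map $\Psi : D(\mathcal{P},w) \to \mathcal{H}$,
\[
\|\Psi(g)-\Psi(h)\|_2 = \#(g^{-1}h)^{1/2} \geq \Big(\tfrac1L\Big)^{1/2} d_S(g,h)^{1/2},
\]
while Lipschitzness of $\Psi$ follows from the upper Burillo bound $\#(g^{-1}h) \leq L\, d_S(g,h)$. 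Therefore $\Psi$ has compression $\geq 1/2$, so the Hilbert space compression of $D(\mathcal{P},w)$ is $\geq 1/2$, as claimed.

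I do not expect a genuine obstacle here: the statement is essentially a packaging of Theorem~\ref{thm:MedianComp}, Theorem~\ref{thm:FarleyCC}, and Lemma~\ref{lem:Geodesics}, with the Burillo property supplying exactly the comparison between the word metric and the diagram metric that makes the orbit map a biLipschitz embedding. The only point requiring a line of care is making explicit that compression does not decrease when one restricts a Lipschitz map with compression $\alpha$ to a subspace whose induced metric is biLipschitz to the ambient one — which is immediate from the displayed inequalities above — so the proof can be written in a few lines.
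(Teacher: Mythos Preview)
Your argument is correct and follows essentially the same route as the paper: the Burillo property says exactly that the orbit map into $M(\mathcal{P},w)$ is a biLipschitz embedding (via Lemma~\ref{lem:Geodesics}), and then Theorem~\ref{thm:MedianComp} gives compression $\geq 1/2$. The paper also notes that the original argument in \cite{MR2271228} is phrased in terms of the rooted $2$-tree $T(\mathcal{P},w)$ rather than the median graph, but that this is just a translation of the same embedding through the correspondence between $2$-cells of $T(\mathcal{P},w)$ and hyperplanes of $M(\mathcal{P},w)$.
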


\noindent
Examples of diagram groups satisfying the Burillo property include the pure planar braid groups and various right-angled Artin groups (according to Proposition~\ref{prop:EasyB}), Thompson's group $F$ \cite{MR1670622}, the wreath product $\mathbb{Z} \wr \mathbb{Z}$ \cite{MR2271228}, and $\mathbb{Z} \bullet \mathbb{Z}$ \cite{MR3868219}. (All these examples are thought of as diagram groups by taking the semigroup presentations given in Section~\ref{section:Examples}.) So far, no example of a finitely generated diagram group that does not satisfy the Burillo property appears in the literature.

\medskip \noindent
Proposition~\ref{prop:BHilbert} only gives lower bounds on Hilbert space compression. Computing the exact value of the Hilbert space compression may be subtle and goes beyond the scope of this survey. Nevertheless, let us mention that Thompson's group $F$ has Hilbert space compression $1/2$ \cite{MR2271228} and that the lamplighter group $\mathbb{Z} \wr \mathbb{Z}$ has Hilbert space compression $2/3$ \cite{MR2439428}. 

\medskip \noindent
The proof of Proposition~\ref{prop:BHilbert} given in \cite{MR2271228} does not use median geometry, but it turns out to be the exact translation of the argument presented above through the correspondance between median graphs and rooted $2$-trees described in Section~\ref{section:MedianDiag}. Let us describe the argument as presented in \cite{MR2271228}.

\medskip \noindent
Let $\mathcal{P}= \langle \Sigma \mid \mathcal{R} \rangle$ be a semigroup presentation and $w \in \Sigma^+$ a baseword. We glue together all the reduced diagrams over $\mathcal{P}$ with top label $w$ along their top paths $\tau$; and, for any two diagrams $\Delta_1,\Delta_2$, if $\Delta$ is a common prefix of $\Delta_1,\Delta_2$, then we identify the two copies of $\Delta$ in $\Delta_1$ and $\Delta_2$. Let $T(\mathcal{P},w)$ denote the resulting object. Let $\mathcal{H}$ denote the Hilbert space given by the formal sums of $2$-cells in $T(\mathcal{P},w)$ endowed with the $\ell^2$-norm. For every $g \in D(\mathcal{P},w)$, there is a unique copy of a reduced diagram representing $g$ having $\tau$ as its top path. Let $\Xi(g)$ denote the formal sum of the $2$-cells contained in this copy. Then
$$\| \Xi(g) - \Xi(h) \|_2^2 = \#(g^{-1}h) \text{ for all } g,h \in D(\mathcal{P},w).$$
Thus, if $D(\mathcal{P},w)$ satisfies the Burillo property, then $g \mapsto \Xi(g)$ defines a Lipschitz map $D(\mathcal{P},w) \to \mathcal{H}$ with compression $1/2$. 

\medskip \noindent
An explicit bijection between the $2$-cells in $T(\mathcal{P},w)$ and the hyperplanes in $M(\mathcal{P},w)$ can be described as follows. A diagram $\Delta$ over $\mathcal{P}$ is \emph{minimal} if it contains a single $2$-cell having its bottom path in the bottom path of $\Delta$. If $\Delta^-$ denotes the diagram obtained from $\Delta$ by removing this $2$-cell, then $[\Delta,\Delta^-]$ defines an edge in $M(\mathcal{P})$, and we denote by $J(\Delta)$ the hyperplane containing this edge. It turns out that the map $\Delta \mapsto J(\Delta)$ induces a bijection from the reduced minimal diagrams over $\mathcal{P}$ to the hyperplanes in $M(\mathcal{P})$ \cite[Proposition~2]{HypDiag}. For every $2$-cell $\pi$ in $T(\mathcal{P},w)$, let $\Delta(\pi)$ denote the smallest diagram in $T(\mathcal{P},w)$ containing $\pi$ and having $\tau$ as its top path. The minimality assures that $\Delta(\pi)$ is minimal, and the map $\pi \mapsto J(\Delta(\pi))$ yields the desired bijection from the $2$-cells of $T(\mathcal{P},w)$ to the hyperplanes of $M(\mathcal{P},w)$.

\begin{remark}
In this section, we have focused on Hilbert space compressions of groups, but one can define the \emph{equivariant Hilbert space compression} of a finitely generated group $G$ as the least compression of a Lipschitz map from $G$ (endowed with the word-length given by a finite generating set) to a Hilbert space on which $G$ acts by affine isometries. Everything that has been said about Hilbert space compressions in this section also holds for equivariant compressions. 
\end{remark}

\subsection{Hyperplanes in Squier's cube complex}\label{section:Hyperplanes}

\noindent
In this section, we focus on the combinatorics of hyperplanes in Squier cube complexes. In particular, this will allow us to prove that some diagram groups are linear over $\mathbb{Z}$, have finite asymptotic dimension, or satisfy some Tits alternative depending on their underlying semigroup presentations. We begin by recalling some terminology and results from \cite{MR2377497}. 

\begin{definition}
Let $X$ be a nonpositively curved cube complex. 
\begin{itemize}
	\item A hyperplane is \emph{one-sided} if its two orientations coincide. Otherwise, it is \emph{two-sided}.
	\item A hyperplane $J$ \emph{self-intersects} if there exist a vertex $o \in X$ and two neighbours $a,b \in X$ such that the edges $[o,a], [o,b]$ span a square and are both transverse to $J$.
	\item A hyperplane $J$ \emph{self-osculates} if there exist a vertex $o \in X$ and two neighbours $a,b \in X$ such that the edges $[o,a], [o,b]$ do not span a square and are both transverse to $J$.
	\item Two hyperplanes \emph{inter-osculates} if they are both transverse and tangent. 
\end{itemize}
The cube complex $X$ is \emph{conspicial}\footnote{This property is called \emph{$A$-special} in \cite{MR2377497}, shortened as \emph{special} in the recent literature. In order to minimise conflicts with the every-day language, we propose to use the Latin word \emph{conspicial} which is close to \emph{special} both semantically and phonetically} if all its hyperplanes are two-sided and if it does not contain self-intersections, self-osculations, nor inter-osculations.
\end{definition}

\noindent
Let us motivate this definition. Given a nonpositively curved cube complex $X$, let $\Delta X$ denote its \emph{crossing graph}, i.e.\ the graph whose vertices are the hyperplanes of $X$ and whose edges connect two hyperplanes whenever they are transverse. Fix a basepoint $o \in X$. Also, we assume that the hyperplanes of $X$ are two-sided and fix an orientation on the hyperplanes of $X$. A loop in $X^{(1)}$ based at $o$ can be thought of as a sequence of oriented edges, to which we can associate a formal word written over $\{ \text{hyperplanes of } X\}^{\pm 1}$ (an oriented edge $e$ being sent to the hyperplane it crosses or its inverse depending on whether the orientations of $e$ and the hyperplane agree). This correspondence induces a well-defined morphism
$$\mathfrak{A}_X : \pi_1(X,o) \to A(\Delta X)$$
to the right-angled Artin group $A(\Delta X)$. (See Section~\ref{section:Examples} for a definition a right-angled Artin groups.) In full generality, we do not know anything about the morphism $\mathfrak{A}_X$. It may be even trivial. However, it turns out to be injective as soon as $X$ is conspicial \cite[Theorem~4.2]{MR2377497}. Thus, fundamental groups of conspicial cube complexes inherit all the properties satisfied by right-angled Artin groups and stable under taking subgroups. This includes for instance:
\begin{itemize}
	\item being linear over $\mathbb{Z}$ (and, as a consequence, being residually finite);
	\item satisfying a strong Tits alternative, namely every subgroup either is free abelian or contains a non-abelian free subgroup.
\end{itemize}
If moreover the conspicial cube complex is compact, then its fundamental group is nicely embedded into the corresponding right-angled Artin groups: it defines a \emph{convex-cocompact} subgroup. This allows us to strengthen the residual finiteness previously mentioned and to deduce that convex-cocompact subgroups in our fundamental group are \emph{separable} \cite[Corollary~7.9]{MR2377497}. 

\medskip \noindent
A natural question to ask now is: when are Squier cube complexes conspicial? 

\begin{prop}[\cite{MR3868219}]\label{prop:Special}
Let $\mathcal{P}= \langle \Sigma \mid \mathcal{R} \rangle$ be a semigroup presentation and $w \in \Sigma^+$ a baseword. The Squier cube complex $S^+(\mathcal{P},w)$ is conspicial if and only if the following conditions are satisfied:
\begin{itemize}
	\item there are no words $a,b,p \in \Sigma^+$ such that $w=ab$, $a=ap$, and $b=pb$ modulo $\mathcal{P}$ with $[p]_\mathcal{P} \neq \{p\}$;
	\item there are no words $a,u,v,w,b,p,q, \xi \in \Sigma^+$ such that $w=auvwb$, $au=au(v\xi)$, and $wb=(\xi v)wb$ modulo $\mathcal{P}$ with $uv=p, vw=q$ in $\mathcal{R}$. 
\end{itemize}
\end{prop}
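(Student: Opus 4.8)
The plan is to translate the combinatorial conditions ``conspicial'' into statements about semigroup diagrams, using the concrete description of hyperplanes in $M_\square(\mathcal{P},w)$ and the fact (stated in Section~\ref{section:MedianDiag}) that $M_\square(\mathcal{P},w) \to S^+(\mathcal{P},w)$ is a universal cover. Since conspicial-ness is inherited by covers in one direction and quotients behave predictably, I would first observe that $S^+(\mathcal{P},w)$ is conspicial if and only if the $D(\mathcal{P},w)$-action on $M_\square(\mathcal{P},w)$ does not produce the forbidden configurations: a hyperplane of $S^+(\mathcal{P},w)$ is two-sided unless some group element inverts a hyperplane of $M_\square(\mathcal{P},w)$; it self-intersects/self-osculates/inter-osculates iff two hyperplanes of $M_\square(\mathcal{P},w)$ in the same $D(\mathcal{P},w)$-orbit (resp. two orbits) are transverse/tangent in the relevant way. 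So the whole statement reduces to understanding, in $M_\square(\mathcal{P},w)$: (a) when two hyperplanes are transverse, (b) when they are tangent, and (c) how the $D(\mathcal{P},w)$-action moves hyperplanes around.

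The key input is the bijection recalled at the end of Section~\ref{section:Hilbert}: reduced \emph{minimal} diagrams over $\mathcal{P}$ with top label $w$ (a diagram with a single bottom-accessible $2$-cell) correspond bijectively to hyperplanes of $M(\mathcal{P},w)$, and the $2$-cell of such a minimal diagram is labelled by some relation $u=v$ in $\mathcal{R}$ with a specified left context and right context. Next I would work out the local picture: two hyperplanes $J(\Delta_1)$, $J(\Delta_2)$ are transverse exactly when $\Delta_1$ and $\Delta_2$ share a common prefix from which the two distinguished cells can be glued on disjoint subsegments of the bottom path (so the two atomic moves commute), and they are tangent when the distinguished cells touch along the bottom path (overlap at an endpoint or share a letter). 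Translating ``first-sided'' and the orientation of a hyperplane into the language of positive vs.\ negative cells then shows one-sidedness of a hyperplane of $S^+(\mathcal{P},w)$ corresponds precisely to a factorisation $w = ab$ with $a = ap$, $b = pb$ modulo $\mathcal{P}$ and $[p]_\mathcal{P}\neq\{p\}$, i.e.\ the element of $D(\mathcal{P},w)$ realising $x\to xp\to x$ acts as an inversion; this is the first bulleted condition. Likewise, self-intersection/self-osculation/inter-osculation each unwind into the existence of words $a,u,v,w,b,p,q,\xi$ with $w = auvwb$, $au = au(v\xi)$, $wb = (\xi v)wb$ modulo $\mathcal{P}$ and $uv=p$, $vw=q$ in $\mathcal{R}$ — the group element $x\to x(v\xi)\to x$ moving one hyperplane onto another one it overlaps — which is the second bulleted condition. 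Finally I would check that self-osculations and inter-osculations collapse into the \emph{same} algebraic condition (hence only two bullets appear rather than three or four); this is a bookkeeping point about which relations are involved and on which side the overlap occurs.

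Concretely the argument runs: (1) recall the hyperplane/minimal-diagram dictionary and set up the $D(\mathcal{P},w)$-equivariant bijection between hyperplanes of $M_\square(\mathcal{P},w)$ and conjugates of the ``atomic'' hyperplanes, each tagged by (left context, relation, right context); (2) compute the transversality and tangency relations between two such hyperplanes purely in terms of whether the corresponding atomic moves are performed on disjoint, overlapping, or adjacent subsegments after passing to a common prefix; (3) compute when a group element carries one hyperplane to another sharing a vertex, getting the defining data $a,b,p$ (rank-one case) and $a,u,v,w,b,p,q,\xi$ (rank-two case); (4) assemble: no one-sided hyperplane $\Leftrightarrow$ first bullet fails to be violated; no self-intersection/self-osculation $\Leftrightarrow$ second bullet; no inter-osculation $\Leftrightarrow$ subsumed by the second bullet; (5) conclude via the definition of conspicial. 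The main obstacle I expect is step (3): carefully extracting the correct syntactic shape of the words $u,v,w,\xi$ and verifying that the condition is both necessary \emph{and} sufficient — in particular ruling out ``accidental'' coincidences (two hyperplanes that look like they should osculate but actually bound a cube, or vice versa) requires a delicate analysis of reduced diagrams and of when concatenations remain reduced, exactly the kind of diagram combinatorics underlying Lemma~\ref{lem:Geodesics} and the prefix arguments in the first proof of Theorem~\ref{thm:FarleyCC}. The subtlety that $\Psi$ in such decompositions need not be spherical (as emphasised after Theorem~\ref{thm:TorsionFree}) is what makes the second bullet have the asymmetric form $au = au(v\xi)$, $wb = (\xi v)wb$ rather than something cleaner, so matching that asymmetry correctly is where the care is needed.
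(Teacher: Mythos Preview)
Your identification of which bullet corresponds to which hyperplane pathology is wrong, and this is not a mere relabelling issue. Hyperplanes in $S^+(\mathcal{P},w)$ are \emph{always} two-sided: an edge $(a,u\to v,b)$ carries a canonical orientation determined by which of $u=v$ or $v=u$ lies in $\mathcal{R}$, and since opposite edges of any square carry the same relation in the same order, this orientation is globally consistent. Hence no element of $D(\mathcal{P},w)$ ever inverts a hyperplane of $M_\square(\mathcal{P},w)$, and the first bullet cannot be encoding one-sidedness as you claim. In fact the first bullet encodes the self-pathologies of a single hyperplane: under $w=ab$, $a=ap$, $b=pb$, writing $p=\alpha s\beta$ with $(s=t)\in\mathcal R$, the vertex $appb$ carries the two edges $(a\alpha, s\to t, \beta pb)$ and $(ap\alpha, s\to t, \beta b)$, which lie in the same hyperplane (use $a\alpha=ap\alpha$ and $\beta b=\beta pb$ modulo $\mathcal P$) and span a square, so that hyperplane self-intersects. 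The second bullet encodes inter-osculation: the hyperplanes $[a, uv\to p, wb]$ and $[au, vw\to q, b]$ are tangent at the vertex $auvwb$ (the two relations overlap along $v$) and transverse at the vertex $auv\xi vwb$ (the two relations are now disjoint, separated by $\xi$, and the contexts match modulo $\mathcal{P}$ by $au=au(v\xi)$ and $wb=(\xi v)wb$). Your plan, as written, would therefore produce a vacuous condition in place of the first bullet and then try to squeeze three genuine pathologies into the second; it cannot recover the stated characterisation without first repairing this correspondence.

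Separately, the argument in \cite{MR3868219} (whose key lemmas are recalled just after Corollary~\ref{cor:Special}) does not pass through the universal cover at all. One parametrises the hyperplanes of $S^+(\mathcal{P},w)$ directly by triples $[a,u\to v,b]$ with $a,b$ taken modulo $\mathcal{P}$, establishes the coincidence and transversality criteria for such triples, and then reads off each pathology as an explicit statement about words modulo $\mathcal{P}$. Your route through minimal diagrams in $M_\square(\mathcal{P},w)$ and the $D(\mathcal{P},w)$-action could be made to work once the bullet-to-pathology dictionary is corrected, but it adds a layer that the direct parametrisation in the base renders unnecessary.
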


\noindent
Here, $[p]_\mathcal{P}$ denotes the set of all the words in $\Sigma^+$ equal to $p$ modulo $\mathcal{P}$. It is worth noticing that, if the condition from the first item fails, then $\{ ap^nb \mid n \geq 0 \} \subset [w]_\mathcal{P}$; and, if the condition from the second item fails, then $\{ au (v\xi)^m v (\xi v)^n wb \mid n,m \geq 0 \} \subset [w]_\mathcal{P}$. Consequently, if we know that $[w]_\mathcal{P}$ is finite, then it follows immediately from the proposition that our Squier cube complex is conspicial. 

\begin{cor}\label{cor:Special}
Let $\mathcal{P}= \langle \Sigma \mid \mathcal{R} \rangle$ be a semigroup presentation and $w \in \Sigma^+$ a baseword. If $[w]_\mathcal{P}$ is finite, then $S^+(\mathcal{P},w)$ is conspicial. 
\end{cor}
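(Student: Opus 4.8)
The plan is to derive Corollary~\ref{cor:Special} directly from Proposition~\ref{prop:Special} by a simple contrapositive argument: assuming $S^+(\mathcal{P},w)$ is not conspicial, Proposition~\ref{prop:Special} tells us that one of the two displayed conditions must fail, and in either case we exhibit an infinite family of words inside $[w]_\mathcal{P}$, contradicting the hypothesis that $[w]_\mathcal{P}$ is finite. This matches exactly the remark made right after the statement of Proposition~\ref{prop:Special}, so the bulk of the work is already done; what remains is to verify the two inclusions of infinite families carefully.

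First I would treat the failure of the first condition. Suppose there exist words $a,b,p \in \Sigma^+$ with $w = ab$, $a = ap$, and $b = pb$ modulo $\mathcal{P}$, and with $[p]_\mathcal{P} \neq \{p\}$. I claim $\{ap^n b \mid n \geq 0\} \subseteq [w]_\mathcal{P}$. Indeed, an easy induction on $n$ shows $a = ap^n$ modulo $\mathcal{P}$ for all $n \geq 0$ (the base case is $a = a$, and the inductive step multiplies $a = ap^{n-1}$ on the right by the relation $a = ap$ applied to the prefix, or equivalently uses $ap^{n-1} = ap \cdot p^{n-1} = ap^n$). Hence $ap^n b = a b = w$ modulo $\mathcal{P}$ for every $n$. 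Since $[p]_\mathcal{P} \neq \{p\}$, there is some word $p' \neq p$ with $p' = p$ modulo $\mathcal{P}$; then all the words $ap^i p' p^{n-1-i} b$ (for $0 \le i \le n-1$) also lie in $[w]_\mathcal{P}$, and comparing lengths and letters one sees these words, together with the $ap^n b$, form an infinite subset of $[w]_\mathcal{P}$ — for instance the $ap^n b$ already have strictly increasing length as $n$ grows (using $p \neq \emptyset$), which alone forces $[w]_\mathcal{P}$ to be infinite. This contradicts the hypothesis.

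Next I would treat the failure of the second condition. Suppose there are words $a,u,v,w,b,p,q,\xi \in \Sigma^+$ with $w = auvwb$, $au = au(v\xi)$, $wb = (\xi v)wb$ modulo $\mathcal{P}$, and $uv = p$, $vw = q$ in $\mathcal{R}$. Again by a straightforward induction one gets $au = au(v\xi)^m$ and $wb = (\xi v)^n wb$ modulo $\mathcal{P}$ for all $m,n \geq 0$, whence
$$au(v\xi)^m v (\xi v)^n wb = au \cdot v \cdot wb = auvwb = w \pmod{\mathcal{P}}$$
for all $m,n \geq 0$. The words on the left have strictly increasing length as $m+n \to \infty$ (since $v\xi$ and $\xi v$ are nonempty), so $\{ au(v\xi)^m v(\xi v)^n wb \mid m,n \geq 0 \}$ is an infinite subset of $[w]_\mathcal{P}$, contradicting finiteness of $[w]_\mathcal{P}$.

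Putting the two cases together completes the proof. The only mild subtlety — and the step I would be most careful about — is making sure the inductions establishing $a = ap^n$ and $au = au(v\xi)^m$, $wb = (\xi v)^n wb$ modulo $\mathcal{P}$ are stated correctly, i.e.\ that applying a relation to a subword of a word that is already equal to $w$ modulo $\mathcal{P}$ keeps one inside $[w]_\mathcal{P}$; but this is immediate from the definition of equality modulo $\mathcal{P}$ as the congruence generated by $\mathcal{R}$. No genuine obstacle is expected here: Corollary~\ref{cor:Special} is essentially a formal consequence of Proposition~\ref{prop:Special} plus the observation that each ``bad configuration'' manufactures infinitely many words representing $[w]_\mathcal{P}$.
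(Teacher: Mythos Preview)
Your proof is correct and follows exactly the approach the paper takes: the paragraph immediately preceding Corollary~\ref{cor:Special} already records the two inclusions $\{ap^nb \mid n \geq 0\} \subset [w]_\mathcal{P}$ and $\{au(v\xi)^m v(\xi v)^n wb \mid m,n \geq 0\} \subset [w]_\mathcal{P}$ when the respective conditions of Proposition~\ref{prop:Special} fail, and you have simply filled in the (routine) inductive details. Your observation that the hypothesis $[p]_\mathcal{P} \neq \{p\}$ is not actually needed to produce infinitely many words in the first case is also correct---the nonemptiness of $p \in \Sigma^+$ alone suffices.
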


\noindent
Corollary~\ref{cor:Special} applies to the right-angled Artin groups and pure planar braid groups from Section~\ref{section:Examples}. It does not apply to the group $\mathbb{Z} \bullet \mathbb{Z}$, but Proposition~\ref{prop:Special} holds anyway. It is worth mentioning that not all diagram groups can be described as fundamental groups of conspicial cube complexes. Thompson's group $F$ is such an example since it is not even residually finite. 

\medskip \noindent
In Corollary~\ref{cor:Special}, the Squier cube complex is conspicial but also compact. Therefore, this allows us to deduce that convex-cocompact subgroups in the corresponding diagram group are separable. One instance of this assertion is given by:

\begin{cor}[\cite{MR3868219}]
Let $\mathcal{P}= \langle \Sigma \mid \mathcal{R} \rangle$ be a semigroup presentation and $w \in \Sigma^+$ a baseword with $[w]_\mathcal{P}$ finite. Fix a $(w,x_1u_1 \cdots x_nu_nx_{n+1})$-diagram $\Delta$. The morphism
$$\left\{ \begin{array}{ccc} D(\mathcal{P},u_1) \times \cdots \times D(\mathcal{P},u_n) & \to & D(\mathcal{P},w) \\ (\Gamma_1, \ldots, \Gamma_n) & \mapsto & \Delta \cdot (\Gamma_1+ \cdots + \Gamma_n) \cdot \Delta^{-1} \end{array} \right.$$
is injective and its image is a separable subgroup of $D(\mathcal{P},w)$. 
\end{cor}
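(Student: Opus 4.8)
The plan is to transfer the problem, along the conjugation isomorphism induced by $\Delta$, to a separability statement inside the diagram group based at $v:=x_1u_1\cdots x_nu_nx_{n+1}$, and then to realise the relevant subgroup as a convex-cocompact subgroup of $\pi_1$ of the compact conspicial cube complex $S^+(\mathcal{P},v)$, so that the Haglund--Wise separability result applies.

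First I would dispense with the formal parts. Conjugation by the $(w,v)$-diagram $\Delta$ is an isomorphism $c_\Delta\colon D(\mathcal{P},v)\to D(\mathcal{P},w)$, $g\mapsto\Delta\circ g\circ\Delta^{-1}$, and the \emph{sum map} $\mu\colon D(\mathcal{P},u_1)\times\cdots\times D(\mathcal{P},u_n)\to D(\mathcal{P},v)$ sending $(\Gamma_1,\dots,\Gamma_n)$ to $\epsilon(x_1)+\Gamma_1+\epsilon(x_2)+\cdots+\Gamma_n+\epsilon(x_{n+1})$ is a homomorphism, since concatenation of diagrams distributes over the sum operation of Definition~\ref{def:SumDiag}; the map in the statement is $c_\Delta\circ\mu$. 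It is injective: dipole reductions never straddle two summands of a sum, so from a reduced representative of $\mu(\Gamma_1,\dots,\Gamma_n)$ one recovers reduced representatives of the $\Gamma_i$ as the sub-sums delimited by the $x_i$-blocks, whence $\mu$ is injective, and $c_\Delta$ is a bijection. Since separability of a subgroup is preserved by ambient isomorphisms, it now suffices to prove that $H_0:=\mathrm{im}(\mu)$ is separable in $D(\mathcal{P},v)$.

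The point of contact with special cube complex technology is the following. Since $v=w$ modulo $\mathcal{P}$ we have $[v]_\mathcal{P}=[w]_\mathcal{P}$ finite, and each $[u_i]_\mathcal{P}$ is finite too, because substituting any word equal to $u_i$ modulo $\mathcal{P}$ for the $i$th block of $v$ yields a distinct element of $[v]_\mathcal{P}$. Hence $S^+(\mathcal{P},v)$ is compact and, by Corollary~\ref{cor:Special}, conspicial, and each $S^+(\mathcal{P},u_i)$ is compact. The separator letters $x_i$ realise a product embedding $Y:=S^+(\mathcal{P},u_1)\times\cdots\times S^+(\mathcal{P},u_n)\hookrightarrow S^+(\mathcal{P},v)$, $(\gamma_1,\dots,\gamma_n)\mapsto x_1\gamma_1x_2\cdots x_n\gamma_nx_{n+1}$, whose image is a compact subcomplex with $\pi_1(Y)\cong\prod_iD(\mathcal{P},u_i)$, and the induced map $\pi_1(Y)\to D(\mathcal{P},v)$ is exactly $\mu$, hence has image $H_0$. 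The crucial step is to check that this subcomplex is \emph{locally convex}: using the description of links in $S^+(\mathcal{P},v)$ from the second proof of Theorem~\ref{thm:FarleyCC} (the link of a word $z$ has as vertices the subwords of $z$ forming a side of a relation of $\mathcal{R}$, with simplices the pairwise disjoint families of such subwords), the link of a vertex of $Y$ consists of those families that lie within a single block, which is a \emph{full} subcomplex of the ambient link since disjointness is inherited; equivalently, one verifies directly from Definition~\ref{def:MedianDiag} and Lemma~\ref{lem:Geodesics} that the corresponding subcomplex of the universal cover $M_\square(\mathcal{P},v)$ --- the reduced diagrams of the form $\epsilon(x_1)+\Delta_1+\cdots+\Delta_n+\epsilon(x_{n+1})$ with $\mathrm{top}(\Delta_i)=u_i$, which is isometric to $\prod_iM_\square(\mathcal{P},u_i)$ --- is convex, because $\Phi^{-1}\circ\Psi$ respects the block structure for $\Phi,\Psi$ in it, hence so do its reduction and all its prefixes. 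This convexity/local-convexity verification, where the combinatorics of $M_\square(\mathcal{P},v)$ has to be used in earnest, is the main obstacle.

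Granting it, $\pi_1(Y)$ lifts to act cocompactly on a convex subcomplex of $M_\square(\mathcal{P},v)$, so $H_0$ is a convex-cocompact subgroup of $D(\mathcal{P},v)=\pi_1(S^+(\mathcal{P},v))$ in the sense of \cite{MR2377497}. Since $S^+(\mathcal{P},v)$ is compact and conspicial, \cite[Corollary~7.9]{MR2377497} gives that $H_0$ is separable in $D(\mathcal{P},v)$. Transporting along the isomorphism $c_\Delta$, the image $c_\Delta(H_0)$ of the map in the statement is separable in $D(\mathcal{P},w)$, which completes the proof.
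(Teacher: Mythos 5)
Your proof is correct and follows essentially the same route the paper intends: the corollary is presented there precisely as an instance of the Haglund--Wise result that convex-cocompact subgroups of fundamental groups of compact conspicial cube complexes are separable, with the compactness and conspiciality of $S^+(\mathcal{P},w)$ coming from the finiteness of $[w]_\mathcal{P}$ via Corollary~\ref{cor:Special}. Your verification that the block-form diagrams span a convex subcomplex of $M_\square(\mathcal{P},v)$ (equivalently, that the product subcomplex is locally convex) correctly supplies the convex-cocompactness that the paper leaves implicit.
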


\medskip \noindent
Let us describe explicitly the morphism $\mathfrak{A}_X$ above in the specific case of our Squier cube complexes. So we fix a semigroup presentation $\mathcal{P}= \langle \Sigma \mid \mathcal{R} \rangle$ and a baseword $w \in \Sigma^+$. The oriented edges of $S^+(\mathcal{P},w)$ can be written as $(a, u \to v, b)$ when they connect $aub$ to $avb$; we denote by $[a, u\to v,b]$ the corresponding (oriented) hyperplane. It can be shown that two oriented hyperplanes $[a, u \to v, b]$ and $[c, p \to q, d]$
\begin{itemize}
	\item coincide if and only if $a=c$, $b=d$ modulo $\mathcal{P}$ and $u=p$, $v=q$ in $\Sigma^+$ \cite[Lemma~3.1]{MR3868219};
	\item are transverse if and only if there exists some $y \in \Sigma^+$ such that either $c=auy$ and $b=ypd$ modulo $\mathcal{P}$ or $d=yub$ and $a=cpy$ modulo $\mathcal{P}$ \cite[Lemma~3.2]{MR3868219}. 
\end{itemize}
Let $A(\mathcal{P},w)$ be the right-angled Artin groups given by the crossing graph of $S^+(\mathcal{P},w)$. The generators of $A(\mathcal{P},w)$ are the oriented hyperplanes $[a, u \to v,b]$ where $u=v$ belongs to $\mathcal{R}$. The map that sends each oriented edge $(a, u \to v,b)$ to the element $[a,u \to v,b]$ if $u=v$ belongs to $\mathcal{R}$ and to the element $[a,v \to u,b]^{-1}$ if $v=u$ belongs to $\mathcal{R}$ induces a morphism
$$\mathfrak{A}(\mathcal{P},w) : D(\mathcal{P},w) \to A(\mathcal{P},w).$$
It is injective as soon as the conditions of Proposition~\ref{prop:Special} are satisfied. This happens for instance when $[w]_\mathcal{P}$ is finite. 

\begin{ex}\label{ZbulletZspecial}
Let us consider the following semigroup presentation:
$$\mathcal{P} := \left\langle \begin{array}{l} a_1,a_2,a_3 \\ b_1,b_2,b_3 \end{array}, p \left| \begin{array}{l} a_1=a_2, a_2=a_3, a_3=a_1 \\ b_1=b_2, b_2=b_3, b_3=b_1 \end{array}, \begin{array}{l} a_1=a_1p \\ b_1=pb_1 \end{array} \right. \right\rangle.$$
The diagram group $D(\mathcal{P},a_1b_1)$ is the group $\mathbb{Z} \bullet \mathbb{Z}$ described in Section~\ref{section:Examples}. According to Proposition~\ref{prop:Special}, the Squier cube complex $S^+(\mathcal{P},a_1b_1)$ is conspicial, so our morphism $\mathfrak{A}(\mathcal{P},w_1b_1) \to A(\mathcal{P},a_1b_1)$ is injective.

\medskip \noindent
Using our previous description of hyperplanes, we find that $S^+(\mathcal{P},a_1b_1)$ has eight hyperplanes: $A_i:=[1,a_i \to a_{i+1},b_1]$, $B_i:=[a_1,b_i \to b_{i+1},1]$, $C:= [ 1,a_1 \to a_1p,b_1]$, and $D:=[a_1, b_1 \to pb_1,1]$. Moreover, we find that the crossing graph of $S^+(\mathcal{P},a_1b_1)$ is a complete bipartite graph $K_{4,4}$, where each vertex of $\{ A_1,A_2,A_3,C \}$ is connected by an edge to each vertex of $\{ B_1, B_2, B_3, D \}$. In particular, $A(\mathcal{P},a_1b_1) \simeq \mathbb{F}_4 \times \mathbb{F}_4$. 

\medskip \noindent
Then, by using Theorem~\ref{thm:Presentation} or Proposition~\ref{prop:GraphOfSpaces} below, we find that $\mathbb{Z} \bullet \mathbb{Z}$ is generated by the three diagrams given by Figure~\ref{figure39}.
\begin{figure}[h!]
\begin{center}
\includegraphics[width=0.7\linewidth]{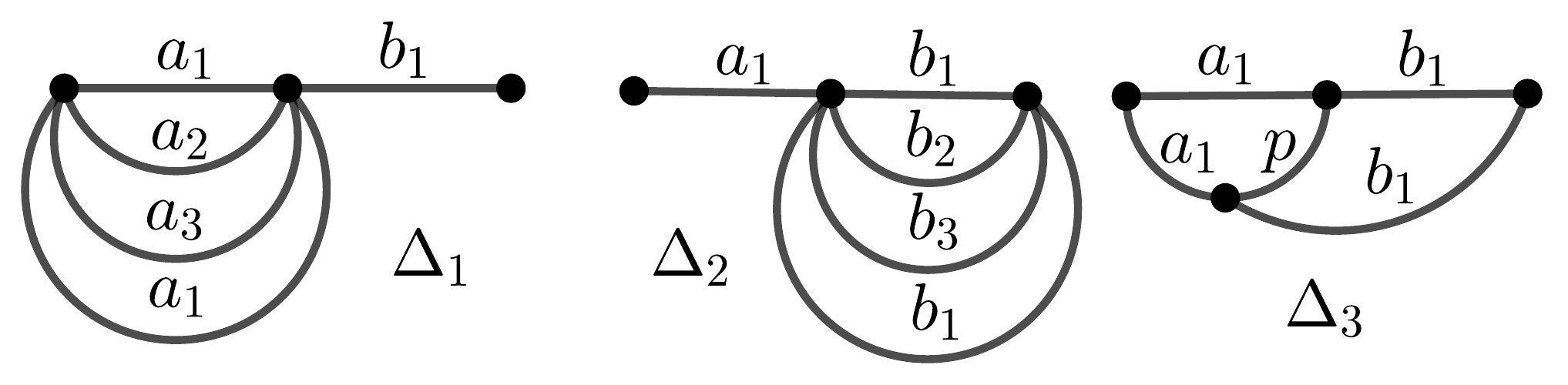}
\caption{Generators of $\mathbb{Z} \bullet \mathbb{Z}$.}
\label{figure39}
\end{center}
\end{figure}

\noindent
For instance, the first diagram corresponds to the loop of edges:
$$(1,a_1 \to a_2, b_1), \ (1,a_2 \to a_3, b_1), \ (1,a_3 \to a_1, b_1),$$
which implies that our morphism $\mathfrak{A}= \mathfrak{A}(\mathcal{P},a_1b_1)$ sends $\Delta_1$ to $A_1A_2A_3$. Similarly, we find that $\mathfrak{A}(\Delta_2)= B_1B_2B_3$ and $\mathfrak{A}(\Delta_3)=CD^{-1}$.

\medskip \noindent
We conclude that the subgroup $\langle A_1A_2A_3, B_1B_2B_3, CD^{-1} \rangle$ of 
\begin{center}
$\mathbb{F}_4 \times \mathbb{F}_4= \langle A_1,A_2,A_3,C \mid \ \rangle \times \langle B_1,B_2,B_3, D \mid \ \rangle$
\end{center}
is isomorphic to $\mathbb{Z} \bullet \mathbb{Z}$. The embedding can be simplified to an embedding into $\mathbb{F}_2 \times \mathbb{F}_2$; we refer to \cite[Example~3.17]{MR3868219} for more details. 
\end{ex}

\noindent
From a more geometric perspective, it is possible to use the structure of hyperplanes in the Squier cube complexes in order to construct quasi-isometric embeddings from diagram groups into product of trees. More precisely:

\begin{prop}\label{prop:ProductTrees}
Let $\mathcal{P}= \langle \Sigma \mid \mathcal{R} \rangle$ be a semigroup presentation and $w \in \Sigma^+$ a baseword. If $D(\mathcal{P},w)$ satisfies the Burillo property and if $S^+(\mathcal{P},w)$ has finite dimension $d$, then $D(\mathcal{P},w)$ quasi-isometrically embeds into a product of $d$ trees. As a consequence, $D(\mathcal{P},w)$ has finite asymptotic dimension $\leq d$ and its Hilbert space compression is $1$.
\end{prop}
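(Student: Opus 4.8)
The plan is to exploit the fact that, under the stated hypotheses, the diagram group acts properly by isometries on the Squier cube complex $S^+(\mathcal{P},w)$ in a way compatible with the diagram metric, and then to use the standard decomposition of a finite-dimensional CAT(0) cube complex into a product of trees indexed by a colouring of its hyperplanes.

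First I would recall that $D(\mathcal{P},w)$ acts freely on the median graph $M(\mathcal{P},w)$, whose cube-completion $M_\square(\mathcal{P},w)$ is the universal cover of $S^+(\mathcal{P},w)$ (Proposition in Section~\ref{section:MedianDiag}), so that $M_\square(\mathcal{P},w)$ is a CAT(0) cube complex of dimension $d$. The Burillo property says precisely that the orbit map $D(\mathcal{P},w)\to M(\mathcal{P},w)$, $g\mapsto g\cdot\epsilon(w)$, is a quasi-isometric embedding, since word-length is biLipschitz to the diagram metric $\#(\cdot)$ which by Lemma~\ref{lem:Geodesics} is exactly the graph metric of $M(\mathcal{P},w)$. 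Hence it suffices to show that $M_\square(\mathcal{P},w)$ quasi-isometrically embeds into a product of $d$ trees. This is a general fact about finite-dimensional CAT(0) cube complexes: by a theorem that goes back to work building on Brodzki--Campbell--Guentner--Niblo--Wright and on Stein/Chepoi colourings, one colours the hyperplanes of an $n$-dimensional CAT(0) cube complex with $n$ colours so that no two hyperplanes of the same colour are transverse; the hyperplanes of colour $i$ define, via the associated wallspace/pocset structure, a tree $T_i$, and the diagonal map $X\to T_1\times\cdots\times T_n$ (sending a vertex to the tuple of its "positions" in each $T_i$) is a quasi-isometric embedding when $X$ is endowed with the $\ell^1$-metric, which on a CAT(0) cube complex is biLipschitz to the $\ell^2$-metric up to the factor $\sqrt{n}$. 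I would state this as the key lemma, cite the appropriate source, and apply it with $n=d$.

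With the quasi-isometric embedding $D(\mathcal{P},w)\hookrightarrow T_1\times\cdots\times T_d$ in hand, the two consequences are immediate. Finite asymptotic dimension: each tree has asymptotic dimension $1$, a finite product of spaces of asymptotic dimension $1$ has asymptotic dimension at most $d$ (by the product formula for asymptotic dimension), and asymptotic dimension is monotone under quasi-isometric embeddings, so $\operatorname{asdim} D(\mathcal{P},w)\le d$. Hilbert space compression equal to $1$: a tree admits a $1$-Lipschitz embedding into a Hilbert space with compression $1$ (for instance via the wall metric embedding, exactly as in the proof of Theorem~\ref{thm:MedianComp} specialised to trees), hence so does a finite product of trees with the $\ell^2$-metric, and composing with the quasi-isometric embedding transports compression $1$ to $D(\mathcal{P},w)$; the reverse inequality $\le 1$ is automatic. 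Alternatively one can invoke Theorem~\ref{thm:MedianComp} directly, since $M(\mathcal{P},w)$ has finite cubical dimension, together with Proposition~\ref{prop:BHilbert}; the product-of-trees route is just the sharper statement.

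The main obstacle is the hyperplane-colouring lemma: one must be careful that $d$ colours genuinely suffice and that the resulting diagonal map into the product of trees is a quasi-isometric \emph{embedding} and not merely Lipschitz. The point is that distances in a CAT(0) cube complex are counted by separating hyperplanes, each hyperplane receives exactly one colour, and a pair of vertices is separated in $T_i$ by every colour-$i$ hyperplane separating them in $X$; summing over colours recovers the $\ell^1$-distance in $X$ exactly, so the map is even an isometric embedding for the $\ell^1$-metrics and a quasi-isometric embedding for the $\ell^2$-metrics with multiplicative constant $\sqrt{d}$. Once this is set up cleanly, everything else is a routine assembly of the Burillo property, the universal-cover description of $M_\square(\mathcal{P},w)$, and standard stability properties of asymptotic dimension and Hilbert space compression.
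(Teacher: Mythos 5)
Your overall architecture (Burillo property reduces everything to a quasi-isometric embedding of $M(\mathcal{P},w)$ into a product of $d$ trees; a colour class of pairwise non-transverse hyperplanes yields a tree; the diagonal map is an $\ell^1$-isometric embedding; asymptotic dimension and compression follow formally) is exactly the paper's, and those parts are fine. The problem is your ``key lemma''.

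The statement that the hyperplanes of a $d$-dimensional CAT(0) cube complex can always be coloured with $d$ colours so that no two hyperplanes of the same colour are transverse is \emph{false}. This is a proper colouring of the crossing graph, and the chromatic number of a graph can strictly exceed its clique number, which is what the dimension controls. Concretely, a $2$-dimensional CAT(0) cube complex can have an induced $5$-cycle in its crossing graph (e.g.\ the Davis complex of the right-angled Coxeter group over a pentagon), which already forces $3$ colours; using triangle-free graphs of large chromatic number one gets $2$-dimensional complexes needing arbitrarily many colours, and it is precisely for this reason that the embeddability of finite-dimensional CAT(0) cube complexes into finite products of trees is a genuine problem (Chepoi--Hagen need a bounded-degree hypothesis even in dimension $2$, and Brodzki--Campbell--Guentner--Niblo--Wright only obtain Property A, not such an embedding). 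So the general fact you invoke does not exist, and the citation does not support it.

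The colouring must therefore come from the specific combinatorics of Squier complexes, and this is the actual content of the paper's proof. Two transverse hyperplanes of $S^+(\mathcal{P},w)$ meet in a square $(a,u\to v,b,p\to q,c)$, in which one of them is the ``left'' one and the other the ``right'' one; writing $J_1\prec J_2$ when $J_1=[a,u\to v,bpc]$ and $J_2=[aub,p\to q,c]$, any two transverse hyperplanes are $\prec$-comparable, and a $\prec$-chain of length $n$ consists of $n$ pairwise transverse hyperplanes, hence has $n\le d$. The \emph{rank} of a hyperplane, i.e.\ the length of the longest $\prec$-chain below it, then takes values in $\{0,\dots,d-1\}$ and strictly increases along $\prec$, so it is the desired $d$-colouring; one pulls it back to $M(\mathcal{P},w)$ via the covering map and proceeds as you describe. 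You need to supply this argument (or an equivalent Squier-specific one); without it the central step of your proof is unsupported.
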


\noindent
The last assertion of the proposition follows from the first part but also from \cite{MR2916293} and \cite{MR2160829}. One easily observes that $S^+(\mathcal{P},w)$ has dimension $\leq d$ if and only if $d$ is the smallest integer $k$ for which there exist $u_1, \ldots, u_k \in \Sigma^+$ such that $w=u_1 \cdots u_k$ modulo $\mathcal{P}$ and such that $[u_i]_\mathcal{P} \neq \{u_i\}$ for every $1 \leq i \leq k$. For instance, $S^+(\mathcal{P},w)$ is automatically finite-dimensional when $[w]_\mathcal{P}$ is finite. 

\begin{proof}[Proof of Proposition~\ref{prop:ProductTrees}.]
First of all, we need to introduce some notation and vocabulary. 

\medskip \noindent
Let $J_1$ and $J_2$ be two hyperplanes in  $S^+(\mathcal{P},w)$. If they meet inside a square $(a,u \to v, b,p \to q,c)$ with $[a,u \to v,bpc]=J_1$ and $[aub,p \to q,c]=J_2$, then we write $J_1 \prec J_2$. This relation allows us to define the \emph{rank} of a hyperplane $J$ in $S^+(\mathcal{P},w)$ as
$$\mathrm{rank}(J) := \max \{ n \geq 0 \mid \text{there exist} \ J_1, \ldots, J_n \ \text{with} \ J_1 \prec \cdots \prec J_n \prec J \}.$$
Because any two $\prec$-related hyperplanes must be transverse, the rank of a hyperplane lies between $0$ and $d-1$. By extension, we define the \textit{rank} of a hyperplane in the median graph $M(\mathcal{P},w)$ as the rank of its image by the covering map $M_\square(\mathcal{P},w) \to S^+(\mathcal{P},w)$.

\medskip \noindent
Thus, we get a colouring of the hyperplanes of $M(\mathcal{P},w)$ with $d$ colours given by the rank such that any two hyperplanes of the same colour cannot be transverse. This implies that $M(\mathcal{P},w)$ embeds isometrically into a product of $d$ trees (endowed with the $\ell^1$-metric), which allows us to conclude since $D(\mathcal{P},w)$ embeds isometrically into $M(\mathcal{P},w)$ as a consequence of the Burillo property.

\medskip \noindent
Let us describe how $M(\mathcal{P},w)$ can be embedded into a product of $d$ trees. For every $0 \leq i \leq d-1$, let $T_i$ denote the graph whose vertices are the connected components of the graph obtained by cutting $M(\mathcal{P},w)$ along all the hyperplanes of rank $i$ and whose edges connect two components whenever they are separated by a single hyperplane of rank $i$; because no two hyperplanes of rank $i$ are transverse, $T_i$ is a tree; moreover, there is a natural projection $M(\mathcal{P},w) \to T_i$ sending a vertex to the connected component it belongs to, and the distance in $T_i$ between the projections of two vertices $x,y \in M(\mathcal{P},w)$ coincides with the number of hyperplanes of rank $i$ separating $x$ and $y$. Thus, the product of these projections yields a map $M(\mathcal{P},w) \to T_0 \times \cdots \times T_{d-1}$ such that the distance between the images of two vertices $x,y \in M(\mathcal{P},w)$ coincides with the sum over $i$ of the number of hyperplanes of rank $i$ separating $x$ and $y$, i.e.\ the total number of hyperplanes separating $x$ and $y$, which is exactly the distance between $x$ and $y$ in $M(\mathcal{P},w)$. 
\end{proof}

\noindent
We conclude this section by explaining how one can use hyperplanes in order to decompose Squier cube complexes as graphs of spaces and to compute presentations of diagram groups.  Before stating our decomposition theorem, we need to introduce some vocabulary.

\medskip \noindent
So let $\mathcal{P}= \langle \Sigma \mid \mathcal{R} \rangle$ be a semigroup presentation and $w\in \Sigma^+$ a baseword. A hyperplane $J= [a,u \to v,b]$ in $S^+(\mathcal{P},w)$ is \textit{left} if $D(\mathcal{P},a)= \{1 \}$ but $D(\mathcal{P},au) \neq \{1 \}$. If so, let $p_J,s_J \in \Sigma^+$ be two words and $\ell_J \in \Sigma$ a letter satisfying: $u=p_J \ell_J s_J$ in $\Sigma^+$, $D(\mathcal{P},ap_J)= \{1 \}$, and $D(\mathcal{P},ap_J \ell_J) \neq \{1 \}$. Notice that $p_J$ is just the maximal prefix of $u$ satisfying $D(\mathcal{P},ap_J)= \{1 \}$, so that $p_J,s_J, \ell_J$ are uniquely determined. We define similarly $v=q_Jm_Jr_J$, where $q_J,r_J \in \Sigma^+$ and $m_J \in \Sigma$, so that $D(\mathcal{P},q_J)= \{1 \}$ and $D(\mathcal{P},q_Jm_J) \neq \{1 \}$.

\begin{prop}\label{prop:GraphOfSpaces}
Let $\mathcal{P}= \langle \Sigma \mid \mathcal{R} \rangle$ be a semigroup presentation and $w\in \Sigma^+$ a baseword. Let $\mathfrak{J}$ denote the set of left hyperplanes of $S^+(\mathcal{P},w)$ and let $\mathcal{G}(\mathcal{P},w)$ be the graph of spaces defined by:
\begin{itemize}
	\item the set of vertex-spaces is $$\{ S^+(\mathcal{P},ap_J) \ell_J S^+(\mathcal{P},s_Jb), \ S^+(\mathcal{P},aq_J) m_J S^+(\mathcal{P},r_Jb) \mid J= [a,u \to v,b] \in \mathfrak{J} \};$$
	\item to each left hyperplane $[a, u \to v,b] \in \mathfrak{J}$ is associated the edge-space $S^+(\mathcal{P},a) \times S^+(\mathcal{P},b)$;
	\item the edge-maps are the canonical maps $$S^+(\mathcal{P},a) \times S^+(\mathcal{P},b) \to S^+(\mathcal{P},a)u S^+(\mathcal{P},b)$$ and $$ S^+(\mathcal{P},a) \times S^+(\mathcal{P},b) \to S^+(\mathcal{P},a)v S^+(\mathcal{P},b).$$
\end{itemize}
Then $\mathcal{G}(\mathcal{P},w)$ defines a decomposition of $S^+(\mathcal{P},w)$ as a graph of spaces.
\end{prop}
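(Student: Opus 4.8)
Here is a proposal. I would realise the decomposition by cutting $S^+(\mathcal{P},w)$ open along the family $\mathfrak{J}$ of its left hyperplanes; equivalently, I would build the tautological cubical map $\Phi$ from the total space of $\mathcal{G}(\mathcal{P},w)$ into $S^+(\mathcal{P},w)$ — on each vertex-space it is the inclusion of the corresponding subcomplex, on each product $(S^+(\mathcal{P},a)\times S^+(\mathcal{P},b))\times[0,1]$ attached to $J=[a,u\to v,b]\in\mathfrak{J}$ it is the canonical parametrisation of the carrier of $J$ — and then check that $\Phi$ is a cubical isomorphism. Two elementary facts get used throughout: (i) for any words $x,y$ the map $\Delta\mapsto\Delta+\epsilon(y)$ embeds $D(\mathcal{P},x)$ into $D(\mathcal{P},xy)$, so that along a fixed word the diagram group of a prefix only grows as the prefix lengthens; and (ii) the description of hyperplanes of $S^+(\mathcal{P},w)$ from \cite{MR3868219}, namely that $[a,u\to v,b]=[c,p\to q,d]$ iff $a=_\mathcal{P}c$, $b=_\mathcal{P}d$, $u=p$, $v=q$, and that two such hyperplanes are transverse iff there is $y\in\Sigma^+$ with ($c=_\mathcal{P}auy$, $b=_\mathcal{P}ypd$) or ($d=_\mathcal{P}yub$, $a=_\mathcal{P}cpy$).

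The first real step is a \emph{cube classification}. A cube of $S^+(\mathcal{P},w)$ reads $C=(c_1,r_1\to t_1,\dots,c_n,r_n\to t_n,c_{n+1})$; writing $P_i$ for the (well-defined up to $=_\mathcal{P}$) word obtained from $C$ by performing the first $i-1$ rewrites and then reading up to the $i$-th rewriting slot, the $i$-th coordinate hyperplane of $C$ is left iff $D(\mathcal{P},P_i)=\{1\}$ while $D(\mathcal{P},P_ir_i)\neq\{1\}$. Since $P_ir_i$ is $=_\mathcal{P}$-equivalent to a prefix of every later $P_j$, and is a prefix of $P_i$ for every earlier index, fact (i) forces \emph{at most one} coordinate of $C$ to be left. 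In particular no two left hyperplanes are transverse, and the carriers of distinct left hyperplanes share no cube — exactly the configuration that makes cutting $S^+(\mathcal{P},w)$ open along $\mathfrak{J}$ produce an honest graph of spaces. The same step identifies edge-spaces and edge-maps: for $J=[a,u\to v,b]\in\mathfrak{J}$ the cubes carrying an edge of $J$ are precisely those of the form $(a_1,\dots,a_k,u\to v,b_1,\dots,b_m)$ with $a_1\cdots a_k=_\mathcal{P}a$ and $b_1\cdots b_m=_\mathcal{P}b$; sending a midcube to the pair of cubes on its two sides gives $J\cong S^+(\mathcal{P},a)\times S^+(\mathcal{P},b)$, its carrier is $(S^+(\mathcal{P},a)\times S^+(\mathcal{P},b))\times[0,1]$, and the two sides of that carrier are the subcomplexes $S^+(\mathcal{P},a)uS^+(\mathcal{P},b)$ and $S^+(\mathcal{P},a)vS^+(\mathcal{P},b)$ of $S^+(\mathcal{P},w)$.

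The last step identifies the vertex-spaces, i.e.\ the connected components of $S^+(\mathcal{P},w)$ cut open along $\mathfrak{J}$, together with their incidences. To a vertex $z=_\mathcal{P}w$ I would attach the datum $z=z'\,\ell\,z''$, where $z'$ is the longest prefix of $z$ with $D(\mathcal{P},z')=\{1\}$ (so $D(\mathcal{P},z'\ell)\neq\{1\}$); the degenerate case $D(\mathcal{P},w)=\{1\}$, where $\mathfrak{J}=\emptyset$ and $S^+(\mathcal{P},w)$ is a single contractible piece, is handled separately. By the cube classification an edge $(c,r\to t,d)$ is crossed by a left hyperplane exactly when $D(\mathcal{P},c)=\{1\}$ and $D(\mathcal{P},cr)\neq\{1\}$, so that moving inside a component never changes the initial segment $z'\ell$; hence the component of $z$ is exactly $S^+(\mathcal{P},z')\ell S^+(\mathcal{P},z'')$. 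Applying this to the vertices $aub=ap_J\,\ell_J\,s_Jb$ and $avb=aq_J\,m_J\,r_Jb$ of a left hyperplane $J=[a,u\to v,b]$ shows that the two carrier-sides of $J$ lie in the components $S^+(\mathcal{P},ap_J)\ell_JS^+(\mathcal{P},s_Jb)$ and $S^+(\mathcal{P},aq_J)m_JS^+(\mathcal{P},r_Jb)$; conversely every component arises this way, because $D(\mathcal{P},z'\ell)\neq\{1\}$ produces (after rewriting $z'\ell$ inside its component) an applicable relation starting immediately after $z'$, hence a left hyperplane at that position. Assembling, the underlying graph has one vertex per component, one edge per $J\in\mathfrak{J}$ joining the two components just described, with edge-space $S^+(\mathcal{P},a)\times S^+(\mathcal{P},b)$ and the two canonical maps into them; this is $\mathcal{G}(\mathcal{P},w)$, and the tautological map $\Phi$ is a cubical isomorphism.

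The main obstacle is this last step — the combinatorial bookkeeping of the cut-open components. One must verify that the longest-trivial-prefix datum is a \emph{complete} invariant of the components (no hidden identifications, and $D(\mathcal{P},z'\ell)\neq\{1\}$ genuinely supplies a left hyperplane immediately after $z'$ up to rewriting within the component), that the resulting edge-maps really coincide with the canonical maps $S^+(\mathcal{P},a)uS^+(\mathcal{P},b)\hookrightarrow S^+(\mathcal{P},ap_J)\ell_JS^+(\mathcal{P},s_Jb)$ of the statement (their $\pi_1$-injectivity is automatic for hyperplane carriers, but matching them explicitly still takes care), and that left hyperplanes which merely osculate are correctly accounted for, since they may give rise to degenerate (lower-dimensional) vertex-spaces or to loop edges in the underlying graph.
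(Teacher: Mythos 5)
The paper states this proposition without proof, deferring to the literature, so there is nothing in the text to compare your argument against; I can only assess it on its own terms. Your strategy is sound, and the step that does the real work is both correctly identified and correctly proved: the prefix embedding $D(\mathcal{P},x)\hookrightarrow D(\mathcal{P},xy)$, $\Delta\mapsto\Delta+\epsilon(y)$, forces every cube of $S^+(\mathcal{P},w)$ to carry at most one left coordinate hyperplane, so left hyperplanes are pairwise non-transverse and do not self-intersect, and cutting along $\mathfrak{J}$ is legitimate. Your identification of the carrier of $J=[a,u\to v,b]$ with $(S^+(\mathcal{P},a)\times S^+(\mathcal{P},b))\times[0,1]$ and of the cut-open components with the subcomplexes $S^+(\mathcal{P},z')\ell S^+(\mathcal{P},z'')$ indexed by the longest-trivial-prefix datum both check out, and the worries you list at the end are genuine but harmless: a self-osculating left hyperplane only identifies points on the boundary of its cylinder, which is exactly what the total-space construction of a graph of spaces performs anyway, and coinciding or repeated vertex-spaces simply produce loops in the underlying graph.

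The one justification that does not survive scrutiny as written is the converse claim that every component is a side of some left hyperplane ``because $D(\mathcal{P},z'\ell)\neq\{1\}$ produces an applicable relation starting immediately after $z'$''. Nontriviality of $D(\mathcal{P},z'\ell)$ guarantees that some relation applies to some word of $[z'\ell]_\mathcal{P}$, but not that, within the given component, a relation applies to a subword covering the distinguished letter $\ell$: the witnessing cells could a priori all live inside the $z'$-part, and words of $[z'\ell]_\mathcal{P}$ need not have the form $m'\ell$. Two repairs are available. Globally: $S^+(\mathcal{P},w)$ is connected and non-left edges never leave a component, so any component that is a proper subset of the vertex set meets a left edge, and you have already computed that the component of either endpoint of a left edge is exactly one of the listed vertex-spaces; the only exception is $\mathfrak{J}=\emptyset$, the degenerate case you set aside. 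Locally: if no left edge touched the component, then no cell of any reduced spherical diagram over $z'\ell$ could ever rewrite the distinguished letter $\ell$, so every such diagram would decompose as $\Theta'+\epsilon(\ell)$ with $\Theta'\in D(\mathcal{P},z')=\{1\}$, contradicting $D(\mathcal{P},z'\ell)\neq\{1\}$. With either repair the proof is complete.
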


\noindent
As an illustration of the proposition, let us compute a presentation of some diagram group. 

\begin{ex}\label{ZbulletZgraph}
Let $\mathcal{P}= \left\langle \begin{array}{l} a_1,a_2,a_3 \\ b_1,b_2,b_3 \end{array}, p \left| \begin{array}{l} a_1=a_2, a_2=a_3, a_3=a_1 \\ b_1=b_2, b_2=b_3, b_3=b_1 \end{array}, \begin{array}{l} a_1=a_1p \\ b_1=pb_1 \end{array} \right. \right\rangle$.

\medskip \noindent
Then $S^+(\mathcal{P},a_1b_1)$ contains four left hyperplanes: $[1,a_1 \to a_2, b_1]$, $[1,a_2 \to a_3,b_1]$, $[1,a_3 \to a_1,b_1]$, and $[1,a_1 \to a_1p,b_1]$. Thus, the vertex-spaces of our graph of spaces will be $a_1S(\mathcal{P},b_1)$, $a_2S(\mathcal{P},b_1)$, and $a_3S(\mathcal{P},b_1)$. The graph of spaces given by Proposition~\ref{prop:GraphOfSpaces} is then: 
\begin{center}
\includegraphics[width=0.6\linewidth]{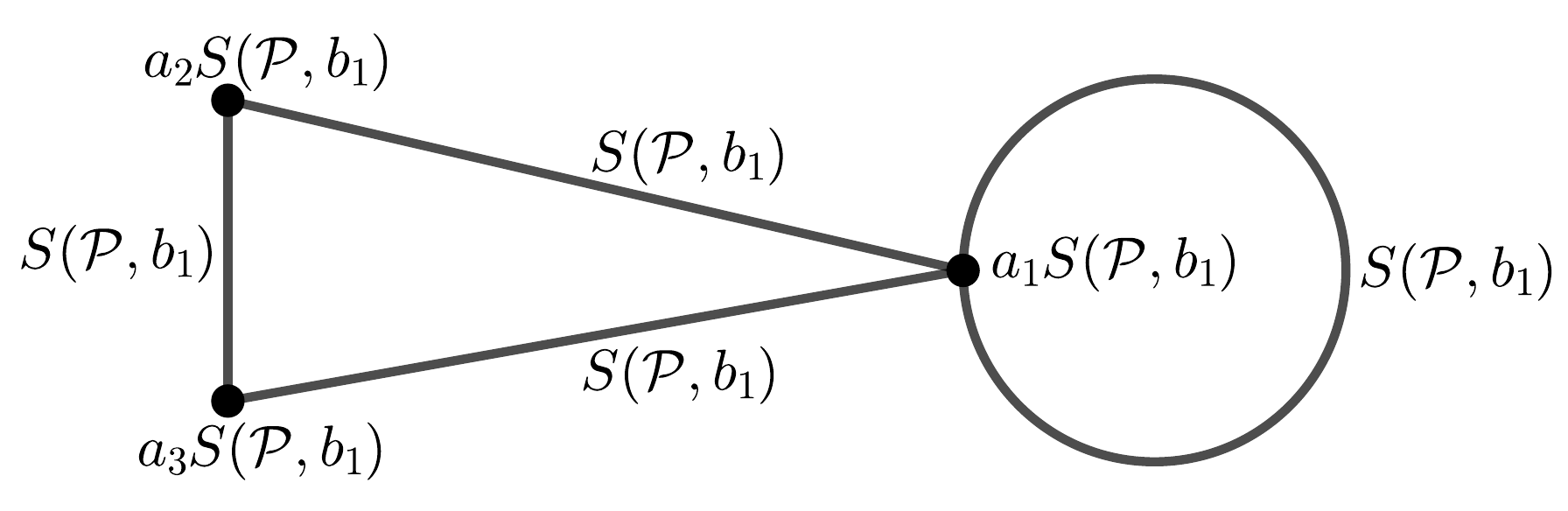}
\end{center}

\noindent
The maps associated to the loop are induced by $w_0 \mapsto a_1w_0$ and $w_0 \mapsto a_1pw_0$. On the other hand, the Squier complex $S(\mathcal{P},b_1)$ is easy to draw:
\begin{center}
\includegraphics[width=0.6\linewidth]{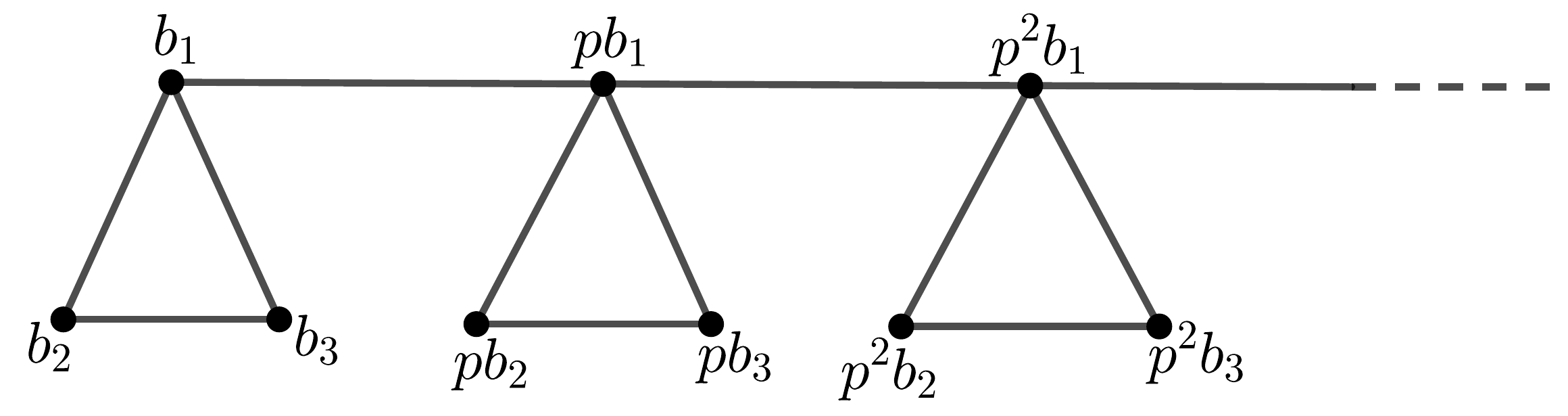}
\end{center}

\noindent
In particular, its fundamental group is isomorphic to $\mathbb{F}_{\infty}= \langle x_1,x_2, \ldots \mid \ \rangle$. 

\medskip \noindent
\begin{minipage}{0.45\linewidth}
\includegraphics[width=0.95\linewidth]{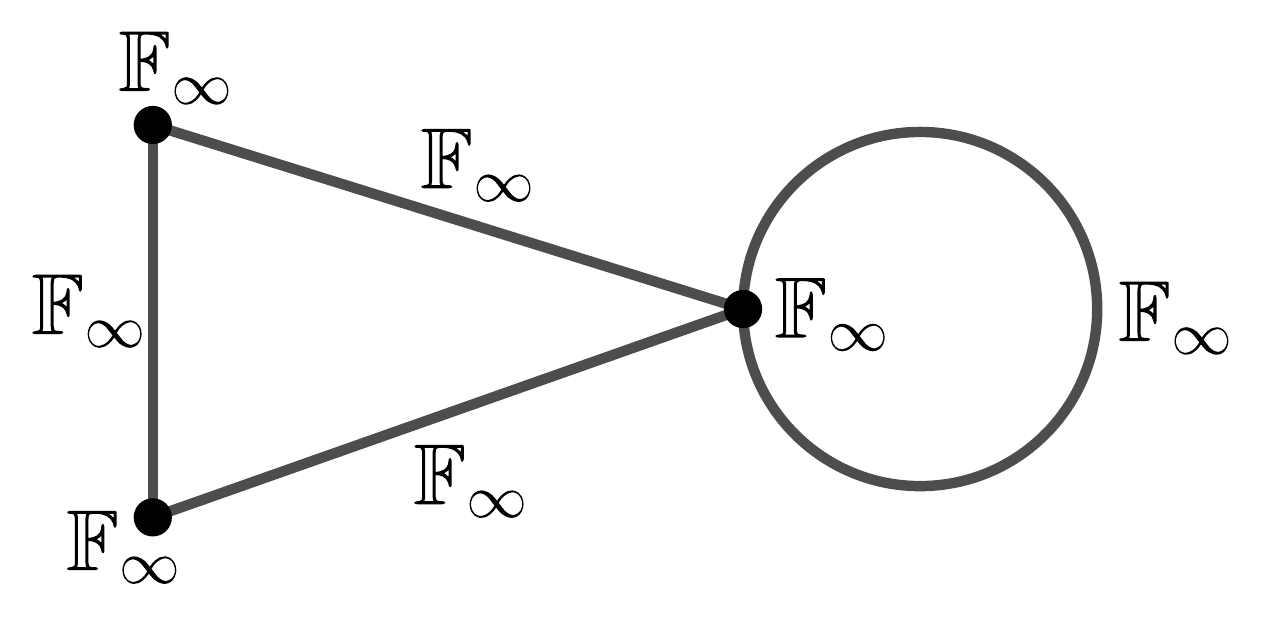}
\end{minipage}
\begin{minipage}{0.53\linewidth}
Thus, $D(\mathcal{P},a_1b_1)$ may be decomposed as the graph of groups illustrated on the left, where the maps associated to the three edges on the left are identities, and where the two maps associated to the loop are the identity and the morphism induced by $x_i \mapsto x_{i+1}$. 
\end{minipage}

\medskip \noindent
\begin{minipage}{0.63\linewidth}
Finally, this graph of groups may be simplified into the graph of groups illustrated on the right, where the maps associated to the left loop are identities and where the two maps associated to the right loop are the identity and the morphism induced by $x_i \mapsto x_{i+1}$. 
\end{minipage}
\begin{minipage}{0.35\linewidth}
\includegraphics[width=\linewidth]{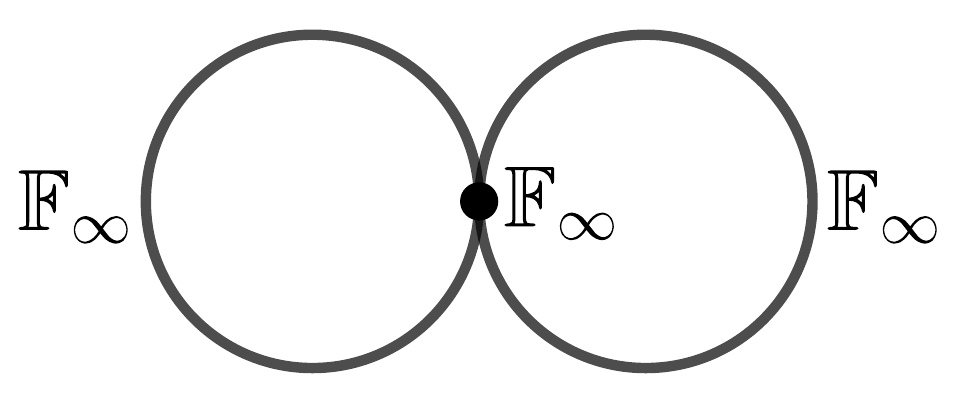}
\end{minipage}

\medskip \noindent
Thus, we deduce the following presentation of our diagram group $D(\mathcal{P},a_1b_1)$:
$$\langle x_1,x_2, \ldots, t,h \mid tx_it^{-1}=x_i, hx_ih^{-1}=x_{i+1} \ (i \geq 1 ) \rangle.$$
Noticing that $x_{i+1}=h^i x_1 h^{-i}$ for every $i \geq 1$, we can simplify the presentation as
$$\mathbb{Z} \bullet \mathbb{Z} = \langle a,t,h \mid [t,a^{h^i}]=1 \ (i \geq 0) \rangle,$$
which is the presentation given in Section~\ref{section:Examples}.
\end{ex}

\subsection{Acylindrical hyperbolicity}\label{section:Acyl}

\noindent
A powerful method to study groups, coming from geometric group theory, is to look for behaviours of negative curvature. We saw with Theorem~\ref{thm:HypDiag} that there is essentially no hyperbolicity in diagram groups, and it can be expected that a similar phenomenon occurs for relative hyperbolicity (see Conjecture~\ref{conj:FreeProduct}). Nevertheless, \emph{acylindrical hyperbolicity} is sufficiently weak to encompass interesting examples of diagram groups and sufficiently restrictive to provide valuable information.

\medskip \noindent
Formally, a group $G$ is \emph{acylindrically hyperbolic} if it admits an action on some (Gromov-)hyperbolic space $X$ which is both \emph{non-elementary} (i.e.\ with an infinite limit set) and \emph{acylindrical} (i.e.\ for every $d \geq 0$, there exist $L,N \geq 0$ such that $\# \{ g \in G \mid d(x,gx),d(y,gy) \leq d\} \leq N$ for all $x,y \in X$ satisfying $d(x,y) \geq L$). Acylindrical hyperbolicity can be thought of as a weak phenomenon of negative curvature, but which nevertheless provides strong restrictions on the structure of the group under consideration. For instance:
\begin{itemize}
	\item Acylindrically hyperbolic groups admit many quotients. This is a consequence of small cancellation \cite{MR3589159} and Dehn fillings \cite{MR3605028}. For instance, every acylindrically hyperbolic group $G$ is \emph{SQ-universal} (i.e.\ every countable group embeds into a quotient of $G$);
	\item Acylindrically hyperbolic groups contain many free subgroups. This can also be seen as a consequence of small cancellation \cite{MR3589159} since the normal subgroups produced are free. But we can also justify this assertion by using the Property $P_{\mathrm{naive}}$ \cite{MR3936081} or the fact that random random subgroups are free \cite{MR4023758}.
	\item Acylindrically hyperbolic groups admit many quasimorphisms \cite{MR1914565}. In particular, this implies that the second bounded cohomology group has uncountable dimension. 
	\item An acylindrically hyperbolic group $G$ does not contain an infinite abelian subgroup $H$ that is \emph{$s$-normal} (i.e.\ $H \cap gHg^{-1}$ is infinite for every $g \in G$) \cite{OsinAcyl}. In particular, centres of acylindrically hyperbolic groups are finite. 
	\item An acylindrically hyperbolic group does not decompose as the direct sum of two infinite groups \cite{OsinAcyl}.
\end{itemize}
Diagram groups may or may not be acylindrically hyperbolic. Obvious examples of acylindrically hyperbolic diagram groups are non-abelian free groups. More interesting examples will be given in this section. On the other hand, free abelian groups, Thompson's group $F$, and the lamplighter group $\mathbb{Z} \wr \mathbb{Z}$ are not acylindrically hyperbolic (since they do not contain any non-abelian free subgroup). Is it possible to recognise whether or not a given diagram group is acylindrically hyperbolic? 

\medskip \noindent
Even though the median graphs constructed in Section~\ref{section:MedianDiag} are usually not hyperbolic, it turns out that there exist efficient methods that allow us to deduce that a group is acylindrically hyperbolic thanks to an action on a median graph; see \cite{MR4057355} and references therein. One of these methods is to look for specific isometries that behave like isometries in hyperbolic spaces. More precisely:

\begin{definition}
Let $X$ be a metric space. An isometry $g \in \mathrm{Isom}(X)$ is \emph{contracting} if there exists a basepoint $o \in X$ such that the map $n \mapsto g^n \cdot o$ induces a quasi-isometric embedding $\mathbb{Z} \to X$; and if there exists some $D \geq 0$ such that the nearest-point projection on $\langle g \rangle \cdot o$ of every ball disjoint from $\langle g \rangle \cdot o$ has diamter at most $D$. 
\end{definition}

\noindent
It turns out that a group acting properly on a metric space with at least one contracting isometry must be either virtually infinite cyclic or acylindrically hyperbolic. Therefore, a natural problem is to determine when an element of a diagram group induces a contracting isometry on the corresponding median graph. 

\begin{thm}[\cite{MR4071367}]\label{thm:Contracting}
Let $\mathcal{P}= \langle \Sigma \mid \mathcal{R} \rangle$ be a semigroup presentation, $w \in \Sigma^+$ a baseword, and $g \in D(\mathcal{P},w)$ a non-trivial absolutely reduced element. Then $g$ is a contracting isometry of $M(\mathcal{P},w)$ if and only if the following conditions are satisfied:
\begin{itemize}
	\item $g^\infty$ does not contain any infinite proper prefix (i.e.\ $g^\infty$ does not decompose as a concatenation $\Phi \circ \Psi$ with $\Phi$ infinite and $\Psi$ non-trivial);
	\item if $\Delta$ is are reduced diagram with $g^\infty$ as a prefix, then $\Delta$ differs from $g^\infty$ by only finitely many $2$-cells. 
\end{itemize}
\end{thm}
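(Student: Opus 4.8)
The plan is to recast everything in terms of the median graph $M(\mathcal{P},w)$ and the combinatorics of its hyperplanes, and then to invoke the known cubical characterisation of contracting isometries via \emph{well-separated} axes. Since $g$ is non-trivial and absolutely reduced, so is its mirror image $g^{-1}$, hence every power $g^{n}$ ($n\in\mathbb{Z}$) is reduced; by Lemma~\ref{lem:Geodesics} the bi-infinite path $\gamma\colon n\mapsto g^{n}\cdot o$ through the basepoint $o:=\epsilon(w)$, subdivided into the geodesic segments $g^{n}\cdot[o,g\cdot o]$, is a genuine combinatorial geodesic line on which $g$ acts by translation of length $\#(g)$. So $g$ is a combinatorially hyperbolic isometry (it acts by translation on a combinatorial geodesic line) with axis $\gamma$, the forward ray $\gamma_{+}$ being the increasing union of the prefixes $g^{n}$ and hence converging to the infinite reduced diagram $g^{\infty}$, while $\gamma_{-}$ converges to $g^{-\infty}$. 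By the criterion for contracting isometries in median graphs (see \cite{MR4057355} and the references therein), $g$ is a contracting isometry of $M(\mathcal{P},w)$ if and only if $\gamma$ is $L$-well-separated for some $L\ge 0$: for any two hyperplanes $J_{1},J_{2}$ crossing $\gamma$, every family of pairwise non-transverse hyperplanes each transverse to both $J_{1}$ and $J_{2}$ should have at most $L$ elements. Thus the theorem reduces to the combinatorial equivalence that $\gamma$ fails to be $L$-well-separated for every $L$ if and only if the first or the second of the stated conditions fails.

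To prove that equivalence I would first fix the dictionary supplied by Section~\ref{section:Hyperplanes}: the cells of a reduced (finite or infinite) diagram $\Delta\in M(\mathcal{P},w)$ are in bijection with the hyperplanes crossed by a geodesic from $o$ to $\Delta$; two cells are incomparable in the natural partial order on the cells of $\Delta$ precisely when the corresponding hyperplanes are transverse (this uses the transversality test \cite[Lemma~3.2]{MR3868219}); and the $g$-action translates the cells of $g^{\infty}$ along $\gamma$. Under this dictionary, an arbitrarily large family of pairwise non-transverse hyperplanes transverse to two fixed hyperplanes crossing $\gamma$ corresponds to an arbitrarily large ``parallel family'' of cells sitting side by side along the axis. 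Using the $g$-periodicity of $\gamma$, one then argues that such an unbounded family can be produced in exactly two ways. Either it forces a decomposition $g^{\infty}=\Phi\circ\Psi$ with $\Phi$ infinite and $\Psi$ non-trivial: a single non-trivial ``tail'' $\Psi$ that commutes past the infinite ``head'' $\Phi$ yields, after translating by all powers of $g$, parallel families of every size, which is the negation of the first condition. Or it yields a reduced diagram $\Delta$ having $g^{\infty}$ as a prefix but differing from $g^{\infty}$ by infinitely many cells, the extra cells being exactly those of the parallel family that escape $\gamma$, which is the negation of the second condition. Conversely, from a bad decomposition $g^{\infty}=\Phi\circ\Psi$, or from such a diagram $\Delta$, one reads off an explicit parallel family and enlarges it at will by translating with powers of $g$; hence $\gamma$ is not well-separated. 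The behaviour at the $\gamma_{-}$ end needs no separate hypothesis: by periodicity a flat alongside $\gamma_{+}$ translates under $g^{-n}$ to flats accumulating along $\gamma_{-}$, and conversely running the forward analysis for $g^{-1}$ (again absolutely reduced) shows that the two stated conditions already govern both ends.

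The step I expect to be the main obstacle is the claim that an unbounded parallel family must arise in one of exactly these two ways. Making it precise requires a careful local analysis of how the cells of $g^{\infty}$, and of its $g$-translates, fit together --- in particular controlling the case where the intermediate words of $g^{\infty}$ grow in length without any genuine flat being present --- followed by a pigeonhole argument that, using periodicity, upgrades a bounded configuration repeated along the axis into one of the two global structures (an internal splitting of $g^{\infty}$, or a side-flat witnessed by a prefix-larger diagram). By contrast, the set-up of the axis, the hyperplane dictionary, and the constructions in the direction ``if one of the conditions fails'' are comparatively routine given Sections~\ref{section:MedianDiag} and~\ref{section:Hyperplanes} together with the external criterion for contracting isometries.
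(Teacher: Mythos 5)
This survey does not actually prove Theorem~\ref{thm:Contracting}: it is quoted from \cite{MR4071367}, so there is no in-paper argument to compare against. The strategy you outline --- a combinatorial axis through the basepoint for an absolutely reduced element, reduction to a well-separation criterion for contracting isometries of median graphs, and a dictionary between the cells of (possibly infinite) reduced diagrams and hyperplanes --- is indeed the strategy of that reference. Your set-up of the axis via Lemma~\ref{lem:Geodesics} is correct (absolute reducedness of $g$ and of its mirror image gives $d(g^{-n}o,g^m o)=(n+m)\#(g)$, so the orbit path really is a bi-infinite combinatorial geodesic), and identifying the cells of a reduced diagram with the hyperplanes separating it from the basepoint, with incomparability in the cell order matching transversality, is the right dictionary.

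As a proof, however, the proposal has a genuine gap, and you have located it yourself: the equivalence ``the axis fails to be uniformly well-separated if and only if one of the two diagrammatic conditions fails'' \emph{is} the content of the theorem, and your text describes the intended correspondence without establishing it. Two concrete points. First, the external criterion is misquoted: what \cite{MR4071367} proves is that $g$ is contracting if and only if it \emph{skewers some} pair of $L$-well-separated hyperplanes, where well-separation bounds families of hyperplanes transverse to both that contain no facing triple, rather than chains, and the pair is existentially, not universally, quantified; reconciling this with your ``for any two hyperplanes crossing $\gamma$'' version for a periodic axis is plausible but is itself something to argue. Second, the hard direction is not manufacturing bad configurations from a failed condition (that part is indeed routine), but the converse: from the failure of well-separation one only gets, for each $n$, a finite chain of hyperplanes transverse to a pair $J$, $g^{k_n}J$, and these hyperplanes need not cross the axis nor sit inside any single diagram. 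One must organise infinitely many such witnesses, produced at different scales, into either an infinite proper prefix of $g^\infty$ or a single reduced infinite diagram containing $g^\infty$ as a prefix with infinitely many extra cells; this requires the formalism of infinite diagrams, their prefix order and reductions, a diagonal/limit argument, and a check that the $\gamma_-$ end is governed by conditions on $g^\infty$ alone rather than on $(g^{-1})^\infty$. None of this is supplied, so what you have is an accurate road map of the proof in \cite{MR4071367} rather than a proof.
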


\noindent
Roughly speaking, an infinite diagram refers to the object $\Delta_1 \circ \Delta_2 \circ \cdots$ obtained by concatenating infinitely many finite diagrams $\Delta_1,\Delta_2,\ldots$ over a given semigroup presentation. In particular, $g^\infty$ denotes the infinite diagram obtained by concatenating infinitely many copies of the diagram $g$. We refer to \cite{MR4071367} for a more formal description. 

\begin{ex}\label{ex1}
Let $\mathcal{P}= \langle a,b,p \mid a=ap,b=pb \rangle$ and let $g \in D(\mathcal{P},ab)$ be the spherical diagram illustrated by Figure \ref{figure15}.
\begin{figure}[h!]
\begin{minipage}{0.35\linewidth}
\includegraphics[width=0.95\linewidth]{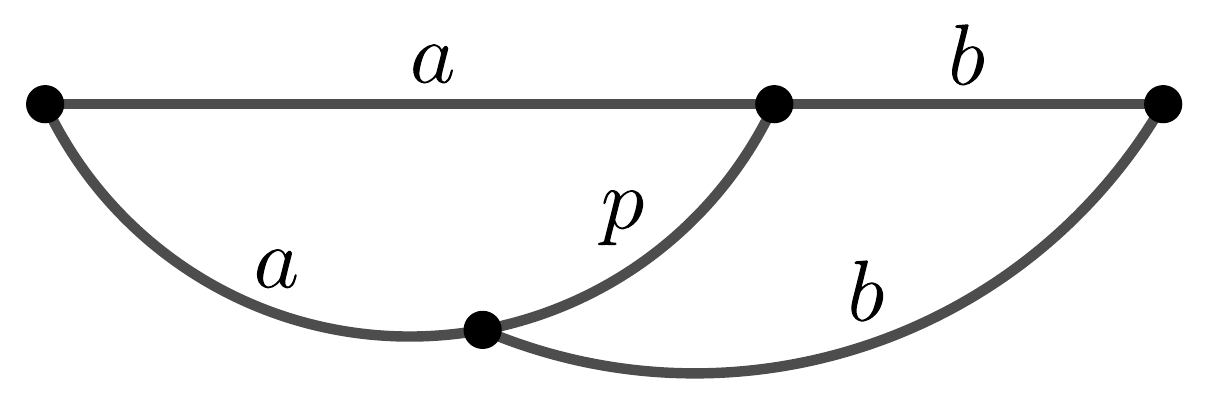}
\end{minipage}
\begin{minipage}{0.63\linewidth}
\includegraphics[width=\linewidth]{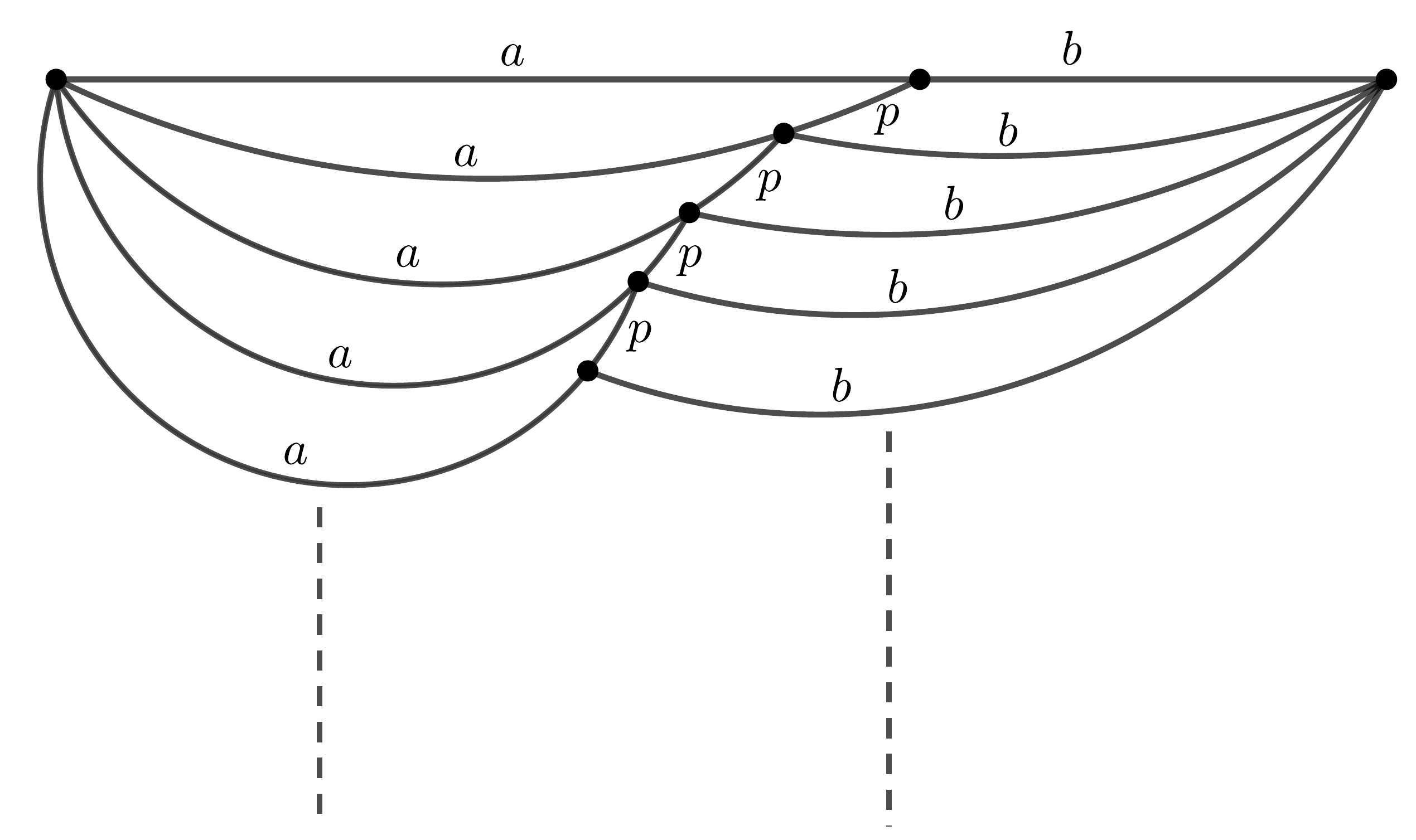}
\end{minipage}
\caption{Diagrams $g$ and $g^\infty$ from Example~\ref{ex1}.}
\label{figure15}
\end{figure}

\noindent
Then $g^{\infty}$ clearly contains a proper infinite prefix. Therefore, $g$ is not a contracting isometry of $M(\mathcal{P},ab)$.
\end{ex}

\begin{ex}\label{ex3}
Let $\mathcal{P}= \langle a,b,c \mid a=b,b=c,c=a \rangle$ and let $g \in D(\mathcal{P},a^2)$ be the spherical diagram illustrated by Figure~\ref{figure19}.
\begin{figure}[h!]
\begin{minipage}{0.4\linewidth}
\includegraphics[width=0.95\linewidth]{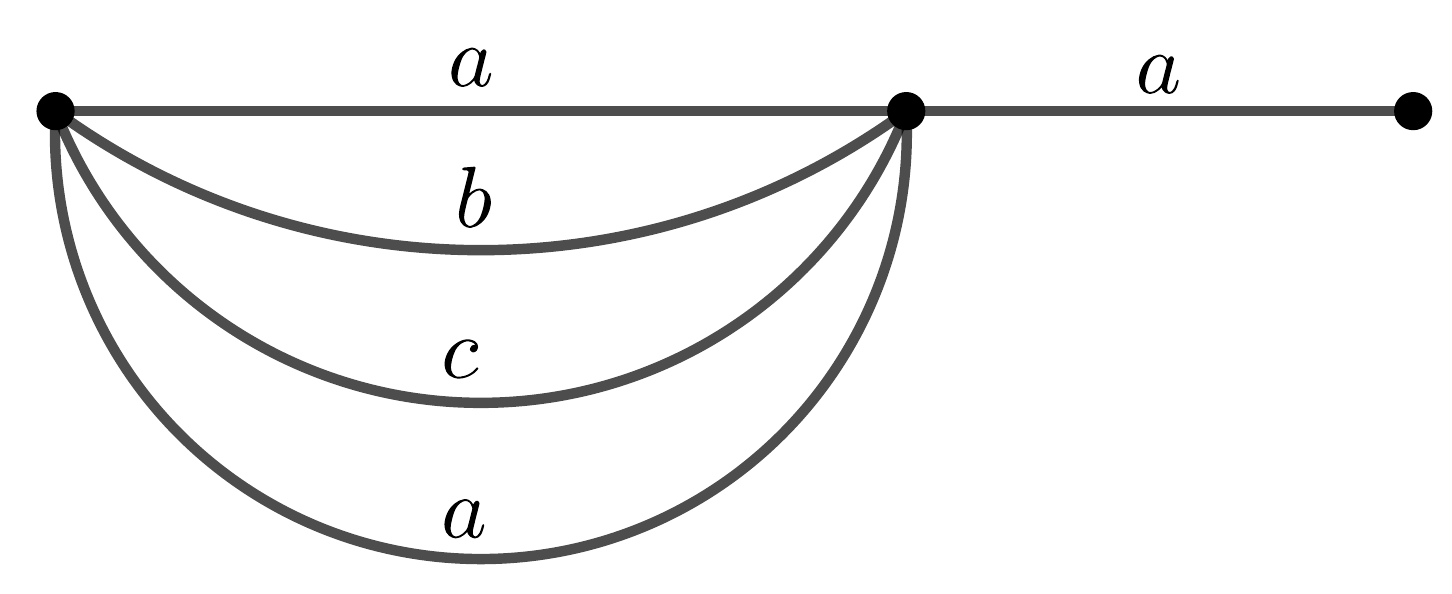}
\end{minipage}
\begin{minipage}{0.58\linewidth}
\includegraphics[width=\linewidth]{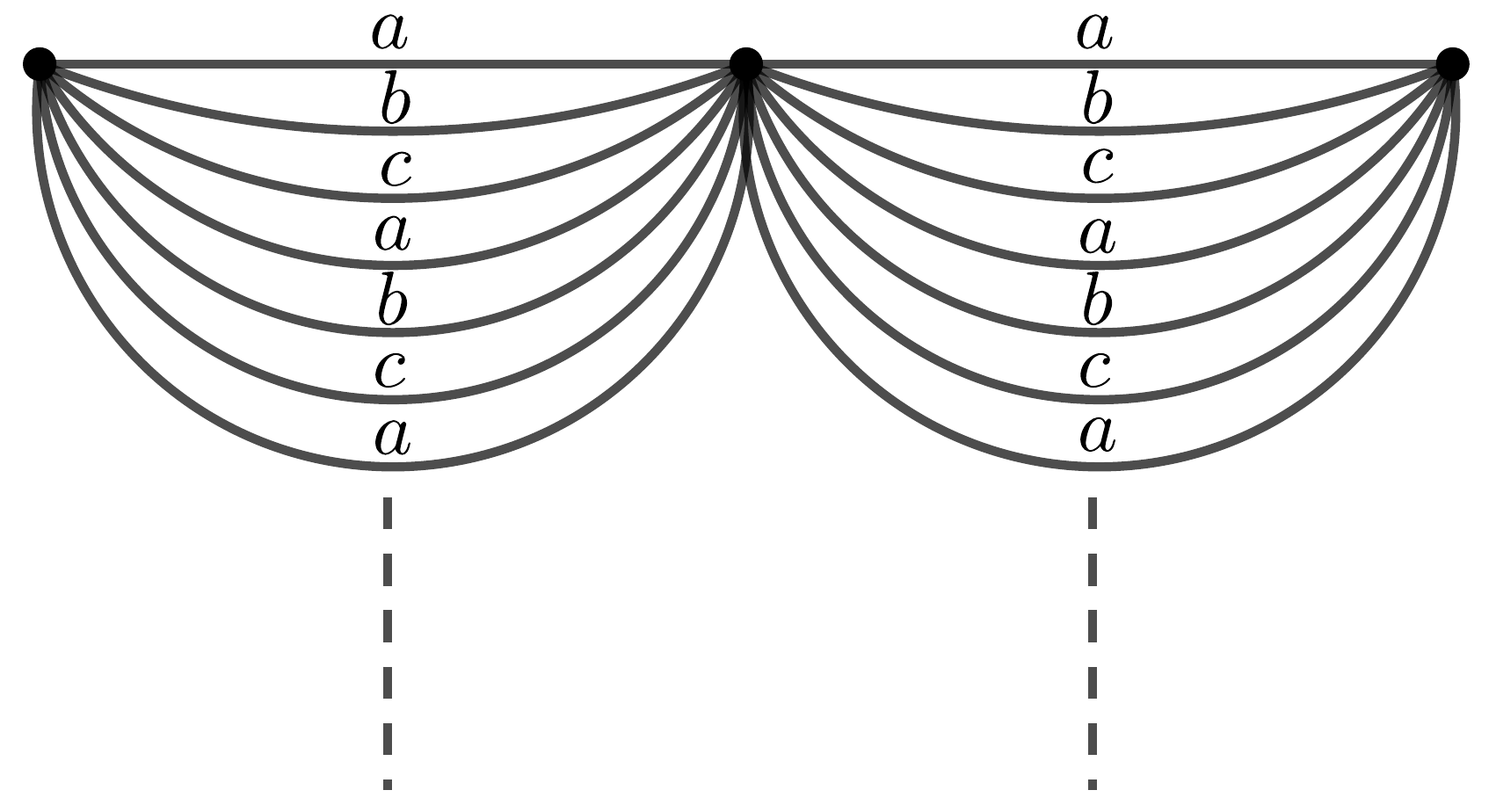}
\end{minipage}
\caption{Diagrams $g$ and $g^\infty$ from Example~\ref{ex3}.}
\label{figure19}
\end{figure}

\noindent
Then $g^{\infty}$ is a prefix of the diagram $\Delta$ given by Figure~\ref{figure19}, but $\Delta$ contains infinitely many $2$-cells not in $g^\infty$. Therefore, $g$ is not a contracting isometry of $M(\mathcal{P},a^2)$. 
\end{ex}

\begin{ex}\label{ex2}
Let $\mathcal{P}= \langle a,b,c,d \mid ab=ac, cd=bd \rangle$ and let $g \in D(\mathcal{P},ab)$ be the spherical diagram illustrated by Figure \ref{figure22}. Then the infinite diagram $g^{\infty}$ is given by Figure \ref{figure22}.
\begin{figure}[h!]
\begin{center}
\includegraphics[width=0.7\linewidth]{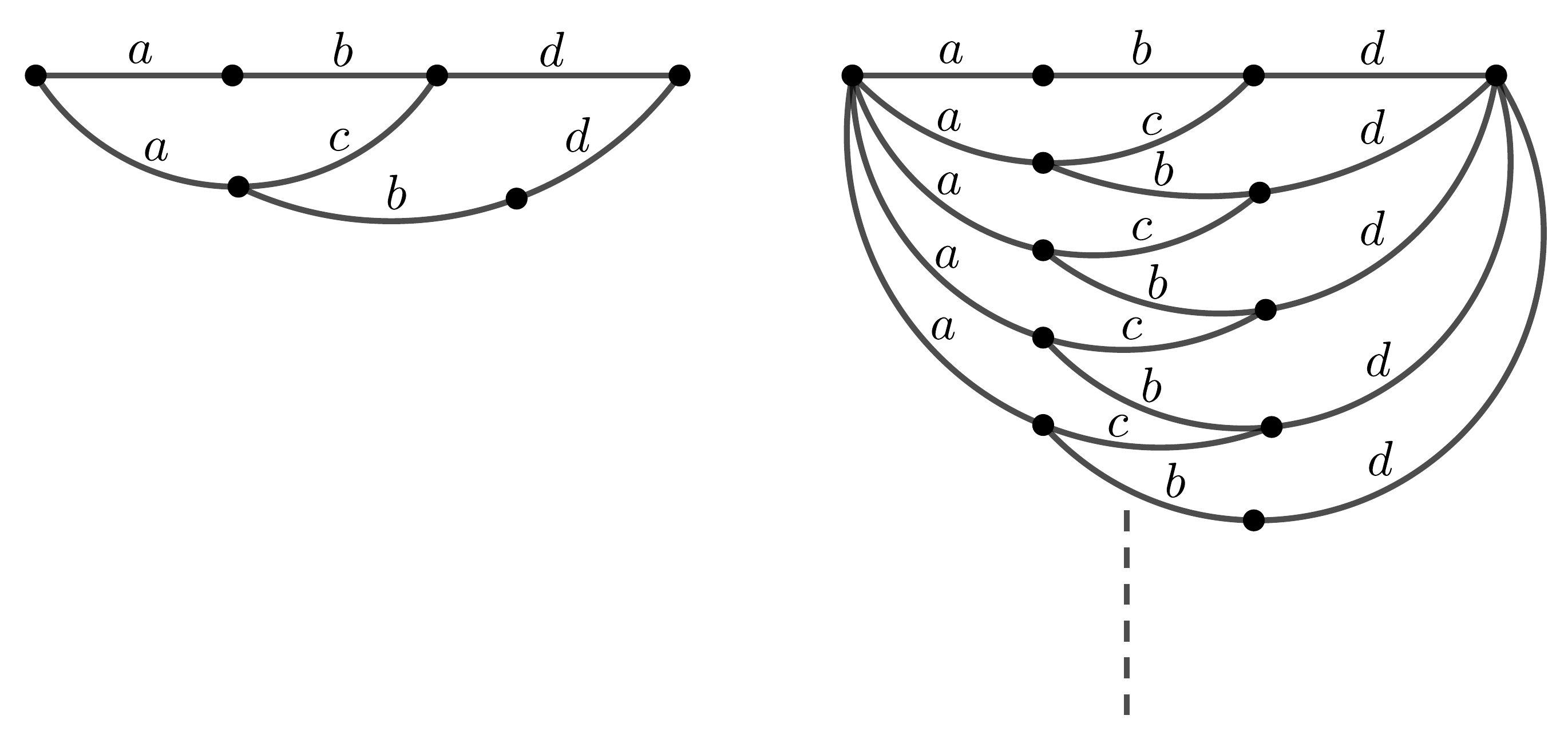}
\caption{The diagram $g$ and its associated infinite diagram $g^{\infty}$ from Example \ref{ex2}.}
\label{figure22}
\end{center}
\end{figure}

\noindent
We can notice that $g^{\infty}$ does not contain a proper infinite prefix and that any diagram containing $g^{\infty}$ as a prefix is necessarily equal to $g^{\infty}$. Therefore, $g$ yields a contracting isometry of $M(\mathcal{P},ab)$. One easily verifies that $D(\mathcal{P},ab)$ is not infinite cyclic, so $D(\mathcal{P},ab)$ is acylindrically hyperbolic.
\end{ex}

\noindent
Theorem~\ref{thm:Contracting} may be considered as unsatisfying because infinite diagrams cannot be drawn. For diagrams with only a few cells, the previous examples show that the theorem can be applied fairly easily, but for bigger diagrams it gets more delicate. For instance, consider the semigroup presentation
$$\left\langle a_1,a_2,a_3,b_1,b_2,b_3,p \mid \begin{array}{c} a_1=a_2, a_2=a_3, a_3=a_1 \\ b_1=b_2, b_2=b_3,b_3=b_1 \end{array}, a_1=a_1p, b_1=pb_1 \right\rangle$$
corresponding to the diagram group $\mathbb{Z} \bullet \mathbb{Z}$, and let $g$ be the element given by the following spherical diagram:
\begin{center}
\includegraphics[width=0.6\linewidth]{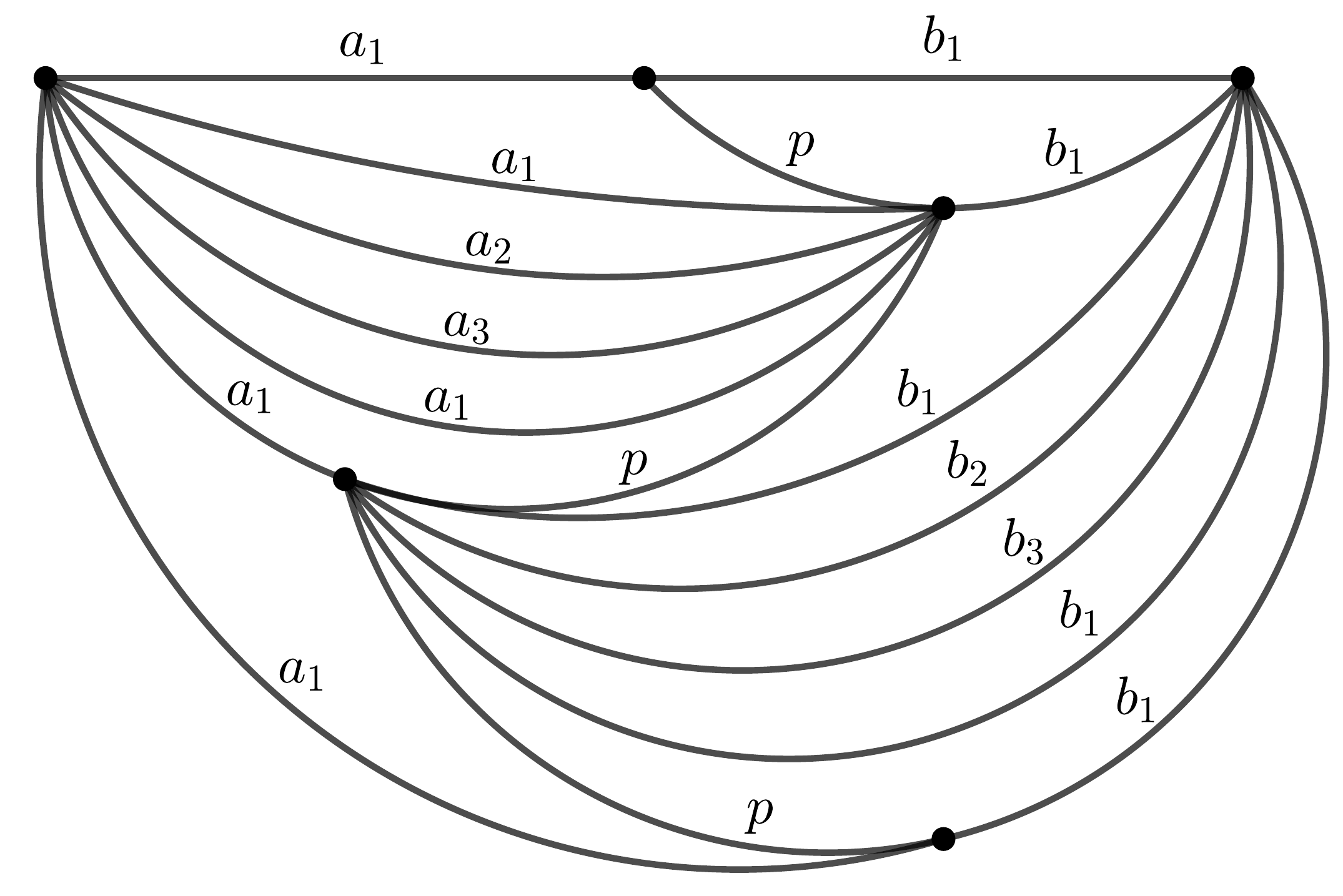}
\end{center}

\noindent
Then it can be verified that $g$ is a contracting isometry of $M(\mathcal{P},a_1b_1)$, proving that $\mathbb{Z} \bullet \mathbb{Z}$ is acylindrically hyperbolic. (In fact, it can be proved that $A \bullet B$ is always acylindrically hyperbolic when the groups $A$ and $B$ are both non-trivial; see \cite[Example~5.44]{MR4071367} for more details.) However, applying Theorem~\ref{thm:Contracting} becomes painful. We refer to \cite[Proposition~5.39]{MR4071367} for an efficient method that allows us verify the conditions given by Theorem~\ref{thm:Contracting} more easily. 

\medskip \noindent
Theorem~\ref{thm:Contracting} only provides a sufficient condition to be acylindrically hyperbolic. In fact, the diagram group may be acylindrically hyperbolic but with no contracting isometries in the associated median graph. However, when the action on the median graph is cocompact, the criterion turns out to provide a necessary and sufficient condition. 

\begin{thm}\label{thm:CocompactAH}
Let $\mathcal{P}= \langle \Sigma \mid \mathcal{R} \rangle$ be a semigroup presentation and $w \in \Sigma^+$ a baseword with $[w]_\mathcal{P}$ finite. There exist words $u_0,u_1, \ldots, u_m \in \Sigma^+$ and a $(w,u_0u_1 \cdots u_m)$-diagram $\Gamma$ such that
$$D(\mathcal{P},w) = \Gamma \cdot \left( D(\mathcal{P},u_0) \times D(\mathcal{P},u_1) \times \cdots \times D(\mathcal{P},u_m) \right) \cdot \Gamma^{-1},$$
where each $D(\mathcal{P},u_i)$ is trivial, infinite cyclic, or acylindrically hyperbolic.
\end{thm}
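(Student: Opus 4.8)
The plan is to combine the cocompactness of the action (which follows because $[w]_{\mathcal P}$ is finite, by Theorem~\ref{thm:FarleyCC}) with the decomposition of $M_\square(\mathcal P,w)$ as a product, and then to apply the contracting-isometry criterion (Theorem~\ref{thm:Contracting}) factor by factor. First I would recall that a cocompact median graph $X$ decomposes canonically as a product $X \cong X_1 \times \cdots \times X_k$ of \emph{irreducible} median graphs, where irreducibility means the hyperplane set does not split into two non-empty classes, each transverse to the other. Since $D(\mathcal P,w)$ acts freely and cocompactly on $M(\mathcal P,w)$, it acts on the set of factors, and a finite-index subgroup $G_0$ preserves each factor, giving a homomorphism $G_0 \to \mathrm{Isom}(X_1)\times\cdots\times\mathrm{Isom}(X_k)$. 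The first task is to promote this factorisation of the \emph{graph} into the algebraic direct-product decomposition asserted in the statement; this is where I would invoke the diagrammatic description of centralisers: Theorem~\ref{thm:Centralisers} together with Theorem~\ref{thm:FormAbelian} already tells us that commuting families of elements are simultaneously conjugated into sums $\Gamma\cdot(\Delta_1^{p_{i1}}+\cdots+\Delta_n^{p_{in}})\cdot\Gamma^{-1}$, and a product decomposition of a group acting cocompactly and freely on a median graph forces exactly such a sum decomposition of the diagram group — one conjugates by a single $(w,u_0\cdots u_m)$-diagram $\Gamma$ so that each factor $D(\mathcal P,u_i)$ acts on its own irreducible factor of the median graph.

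Second, I would analyse each factor $D(\mathcal P,u_i)$ acting on the corresponding irreducible median graph $X_i$, which is again cocompact. There is a standard trichotomy for a group $H$ acting freely and cocompactly on an \emph{irreducible} median graph: either $X_i$ is a single point (so $H$ is trivial), or $X_i$ is a line (so $H$ is infinite cyclic, since the action is free), or $X_i$ is unbounded and irreducible of a genuinely two-dimensional type, in which case — by the Caprace–Sageev rank-one machinery for cocompact actions on irreducible CAT(0) cube complexes — $H$ either has a finite-index subgroup splitting as a product (excluded by irreducibility) or contains a contracting (rank-one) isometry of $X_i$. In the last case $H$ is either virtually cyclic or acylindrically hyperbolic (Theorem~\ref{thm:Contracting} and the paragraph following it), and since diagram groups are torsion-free (Theorem~\ref{thm:TorsionFree}) "virtually cyclic" means "infinite cyclic". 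Collecting the three possibilities gives exactly: each $D(\mathcal P,u_i)$ is trivial, infinite cyclic, or acylindrically hyperbolic.

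Third, I would have to check that "trivial" factors can simply be absorbed — i.e.\ reduce to the case where every $u_i$ with $D(\mathcal P,u_i)=\{1\}$ is merged into its neighbours — which is routine bookkeeping on the word $u_0u_1\cdots u_m$ and the diagram $\Gamma$, and does not affect the statement as written (the theorem explicitly allows trivial factors). The main obstacle, I expect, is the \emph{first} step: passing rigorously from the Euclidean–de~Rham-type factorisation of the median graph $M(\mathcal P,w)$ to the direct-product decomposition of $D(\mathcal P,w)$ with a \emph{single} conjugating diagram $\Gamma$ and basewords $u_i\in\Sigma^+$. One must verify that the irreducible factors of $M(\mathcal P,w)$ are themselves of the form $M(\mathcal P,u_i)$ (up to isomorphism) for suitable words $u_i$, and that the product structure on vertices matches the concatenation $u_0u_1\cdots u_m$; this uses that a reduced spherical diagram over $w$ with $[w]_{\mathcal P}$ finite decomposes as a sum of simple diagrams supported on disjoint subwords, so that the hyperplane classes of $M(\mathcal P,w)$ are partitioned according to which "block" $u_i$ of the baseword their underlying relation touches — and two hyperplanes touching different blocks are always transverse. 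Making that block decomposition canonical (independent of the diagram, hence $D(\mathcal P,w)$-invariant) is the technical heart, and it is precisely the content of the combinatorial analysis behind Theorems~\ref{thm:Centralisers} and~\ref{thm:FormAbelian}, which I would cite rather than reprove.
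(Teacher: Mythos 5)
The paper states Theorem~\ref{thm:CocompactAH} without proof, so there is no argument to compare against line by line; your overall architecture --- first decompose, then run a rank-rigidity/contracting-isometry trichotomy on each indecomposable factor, using Theorem~\ref{thm:Contracting} and torsion-freeness --- is certainly the intended one, and your second and third steps are essentially sound (modulo passing to the essential core before invoking Caprace--Sageev). The problem is your first step, which is where all the content of the theorem lies.

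You deduce the algebraic decomposition $D(\mathcal{P},w)=\Gamma\cdot\left(D(\mathcal{P},u_0)\times\cdots\times D(\mathcal{P},u_m)\right)\cdot\Gamma^{-1}$ from the de~Rham factorisation of the median graph, asserting that a free cocompact action on a reducible median graph ``forces exactly such a sum decomposition''. That principle is false in general: torsion-free Burger--Mozes lattices act freely and cocompactly on products of two trees and are simple, hence certainly do not split as direct products; preserving the de~Rham factors only yields a subdirect product of the projections, and only for a finite-index subgroup at that. The references you offer to close this gap do not close it: Theorem~\ref{thm:Centralisers} describes the centraliser of a \emph{single} element, and Theorem~\ref{thm:FormAbelian} simultaneously normalises a \emph{finite commuting family}; neither produces one conjugating diagram $\Gamma$ and one decomposition $w=u_0\cdots u_m$ valid for \emph{all} elements of $D(\mathcal{P},w)$ at once. (Every individual spherical diagram is a sum of simple spherical diagrams, but the cut points vary from element to element --- this is precisely why Thompson's group $F$ does not split even though each of its elements decomposes.) What is actually needed, and where the hypothesis that $[w]_\mathcal{P}$ is finite enters, is a decomposition of the \emph{Squier complex itself} in the spirit of Proposition~\ref{prop:DirectSum} read backwards: one must find $u_0,\ldots,u_m$ with $w=u_0\cdots u_m$ modulo $\mathcal{P}$ such that no relation of $\mathcal{R}$ overlaps two consecutive blocks in \emph{any} word of $[w]_\mathcal{P}$, so that $S(\mathcal{P},u_0\cdots u_m)$ literally equals $S(\mathcal{P},u_0)\times\cdots\times S(\mathcal{P},u_m)$ and the fundamental group splits accordingly; finiteness of $[w]_\mathcal{P}$ bounds the lengths of the words in the class and guarantees that the refinement into blocks terminates at a maximal (indecomposable) stage. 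As written, your step one asserts the conclusion of this combinatorial argument rather than proving it, and one must also still check that indecomposability of each block $u_i$ yields irreducibility of the essential core of $M(\mathcal{P},u_i)$ before your trichotomy applies.
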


\noindent
We already mentioned that acylindrically hyperbolic groups do not split as product of infinite groups. Theorem~\ref{thm:CocompactAH} shows that this is essentially the only obstruction for a cocompact diagram group to be acylindrically hyperbolic.

\subsection{Diagram products and quasi-median geometry}\label{section:DiagramProducts}

\noindent
Diagram products, first introduced in \cite{MR1725439}, can be thought of as \emph{diagram groups with coefficients}, where the coefficients come from some collection of groups (the \emph{factors} of the product). Below, we follow the definition given in \cite{QM}.

\medskip \noindent
Let $\mathcal{P}= \langle \Sigma \mid \mathcal{R} \rangle$ be a semigroup presentation and $\mathcal{G}= \{ G_s \mid s \in \Sigma \}$ a collection of groups indexed by the letters of our alphabet $\Sigma$. A \emph{diagram over $(\mathcal{P},\mathcal{G})$} is a diagram $\Delta$ over $\mathcal{P}$ such that each edge of $\Delta$ labelled by a letter $s \in \Sigma$ is also labelled by an element of $G_s$. In other words, the edges of a diagram are labelled by $\{ (s,g) \mid s \in \Sigma, g \in G_s\}$. Most of the vocabulary introduced in Section~\ref{section:Diagrams} extends naturally to such diagrams. Let us record the two major differences.
\begin{itemize}
	\item In a diagram $\Delta$ over $(\mathcal{P},\mathcal{G})$, a \emph{dipole} refers to two cells $\pi_1,\pi_2$ satisfying $\mathrm{bot}(\pi_1)= \mathrm{top}(\pi_2)$ and labelled as follows: the top path of $\pi_1$ is labelled by $(u_1,g_1) \cdots (u_m,g_m)$, its bottom path is labelled by $(v_1,1) \cdots (v_n,1)$, and the bottom path of $\pi_2$ is labelled by $(u_1,h_1) \cdots (u_m,h_m)$ where $u_1 \cdots _m= v_1 \cdots v_n$ or its inverse belongs to $\mathcal{R}$. One \emph{reduces} the dipole by removing $\pi_1,\pi_2$, identifying $\mathrm{top}(\pi_1)$ with $\mathrm{bot}(\pi_2)$, and labelling this path by $(u_1,g_1h_1) \cdots (u_m, g_mh_m)$. 
	\item Let $\Delta_1,\Delta_2$ be two diagrams over $(\mathcal{P},\mathcal{G})$. If $\mathrm{bot}(\Delta_1)$ is labelled by $(u_1,g_1) \cdots (u_m,g_m)$ and $\mathrm{top}(\Delta_2)$ by $(u_1,h_1) \cdots (u_m,h_m)$, the \emph{concatenation} $\Delta_1 \circ \Delta_2$ of $\Delta_1$ and $\Delta_2$ is the diagram obtained by identifying $\mathrm{bot}(\Delta_1)$ with $\mathrm{top}(\Delta_2)$ and by labelling this common path by $(u_1,g_1h_1) \cdots (u_m,g_mh_m)$. 
\end{itemize}

\begin{definition}
Let $\mathcal{P}= \langle \Sigma \mid \mathcal{R} \rangle$ be a semigroup presentation, $\mathcal{G}= \{ G_s \mid s \in \Sigma \}$ a collection of groups, and $w \in \Sigma^+$ a baseword. The \emph{diagram product} $D(\mathcal{P},\mathcal{G},w)$ is the group whose elements are the diagrams over $(\mathcal{P}, \mathcal{G})$ modulo dipole reductions and whose product is given by the concatenation.
\end{definition}

\noindent
As an illustration, consider the semigroup presentation $\mathcal{P}= \langle a,b,c \mid ab=ba, ac=ca, bc=cb \rangle$ and the factors $\mathcal{G}= \{G_a=G_b=G_c= \mathbb{Z} \}$. Here is a product of two diagrams over $(\mathcal{P}, \mathcal{G})$:

\noindent
\includegraphics[width=\linewidth]{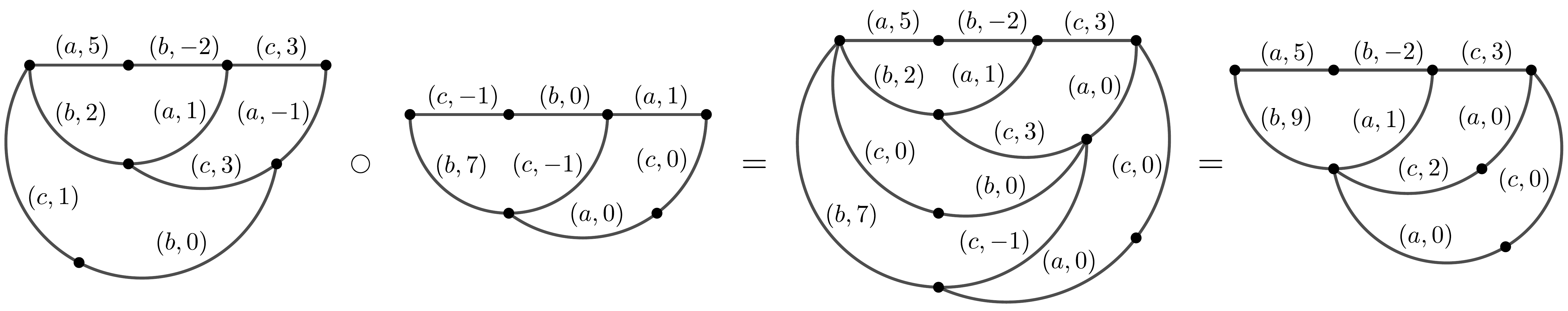}

\noindent
We emphasize that the rightmost diagram is reduced (even though there is a dipole when we remove the labels coming from the factors). 

\medskip \noindent
In the same way that diagram groups can be described both in terms of diagrams and as fundamental groups of Squier complexes, it is also possible to describe diagram products as fundamental groups of specific complexes, which we describe now.

\begin{definition}
Let $\mathcal{P}= \langle \Sigma \mid \mathcal{R} \rangle$ be a semigroup presentation and $\mathcal{G}= \{G_s \mid s \in \Sigma \}$ a collection of groups indexed by $\Sigma$. The \emph{Squier complex} $S(\mathcal{P},\mathcal{G})$ is the square complex whose vertex-set is $\Sigma^+$ with
\begin{itemize}
	\item an edge $(a,u \to v,b)$ between the words $aub$ and $avb$ whenever $(u=v) \in \mathcal{R}$;	
	\item a loop $b(g)$ based at $w= \ell_1 \cdots \ell_n \in \Sigma^+$ for every $g \in G(w) := G_{\ell_1} \times \cdots \times G_{\ell_n}$;
	\item a square delimited by the four vertices $aubpc$, $avbpc$, $aubqc$, and $avbqc$ whenever $(u=v),(p=q) \in \mathcal{R}$;
	\item a triangle delimited by the three loops based at $w$ labelled by $g,h,gh \in G(w)$;
	\item a cylinder between the loops labelled by $g \in G(a) \times G(b) \leq G(aub)$ and $h \in G(a) \times G(b) \leq G(avb)$ following the edge $(a,u \to v,b)$, where $(u=v) \in \mathcal{R}$. 
\end{itemize}
\end{definition}

\noindent
The reader familiar with complexes of groups should recognise the topological realisation of a simple complex of groups with the Squier square complex $S(\mathcal{P})$ as the underlying complex; we refer to \cite[Chapter~II.12]{MR1744486} for more information on simple complexes of groups. In \cite{MR1725439}, diagram products are defined as fundamental groups of such complexes. This point of view is equivalent to the definition presented here. 

\begin{prop}[\cite{QM}]\label{prop:DiagProdSquier}
Let $\mathcal{P}= \langle \Sigma \mid \mathcal{R} \rangle$ be a semigroup presentation, $\mathcal{G}= \{G_s \mid s \in \Sigma \}$ a collection of groups, and $w \in \Sigma^+$ a baseword. The diagram product $D(\mathcal{P},\mathcal{G},w)$ is naturally isomorphic to the fundamental group of $S(\mathcal{P},\mathcal{G})$ based at $w$.
\end{prop}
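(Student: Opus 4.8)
The plan is to extend the dictionary between paths in the Squier complex and diagrams developed in Section~\ref{section:Diagrams} (Proposition~\ref{prop:PathDiag} and Corollary~\ref{cor:PathDiag}) so that it keeps track of the group labels. First I would define a map $\mathrm{diag}$ from the edge-paths of $S(\mathcal{P},\mathcal{G})$ to the diagrams over $(\mathcal{P},\mathcal{G})$: an edge $(a,u\to v,b)$ is sent to the diagram $\epsilon(a)+\pi+\epsilon(b)$, where $\pi$ is the single cell associated with the relation $u=v$ and all the boundary edges carry the trivial element of the corresponding factor; a loop $b(g)$ based at $\ell_1\cdots\ell_n$ with $g=(g_1,\dots,g_n)\in G_{\ell_1}\times\cdots\times G_{\ell_n}$ is sent to the trivial diagram $\epsilon(\ell_1\cdots\ell_n)$ whose $i$-th edge is labelled by $g_i$; and a concatenation of edges and loops is sent to the corresponding concatenation of diagrams (so that, in particular, composing two loops at $w$ multiplies their factor labels, matching the concatenation rule for diagrams over $(\mathcal{P},\mathcal{G})$). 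The inverse of an edge is sent to the mirror cell and the inverse of a loop $b(g)$ to $b(g^{-1})$, so $\mathrm{diag}$ is compatible with path-inversion.

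Next I would check that $\mathrm{diag}$ descends to a groupoid morphism $\Phi\colon \Pi_1(S(\mathcal{P},\mathcal{G}))\to D(\mathcal{P},\mathcal{G})$ from the fundamental groupoid to the \emph{diagram groupoid with coefficients} (diagrams over $(\mathcal{P},\mathcal{G})$ up to dipole reduction, with concatenation). Two homotopic edge-paths differ by a finite sequence of elementary moves, one kind for each type of $2$-cell of $S(\mathcal{P},\mathcal{G})$, and each move is absorbed by dipole reduction exactly as in the diagram-group case: a backtrack produces a dipole with trivial labels; the boundary of a square $[a,u\to v,b,p\to q,c]$ produces two diagrams that coincide on the nose (performing two disjoint cell-applications in either order); the boundary of a multiplication triangle on $g,h,gh$ produces a trivial diagram whose $i$-th edge carries the label $g_ih_i(g_ih_i)^{-1}=1$; and the boundary of a cylinder along $(a,u\to v,b)$ carrying a label from $G(a)\times G(b)$ produces a diagram of the form $\epsilon_g(aub)\circ(\epsilon(a)+\pi+\epsilon(b))\circ\epsilon_{g}(avb)^{-1}\circ(\epsilon(a)+\pi+\epsilon(b))^{-1}$ in which the factor label, being carried by edges of the trivial parts $\epsilon(a),\epsilon(b)$ that are disjoint from the cell $\pi$, slides freely past $\pi$, so that the two cells form a dipole with matching (trivial) labels and the whole diagram reduces to $\epsilon(aub)$. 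Thus $\Phi$ is well defined.

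Conversely, every diagram over $(\mathcal{P},\mathcal{G})$ can be built up by gluing cells below one at a time (inserting a trivially labelled cell, hence an edge of $S(\mathcal{P},\mathcal{G})$) and then inserting factor labels one edge at a time (a loop of $S(\mathcal{P},\mathcal{G})$); reading off the resulting path gives a section $\Psi$. One checks that $\Psi$ is well defined and that $\Phi$ and $\Psi$ are mutually inverse, so $\Phi$ is an isomorphism of groupoids; restricting to the base vertex $w$ identifies $\pi_1(S(\mathcal{P},\mathcal{G}),w)$ with the vertex group of $D(\mathcal{P},\mathcal{G})$ at $w$, which is by definition $D(\mathcal{P},\mathcal{G},w)$. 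Alternatively, one may observe that $S(\mathcal{P},\mathcal{G})$ is the topological realisation of the simple complex of groups built on the Squier square complex $S(\mathcal{P})$ with the factors $G_s$, and deduce the statement from \cite{MR1744486} together with the original description of diagram products as fundamental groups of such complexes of groups in \cite{MR1725439}.

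The hard part is the well-definedness of $\Psi$, equivalently the injectivity of $\Phi$ on $\pi_1$: the statement that if the diagram associated with an edge-loop at $w$ reduces to the trivial diagram, then the loop is null-homotopic in $S(\mathcal{P},\mathcal{G})$. In the label-free setting this is precisely Proposition~\ref{prop:PathDiag}, and the genuinely new content is the bookkeeping of factor labels through a sequence of dipole reductions. Here the cylinder cells play a double role: besides making the cylinder-boundary relation trivial as above, they are exactly what is needed to commute a factor label carried by an edge of a diagram past a cell that does not touch that edge, which is the manoeuvre that converts a dipole of diagrams over $(\mathcal{P},\mathcal{G})$ into a plain backtrack (after which the triangle relations collapse the residual labels into a single one). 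One also needs, as a prerequisite, the analogue of \cite[Theorem~3.17]{MR1396957} for diagrams over $(\mathcal{P},\mathcal{G})$, namely that the reduction of such a diagram is independent of the order in which dipoles are reduced; this is part of the construction of diagram products and may be assumed.
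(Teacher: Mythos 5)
Your proposal is correct and follows essentially the same route as the paper, which disposes of this proposition in one line by saying the proof is ``basically the same as the proof of Corollary~\ref{cor:PathDiag}''; your write-up is simply a fleshed-out version of that remark, with the right accounting of how the new $2$-cells of $S(\mathcal{P},\mathcal{G})$ (multiplication triangles and cylinders) correspond to relations absorbed by labelled dipole reduction. The alternative you mention via simple complexes of groups is also the equivalence the paper itself points to when comparing its definition with the one in \cite{MR1725439}.
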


\noindent
The proof of Proposition~\ref{prop:DiagProdSquier} is basically the same as the proof of Corollary~\ref{cor:PathDiag}. It can be shown that a diagram product of diagram groups yields a diagram group. More precisely:

\begin{thm}[\cite{MR1725439}]
Let $\mathcal{P}= \langle \Sigma \mid \mathcal{R} \rangle$ be a semigroup presentation and $w \in \Sigma^+$ a baseword. For every $s \in \Sigma$, fix a semigroup presentation $\mathcal{P}_s = \langle \Sigma_s \mid \mathcal{R}_s \rangle$ and a baseword $w_s \in \Sigma^+$. Assume that the alphabets $\Sigma$, $\Sigma_s$ ($s \in \Sigma$) are pairwise disjoint. The diagram product $D(\mathcal{P},\mathcal{G},w)$, where $\mathcal{G}:= \{ D(\mathcal{P}_s,w_s) \mid s \in \Sigma\}$, is isomorphic to the diagram group given by the semigroup presentation
$$\left\langle \Sigma \sqcup \{x_s \mid s \in \Sigma\} \sqcup \bigsqcup\limits_{s \in \Sigma} \Sigma_s \mid \bigsqcup\limits_{s \in \Sigma} \mathcal{R}_s \sqcup \mathcal{R} \sqcup \{ s = x_s w_s x_s, s \in \Sigma\} \right\rangle$$
and the baseword $w$. 
\end{thm}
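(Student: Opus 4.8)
The plan is to produce an explicit isomorphism between $D(\mathcal{P},\mathcal{G},w)$, where $\mathcal{G}=\{D(\mathcal{P}_s,w_s)\mid s\in\Sigma\}$, and the diagram group $D(\mathcal{Q},w)$ over the amalgamated presentation
$$\mathcal{Q}:=\left\langle \Sigma \sqcup \{x_s\mid s\in\Sigma\}\sqcup \bigsqcup_{s\in\Sigma}\Sigma_s \;\middle|\; \bigsqcup_{s\in\Sigma}\mathcal{R}_s\sqcup\mathcal{R}\sqcup\{s=x_sw_sx_s,\ s\in\Sigma\}\right\rangle.$$
The guiding idea is exactly the same trick used for the wreath product and the $\bullet$-product earlier in the survey: the new letters $x_s$ act as \emph{separators}, so that the only way a relation of $\mathcal{R}_s$ can be applied to a subword is when it sits strictly between two copies of $x_s$, i.e.\ inside a single ``slot'' $x_sw_sx_s$ created from one letter $s$; conversely, a relation of $\mathcal{R}$ (or one of the $s=x_sw_sx_s$) can only be applied once all the inner slots it touches have been collapsed back to single letters $s$. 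So every reduced $(w,w)$-diagram over $\mathcal{Q}$ decomposes, after suitable rearrangement, into a ``$\mathcal{P}$-part'' built from cells labelled by $\mathcal{R}$ and the separator relations, decorated by ``pockets'' each of which is a reduced $(w_s,w_s)$-diagram over $\mathcal{P}_s$ hanging off an edge labelled $s$.

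First I would make this decomposition precise at the level of Squier complexes, using Proposition~\ref{prop:DiagProdSquier}: it suffices to build an isomorphism $\pi_1(S(\mathcal{P},\mathcal{G}),w)\to\pi_1(S(\mathcal{Q}),w)$. The map on generators sends a loop $(a,u\to v,b)$ of $S(\mathcal{P},\mathcal{G})$ to the corresponding $\mathcal{R}$-edge in $S(\mathcal{Q})$ (between words in which each letter $t$ has been replaced by $x_tw_tx_t$ and back), and sends a loop $b(g)$ at $w=\ell_1\cdots\ell_n$, with $g=(g_1,\ldots,g_n)\in G(w)$, to the $(w,w)$-diagram over $\mathcal{Q}$ that opens each $\ell_i$ into $x_{\ell_i}w_{\ell_i}x_{\ell_i}$, inserts inside the $i$-th pocket a reduced $(w_{\ell_i},w_{\ell_i})$-diagram over $\mathcal{P}_{\ell_i}$ representing $g_i\in D(\mathcal{P}_{\ell_i},w_{\ell_i})$, and closes the pockets again. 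Second I would check this respects the defining relations of $\pi_1(S(\mathcal{P},\mathcal{G}))$: the triangles $g,h,gh$ at $w$ map to the fact that pocket-diagrams concatenate (which is how $D(\mathcal{P}_s,w_s)$ multiplies), the squares go to squares of $\mathcal{R}$-edges in $S(\mathcal{Q})$, and the cylinders along $(a,u\to v,b)$ (carrying the $G(a)\times G(b)$-action) translate into the statement that a pocket hanging off a letter of $a$ or $b$ can be slid past the $\mathcal{R}$-move $u\to v$ — this is just a commutation of disjoint cells in a diagram over $\mathcal{Q}$, hence a homotopy. This gives a well-defined homomorphism $\Phi:D(\mathcal{P},\mathcal{G},w)\to D(\mathcal{Q},w)$.

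To finish, I would exhibit the inverse. Given a reduced diagram $\Theta$ over $\mathcal{Q}$ with top and bottom labels $w$, I would argue that up to dipole reduction $\Theta$ can be brought to a ``stratified'' normal form $\Theta_{\mathrm{open}}\circ\Theta_{\mathcal{P}}\circ\Theta_{\mathrm{pocket}}\circ\Theta_{\mathcal{P}}'\circ\Theta_{\mathrm{close}}$, and read off from it: (i) a path in $S(\mathcal{P})$ recording the $\mathcal{R}$-moves (the part of $\Theta$ using $\mathcal{R}$ and separator cells), and (ii) for each letter $s$ visited, the pocket contents as an element of $D(\mathcal{P}_s,w_s)$. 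The well-definedness of this readout modulo homotopy, together with $\Phi$, yields mutually inverse maps. The main obstacle — and the only genuinely technical point — is establishing this normal form: one must show that separator cells ($s\leftrightarrow x_sw_sx_s$) and $\mathcal{R}$-cells can always be commuted past inner $\mathcal{R}_s$-cells (using that a reduced diagram over $\mathcal{Q}$ has no dipole, so an $\mathcal{R}_s$-relation can never straddle a separator), and that two stratified diagrams are equal modulo dipoles iff their associated $\mathcal{P}$-paths and pocket elements agree. This is precisely where the ``$x_s$ is a separation letter'' heuristic must be turned into a combinatorial lemma about where each relation of $\mathcal{Q}$ can legally be applied; once that lemma is in hand, everything else is bookkeeping, and the isomorphism follows.
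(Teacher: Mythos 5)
The survey states this theorem without proof (it only cites Guba--Sapir), so there is no in-paper argument to compare against; judged on its own terms, your plan is correct and is exactly the natural one, mirroring the separator-letter technique the paper itself uses for the $\wr\mathbb{Z}$ and $\bullet$ constructions and passing through Proposition~\ref{prop:DiagProdSquier} as it should. Your identification of the one genuinely technical point is also right: you must prove the ``slot-structure'' invariant, namely that every word in $[w]_{\mathcal{Q}}$ decomposes uniquely into unopened $\Sigma$-letters and slots $x_s(\cdot)x_s$ with content in $[w_s]_{\mathcal{P}_s}$, so that each $\mathcal{R}_s$-cell lives inside a single slot, each $\mathcal{R}$-cell touches only unopened letters, and $x_sw_sx_s$ can only be matched against a genuine slot (an induction on derivations). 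Granting that lemma, the verification of the triangle, square, and cylinder relations and the construction of the inverse are bookkeeping, as you say; the only cosmetic quibble is that the $\mathcal{R}$-edges of $S(\mathcal{P},\mathcal{G})$ already sit inside $S(\mathcal{Q})$ verbatim, so no conjugation by opened slots is needed there.
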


\noindent
It is worth noticing that some of the examples given in Section~\ref{section:Examples} arise in this way.  

\medskip \noindent
Because diagram groups admit a natural median geometry, it is reasonable to expect diagram products to admit a median geometry relative to their factors. This idea is formalised in \cite{QM} through \emph{quasi-median} geometry. (Compare with Remark~\ref{remark:QMvsM} below.)

\begin{definition}
Let $X$ be a connected graph. Given three vertices $x_1,x_2,x_3 \in X$, a \emph{median triangle} is the data of three vertices $y_1,y_2,y_3 \in X$ such that
$$d(x_i,x_j)=d(x_i,y_i)+d(y_i,y_j)+d(y_j,x_j), \ \forall i \neq j.$$
Its \emph{size} if $d(y_1,y_2)+d(y_2,y_3)+d(y_3,y_1)$. The graph $X$ is \emph{quasi-median} if any three vertices admit a unique median triangle of minimal size and if the gated hull of any such median triangle is a product of complete graphs.
\end{definition}
\begin{center}
\includegraphics[width=\linewidth]{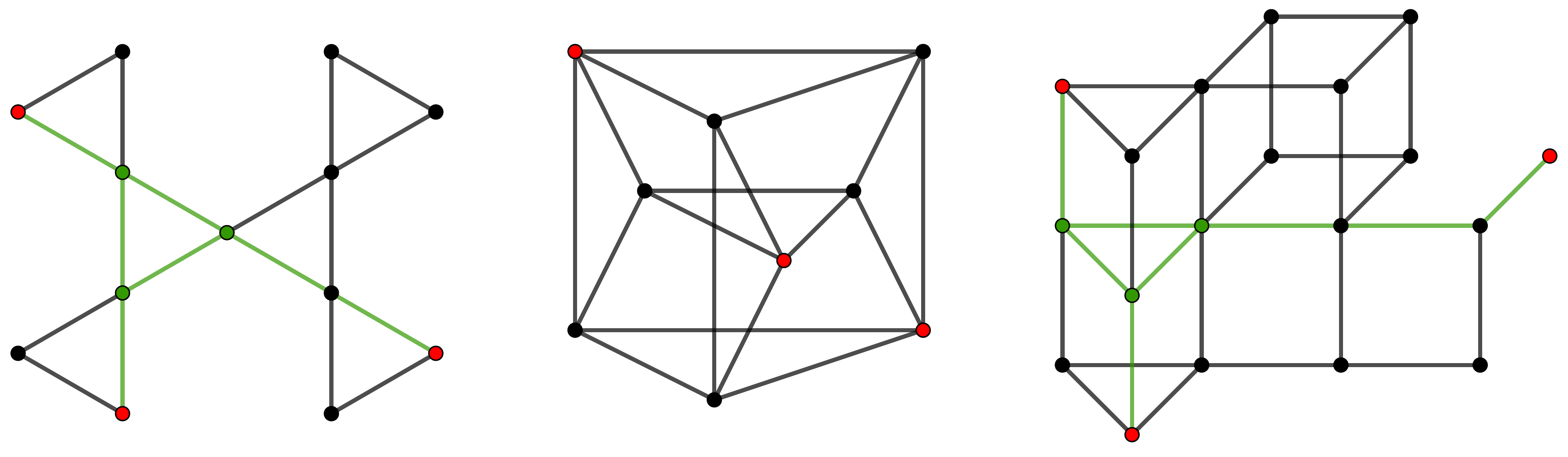}
\end{center}

\noindent
Recall that, given a graph $X$, a subgraph $Y \subset X$ is \emph{gated} if, for every $x \in X$, there exists some $z \in Y$ (referred to as the \emph{gate} or \emph{projection}) that belongs to at least one geodesic from $x$ to $y$ for every $y \in Y$. Notice that, when it exists, the gate is unique and it minimises the distance to $x$ from $Y$. Gatedness should be thought of as a strong convexity. 

\medskip \noindent
In the same way that median graphs can be described as one-skeleta of nonpositively curved cellular complexes, namely CAT(0) cube complexes, quasi-median graphs can be described as one-skeleta of CAT(0) prism complexes (where a \emph{prism} refers to a product of simplices) \cite{QM}. 

\begin{definition}
Let $\mathcal{P}= \langle \Sigma \mid \mathcal{R} \rangle$ be a semigroup presentation and $\mathcal{G}= \{ G_s \mid s \in \Sigma \}$ a collection of groups. Let $\mathrm{QM}(\mathcal{P},\mathcal{G})$ denote the graph whose vertices are the diagrams over $(\mathcal{P},\mathcal{G})$ modulo dipole reductions and whose edges connect two diagrams if one can be obtained from the other by right-multiplying by a \emph{unitary} diagram (i.e.\ a diagram that either contains a single cell and all of whose edges have trivial second coordinates in their labels, or has no cell and a single edge with a non-trivial second coordinate in its label). 
\end{definition}

\noindent
One easily sees that two diagrams belong to the same connected component of $\mathrm{QM}(\mathcal{P},\mathcal{G})$ if and only if the labels in $\Sigma^+$ (i.e.\ when the second coordinates are forgotten) of their top paths are identical. Given a baseword $w \in \Sigma^+$, we denote by $\mathrm{QM}(\mathcal{P},\mathcal{G},w)$ the connected component containing the diagrams whose top paths are labelled by $w$ in $\Sigma^+$. The diagram product $D(\mathcal{P},\mathcal{G},w)$ naturally acts on $\mathrm{QM}(\mathcal{P},\mathcal{G},w)$ by left-multiplication. 

\begin{figure}[h!]
\begin{center}
\includegraphics[width=0.8\linewidth]{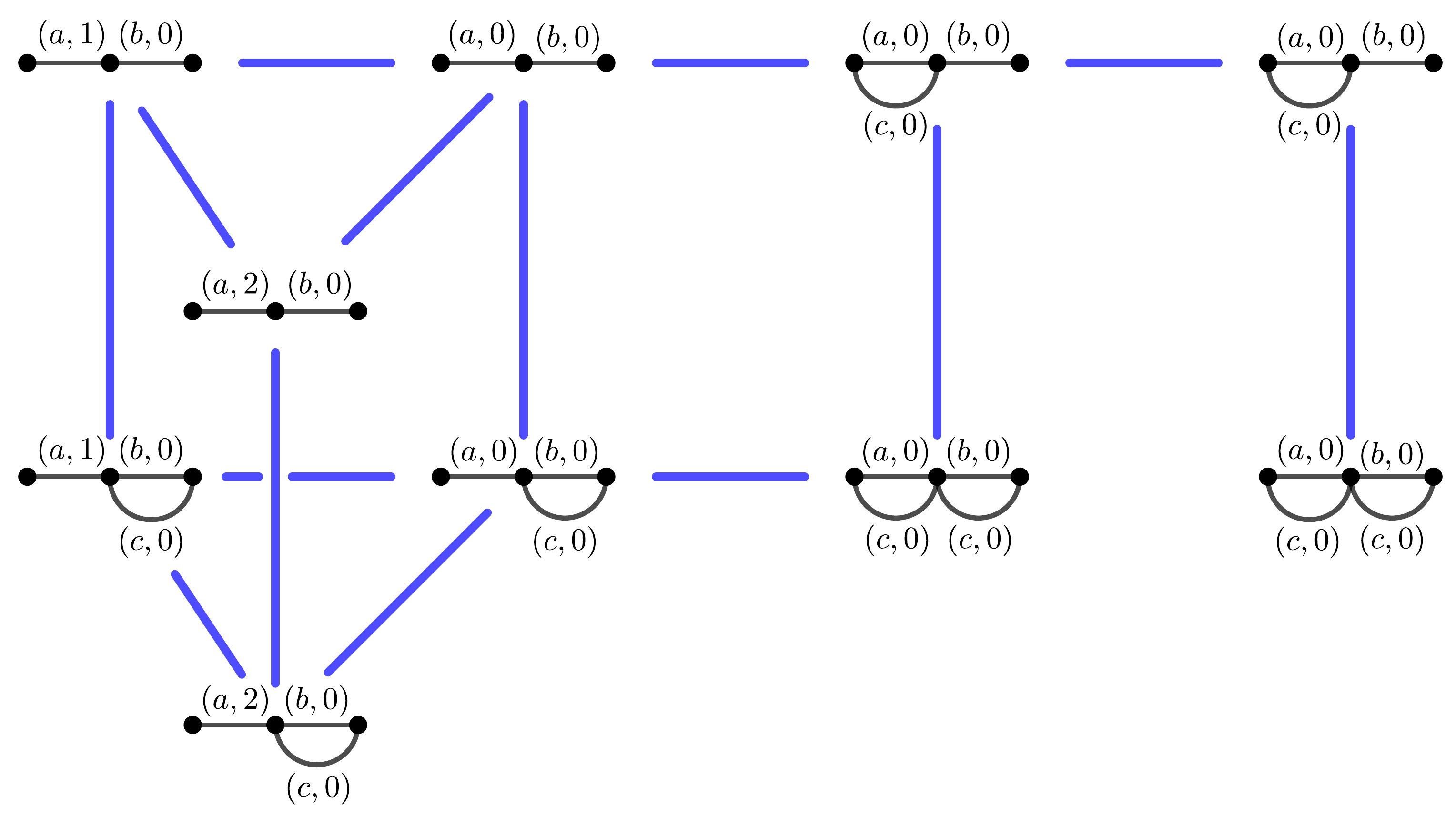}
\caption{A piece of $\mathrm{QM}(\mathcal{P},\mathcal{G},ab)$ for $\mathcal{P}=\langle a,b,c \mid a=c,b=c \rangle$ and $G_a=G_b=G_c= \mathbb{Z}$.}
\label{MedProd}
\end{center}
\end{figure}

\begin{thm}[\cite{QM}]
Let $\mathcal{P}= \langle \Sigma \mid \mathcal{R} \rangle$ be a semigroup presentation, $\mathcal{G}= \{ G_s \mid s \in \Sigma \}$ a collection of groups, and $w \in \Sigma^+$ a baseword. Then $\mathrm{QM}(\mathcal{P},\mathcal{G},w)$ is a quasi-median graph on which the diagram product $D(\mathcal{P},\mathcal{G},w)$ acts freely. The action is cocompact if and only if $[w]_\mathcal{P}$ is finite and all the groups in $\mathcal{G}$ are finite. 
\end{thm}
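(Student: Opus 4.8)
The plan is to prove the two assertions separately: that $\mathrm{QM}(\mathcal{P},\mathcal{G},w)$ is a quasi-median graph, and that $D(\mathcal{P},\mathcal{G},w)$ acts on it freely, cocompactly exactly under the stated condition. For the first assertion, rather than verifying the median-triangle axiom directly I would follow the pattern of the second proof of Theorem~\ref{thm:FarleyCC}: use the characterisation of quasi-median graphs as one-skeleta of simply connected, locally CAT(0) prism complexes from \cite{QM} (the analogue of Theorem~\ref{thm:MedianVScube}), so that it suffices to show that the prism-completion $\mathrm{QM}_\triangle(\mathcal{P},\mathcal{G},w)$ is nonpositively curved and simply connected.

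The first step is to describe the prisms. A prism based at a reduced diagram $\Delta$ over $(\mathcal{P},\mathcal{G})$ with bottom path labelled in $\Sigma^+$ by $\ell_1\cdots\ell_k$ is given by a choice of pairwise disjoint subsegments of $\mathrm{bot}(\Delta)$, each of which is either the left-hand side of a relation of $\mathcal{R}$ — contributing a $K_2$-factor (glue the corresponding cell, with trivial group labels) — or a single edge labelled by some $\ell_i$ — contributing a $K_{|G_{\ell_i}|}$-factor (multiply the $i$-th bottom label by an arbitrary element of $G_{\ell_i}$). One checks, exactly as in the cube case, that these exhaust the prisms, so prisms are products of complete graphs. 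Consequently the link of $\Delta$ is the simplicial complex with one vertex for each relation-subword of $\ell_1\cdots\ell_k$ and a full $(|G_{\ell_i}|-2)$-simplex for each index $i$ with $|G_{\ell_i}|\geq 2$, a collection of these spanning a common simplex precisely when their supports in $\mathrm{bot}(\Delta)$ are pairwise disjoint; this is routinely seen to satisfy the flag-type condition from \cite{QM} making a prism complex locally CAT(0).

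For simple connectivity I would again imitate the second proof of Theorem~\ref{thm:FarleyCC} and show that every finite subcomplex is contained in a contractible one, by an induction that collapses a maximal diagram onto the neighbours obtained by deleting a bottom cell or decreasing a bottom group label. The new feature is that the clique-directions contributed by the $G_s$ do not fit the prefix partial order used in the diagram-group argument; I would handle them either by refining the complexity measure used for the induction (lexicographically: number of cells, then number of non-trivial bottom group labels, then the multiset of these labels with respect to fixed well-orders on the $G_s$), or — perhaps more cleanly — by observing that the forgetful map to $M_\square(\mathcal{P},w)$, sending a $(\mathcal{P},\mathcal{G})$-diagram to its underlying $\mathcal{P}$-diagram, realises $\mathrm{QM}_\triangle(\mathcal{P},\mathcal{G},w)$ as the development of the simple complex of groups over the Squier complex $S(\mathcal{P})$ with local groups the $G_s$ (compare Proposition~\ref{prop:DiagProdSquier}); such developments are simply connected \cite{MR1744486}, and contractibility of $M_\square(\mathcal{P},w)$ together with contractibility of the fibres (products of simplices) yields contractibility of the total complex. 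Either way, $\mathrm{QM}(\mathcal{P},\mathcal{G},w)$ is quasi-median by \cite{QM}. I expect this contractibility step to be the main obstacle, precisely because the factor-group directions escape the prefix order that drives the diagram-group argument.

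Finally, the action. The diagram product acts by left-concatenation; if $g$ fixes a vertex $\Delta$ then $g\circ\Delta=\Delta$ up to dipole reduction, and cancelling $\Delta$ on the right (it is invertible up to dipole reduction, its inverse being the mirror image) forces $g=1$, so vertex-stabilisers are trivial; an element inverting an edge would square to a vertex-stabiliser, hence also be trivial, so the action is free. Since left-concatenation never alters the labelled bottom path of a reduced diagram, two vertices lie in the same orbit if and only if their bottom paths carry the same labelling, so the set of vertex-orbits is in bijection with the pairs $(m,\vec h)$ where $m\in[w]_\mathcal{P}$ and $\vec h$ ranges over $\prod_i G_{m_i}$. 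This set is finite exactly when $[w]_\mathcal{P}$ is finite and every $G_s$ (more precisely, every $G_s$ with $s$ occurring in some element of $[w]_\mathcal{P}$) is finite; under these hypotheses $\mathrm{QM}(\mathcal{P},\mathcal{G},w)$ is moreover locally finite, carrying only finitely many finite prisms at each vertex, so the action is cocompact, while a cocompact action plainly has finitely many vertex-orbits and thus forces the criterion. This completes the proof.
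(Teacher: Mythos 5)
The survey states this theorem without proof (it is imported from \cite{QM}), so I am judging your argument on its own terms. Your architecture — describe the cliques and prisms, check a local condition, prove simple connectivity, then analyse the action — is the right analogue of the second proof of Theorem~\ref{thm:FarleyCC}, and your clique description and the freeness argument are fine. But the central step, simple connectivity of the prism completion, is not actually established. Your route (b) rests on a false premise: the ``forgetful'' projection to $M_\square(\mathcal{P},w)$ does not have products of simplices as fibres, because a reduced diagram over $(\mathcal{P},\mathcal{G})$ can have a \emph{non-reduced} underlying diagram over $\mathcal{P}$ — a dipole over $\mathcal{P}$ fails to be a dipole over $(\mathcal{P},\mathcal{G})$ as soon as the interface path carries a non-trivial group label, and the paper's worked example right after the definition of diagram products exhibits exactly this. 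So the fibre over a vertex of $M_\square(\mathcal{P},w)$ is an infinite, complicated set, not a prism, and no development/fibration argument of the kind you sketch is available. Route (a) is only the proposal of a complexity function, with no verification that the collapse terminates or that the retraction exists; you yourself flag it as the main obstacle, and it remains one. (A smaller issue in the same step: for quasi-median graphs the local condition is more than flagness of links — one must also exclude induced $K_4^-$ and $K_{3,2}$, i.e.\ verify that the maximal cliques and the squares really are the ones you listed.)

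The cocompactness analysis contains a concrete error: it is \emph{not} true that left-concatenation never alters the labelled bottom path. Dipole reduction transfers group labels onto the identified path, and this path can lie on the bottom of the diagram. For instance, over $\mathcal{P}=\langle a,b\mid a=b\rangle$ with $G_b\neq 1$, the element $g$ obtained by stacking a cell $a\to b$ on a cell $b\to a$ with middle edge labelled $(b,h)$ is reduced and non-trivial, and $g\cdot\Delta$, where $\Delta$ is the single cell with bottom $(b,1)$, reduces to the single cell with bottom $(b,h)$. More generally, whenever two vertices have bottom paths with the same $\Sigma$-labels, the diagram $\Delta_1\circ\Delta_2^{-1}$ is an element of the diagram product carrying one to the other, so the vertex-orbits are in bijection with $[w]_\mathcal{P}$ alone — not with pairs $(m,\vec h)$. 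Your criterion therefore comes out right but for the wrong reason: the necessity of the factors being finite must be extracted from edge- or prism-orbits (an infinite clique together with trivial vertex-stabilisers forces infinitely many orbits of edges), not from vertex-orbits. You should rewrite this part accordingly.
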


\noindent
The combination of the combinatorics of diagrams and the quasi-median geometry allows us to deduce a lot of information about diagram products. For instance:
\begin{itemize}
	\item A finitely generated diagram product has a solvable word problem if so do its factor.
	\item A diagram product is torsion-free if and only if so are its factors.
	\item Distorted elements in a diagram product belong to products of factors.
	\item A diagram product of groups acting nicely on median graphs acts nicely on median graphs \cite[Theorems~10.36--10.38]{QM}.
	\item The Hilbert space compression of a diagram product is bounded below by the compressions of its factors \cite[Theorem~10.39]{QM}.
\end{itemize}
It is worth noticing that every diagram product $D(\mathcal{P},\mathcal{G},w)$ surjects onto its underlying diagram group $D(\mathcal{P},w)$ via the map that ``forgets'' the labels coming from the factors. Conversely, it is clear that $D(\mathcal{P},w)$ embeds into $D(\mathcal{P},\mathcal{G},w)$: it corresponds to the diagrams all of whose edges have labels with trivial second coordinates (referred to as diagrams over $\mathcal{P}$ for simplicity). Therefore, diagram products split as semidirect products. More precisely:

\begin{thm}[\cite{QM}]\label{thm:SemiProduct}
Let $\mathcal{P}= \langle \Sigma \mid \mathcal{R} \rangle$ be a semigroup presentation, $\mathcal{G}= \{ G_s \mid s \in \Sigma \}$ a collection of groups, and $w \in \Sigma^+$ a baseword. The diagram product decomposes~as 
$$D(\mathcal{P},\mathcal{G},w)= A(\mathcal{P}, \mathcal{G},w) \rtimes D(\mathcal{P},w)$$
where $A(\mathcal{P},\mathcal{G},w)$ is a graph product whose vertex-groups are isomorphic to group in $\mathcal{G}$. 
\end{thm}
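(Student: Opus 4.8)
The argument splits into an easy structural part and a harder geometric part. \emph{First}, I would produce the semidirect-product decomposition. The operation that erases from a diagram over $(\mathcal{P},\mathcal{G})$ the second coordinates of all its edge-labels sends diagrams over $(\mathcal{P},\mathcal{G})$ to diagrams over $\mathcal{P}$; it is compatible with dipole reductions and with concatenations, so it induces a surjective homomorphism $\phi : D(\mathcal{P},\mathcal{G},w) \twoheadrightarrow D(\mathcal{P},w)$. Conversely, a diagram over $\mathcal{P}$ is a diagram over $(\mathcal{P},\mathcal{G})$ all of whose edges carry trivial group-labels, and concatenating two such diagrams never creates a non-trivial product of factor elements; hence this inclusion is a homomorphism $\iota : D(\mathcal{P},w) \hookrightarrow D(\mathcal{P},\mathcal{G},w)$ with $\phi \circ \iota = \mathrm{id}$ (both maps were already observed in the text preceding the statement). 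Therefore $D(\mathcal{P},\mathcal{G},w) = \ker(\phi) \rtimes D(\mathcal{P},w)$, and setting $A(\mathcal{P},\mathcal{G},w) := \ker(\phi)$ it remains only to prove that $\ker(\phi)$ is a graph product of copies of the groups in $\mathcal{G}$.

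\emph{Second}, I would pin down generators of $\ker(\phi)$ and a candidate graph. For a reduced diagram $\Psi$ over $\mathcal{P}$ with $\mathrm{top}(\Psi) = w$ and $\mathrm{bot}(\Psi) = \ell_1 \cdots \ell_n$, and for $1 \leq i \leq n$ and $g \in G_{\ell_i}$, let $\lambda_i^\Psi(g)$ denote the cell-free diagram over $(\mathcal{P},\mathcal{G})$ whose $i$-th bottom edge carries $(\ell_i,g)$ and whose other edges carry trivial labels; then $\Psi \circ \lambda_i^\Psi(g) \circ \Psi^{-1} \in \ker(\phi)$. One checks that these elements generate $\ker(\phi)$: an element of $\ker(\phi)$ is represented by a reduced diagram whose underlying diagram over $\mathcal{P}$ reduces to $\epsilon(w)$, and pushing the group-labels toward a common level along the tree-like pattern of would-be $\mathcal{P}$-dipoles rewrites it as a product of such conjugated label elements. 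For a fixed $\Psi$ the subgroup $\langle \Psi \circ \lambda_i^\Psi(g) \circ \Psi^{-1} \mid 1 \leq i \leq n,\ g \in G_{\ell_i}\rangle$ is the direct product $G_{\ell_1} \times \cdots \times G_{\ell_n}$, since $\lambda_i^\Psi(g)$ and $\lambda_j^\Psi(h)$ act on disjoint edges and hence commute as diagrams. I would then define a graph $\Gamma$ with vertex-groups $\mathcal{G}'$: the vertices of $\Gamma$ are the $\ker(\phi)$-orbits of factor-cliques of $\mathrm{QM}(\mathcal{P},\mathcal{G},w)$ (equivalently, the classes of pairs $(\Psi,i)$ under the relation identifying $(\Psi,i)$ with $(\Psi',i')$ whenever $\Psi \circ \lambda_i^\Psi(g)\circ\Psi^{-1} = \Psi'\circ\lambda_{i'}^{\Psi'}(g)\circ\Psi'^{-1}$ for all $g$); two vertices are joined by an edge when they admit transverse representatives; and the group attached to the class of $(\Psi,i)$ is $G_{\ell_i}$. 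The commutations just recorded furnish a canonical surjection $\Gamma\mathcal{G}' \twoheadrightarrow \ker(\phi)$.

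\emph{Finally}, and this is the heart of the matter, I would prove that this surjection is injective, i.e.\ that there are no relations among the conjugated label loops beyond the displayed commutations. Here I would use the quasi-median geometry of Section~\ref{section:DiagramProducts}: the diagram product acts freely on $\mathrm{QM}(\mathcal{P},\mathcal{G},w)$, so $\ker(\phi)$ does too; the projection $\mathrm{QM}(\mathcal{P},\mathcal{G},w) \to M(\mathcal{P},w)$ collapsing the factor-cliques is $\phi$-equivariant; and the factor-hyperplanes of $\mathrm{QM}(\mathcal{P},\mathcal{G},w)$, together with their transversality relation and their ``types'' in $\mathcal{G}$, reproduce exactly the data $(\Gamma,\mathcal{G}')$. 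I would then invoke the standard correspondence between graph products and their free actions on quasi-median graphs (see \cite{QM}) to identify $\ker(\phi)$ with $\Gamma\mathcal{G}'$; alternatively, one argues directly by running a normal-form / ping-pong argument for the vertex subgroups $G_{\ell_i}$ inside $\mathrm{QM}(\mathcal{P},\mathcal{G},w)$, using that distinct factor-hyperplanes separate the corresponding cliques. The main obstacle is precisely this injectivity: the semidirect decomposition $\ker(\phi)\rtimes D(\mathcal{P},w)$ is essentially formal, whereas verifying that $\ker(\phi)$ is \emph{freely} assembled from the vertex groups along $\Gamma$ — rather than satisfying some extra, geometry-invisible relation — requires the full strength of the quasi-median machinery.
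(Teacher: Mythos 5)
Your proposal is correct and follows essentially the same route as the paper: both arguments identify $A(\mathcal{P},\mathcal{G},w)$ with the subgroup generated by the (conjugated) factor groups attached to the cliques/hyperplanes of $\mathrm{QM}(\mathcal{P},\mathcal{G},w)$ tangent to the copy of the diagrams over $\mathcal{P}$, take the crossing graph of these hyperplanes as the defining graph, and use a ping-pong argument in the quasi-median graph to show the assembly is free of extra relations. The only cosmetic difference is the order of operations: you define $A$ as $\ker(\phi)$ and then exhibit the graph-product structure, whereas the paper builds the graph product of rotative-stabilisers first and then checks it is a normal complement of $D(\mathcal{P},w)$ generating the whole diagram product.
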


\noindent
We refer to \cite[Theorem~10.58]{QM} for an even more precise statement. Recall that, given a graph $\Gamma$ and a collection of groups $\mathcal{G}= \{ G_u \mid u \in V(\Gamma)\}$ indexed by the vertices of $\Gamma$, the \emph{graph product} $\Gamma \mathcal{G}$ is
$$\langle G_u, \ u \in V(\Gamma) \mid [G_u,G_v]=1, \ \{u,v\} \in E(\Gamma) \rangle$$
where $[G_u,G_v]=1$ is a shorthand for: $[a,b]=1$ for all $a \in G_u$ and $b \in G_b$. Usually, graph products are described as an interpolation between free products (when $\Gamma$ has no edge) and direct sums (when $\Gamma$ is a complete graph). 

\medskip \noindent
The strategy to prove the theorem is the following. Let $Y$ denote the subgraph of $M(\mathcal{P},\mathcal{G},w)$ given by the diagrams over $\mathcal{P}$. Its stabiliser under the action of $D(\mathcal{P},\mathcal{G},w)$ coincides with $D(\mathcal{P},w)$. Also, let $\mathcal{J}$ denote the collection of all the hyperplanes in $M(\mathcal{P},\mathcal{G},w)$ tangent to $Y$. In a quasi-median graph, a hyperplane is a class of parallel \emph{cliques} (i.e.\ maximal complete subgraphs) instead of parallel edges in the case of median graphs. To each hyperplane corresponds a \emph{rotative-stabiliser}, i.e.\ the subgroup of its stabiliser that stabilises each of the cliques it contains. Thanks to a Ping-Pong Lemma, it can be shown that the subgroup generated by the rotative-stabilisers of all the hyperplanes in $\mathcal{J}$ decomposes as the graph product of the rotative-stabilisers over the crossing graph of $\mathcal{J}$ (i.e.\ the graph whose vertex-set is $\mathcal{J}$ and whose edges connect two hyperplanes whenever they are transverse). This is our graph product $A(\mathcal{P},\mathcal{G},w)$. One easily verifies that $D(\mathcal{P},w)$ normalises and intersects trivially $A(\mathcal{P},\mathcal{G},w)$. The proof is complete once it is shown that $A(\mathcal{P},\mathcal{G},w)$ and $D(\mathcal{P},w)$ generate the diagram product.

\medskip \noindent
As an application of Theorem~\ref{thm:SemiProduct}, it is possible to prove the following theorem, initially obtained in \cite{MR2193191} by using different methods.

\begin{thm}\label{thm:Short}
Every diagram group $G$ satisfies a short exact sequence
$$1 \to R \to G \to S \to 1$$
where $R$ is a subgroup of a right-angled Artin group and where $S$ is a subgroup of Thompson's group $F$.
\end{thm}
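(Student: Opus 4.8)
The plan is to realise an arbitrary diagram group $G = D(\mathcal{P},w)$ as a retract of a suitable diagram product and then invoke Theorem~\ref{thm:SemiProduct}. First I would use Lemma~\ref{lem:Complete} to replace $\mathcal{P}$ by a complete semigroup presentation $\mathcal{P}' = \langle \Sigma \mid \mathcal{R}' \rangle$ so that $G = D(\mathcal{P},w)$ is a retract of $D(\mathcal{P}',w)$; since subgroups and retracts of groups satisfying a short exact sequence of the desired form again satisfy it (one intersects $R$ and pushes forward to $S$), it suffices to prove the theorem for $D(\mathcal{P}',w)$. From now on assume $\mathcal{P}$ is complete. The key observation is that when $\mathcal{P}$ is complete, the Squier cube complex $S^+(\mathcal{P},w)$ carries a canonical spanning-forest structure (the principal left edges of Definition~\ref{def:PrincipalLeft}), and collapsing along it exhibits $D(\mathcal{P},w)$ as a quotient that, after the graph-of-spaces analysis of Section~\ref{section:Hyperplanes}, interacts cleanly with the copy of Thompson's group $F$ coming from the single relation used to ``reduce'' basewords.

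More concretely, the second step is to present $G$ as a diagram product. Introduce a new letter $o$ and the presentation $\mathcal{Q} := \langle \Sigma \sqcup \{o\} \mid \mathcal{R} \sqcup \{o = o^2,\ o s = o,\ s o = o\ (s \in \Sigma)\}\rangle$ (or a minor variant ensuring the semigroup defined by $\mathcal{Q}$ reduces everything to $o$), and consider the diagram product $D(\mathcal{Q}, \mathcal{G}, o)$ where the factor attached to $o$ is trivial and the factor attached to each $s \in \Sigma$ is the diagram group $D(\mathcal{P}_s, w_s)$ arising in the realisation of $G$ as a diagram product of $\mathbb{Z}$'s and copies of $G$ itself — in fact the cleanest route is to take the trivial collection of factors and simply observe that $D(\mathcal{P},w)$ embeds as a retract into a diagram group over $\langle x \mid x = x^2\rangle$-type presentation whose underlying diagram group is $F$. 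Either way, I would apply Theorem~\ref{thm:SemiProduct} to obtain $D(\mathcal{Q},\mathcal{G},o) = A \rtimes D(\mathcal{Q}, o)$ with $A$ a graph product of the factor groups; arranging the factors to be infinite cyclic makes $A$ a right-angled Artin group, and arranging $\mathcal{Q}$ so that $D(\mathcal{Q},o) \cong F$ (the presentation $\langle x \mid x = x^2\rangle$ extended by the collapsing relations) gives the split extension $1 \to A \to D(\mathcal{Q},\mathcal{G},o) \to F \to 1$ with $A \leq$ RAAG and quotient $F$.

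The third step is to transfer this to $G$ itself. One must produce a retraction $D(\mathcal{Q},\mathcal{G},o) \twoheadrightarrow G$ restricting to the identity on $G$ — the map forgetting the extra letter $o$ and the collapsing relations, combined with the retraction of Lemma~\ref{lem:Complete}. Then set $R := A \cap G$ and $S := $ image of $G$ in $F$. Since $R$ is a subgroup of the right-angled Artin group $A$, and $S$ is a subgroup of $F$, and the restriction of the short exact sequence to $G$ reads $1 \to R \to G \to S \to 1$ (exactness being preserved because $G \cap A$ is normal in $G$ with quotient embedding into $F = D(\mathcal{Q},\mathcal{G},o)/A$), we are done. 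The main obstacle I anticipate is bookkeeping: one must choose the auxiliary presentation $\mathcal{Q}$ and the factor collection $\mathcal{G}$ so that simultaneously (a) $G$ embeds as a retract of $D(\mathcal{Q},\mathcal{G},o)$, (b) the graph product $A$ produced by Theorem~\ref{thm:SemiProduct} is genuinely a right-angled Artin group (i.e.\ all factors infinite cyclic), and (c) the underlying diagram group quotient $D(\mathcal{Q},o)$ is exactly (a subgroup or quotient mapping into) Thompson's group $F$ rather than some larger diagram group. Threading these three requirements through a single construction — likely by first writing $G$ as a diagram product over $\langle x \mid x = x^2 \rangle$ with the right basewords, as in the $\bullet$- and $\square$-product constructions of Section~\ref{section:Examples} — is where the real work lies; the rest is an application of Theorem~\ref{thm:SemiProduct} and elementary diagram chasing in the category of short exact sequences.
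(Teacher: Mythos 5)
You have correctly identified the endgame --- realise $G$ inside a diagram product, apply Theorem~\ref{thm:SemiProduct} to obtain a split extension $1 \to A \to D(\mathcal{Q},\mathcal{G},\cdot) \to D(\mathcal{Q},\cdot) \to 1$, and restrict it to $G$ --- and the final diagram chase is fine. But the heart of the proof is the construction of an embedding of an arbitrary $D(\mathcal{P},w)$ into a diagram product whose underlying diagram group is $F$ and whose graph product $A$ is (contained in) a right-angled Artin group, and that construction is exactly what is missing. Each candidate you float fails. Taking the trivial collection of factors would force $G$ to embed into $D(\mathcal{Q},o)$ itself, i.e.\ into something like $F$, which is impossible for any diagram group containing a non-abelian free subgroup. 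Attaching to each $s \in \Sigma$ a factor ``arising in the realisation of $G$ as a diagram product of copies of $G$ itself'' is circular. Your presentation $\mathcal{Q}= \langle \Sigma \sqcup \{o\} \mid \mathcal{R} \sqcup \{o=o^2,\ os=o,\ so=o\} \rangle$ retains all of $\Sigma$ and $\mathcal{R}$, so the quotient $D(\mathcal{Q},o)$ is in general much larger than $F$, violating your own requirement (c). Finally, the opening reduction to complete presentations via Lemma~\ref{lem:Complete}, and the remarks about principal left edges and spanning forests, play no role in this theorem and do not advance the argument.

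The actual construction is a genuine idea rather than bookkeeping. Take $\mathcal{Q}=\langle x \mid x=x^2 \rangle$, so that $D(\mathcal{Q},x)\cong F$ on the nose, and let the single factor $G_x$ be the free group $\mathbb{F}$ freely generated by the set of relations $\mathcal{R}$. For each $n\geq 1$ fix once and for all an $(x^n,x)$-diagram $\Gamma_n$ over $\mathcal{Q}$. A diagram $\Delta$ over $\mathcal{P}$ is turned into a diagram over $(\mathcal{Q},\mathcal{G})$ by forgetting the letters of $\Sigma$ (every edge becomes an $x$-edge) and replacing each cell labelled by a relation $u=v$, with $|u|=n$ and $|v|=m$, by $\Gamma_n \circ \epsilon(f) \circ \Gamma_m^{-1}$, where $\epsilon(f)$ is the one-edge diagram carrying the generator $f\in\mathbb{F}$ corresponding to $u=v$. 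The label $f$ is what remembers which relation was applied, which is what makes the resulting map injective; without it, all the information in $\Delta$ would be crushed into $F$. Theorem~\ref{thm:SemiProduct} then gives $D(\mathcal{Q},\mathcal{G},x)=A\rtimes F$ with $A$ a graph product of copies of $\mathbb{F}$ --- still a right-angled Artin group, since a graph product of free groups is a graph product of infinite cyclic groups over the graph obtained by blowing each vertex up into an anticlique --- and intersecting this extension with the image of $G$ yields the desired short exact sequence. Without this substitution trick your argument does not get off the ground.
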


\noindent
The strategy is the following. Let $\mathcal{P}= \langle \Sigma \mid \mathcal{R} \rangle$ be an arbitrary semigroup presentation and $w \in \Sigma^+$ a baseword. The goal is to prove that that $D(\mathcal{P},w)$ embeds into the diagram product $D(\mathcal{Q}, \mathcal{G},x)$ where $\mathcal{Q}= \langle x \mid x=x^2 \rangle$ and where $G_x$ is the free group $\mathbb{F}$ formally generated by the relations in $\mathcal{R}$. The desired conclusion will follow since $D(\mathcal{Q}, \mathcal{G},x)$ splits as a semidirect product of a right-angled Artin group with Thompson group's $F$ as a consequence of Theorem~\ref{thm:SemiProduct}.

\medskip \noindent
In order construct such an embedding, we fix, for every $n \geq 1$, a well-chosen $(x^n,x)$-diagram $\Gamma_n$ over $\mathcal{Q}$. Then, we transform a diagram $\Delta$ over $\mathcal{P}$ into a diagram over $(\mathcal{Q}, \mathcal{G})$ by replacing each cell of $\Delta$ labelled by a relation $u=v$ with the diagram $\Gamma_n \circ \epsilon(f) \circ \Gamma_m^{-1}$, where $n$ is the length of $u$, $m$ the length of $v$, and $\epsilon(f)$ the diagram reduced to a single edge labelled by $(x,f)$ with $f$ the (inverse of the) generator of $\mathbb{F}$ corresponding to $u=v$. 

\medskip \noindent
Because right-angled Artin groups and Thompson's group $F$ are locally indicable, and since locally indicable groups are automatically orderable, Theorem~\ref{thm:Short} immediately implies:

\begin{cor}\label{cor:LocallyIndicable}
Diagram groups are locally indicable, hence orderable. 
\end{cor}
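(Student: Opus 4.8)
The plan is to obtain the corollary as a formal consequence of Theorem~\ref{thm:Short}. That theorem exhibits an arbitrary diagram group $G$ inside a short exact sequence
$$1 \to R \to G \to S \to 1$$
with $R$ a subgroup of a right-angled Artin group and $S$ a subgroup of Thompson's group $F$. Since local indicability obviously passes to subgroups — a finitely generated subgroup of a subgroup is a finitely generated subgroup of the overgroup — and since it is preserved under extensions, it suffices to record that right-angled Artin groups and $F$ are locally indicable, and then to apply the classical implication that locally indicable groups are left-orderable.

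First I would prove the extension lemma: if $1 \to R \to G \to S \to 1$ is exact with $R$ and $S$ locally indicable, then $G$ is locally indicable. Let $H \le G$ be finitely generated and nontrivial. If $H \le R$, then $H$ surjects onto $\mathbb{Z}$ because $R$ is locally indicable. Otherwise the image $\bar H$ of $H$ in $S$ is a nontrivial finitely generated subgroup of the locally indicable group $S$, hence admits an epimorphism onto $\mathbb{Z}$; precomposing with the surjection $H \twoheadrightarrow \bar H$ produces an epimorphism $H \twoheadrightarrow \mathbb{Z}$. Thus every nontrivial finitely generated subgroup of $G$ surjects onto $\mathbb{Z}$, which is exactly local indicability.

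It remains to feed in the two input facts. Right-angled Artin groups are bi-orderable — for instance because they are residually torsion-free nilpotent, or by an easy induction on the number of vertices using that bi-orderability is preserved by free and direct products — and bi-orderable groups are locally indicable (a maximal proper convex subgroup of a finitely generated bi-ordered group is normal with archimedean, hence abelian, quotient); hence $R$, as a subgroup of a right-angled Artin group, is locally indicable. That $F$ is locally indicable is the fact already invoked in the discussion preceding the statement, so $S \le F$ is locally indicable as well. Applying the extension lemma to $1 \to R \to G \to S \to 1$ shows that $G$ is locally indicable, and the Burns--Hale theorem then gives that $G$ is left-orderable. The only genuine subtlety, if one insists on a self-contained account, is the local indicability of $F$ itself; everything else is routine bookkeeping with finitely generated subgroups.
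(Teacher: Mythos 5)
Your argument is exactly the paper's: it deduces the corollary from Theorem~\ref{thm:Short} by combining the local indicability of right-angled Artin groups and of Thompson's group $F$ with the facts that local indicability passes to subgroups and extensions and implies (left-)orderability. The paper states this in one line, so your write-up simply supplies the routine verifications it leaves implicit; the proposal is correct.
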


\begin{remark}\label{remark:QMvsM}
It is worth noticing that one can make a diagram product $D(\mathcal{P},\mathcal{G},w)$ act on a median graph instead of a quasi-median graph. Given a diagram $\Delta$ and a (possibly empty) collection of edges $\epsilon$ in the bottom path of $\Delta$, let $\Delta(\epsilon)$ denote the set of the diagrams obtained from $\Delta$ by modifying the second coordinates in the labels of the edges in $\epsilon$. Observe that $\Delta(\epsilon)$ defines a coset in the groupoid of the diagrams over $(\mathcal{P},\mathcal{G})$. Let $M(\mathcal{P},\mathcal{G})$ denote the graph whose vertices are the cosets $\Delta(\epsilon)$ and whose edges connect two cosets $\Phi(\zeta)$ and $\Psi(\xi)$ either if $\Psi= \Phi$ and the symmetric difference between $\zeta,\xi$ has cardinality one or if $\Psi$ is obtained from $\Phi$ by gluing a $2$-cell below $\Phi$ along a path disjoint from $\zeta$ and $\xi$ is the image of $\zeta$ after the process. Then the diagram product $D(\mathcal{P},\mathcal{G},w)$ acts on a connected component $M(\mathcal{P},\mathcal{G},w)$ of $M(\mathcal{P},\mathcal{G})$ with stabilisers isomorphic to products of factors. In fact, $M(\mathcal{P},\mathcal{G})$ and $\mathrm{QM}(\mathcal{P},\mathcal{G})$ are basically subdivisions of a common space, but in practice it turns out to be easier and more natural to work with the quasi-median graph. 
\end{remark}

\section{Generalisations}\label{section:generalisations}

\subsection{Planar, annular, and symmetric diagram groups}

\noindent
Let $\mathcal{P}= \langle \Sigma \mid \mathcal{R} \rangle$ be a semigroup presentation. Given a diagram over $\mathcal{P}$, one can construct a \emph{dual picture} made of \emph{transistors} which are connected by \emph{wires} labelled by $\Sigma$. See Figure~\ref{Dual}. Roughly speaking, a transistor is a black box with top and bottom wires such that, if $u \in \Sigma^+$ (resp. $v \in \Sigma^+$) is the word obtained by reading from left to right the labels of its top (resp. bottom) wires, then $u=v$ or $v=u$ belongs to $\mathcal{R}$. 

\begin{figure}[h!]
\begin{center}
\includegraphics[width=0.35\linewidth]{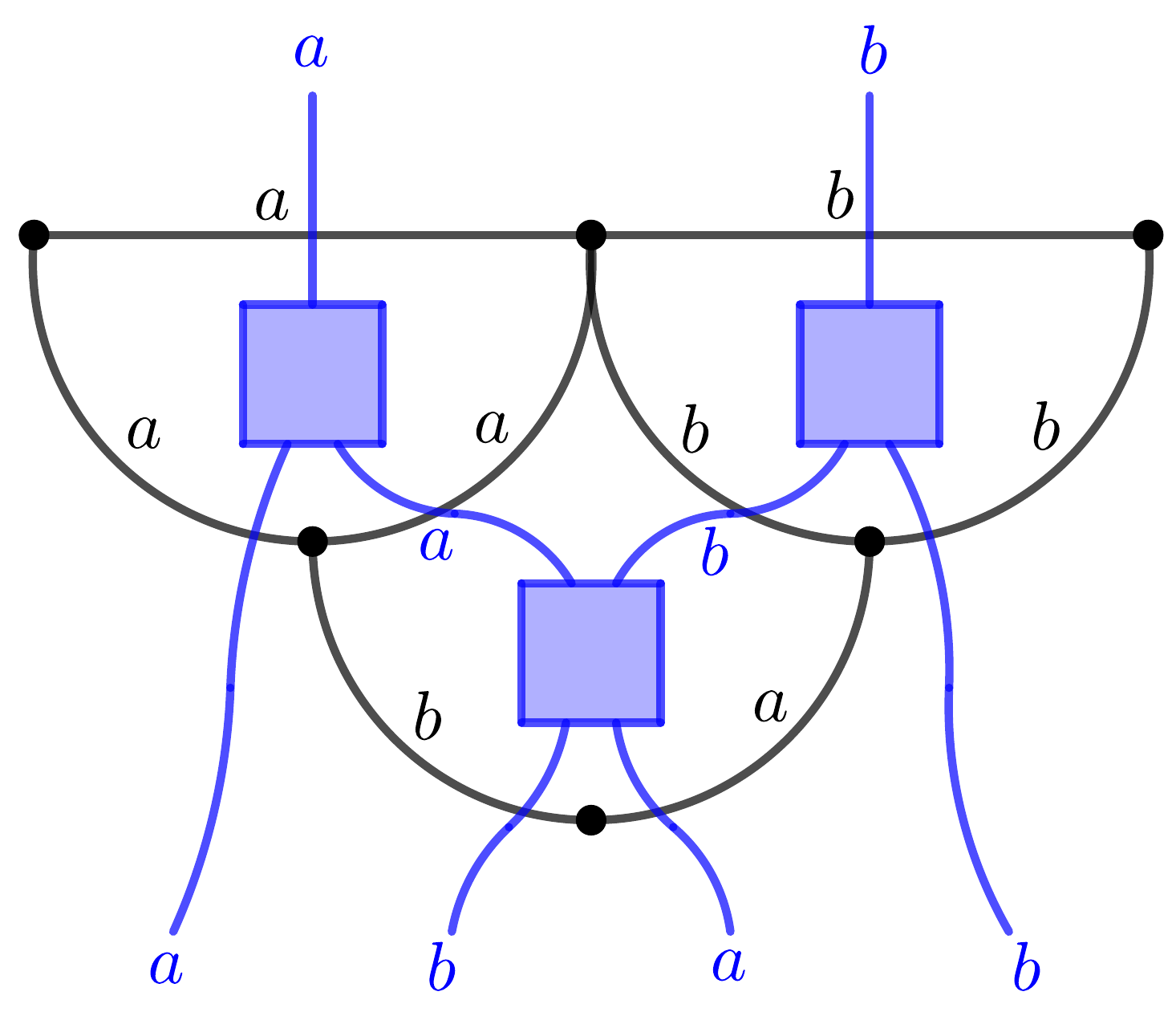}
\caption{A diagram over $\langle a,b \mid a=a^2,b=b^2,ab=ba \rangle$ and its dual picture.}
\label{Dual}
\end{center}
\end{figure}

\noindent
All the terminology introduced in Section~\ref{section:FirstTaste} for diagrams can be easily adapted to pictures. For instance, a dipole in a picture refers to two transistors $\tau_1,\tau_2$ labelled by $u=v$ and $v=u$ for some relation in $\mathcal{R}$ such that bottom wires of $\tau_1$ and the top wires of $\tau_2$ coincide. One reduces the dipole by removing the two transistors and connecting the top wires of $\tau_1$ with the bottom wires of $\tau_2$.  

\medskip \noindent
This alternative point of view allows us to generalise diagram groups in several natural directions. Our initial diagram groups, now referred to as \emph{planar diagram groups}, have their elements represented by pictures whose wires do not cross and are monotonic in the vertical direction. A similar definition on annuli yields \emph{annular diagram groups}. And allowing wires to cross in an arbitrary way yields \emph{symmetric}\footnote{In \cite{MR1396957}, these groups are referred to as \emph{braided diagram groups}, even though the wires are not braided but just permuted. We suggest to replace \emph{braided} with \emph{symmetric}, and to keep the initial terminology for the groups considered in Section~\ref{section:Braided}.} \emph{diagram groups}. We do not give precise definitions and refer to \cite{MR2136028} for more details. See Figures~\ref{Annular} and~\ref{Symmetric} for examples of products of annular and symmetric pictures.

\begin{figure}[h!]
\includegraphics[width=\linewidth]{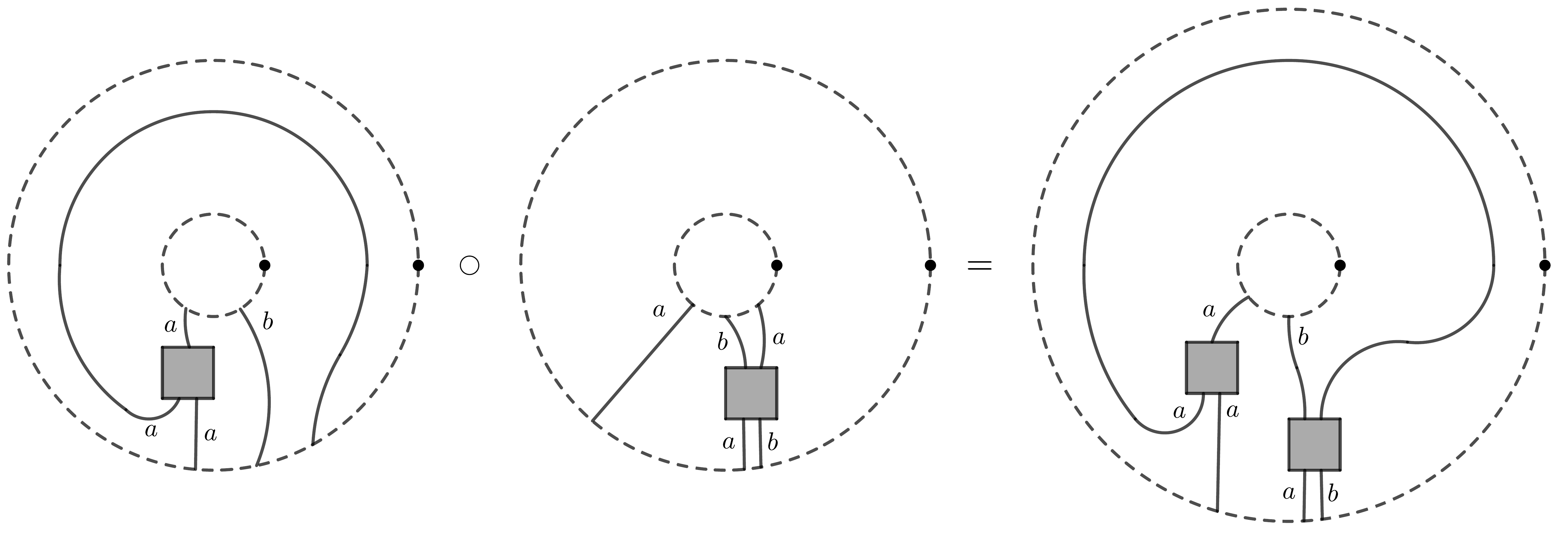}
\caption{A product of annular pictures over $\langle a,b \mid a=a^2,ab=ba \rangle$.}
\label{Annular}
\end{figure}
\begin{figure}[h!]
\begin{center}
\includegraphics[width=0.7\linewidth]{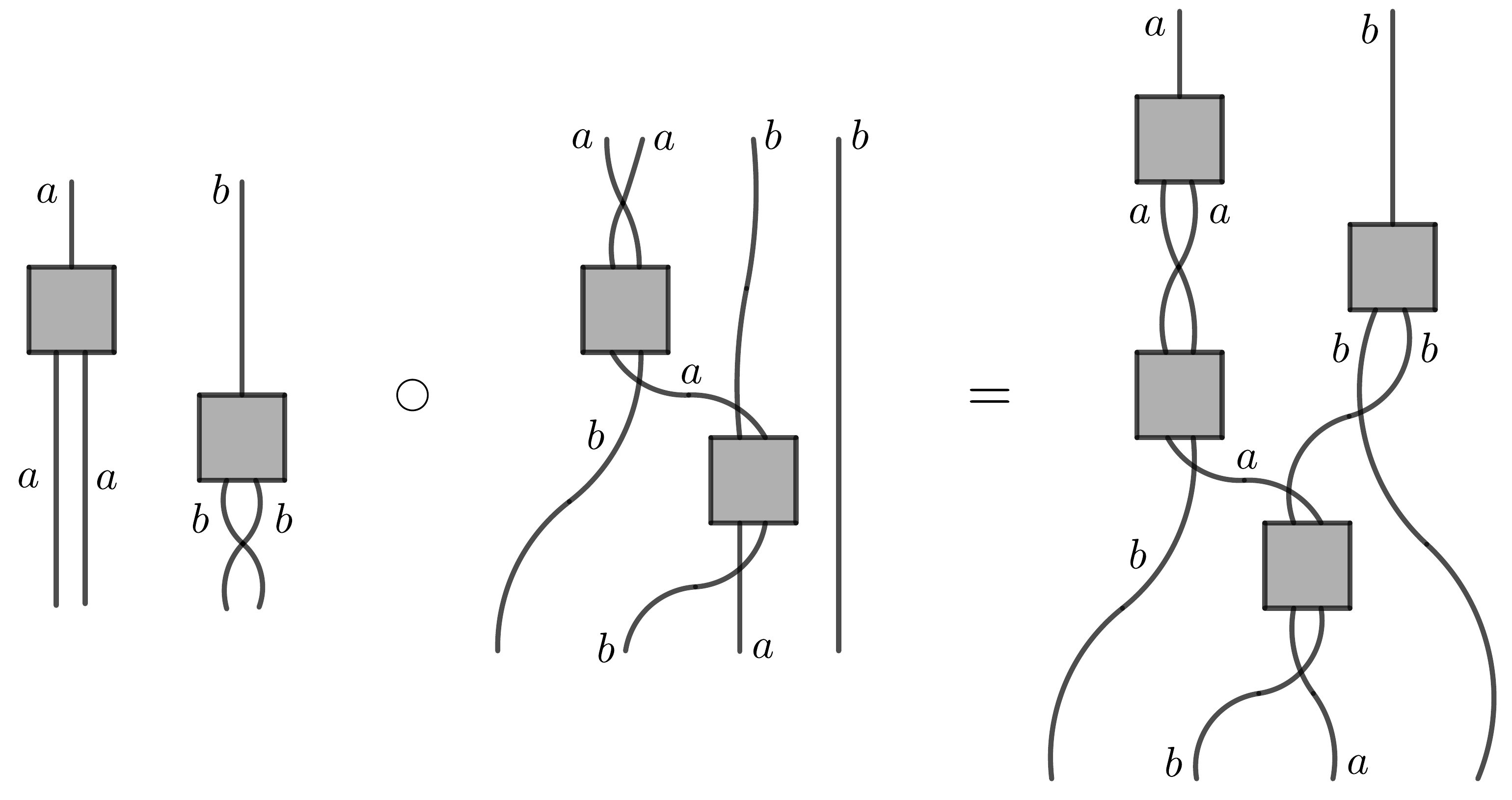}
\caption{A product of symmetric pictures over $\langle a,b \mid a=a^2,b=b^2,ab=ba \rangle$.}
\label{Symmetric}
\end{center}
\end{figure}

\medskip \noindent
Thus, given a semigroup presentation $\mathcal{P}= \langle \Sigma \mid \mathcal{R} \rangle$ and a baseword $w \in \Sigma^+$, we have three nested groups
$$D_p(\mathcal{P},w) \subset D_a(\mathcal{P},w) \subset D_s(\mathcal{P},w)$$
given by the corresponding planar, annular, and symmetric diagram groups. As shown below, for $\mathcal{P}= \langle x \mid x=x^2 \rangle$ and $w=x$, one recovers the three classical Thompson groups $F \subset T \subset V$.

\medskip \noindent
Annular and symmetric diagram groups are sketched in \cite{MR1396957} and further investigated in \cite{MR2136028}. Compared to (planar) diagram groups, much less is known about these groups. Nevertheless, let us record what can be extracted from the existing literature. 

\paragraph{Word problem.} Similarly to diagrams, every planar, annular, or symmetric picture admits a unique \emph{reduced} representative, i.e.\ reducing the dipoles, whatever the order we follow, always yields the same picture. See \cite{MR2136028}. As a consequence:

\begin{prop}
Planar, annular, and symmetric diagram groups have solvable word problems.
\end{prop}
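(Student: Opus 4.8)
The plan is to turn the uniqueness of reduced pictures into an explicit decision procedure. Fix a semigroup presentation $\mathcal{P}= \langle \Sigma \mid \mathcal{R} \rangle$, a baseword $w \in \Sigma^+$, and — exactly as in the analogous statement for planar diagram groups recalled in Section~\ref{section:WordConj} — a finite generating set of the relevant diagram group consisting of pictures $\Pi_1, \ldots, \Pi_r$ (planar, annular, or symmetric, according to the case). An arbitrary element is then presented by a word $\Pi_{i_1}^{\varepsilon_1} \cdots \Pi_{i_n}^{\varepsilon_n}$ with $\varepsilon_j \in \{\pm 1\}$; concatenating these pictures (using the mirror picture for a negative exponent, just as for diagrams) produces a single picture $\Pi$ over $\mathcal{P}$ whose number of transistors $N(\Pi)$ is at most $\sum_j N(\Pi_{i_j})$, hence linear in $n$.

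Next I would observe that the datum needed to recognise a dipole in $\Pi$ is purely local: a pair of transistors $\tau_1,\tau_2$ such that the bottom wires of $\tau_1$ are exactly the top wires of $\tau_2$ (as labelled, and in the annular, respectively symmetric, cases respecting the cyclic, respectively permuted, ordering of the wires), with $\tau_1,\tau_2$ labelled by a relation of $\mathcal{R}$ and its inverse. Since $\Pi$ has finitely many transistors and the wire-labels are words over $\Sigma$ of bounded length, searching for such a pair is a finite computation. If one is found, reduce it; this strictly decreases $N(\Pi)$ by $2$ and leaves the represented element of the diagram group unchanged. Iterating, after at most $\lfloor N(\Pi)/2\rfloor$ steps the process halts at a reduced picture $\Pi_0$.

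Then I would invoke the uniqueness of the reduced representative for planar, annular, and symmetric pictures (the confluence statement recalled just above, from \cite{MR2136028}): $\Pi_0$ does not depend on the order in which the dipoles were removed, and $\Pi$ represents the identity of the diagram group if and only if $\Pi_0$ is the trivial picture on $w$, i.e.\ has no transistor. Hence the algorithm — concatenate the generating pictures, reduce all dipoles in any order, and test whether a transistor survives — decides the word problem. As in the planar case, no finiteness hypothesis on $\mathcal{P}$ is required, since the procedure only ever manipulates the finitely many fixed generating pictures and their concatenations.

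The only genuinely nontrivial ingredient, and the step where the three settings differ, is the confluence (Church--Rosser / diamond lemma) property for picture reduction; this is precisely the ``main obstacle'', but it is already available in the literature for all three flavours, so once it is granted the remainder of the argument reduces to the termination-by-monovariant bookkeeping sketched above.
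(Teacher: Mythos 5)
Your proposal is correct and follows exactly the paper's argument: concatenate the given pictures, reduce dipoles in any order (termination being clear since each reduction removes two transistors), and use the uniqueness of the reduced representative from \cite{MR2136028} to conclude that the element is trivial if and only if no transistor survives. The extra bookkeeping you supply (locality of dipole detection, the monovariant for termination) is a fleshed-out version of the same proof, not a different route.
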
 

\noindent
Indeed, given a picture representing an element of the group, it suffices to reduce its dipoles. Once we get a reduced picture, we know that our element is trivial if and only if the picture is reduced to a collection of vertical wires.

\paragraph{Median geometry.} The construction described in Section~\ref{section:MedianDiag} generalises to annular and symmetric diagram groups \cite{MR2136028}. Given a semigroup presentation $\mathcal{P}$, we can construct a median graph $M_s(\mathcal{P})$ as follows. The vertices are the symmetric pictures over $\mathcal{P}$ modulo dipole reduction and right-multiplication by a \emph{symmetric picture} (i.e.\ a picture with no transistor). And the edges connect the classes of two pictures when one can be obtained from the other by adding a transistor at its bottom. The graph $M_s(\mathcal{P})$ turns out to be median, and the symmetric diagram group $D_s(\mathcal{P},w)$, given a baseword $w$, naturally acts by left-multiplication on the connected component $M_s(\mathcal{P},w)$ containing the picture with only vertical wires labelled by the word $w$. Restricting to annular pictures, one gets similarly a median graph $M_a(\mathcal{P},w)$ on which the annular diagram group $D_a(\mathcal{P},w)$ acts. Consequently, Theorem~\ref{thm:FarleyCC} generalises as:

\begin{thm}[\cite{MR2136028}]\label{thm:PictureCC}
Annular and symmetric diagram groups act properly on median graphs locally of finite cubical dimension.
\end{thm}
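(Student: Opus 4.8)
The plan is to follow the two proofs of Theorem~\ref{thm:FarleyCC} almost verbatim, the only genuinely new features being the quotient by right-multiplication by transistorless (``symmetric'') pictures and the fact that the action need no longer be free. Write $X$ for $M_s(\mathcal{P},w)$ (the symmetric case; $M_a(\mathcal{P},w)$ is treated identically), and think of a vertex as a coset $[\Delta]$ of a reduced picture $\Delta$ modulo the subgroup $\mathcal{S}$ of pictures without transistors. First I would record the analogue of Lemma~\ref{lem:Geodesics}: a path $[\Phi], [\Phi\circ\Delta_1], \ldots, [\Phi\circ\Delta_1\circ\cdots\circ\Delta_n]=[\Psi]$ is a geodesic if and only if $\Delta_1\circ\cdots\circ\Delta_n$ is reduced, and hence $d([\Phi],[\Psi])$ equals the number $\#(\Phi^{-1}\Psi)$ of transistors in the reduced form of $\Phi^{-1}\Psi$. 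The point needing care is well-definedness on cosets: right-multiplying a picture by an element of $\mathcal{S}$ and reducing dipoles does not change its set of transistors (a transistorless picture cannot be half of a dipole), so $\#(\cdot)$ descends, and uniqueness of reduced pictures makes the whole setup coherent.

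With this in hand, both proofs of Theorem~\ref{thm:FarleyCC} transcribe. For the direct one, given three vertices, translate so that one of them is the class of the trivial picture (this may change $w$, exactly as before); the class of the largest common prefix of suitably chosen reduced representatives of the two remaining vertices — interpreted modulo $\mathcal{S}$, which is the delicate bookkeeping — then provides the unique median point, by the same prefix/geodesic argument as in the first proof of Theorem~\ref{thm:FarleyCC}. For the argument via Theorem~\ref{thm:MedianVScube}, one describes the cubes of $X$: they are of the form $\{[\Delta\circ\Psi_0] \mid \Psi_0 \text{ a prefix of } \Psi\}$ where $\Psi$ is a \emph{thin} picture, i.e.\ finitely many single-transistor pictures attached at pairwise disjoint sub-collections of the wires of $\mathrm{bot}(\Delta)$; the link of a vertex with bottom word $v$ is then the simplicial complex whose vertices are the sub-collections of the wires of $v$ whose labels form a side of a relation of $\mathcal{R}$ and whose simplices are pairwise disjoint such families, which is flag, so $X_\square$ is nonpositively curved. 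Simple connectedness of each component of $X_\square$ follows from the same compact-core retraction as in the second proof of Theorem~\ref{thm:FarleyCC}: the subcomplex spanned by the prefixes of a finite vertex set deformation retracts, by peeling a maximal thin suffix off a vertex of maximal distance to the trivial picture, onto a strictly smaller such subcomplex, and one iterates. Local finiteness of the cubical dimension is then immediate, since at a vertex with bottom word $v$ a cube records a family of pairwise disjoint sub-collections of the wires of $v$, so the dimension there is at most $|v|<\infty$.

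It remains to see that the action of $D_s(\mathcal{P},w)$ (resp.\ $D_a(\mathcal{P},w)$) on $X$ is proper, i.e.\ has finite vertex-stabilisers; here the non-freeness enters. If $\Delta$ is a $(w,v)$-picture, the stabiliser of $[\Delta]$ is $\Delta\cdot\mathcal{S}(v)\cdot\Delta^{-1}$, where $\mathcal{S}(v)$ is the group of transistorless $(v,v)$-pictures. In the symmetric case $\mathcal{S}(v)$ embeds into the symmetric group on the wires of $v$ (a transistorless picture merely permutes wires, respecting their labels), and in the annular case it is cyclic of order dividing $|v|$; in both cases it is finite, so every vertex-stabiliser is finite and the action is proper. (Specialising to $\mathcal{P}=\langle x\mid x=x^2\rangle$ and $w=x$ recovers $F\subset T\subset V$, with the torsion of $T$ and $V$ now visibly located in these finite stabilisers.) I expect the main obstacle not to be any single step but the uniform verification that every structure used in the planar proofs — geodesics, largest common prefixes, cubes, links, and the retraction — is insensitive to post-composition by transistorless pictures; once that is settled, the arguments of Theorem~\ref{thm:FarleyCC} apply unchanged.
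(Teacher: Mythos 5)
Your proposal is correct and follows essentially the same route as the paper: one forms the graph of reduced pictures modulo dipole reduction and right-multiplication by transistorless pictures, transcribes both arguments for Theorem~\ref{thm:FarleyCC} (geodesics via reduced concatenations, medians via largest common prefixes, cubes via thin pictures and flag links, and the retraction for simple connectedness), and observes that vertex-stabilisers are exactly the conjugates $\Delta\cdot\mathcal{S}(v)\cdot\Delta^{-1}$ of the finite groups of label-preserving (cyclic) permutations of wires --- precisely the permutation subgroups the paper identifies as the stabilisers. Your identification of where the non-freeness is absorbed, and the sanity check $F\subset T\subset V$, match the intended argument from \cite{MR2136028}.
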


\noindent
Contrary to planar diagram groups, annular and symmetric diagram groups are usually not torsion-free, since it is possible to permute the wires. More precisely, given an annular (resp. a symmetric) diagram group $D_a(\mathcal{P},w)$ (resp. $D_s(\mathcal{P},w)$) and a picture $\Delta$ with top label $w$, the subgroup given by the pictures
$$\Delta \cdot \left( \begin{array}{c} \text{permutation of the} \\ \text{bottom wires of $\Delta$} \end{array} \right) \cdot \Delta^{-1}$$
is always finite (but possibly trivial). We refer to such a subgroup as a \emph{permutation subgroup}. In annular diagram groups, permutation subgroups are always cyclic. It turns out that permutation subgroups are sufficient to understand finite subgroups, and more generally finitely generated torsion subgroups:

\begin{cor}\label{cor:Torsion}
In an annular or symmetric diagram group, every finitely generated torsion subgroup is contained in a permutation subgroup.
\end{cor}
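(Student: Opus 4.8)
The plan is to deduce Corollary~\ref{cor:Torsion} from Theorem~\ref{thm:PictureCC} together with the structure theory of finite-order isometries of median graphs, exactly as in the second proof of Theorem~\ref{thm:TorsionFree} but applied to a whole torsion subgroup rather than a single element. Let $\mathcal{P}= \langle \Sigma \mid \mathcal{R} \rangle$ be a semigroup presentation, $w \in \Sigma^+$ a baseword, and let $H$ be a finitely generated torsion subgroup of $D_a(\mathcal{P},w)$ or $D_s(\mathcal{P},w)$. By Theorem~\ref{thm:PictureCC} the group $D_a(\mathcal{P},w)$ (resp. $D_s(\mathcal{P},w)$) acts properly on the median graph $M_a(\mathcal{P},w)$ (resp. $M_s(\mathcal{P},w)$), which is locally of finite cubical dimension. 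The first step is to upgrade ``finitely generated torsion'' plus ``proper action'' to ``finite'': since $H$ is finitely generated and torsion, and it acts properly on a median graph, I would invoke the fact that a finitely generated torsion group acting properly on a finite-dimensional $\mathrm{CAT}(0)$ cube complex has bounded orbits (a torsion group cannot contain a loxodromic isometry, so every element fixes a point by the finite-order case, and one then applies a Helly-type / fixed-point argument for the finitely generated group — e.g. the cube complex version of the Bruhat--Tits fixed point theorem, which holds once orbits are bounded), hence $H$ fixes a cube, hence (properness) $H$ is finite. Thus it suffices to treat finite subgroups, and the finitely-generated-torsion statement reduces to the finite one.

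The second step is to show that a finite subgroup $H \leq D_s(\mathcal{P},w)$ is conjugate into a permutation subgroup. Here I would use the same mechanism as in the proof of Theorem~\ref{thm:TorsionFree}: a finite group acting on a median graph stabilises a cube, so $H$ fixes a vertex $v$ of $M_s(\mathcal{P},w)$ (after passing to the barycentric subdivision / using that cube-stabilisers are vertex-stabilisers of the subdivided complex, or directly that $H$ fixes the centre of the stabilised cube which is a vertex of $M_s(\mathcal{P},w)$ since edges are labelled by adding one transistor). Now translate this back into the language of pictures: a vertex of $M_s(\mathcal{P},w)$ fixed by $H$ is a class $[\Delta]$ of a symmetric picture $\Delta$ with top label $w$, modulo dipole reduction and right-multiplication by a wire-permutation picture, and $H \cdot [\Delta] = [\Delta]$. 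Unpacking the equivalence relation, for each $h \in H$ the picture $\Delta^{-1} \circ h \circ \Delta$ is equivalent to a picture with no transistor, i.e. it represents an element of the permutation subgroup $P_\Delta := \Delta \cdot (\text{permutations of the bottom wires of } \Delta) \cdot \Delta^{-1}$. This gives $H \leq P_\Delta$, and $P_\Delta$ is finite (and cyclic in the annular case), which is precisely the desired conclusion.

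The main obstacle I anticipate is the first step: making rigorous the passage from ``finitely generated torsion group acting properly'' to ``finite''. The single-element case is exactly what is quoted in the proof of Theorem~\ref{thm:TorsionFree} (a finite-order isometry of a median graph stabilises a cube, via \cite[Lemma~2.1.3]{MR1418304} combined with \cite{MR913179}), but for a finitely generated torsion subgroup one genuinely needs a group-level fixed point statement. The cleanest route is: properness of the action forces each element to have finite order and to fix a cube; then one shows the collection of cube-fixed-point-sets of the generators has the finite intersection property, or more robustly one argues that $H$ has bounded orbits (each generator fixes a point, so an orbit of $H$ sits inside a bounded neighbourhood built from finitely many bounded pieces) and then applies the fact that a group with a bounded orbit on a complete $\mathrm{CAT}(0)$ space fixes a point (the circumcentre), noting that $M_s(\mathcal{P},w)$ is locally finite-dimensional so bounded sets lie in finite-dimensional — hence complete $\mathrm{CAT}(0)$ — subcomplexes. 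Once that fixed point is produced and seen to be (or to yield) a fixed vertex of the graph, the rest is the routine translation into pictures described above. I would present the argument with the fixed-point input cited, and spell out only the dictionary between ``fixes a vertex of $M_s(\mathcal{P},w)$'' and ``is contained in a permutation subgroup'', since that is the content specific to diagram pictures.
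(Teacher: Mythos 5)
Your overall route is the paper's: produce a vertex of $M_a(\mathcal{P},w)$ or $M_s(\mathcal{P},w)$ fixed by $H$, and then observe that vertex-stabilisers are exactly the permutation subgroups. Your "dictionary" step is correct and is all the diagram-specific content there is; note also that once $H$ is inside a permutation subgroup its finiteness is automatic, so your preliminary reduction to finite subgroups is not needed. (One small slip there: the centre of a positive-dimensional cube is not a vertex; the correct statement, which you also give, is that cube-stabilisers are vertex-stabilisers because left-multiplication preserves the number of transistors and hence fixes the minimal vertex of any stabilised cube.)

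The genuine gap is in your first step. The implication "each generator of $H$ fixes a point, hence an orbit of $H$ is bounded" is false: already in a simplicial tree two elliptic isometries can generate a group with unbounded orbits (their product may be loxodromic). What saves the day for a \emph{torsion} group is that \emph{every} element is elliptic, but passing from "all elements elliptic" to "global fixed point" is a real theorem for finite-dimensional (or locally finite-dimensional) median graphs, not a consequence of the circumcentre lemma --- and your fallback via the Helly property also begs the question, since the pairwise intersection of the fixed-point sets $\mathrm{Fix}(s_i)\cap\mathrm{Fix}(s_j)$ is precisely the two-generator case of the statement being proved. (There is the further wrinkle that the cube-completion is only \emph{locally} of finite cubical dimension, so completeness of the CAT(0) metric on the relevant subcomplexes has to be argued, not assumed.) The paper does not attempt any of this: it simply cites \cite[Theorem~4.23]{NeretinCremona} for the statement that a finitely generated torsion subgroup of a group acting on a median graph locally of finite cubical dimension fixes a vertex, and then concludes as you do. So your proposal becomes correct if you replace your first step by that citation (or by a genuine proof of the fixed-point theorem for periodic groups on finite-dimensional cube complexes); as written, the justification offered for that step does not hold up.
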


\begin{proof}
Because the median graph on which our diagram group acts is locally of finite cubical dimension, every finitely generated torsion subgroup must fix a vertex \cite[Theorem~4.23]{NeretinCremona}. But it follows from the construction of the median graph that vertex-stabilisers are permutation subgroups. 
\end{proof}

\noindent
It is worth noticing, in view of Corollary~\ref{cor:Torsion}, that annular and symmetric diagram groups may contain infinite torsion subgroups. For instance, the Houghton groups described below contain the group of finitely supported permutations of an infinite set.

\medskip \noindent
We can also deduce from Theorem~\ref{thm:PictureCC} analogues of Theorem~\ref{thm:Nilpotent} and Proposition~\ref{prop:Roots}

\begin{cor}
In annular and symmetric diagram groups, polycyclic subgroups are virtually abelian and undistorted.
\end{cor}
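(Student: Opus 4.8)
The plan is to follow the second proof of the weak version of Theorem~\ref{thm:Nilpotent} almost verbatim, the only genuinely new input being Theorem~\ref{thm:PictureCC}.

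First I would fix a semigroup presentation $\mathcal{P}= \langle \Sigma \mid \mathcal{R} \rangle$ and a baseword $w \in \Sigma^+$, and let $G$ denote either the annular diagram group $D_a(\mathcal{P},w)$ or the symmetric diagram group $D_s(\mathcal{P},w)$. By Theorem~\ref{thm:PictureCC}, $G$ acts properly on a median graph $X$ (namely $M_a(\mathcal{P},w)$ or $M_s(\mathcal{P},w)$) that is locally of finite cubical dimension, exactly as in the planar case treated in Section~\ref{section:MedianDiag}. Now let $H \leq G$ be a polycyclic subgroup. Being polycyclic, $H$ is finitely generated, so it also acts properly on $X$, and \cite[Corollary~6.C.8]{CornulierCommensurated} applies and shows that $H$ is virtually abelian; this is the first assertion.

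For the undistortion statement I would pick a finite-index subgroup $A \leq H$ which is free abelian of finite rank; such an $A$ exists because a finitely generated virtually abelian group contains a finite-index copy of $\mathbb{Z}^n$. Then $A$ is a finitely generated abelian subgroup of $G$, so \cite[Corollary~6.B.5]{CornulierCommensurated} shows that $A$ is undistorted in $G$. Since $A$ has finite index in $H$, the word metric of $H$ and that of $A$ are bi-Lipschitz equivalent on $A$, so the quasi-isometric embedding $A \to G$ upgrades to a quasi-isometric embedding $H \to G$, that is, $H$ is undistorted.

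I do not expect any serious obstacle: the whole content is Theorem~\ref{thm:PictureCC} together with the two general facts about proper actions on median graphs already invoked for (planar) diagram groups. The only point requiring a word of care is the final reduction, namely that undistortion of a subgroup is inherited by any commensurable overgroup; this step was not needed in the proof of the weak version of Theorem~\ref{thm:Nilpotent} because there the relevant subgroup was abelian on the nose, whereas here one only gets virtual abelianness.
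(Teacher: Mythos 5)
Your proposal is correct and follows essentially the same route as the paper: the paper's proof likewise invokes Theorem~\ref{thm:PictureCC} to get a proper action on a median graph of locally finite cubical dimension and then cites \cite[Corollaries~6.B.5 and~6.C.8]{CornulierCommensurated}. Your extra remark about passing undistortion from the finite-index free abelian subgroup up to the polycyclic subgroup is a valid (and slightly more careful) spelling-out of a step the paper leaves implicit.
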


\begin{proof}
If a group acts properly on a median graph, then its finitely generated abelian subgroups are automatically undistorted \cite[Corollary~6.B.5]{CornulierCommensurated} and its polycyclic subgroups are automatically virtually abelian \cite[Corollary~6.C.8]{CornulierCommensurated}. 
\end{proof}

\begin{cor}
In annular and symmetric diagram groups, an infinite-order element cannot be an arbitrarily large power.
\end{cor}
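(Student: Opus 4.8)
The plan is to transcribe the second proof of Proposition~\ref{prop:Roots} into the setting of annular and symmetric diagram groups, replacing the median graph used there with the one provided by Theorem~\ref{thm:PictureCC}. Write $G$ for an annular (resp.\ symmetric) diagram group $D_a(\mathcal{P},w)$ (resp.\ $D_s(\mathcal{P},w)$) and let $M$ be the associated median graph $M_a(\mathcal{P},w)$ (resp.\ $M_s(\mathcal{P},w)$), on which $G$ acts properly with locally finite cubical dimension. The first thing I would record is that this action has \emph{no hyperplane inversion}: exactly as for the Squier cube complex, every edge of $M$ corresponds to gluing a transistor labelled by an \emph{ordered} relation $u\to r$ of $\mathcal{R}$, so the hyperplanes of $M$ carry a $G$--invariant orientation and no element of $G$ can swap the two halfspaces delimited by a hyperplane. (Alternatively, a vertex-stabilising element cannot invert a hyperplane, and torsion elements lie in vertex stabilisers by Corollary~\ref{cor:Torsion}.)

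Now fix $a\in G$ of infinite order and suppose $a=b^k$ for some $b\in G$ and some $k\neq 0$. Since the action of $G$ on $M$ is proper, a bounded $\langle a\rangle$--orbit would force $\langle a\rangle$ to be finite; hence $\langle a\rangle$ has unbounded orbits in $M$. As no power of $a$ inverts a hyperplane, the result of \cite{arXiv:0705.3386} quoted in the second proof of Proposition~\ref{prop:Roots} applies and $a$ acts as a translation along a bi-infinite combinatorial geodesic of $M$, with integer translation length $\tau(a)=\min_x d(x,ax)\geq 1$. The same holds for $b$, which also has infinite order since $b^k=a$ does, so $\tau(b)\geq 1$. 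For a combinatorial hyperbolic isometry the translation length is multiplicative along powers (a combinatorial axis of $b$ is again a combinatorial axis of $b^k$, traversed $|k|$ times per period, and the translation length coincides with the stable translation length), so $\tau(a)=\tau(b^k)=|k|\cdot\tau(b)$, whence $|k|=\tau(a)/\tau(b)\leq\tau(a)$.

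Finally I would bound $\tau(a)$ intrinsically: as in Lemma~\ref{lem:Geodesics} and its picture analogue, the distance in $M$ between two vertices is the number of transistors in the reduced picture relating them, so taking the vertex represented by the trivial picture $\epsilon(w)$ gives $\tau(a)\leq d(\epsilon(w),a\cdot\epsilon(w))=\#(a)$, where $\#(a)$ is the number of transistors in a reduced picture of $a$. Therefore $|k|\leq\#(a)$, and in particular the set of $k$ for which $a$ is a $k$-th power is finite, which is the assertion. The only genuinely new ingredient compared with Proposition~\ref{prop:Roots} is checking that the action stays inversion-free despite the presence of torsion; the point I would be most careful about is that one really has multiplicativity (not merely stability) of the combinatorial translation length for the isometries involved, which is precisely what the cited semisimplicity theorem provides, and that the restriction to infinite-order elements is what lets us use it (finite-order elements are exactly the ones with bounded orbits, for which the argument says nothing).
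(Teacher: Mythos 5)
Your proposal is correct and follows essentially the same route as the paper: invoke the median graph from Theorem~\ref{thm:PictureCC}, use the semisimplicity result of \cite{arXiv:0705.3386} to make an infinite-order element act as a translation along a bi-infinite geodesic, and conclude from the multiplicativity and integrality of translation lengths that $|k|\leq\|a\|$. The extra care you take about hyperplane inversions and the explicit bound $\tau(a)\leq\#(a)$ are sensible refinements but do not change the argument.
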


\begin{proof}
If $g$ is an infinite-order element in our annular or symmetric diagram group, it follows from \cite{arXiv:0705.3386} that it acts as a translation on some bi-infinite geodesic line. Let $\|g\|$ denote the length of this translation. If the equality $a=b^k$ holds in our group, where $a,b$ are elements of infinite order, then 
$$\|a\| = \|b^k\|= |k| \cdot \| b \| \geq |k|,$$ 
since translation length are integers. Thus, $|k|$ cannot be arbitrarily large. 
\end{proof}

\noindent
As a consequence, annular and symmetric diagram groups cannot contain $\mathbb{Q}$. This contrasts with, for instance, the lift $\bar{T}$ of Thompson's group $T$ in $\mathrm{Homeo}(\mathbb{R})$ \cite{MR4478033}.

\paragraph{Finiteness properties.} Even though annular and symmetric diagram groups do not act freely on their median graphs, the properness of the actions is still allows us to deduce that some diagram groups satisfy good finiteness properties. The easiest case is when the actions are cocompact, providing an analogue of Proposition~\ref{prop:EasyF}. 

\begin{prop}
Let $\mathcal{P}= \langle \Sigma \mid \mathcal{R} \rangle$ be a semigroup presentation and $w \in \Sigma^+$ a baseword. If $[w]_\mathcal{P}$ is finite, then the annular and symmetric diagram groups $D_a(\mathcal{P},w)$ and $D_s(\mathcal{P},w)$ are of type $F_\infty$.
\end{prop}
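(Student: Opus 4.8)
The plan is to treat this as the cocompact analogue of Proposition~\ref{prop:EasyF}: when $[w]_\mathcal{P}$ is finite the relevant median graph is cocompact, and a proper cocompact action on a contractible complex forces type $F_\infty$. Fix $\mathcal{P}$ and $w$ with $[w]_\mathcal{P}$ finite and let $X$ be the cube-completion of $M_s(\mathcal{P},w)$; the annular case is entirely analogous, working with $M_a(\mathcal{P},w)$ in place of $M_s(\mathcal{P},w)$. By the contractibility of cube-completions of median graphs recorded in Section~\ref{section:Crash}, $X$ is contractible, and by Theorem~\ref{thm:PictureCC} the group $D_s(\mathcal{P},w)$ acts on $X$ properly; vertex-stabilisers are the (finite) permutation subgroups, and since each cube has finitely many vertices, all cell-stabilisers are finite as well.

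The bulk of the work is to verify that this action is cocompact. First I would check that $M_s(\mathcal{P},w)$ has only finitely many orbits of vertices. A vertex is represented by a symmetric picture with top label $w$, whose bottom label is a word equal to $w$ modulo $\mathcal{P}$, hence an element of the finite set $[w]_\mathcal{P}$ (only defined up to a permutation of the wires, which is harmless because vertices have already been quotiented by right multiplication by wire-permutation pictures). Given two pictures $\Delta_1,\Delta_2$ with the same bottom label, the picture $\Delta_1 \circ \Delta_2^{-1}$ is a $(w,w)$-picture, so an element of $D_s(\mathcal{P},w)$, and it carries the vertex $\Delta_2$ to the vertex $\Delta_1$; thus $D_s(\mathcal{P},w)$ is transitive on vertices with a fixed bottom label, and there are at most $|[w]_\mathcal{P}|$ orbits. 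Next I would check that $X$ is locally finite: an edge out of a vertex of bottom label $u$ is obtained by attaching a transistor labelled by a relation of $\mathcal{R}$ whose left-hand side occurs among the wires of $u$ and whose application again produces a word of $[w]_\mathcal{P}$, so only finitely many relations of $\mathcal{R}$ can ever be used, each vertex has finitely many incident edges, and hence finitely many incident cubes. Finitely many vertex-orbits together with local finiteness yield a cocompact (and, incidentally, finite-dimensional) action.

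Finally I would invoke the standard fact on finiteness properties --- Brown's criterion, used the same way in Farley's proof of Theorem~\ref{thm:Farley} --- that a group acting cocompactly on a contractible CW-complex all of whose cell-stabilisers are of type $F_\infty$ is itself of type $F_\infty$. Here the stabilisers are finite, hence of type $F_\infty$, so $D_s(\mathcal{P},w)$ is of type $F_\infty$, and the same argument for $M_a(\mathcal{P},w)$ settles $D_a(\mathcal{P},w)$. The one point requiring genuine care --- and the main obstacle --- is the local-finiteness step: one must be sure that the finiteness of $[w]_\mathcal{P}$ alone, without any finiteness hypothesis on the presentation $\mathcal{P}$ itself, already limits to finitely many the relations of $\mathcal{R}$ that ever appear along edges of $M_s(\mathcal{P},w)$, which is precisely what the observation that each such relation turns one element of $[w]_\mathcal{P}$ into another delivers.
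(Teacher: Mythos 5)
Your overall architecture is exactly the intended one: the paper gives no explicit proof and presents this as the ``cocompact case'', so contractibility of the cube-completion, properness with finite (permutation) stabilisers, cocompactness via ``finitely many vertex orbits plus local finiteness'', and Brown's criterion is the right skeleton, and your transitivity argument on vertices sharing a bottom label is correct. The genuine gap is the very first step of the cocompactness argument: the claim that the bottom label of a symmetric (or annular) picture with top label $w$ is ``a word equal to $w$ modulo $\mathcal{P}$, hence an element of the finite set $[w]_\mathcal{P}$''. A symmetric picture encodes a derivation in which applications of relations from $\mathcal{R}$ are interleaved with arbitrary permutations of the wires, so the reachable bottom labels form the connected component of $w$ in the graph $\Gamma_s(\mathcal{P})$ (words modulo permutation, with edges given by $\mathcal{R}$), not the set $[w]_\mathcal{P}$ of words equal to $w$ in the semigroup, and the former can be infinite while the latter is finite. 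Concretely, take $\mathcal{P}=\langle a,b,x \mid ab=x,\ ba=bax\rangle$ and $w=ab$: then $[ab]_\mathcal{P}=\{ab,x\}$ is finite, but a symmetric picture may cross the two top wires, feed them into a transistor labelled $ba=bax$, and iterate, producing reduced pictures with bottom multiset $\{a,b,x^n\}$ for every $n$. So $M_s(\mathcal{P},ab)$ has infinitely many vertex orbits and the cocompactness argument collapses; the same phenomenon occurs in the annular case with cyclic permutations. The step you yourself single out as ``the one point requiring genuine care'' --- that each relation used along an edge turns one element of $[w]_\mathcal{P}$ into another --- is precisely the assertion that fails.

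The repair is to run your argument with the correct invariant: the hypothesis must be read (as it surely is intended, given the paper's own definition of $[w]_\mathcal{P}$ is the planar one) as finiteness of the connected component of $w$ in $\Gamma_a(\mathcal{P})$, resp.\ $\Gamma_s(\mathcal{P})$, the graphs on words modulo cyclic permutation, resp.\ permutation, introduced for Farley's $F_\infty$ theorem immediately afterwards. With that reading every step of your proof goes through verbatim: bottom labels range over a finite set of (cyclic) multisets, transitivity on each bottom-label class gives finitely many vertex orbits, local finiteness follows as you argue, and a cocompact action on a contractible complex with finite stabilisers yields type $F_\infty$.
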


\noindent
A more interesting result, an analogue of Theorem~\ref{thm:Farley} which is proved using the same strategy based on Morse theory, is:

\begin{thm}[\cite{MR2146639}]
Let $\mathcal{P}= \langle \Sigma \mid \mathcal{R} \rangle$ be a semigroup presentation and $w \in \Sigma^+$ a baseword. If the oriented graph $\Gamma_a(\mathcal{P})$ (resp. $\Gamma_s(\mathcal{P})$) does not contain an infinite oriented ray and contains only finitely many sink vertices, then $D_a(\mathcal{P},w)$ (resp. $D_s(\mathcal{P},w)$) is of type $F_\infty$. 
\end{thm}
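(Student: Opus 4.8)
The plan is to run the Bestvina--Brady Morse-theoretic argument used for Theorem~\ref{thm:Farley}, now on the median graph $M_a(\mathcal{P},w)$ (resp.\ $M_s(\mathcal{P},w)$) on which $D_a(\mathcal{P},w)$ (resp.\ $D_s(\mathcal{P},w)$) acts properly by Theorem~\ref{thm:PictureCC}. The cube-completion of this graph is contractible, so it serves as a classifying-type space for the group, albeit infinite-dimensional and with infinitely many orbits of cells; the difference with the planar case is that the action is proper but not free. However, by Corollary~\ref{cor:Torsion} and the description of the underlying median graph, the vertex-stabilisers are the permutation subgroups, which are finite, hence of type $F_\infty$. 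Consequently a group acting cocompactly on an $(n-1)$-connected subcomplex of $M_a(\mathcal{P},w)$ is automatically of type $F_n$, so it suffices to produce, for each $n$, a $D_a(\mathcal{P},w)$-cocompact, $(n-1)$-connected subcomplex.

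First I would build the Morse function. Because $\Gamma_a(\mathcal{P})$ (resp.\ $\Gamma_s(\mathcal{P})$) contains no infinite oriented ray, the rewriting it encodes terminates, so each word carries a well-defined rank in $\mathbb{N}$ (its ``distance to the irreducible words'' in $\Gamma_a(\mathcal{P})$), and extending affinely over cubes the map sending a reduced picture to the rank of its bottom word yields a $D_a(\mathcal{P},w)$-equivariant Morse function $\mathfrak{h}\colon M_a(\mathcal{P},w)\to\mathbb{R}_+$. The hypothesis that $\Gamma_a(\mathcal{P})$ has only finitely many sink vertices (finitely many irreducible words) then ensures, exactly as in the ``finite presentation of a finite semigroup'' situation of Theorem~\ref{thm:Farley}, that below any fixed height there are only finitely many orbits of cubes, so every sublevel $\mathfrak{h}^{-1}([0,k])$ is cocompact. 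By the Morse Lemma of \cite{MR1465330} quoted above, and since the ambient complex is contractible, it remains only to check that the descending links of vertices of $M_a(\mathcal{P},w)$ become arbitrarily highly connected as their height grows.

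The descending link of a reduced picture $\Delta$ with bottom word $v$ is, by construction of the median graph, the complex whose simplices are the finite families of pairwise disjoint rank-decreasing applications of relations to $v$ that can be glued below $\Delta$; up to isomorphism it depends only on $v$. This is a common generalisation of the complexes $\mathfrak{I}_k$ from the proof of Theorem~\ref{thm:Farley}, the annular (resp.\ symmetric) setting entering only through the cyclic (resp.\ commutative) structure of $v$ and the freedom to permute wires, which do not lower connectivity. \textbf{The main obstacle} is to show that these ``complexes of compatible reductions'' are $(n-1)$-connected once the rank of $v$ is large enough: I would prove this by induction on the rank, writing the complex attached to a long word as the union of a cone over the subcomplex obtained by deleting the support of one chosen reducing move with the (smaller-rank) complex attached to the reduced word, and then applying a nerve / Mayer--Vietoris argument --- the same bootstrapping that rebuilds $\mathfrak{I}_{k+3}$ from cones over $\mathfrak{I}_{k+1}$ and $\mathfrak{I}_{k}$. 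Granting such a connectivity estimate, for every $n$ a high-enough cocompact sublevel $\mathfrak{h}^{-1}([0,k_n])$ is $(n-1)$-connected, whence $D_a(\mathcal{P},w)$ (resp.\ $D_s(\mathcal{P},w)$) is of type $F_n$ for all $n$, i.e.\ of type $F_\infty$.
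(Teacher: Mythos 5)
Your architecture is the intended one (the survey only gestures at the proof, deferring to \cite{MR2146639}, and that proof is indeed the Morse-theoretic strategy of Theorem~\ref{thm:Farley} run on the median graphs of Theorem~\ref{thm:PictureCC}), and you correctly identify the one new structural issue --- the action is proper but not free --- and dispose of it the right way: vertex-stabilisers are permutation subgroups, hence finite, so Brown's criterion still converts cocompact $(n-1)$-connected sublevels into type $F_n$. The cocompactness of sublevels from ``finitely many sinks'' is also essentially right. But two of your steps are asserted where they need to be proved, and they are where the content of the cited proof actually lives. The lesser one is the Morse function itself: the ``rank'' of a word (length of a longest oriented path issuing from its class in $\Gamma_a(\mathcal{P})$ or $\Gamma_s(\mathcal{P})$) is finite and decreases along descending edges, but Morse theory requires the height to extend \emph{affinely} over each cube, i.e.\ to be additive over disjoint simultaneous applications of relations, and a longest-path rank has no reason to satisfy $\mathrm{rank}(w_S)=\mathrm{rank}(w_\emptyset)+\sum_{i\in S}\delta_i$. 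One must either exhibit a better-behaved height (in the model case $\langle x\mid x^2=x\rangle$ it is just the word length) or verify this additivity from the hypotheses; as stated the function may fail to be a Morse function.

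The serious gap is the descending-link connectivity, which you flag as ``the main obstacle'' and then resolve by declaring that the cyclic/symmetric structure and the freedom to permute wires ``do not lower connectivity'', so that the $\mathfrak{I}_k$ bootstrap goes through. That is precisely what has to be proved, and in the symmetric case it is not a cosmetic adaptation. Over $\langle x\mid x^2=x\rangle$ the descending link of a vertex with $k$ bottom wires in $M_s(\mathcal{P},w)$ is not the interval complex $\mathfrak{I}_k$ (an independence complex of a path) but a matching-type complex: its vertices are \emph{arbitrary} pairs of wires to be merged, with no planarity constraint, and its simplices are families of pairwise disjoint such pairs. The statement that matching/chessboard complexes of complete graphs become highly connected as $k\to\infty$ is a genuine theorem (of Bj\"orner--Lov\'asz--Vre\'cica--\v{Z}ivaljevi\'c type), proved by a noticeably more delicate induction and nerve argument than the one rebuilding $\mathfrak{I}_{k+3}$ from cones over $\mathfrak{I}_{k+1}$ and $\mathfrak{I}_k$; for a general presentation satisfying your hypotheses the descending links are assembled from such complexes relation by relation, and establishing their connectivity is the bulk of Farley's paper. ``Granting such a connectivity estimate'' therefore grants the hardest step: what you have is a correct plan together with the correct reduction, not yet a proof.
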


\noindent
Here, $\Gamma_a(\mathcal{P})$ (resp. $\Gamma_s(\mathcal{P})$) is the oriented graph whose vertices are the words in $\Sigma^+$ modulo cyclic permutation (resp. modulo permutation) and whose oriented edges connect a given word $aub$ to a word $avb$ whenever $u=v$ belongs $\mathcal{R}$.

\paragraph{Diagram products.} Let $\mathcal{P}= \langle \Sigma \mid \mathcal{R} \rangle$ be a semigroup presentation, $\mathcal{G}= \{ G_s \mid s \in \Sigma\}$ a collection of groups indexed by $\Sigma$, and $w \in \Sigma^+$ a baseword. By labelling the wires of annular and symmetric pictures with elements from groups in $\mathcal{G}$, one can naturally extend the definitions from Section~\ref{section:DiagramProducts} in order to define annular and symmetric diagram products. This is done in \cite{MR4033512}, where the quasi-median geometry is also extended. This allows us to prove an analogue of Theorem~\ref{thm:SemiProduct} and to prove a statement similar to Theorem~\ref{thm:Short}, namely:

\begin{thm}[\cite{MR4033512}]
Every annular (resp. symmetric) diagram group satisfies a short exact sequence
$$1 \to R \to G \to S \to 1$$
where $R$ is a subgroup of a right-angled Artin group and $S$ is a subgroup of Thompson's group $T$ (resp. $V$). 
\end{thm}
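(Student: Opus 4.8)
The plan is to adapt the proof of Theorem~\ref{thm:Short} to the annular and symmetric settings, replacing planar diagram products by their annular and symmetric analogues and invoking the structure theory of these products developed in \cite{MR4033512}. Fix an annular (resp.\ symmetric) diagram group $D_a(\mathcal{P},w)$ (resp.\ $D_s(\mathcal{P},w)$), where $\mathcal{P}= \langle \Sigma \mid \mathcal{R} \rangle$, and set $\mathcal{Q}:= \langle x \mid x=x^2 \rangle$. Let $\mathbb{F}$ be the free group formally generated by the relations of $\mathcal{R}$, and let $\mathcal{G}:= \{ G_x = \mathbb{F}\}$. Recall that $D_a(\mathcal{Q},x) \simeq T$ and $D_s(\mathcal{Q},x) \simeq V$.

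Exactly as in the proof of Theorem~\ref{thm:Short}, I would fix for every $n \geq 1$ a chosen $(x^n,x)$-diagram $\Gamma_n$ over $\mathcal{Q}$, and transform an annular (resp.\ symmetric) picture over $\mathcal{P}$ into an annular (resp.\ symmetric) picture over $(\mathcal{Q},\mathcal{G})$ by ``unrolling'' each $\Sigma$-wire into an $x$-wire and replacing each transistor labelled by a relation $u=v$ (with $|u|=n$ and $|v|=m$) by the diagram $\Gamma_n \circ \epsilon(f) \circ \Gamma_m^{-1}$, where $\epsilon(f)$ denotes the single $x$-edge carrying the generator $f \in \mathbb{F}$ attached to $u=v$. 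Up to conjugating by the fixed comb $\Gamma_{|w|}$, this produces a picture in $D_a(\mathcal{Q},\mathcal{G},x)$ (resp.\ $D_s(\mathcal{Q},\mathcal{G},x)$). One then checks, using the freeness of $\mathbb{F}$ just as in the planar case, that this assignment is compatible with dipole reductions, hence defines a group homomorphism, and that it is injective because forgetting the $\mathbb{F}$-labels and collapsing the combs $\Gamma_n$ recovers the original picture.

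The step I expect to be the main obstacle is precisely the verification that this map is a well-defined \emph{injective} homomorphism landing in the annular (resp.\ symmetric) diagram product: one must keep careful track of how the block structure of the unrolling transports wire-crossings --- a crossing of two $\Sigma$-wire bundles becoming the corresponding block transposition of $x$-wires --- and check that no spurious reductions are created. Once this is in place, the conclusion is formal. The annular (resp.\ symmetric) analogue of Theorem~\ref{thm:SemiProduct} established in \cite{MR4033512} yields a decomposition
$$D_a(\mathcal{Q},\mathcal{G},x) = A_a(\mathcal{Q},\mathcal{G},x) \rtimes D_a(\mathcal{Q},x) = R' \rtimes T$$
(resp.\ $D_s(\mathcal{Q},\mathcal{G},x) = R' \rtimes V$), where $R'$ is a graph product of copies of $\mathbb{F}$ and hence a right-angled Artin group, since a graph product of free groups is a right-angled Artin group. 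Composing the embedding $D_a(\mathcal{P},w) \hookrightarrow R' \rtimes T$ with the retraction onto $T$ gives a morphism $D_a(\mathcal{P},w) \to T$ whose image $S$ is a subgroup of $T$ and whose kernel $R = D_a(\mathcal{P},w) \cap R'$ is a subgroup of the right-angled Artin group $R'$; this is the desired short exact sequence $1 \to R \to D_a(\mathcal{P},w) \to S \to 1$. The symmetric case is identical, with $T$ replaced by $V$.
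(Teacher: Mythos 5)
Your proposal is correct and follows essentially the same route as the paper and its reference \cite{MR4033512}: embed the annular (resp.\ symmetric) diagram group into the annular (resp.\ symmetric) diagram product over $\langle x \mid x=x^2\rangle$ with coefficients in the free group on $\mathcal{R}$, exactly as in the planar proof of Theorem~\ref{thm:Short}, and then apply the annular/symmetric analogue of the semidirect decomposition of Theorem~\ref{thm:SemiProduct} to project onto $T$ (resp.\ $V$) with kernel inside a graph product of free groups. The technical point you flag (well-definedness and injectivity of the unrolling map on pictures) is indeed where the work lies in \cite{MR4033512}, and your identification of the kernel and image is the same as the paper's.
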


\noindent
For instance, this implies that every simple annular (resp. symmetric) diagram group embeds into Thompson's group $T$ (resp. $V$). This excludes in particular the higher dimensional Thompson groups $nV$ for $n \geq 2$ \cite{MR2112673}.

\paragraph{Examples.} Let $\mathcal{P}= \langle x \mid x=x^2 \rangle$. A reduced annular (resp. symmetric) $(x,x)$-picture over $\mathcal{P}$ can always be written as the concatenation of a \emph{positive} picture (i.e.\ all of whose transistors are labelled by $x=x^2$), followed by a cyclic permutation (resp. a permutation) of the wires, and finally by a \emph{negative} picture (i.e.\ all of whose transistors are labelled by $x^2=x$). Such a structure immediately yields a triple $(T_1,\sigma,T_2)$ where $T_1,T_2$ are two finite rooted $2$-regular trees with the same number of leaves, say $n$, and where $\sigma$ is a cyclic permutation (resp. a permutation) of $n$ elements. This correspondence induces an isomorphism from the annular diagram group $D_a(\mathcal{P},x)$ to Thompson's group $T$ (resp. from the symmetric diagram group $D_s(\mathcal{P},x)$ to Thompson's group $V$). We refer to \cite{MR1426438} for more information about the representations of elements of $T$ and $V$ as pairs of trees. 

\medskip \noindent
These examples can be found in \cite{MR1396957}. Other interesting examples can be found in \cite{MR3822286}, namely the Houghton groups $H_n$ and the symmetrisation $QV$ of Thompson's group $V$. 

\medskip \noindent
Given an $n \geq 1$, let $R_n$ denote the union of $n$ disjoint infinite rays. The \emph{Houghton group} $H_n$ is the group of the bijections $R_n^{(0)}\to R_n^{(0)}$ that preserve the ends of the graph $R_n$ and that preserve adjacency and non-adjacency for all but finitely many pairs of vertices. In other words, an element of $H_n$ acts by translations outside a finite set of $R_n$ and it permutes the vertices of this set in an arbitrary way. Since, on each infinite ray, an element of $H_n$ acts as a translation, we have $n$ morphisms $H_n \to \mathbb{Z}$ giving the corresponding translation lengths (positive if the vertices are translated towards infinity, negative otherwise). In order to get a bijection $R_n^{(0)} \to R_n^{(0)}$, the sum of these translation lengths must be zero. Thus, one deduces a short exact sequence
$$1 \to S_\infty \to H_n \to \mathbb{Z}^{n-1} \to 1$$
where $S_\infty$ is identified with the finitely supported permutations of $R_n^{(0)}$. 

\medskip \noindent
The definition of $QV$ has a similar flavor. Let $\mathcal{T}$ denote the rooted $2$-regular tree with a fixed embedding in the plane (so each vertex has two children: a left and a right). Then $QV$ is the group of the bijections $\mathcal{T}^{(0)} \to \mathcal{T}^{(0)}$ that preserves adjacency, non-adjacency, and left-right order on the children with only finitely many exceptions. Notice that $QV$ has a well-defined action on the boundary of $\mathcal{T}$, which is a Cantor set. This action turns out to coincide with the usual definition of Thompson's group $V$ as a group of homeomorphisms of the Cantor set. Hence a short exact sequence
$$1 \to S_\infty \to QV \to V \to 1$$
where $S_\infty$ is identified with the finitely supported permutations of $\mathcal{T}^{(0)}$. 

\begin{prop}[\cite{MR3822286}]
For every $n \geq 2$, the Houghton group $H_n$ is isomorphic to the symmetric diagram group $D_s(\mathcal{P}_n,r)$ where
$$\mathcal{P}_n := \langle a,x_1, \ldots, x_n \mid x_1=x_1a, \ldots, x_n=x_na \rangle.$$
The group $QV$ is isomorphic to the symmetric diagram group $D_s(\mathcal{Q},x)$ where $\mathcal{Q}:= \langle a,x \mid x=xax \rangle.$
\end{prop}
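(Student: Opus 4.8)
The plan is to exploit the normal form for reduced symmetric pictures over these two presentations, in exact parallel with the identification of $D_s(\langle x\mid x=x^2\rangle,x)$ with Thompson's group $V$ recalled in the Examples paragraph above, and then to match the resulting combinatorial data with the descriptions of $H_n$ and $QV$ as groups of eventually-structure-preserving bijections. First I would record that, since $\mathcal{P}_n$ and $\mathcal{Q}$ have relations with a single letter on one side, every reduced symmetric $(w,w)$-picture $\Delta$ over them decomposes uniquely as $\Delta=\Delta^{+}\circ\pi\circ\Delta^{-}$, where $\Delta^{+}$ (resp.\ $\Delta^{-}$) is a positive (resp.\ negative) picture and $\pi$ is a pure permutation picture; existence and uniqueness of reduced forms come from \cite{MR2136028}, and the positive/permutation/negative splitting is forced by the shape of the relations. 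The positive part then carries purely combinatorial ``growth'' data, the permutation $\pi$ is a bijection of the wires which must respect the letters labelling them, and dipole reduction acts on this data by cancelling a matched ``terminal'' pair from $\Delta^{+}$ and $\Delta^{-}$.

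For $H_n$ I would decode this as follows. A positive picture over $\mathcal{P}_n$ merely appends $k_i\ge 0$ copies of $a$ to each $x_i$, so $\Delta^{+}$ is determined by a tuple $(k_1,\dots,k_n)\in\mathbb{Z}_{\ge 0}^n$ and $\Delta^{-}$ by a tuple $(l_1,\dots,l_n)$; since $\pi$ is label-respecting it fixes each of the $n$ root wires $x_i$ and freely permutes the $\sum k_i$ ``$a$-wires'' of the source word onto the $\sum l_j$ ``$a$-wires'' of the target word, whence $\sum k_i=\sum l_i$. Reading $x_i a^{k_i}$ as a finite initial segment of the $i$-th ray of $R_n$, with the letter $x_i$ compressing the remaining tail, $\Delta$ induces a bijection of $R_n^{(0)}$: on the uncompressed vertices $\pi$ gives a finitely supported permutation, and on the compressed tail of ray $i$ it acts as the translation by $t_i:=l_i-k_i$, with $\sum_i t_i=0$ because $\pi$ conserves the total number of $a$-wires. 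This yields a map $D_s(\mathcal{P}_n,r)\to H_n$; it is unchanged by dipole reduction ($t_i$ and the finite part are insensitive to cancelling a matched terminal $a$), it is a homomorphism once one checks that composition of pictures corresponds to the ``stabilise both snapshots to a common size, then compose'' recipe in $H_n$, it is injective (a picture mapping to the identity has all $t_i=0$ and trivial near-permutation, hence after stabilising to all $k_i=l_i$ its middle permutation is trivial and the picture is reduced-trivial), and it is surjective (given $(t_i)$ with $\sum t_i=0$ and a finite perturbation, take $k_i=K$, $l_i=K+t_i$ for $K\gg 0$ and the evident $\pi$). Matching this with $1\to S_\infty\to H_n\to\mathbb{Z}^{n-1}\to 1$ — the map to $\mathbb{Z}^{n-1}$ becoming $\Delta\mapsto(t_1,\dots,t_n)$ and $S_\infty$ the reduced pictures with all $t_i=0$ — then gives the isomorphism.

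The $QV$ case runs on the same scheme with $\mathcal{Q}=\langle a,x\mid x=xax\rangle$ playing the role that $\langle x\mid x=x^2\rangle$ plays for $V$: the relation $x=xax$ is a ``ternary caret'' whose left and right branches are again $x$-edges but whose middle branch is a terminal $a$-edge. Hence a positive picture over $\mathcal{Q}$ with top $x$ is encoded by a finite rooted binary tree $T_1$, its $x$-wires being the leaves of $T_1$ and its $a$-wires being in bijection with the internal nodes of $T_1$ (one $a$-leaf per caret); a general reduced picture is then a triple $(T_1,T_2,\pi)$ where $\pi$ is a label-respecting bijection from the vertex set of $T_1$ to that of $T_2$, i.e.\ sending leaves to leaves and internal nodes to internal nodes, and dipole reduction is contraction of a matched pair of carets. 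I would then observe that this is precisely the model of $QV$ used in the excerpt: $\pi$ restricted to the leaves, extended by the prefix-replacement germ below the leaves, is an element of $V$ acting on $\partial\mathcal{T}$, while $\pi$ on the internal nodes records the extra finitely supported rearrangement of $\mathcal{T}^{(0)}$, recovering $1\to S_\infty\to QV\to V\to 1$. Thus $(T_1,T_2,\pi)\mapsto(\text{induced bijection of }\mathcal{T}^{(0)})$ is a well-defined isomorphism $D_s(\mathcal{Q},x)\to QV$, its homomorphism property reducing to the common-refinement computation for tree pairs already used to identify $D_s(\langle x\mid x=x^2\rangle,x)$ with $V$, now carried out on the enlarged vertex set.

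The main obstacle in both cases is the same, namely verifying that these combinatorial correspondences are genuinely group homomorphisms, which amounts to controlling how the middle permutation $\pi$ is re-absorbed into the positive and negative parts when one stacks two pictures and re-reduces; equivalently, one must show that the ``germ at infinity plus near-permutation'' extracted from a picture depends only on the underlying group element and is multiplicative. The cleanest way to make this rigorous is probably to forgo explicit normal-form bookkeeping and instead read the isomorphism off the action of $D_s(\mathcal{P}_n,r)$ (resp.\ $D_s(\mathcal{Q},x)$) on its median graph $M_s$ from Theorem~\ref{thm:PictureCC}, identifying that graph with the Farley-type complex on which $H_n$ (resp.\ $QV$) acts — finite forests of rays with a vertex-labelling (resp.\ finite binary trees with a vertex-labelling) — and concluding that the two actions, being literally the same, come from the same group.
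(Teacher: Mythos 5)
Your proposal is correct and follows essentially the same route as the paper (which only illustrates the correspondence and defers to the cited source): the positive--permutation--negative normal form for reduced symmetric pictures over these presentations, with the positive part recording how far each ray (resp.\ the binary tree) has been expanded, the middle permutation recording the finitely supported rearrangement, and the difference of expansion depths recording the eventual translations (resp.\ the underlying element of $V$). The multiplicativity check you single out is the same routine verification already implicit in identifying $D_s(\langle x\mid x=x^2\rangle,x)$ with $V$, and your ``stabilise both snapshots to a common expansion, then compose'' recipe is exactly how one closes it, so no genuine gap remains.
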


\noindent
Figure~\ref{Houghton} illustrates how to describe the second Houghton group as a symmetric diagram group. The philosophy is the same for the other Houghton groups and for $QV$. See \cite[Examples~4.3 and~4.4]{MR3822286} for more details. Notice that, by the same argument, one can show that the groups $QV_{n,r}$, which are defined by replacing our previous rooted tree $\mathcal{T}$ with a forest of $r$ rooted $n$-regular trees, are also symmetric diagram groups. 

\begin{figure}[h!]
\begin{center}
\includegraphics[width=0.9\linewidth]{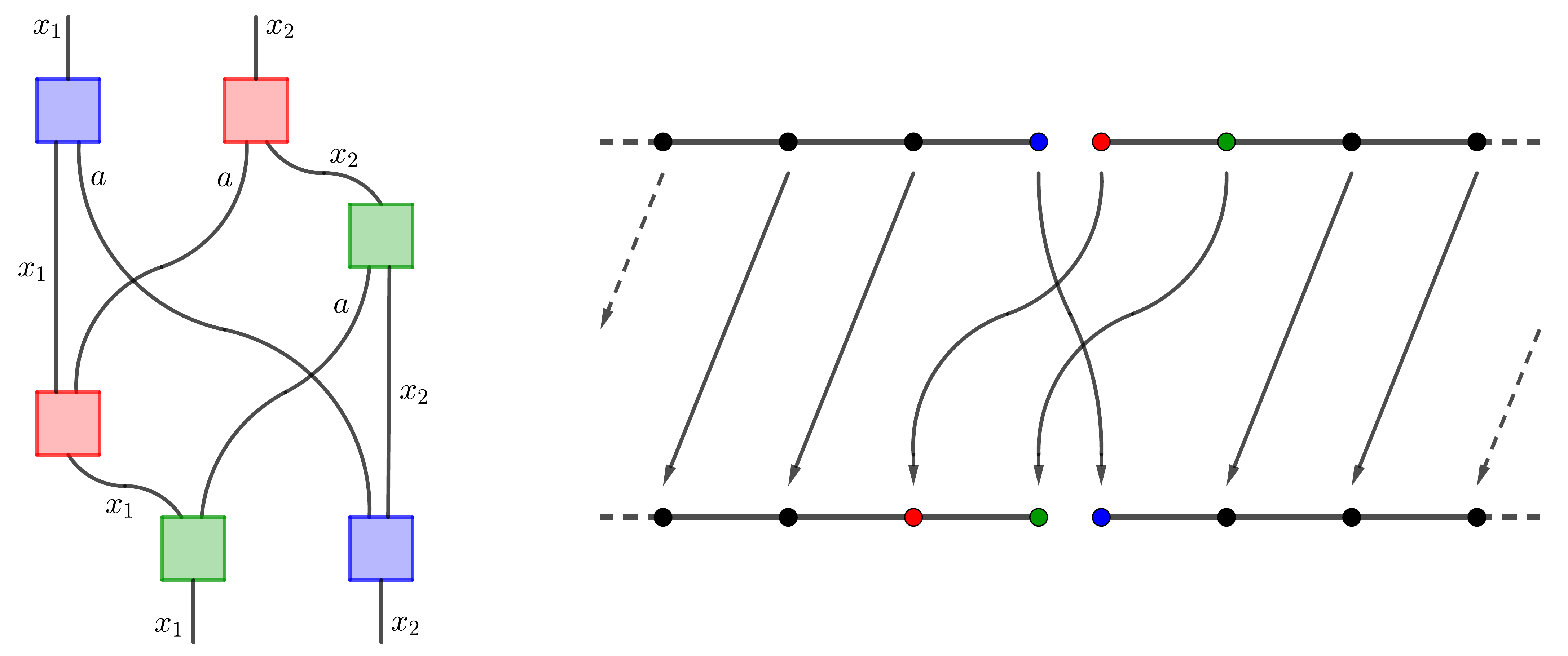}
\caption{$D_s(\langle a,x_1,x_2 \mid x_1=x_1a, x_2=ax_2 \rangle, x_1x_2)$ isomorphic to $H_2$.}
\label{Houghton}
\end{center}
\end{figure}

\subsection{Braided-like diagram groups}\label{section:Braided}

\noindent
In addition to planar, annular, and symmetric diagram groups, it would be natural to define \emph{braided diagram groups}, where wires in pictures are thought of as strands in braids. This variation has not been defined properly in the literature yet. Nevertheless, the following results can be reasonably expected:
\begin{itemize}
	\item The braided diagram group $D_b(\mathcal{P},x)$, where $\mathcal{P}= \langle x \mid x=x^2 \rangle$, coincides with the braided Thompson group $\mathrm{br}V$ \cite{MR2258261, MR2364825}.
	\item A braided diagram group acts on a median graph with vertex-stabilisers isomorphic to finitely generated braid groups. As a consequence, it is torsion-free.
	\item Polycyclic subgroups in braided diagram groups are virtually abelian and undistorted.		
	\item Every braided diagram group $G$ satisfies a short exacst sequence $$1 \to R \to G \to S \to 1$$ where $R$ is a subgroup of a right-angled Artin group and where $S$ is a subgroup of $\mathrm{br}V$. 
\end{itemize}
It would interesting to know whether braided diagram groups are bi-orderable. In other words, is it possible to combine the bi-orderability of braid groups and (planar) diagram groups in order to deduce the bi-orderability of braided diagram groups?

\medskip \noindent
In this direction, we can imagine as many diagram-like groups as there are braided-like groups. This could include \emph{virtual diagram groups}, in reference to virtual braid groups; \emph{loop diagram groups}, in reference with loop braid groups; \emph{cactus diagram groups}, in reference to the cactus groups; etc. The variations are virtually endless.

\medskip \noindent
In another direction, it is possible to treat wires differently according to the letters labelling it. There are at least two such examples available in the literature.

\medskip \noindent
The first example is \cite{MR3797069}, dedicated to the symmetrisation $QF$ (resp. $QT$) of Thompson's group $F$ (resp. $T$). The group $QF$ (resp. $QT$) is naturally a subgroup of $QV$, which is isomorphic to the symmetric diagram group $D_s(\mathcal{P},x)$ where $\mathcal{P}= \langle a,x \mid x=axa \rangle$, and its elements can be described as the pictures where the wires labelled $a$ behave like in a planar (resp. annular) picture. This point of view allows the authors to show that the groups $QF$, $QT$, $QV$ are of type $F_\infty$.

\medskip \noindent
As another example, we can mention the \emph{Chambord groups} introduced in \cite{Chambord}. Formally, they are defined thanks to specific diagrams with both strands (which are braided) and wires (which are cyclically permuted). This formalism allows the authors to describe combinatorially the asymptotically rigid mapping class groups studied in \cite{MR4466651}, including the braided Ptolemy-Thompson from \cite{MR2390352} and the braided Houghton groups from \cite{brHoughton, MR2386796}.

\subsection{Monoid presentations}

\noindent
So far, we have only considered diagram groups defined from semigroup presentations, but it is also possible to start from monoid presentations, i.e.\ allowing relations of the form $u=1$ where $1$ corresponds to the empty word. Given a monoid presentation $\mathcal{P}= \langle \Sigma \mid \mathcal{R} \rangle$, one can define a Squier square complex $S(\mathcal{P})$ by repeating word for word Definition~\ref{def:SquierSquare}; and, fixing a baseword $w \in \Sigma^+$, the diagram group $D(\mathcal{P},w)$ is defined as the fundamental group of the connected component $S(\mathcal{P},w)$ of $S(\mathcal{P})$. Following Section~\ref{section:Diagrams}, a diagrammatic description od $D(\mathcal{P},w)$ can be obtained, but it is more convenient to use pictures instead of diagrams. The main difference with semigroup presentations is that, in a picture over our monoid presentation, a transistor labelled by a relation $u=1$ (resp. $1=u$) has not bottom (resp. top) wires. 

\medskip \noindent
\begin{minipage}{0.66\linewidth}
\includegraphics[width=0.9\linewidth]{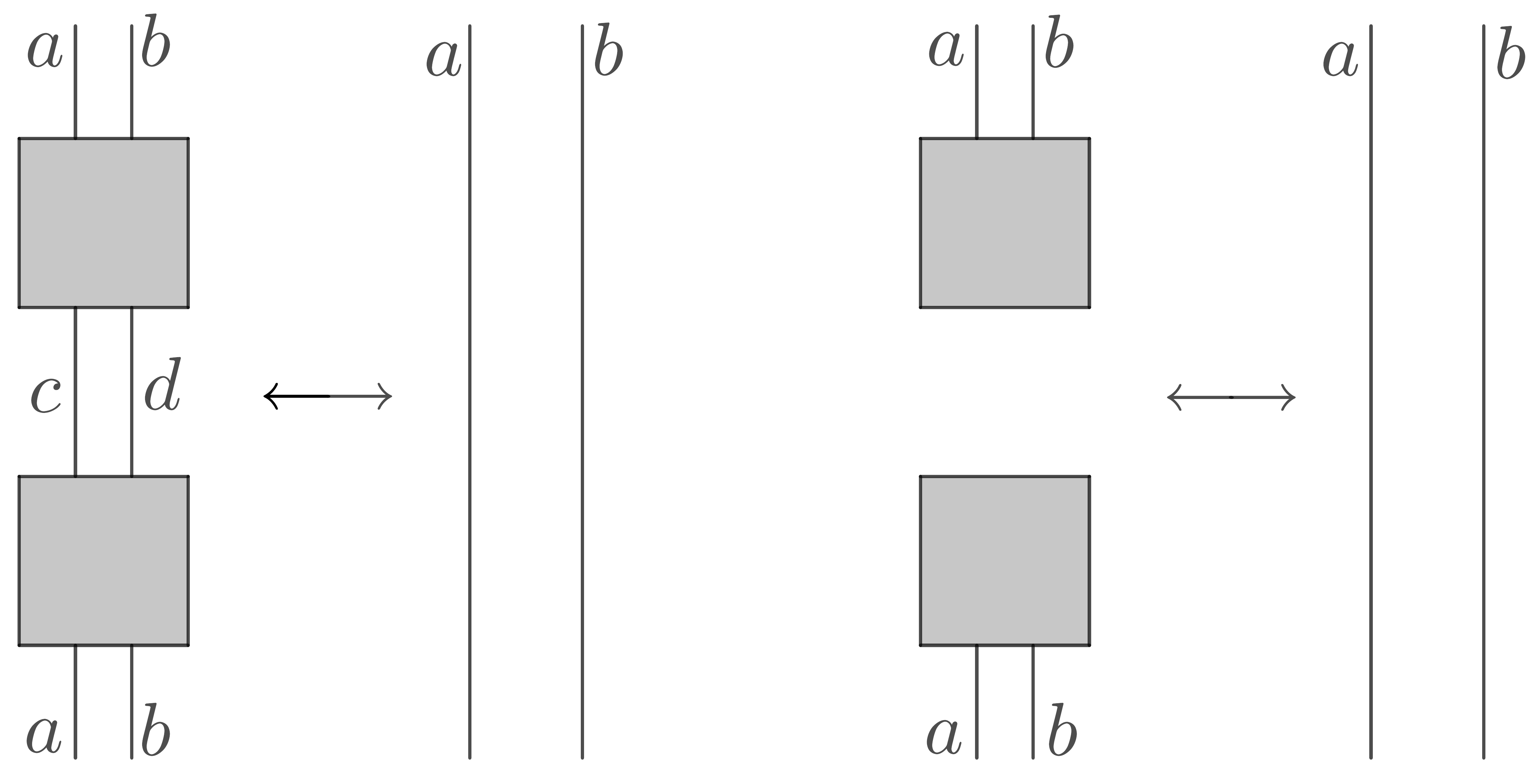}
\begin{center}
Two examples of dipole reduction.
\end{center}
\end{minipage}
\begin{minipage}{0.3\linewidth}
\begin{center}
\includegraphics[width=0.65\linewidth]{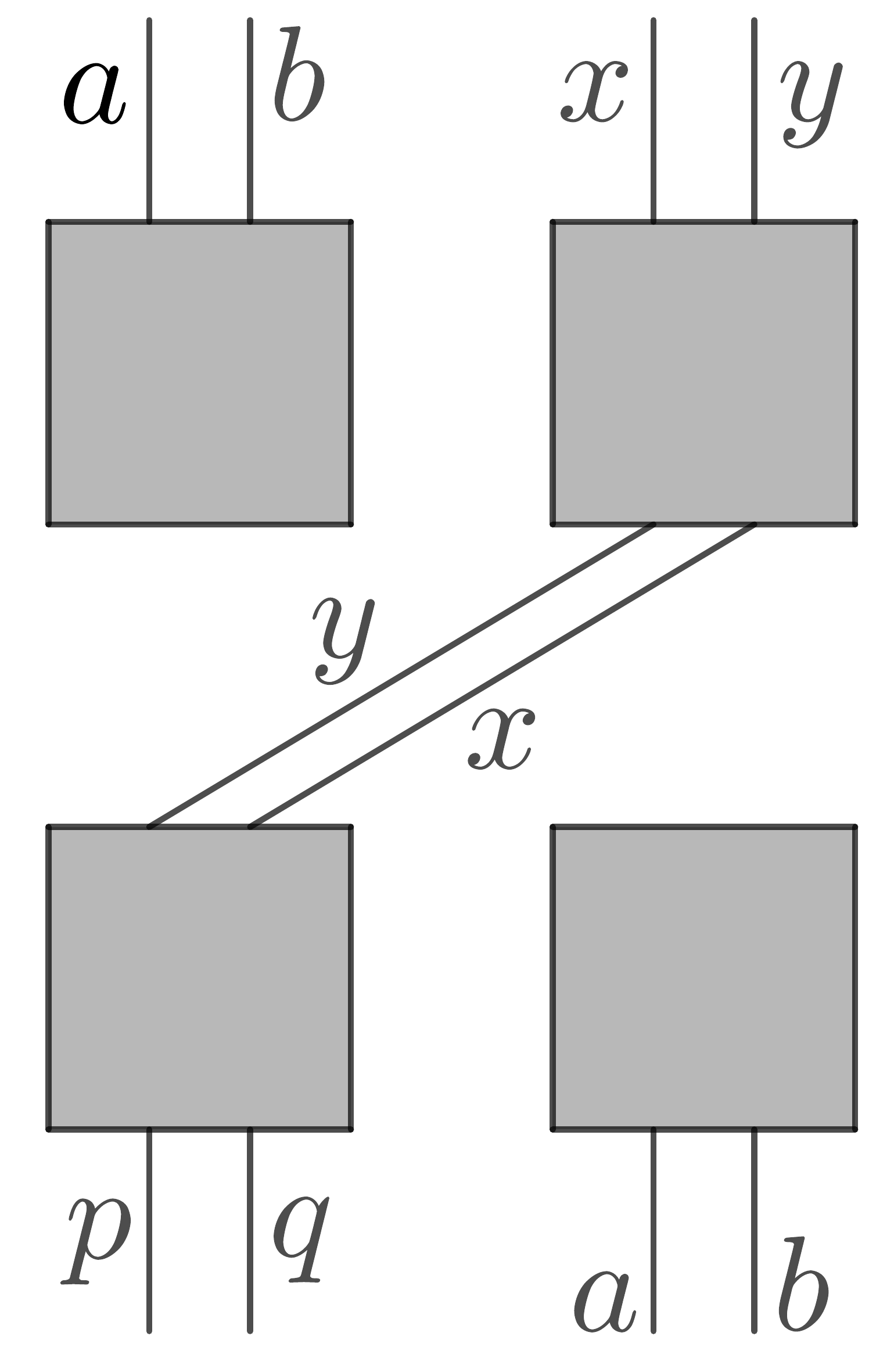}
\end{center}
\begin{center}
A reduced picture.
\end{center}
\end{minipage}

\medskip \noindent
We refer to \cite{MR1396957} for more details. Even though many techniques can be transfered from diagram groups over semigroup presentation to diagram groups over monoid presentations, a major difference is that a picture over monoid presentation may admit several reduced forms, i.e.\ reducing the dipoles in one order or another may lead to distinct pictures. 
\begin{figure}[h!]
\begin{center}
\includegraphics[width=0.8\linewidth]{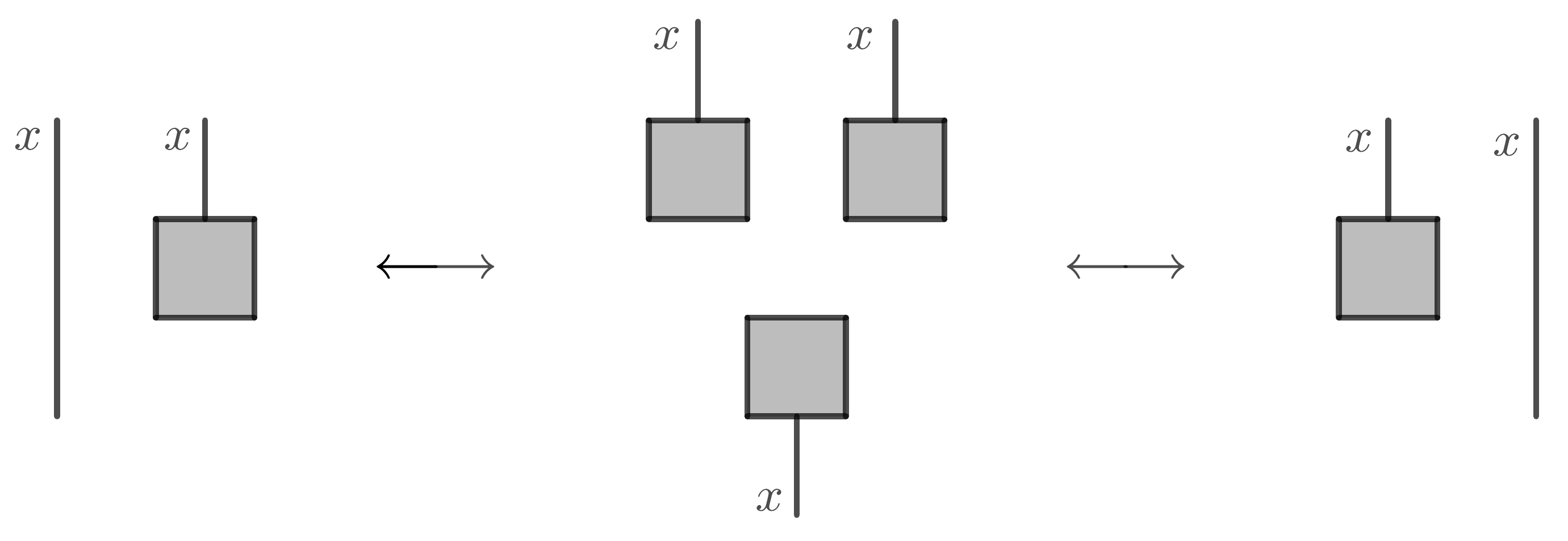}
\caption{Two distinct reductions of a picture over $\langle x \mid x=1 \rangle$.}
\label{Reduced}
\end{center}
\end{figure}

\noindent
Even worse, the problem of determining whether or not two given pictures are equivalent modulo dipole reduction may be undecidable \cite[Theorem~13.3]{MR1396957}. This explains why it is usually much more difficult to work with diagram groups over monoid presentations than diagram groups over semigroup presentations. As an illustration of this phenomenon, let us mention the following open question:

\begin{question}[\cite{MR1396957}]\label{question:monoid}
Are diagram groups over monoid presentations torsion-free?
\end{question}

\noindent
This question illustrate how little is known about diagram groups over monoid presentations. 

\medskip \noindent
It is worth noticing that Question~\ref{question:monoid} cannot be answered with the help of median geometry as mentioned in Section~\ref{section:MedianDiag}. Indeed, a (connected component of the) Squier cube complex over a monoid presentation may not be nonpositively curved. For instance, if $\mathcal{P}= \langle x \mid x=1 \rangle$, then the $4$-cycle
$$xx \overset{(x,x=1, 1)}{\longrightarrow} x \overset{(1,x=1,1)}{\longrightarrow} 1 \overset{(1,1=x,1)}{\longrightarrow} x \overset{(1,x=1,x)}{\longrightarrow} xx$$
shows that the link of the vertex $1$ is not simplicial since there is a loop based at the vertex given by the edge $(1,1=x,1)$.

\section{Open questions}\label{section:OpenQuestion}

\noindent
In this final section, we record various open questions related to diagram groups.

\paragraph{Classification of diagram groups.} In view of the known examples of diagram groups, there seem to be three main families of diagram groups: those which contain a copy of Thompson's group $F$, those which contain a copy of the lamplighter group $\mathbb{Z} \wr \mathbb{Z}$ but not of $F$, and those which embed into a right-angled Artin group. Are there other big families of diagram groups? Let us ask a few precise question in this direction. 

\begin{conj}[\cite{MR3868219}]
A diagram group embeds into a right-angled Artin group if and only if it does not contain $\mathbb{Z} \wr \mathbb{Z}$.
\end{conj}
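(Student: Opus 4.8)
The plan is to prove the two implications separately, the easy one being that if a diagram group embeds into a right-angled Artin group then it cannot contain $\mathbb{Z} \wr \mathbb{Z}$, and the hard one being the converse.

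For the easy direction, I would use the fact that right-angled Artin groups are CAT(0) (they act properly and cocompactly on the CAT(0) cube complex constructed from the Salvetti complex's universal cover), or more simply that they satisfy a strong Tits alternative: by \cite{ServatiusCent} every subgroup of a right-angled Artin group either contains a nonabelian free group or is free abelian. Since $\mathbb{Z} \wr \mathbb{Z}$ is solvable (metabelian) but not virtually abelian, it cannot embed into a right-angled Artin group, and neither can any group containing it. So any diagram group containing $\mathbb{Z} \wr \mathbb{Z}$ fails to embed into a right-angled Artin group. Alternatively one can invoke that right-angled Artin groups, being CAT(0), have all polycyclic subgroups virtually abelian and all finitely generated abelian subgroups undistorted (Theorem~\ref{thm:Nilpotent}-type statements), whereas the $\mathbb{Z}$ factors inside the base group of $\mathbb{Z} \wr \mathbb{Z}$ are generated by conjugates and form a distorted copy of $\bigoplus \mathbb{Z}$; the cleanest obstruction, though, is the Tits alternative together with solvability.

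For the hard direction, the strategy I would pursue is to combine the structural results available in the excerpt. Recall from Section~\ref{section:Commutation} that a diagram group $D(\mathcal{P},w)$ contains $\mathbb{Z} \wr \mathbb{Z}$ if and only if there are words $x,y,z$ with $D(\mathcal{P},z) \neq \{1\}$ and $w = xy$, $x = xz$, $y = zy$ modulo $\mathcal{P}$. So the assumption ``no $\mathbb{Z} \wr \mathbb{Z}$'' is a concrete combinatorial condition ruling out such configurations. The plan would be to show that under this condition the Squier cube complex $S^+(\mathcal{P},w)$, or rather the median graph $M(\mathcal{P},w)$ of Section~\ref{section:MedianDiag}, is \emph{conspicial} (special), or at least that $D(\mathcal{P},w)$ is a subgroup of a right-angled Artin group. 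By Proposition~\ref{prop:Special}, $S^+(\mathcal{P},w)$ is conspicial exactly when two combinatorial conditions hold, the first being: there are no words $a,b,p$ with $w = ab$, $a = ap$, $b = pb$ modulo $\mathcal{P}$ with $[p]_\mathcal{P} \neq \{p\}$. The key observation is that the configuration forbidden by the first item of Proposition~\ref{prop:Special} is \emph{almost} the configuration producing a $\mathbb{Z} \wr \mathbb{Z}$: indeed $D(\mathcal{P},p) \neq \{1\}$ is implied by $[p]_\mathcal{P} \neq \{p\}$. So the absence of $\mathbb{Z} \wr \mathbb{Z}$ handles the first obstruction. The delicate part is the second item of Proposition~\ref{prop:Special} (inter-osculation), which involves words $a,u,v,w,b,p,q,\xi$ with $uv = p$, $vw = q$ relations; one would need to argue that such a self-osculating/inter-osculating configuration of hyperplanes also forces a $\mathbb{Z} \wr \mathbb{Z}$ subgroup (coming from the ``sliding'' of the overlapping relation block $v$), perhaps by exhibiting explicit diagrams generating a lamplighter. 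If both obstructions are ruled out, Proposition~\ref{prop:Special} gives that $S^+(\mathcal{P},w)$ is conspicial, and then $\mathfrak{A}(\mathcal{P},w) : D(\mathcal{P},w) \to A(\mathcal{P},w)$ embeds $D(\mathcal{P},w)$ into a right-angled Artin group.

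The main obstacle I anticipate is precisely the reduction of the second (inter-osculation) condition of Proposition~\ref{prop:Special} to the presence of $\mathbb{Z} \wr \mathbb{Z}$. The first condition corresponds transparently to a ``separating letter'' phenomenon that yields a lamplighter, but the inter-osculation case is subtler: one has two relations overlapping in a common factor $v$, and the associated hyperplanes are simultaneously transverse and tangent. Translating this into a concrete pair of commuting-yet-sliding diagrams — that is, into an element $t$ and an infinite family of copies of a nontrivial diagram on which $t$ acts by shift — will require a careful diagram-combinatorial argument, and it is conceivable that the inter-osculation can occur \emph{without} producing a genuine $\mathbb{Z} \wr \mathbb{Z}$, in which case the conjecture as literally stated would be about embeddability rather than speciality and one would need a separate argument (e.g. a retraction or a finer analysis of the RAAG-subgroup structure) to conclude. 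A secondary obstacle is the non-cocompact case: when $[w]_\mathcal{P}$ is infinite, $S^+(\mathcal{P},w)$ need not be compact, so one cannot directly invoke convex-cocompactness, but Proposition~\ref{prop:Special} and the embedding $\mathfrak{A}(\mathcal{P},w)$ are stated without a finiteness hypothesis, so the embedding (though not the separability) should still go through.
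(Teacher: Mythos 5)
The statement you are trying to prove is listed in the paper as an open \emph{conjecture} (Section~\ref{section:OpenQuestion}); the paper contains no proof of it, so your proposal cannot be measured against one. Your easy direction is essentially fine: $\mathbb{Z}\wr\mathbb{Z}$ is solvable but not virtually abelian, hence cannot embed into a right-angled Artin group by the Tits alternative for RAAGs (though \cite{ServatiusCent} is about centralisers and is not the right reference for that alternative). The problem is the hard direction, where your plan contains a concretely false step.

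You claim that ``$D(\mathcal{P},p)\neq\{1\}$ is implied by $[p]_\mathcal{P}\neq\{p\}$.'' The implication goes the other way: $[p]_\mathcal{P}\neq\{p\}$ only says that the connected component of $p$ in the Squier complex has more than one vertex, which says nothing about its fundamental group. Consequently the configuration forbidden by the \emph{first} item of Proposition~\ref{prop:Special} is strictly weaker than the Guba--Sapir criterion for containing $\mathbb{Z}\wr\mathbb{Z}$ (which requires $D(\mathcal{P},z)\neq\{1\}$, not merely $[z]_\mathcal{P}\neq\{z\}$), and the absence of $\mathbb{Z}\wr\mathbb{Z}$ does \emph{not} rule it out. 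For a concrete example take $\mathcal{P}=\langle a,b,p,q\mid a=ap,\ b=pb,\ p=q\rangle$ and $w=ab$: here $w=ab$, $a=ap$, $b=pb$ modulo $\mathcal{P}$ and $[p]_\mathcal{P}=\{p,q\}\neq\{p\}$, so $S^+(\mathcal{P},ab)$ is not conspicial; yet every candidate $z$ with $a=az$ and $b=zb$ modulo $\mathcal{P}$ is a word in $\{p,q\}^+$, whose Squier component is a cube and hence simply connected, so $D(\mathcal{P},ab)$ contains no $\mathbb{Z}\wr\mathbb{Z}$. Thus ``no $\mathbb{Z}\wr\mathbb{Z}$'' cannot imply conspiciality of $S^+(\mathcal{P},w)$, and the route through Proposition~\ref{prop:Special} collapses already at the step you described as transparent --- not only at the inter-osculation condition you flagged. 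What would remain to be shown is that in such non-conspicial situations the diagram group nevertheless embeds into \emph{some} RAAG (possibly after changing the presentation or passing to a different cube complex), and that is precisely the open content of the conjecture; your proposal does not supply an idea for it.
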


\begin{question}[\cite{MR3868219}]
Is a diagram group not containing Thompson's group $F$ linear or residually finite?
\end{question}

\noindent
It is worth noticing that, given a semigroup presentation $\mathcal{P}= \langle \Sigma \mid \mathcal{R} \rangle$ and a baseword $w \in \Sigma^+$, we know exactly when the diagram group $D(\mathcal{P},w)$ contains $F$ or $\mathbb{Z} \wr \mathbb{Z}$. Indeed, $D(\mathcal{P},w)$ contains $F$ if and only if there exist words $a,b,x$ such that $w=axb$ and $x^2=x$ hold modulo $\mathcal{P}$ \cite{MR1902358}; and $D(\mathcal{P},w)$ contains $\mathbb{Z} \wr \mathbb{Z}$ if and only if there exist words $a,b,p$ such that $D(\mathcal{P},p) \neq \{1\}$ and such that $w=ab$, $a=ap$, $b=pb$ hold modulo $\mathcal{P}$ \cite{MR1725439}. 

\begin{question}[\cite{MR2193191}]
If a diagram group does not contain any non-abelian free subgroup, does it embed into Thompson's group $F$?
\end{question}

\noindent
Notice that Theorem~\ref{thm:Short} implies that a diagram with no free subgroup maps to $F$ with an abelian kernel.

\paragraph{Isomorphism problem.} Given any reasonable family of groups, a basic, but usually difficult, problem is to find how to determine whether two members of the family yield isomorphic groups. 

\begin{question}\label{question:IsoProblem}
Given two semigroup presentations $\mathcal{P}_1= \langle \Sigma_1 \mid \mathcal{R}_1 \rangle$, $\mathcal{P}_2= \langle \Sigma_2 \mid \mathcal{R}_2 \rangle$ and two basewords $w_1 \in \Sigma_1^+$, $w_2 \in \Sigma_2^+$, how to determine whether the diagram groups $D(\mathcal{P}_1,w_1)$ and $D(\mathcal{P}_2,w_2)$ are isomorphic?
\end{question}

\noindent
As an easy sufficient condition, notice that, given a semigroup presentation $\mathcal{P}= \langle \Sigma \mid \mathcal{R} \rangle$ and two words $u,v \in \Sigma^+$, the diagram groups $D(\mathcal{P},u)$ and $D(\mathcal{P},v)$ are isomorphic if $u$ and $v$ represent the same element in the semigroup defined by $\mathcal{P}$. This is a immediate consequence of the fact that $u$ and $v$ are two vertices in the same connected component of the Squier square complex $S(\mathcal{P})$, which implies that the fundamental groups are conjugate in the fundamental groupoid. 

\medskip \noindent
More interesting conditions are given by \cite[Theorem~4.1]{MR2193190}. Nevertheless, we are far from a reasonable answer to Question~\ref{question:IsoProblem}. In order to illustrate the difficulty of the problem, let us mention a few examples of non-trivial isomorphisms. Considering the semigroup presentations
$$\begin{array}{l} \mathcal{A}= \langle a,b,c \mid a=b,b=c,c=a \rangle \\ \mathcal{B}= \langle a,b,p \mid a=ap, b=pb \rangle \\ \mathcal{C}= \langle a,b,x,y \mid ax=ay, xb=yb \rangle \end{array}$$
the diagram groups $D(\mathcal{A},a)$, $D(\mathcal{B},ab)$, and $D(\mathcal{C},axb)$ are all infinite cyclic. Considering the semigroup presentations
$$\begin{array}{l} \mathcal{D}= \langle a,b,c, p \mid a=b,b=c,c=a, a=ap \rangle \\ \mathcal{E} = \langle a,b,c,x,y,p \mid a=b, b=c, c=a, ax=ay, xp=yp \rangle \\ \mathcal{F}= \langle a,b,c \mid ab=ba, ac=ca, bc=cb \rangle \end{array}$$
the diagram groups $D(\mathcal{D},a)$, $D(\mathcal{E},axp)$, and $D(\mathcal{F}, abc^2)$ are all free of rank two. Considering the presentation 
$$\begin{array}{l} \mathcal{G}= \langle a,b,c \mid a=bc,b=ca,c=ab \rangle \\ \mathcal{H}= \langle x \mid x=x^9 \rangle \end{array}$$
the diagram groups $D(\mathcal{G},a)$ and $D(\mathcal{H},x)$ are both isomorphic to the generalised Thompson group $F_9$ \cite{MR4412796}. 

\medskip \noindent
The triviality problem is probably a more reasonable question before attacking Question~\ref{question:IsoProblem}. 

\begin{question}
Given a semigroup presentation $\mathcal{P}= \langle \Sigma \mid \mathcal{R} \rangle$ and a baseword $w \in \Sigma^+$, how to determine whether the diagram group $D(\mathcal{P},w)$ is non-trivial? 
\end{question}

\paragraph{Algorithmic aspects.} In addition to the isomorphism and triviality problems previously mentioned, there are other interesting problem that remained to be solved algorithmically. For instance:

\begin{question}[\cite{MR1396957}]
Given a finite semigroup presentation $\mathcal{P}= \langle \Sigma \mid \mathcal{R} \rangle$ and a baseword $w \in \Sigma^+$, is there an algorithm computing the rank of the abelianisation of the diagram group $D(\mathcal{P},w)$?
\end{question}

\noindent
In free groups, many problems are solved algorithmically thanks to Stallings' foldings. As mentioned in Section~\ref{section:Fold}, a similar theory exists for directed $2$-complexes and their diagram groups. However, the theory remains to be developed. 

\begin{problem}
Develop the theory of foldings of directed $2$-complexes for diagram groups. In particular, solve \cite[Conjecture~3.11]{Golan}.
\end{problem}

\noindent
Nevertheless, it is worth mentioning that Stallings' foldings have been applied with success to Thompson's group $F$, see for instance \cite{MR3710646, Golan, GolanSapir}.

\paragraph{Solvable diagram groups.} Theorem~\ref{thm:Nilpotent} shows that there are no interesting nilpotent diagram groups. There are interesting solvable diagram groups, such as the lamplighter group $\mathbb{Z} \wr \mathbb{Z}$, but only few examples are known. Finding more examples, or proving that there are only few possible examples, would be interesting. 

\begin{question}
Which metabelian (resp. solvable) groups are diagram groups?
\end{question}

\begin{question}
Are there finitely presented solvable diagram groups?
\end{question}

\begin{conj}
A diagram group does not contain $\mathbb{Z} \wr \mathbb{Z}^2$.
\end{conj}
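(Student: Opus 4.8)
The plan is to argue by contradiction: suppose $W := \mathbb{Z} \wr \mathbb{Z}^2 = \big( \bigoplus_{\mathbb{Z}^2} \mathbb{Z} \big) \rtimes \mathbb{Z}^2$ is a subgroup of some diagram group $D(\mathcal{P},w)$. Write $B = \bigoplus_{\mathbb{Z}^2}\mathbb{Z}$ for the base, $\delta_v$ ($v \in \mathbb{Z}^2$) for its canonical free abelian generators, and $s,t$ for a basis of the acting $\mathbb{Z}^2$, so that $\delta_{(i,j)} = s^it^j\delta_{(0,0)}s^{-i}t^{-j}$. The structure I would exploit is: the $\delta_v$ pairwise commute and no two lie in a common cyclic subgroup; the subgroup $\langle s,t\rangle \cong \mathbb{Z}^2$ is finitely generated, hence undistorted in $D(\mathcal{P},w)$ by Theorem~\ref{thm:Nilpotent}; and $\langle s,t\rangle$ permutes the family of rank-one subgroups $\{\langle\delta_v\rangle : v \in \mathbb{Z}^2\}$ simply transitively and \emph{two-dimensionally}. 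The whole point will be to extract from this a ``two independent periodicities'' phenomenon that a single semigroup word cannot support.

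I would first carry this out in the median graph $X := M(\mathcal{P},w)$, on which $D(\mathcal{P},w)$ acts freely with $X$ locally of finite cubical dimension (Theorem~\ref{thm:FarleyCC}). Each $\delta_v$ is an infinite-order isometry acting without inversion, so, as in the second proof of Proposition~\ref{prop:Roots}, it translates along a combinatorial axis $\gamma_v$; let $\mathcal{H}_v$ be the finitely many $\langle \delta_v\rangle$-orbits of hyperplanes crossing $\gamma_v$, which under the bijection of Section~\ref{section:Hilbert} between reduced minimal diagrams and hyperplanes is an explicitly readable ``period'' of $\delta_v$. Since $\delta_v$ and $\delta_w$ commute and are independent, a flat-torus-type argument gives that $\gamma_v,\gamma_w$ span a combinatorial $2$-flat, and, more usefully, that the periods $\mathcal{H}_v$ are pairwise ``compatible'' (their hyperplanes cross only in a restricted pattern). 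The $\langle s,t\rangle$-action carries $\mathcal{H}_v$ to $\mathcal{H}_{v+e_1}$ and $\mathcal{H}_{v+e_2}$, so the configuration $\{\mathcal{H}_v\}_{v \in \mathbb{Z}^2}$ is genuinely $\mathbb{Z}^2$-periodic. I would then aim for the contradiction by combining the rank-colouring of hyperplanes from the proof of Proposition~\ref{prop:ProductTrees} with the elementary estimate that a reduced diagram with $n$ cells has bottom path of length $O(n)$, hence meets at most $O(n)$ pairwise disjoint subwords that are left sides of relations and lies in a cube of dimension $O(n)$: a $\mathbb{Z}^2$-periodic system of compatible rank-one axes should be forced to cram cubes of dimension growing faster than the radius into balls of that radius, whereas $\mathbb{Z}\wr\mathbb{Z}$ and $(\mathbb{Z}^2)\wr\mathbb{Z}$ only produce a $\mathbb{Z}$-periodic system, for which the cube dimension in a radius-$r$ ball stays $O(r)$ and no contradiction arises.

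A parallel, more combinatorial route goes through centralisers. By Theorem~\ref{thm:Centralisers} one may conjugate so that $\delta_{(0,0)} = \Gamma \cdot (\Delta_1^{n_1}+\cdots+\Delta_m^{n_m}) \cdot \Gamma^{-1}$ with each $\Delta_k$ simple and absolutely reduced, and — reproducing the argument of Corollary~\ref{cor:NotRAAG} — one has a conjugation-invariant, transitive, antisymmetric relation $\prec$ on ``monocomponent'' elements such that commuting monocomponents not sharing a cyclic subgroup are $\prec$-comparable. If the $\delta_v$ are monocomponent, this makes $(\{\delta_v\}_{v\in\mathbb{Z}^2},\prec)$ a totally ordered set on which $\langle s,t\rangle$ acts by order automorphisms, i.e.\ a $\mathbb{Z}^2$-translation-invariant total order on $\mathbb{Z}^2$; the task is then to show that $\prec$ cannot realise such an order, since a $\prec$-chain realising, say, a lexicographic order would force an infinite family of pairwise-compatible periods arranged along two independent directions inside a single element of the diagram groupoid, contradicting the finiteness of diagrams. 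The non-monocomponent case reduces to this one by splitting $\delta_{(0,0)}$ into its summands and pushing the whole $W$-action through the decomposition of $C(\delta_{(0,0)})$ provided by Theorem~\ref{thm:Centralisers}.

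The step I expect to be the genuine obstacle is the quantitative one common to both routes: pinning down \emph{which} feature of the cube complexes $M(\mathcal{P},w)$ (or of the relation $\prec$) is incompatible with a two-dimensionally periodic family of commuting rank-one subgroups, and proving it from the combinatorics of reduced diagrams. Any such statement must become vacuous in the presence of one-dimensional periodicity alone, since $\mathbb{Z}\wr\mathbb{Z}$, $(\mathbb{Z}\wr\mathbb{Z})\wr\mathbb{Z}$ and $(\mathbb{Z}^2)\wr\mathbb{Z}$ are all diagram groups; so the obstruction cannot be torsion-freeness, undistortion of abelian subgroups, triviality of $Z(H)\cap[H,H]$ (Theorem~\ref{thm:ForNilpotent}), or containment of $\mathbb{Z}^{\infty}$ — all of which $\mathbb{Z}\wr\mathbb{Z}^2$ shares with genuine diagram groups. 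Isolating and quantifying the ``one semigroup word cannot carry two independent periodicities'' intuition is, in my view, the whole content of the conjecture.
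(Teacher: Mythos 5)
This statement is listed in the paper as an open conjecture (Section~\ref{section:OpenQuestion}); the paper contains no proof of it, and your proposal does not close that gap. What you have written is a pair of strategy sketches, each of which terminates exactly where a proof would have to begin, and your own final paragraph concedes this: the ``quantitative incompatibility between two independent periodicities and a single semigroup word'' is asserted, not established, in both routes. Concretely: in the median-geometric route, the claim that a $\mathbb{Z}^2$-periodic family of compatible rank-one axes must ``cram cubes of dimension growing faster than the radius'' into balls is unsupported, and there is no a priori reason to believe it --- the base group $\bigoplus_{\mathbb{Z}^2}\mathbb{Z}$ is itself a diagram group acting on a median graph of unbounded cubical dimension, so unbounded dimension growth is not in itself an obstruction, and nothing you say isolates what the extra $\mathbb{Z}^2$-action would force. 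The preliminary steps there are also not free: the existence of combinatorial axes and of a ``flat-torus-type'' $2$-flat spanned by $\gamma_v,\gamma_w$, and the meaning of ``compatible periods,'' would all need precise statements and proofs.

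The centraliser route has the same problem in sharper form. Granting that the $\delta_v$ are monocomponent and that $\prec$ totally orders them, you obtain a $\mathbb{Z}^2$-invariant total order on $\mathbb{Z}^2$ --- but such orders exist in abundance (lexicographic orders, orders by irrational slope), so nothing is contradicted yet; you must show that $\prec$ cannot realise any of them, and the reason you offer (``an infinite family of pairwise-compatible periods \ldots inside a single element of the diagram groupoid'') does not follow from pairwise $\prec$-comparability, since the conjugating diagram $\Gamma$ in the definition of $\prec$ depends on the pair and need not be chosen uniformly. Note also that the analogous configuration for $\mathbb{Z}\wr\mathbb{Z}$ --- infinitely many pairwise commuting, pairwise non-powers, monocomponent elements totally ordered by $\prec$ with order type $\mathbb{Z}$ --- does occur inside genuine diagram groups, so the obstruction, if it exists, must be sensitive to the order type or to the two-dimensionality of the acting group in a way you have not identified. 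The argument of Corollary~\ref{cor:NotRAAG} that you propose to imitate derives its contradiction from the parity of an odd cycle, and that mechanism has no counterpart here. In short, the conjecture remains open and your proposal, while a reasonable survey of plausible lines of attack, does not constitute a proof.
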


\paragraph{Right-angled Artin groups.} As mentioned in Section~\ref{section:Examples}, right-angled Artin groups given by finite trees and finite interval graphs are diagram groups. On the other hand, we saw with Corollary~\ref{cor:NotRAAG} that right-angled Artin groups defined by triangle-free graphs with induced cycles of odd length $\geq 5$ are not diagram groups. This naturally raises the following question:

\begin{question}\label{question:RAAG}
Which (finitely generated) right-angled Artin groups can be described as diagram groups?
\end{question}

\noindent
So far, no expected nor reasonable answer is known. A related question is:

\begin{question}[\cite{MR3868219}]
Are all right-angled Artin groups subgroups of diagram groups?
\end{question}

\noindent
For instance, even though right-angled Artin groups defined by cycles of odd lengths $\geq 5$ are not diagram groups, they do embed into diagram groups\footnote{Contrary to what \cite{MR2422070} claims. There, the authors cite \cite{MR1725439} in order to justify that such right-angled Artin groups do not embed into diagram groups. But the result only proves that these right-angled Artin groups are not diagram groups themselves, and the proof (sketched in Section~\ref{section:Commutation}) does not imply this statement either.} \cite{MR3868219}.

\paragraph{Miscellaneous.} Recall that, given two groups $G,H$, the group $G$ is \emph{residually $H$} if, for every non-trivial $g \in G$, there exists a morphism $\varphi : G \to H$ satisfying $\varphi(g) \neq 1$. 

\begin{question}[\cite{MR1396957, MR2193191}]\label{question:ResiduallyPL}
Are all diagram groups residually $\mathrm{PL}_+([0,1])$? or even residually $F$?
\end{question}

\noindent
A possible strategy is the following. Let $\mathcal{P}= \langle \Sigma \mid \mathcal{R} \rangle$ be a semigroup presentation and let $w \in \Sigma^+$ a baseword. Every atomic $(aub,avb)$-diagram $\Delta$ over $\mathcal{P}$ can be written as $\epsilon(a) + \Psi(u=v) + \epsilon(b)$, where $\epsilon(a),\epsilon(b)$ are diagrams with no cells labelled by $a,b$ and where $\Psi(u=v)$ is a single cell labelled by $u=v$. To such a diagram $\Delta$, we associate a continuous, nondecreasing, piecewise linear homeomorphism $H(\Delta) : [0,|aub|] \to [0,|avb|]$ which has slope $1$ on both $[0,|a|]$ and $[|au|,|aub|]$. Then, one gets a morphism $$ \rho : D(\mathcal{P},w) \to \mathrm{PL}_+([0,|w|])$$ by taking a diagram $\Phi$, decomposing it as a concatenation of atomic diagrams $\Phi_1 \circ \cdots \circ \Phi_n$, and sending it to $H(\Phi_1) \circ \cdots \circ H(\Phi_n)$. Is it is possible to obtain a non-trivial homeomorphism $\rho(\Phi)$ by choosing the homeomorphisms $H(\cdot)$ carefully?

\medskip \noindent
We saw a particular instance of this construction in Section~\ref{section:Examples}, where we described how to embed $D(\langle x \mid x=x^2 \rangle,x)$ into $\mathrm{PL}_+([0,1])$ and recognise that it coincides with Thompson's group $F$. 

\medskip \noindent
As mentioned in Section~\ref{section:Properties}, it is proved in \cite{HypDiag} that diagram groups with $\mathbb{Z}^2$ are locally free, which implies in particular that there are no interesting hyperbolic diagram groups. First, this raises the following question:

\begin{question}[\cite{HypDiag}]
Is a locally free diagram group necessarily free?
\end{question}

\noindent
Next, since we saw in Section~\ref{section:Acyl} that there are many interesting examples of acylindrically hyperbolic diagram groups, it is natural to look for relatively hyperbolic diagram groups. We conjecture that there are no interesting examples there either.

\begin{conj}\label{conj:FreeProduct}
Relatively hyperbolic diagram groups are free products.
\end{conj}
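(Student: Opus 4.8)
The plan is to adapt the proof of Theorem~\ref{thm:HypDiag} to the relative setting, combining the median geometry of diagram groups with the structure theory of relatively hyperbolic groups. Fix $G = D(\mathcal P,w)$ hyperbolic relative to a finite family $\mathcal H = \{H_1,\dots,H_n\}$ of proper subgroups, and let $G\curvearrowright M = M(\mathcal P,w)$ be the free action on the median graph provided by Theorem~\ref{thm:FarleyCC}. The first step is a reduction: by Grushko's theorem in the finitely generated case (the general case being reached by a direct-limit argument, using that a relatively hyperbolic group is finitely generated relative to its peripheral subgroups), and by the fact that each free factor of a relatively hyperbolic group inherits a relatively hyperbolic structure from the restriction of $\mathcal H$, it suffices to treat a freely indecomposable $G$ and show that it must coincide with a single peripheral subgroup $H_i$ --- so that its relative hyperbolicity is degenerate. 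The general case then gives $G \cong H_1 * \cdots * H_n * F_k$, i.e.\ $G$ is a free product.

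So assume $G$ is freely indecomposable. A standard fact about relatively hyperbolic groups is that every undistorted $\mathbb Z^2$-subgroup, and more generally every amenable non-virtually-cyclic subgroup, is conjugate into some $H_i$; together with the undistortion of $\mathbb Z^2$-subgroups in diagram groups (Theorem~\ref{thm:Nilpotent}) and the description of commuting families of diagrams (Theorem~\ref{thm:FormAbelian}), this means the ``flat part'' of $M$ is carried entirely by the peripheral cosets: whenever two hyperplanes of $M$ remain transverse along the $\langle g\rangle$-orbit of an infinite-order $g\in G$, the element $g$ lies in a peripheral coset. I would then invoke the characterisation of relative hyperbolicity for group actions on CAT(0) cube complexes (see \cite{MR4057355} and references therein): the action $G\curvearrowright M$ is hyperbolic relative to $\mathcal H$ precisely when a suitable coned-off crossing graph of $M$ is hyperbolic, and $G$ acts non-elementarily on it unless it is virtually cyclic or conjugate into a peripheral subgroup. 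Excluding those degenerate possibilities, one can run the hyperplane-selection step of Theorem~\ref{thm:HypDiag}: choose a hyperplane $J$ of $M$ whose $G$-translates are pairwise non-transverse modulo the peripheral cosets and whose $G$-stabiliser is contained in a conjugate of some $H_i$. The orbit $G\cdot J$ then organises $M$ into a tree-like pattern, producing an action of $G$ on a tree without global fixed point and with edge-stabilisers conjugate into the $H_i$, hence a genuine peripheral splitting.

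An alternative, more self-contained route to such a tree action starts from Proposition~\ref{prop:GraphOfSpaces}, which already presents $G$ as the fundamental group of a graph of groups whose vertex and edge groups are direct products of diagram groups $D(\mathcal P,\cdot)$. By the combination theorems for relatively hyperbolic groups, each such vertex or edge group is either relatively hyperbolic with respect to an induced family or conjugate into a peripheral subgroup; but a direct product of two infinite groups is never non-degenerately relatively hyperbolic --- acylindrically hyperbolic groups, hence non-degenerately relatively hyperbolic ones, do not split as direct products of infinite groups, as recorded in Section~\ref{section:Acyl} --- so every ``fat'' piece of the decomposition is peripheral. Peeling off the remaining ``thin'' pieces (the single-factor diagram groups $D(\mathcal P,c)$) and inducting on the complexity of the decomposition again forces a freely indecomposable $G$ to be assembled from peripheral subgroups alone, hence to coincide with one of them.

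The step I expect to be the main obstacle is the hyperplane-selection of the second paragraph: turning ``the coned-off crossing graph is hyperbolic and $G$ is non-elementary'' into ``there is a single hyperplane of $M$ with a tree-like, peripherally-controlled $G$-orbit and small stabiliser.'' Even in the absolute case this is the delicate heart of Theorem~\ref{thm:HypDiag}, and in the relative case one must also rule out that the transversality in every candidate orbit ``leaks'' outside the peripheral cosets; controlling this presumably requires a fine analysis of how hyperplanes of $M(\mathcal P,w)$ interact with the subgroups $D(\mathcal P,u)$ of Theorem~\ref{thm:Centralisers} and with the left-hyperplanes of Proposition~\ref{prop:GraphOfSpaces}. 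A secondary difficulty is ensuring that the splittings produced at each stage are non-trivial (no vertex group equal to $G$), so that the induction terminates; this should follow from the chosen hyperplane genuinely separating distinct $G$-orbits of vertices of $M$, but it has to be verified with care.
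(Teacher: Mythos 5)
This statement is Conjecture~\ref{conj:FreeProduct}: it appears in Section~\ref{section:OpenQuestion} as an open problem, and the paper offers no proof of it. So there is no argument of the paper to compare yours against; the only question is whether your proposal settles the conjecture, and it does not. What you have written is a strategy outline whose central step is still missing -- as you acknowledge yourself.

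The decisive gap is the hyperplane selection. In the absolute case (Theorem~\ref{thm:HypDiag}), the existence of a hyperplane $J$ whose translates are pairwise non-transverse and whose stabiliser is trivial is exactly where the hypothesis ``no $\mathbb{Z}^2$'' is consumed: transversality of translates along an orbit produces commuting infinite-order elements, which is forbidden. In the relative setting, $\mathbb{Z}^2$'s do exist (inside the peripherals), so this mechanism is unavailable, and you supply no substitute: ``pairwise non-transverse modulo the peripheral cosets'' is not defined, and the assertion that the flat part of $M(\mathcal{P},w)$ is ``carried entirely by the peripheral cosets'' is a restatement of what must be proved rather than a step towards proving it. Knowing that every $\mathbb{Z}^2$-subgroup is conjugate into some $H_i$ does not control where the corresponding pairs of transverse hyperplanes sit in $M(\mathcal{P},w)$, nor does it produce a $G$-invariant arboreal family of hyperplanes. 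Your alternative route via Proposition~\ref{prop:GraphOfSpaces} also does not close the argument: the vertex groups there are products $D(\mathcal{P},\cdot)\times D(\mathcal{P},\cdot)$ in which one factor is frequently trivial, so the dichotomy ``direct product of two infinite groups, hence peripheral'' does not apply to every piece; the decomposition can involve loops identifying a vertex group with a proper subgroup of itself (as in Example~\ref{ZbulletZgraph}), so the proposed induction on ``complexity'' has no evident termination; and deducing that a piece is conjugate into a peripheral from the fact that it is not itself non-degenerately relatively hyperbolic requires a relative quasiconvexity or acylindricity input that you do not establish. Finally, the opening reduction is itself incomplete: Grushko does not apply to infinitely generated diagram groups, and the direct-limit argument you gesture at is not compatible with the hypothesis of relative hyperbolicity as you use it. The conjecture remains open.
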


\begin{question}[\cite{MR2193190}]
Do the cohomological and algebraic dimensions of diagram groups always coincide? 
\end{question}

\noindent
Recall that the algebraic dimension of a group is the maximal rank of a free abelian subgroup. According to \cite{MR2193191}, the answer is positive for diagram groups over complete presentations. 

\begin{question}[\cite{MR1725439}]
Is every subgroup of Thompson's group $F$ a diagram group?
\end{question}

\begin{question}[\cite{MR1725439}]
Is the derived subgroup of a diagram group again a diagram group?
\end{question}

\noindent
Recall from Section~\ref{section:Hilbert} that a finitely generated diagram group satisfies the Burillo property if every word length is biLipschitz equivalent the diagram length $\#( \cdot)$, i.e.\ the number of $2$-cells in a reduced representative. 

\begin{question}[\cite{MR2271228}]
Does there exist a finitely generated diagram that does not satisfy the Burillo property?
\end{question}

\noindent
There are examples of infinitely presented diagram groups (such as abelian and non-abelian free groups of infinite rank), examples of finitely generated diagram groups that are not finitely presented (such as $\mathbb{Z} \wr \mathbb{Z}$ and $\mathbb{Z} \bullet \mathbb{Z}$), examples of diagram groups of type $F_\infty$ but not of type $F$ (such as Thompson's group $F$), and examples of diagram groups of type $F$ (such as abelian and non-abelian free groups). But there is a gap in the progression, hence:

\begin{question}
Given an $n \geq 2$, does there exist a diagram group of type $F_{n+1}$ but not $F_{n}$?
\end{question}

\noindent
It is worth noticing that such examples exist for symmetric diagram groups: the Houghton group $H_n$ is of type $F_{n-1}$ but not of type $F_n$ \cite{MR885095}.

\addcontentsline{toc}{section}{References}

\bibliographystyle{alpha}
{\footnotesize\bibliography{DiagGroups}}

\def\polhk#1{\setbox0=\hbox{#1}{\ooalign{\hidewidth
  \lower1.5ex\hbox{`}\hidewidth\crcr\unhbox0}}}
\begin{thebibliography}{CRDK13}

\bibitem[AAF18]{MR3797069}
S.~Audino, D.~Aydel, and D.~Farley.
\newblock Quasiautomorphism groups of type {$F_\infty$}.
\newblock {\em Algebr. Geom. Topol.}, 18(4):2339--2369, 2018.

\bibitem[AD19]{MR3936081}
C.~Abbott and F.~Dahmani.
\newblock Property {$P_{naive}$} for acylindrically hyperbolic groups.
\newblock {\em Math. Z.}, 291(1-2):555--568, 2019.

\bibitem[AGS06]{MR2271228}
G.~Arzhantseva, V.~Guba, and M.~Sapir.
\newblock Metrics on diagram groups and uniform embeddings in a {H}ilbert
  space.
\newblock {\em Comment. Math. Helv.}, 81(4):911--929, 2006.

\bibitem[ANP09]{MR2439428}
T.~Austin, A.~Naor, and Y.~Peres.
\newblock The wreath product of {$\Bbb Z$} with {$\Bbb Z$} has {H}ilbert
  compression exponent {$\frac{2}{3}$}.
\newblock {\em Proc. Amer. Math. Soc.}, 137(1):85--90, 2009.

\bibitem[BB97]{MR1465330}
M.~Bestvina and N.~Brady.
\newblock Morse theory and finiteness properties of groups.
\newblock {\em Invent. Math.}, 129(3):445--470, 1997.

\bibitem[BF02]{MR1914565}
M.~Bestvina and K.~Fujiwara.
\newblock Bounded cohomology of subgroups of mapping class groups.
\newblock {\em Geom. Topol.}, 6:69--89, 2002.

\bibitem[BH99]{MR1744486}
M.~Bridson and A.~Haefliger.
\newblock {\em Metric spaces of non-positive curvature}, volume 319 of {\em
  Grundlehren der Mathematischen Wissenschaften [Fundamental Principles of
  Mathematical Sciences]}.
\newblock Springer-Verlag, Berlin, 1999.

\bibitem[BHM22]{MR4478033}
J.~Belk, J.~Hyde, and F.~Matucci.
\newblock Embedding {$\Bbb{Q}$} into a finitely presented group.
\newblock {\em Bull. Amer. Math. Soc. (N.S.)}, 59(4):561--567, 2022.

\bibitem[Bri04]{MR2112673}
M.~Brin.
\newblock Higher dimensional {T}hompson groups.
\newblock {\em Geom. Dedicata}, 108:163--192, 2004.

\bibitem[Bri07]{MR2364825}
M.~Brin.
\newblock The algebra of strand splitting. {I}. {A} braided version of
  {T}hompson's group {$V$}.
\newblock {\em J. Group Theory}, 10(6):757--788, 2007.

\bibitem[Bro87]{MR885095}
K.~Brown.
\newblock Finiteness properties of groups.
\newblock In {\em Proceedings of the {N}orthwestern conference on cohomology of
  groups ({E}vanston, {I}ll., 1985)}, volume~44, pages 45--75, 1987.

\bibitem[Bur99]{MR1670622}
J.~Burillo.
\newblock Quasi-isometrically embedded subgroups of {T}hompson's group {$F$}.
\newblock {\em J. Algebra}, 212(1):65--78, 1999.

\bibitem[BvdV87]{MR913179}
H.-J. Bandelt and M.~van~de Vel.
\newblock A fixed cube theorem for median graphs.
\newblock {\em Discrete Math.}, 67(2):129--137, 1987.

\bibitem[CFP96]{MR1426438}
J.~Cannon, W.~Floyd, and W.~Parry.
\newblock Introductory notes on {R}ichard {T}hompson's groups.
\newblock {\em Enseign. Math. (2)}, 42(3-4):215--256, 1996.

\bibitem[Che00]{mediangraphs}
V.~Chepoi.
\newblock Graphs of some {$\rm CAT(0)$} complexes.
\newblock {\em Adv. in Appl. Math.}, 24(2):125--179, 2000.

\bibitem[CN05]{MR2132393}
S.~Campbell and G.~Niblo.
\newblock Hilbert space compression and exactness of discrete groups.
\newblock {\em J. Funct. Anal.}, 222(2):292--305, 2005.

\bibitem[Cor13]{CornulierCommensurated}
Y.~Cornulier.
\newblock Group actions with commensurated subsets, wallings and cubings.
\newblock {\em arxiv:1302.5982}, 2013.

\bibitem[CRDK13]{MR3072113}
M.~Casals-Ruiz, A.~Duncan, and I.~Kazachkov.
\newblock Embedddings between partially commutative groups: two
  counterexamples.
\newblock {\em J. Algebra}, 390:87--99, 2013.

\bibitem[CSS08]{MR2422070}
J.~Crisp, M.~Sageev, and M.~Sapir.
\newblock Surface subgroups of right-angled {A}rtin groups.
\newblock {\em Internat. J. Algebra Comput.}, 18(3):443--491, 2008.

\bibitem[Deh00]{brHoughton}
F.~Dehenhardt.
\newblock Endlichkeitseigenschaften gewisser gruppen von z\"{o}pfen unendlicher
  ordnung.
\newblock {\em PhD thesis, Goethe-Universit\"{a}t Frankfurt am Main}, 2000.

\bibitem[Deh06]{MR2258261}
P.~Dehornoy.
\newblock The group of parenthesized braids.
\newblock {\em Adv. Math.}, 205(2):354--409, 2006.

\bibitem[DGO17]{MR3589159}
F.~Dahmani, V.~Guirardel, and D.~Osin.
\newblock Hyperbolically embedded subgroups and rotating families in groups
  acting on hyperbolic spaces.
\newblock {\em Mem. Amer. Math. Soc.}, 245(1156):v+152, 2017.

\bibitem[Far03]{MR1978047}
D.~Farley.
\newblock Finiteness and {$\rm CAT(0)$} properties of diagram groups.
\newblock {\em Topology}, 42(5):1065--1082, 2003.

\bibitem[Far05a]{MR2136028}
D.~Farley.
\newblock Actions of picture groups on {CAT}(0) cubical complexes.
\newblock {\em Geom. Dedicata}, 110:221--242, 2005.

\bibitem[Far05b]{MR2146639}
D.~Farley.
\newblock Homological and finiteness properties of picture groups.
\newblock {\em Trans. Amer. Math. Soc.}, 357(9):3567--3584, 2005.

\bibitem[Far21]{FarleyTwin}
D.~Farley.
\newblock The planar pure braid group is a diagram group.
\newblock {\em arXiv:2109.02815}, 2021.

\bibitem[FH18]{MR3822286}
D.~Farley and B.~Hughes.
\newblock Braided diagram groups and local similarity groups.
\newblock In {\em Geometric and cohomological group theory}, volume 444 of {\em
  London Math. Soc. Lecture Note Ser.}, pages 15--33. Cambridge Univ. Press,
  Cambridge, 2018.

\bibitem[FK08]{MR2390352}
L.~Funar and C.~Kapoudjian.
\newblock The braided {P}tolemy-{T}hompson group is finitely presented.
\newblock {\em Geom. Topol.}, 12(1):475--530, 2008.

\bibitem[Fun07]{MR2386796}
L.~Funar.
\newblock Braided {H}oughton groups as mapping class groups.
\newblock {\em An. \c{S}tiin\c{t}. Univ. Al. I. Cuza Ia\c{s}i. Mat. (N.S.)},
  53(2):229--240, 2007.

\bibitem[Gen]{Book}
A.~Genevois.
\newblock Algebraic properties of groups acting on median graphs.
\newblock {\em in preparation, draft of the book available on the webpage of
  the author}.

\bibitem[Gen17a]{QM}
A.~Genevois.
\newblock Cubical-like geometry of quasi-median graphs and applications to
  geometric group theory.
\newblock {\em PhD thesis, arxiv:1712.01618}, 2017.

\bibitem[Gen17b]{HypDiag}
A.~Genevois.
\newblock Hyperbolic diagram groups are free.
\newblock {\em Geometriae Dedicata}, 188(1):33--50, Jun 2017.

\bibitem[Gen18]{MR3868219}
A.~Genevois.
\newblock Hyperplanes of {S}quier's cube complexes.
\newblock {\em Algebr. Geom. Topol.}, 18(6):3205--3256, 2018.

\bibitem[Gen19a]{MR4033512}
A.~Genevois.
\newblock Embeddings into {T}hompson's groups from quasi-median geometry.
\newblock {\em Groups Geom. Dyn.}, 13(4):1457--1510, 2019.

\bibitem[Gen19b]{MR4057355}
A.~Genevois.
\newblock Hyperbolicities in {${\rm CAT}(0)$} cube complexes.
\newblock {\em Enseign. Math.}, 65(1-2):33--100, 2019.

\bibitem[Gen20]{MR4071367}
A.~Genevois.
\newblock Contracting isometries of {${\rm CAT}(0)$} cube complexes and
  acylindrical hyperbolicity of diagram groups.
\newblock {\em Algebr. Geom. Topol.}, 20(1):49--134, 2020.

\bibitem[Ger98]{MR1663779}
V.~Gerasimov.
\newblock Fixed-point-free actions on cubings [translation of {\it {a}lgebra,
  geometry, analysis and mathematical physics ({r}ussian) ({n}ovosibirsk,
  1996)}, 91--109, 190, {I}zdat. {R}oss. {A}kad. {N}auk {S}ibirsk. {O}tdel.
  {I}nst. {M}at., {N}ovosibirsk, 1997; {MR}1624115 (99c:20049)].
\newblock {\em Siberian Adv. Math.}, 8(3):36--58, 1998.

\bibitem[GK04]{MR2160829}
E.~Guentner and J.~Kaminker.
\newblock Exactness and uniform embeddability of discrete groups.
\newblock {\em J. London Math. Soc. (2)}, 70(3):703--718, 2004.

\bibitem[GLU21a]{Chambord}
A.~Genevois, A.~Lonjou, and C.~Urech.
\newblock Asymptotically rigid mapping class groups {II}: strand diagrams and
  nonpositive curvature.
\newblock {\em arxiv:2110.06721}, 2021.

\bibitem[GLU21b]{NeretinCremona}
A.~Genevois, A.~Lonjou, and C.~Urech.
\newblock Cremona groups over finite fields, {N}eretin groups, and
  non-positively curved cube complexes.
\newblock {\em arxiv:2110.14605}, 2021.

\bibitem[GLU22]{MR4466651}
A.~Genevois, A.~Lonjou, and C.~Urech.
\newblock Asymptotically rigid mapping class groups, {I}: {F}initeness
  properties of braided {T}hompson's and {H}oughton's groups.
\newblock {\em Geom. Topol.}, 26(3):1385--1434, 2022.

\bibitem[Gol16]{Golan}
G.~Golan.
\newblock The generation problem in {T}hompson group ${F}$.
\newblock {\em to appear in Memoirs of the AMS, arxiv:1608.02572}, 2016.

\bibitem[Gro87]{GromovHyp}
M.~Gromov.
\newblock Hyperbolic groups.
\newblock {\em Essays in group theory}, 8(75-263):2, 1987.

\bibitem[GS97]{MR1396957}
V.~Guba and M.~Sapir.
\newblock Diagram groups.
\newblock {\em Mem. Amer. Math. Soc.}, 130(620):viii+117, 1997.

\bibitem[GS99]{MR1725439}
V.~Guba and M.~Sapir.
\newblock On subgroups of the {R}. {T}hompson group {$F$} and other diagram
  groups.
\newblock {\em Mat. Sb.}, 190(8):3--60, 1999.

\bibitem[GS02]{MR1902358}
V.~Guba and M.~Sapir.
\newblock Rigidity properties of diagram groups.
\newblock volume~12, pages 9--17. 2002.
\newblock International Conference on Geometric and Combinatorial Methods in
  Group Theory and Semigroup Theory (Lincoln, NE, 2000).

\bibitem[GS06a]{MR2193190}
V.~Guba and M.~Sapir.
\newblock Diagram groups and directed 2-complexes: homotopy and homology.
\newblock {\em J. Pure Appl. Algebra}, 205(1):1--47, 2006.

\bibitem[GS06b]{MR2193191}
V.~Guba and M.~Sapir.
\newblock Diagram groups are totally orderable.
\newblock {\em J. Pure Appl. Algebra}, 205(1):48--73, 2006.

\bibitem[GS17]{MR3710646}
G.~Golan and M.~Sapir.
\newblock On subgroups of {R}. {T}hompson's group {$F$}.
\newblock {\em Trans. Amer. Math. Soc.}, 369(12):8857--8878, 2017.

\bibitem[GS21]{GolanSapir}
G.~Golan and M.~Sapir.
\newblock On closed subgroups of the {R}. {T}hompson group ${F}$.
\newblock {\em arxiv:2105.00531}, 2021.

\bibitem[Gub22]{MR4412796}
V.~Guba.
\newblock On diagram groups over {F}ibonacci-like semigroup presentations and
  their generalizations.
\newblock {\em Semigroup Forum}, 104(2):320--329, 2022.

\bibitem[Hag07]{arXiv:0705.3386}
F.~Haglund.
\newblock Isometries of {CAT}(0) cube complexes are semi-simple.
\newblock {\em arXiv:0705.3386}, 2007.

\bibitem[Hul16]{MR3605028}
M.~Hull.
\newblock Small cancellation in acylindrically hyperbolic groups.
\newblock {\em Groups Geom. Dyn.}, 10(4):1077--1119, 2016.

\bibitem[HW08]{MR2377497}
F.~Haglund and D.~Wise.
\newblock Special cube complexes.
\newblock {\em Geom. Funct. Anal.}, 17(5):1551--1620, 2008.

\bibitem[Kil97]{MR1448329}
V.~Kilibarda.
\newblock On the algebra of semigroup diagrams.
\newblock {\em Internat. J. Algebra Comput.}, 7(3):313--338, 1997.

\bibitem[Lea13]{Leary}
Ian~J. Leary.
\newblock A metric kan-thurston theorem.
\newblock {\em Journal of Topology}, 6(1):251--284, 2013.

\bibitem[MS19]{MR4023758}
J.~Maher and A.~Sisto.
\newblock Random subgroups of acylindrically hyperbolic groups and hyperbolic
  embeddings.
\newblock {\em Int. Math. Res. Not. IMRN}, (13):3941--3980, 2019.

\bibitem[NR97]{Haagerup}
G.~Niblo and L.~Reeves.
\newblock Groups acting on {CAT}(0) cube complexes.
\newblock {\em Geometry and Topology}, 1:1--7, 1997.

\bibitem[Osi16]{OsinAcyl}
D.~Osin.
\newblock Acylindrically hyperbolic groups.
\newblock {\em Trans. Amer. Math. Soc.}, 368:851--888, 2016.

\bibitem[Pri10]{MR2727671}
S.~Pride.
\newblock Universal diagram groups with identical {P}oincar\'{e} series.
\newblock {\em Groups Geom. Dyn.}, 4(4):901--908, 2010.

\bibitem[Rol98]{Roller}
M.~Roller.
\newblock Pocsets, median algebras and group actions; {A}n extended study of
  {D}unwoody's construction and {S}ageev's theorem.
\newblock {\em dissertation}, 1998.

\bibitem[Sag95]{MR1347406}
M.~Sageev.
\newblock Ends of group pairs and non-positively curved cube complexes.
\newblock {\em Proc. London Math. Soc. (3)}, 71(3):585--617, 1995.

\bibitem[Sag14]{MR3329724}
M.~Sageev.
\newblock {$\rm CAT(0)$} cube complexes and groups.
\newblock In {\em Geometric group theory}, volume~21 of {\em IAS/Park City
  Math. Ser.}, pages 7--54. Amer. Math. Soc., Providence, RI, 2014.

\bibitem[Ser89]{ServatiusCent}
Herman Servatius.
\newblock Automorphisms of graph groups.
\newblock {\em Journal of Algebra}, 126(1):34--60, 1989.

\bibitem[Sta83]{MR695906}
J.~Stallings.
\newblock Topology of finite graphs.
\newblock {\em Invent. Math.}, 71(3):551--565, 1983.

\bibitem[Tar96]{MR1418304}
C.~Tardif.
\newblock On compact median graphs.
\newblock {\em J. Graph Theory}, 23(4):325--336, 1996.

\bibitem[Wie03]{MR1983088}
B.~Wiest.
\newblock Diagram groups, braid groups, and orderability.
\newblock {\em J. Knot Theory Ramifications}, 12(3):321--332, 2003.

\bibitem[Wri12]{MR2916293}
N.~Wright.
\newblock Finite asymptotic dimension for {${\rm CAT}(0)$} cube complexes.
\newblock {\em Geom. Topol.}, 16(1):527--554, 2012.

\end{thebibliography}

\Address

%

\end{document}